\newtheorem{theorem}{Theorem}[subsection]
\newtheorem{lemma}[theorem]{Lemma}
\newtheorem{proposition}[theorem]{Proposition}
\newtheorem{corollary}[theorem]{Corollary}
\newtheorem{definition}[theorem]{Definition}
\newcommand{\rmks}{\vskip 0.1 truecm\noindent{\sc Remarks: }}
\newcommand{\T}{\ms T}
\renewcommand{\leq}{\leqslant}
\renewcommand{\geq}{\geqslant}
\newcommand{\lga}{\Gamma\backslash}
\newcommand{\diam}{\operatorname{diam}}
\newcommand{\vect}{\underline}
\renewcommand{\epsilon}{\varepsilon}
\newcommand*{\dt}[1]{\accentset{\mbox{\large\bfseries .}}{#1}}
\newcommand{\oset}[3][0ex]{
  \mathrel{\mathop{#3}\limits^{
    \vbox to#1{\kern-2\ex@
    \hbox{$\scriptstyle#2$}\vss}}}}
\newcommand{\G}{\ms G}
\newcommand{\sld}{\mathsf{SL}_2(\mathbb R)}
\newcommand{\psld}{\mathsf{PSL}_2(\mathbb R)}
\newcommand{\hh}{{\bf H}^2}
\newcommand{\gp}{{\bf F}}
\newcommand{\ms}{\mathsf}
\newcommand{\mk}{\mathfrak}
\newcommand{\msl}{{\skd}}
\newcommand{\sg}{\operatorname{Sym}({\G})}
\newcommand{\mc}{\mathcal}
\newcommand{\id}{\operatorname{Id}}
\newcommand{\bA}{{\rm A}}
\newcommand{\bk}{\boldsymbol{\kappa}}
\newcommand{\bK}{\boldsymbol{K}}
\newcommand{\bI}{{{\bf I}}}
\newcommand{\bJ}{{{\bf J}}}
\newcommand{\bM}{{{\bf M}}}
\newcommand{\bC}{{\rm M}}
\newcommand{\car}{{\boldsymbol i}}
\newcommand{\bQ}{{\rm q}}
\newcommand{\bpsi}{{\boldsymbol\Psi}}
\newcommand{\bmu}{{\boldsymbol\mu}}
\newcommand{\bnu}{{\boldsymbol\nu}}
\newcommand{\defeq}{\coloneqq}
\newcommand{\eqdef}{\eqqcolon}
\newcommand{\slt}{\mk s}
\newcommand{\skd}{\mk{sl}_2}
\newcommand{\wa}{{\rm A}}
\newcommand{\wb}{{\rm B}}
\newcommand{\wD}{{\rm D}}
\newcommand{\eR}{\frac{\epsilon}{R}}
\newcommand{\KMT}{{\bf{T}}}
\newcommand{\Rp}{{\bf P}^1(\mathbb R)}
\renewcommand{\d}{{\rm d}}
\newcommand{\Ad}{\operatorname{Ad}}
\newcommand{\bN}{{\bf N}}
\newcommand{\seR}{_{\epsilon,R}}
\newcommand{\Sc}{\operatorname{Suc}}
\newcommand{\Op}{\operatorname{Opp}}
\newcommand{\Aut}{\operatorname{Aut}}
\newcommand{\chem}[1]{\{{#1}_t\}_{t\in[0,1]}}
\newcommand{\seq}[1]{ \{{#1}_m\}_{m\in\mathbb N}}
\title{Surface groups in uniform  lattices of  some semi-simple groups} 
\author[Kahn]{Jeremy Kahn}
\address{Brown University, Department of Mathematics, 
Providence, RI 02912, U.S.A.}
\author[Labourie]{Fran\c cois Labourie}
\address{Universit\'e C\^ote d'Azur, LJAD, Nice F-06000; France}
\author[Mozes]{Shahar Mozes}
\address{The Hebrew University of Jerusalem, Einstein Institute of Mathematics, Givat Ram. Jerusalem, 9190401, Israel}
\thanks{J.K and F.L acknowledge support from U.S. National Science Foundation grants DMS 1107452, 1107263, 1107367 ``RNMS: Geometric structures And Representation varieties'' (the GEAR Network). J.K acknowledges support by the National Science Foundation under Grant ${\rm n}^{\rm o}$ DMS 1352721
F.L was partially supported by the European Research Council under the {\em European Community}'s seventh Framework Programme (FP7/2007-2013)/ERC {\em grant agreement} ${\rm n}^{\rm o}$ FP7-246918 as well as by  the ANR grant DynGeo ANR-11-BS01-013. 
S.M acknowledges support by ISF grant 1003/11 and ISF-Moked grant 2095/15. }
\begin{document}
\renewcommand{\theenumi}{(\roman{enumi})}
\begin{abstract}
We show that uniform lattices in some semi-simple groups (notably complex ones) admit  Anosov surface subgroups. This result has a quantitative version: we introduce a notion, called $K$-Sullivan maps, which generalizes the notion of $K$-quasi-circles in hyperbolic geometry, and show in particular that Sullivan maps are H{\"o}lder. Using this notion, we show a quantitative version of our surface subgroup theorem and in particular that one can obtain $K$-Sullivan limit maps, as close as one wants to smooth round circles. All these results use the coarse geometry of  ``path of triangles''  in a certain flag manifold and we prove an analogue to the Morse Lemma for quasi-geodesics in that context.
\end{abstract}
\maketitle

\section{Introduction}
\newcounter{foo}
\newtheorem{theo}[foo]{Theorem}
\renewcommand*{\thefoo}{\Alph{foo}}

As a corollary of our main Theorem, we obtain the following easily stated result

\begin{theo}
	Let $\ms G$ be a center free, complex semisimple Lie group and $\Gamma$ a uniform lattice in $\ms G$. Then $\Gamma$ contains a surface group.
\end{theo}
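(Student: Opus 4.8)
\emph{Strategy.} The plan is to deduce this statement from the quantitative main theorem announced in the abstract, which produces an Anosov surface subgroup in any uniform lattice $\Gamma\le\G$ provided $\G$ carries a suitable ``$\psld$-structure'' in one of its flag manifolds; so the work splits into (i) checking this structural input when $\G$ is complex and (ii) running the geometric construction. For (i): since $\G$ is center free and complex semisimple, a principal $\mathfrak{sl}_2$-triple integrates to a subgroup $\iota\colon\mathsf{SL}_2(\mathbb C)\to\G$ carrying a Borel into a Borel, whence (after passing to a finite cover if necessary) a homomorphism $\psld\to\G$, a $\psld$-equivariant ``round circle'' $\bh\to\G/P$ into the relevant flag manifold, a totally geodesic hyperbolic plane in the symmetric space of $\G$, and a reference length-$R$ ``model edge''. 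The point of complexity is that it makes the transverse geometry along this model geodesic as rigid as in rank one: the $\ad$-grading of the principal $\mathfrak{sl}_2$ is even, the compact part $M$ of the centralizer of the geodesic flow $a_t=\iota(\operatorname{diag}(e^{t/2},e^{-t/2}))$ is a connected torus, and every transverse direction is uniformly expanded or contracted with no neutral leaf --- which is exactly what the main theorem needs.

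Next, working in $\Gamma\backslash\G$, I would invoke mixing of the flow $a_t$ --- valid by Howe--Moore, and with an exponential rate because the uniform lattice $\Gamma$ has a spectral gap --- to manufacture, for $R$ large and $\epsilon$ small, exponentially many (in $R$) ``$(\epsilon,R)$-good'' building blocks: closed finite \emph{paths of triangles} in $\Gamma\backslash\G$ whose successive edges are $\epsilon$-close to the model length-$R$ edge and whose \emph{feet} --- the transverse frame read at the midpoint of each edge --- equidistribute in $M$. Assembling three good tripods around a hexagon then yields $(\epsilon,R)$-good pairs of pants realised by elements of $\Gamma$.

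To close up a surface I would glue good pieces along edges of (almost) equal complex length $\approx R$. Along each such edge the two feet must agree up to the ``flip'', that is, composition with the half-turn along the edge together with an arbitrarily small shear; because the feet equidistribute in the \emph{connected} torus $M$ and the flip normalises $M$, the bipartite incidence relation between the two sides satisfies Hall's condition, so a perfect matching exists. Performing all the gluings produces a closed triangulated surface $S$ and a representation $\rho\colon\pi_1 S\to\Gamma$, and the coarse geometry of paths of triangles --- the Morse Lemma analogue of the paper --- guarantees that the local ``nearest model geodesic'' data fit together coherently along these edges. \emph{This gluing step is the main obstacle:} one must quantify the equidistribution of feet finely enough and verify that the flip acts on $M$ compatibly, so that Hall's condition genuinely holds; this is where complexity is indispensable, since for a general real semisimple $\G$ there can be a genuine topological obstruction to matching feet.

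Finally, $\rho$ is $(\epsilon,R)$-good by construction, so the Morse Lemma for quasi-geodesic paths of triangles supplies a continuous $\rho$-equivariant limit map $\xi\colon\bh\to\G/P$ which is a $K$-Sullivan map with $K\to 1$ as $R\to\infty$ and $\epsilon\to 0$; since Sullivan maps are H{\"o}lder, $\xi$ is a H{\"o}lder embedding, so $\rho$ is Anosov and in particular a quasi-isometric embedding, hence injective with discrete image. Therefore $\rho(\pi_1 S)$ is a surface group contained in $\Gamma$, and one may moreover take $\xi$ as close as desired to a smooth round circle --- the quantitative refinement stated in the abstract.
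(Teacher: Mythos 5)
Your sketch follows the paper's blueprint: for complex $\G$ the principal $\skd$-triple is regular (so has compact centralizer) and satisfies the flip assumption since $\bJ_0=\exp(i\zeta a/2)$ lies in the compact part of the Cartan; exponential mixing plus the Measured Marriage Theorem then glue stitched pairs of pants into a straight surface; and the coarse geometry of paths of quasi-tripods (the Limit Point Theorem) supplies a Sullivan limit map, so the monodromy is Anosov and faithful. You correctly single out the equidistribution/flip step as the place where complexity is exploited.

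One wrinkle in your final deduction: the Hölder property of Sullivan curves (Theorem \ref{theo:sull-hold}) does \emph{not} by itself make $\xi$ an embedding, nor imply Anosov; rather, for $\zeta$ small a $\rho$-equivariant $\zeta$-Sullivan map forces $\rho$ to be $\ms P$-Anosov directly (Theorem \ref{theo:sull-anos}, via the contraction estimate of Lemma \ref{lem:anosul}), and injectivity of $\xi$ and faithfulness of $\rho$ are consequences of the Anosov property, while the Hölder property instead provides the equicontinuity used in the Improvement Theorem \ref{theo:boot} to extend the limit map from accessible points to all of $\Rp$.
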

However our main result is a quantitative version of this result.

By a surface group, we mean the fundamental group of a closed connected oriented surface of genus at least 2. We shall see later on that the restriction that $\ms G$ is complex can be relaxed : the theorem holds for a wider class of groups, for instance $\ms{PU}(p,q)$ with $q>p>0$. This theorem is a generalization of the celebrated Kahn--Markovic Theorem \cite{Kahn:2009wh,Bergeron2013} which deals with the case of $\mathsf{PSL}(2,\mathbb C)$ and its proof follows a similar  scheme: building pairs of pants, gluing them and showing the group is injective, however the details vary greatly, notably in the injectivity part.  Let us note that Hamenstädt \cite{Hamenstadt:2015wa}
 had followed a similar proof to show the existence of surface subgroups of all rank 1 groups, except $\mathsf{SO}(2n,1)$, while Kahn and Markovic  essentially deals with the case $\mathsf G=\mathsf{SL}(2,\mathbb R)\times \mathsf{SL}(2,\mathbb R)$ in their Ehrenpreis paper \cite{Kahn:2011tt}.
 
 Finally, let us recall that Kahn--Markovic paper was preceded in the context of hyperbolic 3-manifolds by (non quantitative) results of Lackenby \cite{Lackenby:2010wk} for lattices with torsion  and  Cooper, Long and Reid \cite{Cooper:1997ug} in the non uniform case, both papers  using very different techniques.

Kahn--Markovic Theorem has a quantitative version: the surface group obtained is $K$-quasi-symmetric where $K$ can be chosen  arbitrarily close to 1. Our theorem also has  a quantitative version that needs some preparation and definitions to be stated properly: in particular, we need to define in this higher rank context what is the analog of a quasi-symmetric (or rather almost-symmetric) map. 

\subsection{Sullivan maps}
We make the choice of an $\skd$ triple in $\ms G$, that is an embedding of the Lie algebra of $\sld$ with its standard generators $(a,x,y)$ into the Lie algebra of $\ms G$. For the sake of simplification, in this introduction, we suppose that this triple has a compact centralizer. Such an $\skd$ triple defines a flag manifold $\gp$: a compact $\ms G$-transitive space on which the hyperbolic element $a$ acts with a unique attractive fixed point (see section \ref{sec:prel} for details).

Most of the results and techniques of the proof involves the study of the following geometric objects in $\gp$:
\begin{enumerate}
	\item  {\em circles in $\gp$} which are maps from $\Rp$ to $\gp$ equivariant under a representation of $\sld$ conjugate to the one defined by the $\skd$ triple chosen above.
	\item {\em tripods} which are triple of distinct point on a circle. Such a tripod $\tau$ defines -- in a $\ms G$-equivariant way -- a metric $d_\tau$ on $\gp$.
\end{enumerate}
We can now define what is the generalization of a $K$-quasi-symmetric map, for $K$-close to 1. Let $\zeta$ be a positive number. A {\em $\zeta$-Sullivan} map is a map $\xi$ from $\Rp$ to $\gp$, so that for every triple of pairwise distinct points $T$ in $\Rp$, there is a circle $\eta_T:\Rp\to\gp$ so that
$$
\forall x\in \Rp\ , \ \ \ d_{\eta_T(T)}(\eta_T(x),\xi(x))\leq \zeta\ .
$$
We remark that circles are $0$-Sullivan map. Also, we insist that this notion is relative to the choice of some $\skd$ triple, or more precisely of a conjugacy class of $\skd$-triple. This notion is discussed more deeply in Section \ref{sec:sull}.

Obviously, for this definition to make sense,  $\zeta$ has to be  small. We do not require any regularity nor continuity of the map $\xi$. Our first result actually guarantees some regularity:
\begin{theo}{\sc[H{\"o}lder property]}\label{theo:sull-hold0}
	 There exists some positive numbers $\zeta$ and $\alpha$,  so that any  $\zeta$-Sullivan map  is $\alpha$-H{\"o}lder.
\end{theo}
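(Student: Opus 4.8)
The plan is to exploit the defining property of a $\zeta$-Sullivan map locally and compare the induced metric $d_{\eta_T(T)}$ with a fixed background metric on $\gp$. Fix a base circle $\eta_0\colon\Rp\to\gp$ coming from the chosen $\skd$ triple, together with a fixed ``round'' tripod $\tau_0$ on it, and equip $\gp$ with the reference metric $d_{\tau_0}$. The key geometric input is a quantitative comparison lemma: if $\tau$ is a tripod whose three points are, in the reference metric $d_{\tau_0}$, at pairwise distance bounded below by $r>0$ and above by some fixed constant, then the two metrics $d_\tau$ and $d_{\tau_0}$ are bi-Lipschitz with a constant depending only on $r$ (this is a consequence of $\ms G$-equivariance plus compactness of the space of such tripods, modulo the $\ms G$-action). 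First I would record this comparison and, dually, the statement that if the three points of $\tau$ are $d_{\tau_0}$-close together then $d_\tau$ is a large multiple of $d_{\tau_0}$ near that cluster and a small multiple far away — this is the mechanism that will force H\"older behaviour.

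Next I would set up the induction on scales. Given two points $x,y\in\Rp$, choose a standard tripod $T=T(x,y)$ in $\Rp$ whose ``size'' (in the spherical metric on $\Rp$) is comparable to $d(x,y)$ — for instance $x$, $y$, and a third point at definite distance from both. By the $\zeta$-Sullivan condition there is a circle $\eta=\eta_T$ with $d_{\eta(T)}(\eta(z),\xi(z))\le\zeta$ for all $z$; in particular $d_{\eta(T)}(\xi(x),\xi(y))\le 2\zeta+d_{\eta(T)}(\eta(x),\eta(y))$, and the latter term is controlled because $\eta$ is a genuine circle and $\eta(T)$ is a tripod of definite size on it, so $d_{\eta(T)}(\eta(x),\eta(y))\lesssim 1$. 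The real point is to upgrade this from ``$\xi(x),\xi(y)$ are $d_{\eta(T)}$-close'' to ``they are $d_{\tau_0}$-close, with a gain as $d(x,y)\to 0$''. For that I would iterate: apply the condition at the smaller scale, using a tripod $T'$ nested inside $T$ with size a fixed fraction $\lambda<1$ of that of $T$; the corresponding circle $\eta'$ and its tripod $\eta'(T')$ give a metric $d_{\eta'(T')}$ that, by the comparison lemma applied on the chain $\tau_0 \to \eta(T) \to \eta'(T')$, is at least a definite multiple $\mu>1$ larger than $d_{\eta(T)}$ on the relevant region. Since $\xi(x),\xi(y)$ are $\le 2\zeta$ from $\eta'(x),\eta'(y)$ in $d_{\eta'(T')}$ as well, and the latter are $d_{\eta'(T')}$-close, one concludes $d_{\eta'(T')}(\xi(x),\xi(y))\lesssim 1$, hence $d_{\tau_0}(\xi(x),\xi(y))\lesssim \mu^{-k}$ after $k$ steps, while $d(x,y)\asymp \lambda^k$. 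Taking $\alpha=\log\mu/\log(1/\lambda)$ (and $\zeta$ small enough that the errors $\zeta$ do not swamp the gain $\mu$ at each stage) yields $d_{\tau_0}(\xi(x),\xi(y))\lesssim d(x,y)^\alpha$.

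Concretely the induction would be phrased as: there are constants $C\ge 1$, $\lambda\in(0,1)$, $\mu>1$ and $\zeta>0$ such that for every $n\ge 0$ and every pair $x,y$ with $d(x,y)\le\lambda^n$ one has $d_{\tau_0}(\xi(x),\xi(y))\le C\mu^{-n}$; the base case is the trivial bound $\diam_{d_{\tau_0}}(\xi(\Rp))\le C$ (itself obtained from a single application of the Sullivan condition), and the inductive step is exactly the nesting argument above, where one must check that $2\zeta\cdot(\text{comparison constant}) + (\text{circle contribution}) \le C\mu^{-(n+1)}$ given the hypothesis at level $n$; this closes provided $\zeta$ is chosen small relative to $\mu$ and $\lambda$.

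The main obstacle is the quantitative metric-comparison lemma and, relatedly, keeping the chain of comparisons $\tau_0 \to \eta_T(T) \to \eta_{T'}(T')$ uniform: a priori the circles $\eta_T$ produced by the Sullivan condition are arbitrary, so the tripods $\eta_T(T)$ can be badly distorted relative to $\tau_0$, and one needs that \emph{nested} tripods of controlled combinatorial size still produce a uniformly expanding sequence of metrics. The point is that the $\zeta$-closeness pins $\eta_T(z)$ near $\xi(z)$ for \emph{all} $z$, in particular at the third, ``anchor'' point shared with the previous scale, which forces the successive circles to stay within bounded distance of one another; I expect the bulk of the work is in making this ``bounded drift of the approximating circles'' precise and extracting from it the uniform expansion factor $\mu$. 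Everything else — the choice of standard nested tripods in $\Rp$, the reduction to a compact family of configurations, and the final summation of the geometric series — is routine once that lemma is in hand.
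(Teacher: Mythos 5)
Your proposal takes essentially the same approach as the paper, though packaged differently. The paper proves a quantitative version (Theorem \ref{theo:HolQuant}) by building, for a pair $x_1,x_2$, two sequences of coplanar $\sld$-tripods branching toward $x_1$ and $x_2$ that coincide for the first $N\sim -\log d_{z_0}(x_1,x_2)$ steps (Lemma \ref{pro:holcuff}); the $\zeta$-Sullivan condition upgrades these to paths of quasi-tripods in $\mc G$ (Lemma \ref{lem:holcuf}, Proposition \ref{pro:suldef0}); and the Limit Point Theorem \ref{theo:exislimi} supplies the geometric decay $d_{\tau_0}(\xi(x_1),\xi(x_2))\leq \bQ^N$, from which the H\"older bound follows. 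Your direct scale-iteration (nested tripods $T\supset T'\supset\dots$, expansion factor $\mu>1$ from one scale to the next, $N\sim\log(1/d(x,y))$ steps) is the same idea viewed ``zoomed-in'' rather than via paths to infinity; it is conceptually lighter because it avoids the quasi-tripod/chord/sliver bookkeeping, but you pay for this by having to re-derive, in an ad hoc way, what Proposition \ref{pro:cone-contrac0} (aligning tripods) and the Confinement Lemma do in the paper, namely the uniform expansion $\mu>1$ and its stability under the $O(\zeta)$ perturbation at each scale. One point you should make explicit: the ``bounded drift of the approximating circles,'' which you correctly single out as the crux, is precisely where the compact-centralizer hypothesis (assumed implicitly in the introduction, stated explicitly in section \ref{sec:sull}) enters. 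The paper's Proposition \ref{lem:proper} shows $\ms G$ acts properly on a neighborhood of the set of tripod vertices in $\gp^3$, which is what pins the compatible tripods at successive scales within bounded distance of one another (Lemma \ref{lem:properA}); without compactness of the centralizer, nearby triples of flags can be matched by tripods that wander off to infinity in $\mc G$, and your comparison chain $\tau_0\to\eta(T)\to\eta'(T')$ would lose all uniformity. With that caveat acknowledged, your sketch is a faithful outline of the argument.
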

If we furthermore assume that the map $\xi$ is equivariant under some representation $\rho$ of a Fuchsian group $\Gamma$ acting on $\Rp$, we have 

\begin{theo} {\sc [Sullivan implies Anosov]}\label{theo:sull-anos0}
 There exists a positive number $\zeta$ such that if $\Gamma$ is a cocompact Fuchsian group, $\rho$ a representation of $\Gamma$ in $\G$ so that there exists a $\rho$ equivariant $\zeta$-Sullivan map $\xi$ from $\Rp$ to $\gp$, 
	 then  $\rho$ is $\gp$-Anosov and $\xi$ is its limit curve.
\end{theo}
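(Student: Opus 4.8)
The plan is to verify the three defining features of an $\gp$-Anosov representation --- a continuous $\rho$-equivariant boundary map into $\gp$, a transverse companion into the opposite flag manifold, and uniform exponential contraction of the associated bundle map over the geodesic flow of $\Gamma\backslash\hh$ --- taking $\xi$ itself as the positive boundary map so that the last assertion (that $\xi$ is the limit curve) is automatic.

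\emph{Regularity and transversality.} By the H{\"o}lder property (Theorem~\ref{theo:sull-hold0}), $\xi$ is $\alpha$-H{\"o}lder, hence continuous. Given $s\neq t$ in $\Rp$, pick a third point so that $T=(r,s,t)$ is a triple of pairwise distinct points, and let $\eta_T$ be the circle supplied by the $\zeta$-Sullivan condition, so $d_{\eta_T(T)}(\eta_T(u),\xi(u))\leq\zeta$ for all $u\in\Rp$. Since $\eta_T$ is $\skd$-equivariant, $\eta_T(s)$ and $\eta_T(t)$ are transverse and a definite $d_{\eta_T(T)}$-distance apart; as transversality is an open condition and $\zeta$ is fixed small and uniform, $\xi(s)$ and $\xi(t)$ are transverse. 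Running the same comparison with the companion ``opposite circle'' $\eta_T^-$, valued in the opposite flag manifold $\gp^-$ (which coincides with $\gp$ in the self-opposite cases of interest), produces the negative boundary map: set $\xi^-(t)$ to be the limit of $\eta_T^-(t)$ as the two other points of $T$ converge to $t$ in $\Rp$, and check from the same $\zeta$-estimates that the limit exists and defines a continuous, $\rho$-equivariant map transverse to $\xi(s)$ whenever $s\neq t$.

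\emph{Dynamical contraction.} Fix a base tripod $\tau_0$. For a geodesic ray in $\hh$ with endpoint $t_+\in\Rp$, tracked by a sequence $(\gamma_n)$ in $\Gamma$ converging to $t_+$, apply the $\zeta$-Sullivan condition at the triples $\gamma_n^{-1}T_0$ (for a fixed triple $T_0$) and push forward by $\rho(\gamma_n)$: using equivariance of $\xi$, this says that the orbit $n\mapsto\rho(\gamma_n)\cdot\tau_0$ is, in the language developed earlier in the paper, a quasi-geodesic path of triangles that $\zeta$-fellow-travels a genuine circle-orbit, with all constants uniform by cocompactness of $T^1(\Gamma\backslash\hh)$. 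The Morse Lemma for paths of triangles then upgrades this to uniform fellow-traveling of a single circle $C_{t_+}$ with attracting fixed flag $\xi(t_+)$ and repelling fixed flag $\xi^-(t_+)$; along $C_{t_+}$ the $\skd$-flow contracts $\gp$ toward $\xi(t_+)$ at a uniform exponential rate measured in the tripod metrics. Transporting this estimate back along the bounded-distance comparison yields precisely the exponential contraction of the $\rho$-associated bundle map over the geodesic flow that defines the Anosov condition; with $(\xi,\xi^-)$ continuous, equivariant, transverse and contracting, $\rho$ is $\gp$-Anosov and $\xi$ is its limit curve. (The same path-of-triangles input shows the orbit map of $\rho$ is a quasi-isometric embedding, should one prefer that characterization.)

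\emph{Main difficulty.} The delicate point is the second step: upgrading the purely local data ``$\zeta$-close to \emph{some} circle near each triple'' to one circle that the whole orbit along a geodesic stays uniformly close to, with constants independent of $t_+$. This is exactly what the coarse geometry of paths of triangles and its Morse Lemma are designed for, but one must check that the Sullivan bound feeds those hypotheses with the correct uniformity, and control how the comparison circles $\eta_T$ vary with $T$ --- in particular, that two circles that are $\zeta$-close to $\xi$ on overlapping triples are themselves uniformly close, which is what permits patching the local comparisons along a geodesic. A secondary point, already flagged in the introduction, is to reduce to --- or argue directly in the absence of --- the compact-centralizer hypothesis on the $\skd$-triple, working throughout with the $\G$-equivariant metrics $d_\tau$.
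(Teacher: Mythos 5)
Your overall plan tracks the paper's proof (\S\ref{sec:anos}): produce the boundary data from $\xi$, establish transversality from the $\zeta$-Sullivan bound, and get exponential contraction from the coarse path-of-tripods machinery. But there are two substantive points on which the proposal falls short.

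First, the genuine gap. You flag it yourself as ``the delicate point'': that two compatible circles $\eta_{T_1}$, $\eta_{T_2}$ for nearby triples $T_1,T_2$ must be uniformly close. Flagging it is not the same as resolving it, and it is exactly here that the proof is actually won or lost. The paper proves this as Lemma~\ref{lem:properA}, deduced from Proposition~\ref{lem:proper}, which is a properness statement: if $X\in\gp^3$ is $\zeta_0$-close to $\partial\tau_1$ in $d_{\tau_1}$ \emph{and} $\zeta_0$-close to $\partial\tau_2$ in $d_{\tau_2}$, then $d(\tau_1,\tau_2)$ is bounded. The argument uses the symmetric space: the sum of Busemann functions associated to the three points of a triple is a geodesically convex function whose restriction to the totally geodesic hyperbolic plane of the corresponding circle has a nondegenerate critical point --- and nondegeneracy is \emph{precisely} where the compact-centralizer hypothesis enters (the centralizer directions are exactly the null directions of the Hessian). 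Your proposal neither supplies this argument nor an alternative, so the ``patching of local comparisons along a geodesic'' that you need has no foundation. This is not a detail to be filled in; it is the mechanism that converts the pointwise Sullivan bound into a dynamical statement with constants uniform over the geodesic flow.

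Second, a smaller structural point. You construct a separate negative boundary map $\xi^-$ via ``opposite circles'' $\eta^-_T$ into $\gp^-$ and take limits of these as two of the three reference points coalesce. This is unnecessary and introduces extra limits to control. In the present setting the parabolic $\ms P$ is conjugate to its opposite (because $a$ is conjugate to $-a$ in $\sld$), so $\gp\simeq\gp^-$ canonically, and the single map $\xi$ supplies both sections of the flat $\gp$-bundle over the unit tangent bundle: $\sigma^+(T)=\xi(\partial^+T)$ and $\sigma^-(T)=\xi(\partial^-T)$. Transversality of $\sigma^+$ and $\sigma^-$ at each $T$ follows directly from the $\zeta$-Sullivan estimate for $\zeta<\tfrac12 d_\tau(\partial^+\tau,\partial^-\tau)$, with no limiting construction needed. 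The paper's proof of contraction (Lemma~\ref{lem:anosul}) is also sharper than your sketch: it does not merely invoke ``the Morse Lemma'' but builds, via Lemma~\ref{pro:holcuff} (the degenerate $x_1=x_2$ case), Lemma~\ref{lem:holcuf}, and Proposition~\ref{pro:suldef0}, an explicit $\epsilon$-deformed path of quasi-tripods whose model is a coplanar path, and then applies the Confinement Lemma~\ref{lem:zigzag} together with the Convergence Corollary~\ref{coro-conv-nest} to produce a definite $\tfrac12$-contraction over a fixed time $R$ depending only on $\ms G$. That concrete $R$ is what makes the resulting flow hyperbolicity uniform; ``exponential contraction along $C_{t_+}$'' transported back by a bounded comparison is not yet a uniform estimate unless you control the comparison constants --- which is again Lemma~\ref{lem:properA}.
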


When $\G=\mathsf{PSL}(2,\mathbb C)$, $\gp={\bf P}^1(\mathbb C)=\partial_\infty{\bf H}^3$, circles are boundaries at infinity of hyperbolic planes, and the theorems above translate into classical properties of quasi-symmetric maps. We refer to \cite{Labourie:2006,Guichard:2012eg} for reference on Anosov representations and give a short introduction in paragraph \ref{subsec:introAnos}. In particular recall that Anosov representations are faithful

\subsection{A quantitative surface subgroup theorem}
We can now state what is our quantitative version of the existence of surface subgroup in higher rank lattices.  

\begin{theo}
	Let $\ms G$ be a center free,  semisimple Lie group without compact factor and $\Gamma$ a uniform lattice in $\ms G$. 
	Let us choose an $\skd$-triple in $\ms G$ with a compact centralizer and satisfying the {\em  flip assumption} (See below) with associated flag manifold $\gp$. 
	
	Let $\zeta$ be a positive number. Then there exists a cocompact Fuchsian group $\Gamma_0$ and a $\gp$-Anosov representation $\rho$ of $\Gamma_0$ in $\ms G$ with values in $\Gamma$ and  whose limit map is $\zeta$-Sullivan.
\end{theo}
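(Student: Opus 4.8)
The plan is to follow the Kahn--Markovic strategy of assembling a closed surface out of ``good pairs of pants'', now living inside the locally symmetric space $\Gamma\backslash\G$, and to certify the outcome \emph{a posteriori} by Theorem~\ref{theo:sull-anos0}. Fix the target $\zeta>0$. We will produce, for a suitably large parameter $R$ and small $\epsilon=\epsilon(R)$, a genuine closed hyperbolic surface $S=\Gamma_0\backslash\hh$ and a homomorphism $\rho\colon\Gamma_0\to\Gamma$ whose developed limit curve $\xi\colon\Rp\to\gp$ is $\zeta$-Sullivan; then Theorem~\ref{theo:sull-anos0} yields that $\rho$ is $\gp$-Anosov (hence faithful, so $\Gamma_0$ is indeed a surface subgroup of $\Gamma$) with limit map $\xi$. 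For the construction: the $\skd$-triple embeds $\psld$ in $\G$, its orbit through the basepoint of $\gp$ is the model circle, and the hyperbolic one-parameter subgroup generated by $a$ projects to a flow on the corresponding $\psld$-orbit in $\Gamma\backslash\G$; since the centralizer is compact and $\G$ has no compact factor, this flow is exponentially mixing, which is the analytic input replacing mixing of the frame flow on hyperbolic $3$-manifolds. From it I would produce an abundance of $(R,\epsilon)$-good closed geodesics (length $\approx R$, along which the flag-space configuration stays $\epsilon$-close to the model), and then, invoking the \emph{flip assumption} to build symmetric ``feet'' at the cuffs, a large supply of $(R,\epsilon)$-good pairs of pants: immersed pants whose three cuffs are $(R,\epsilon)$-good and whose internal geometry is $\epsilon$-close to the model hyperbolic pair of pants of cuff length $R$.

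Next I would match and glue. The oriented cuffs of the good pants, together with their two foot tripods, equidistribute over the relevant unit bundle with error comparable to $\eR$; hence the measure on oriented cuffs is, up to this small error, symmetric under ``reverse orientation and apply the flip''. By the usual Hall-type matching argument, made possible by this near-symmetry exactly as in \cite{Kahn:2009wh,Kahn:2011tt}, I would glue the good pants in pairs along their cuffs with a small prescribed shear, chosen so that the two pants fit together with mismatch $O(\eR)$, obtaining a closed surface $S$. Equipping $S$ with the hyperbolic structure in which every cuff has length exactly $R$ and all shears are as prescribed makes $\Gamma_0\defeq\pi_1(S)$ a cocompact Fuchsian group acting on $\Rp=\bh$, and the holonomy into $\G$, which by construction takes values in $\Gamma$, is the sought $\rho$.

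It remains to build and estimate the Sullivan limit map. Develop the assembly: over each lift of a pair of pants place the corresponding model-circle arcs of its cuffs and seams, glued across each lift of a cuff with the small shear; in the limit on $\Rp$ this produces a $\rho$-equivariant map $\xi\colon\Rp\to\gp$. To see $\xi$ is $\zeta$-Sullivan, fix a triple $T\subset\Rp$; it spans an ideal tripod in $\hh$ which, read off in $S$, crosses a number of pants and cuffs bounded independently of $R$, along each of which $\xi$ agrees with a model circle up to error $O(\epsilon)$, the cuffs being traversed with shear $O(\eR)$. The Morse Lemma for paths of triangles, established earlier, then shows that this concatenation of nearly-model circular arcs with small defects remains uniformly close, in the tripod metric $d_{\eta_T(T)}$, to a single circle $\eta_T$ through $T$, with total defect bounded by $C(\epsilon+\eR)\leq\zeta$ once $R$ is large and $\epsilon$ small. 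Hence $\xi$ is $\zeta$-Sullivan, and Theorem~\ref{theo:sull-anos0} concludes.

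I expect the main obstacle to be the last step, specifically propagating the \emph{local} model error through an \emph{a priori unbounded} geodesic segment: one must show the piecewise-circular developed map obeys a genuine Sullivan bound whose constant does not deteriorate with combinatorial length, i.e.\ that the Morse Lemma for paths of triangles is a true stability statement with an additive defect controlled by $\epsilon+\eR$, not merely a quasi-isometric comparison. Coupling this with the pants construction then forces the exponential mixing rate to dominate the loss incurred by the matching, as in Kahn--Markovic; but here all estimates must be carried out in the flag manifold $\gp$ with the family of tripod metrics $d_\tau$ rather than on a round sphere, which is where the bulk of the difficulty lies.
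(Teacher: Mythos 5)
Your sketch reproduces the paper's strategy faithfully: good pants from exponential mixing, matching across cuffs via a Hall-type measured marriage theorem made possible by the flip assumption, development of a limit map, a quantitative Morse lemma to control the development, and Theorem~\ref{theo:sull-anos0} to conclude. So the overall architecture is correct and the same as in the paper.

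There is, however, a genuine gap precisely where you locate the difficulty. You write that the ideal tripod spanned by a triple $T\subset\Rp$, ``read off in $S$, crosses a number of pants and cuffs bounded independently of $R$'' --- this is false; each of its three geodesic rays crosses infinitely many cuffs as it escapes to infinity. Consequently the picture of $\xi$ as a finite concatenation of near-circular arcs, together with a one-shot ``total defect $\leq C(\epsilon+\eR)$'' estimate, does not literally make sense: one must show that the errors incurred along an \emph{infinite} path of quasi-tripods decay geometrically, so that the limit exists and stays within $\zeta$ of the reference circle. The paper's Limit Point Theorem does exactly this, but it applies to suitably controlled infinite paths (deformed $(\ell_0 R,\eR)$-sequences), and the boundary points that can be reached by such paths are only a dense subset of $\Rp$: the \emph{accessible points} with respect to the $R$-perfect lamination. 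The paper therefore proceeds in two steps that your sketch skips: (i) it proves, via the Accessibility Lemma and the Limit Point Theorem, that $\xi$ exists on the dense set of accessible points and is ``$(a,\zeta)$-Sullivan'' there, with $a=a(R)\to 0$; and (ii) it upgrades this partial statement to a full $2\zeta$-Sullivan map on all of $\Rp$ by the Improvement Theorem, whose proof is a connectedness argument deforming the straight surface continuously to the perfect one (so that the Anosov property, the Hölder modulus, and the stability of Anosov representations can be bootstrapped along the path). Without the accessible-points intermediary and the Improvement bootstrap, the direct ``bound $d_{\eta_T(T)}(\eta_T(x),\xi(x))$ for all $x$ and all $T$'' estimate you are aiming at is not within reach of the tools you have listed.
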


The flip assumption is satisfied for all complex groups, all rank 1 group except $\mathsf{SO}(1,2n)$, but not for real split groups. The precise statement is the following. Let $(a,x,y)$ be an $\skd$-triple and $\zeta_0$ the smallest real positive number so that $\exp(2i\zeta_0\cdotp a)=1$. We say the $(a,x,y)$ satisfies the {\em flip assumption} if the automorphism of $\ms G$, $\bJ_0\defeq\exp(i\zeta_0\cdotp a)$ belongs to the connected component of a compact factor of the centralizer of $a$. Ursula Hamenstädt also used the flip assumption in \cite{Hamenstadt:2015wa}.

We do hope the flip assumption is unnecessary. However removing it is beyond the scope of the present article: it  would involve in particular incorporating generalized arguments from \cite{Kahn:2011tt} which deal with the (non flip) case of $\ms G=\mathsf{SL}(2,\mathbb R)\times \mathsf{SL}(2,\mathbb R)$.

Finally let us notice that Kahn and Wright have announced a quantitative version of the surface subgroup theorem for non uniform lattice in the case of $\mathsf{PSL}(2,\mathbb C)$, leaving thus open the possibility to extend our theorem also for non uniform lattices.

\subsection{A tool: coarse geometry in flag manifolds} A classical tool for Gromov hyperbolic spaces is the Morse Lemma: quasi-geodesics are at uniform distance to geodesics. Higher rank symmetric spaces are not Gromov hyperbolic but they do carry a version of the Morse Lemma: see Kapovich--Leeb--Porti \cite{Kapovich:2014ue} and Bochi--Potrie--Sambarino \cite{Bochi:2016wl}

Our approach in this paper is however to avoid as much as possible dealing with the (too rich) geometry of the symmetric space. We will only use the geometry of the flag manifolds that we defined above: circles, tripods and metrics assigned to tripods. In this new point of view, the analogs of geodesics will be coplanar path of triangles: roughly speaking  a coplanar path of triangles corresponds to a sequence of non overlapping ideal triangles in some hyperbolic space so that two consecutive triangles are adjacent -- see figure \ref{fig:2a}. We now have to describe a coarse version of that. First we need to define {\em quasi-tripods} which are deformation of tripods: roughly speaking these are tripods with deformed vertices (See Definition 
\ref{def:qua-trip} for precisions). Then we want to define almost coplanar sequence of quasi-tripods (See Definition \ref{def:shear-qt}). Finally our main theorem \ref{theo:exislimi} guarantees some circumstances under which these  ``quasi-paths'' converge ``at infinity", that is shrink to a point in $\gp$. 

The Morse Lemma by itself is not enough to conclude in the hyperbolic case and we need a refined version. Our Theorem \ref{theo:exislimi} is used at several points in the paper: to prove the main theorem and to prove the theorems around Sullivan maps. Although, this theorem requires too many definitions to be stated in the introduction, it is one of the main and new contributions of this paper.
\vskip 0.1 truecm  
While this paper was in its last stage, we learned that Ursula Hamenstädt has announced existence results for lattices in higher rank group, without the quantitative part of our results, but with other very interesting features. 

We thank Bachir Bekka,  Yves Benoist, Nicolas Bergeron, Marc Burger,   Mahan Mj, Dennis Sullivan for their help and interest while we were completing this project. Fanny Kassel, Pierre-Louis Blayac and Olivier Glorieux deserve special and extremely warm thanks for comments and crucial corrections on the writing up. Their input was essential in improving the readability of the final version as well as pointing out some crucial mistakes.

\subsection{A description of the content of this article}
What follows is meant to be a reading guide of this article, while introducing informally the essential ideas. In order to improve readability, an index is produced at the end of this paper.
\begin{enumerate}
	\item Section \ref{sec:prel} sets up the Lie theory background: it describes in more details $\skd$-triples, the flip assumption, and the associated parabolic subgroups and flag manifolds. 
	\item Section \ref{sec:tripods} introduces the main tools of our paper: tripods. In the simplest case (for instance principal $\skd$-triples in complex simple groups), tripods are just preferred triples of points in the associated flag manifold. In the general case, tripods come with some extra decoration. They may be thought of as generalizations of ideal triangles in hyperbolic planar geometry and they reflect our choice of a preferred $\skd$-triple.  The space of tripods admits several actions that are introduced here and notably a {\em shearing flow}. Moreover each tripod defines a metric on the flag manifold itself and we explore the relationships between the shearing flow and these metric assignments.
	\item For the hyperbolic plane, (nice) sequences of non overlapping ideal triangles, where two successive ones have a common edge, converges at infinity. This corresponds in our picture to {\em coplanar paths of tripods}. Section \ref{sec:quas-trip} deals with ``coarse deformations'' of these paths. First we introduce quasi-tripods, which are deformation of tripods: in the simplest case these are triples of points in the flag manifold which are not far from a tripod, with respect to the metric induced by the tripod. Then we introduce {\em paths of quasi-tripods} that we see as deformation of coplanar paths of tripods. Our goal will be in a later section to show that this deformed paths converge under some nice hypotheses.
	\item  For coplanar paths of tripods (which are sequences of ideal triangles), one see the convergence to infinity as a result of nesting of intervals in the boundary at infinity. This however is the consequence of the order structure on $\partial_\infty\hh$ and very specific to planar geometry. In our case, we need to introduce ``coarse deformations'' of these intervals, that we call {\em slivers} and introduce quantitative versions of the nesting property of intervals called {\em squeezing} and {\em controlling}. In section \ref{sec:cone} and section \ref{sec:conf-lem}, we define all these objects and prove the confinement Lemma. This lemma tells us  that certain deformations of coplanar paths still satisfy our coarse nesting properties. These two sections are preliminary to the next one.
	\item In section \ref{sec:Morse}, we prove one of the main results of the papers, the Limit Point Theorem that gives a condition under which a deformed sequence of quasi-tripods converges to a point in the flag manifold as well as some quantitative estimates on the rate of convergence. This theorem will be used several times in the sequel. Special instances of this theorem may be thought of as higher rank versions of the Morse Lemma. Our motto is to use the coarse geometry of path of quasi-tripods in the flag manifolds rather than quasi-geodesics in the symmetric space.
	\item In section \ref{sec:sull}, we introduce {\em Sullivan curves} which are analogs of quasi circles. We show extensions of two classical results for Kleinian groups and quasi-circles: Sullivan curves are H{\"o}lder and if a Sullivan curve is equivariant under the representation of of a surface group, this surface group is Anosov -- the analog of quasi-fuchsian. In the case of deformation of equivariant curves, we prove an Improvement Theorem that needs a Sullivan curve to be only defined on a smaller set.
	\item So far, the previous sections were about the geometry of the flag manifolds and did not make use of a lattice or discrete subgroups of $\ms G$. We now move to the proof of existence of surface groups, that we shall build by gluing pairs of pants together. The next two sections deals with pairs of pants: section \ref{sec:pairpants} introduces the concept of a {\em almost closing pair of pants} that generalizes the idea of building a pair of pants out of two ideal triangles. We describe the structure of these pairs of pants in a Structure Theorem  using a partially hyperbolic Closing Lemma. 
 In Kahn--Markovic original paper a central role is played by ``triconnected pair of tripods'' which are (roughly speaking) three homotopy classes of paths joining two points. In section \ref{sec:triconn}, we introduce here the analog in our case (under the same name), then  describe  {\em weight functions}. A triconnected pair of tripods on which the weight function is positive, gives rise to a nearby almost closing pair of pants. We also study an orientation inverting symmetry.
	\item We study in the next two sections the boundary data that is needed to describe the gluing of pair of pants. After having introduced {\em biconnected pair of tripods} which amounts to forget one of the paths in our triple of paths. In section \ref{sec:biconn}, we introduce spaces and measures for both triconnected and biconnected pairs of tripods and show that the forgetting map almost preserve the measure using the mixing property of our mixing flow. Then in section \ref{sec:feet}, we move more closely to study the boundary data: we introduce the {\em feet spaces and projections} which is the higher rank analog to the normal bundle to geodesics and we prove a Theorem that describes under which circumstances a measure is not perturbed too much by  a {\em Kahn--Markovic} twist.
	\item  In section \ref{sec:equi}, we wrap up the previous two sections in proving the Even Distribution Theorem which essentially  roughly says that there are  the same number  pairs of pants coming from ``opposite sides'' in the feet space. This makes use of the flip assumption which is  discussed there with more details (with examples and counter examples).
	\item As in Kahn--Markovic original paper, we use the Measured Marriage Theorem  in section \ref{sec:straight} to produce {\em straight surface groups} which are pair of pants glued nicely along their boundaries. It now remains to prove that these straight surface groups injects and are Sullivan.
	\item Before starting that proof, we need to describe in section \ref{sec:lam} a little further the {\em $R$-perfect lamination} and more importantly the {\em accessible points} in the boundary at infinity, which are roughly speaking those points which are limits of nice path of ideal triangles with respect to the lamination. This section is purely hyperbolic planar geometry.
	\item  We finally make a connexion with the first part of the paper which leads to the Limit Point Theorem. In section  \ref{sec:ss-lm}, we consider the nice paths of tripods converging to accessible points described in the previous section, and show that a straight surface (or more generally an {\em equivariant straight surface}) gives rise to a deformation of these paths of tripods into paths of quasi-tripods, these latter paths being well behaved enough to have limit points according to the Limit Point Theorem.  Then using the Improvement Theorem of section \ref{sec:sull}, we show that this gives rive to a Sullivan limit map for our surface.
	\item The last section is a wrap-up of the previous results and finally in an Appendix, we present results and constructions dealing with the Levy--Prokhorov distance between measures.   \end{enumerate}.
\tableofcontents

\section{Preliminaries:  \texorpdfstring{$\mathfrak{sl}_2$}{sl2}-triples}\label{sec:prel}
In this preliminary section, we recall some facts about $\skd$-triples in Lie groups, the hyperbolic plane and discuss the flip assumption that we need to state our result. We also recall the construction of parabolic groups and the flag manifold whose geometry is going to play a fundamental role in this paper.
\subsection{ \texorpdfstring{$\mathfrak{sl}_2$}{sl2}-triples and the flip assumption}\label{sec:sld}
Let ${\G}$ be a semisimple center free Lie group without compact factors. 

\begin{definition}
An {\em ${\skd}$-triple}\index{${\skd}$-triple, even, regular}  \cite{Kostant:1959wi} is $\slt\defeq(a,x,y)\in\mk g^3$ so that $[a,x]=2x$,  $[a,y]=-2y$ and $[x,y]=a$. 

An ${\skd}$-triple $(a,x,y)$ is {\em regular}, if $a$ is a regular element. The centralizer of  a regular  ${\skd}$-triple is compact.  

An  ${\skd}$-triple $(a,x,y)$ is {\em even} if all the eigenvalues of $a$ by the adjoint representation are even.
	\end{definition}
 An $\msl$-triple $(a,x,y)$ generates a Lie algebra $\mk a$ isomorphic  to $\mk{sl}(2,\mathbb R)$ so that
\begin{eqnarray}
a=\left(\begin{array}{rl}1&0\cr0&-1\end{array}\right),\, \,
x=\left(\begin{array}{rl}0&1\cr0&0\end{array}\right),\, \,
y=\left(\begin{array}{rl}0&0\cr1&0\end{array}\right)\ .\label{def:matrix}	\end{eqnarray}
For an even triple, the group whose Lie algebra is  $\mk a$ is isomorphic to $\psld$.
 
 Say an element $\bJ_0$ of $\Aut(\G)$ is a {\em reflexion} for the $\skd$-triple $(a,x,y)$, if
 \begin{itemize}
 	\item $\bJ_0$ is an involution and belongs to ${\ms Z}({\ms Z}(a))$
 	\item $\bJ_0(a,x,y)=(a,-x,-y)$ and in particular $\bJ_0$ normalizes the group generated by $\skd$ isomorphic to $\psld$, and acts by conjugation by the matrix
 $\left(\begin{matrix}1&0\\ 0&-1\end{matrix}\right)$
 \end{itemize}
 
 An example of a reflexion in the case of complex group is  $\bJ_0\defeq\exp\left(\frac{i\zeta a}{2}\right)\in {\G}$, where $\zeta$ be the smallest non zero real number so that $\exp(i\zeta a)=1$. It follows a reflexion always exists (by passing to the complexified group) but is not necessarily an element of $\mathcal G$.

\begin{definition}{\sc [Flip assumption]}\label{def:flip}
We say that that  the ${\skd}$-triple $\slt=(a,x,y)$ in ${\G}$ satisfies the {\em flip assumption} if  $\slt$ is even and   there exists  a reflexion $\bJ_0$ which is an inner automorphism, which belongs to the connected component of the identity of ${\ms Z}({\ms Z}(a))$ of $a$ in $\ms G$.
\end{definition}\index{Flip assumption}
In the regular case, we have a weaker assumption:
\begin{definition}{\sc [Regular flip assumption]}\label{def:flipr}
If the even ${\skd}$-triple $\slt$ is regular, we say that $\slt$ satisfies the {\em  regular flip assumption} if  $\slt$ is even and   there exists  a reflexion $\bJ_0$ which belongs to the connected  component of the identity of ${\ms Z}(a)$ in $\G$.
\end{definition}\index{Flip assumption}
The flip assumption for the  ${\skd}$-triple $(a^0,x^0,y^0)$ in $\mk g$ will only be assumed in order to prove the Even Distribution Theorem  \ref{theo:even}.

In paragraph \ref{sec:flip-ex}, we shall give examples of groups and $\slt$-triples satisfying the flip assumption. 

\subsection{Parabolic subgroups  and the flag manifold}\label{sec:para}
We recall standard facts about parabolic subgroups in real semi-simple Lie groups, for references see \cite[Chapter VIII, Â§3, paragraphs 4 and 5]{Bourbaki:owAvyv1m}
\subsubsection{Parabolic subgroups, flag manifolds, transverse flags} 
Let  $\slt=(a,x,y)$ be an $\skd$-triple. Let $\mk g^\lambda$ be the eigenspace associated to the eigenvalue $\lambda$ for the adjoint action of $a$ and let  
$
\mk p=\bigoplus_{\lambda\geq 0}\mk g^\lambda
$.
Let $\ms P$ be the normalizer of $\mk p$\index{$\ms P$}\index{Parabolic subgroup}. By construction,  $\ms P$ is a {\em parabolic subgroup} and its Lie algebra is $\mk p$. 

The  associated {\em flag manifold} is the set $\gp$ \index{$\gp$}\index{Flag Manifold} of all Lie subalgebras of $\mk g$ conjugate to $\mk p$. By construction, the choice of an element of $\gp$ identifies $\gp$ with ${\G}/\ms P$. The group ${\G}$ acts transitively on $\gp$ and the stabilizer of a point -- or {\em flag} -- $x$ (denoted by 
$\ms P_x$) is a  parabolic subgroup. 

Given $a$, let now $\mk q=\bigoplus_{\lambda\leq 0}\mk g^\lambda$. 
By definition, the normalizer $\ms Q$ of $\mk q$ is the {\em opposite parabolic} to $\ms P$ with respect to $a$. Since in $\sld$, $a$ is conjugate to $-a$, it follows that in this special case opposite parabolic subgroups are conjugate.

Two points $x$ and $y$ of $\gp$ are {\em transverse}\index{Transverse flags} if their stabilizers are opposite parabolic subgroups. Then the stabilizer $\ms L$ of the transverse pair of points is the intersection of two opposite parabolic subgroups, in particular its Lie algebra is $\mk g_{\lambda_0}$, for the eigenvalue $\lambda_0=0$. Moreover, $\ms L$ is the Levi part of $\ms P$. 

\begin{proposition}\label{pro:Levi}
The group $\ms L$ is the centralizer of $a$.	
\end{proposition}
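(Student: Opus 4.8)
The plan is to show the two inclusions $\ms L \subseteq \ms Z(a)$ and $\ms Z(a) \subseteq \ms L$, where $\ms L$ is the stabilizer of a transverse pair $(x_0, y_0)$ with $x_0$ the flag of $\mk p$ and $y_0$ the flag of $\mk q$. We have already noted in the preceding discussion that the Lie algebra of $\ms L$ is $\mk g^0$, the zero eigenspace of $\ad(a)$, which is precisely the Lie algebra of the centralizer $\ms Z(a)$. So at the level of Lie algebras there is nothing to do, and the content of the proposition is the statement about the (possibly disconnected) groups, plus the identification of $\ms L$ as \emph{the} Levi factor.

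First I would prove $\ms Z(a) \subseteq \ms L$. If $g$ centralizes $a$, then $\Ad(g)$ preserves each eigenspace $\mk g^\lambda$ of $\ad(a)$; in particular it preserves $\mk p = \bigoplus_{\lambda \geq 0}\mk g^\lambda$ and $\mk q = \bigoplus_{\lambda \leq 0}\mk g^\lambda$. Hence $g$ normalizes both $\mk p$ and $\mk q$, i.e. $g \in \ms P \cap \ms Q = \ms L$. This direction is immediate.

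For the reverse inclusion $\ms L \subseteq \ms Z(a)$, I would argue as follows. Let $g \in \ms L = \ms P \cap \ms Q$. Then $\Ad(g)$ preserves both $\mk p$ and $\mk q$, hence preserves $\mk p \cap \mk q = \mk g^0$ and, more relevantly, the nilradicals $\mk n^+ = \bigoplus_{\lambda > 0}\mk g^\lambda$ of $\mk p$ and $\mk n^- = \bigoplus_{\lambda < 0}\mk g^\lambda$ of $\mk q$. Now $a$ is characterized inside $\mk g^0$ by the property that $\ad(a)$ acts on $\mk n^+$ with all eigenvalues $\geq 2$ (in fact it is the grading element of the parabolic, and in our situation $\skd$-theory gives that $a$ is the unique semisimple element of $\mk g^0$ acting as the integer $\lambda$ on each $\mk g^\lambda$ — or, in the non-even case, one works with the grading directly). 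Since $\Ad(g)$ preserves the grading $\mk g = \bigoplus_\lambda \mk g^\lambda$ (it fixes the decomposition because it preserves the flags, equivalently the associated filtration and its opposite), $\Ad(g)$ commutes with the grading element $a$; therefore $\Ad(g)(a) = a$, and since $\ms G$ is center free, $g \in \ms Z(a)$. Finally, $\ms L = \ms P \cap \ms Q$ is by definition (or by the cited standard references on parabolic subgroups, e.g. \cite[Ch.~VIII]{Bourbaki:owAvyv1m}) a Levi factor of $\ms P$, since $\ms P = \ms L \ltimes \ms N^+$ with $\ms N^+$ the unipotent radical.

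The only mild subtlety — the step I expect to require the most care — is justifying that an element of $\ms L$, a priori only known to normalize $\mk p$ and $\mk q$, actually preserves the grading $\bigoplus_\lambda \mk g^\lambda$ and hence fixes $a$ on the nose rather than up to an element of $\exp(\mk n^+)\exp(\mk n^-)$. This is handled by observing that $\mk g^\lambda = \bigl(\bigoplus_{\mu \geq \lambda}\mk g^\mu\bigr) \cap \bigl(\bigoplus_{\mu \leq \lambda}\mk g^\mu\bigr)$, i.e. the grading is recovered as the intersection of the filtration coming from $\mk p$ with the opposite filtration coming from $\mk q$, both of which are $\Ad(g)$-stable for $g \in \ms L$. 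Once the grading is preserved, $a$ — being the unique element of $\mk g^0$ whose $\ad$ acts as scalar $\lambda$ on $\mk g^\lambda$ — is fixed, and we are done.
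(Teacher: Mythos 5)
Your easy direction $\ms Z(a)\subseteq\ms L$ matches the paper and is fine. The paper handles the reverse inclusion differently: it verifies the claim explicitly for $\ms{SL}(m,\mathbb R)$, where $\ms L$ is visibly the block-diagonal subgroup stabilizing the eigenspace decomposition of $\mathbb R^m$ under $a$, and $a$ acts as a scalar on each block so block-diagonal elements centralize it; the general case is then reduced to this via a faithful linear representation. You instead try to argue intrinsically inside $\mk g$.

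The intrinsic route has a genuine circularity at precisely the point you flag. From $g\in\ms L = \ms P\cap\ms Q$ one directly obtains that $\Ad(g)$ stabilizes $\mk p$, $\mk q$, and hence $\mk g^0=\mk p\cap\mk q$, $\mk n^+$ (the nilradical of $\mk p$) and $\mk n^-$. That is, the \emph{coarse} filtration steps at $\lambda=0,1$ are preserved. Your resolution of the subtle step is to write $\mk g^\lambda=\bigl(\bigoplus_{\mu\geq\lambda}\mk g^\mu\bigr)\cap\bigl(\bigoplus_{\mu\leq\lambda}\mk g^\mu\bigr)$ and declare that both filtrations are $\Ad(g)$-stable. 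But for $|\lambda|\geq 2$ this stability is not a consequence of $g$ normalizing $\mk p$ and $\mk q$: the space $\bigoplus_{\mu\geq\lambda}\mk g^\mu$ is the $\lambda$-step of the eigenvalue filtration of $\ad(a)$, and asserting that $\Ad(g)$ preserves it for every $\lambda$ is \emph{equivalent} to asserting that $\Ad(g)$ commutes with $\ad(a)$ — i.e.\ $\Ad(g)(a)=a$, which is the conclusion. So the observation you offer as the fix presupposes what it is meant to prove. The danger is concrete: a Lie-algebra automorphism of $\mk n^+$ that preserves $\mk n^+$ as a whole need not respect the internal grading (think of a unipotent automorphism of a Heisenberg algebra). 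What rules this out for $g\in\ms L$ is the extra constraint of also stabilizing $\mk n^-$ together with invariance of the Killing pairing, or, more cleanly, the structure-theoretic fact that $\ms P\cap\ms Q$ is the centralizer of the full split torus $\ms A\subseteq\ms Z(\ms L)^\circ$ (not just of the one element $a$), combined with the observation that $a\in\operatorname{Lie}(\ms A)$. Either of these would close the gap, but your write-up supplies neither; as it stands the step is assumed rather than proved. The paper's passage to $\ms{SL}(V)$ avoids the issue because there the parabolic is literally the stabilizer of a flag of subspaces of $V$, and the Levi is transparently the stabilizer of the eigenspace decomposition on which $a$ is blockwise scalar.
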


\begin{proof} Obviously  $Z(a)$ and $L$ have the same Lie algebra and $\ms Z(a)\subset L$. When $\ms G=\ms{SL}(m,\mathbb R)$ the result follows from the explicit description of $\ms L$ as block diagonal group. In general, it is enough to consider a faithful linear representation of $\ms G$ to get the result.
\end{proof}

\subsubsection{Loxodromic elements}
We say that an element in ${\G}$ is {\em $\ms P$-loxodromic}\index{Loxodromic}, if it has one attractive fixed point and one repulsive fixed point in $\gp$ and these two points are transverse.
We will denote by  $\lambda^-$ the repulsive fixed point of the 
  loxodromic element $\lambda$ and by $\lambda^+$ its attractive fixed point in $\gp$. By construction, for any non zero real number $s$, $\exp(sa)$ is a loxodromic element.

\subsubsection{Weyl chamber}
Let  $\ms C=\ms Z(\ms L)$ be the centralizer of $\ms L$. Since the 1-parameter subgroup generated by $a$ belongs to $\ms L=\ms Z(a)$, it follows that $\ms C\subset \ms L$ and $\ms C$ is an abelian group. Let $\ms A$ be the (connected) split torus in $\ms C$. We now decompose $\mk p^+$ and $\mk p^-$  under the adjoint action of $\ms A$ as 
$
\mk p^\pm  =\bigoplus_{\lambda \in R^\pm }\mk p^\lambda,
$
where $R^+,R^-\subset \ms A^*$, and $\ms A$ acts on $\mk p^\lambda$ by the weight $\lambda$.
The {\em positive Weyl chamber} is 
$$
W=\{b\in \ms A\mid \lambda(b)>0 \hbox{ if }\lambda\in R^+\}\subset \ms A\,.
$$
Observe that $W$ is an open cone that contains $a$.

\section{Tripods and  perfect triangles}\label{sec:tripods}

We define here {\em tripods} which are going to be one of the main tools of the proof. The first definition is not very geometric but we will give more flesh to it.

Namely, we will associate to a tripod a {\em perfect triangle} that is a  certain type of triple of points in $\gp$. We will define various actions and dynamics on the space of tripods. We will also associate to every tripod two important objects in $\gp$: a {\em circle} (a certain class of embedding of $\bf P^1(\mathbb R)$ in $\gp$) as well as a metric on $\gp$.

\subsection{Tripods}\label{subsec:tripods}

Let ${\G}$ be a semi-simple Lie group with trivial center and  Lie algebra $\mk g$. 
Let us fix  a group ${\G}_0$ isomorphic to ${\G}$.

\begin{definition}{\sc [Tripod]}
A {\em tripod} is an isomorphism from ${\G}_0$ to ${\G}$.
\end{definition}\index{Tripod}

So far the terminology  ``tripod'' is baffling. We will explain in the next section how tripods are related to triples of points in a flag manifolds.

We denote by $\mc G$\index{$\mc G$} the {\em space of tripods}. To be more concrete, when one chooses $G_0:=\ms {SL}_n(\mathbb R)$ in the case  of $G=\ms {SL}(V)$, the space of tripods is exactly the set of frames. The space of tripods $\mc G$ is a left principal  ${\Aut (\G)}$-torsor as well a right principal  $\Aut (\G_0)$-torsor where the actions are defined respectively by post-composition and pre-composition. These two actions commute.

\subsubsection{Connected components}

Let us fix a tripod $\xi_0\in\mc G$, that is an isomorphism $\xi_0:\ms  G_0\to {\G}$. Then the map  defined from ${\G}$ to $\mc G$ defined by $g\mapsto g\cdotp \xi_0\cdotp g^{-1}$, realizes an isomorphism from ${\G}$ to the connected component of $\mc G$ containing $\xi_0$. Obviously $\Aut ({\G})$ acts transitively on $\mc G$. We thus obtain

\begin{proposition}
	Every connected component of $\mc G$ is identified (as a ${\G}$-torsor) with ${\G}$. Moreover, the number of connected components of $\mc G$ is equal to the cardinality of $\operatorname{Out}({\G})$.
\end{proposition}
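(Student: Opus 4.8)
The proposition states two things: (i) every connected component of $\mc G$ is isomorphic as a ${\G}$-torsor to ${\G}$ itself, and (ii) the number of connected components of $\mc G$ equals $|\operatorname{Out}({\G})|$. The plan is to exploit the torsor structures already set up in the preceding paragraph, namely that $\mc G$ is a left principal $\Aut({\G})$-torsor (via post-composition) and a right principal $\Aut({\G}_0)$-torsor (via pre-composition), together with the identification of ${\G}$ with its inner automorphism group $\operatorname{Inn}({\G})$, which holds because ${\G}$ is center-free.

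First I would fix a base tripod $\xi_0 \colon {\G}_0 \to {\G}$ and consider the orbit map $\Phi \colon g \mapsto g \cdotp \xi_0 \cdotp g^{-1}$ from ${\G}$ to $\mc G$, where $g$ acts on the left by the inner automorphism $\operatorname{Inn}(g)$ of ${\G}$ and on the right by the inner automorphism of ${\G}_0$ transported through $\xi_0$. Since ${\G}$ is connected, the image lies in the connected component $\mc G_0$ of $\xi_0$. I would check $\Phi$ is injective: if $g\xi_0 g^{-1} = \xi_0$ then $g \in \ms Z({\G})$ (using that $\xi_0$ is an isomorphism, conjugation by $g$ on the image corresponds to conjugation by $\xi_0^{-1}(g)$, forcing $\xi_0^{-1}(g)$, hence $g$, central), and center-freeness gives $g = \id$. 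For surjectivity onto $\mc G_0$: the component $\mc G_0$ is a single orbit under the connected component $\operatorname{Aut}({\G})^\circ = \operatorname{Inn}({\G}) \cong {\G}$ acting on the left, and this left action is already realized by the map $\Phi$ up to the simultaneous right twist — more carefully, $\operatorname{Inn}({\G})$ acts transitively and freely on $\mc G_0$ because $\mc G$ is a left $\operatorname{Aut}({\G})$-torsor, so $\mc G_0$ is a left $\operatorname{Inn}({\G})$-torsor, i.e. a ${\G}$-torsor. That proves (i); the explicit map $\Phi$ exhibits the torsor isomorphism after choosing the basepoint $\xi_0$.

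For (ii), since $\mc G$ is a left principal $\Aut({\G})$-torsor, its set of connected components $\pi_0(\mc G)$ is a torsor under $\pi_0(\Aut({\G})) = \Aut({\G})/\operatorname{Inn}({\G}) = \operatorname{Out}({\G})$ — here I use that $\Aut({\G})$ of a semisimple Lie group has identity component exactly $\operatorname{Inn}({\G}) \cong {\G}$ (again center-freeness), so $\pi_0(\Aut({\G})) \cong \operatorname{Out}({\G})$. A torsor under a finite group has cardinality equal to that of the group, provided it is nonempty, and $\mc G \ne \emptyset$ since the identity-type isomorphism exists once ${\G}_0 \cong {\G}$ is chosen. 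Hence $|\pi_0(\mc G)| = |\operatorname{Out}({\G})|$.

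The only genuine subtlety — and the step I would treat most carefully — is the identity $\operatorname{Aut}({\G})^\circ = \operatorname{Inn}({\G})$ with $\operatorname{Inn}({\G}) \cong {\G}$; the first equality is the classical fact that for a semisimple Lie group the inner automorphisms form the identity component of the automorphism group, and the isomorphism $\operatorname{Inn}({\G}) \cong {\G}$ is precisely where the center-free hypothesis enters. Everything else is formal torsor bookkeeping: a nonempty torsor under a group $H$ has, after a basepoint choice, an $H$-equivariant bijection with $H$, and $\pi_0$ of a torsor under a Lie group $H$ is a torsor under $\pi_0(H)$. I would state these two torsor facts once and apply them to the left $\Aut({\G})$-action, deducing both claims simultaneously.
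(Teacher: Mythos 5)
Your proof is correct and follows essentially the same route as the paper's (terse) argument: fix a base tripod $\xi_0$, observe that $g\mapsto \operatorname{Inn}(g)\circ\xi_0$ identifies ${\G}$ with the component of $\xi_0$ (injective by center-freeness, surjective because $\Aut({\G})^\circ=\operatorname{Inn}({\G})$ for a semisimple group), and that $\pi_0(\mc G)$ is a torsor under $\pi_0(\Aut({\G}))=\operatorname{Out}({\G})$. One small reading note: the expression $g\cdotp\xi_0\cdotp g^{-1}$ just means $x\mapsto g\,\xi_0(x)\,g^{-1}$, i.e.\ post-composition with $\operatorname{Inn}(g)$; your parenthetical about $g$ simultaneously acting on the ${\G}_0$ side via $\xi_0^{-1}$ would, taken literally, cancel and make the map constant, but your subsequent torsor argument correctly uses only the left $\operatorname{Inn}({\G})$-action, so nothing in the substance is affected.
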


\subsubsection{Correct $\msl$-triples and circles} \label{def:groups}

Throughout this paper, we fix an ${\skd}$-triple $\slt_0=(a_0,x_0,y_0)$ in $\mk g_0$. \index{$\slt_0$} Let $\car_0$ be a  Cartan involution  that extends the standard Cartan involution of $\sld$, that is so that 
\begin{eqnarray}
	\car_0 (a_0,x_0,y_0)&=&(-a_0,y_0,x_0)\label{eq:xychev}\ .
\end{eqnarray}
 Let then 
 \begin{itemize}
 \item $\ms S_0$ be the connected subgroup of ${\G}_0$ whose Lie algebra is generated by $\slt_0$.\index{$\ms S_0$
} The group $\ms S_0$ is isomorphic either to $\ms{SL}_2(\mathbb R)$ or $\ms{PSL}_2(\mathbb R)$. \item   $\ms Z_0$ be  the centralizer of $(a_0,x_0,y_0)$ in ${\G}_0$,\index{$\ms Z_0$}
 \item $\ms L_0$\index{$\ms L_0$}  be the centralizer of $a_0$,
 \item   $\ms P^+_0$ be  the parabolic subgroup associated to $a_0$ 
 in ${\G}_0$ and $\ms P^-_0$ the opposite parabolic 
 \item  $\ms N_0^\pm$ be the respective unipotent radicals of  $\ms P^\pm_0$.
 \end{itemize}

 \begin{definition}\label{def:corr-triple}{\sc [Correct $\msl$-triples]}
 	A {\em correct $\msl$-triple}\index{Correct $\msl$-triple} --with respect to the choice of $\slt_0$ -- is the image of $\slt_0$ by a tripod $\tau$. The space of correct $\msl$-triple forms an orbit under the action of $\Aut (\G)$ on the space of conjugacy classes of $\msl$-triples.

\end{definition}
 
 A correct $\msl$-triple $\slt$ is thus identified with an embedding $\xi^\slt$\index{$\xi^\slt$} of $\slt_0$ in ${\G}$ in a given orbit of $\Aut (\G)$.  
 \begin{definition}{\sc [Circles]}
 The {\em circle map}\index{Circle map, c
 Circle} associated to the correct $\msl$-triple $\slt$ is the unique  $\xi^\slt$-equivariant map $\phi^\slt$ from $\bf{P}^1(\mathbb R)$ to $\gp$. The image of a circle map is a {\em circle}.\label{def:circle}	
 \end{definition}
Since we can associate a correct $\skd$-triple to a tripod, we can associate a circle map to a tripod.

We define a right $\sld$-action on $\mc G$  by restricting the ${\G}_0$ action to $\ms S_0$.

\begin{definition}{\sc [Coplanar]}
	Two tripods are {\em coplanar} if they belong to the same $\sld$-orbit.
\end{definition}\index{Coplanar tripods}

\subsection{Tripods and perfect triangles of flags}
This paragraph will justify our terminology. We introduce perfect triangles which generalize ideal triangles in the hyperbolic plane and relate them to tripods.
\begin{definition}{\sc [Perfect triangle]}
Let $\slt=(a,x,y)$ be a correct $\msl$-triple.
The associated {\em perfect triangle} is 	the triple of flags $t_\slt\defeq(t^-,t^+,t^0)$  which are  the attractive fixed points of the 1-parameter subgroups generated respectively by  $-a$,  $a$ and  $a+2y$.
We denote by $\mc T$ the space of perfect triangles.\index{$\mc T$}
\end{definition}\index{Perfect triangle}

We represent in Figure
(\ref{fig:Trip}) graphically a perfect triangle $(t^-,t^+,t^0)$ as a triangle
whose vertices are $(t^-,t^+,t^0)$ with an arrow from $t^-$ to
$t^+$.

\begin{figure}[h] 
  \begin{center}
    \includegraphics[width=1.5in]{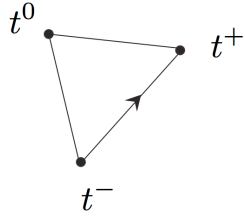}
    \caption{A perfect triangle}\label{fig:Trip}
  \end{center}
 
\end{figure} 
If ${\G}=\sld$, then the perfect triangle associated to the standard $\msl$-triple $(a_0,x_0,y_0)$ described in equation \eqref{def:matrix}  is $(0,\infty,1)$, the perfect triangle associated to $(a_0,-x_0,-y_0)$ is $(0,\infty,-1)$. As a consequence 

\begin{definition}{\sc [Vertices of a tripod]}
 Let $\phi_\tau$ be the circle map associated to a tripod. The {\em set of vertices}   associated to $\tau$  is the perfect triangle  $\partial\tau\defeq\phi_\tau(0,\infty,1)$.\index{Vertices of a tripod}\index{$\partial\tau$}
 \end{definition}
 Observe that any triple of distinct points in a circle is a perfect triangle and that, if two tripods are coplanar, their vertices lie in the same circle.

\subsubsection{Space of perfect triangles}

The group  ${\G}$ acts on the space of tripods, the space of $\msl$-triples and the space of perfect triangles.  

\begin{proposition}{\sc [Stabilizer of a perfect triangle]} 
Let $t=(u,v,w)$ be a  perfect triangle associated to a correct  $\skd$-triple $\slt$. Then the stabilizer of $t$ in ${\G}$ is  the centralizer $\ms Z_\mk s$ of $\mk s$.
\end{proposition}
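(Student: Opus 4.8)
The plan is to reduce the statement to a computation in $\ms{SL}_2(\mathbb R)$ (or $\ms{PSL}_2(\mathbb R)$), then transport it by equivariance. First I would observe that, since the perfect triangle $t=(u,v,w)$ associated to a correct $\skd$-triple $\slt=(a,x,y)$ is defined purely in terms of $\slt$ (as the attractive fixed points of the one-parameter groups generated by $-a$, $a$, and $a+2y$), any automorphism of $\ms G$ fixing $\slt$ pointwise fixes $t$; in particular $\ms Z_{\mk s}$, the centralizer of the subalgebra $\mk s$ generated by $\slt$ (equivalently, by Proposition~\ref{pro:Levi}-type reasoning, the centralizer of the triple), stabilizes $t$. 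This gives the inclusion $\ms Z_{\mk s}\subset \operatorname{Stab}_{\ms G}(t)$ for free.

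For the reverse inclusion, I would use the fact that a correct $\skd$-triple is the image of $\slt_0$ by a tripod, so after translating by an element of $\ms G$ it suffices to treat the standard triple. Let $g\in\ms G$ stabilize $t=(u,v,w)$. The stabilizer of the transverse pair $(u,v)=(t^-,t^+)$ is the Levi $\ms L=\ms Z(a)$ (this is exactly the transverse-pair stabilizer computation recalled in \S\ref{sec:para}, together with Proposition~\ref{pro:Levi}). So $g\in\ms Z(a)$, and moreover $g$ fixes the third point $w=t^0$, the attractive fixed point of $\exp(s(a+2y))$. Now within $\ms Z(a)$, conjugation preserves the grading of $\mk g$ by $a$-eigenvalues, and I would compute how $\ms Z(a)$ acts on the relevant eigenspaces to pin down $w$: the flag $w$ is determined by the nilpotent $x$ (it is the ``opposite'' flag moved by the unipotent $\exp(y)$ applied to $v$, or directly the flag whose parabolic contains $a+2y$ in its nilradical-plus-Levi decomposition). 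Concretely, in $\ms{SL}_2$ one checks $(t^-,t^+,t^0)=(0,\infty,1)$ and the subgroup of the diagonal torus fixing $1\in\mathbf P^1(\mathbb R)$ is trivial (in $\ms{PSL}_2$) — so $g$ centralizes the split part; then one must also see $g$ centralizes $x$ and $y$, not merely $a$.

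The main obstacle is this last point: showing that an element of $\ms Z(a)$ that fixes the third vertex $w$ must centralize the whole triple, i.e. lie in $\ms Z_{\mk s}$ rather than in some strictly larger subgroup of $\ms L=\ms Z(a)$. The clean way I would argue is: $g\in\ms Z(a)$ acts on the $2$-eigenspace $\mk g^2$ and on the $(-2)$-eigenspace $\mk g^{-2}$; the condition that $g$ fixes $w$, the attractive fixed point of $a+2y$, forces $\Ad(g)$ to fix the line (or more precisely the flag-data) spanned by the $\mk g^{-2}$-component of $y$, and combined with $\Ad(g)y$ still being an $\mathfrak{sl}_2$-lowering element paired correctly with $a$, one deduces $\Ad(g)y=y$; then $x=[a,y]$-type relations (or rather $x$ recovered from $y$ and the triple relations, since $\car_0 y = x$ and $g$ can be taken to commute with the Cartan involution fixing the maximal compact) give $\Ad(g)x=x$. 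Hence $g\in\ms Z_{\mk s}$. I would first dispatch $\ms G=\ms{SL}(m,\mathbb R)$ by the explicit block description (as in the proof of Proposition~\ref{pro:Levi}) and then reduce the general case to that one via a faithful linear representation of $\ms G$, which is exactly the template already used in this excerpt. The remaining steps — verifying $(0,\infty,1)$ for the standard triple and that the flip/evenness hypotheses are not needed here — are routine.
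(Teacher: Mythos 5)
Your proposal diverges from the paper's argument and has a genuine gap at exactly the point you flag as the ``main obstacle.'' The paper's proof does not do any eigenspace analysis of $\ms Z(a)$ at all. It observes that for \emph{each} of the three pairs $(u,v)$, $(v,w)$, $(u,w)$, the stabilizer of that transverse pair is the centralizer of the corresponding $1$-parameter subgroup $A_{u,v}$, $A_{v,w}$, $A_{u,w}$ of $\ms S$ (by Proposition~\ref{pro:Levi}). Hence the stabilizer of the whole triangle is the centralizer of the group generated by all three of these tori, and that group is $\ms S$ itself (three pairwise-distinct diagonal subgroups in $\sld$ generate $\sld$; two do not --- you only get a Borel). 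So the stabilizer is $\ms Z_{\mk s}$, with no need to analyze how $\ms Z(a)$ acts on $\mk g^{\pm 2}$.

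Your alternative --- use only the pair $(u,v)$ to get $g\in\ms Z(a)$, then argue that fixing $w$ forces $\Ad(g)y=y$ --- does not close as stated. From $g\in\ms Z(a)$ fixing $w$ you may conclude that $\Ad(g)$ stabilizes the parabolic subalgebra $\mk p_w$ attached to $a+2y$, and that $a+2\Ad(g)y$ has the same attractive fixed flag as $a+2y$; but that does not by itself give $\Ad(g)y=y$, and you offer no actual argument. The appeal to the Cartan involution is also unsound: you cannot ``take $g$ to commute with $\car_0$'' --- that would follow \emph{if} $g\in\ms Z_{\mk s}$, which is what you are trying to prove. (Once $\Ad(g)y=y$ is known, the final step $\Ad(g)x=x$ does follow cleanly by $\mathfrak{sl}_2$-module theory, since $\ad(y)$ is injective on $\mk g^2$ --- that part is fine.) The simplest way to repair the gap is to apply the transverse-pair-stabilizer observation to the pair $(v,w)$ as well: $g$ then centralizes both $A_{u,v}$ and $A_{v,w}$, which already forces $g$ to centralize the Borel generated by $a$ and $x$; then the third pair $(u,w)$ gives the full $\sld$. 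But that is precisely the paper's argument, and it renders the eigenspace discussion and the reduction to $\ms{SL}(m,\mathbb R)$ unnecessary.
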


\begin{proof} Let $\xi$, $u$, $v$ and $w$ be as above. Denote by $L_{x,y}$ the stabilizer of a pair of  transverse points $(x,y)$ in $\gp$. Let also $A_{x,y}=L_{x,y}\cap \ms S$, where $\ms S$ is the group generated by $\skd$ . Observe that $A_{x,y}$ is a 1-parameter subgroup. By Proposition \ref{pro:Levi}, $L_{x,y}$ is the centralizer of $A_{x,y}$. Now given three distinct points in the projective line, the group generated by the three diagonal subgroups $A_{u,v}$, $A_{v,w}$ and $A_{u,w}$ is $\sld$. Thus the stabilizer of a perfect triangle  is the centralizer of $\mk s$, that is $\ms Z_\mk s$.
\end{proof}

\begin{corollary}

\begin{enumerate}
	\item The map $\mk s\mapsto t_{\mk s}$ defines a ${\G}$-equivariant  homeomorphism from the space  of correct triples to the space of perfect triangles.
	\item  We have $\mc T=\mc G/\ms Z_0$ and the map $\partial:\mc G\to \mc T$ is a (right) $\ms Z_0$-principal bundle.
\end{enumerate}
	
\end{corollary}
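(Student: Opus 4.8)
The plan is to deduce both parts of the corollary from the preceding proposition, which identifies the stabilizer in $\G$ of a perfect triangle $t_\slt$ with the centralizer $\ms Z_\slt$ of the correct $\skd$-triple $\slt$. First I would establish the $\G$-equivariant bijection in (i). The map $\slt\mapsto t_\slt$ is $\G$-equivariant by construction (the attractive fixed points of $-a$, $a$, $a+2y$ are carried by $g$ to those of $-\Ad(g)a$, $\Ad(g)a$, $\Ad(g)(a+2y)$). Since the correct $\skd$-triples form a single $\Aut(\G)$-orbit, they certainly form a union of $\G$-orbits; restricting to one $\G$-orbit, $\G$ acts transitively on it and, by the proposition, the point stabilizer is $\ms Z_\slt$, which matches the stabilizer of $t_\slt$. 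Hence $\slt\mapsto t_\slt$ descends to a $\G$-equivariant bijection of homogeneous spaces $\G/\ms Z_\slt \to \mc T$, orbit by orbit; that the correct triples and the perfect triangles have the same number of $\G$-orbits is exactly this statement. To upgrade the bijection to a homeomorphism, I would note both spaces are $\G$-spaces with stabilizers closed subgroups, so each orbit is homeomorphic to $\G/\ms Z_\slt$; the global map is continuous (it is given by explicit limits/fixed-point constructions, or one can invoke that an equivariant bijection of $\G$-spaces coming from a common coset-space model is a homeomorphism), and its inverse is continuous for the same reason.

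For (ii), recall from Definition (Vertices of a tripod) that $\partial\tau = \phi_\tau(0,\infty,1)$, and that the circle map $\phi_\tau$ is built from the correct $\skd$-triple $\slt(\tau)$ which is the image of $\slt_0$ under $\tau$. Thus $\partial\tau$ depends only on $\slt(\tau)$, i.e. $\partial$ factors as $\mc G \to \{\text{correct }\skd\text{-triples}\} \xrightarrow{\ \sim\ } \mc T$ by part (i), so it suffices to understand the fibres of $\tau\mapsto\slt(\tau)$. A tripod is an isomorphism $\ms G_0\to\G$; two tripods $\tau,\tau'$ give the same correct triple $\tau(\slt_0)=\tau'(\slt_0)$ iff $\tau^{-1}\tau'$ is an automorphism of $\ms G_0$ fixing $\slt_0$, and since $\ms G_0$ is center free this automorphism is inner, hence conjugation by an element of $\ms Z_0$, the centralizer of $\slt_0$ in $\ms G_0$. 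Therefore the right $\ms Z_0$-action on $\mc G$ (by precomposition) is exactly the fibre structure of $\partial$, giving $\mc T = \mc G/\ms Z_0$. That the action is free is immediate: if $\tau\cdot z = \tau$ then $z = \id$ since $\tau$ is an isomorphism; so $\partial:\mc G\to\mc T$ is a principal $\ms Z_0$-bundle. Local triviality follows because $\ms Z_0$ is a closed (compact, in the regular case) subgroup of the Lie group $\mc G$ acting freely and properly, a standard fact.

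The main obstacle, such as it is, lies in the bookkeeping about connected components: $\mc G$ has $|\operatorname{Out}(\G)|$ components, the correct $\skd$-triples form one $\Aut(\G)$-orbit but possibly several $\G$-orbits, and one must check that $\partial$ and the identification in (i) respect this so that $\mc T$ inherits the right component structure and the claimed quotient descriptions hold componentwise and hence globally. Concretely I would verify that the inner-automorphism argument identifying fibres of $\tau\mapsto\slt(\tau)$ with $\ms Z_0$-orbits is uniform across components (it is, since it only uses that $\ms G_0$ is center free), and that the proposition's stabilizer computation is likewise orbit-independent. Once that is in place, both statements are formal consequences of the orbit–stabilizer correspondence for Lie group actions together with the freeness and properness of the $\ms Z_0$-action.
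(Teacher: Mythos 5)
The paper states this corollary without proof, so there is no argument to compare against; on its own merits, your argument has a genuine gap in part (ii), and a circular step in part (i).

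The central problem is the sentence ``since $\ms G_0$ is center free this automorphism is inner.'' Center-freeness only tells you that the adjoint map $\ms G_0\to\operatorname{Inn}(\ms G_0)$ is injective; it says nothing about $\operatorname{Out}(\ms G_0)$, which is typically nontrivial for the groups under consideration. In fact an outer automorphism can perfectly well fix $\slt_0$ pointwise. For a complex simple group with a principal $\skd$-triple, the diagram automorphisms (in a Chevalley basis) fix $(a_0,x_0,y_0)$; similarly for $\ms{SL}_n(\mathbb R)$, $n\geq 3$, one can compose $X\mapsto -X^T$ with an inner automorphism to produce an outer automorphism $\sigma$ of $\ms G_0$ with $\sigma(\slt_0)=\slt_0$. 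Whenever such a $\sigma$ exists, the tripods $\tau$ and $\tau\circ\sigma$ satisfy $\partial\tau=\partial(\tau\circ\sigma)$ but lie in different right $\ms Z_0$-orbits (indeed in different connected components of $\mc G$, since $\sigma$ is outer), so the claimed equality of the fiber of $\partial$ with a $\ms Z_0$-orbit does not follow from the reasoning you give. What the argument actually requires is the stronger statement that the stabilizer of $\slt_0$ in the full group $\Aut(\ms G_0)$ coincides with the image of $\ms Z_0$; this is a nontrivial rigidity claim about the chosen $\skd$-triple and must either be proved or carried as a hypothesis.

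There is a second, softer issue in part (i). After matching the stabilizers $\ms Z_\slt$ on both sides, you write that ``the correct triples and the perfect triangles have the same number of $\G$-orbits is exactly this statement,'' which names the conclusion rather than proving it. The stabilizer computation in the proposition gives injectivity of $\slt\mapsto t_\slt$ \emph{within} a single $\G$-orbit, but it does not by itself prevent two distinct $\G$-orbits of correct triples from mapping to the same $\G$-orbit of perfect triangles. Closing this gap requires showing directly that the three flags $(t^-,t^+,t^0)$ recover the correct triple $\slt$ (for example, first recovering $a$ from the pair of opposite parabolics and the conjugacy class of $a_0$, then recovering $y$ from $t^0$), an argument you would need to supply.
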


A perfect triangle $t$, then defines a correct $\skd$-triple and thus an homomorphism denoted 
$\xi^t$ from $\sld$ to ${\G}$.

It will be convenient in the sequel to describe a tripod $\tau$ as a quadruple  $(H,t^-,t^+,t^0)$, where $t=(t^-,t^+,t^0)\eqdef \partial \tau$ is a perfect triangle and $H$ is the set of all tripods coplanar to $\tau$.
We write
$$
\partial \tau=(t^-,t^+,t^0),\ \ \partial^- \tau=t^-,\,\ \partial^+ \tau=t^+,\ \partial^0 \tau=t^0.
$$

\subsection{Structures and actions}\label{sec:act}

We have already described commuting left $\Aut ({\G})$ and right  $\Aut ({\G}_0)$ actions on $\mc G$ and in particular of $\ms G$ and $\ms G_0$.

Since $\ms Z_0$ is the centralizer of $\slt_0$, we also obtain a right action of $\sld$ on $\mc T$, as well as a left ${\G}$-action, commuting together.

We summarize the properties of the actions (and specify some notation) in the following list.

\begin{enumerate}\item Actions of ${\G}$ and ${\G}_0$\begin{enumerate}
 \item \label{act1} the transitive left ${\mathsf G}$-action on $\mc T$ is given -- in the interpretation of triangles -- by $
  g(f_1,f_2,f_3)\defeq (g(f_1),g(f_2),g(f_3))$. 
  Interpreting,  perfect triangles  as morphisms $\xi$ from $\sld$ to ${\G}$ in the class of $\rho$, then 
  $
  (g\cdotp \xi)(x)= g\cdotp\xi(x)\cdotp g^{-1}
  $.
 \item  The (right)-action of an element $b$ of ${\G}_0$ on $\mc G$  is denoted by $R_b$. 
\end{enumerate}
We have the relation 
 $
R_g\cdotp \tau= \tau(g)\cdotp\tau
$.

  \item The right $\sld$-action on $\mc G$ and $\mc T$ gives rises to a flow, an involution and an order 3 symmetry as follows;
\begin{enumerate} 
\item \label{act2} The {\em shearing flow} $\{\varphi_s\}_{s\in\mathbb R}$ \index{$\varphi_s$} is given by $\varphi_s\defeq R_{\exp(sa_0)}$ on $\mc G$ .
   -- See Figure (\ref{fig:psiT}). if we denote by $\xi$ the embedding of $\ms{SL}(2,\mathbb R)$ given by the perfect triangle $t=(t^-,t^+,t^0)$, then
   \begin{eqnarray*}
\varphi_s(H,t^-,t^+,t^0)&\defeq& \left(H,t^-,t^+,\exp(sa)\cdotp t^0)\right)\ ,\end{eqnarray*}
where $a=\T \xi(a^0)$ and $\T f$ denote the tangent map to a map $f$.
We say that $\varphi_R(\tau)$ is {\em $R$-sheared} from $\tau$ \index{Shear}.

\item \label{act4} \index{$\sigma$} The {\em reflection}\index{$\sigma$}  $\sigma:t\mapsto \overline t$ is given on $\mc GG$ by
$ \overline{\tau}=\tau\cdotp\sigma$, 
where $\sigma\in\sld$ is the involution defined by $\sigma(\infty,0,1)=(0,\infty,-1)$. For the point of view of tripods via perfect triangles
$$
\overline{(H,t^+,t^-,t^0)}=(H,t^-,t^+, s^0)\ ,
$$
where $t^-,t^-,t^0,s^0$  form a harmonic division on a circle  -- See Figure (\ref{fig:psiT}). With the same notation the involution on $\mc T$ is given by $
\overline{(t^+,t^-,t^0)}=(t^-,t^+, s^0)
$.
\item \label{act5} The  {\em rotation} $\omega$ of order 3 -- see Figure
  (\ref{fig:omT}) -- is defined on $\mc G$ by
$
\omega(\tau)=\tau\cdotp r_\omega\ .
$
where $r_\omega\in\psld$ is defined by $r_\omega(0,1,\infty)=(1,\infty,0)$. For the point of view of tripods via  perfect triangles
$$
\omega(H,t^-,t^+,t^0)=(H,t^+,t^0,t^-)\ ,
$$
Similarly the action of $\omega$ on $\ms T$ is given by $
\omega(t^-,t^+,t^0)=(t^+,t^0,t^-)
$.
\end{enumerate}

\item \label{act3} Two foliations ${\mc U}^-$ and ${\mc U}^+$ on $\mc G$ and $\mc T$ called respectively the {\em stable}
  and {\em unstable} foliations. \index{Stable and unstable foliations}\index{${\mc U}^-$}
  The leaf of ${\mc U}^\pm$ is  defined as the right orbit of respectively $\ms N^+_0$ and $\ms N^-_0$ (normalized by $Z_0$) and alternatively by 
$$
{\mc U}^\pm_\tau:=\mathsf U^\pm(\tau),
$$
where ${\mathsf U}^\pm(\tau)$ is the unipotent radical of the stabilizer of 
$\partial^\pm \tau$ under the left action of ${\G}$. We also define the {\em central stable} and {\em central
  unstable } foliations by the right actions of respectively $\ms P^\pm_0$ or alternatively by
$$
{\mc U}^{\pm,0}_\tau:={\mathsf U}^{\pm,0}(\tau),
$$ 
where ${\mathsf U}^{\pm,0}(t)$ is the stabilizer of $\partial^\pm \tau$ under the left action of ${\G}$. Observe that ${\mathsf U}^{\pm,0}(t)$ are both conjugate to $\ms P_0$.

\item\label{act-cent} A foliation, called the {\em central foliation}, $\mc L^0$ whose leaves are  the right orbits of $\ms L_0$ on $\mc G$, naturally invariant under the action of the flow $\{\varphi_s\}_{s\in\mathbb R}$. Alternatively, 
$$
{\mc L}^{0}_\tau={\mathsf L}^{0}(\tau),
$$ 
where ${\mathsf L}^0(\tau)$ is the stabilizer in ${\G}$ of $(\partial^+\tau,\partial^-\tau)$.
\end{enumerate}
\begin{figure}[h]
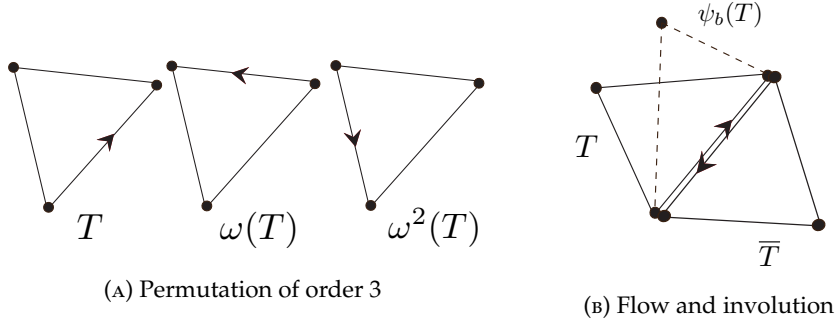

 \centering
\begin{subfigure}[h]{0.6\textwidth}   \begin{center}
    \includegraphics[width=\textwidth]{omegaT.pdf}
    \caption{Permutation of order 3}\label{fig:omT}
  \end{center}
  \end{subfigure}
\quad    
 \begin{subfigure}[h]{0.3\textwidth}
 \centering
  \includegraphics[width=\textwidth]{Tpsib.pdf}
    \caption{Flow and involution}\label{fig:psiT}
 \end{subfigure}  
 \caption{Some actions}
\end{figure}

Then we have
\begin{proposition}\label{pro:bas} \label{gpsi}\label{gomega}\label{phibar} The following properties hold:
  \begin{enumerate}[(i)]
  \item the action of ${\mathsf G}$  commutes with the flow $\{\varphi_s\}_{s\in\mathbb R}$, the involution $\sigma$ and the permutation $\omega$.
\item For any real number $s$ and tripod $\tau$,  $
    \overline{\varphi_s(\tau)}=\varphi_{-s}(\overline \tau)$. 
\item The foliations ${\mc U}^+$ and ${\mc U}^-$ are invariant
  by the left action of ${\mathsf G}$. 
\item Moreover the leaves of ${\mc U}^+$ and
  ${\mc U}^-$ on  $\mc G$ are respectively uniformly contracted (with respect to any left ${\G}$-invariant Riemannian  metric) and dilated by
  the action of $\{R_{\exp(tu)}\}_{t\in\mathbb R}$ for $u$ in the
  interior of the positive Weyl chamber and $t>0$.\label{pro:bas:cont}
\item The flow $\{\varphi_t\}_{t\in\mathbb R}$ acts by isometries along the leaves of  $\mc L_0$. \label{pro:iso-central}
\item We have $\tau\in{\mc U}^{0,-}_{\eta}$  if and only if $\partial^-\tau=\partial^-\eta$. \label{lastpoint}
\end{enumerate}
\end{proposition}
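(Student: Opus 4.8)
\emph{Proof proposal.} Parts (i)--(iii) are formal, and I would dispatch them quickly. For (i): the flow $\varphi_s=R_{\exp(sa_0)}$, the reflection $\sigma$ and the rotation $\omega$ are all defined through the right action of $\ms G_0$ (in fact of $\ms S_0$), and this action commutes with the left $\ms G$-action by construction, hence so does each of them. For (ii): unwinding the definitions, $\overline{\varphi_s(\tau)}$ and $\varphi_{-s}(\overline\tau)$ are obtained from $\tau$ by the right $\ms S_0$-action of $\exp(sa_0)\,\sigma$ and of $\sigma\,\exp(-sa_0)$ respectively, so the claim reduces to the identity $\sigma\exp(ta_0)\sigma^{-1}=\exp(-ta_0)$ in $\ms S_0$, which is immediate from $a_0=\left(\begin{smallmatrix}1&0\\0&-1\end{smallmatrix}\right)$ and $\sigma=\left(\begin{smallmatrix}0&1\\-1&0\end{smallmatrix}\right)$ (the involution with $\sigma(\infty,0,1)=(0,\infty,-1)$). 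For (iii): $\mc U^\pm$ is the partition of $\mc G$ into right $\ms N_0^\pm$-orbits, which the left $\ms G$-action permutes because it commutes with the right $\ms G_0$-action; equivalently, from the description $\mc U^\pm_\tau=\mathsf U^\pm(\tau)\cdot\tau$, with $\mathsf U^\pm(\tau)$ the unipotent radical of $\mathrm{Stab}_{\ms G}(\partial^\pm\tau)$, one invokes the $\ms G$-equivariance of $\partial^\pm$ together with the fact that conjugation preserves unipotent radicals of parabolics.

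Part (iv) carries the content; the plan is to reduce to a single ``model leaf'' and then quote an $\ad$-eigenvalue estimate. First I would fix a base tripod $\xi_0$ and use that each connected component of $\mc G$ is a $\ms G$-torsor: the map $g_0\mapsto\xi_0\cdot g_0$ identifies the component of $\xi_0$ with $\ms G_0$ so that the right $\ms G_0$-action (hence the flow $R_{\exp(tu)}$) becomes right multiplication, the left $\ms G$-action becomes left multiplication through an isomorphism $\ms G\simeq\ms G_0$, and the leaf of $\mc U^\pm$ through $g_0$ becomes the coset $g_0\ms N_0^\pm$. Next, fixing any left-invariant Riemannian metric, coming from an inner product $\langle\cdot,\cdot\rangle$ on $\mk g_0$, I would observe that a tangent vector to the leaf $g_0\ms N_0^\pm$ at a point $g_0 n$ is the image of some $X\in\mk n_0^\pm$ under the differential of left translation by $g_0 n$, and has norm $|X|$; hence each leaf, with its induced metric, is isometric (via $n\mapsto g_0 n$) to $\ms N_0^\pm$ equipped with the left-invariant metric attached to $\langle\cdot,\cdot\rangle|_{\mk n_0^\pm}$, and in this trivialization the flow sends the leaf through $g_0$ onto the leaf through $g_0\exp(tu)$ by the automorphism $n\mapsto\exp(-tu)\,n\,\exp(tu)$ of $\ms N_0^\pm$, with differential (in the left-trivialization) $\Ad(\exp(-tu))|_{\mk n_0^\pm}=e^{-t\,\ad u}|_{\mk n_0^\pm}$. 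Finally, decomposing $\mk n_0^\pm=\bigoplus_{\lambda\in R^\pm}\mk p^\lambda$ under the split torus $\ms A$ and using that $u$ is interior to the positive Weyl chamber (so $\lambda(u)>0$ for $\lambda\in R^+$ and $\lambda(u)<0$ for $\lambda\in R^-$), I would conclude that, for $t>0$, $\bigl\|e^{-t\,\ad u}|_{\mk n_0^+}\bigr\|\leq Ce^{-\delta t}$ and $\bigl\|e^{t\,\ad u}|_{\mk n_0^-}\bigr\|\leq Ce^{-\delta t}$, with $\delta$ any positive number below $\min_{\lambda\in R^+}\lambda(u)$ (the slack absorbing polynomial Jordan terms in case $\ad u$ is not semisimple on these spaces) and $C$ depending only on $u$ and the metric; since none of this depends on the leaf or on the basepoint, one gets uniform exponential contraction of $\mc U^+$ and uniform exponential dilation of $\mc U^-$. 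I expect the only real friction here to be this uniformity: the right $\ms G_0$-action is not isometric, so the reduction to a model leaf must be carried out before the $\Ad$-estimate is invoked.

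For (v): by Proposition~\ref{pro:Levi}, $\ms L_0$ is the centralizer of $a_0$, so $a_0$ is central in its own centralizer algebra $\mk l_0$, and $\exp(ta_0)$ lies in the center of the identity component of $\ms L_0$. The leaf $\mc L^0_\tau$ of the central foliation is the right $\ms L_0$-orbit of $\tau$; $\varphi_t=R_{\exp(ta_0)}$ preserves it and, after the same homogeneity reduction identifying the leaf with $\ms L_0$ carrying the left-invariant metric of $\langle\cdot,\cdot\rangle|_{\mk l_0}$, acts by translation by the central element $\exp(ta_0)$, whose differential in the left-trivialization is $\Ad(\exp(ta_0))|_{\mk l_0}=\id$; hence $\varphi_t$ is an isometry along each leaf of $\mc L^0$. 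For (vi): using $\mc U^{0,-}_\eta=\mathsf U^{0,-}(\eta)\cdot\eta$ with $\mathsf U^{0,-}(\eta)=\mathrm{Stab}_{\ms G}(\partial^-\eta)$, if $\tau\in\mc U^{0,-}_\eta$ then $\tau=g\cdot\eta$ with $g$ fixing $\partial^-\eta$, so $\partial^-\tau=g\cdot\partial^-\eta=\partial^-\eta$; conversely, if $\partial^-\tau=\partial^-\eta$, then — $\tau$ and $\eta$ lying in the connected component containing $\mc U^{0,-}_\eta$ — the unique $g\in\ms G$ with $\tau=g\cdot\eta$ satisfies $g\cdot\partial^-\eta=\partial^-\tau=\partial^-\eta$, so $g\in\mathsf U^{0,-}(\eta)$ and $\tau\in\mathsf U^{0,-}(\eta)\cdot\eta=\mc U^{0,-}_\eta$.
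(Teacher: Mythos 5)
Your proposal is correct and follows the same route the paper takes: formal commutation of right and left actions for (i)--(iii), identification of a connected component of $\mc G$ with $\ms G_0$ so the flow becomes right multiplication and the left-invariant metric picks up a factor of $\Ad(\exp(-tu))$ (then read off the sign of $\lambda(u)$ on the restricted-root decomposition of $\mk n_0^\pm$) for (iv), the centrality of $a_0$ in $\mk l_0$ for (v), and the identification $\mathsf U^{0,-}(\eta)=\mathrm{Stab}_{\ms G}(\partial^-\eta)$ for (vi). The paper gives only a two-line sketch; you have filled in the details faithfully, and your aside about restricting to the connected component of $\eta$ in (vi) is a correct (and silently assumed) hypothesis.
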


\begin{proof} The first three assertions are immediate. 

Let us  choose a tripod  $\tau$ so that $\mc G$ is  identified respectively with ${\G}=\G_0$ . If $d$ is a left invariant metric associated to a norm $\Vert\cdotp\Vert$ on $\mk g$, the image of $d$ under the right action of an element $g$ is associated to the norm $\Vert\cdotp\Vert_g$ so that  
$\Vert u\Vert_g=\Vert \operatorname{ad}(g)\cdotp u\Vert$.  The fourth and fifth assertion follow from that description. 
 
For the last assertion, ${\mc U}^{0,-}_\tau={\mc U}^{0,-}_{\sigma}$, if and only if the stabilizer of $\partial^-\tau$ and $\partial^-\sigma$ are the same. The result follows \end{proof}

\begin{corollary}\label{coro:contractleaf}{\sc[Contracting along leaves]}
	For any left invariant Riemannian metric $d$ on $\G$, there exists a constant $\bM$ only depending on $\ms G$ so that if $\epsilon$ is small enough, then for all positive $R$, the following two properties hold
\begin{eqnarray*}
		 d(u,v)\leq \epsilon\ , \ d(\varphi_R(u),\varphi_R(v))\leq \epsilon 
	&\implies& \forall t\in [0,R], \  d(\varphi_t(u),\varphi_t(v))\leq \bM\epsilon\ , \\
	\partial^-u=\partial^-v, \ d(u,v)\leq \epsilon&\implies&\forall t<0, \  d(\varphi_t(u),\varphi_t(v))\leq \bM\epsilon\ .
\end{eqnarray*}
\end{corollary}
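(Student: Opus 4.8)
The plan is to reduce both statements to the contraction/dilation property of the stable and unstable foliations established in Proposition \ref{pro:bas}\eqref{pro:bas:cont}, combined with the isometry property along central leaves, item \eqref{pro:iso-central}. First I would fix, once and for all, a tripod so that $\mc G$ is identified with $\ms G = \ms G_0$, with the left-invariant metric $d$ coming from a norm $\Vert\cdot\Vert$ on $\mk g$; since all left-invariant metrics are bi-Lipschitz equivalent on the (compact-up-to-scaling) relevant scales, and the constant $\bM$ is allowed to depend on $\ms G$, it is harmless to work with one convenient choice. The key structural fact is that the flow $\{\varphi_t\}$ is $R_{\exp(ta_0)}$, and $a_0$ lies in the closure of the positive Weyl chamber $W$; more precisely, near $a_0$ the dilation estimate of \eqref{pro:bas:cont} holds along $\mc U^-$ and the contraction estimate along $\mc U^+$, while $\varphi_t$ acts by isometries along the central leaves $\mc L^0$. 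Thus, writing the difference between two nearby tripods in the local product coordinates adapted to the decomposition $\mk g = \mk n^- \oplus \mk l_0 \oplus \mk n^+$, the $\mk n^+$-component is exponentially contracted, the $\mk l_0$-component is preserved in size, and the $\mk n^-$-component is exponentially dilated by $\varphi_t$ for $t>0$.

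For the second implication, the hypothesis $\partial^- u = \partial^- v$ means, by Proposition \ref{pro:bas}\eqref{lastpoint}, that $v \in \mc U^{0,-}_u$, i.e. $v = R_q(u)$ for some $q$ in the parabolic $\ms P^-_0$, so the ``difference'' lives only in the $\mk l_0 \oplus \mk n^+$ part. For $t<0$ the flow $\varphi_t$ either preserves (on the central part) or contracts (on the $\mk n^+$ part) this difference: running the flow backwards dilates $\mk n^+$? — no: $\mk n^+$ is the contracted direction for $t>0$, hence the expanded direction for $t<0$, so one must be careful and instead observe that $v\in\mc U^{0,-}_u$ puts the difference in the span of $\mk n^-\oplus\mk l_0$ once we use the correct sign convention of Proposition \ref{pro:bas}\eqref{act3} for $\partial^-$. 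Concretely: I would verify from the definition that the stabilizer of $\partial^-$ is the parabolic whose unipotent radical $\ms U^-$ is \emph{dilated} for $t>0$, so fixing $\partial^-$ forces the difference into the central-stable algebra $\mk p^{-,0}=\mk n^-\oplus\mk l_0$, on which $\varphi_t$ for $t<0$ acts with norm $\le 1$ (isometrically on $\mk l_0$, contracting on $\mk n^-$). A short Grönwall/integration argument along the flow then gives $d(\varphi_t u,\varphi_t v)\le \bM\, d(u,v)$ for all $t<0$, with $\bM$ absorbing the bi-Lipschitz constants between the Riemannian distance and the coordinate norm and the constant in \eqref{pro:bas:cont}.

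For the first implication, the idea is a standard ``hyperbolic contraction between two fixed times'' argument. Decompose $v$ relative to $u$ (for $d(u,v)\le\epsilon$ small, local product coordinates are valid) into stable, central and unstable parts, of sizes $\epsilon^+$, $\epsilon^0$, $\epsilon^-$ respectively, each $\le C\epsilon$. Under $\varphi_t$ the stable part has size $\le e^{-ct}\epsilon^+$, the central part stays $\asymp \epsilon^0$, and the unstable part has size $\asymp e^{ct}\epsilon^-$. At $t=0$ and $t=R$ we are told the total is $\le\epsilon$, hence $\epsilon^-\lesssim e^{-cR}\epsilon$ and $\epsilon^0,\epsilon^+\lesssim\epsilon$; therefore at an intermediate time $t\in[0,R]$ the unstable part is $\lesssim e^{ct}e^{-cR}\epsilon\le\epsilon$, the stable part is $\lesssim\epsilon$, and the central part is $\lesssim\epsilon$, giving $d(\varphi_t u,\varphi_t v)\le \bM\epsilon$. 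One has to be slightly careful that the local product structure remains valid for all $t\in[0,R]$ — this is where the smallness of $\epsilon$ (independent of $R$) is used: since every component stays $\le \bM\epsilon$, as soon as $\bM\epsilon$ is below the injectivity/product-coordinate radius the estimate is self-consistent, a bootstrap that closes because the bound $\bM\epsilon$ does not grow with $R$. The main obstacle, and the only place requiring genuine care, is making this bootstrap and the passage between the intrinsic Riemannian distance $d$ and the linear coordinates on $\mk g$ uniform in $R$ — but this is exactly the content already packaged in Proposition \ref{pro:bas}\eqref{pro:bas:cont} and \eqref{pro:iso-central}, so no new idea is needed beyond organizing these estimates.
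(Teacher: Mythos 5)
Your proof is correct and follows the approach that this corollary is clearly meant to rely on: pass to $\ms G_0$ via a fixed tripod, factor the difference through the triple decomposition $\mk g_0 = \mk n^-_0\oplus\mk l_0\oplus\mk n^+_0$, and use the contraction/dilation/isometry behaviour of $\Ad(\exp(-ta_0))$ on the three summands recorded in Proposition~\ref{pro:bas}~(iv)--(v). A few presentational remarks. First, the mid-paragraph self-correction ($\mk l_0\oplus\mk n^+$ versus $\mk n^-\oplus\mk l_0$) should be removed from a final write-up: once you recall that $\mc U^{0,-}$ is the right orbit of $\ms P^-_0$, whose Lie algebra is $\mk n^-_0\oplus\mk l_0$, the $t<0$ case is a one-liner, since $\Vert\Ad(\exp(-ta_0))\Vert\le 1$ on $\mk n^-_0\oplus\mk l_0$ for every $t\le 0$; no Gr\"onwall argument is needed. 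Second, for the interpolation estimate the ``bootstrap'' framing hides what is actually needed: to pass from $d(\varphi_R u,\varphi_R v)\le\epsilon$ to smallness of the evolved factor $n^-_R=\exp(\Ad(\exp(-Ra_0))\log n^-)$, one should appeal to the fact that the multiplication map $\ms N^-_0\times\ms L_0\times\ms N^+_0\to\ms G_0$ is \emph{injective} onto its (open, dense) image, not merely a local diffeomorphism at the identity; this is what lets you identify the factorisation of $\exp(-Ra_0)g\exp(Ra_0)$ coming from the algebraic conjugation identity with the small-factor decomposition guaranteed by $d(\varphi_R u,\varphi_R v)\le\epsilon$. Once that identification is made, the cleanest statement is simply that $\Vert\Ad(\exp(-ta_0))X^-\Vert$ is non-decreasing in $t$ and $\Vert\Ad(\exp(-ta_0))X^+\Vert$ is non-increasing (since each is a sum of terms $e^{-2t\lambda}|X_\lambda|^2$ of fixed sign of $\lambda$), so each factor norm on $[0,R]$ is dominated by its value at one of the two endpoints; the quoted estimate $e^{ct}e^{-cR}\epsilon\le\epsilon$ with a single rate $c$ is imprecise when the eigenvalues of $\ad(a_0)$ are not all equal, whereas monotonicity is exact and suffices.
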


\subsubsection{A special map}\label{mapK}
\index{$K$}
We consider the map $K$ -- see Figure (\ref{fig:mapK}) -- defined from
$\mc T$ or $\mc G$ to itself by
$$
K(x):={\omega(\overline x)}.
$$
\begin{figure}[h] 
  \begin{center}
    \includegraphics[width=1.5in]{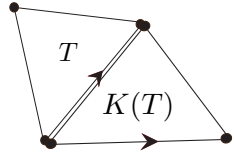}
    \caption{The map $K$}\label{fig:mapK}
\end{center}
   
\end{figure}
Later on, we shall need the following property of this map $K$.

\begin{proposition}\label{pro:Kpreserv}
  For any $(x,y,z)$ in $\mc T$, $K(x,y,z)=(x,t,y)$ for some $t$ in $\gp$.
  The map $K$ preserves each leaf of the foliation ${\mc U}^{0,-}$.
\end{proposition}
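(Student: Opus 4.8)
The plan is to work in the model where the ambient group is $\sld$ itself, since everything in sight is $\ms G$-equivariant, and the map $K$ was built from $\omega$ and $\sigma$ which are right $\sld$-actions (hence commute with the left $\ms G$-action by Proposition \ref{pro:bas}(i)). So it suffices to compute on perfect triangles and, because $\mc T$ is a homogeneous $\ms G$-space and $K$ commutes with the $\ms G$-action, to check the two claims on the single normalized perfect triangle $t_0=(0,\infty,1)$ coming from the standard $\skd$-triple $(a_0,x_0,y_0)$ — the general case follows by transport of structure.

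First I would verify the identity $K(x,y,z)=(x,z,t)$. Unwinding the definitions: $\sigma(t^+,t^-,t^0)=(t^-,t^+,s^0)$ where $(t^-,t^+,t^0,s^0)$ is a harmonic quadruple on the circle through the vertices, so on $t_0$, using that the reflection $\sigma\in\sld$ satisfies $\sigma(\infty,0,1)=(0,\infty,-1)$, I get $\overline{(0,\infty,1)}=(\infty,0,-1)$. Wait — I must be careful about which slot is which; reading off the convention $\overline{(H,t^+,t^-,t^0)}=(H,t^-,t^+,s^0)$, i.e.\ $\sigma$ swaps the first two entries of the triple $(t^-,t^+,t^0)$ and moves $t^0$ to its harmonic conjugate. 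Then $\omega$ cyclically permutes $(t^-,t^+,t^0)\mapsto (t^+,t^0,t^-)$. Composing $K=\omega\circ\sigma$ on a triangle $(x,y,z)$ (written in the $(t^-,t^+,t^0)$-ordering) gives: $\sigma$ sends it to $(y,x,w)$ with $w$ the harmonic conjugate of $z$ with respect to $(x,y)$; then $\omega$ sends that to $(x,w,y)$. So the first entry is preserved and the second entry becomes the old third entry $y$ — matching the claimed form $(x,z,t)$ after renaming. This is a short finite computation with Möbius transformations of $\mathbf P^1(\mathbb R)$; I'd present it cleanly using $\sld$ acting on $\mathbf P^1(\mathbb R)$ by the matrices in \eqref{def:matrix} and the stated action of $\sigma,\omega$ on base triples, and invoke $\ms G$-equivariance of the circle map $\phi^{\slt}$ to pass from $\gp$ back to the $\sld$-picture.

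Second, for the foliation statement: by the description in item \eqref{act3}, a tripod $\tau'$ lies in $\mc U^{0,-}_\tau$ iff $\partial^-\tau'=\partial^-\tau$ (this is exactly Proposition \ref{pro:bas}(vi)/(lastpoint)), i.e.\ the leaf of $\mc U^{0,-}$ through a tripod is determined by, and determines, the single flag $\partial^-$. So "$K$ preserves each leaf of $\mc U^{0,-}$" is equivalent to "$\partial^-\circ K=\partial^-$" on $\mc G$ (equivalently on $\mc T$, since $\partial^-$ factors through $\mc T$, and $K$ on $\mc G$ covers $K$ on $\mc T$ by construction). But $\partial^-$ of a perfect triangle is by definition its first vertex $t^-$, and the previous paragraph showed $K$ fixes the first vertex. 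Hence $\partial^-(K(\tau))=\partial^-(\tau)$, which is the claim. I would phrase the $\mc G$-level statement by noting $K=\omega\circ\sigma$ is a composition of right $\sld$-translations, so it preserves the $\ms U^{\pm,0}$-foliations setwise (they are unions of $\ms P^\pm_0$-orbits, and one checks $\sigma,\omega$ together preserve the relevant coset structure — or, more cheaply, just use that the leaf is cut out by the value of $\partial^-$, which we've already pinned down).

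The only genuine obstacle is bookkeeping: getting the cyclic/transposition conventions for $\omega$ and $\sigma$ consistent with the ordering convention "$(t^-,t^+,t^0)$" and with the definitions $\omega(H,t^-,t^+,t^0)=(H,t^+,t^0,t^-)$ and $\overline{(H,t^+,t^-,t^0)}=(H,t^-,t^+,s^0)$ — a single sign or slot error would break the identity $K(x,y,z)=(x,z,t)$. Once the conventions are fixed and the base-triangle computation in $\mathbf P^1(\mathbb R)$ is done, both assertions are immediate, and the harmonic-conjugate point $s^0$ never needs to be computed explicitly since it only ever occupies the "$t$" slot whose value is unconstrained in the statement.
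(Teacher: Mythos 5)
Your plan (normalize via $\ms G$-equivariance, compute $K$ on a reference triangle in the $\sld$ picture, observe that the first slot is fixed, and invoke Proposition \ref{pro:bas}\eqref{lastpoint} for the foliation claim) is exactly the intended argument; the paper's own proof is the one-line citation of \eqref{lastpoint}, so the content is precisely the base-triangle computation you carry out.

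There is, however, a slot-confusion in your write-up that you should fix and that actually exposes a mismatch with the statement as printed. You correctly derive, from $\overline{(x,y,z)}=(y,x,w)$ and $\omega(a,b,c)=(b,c,a)$, that
$K(x,y,z)=\omega(y,x,w)=(x,w,y)$,
where $w$ is the harmonic conjugate of $z$ relative to $(x,y)$. But then you write that ``the second entry becomes the old third entry $y$'' — wrong: in $(x,w,y)$ the second entry is $w$ and the \emph{third} is $y$ (the old \emph{second} entry). And you conclude that this ``matches the claimed form $(x,z,t)$ after renaming,'' which it does not: the claimed form has the input's third point $z$ in the \emph{second} slot with only the \emph{third} slot free, whereas your computation puts the (determined, and generically $\neq z$) harmonic conjugate $w$ in the second slot and the input's second point $y$ in the third. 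So the harmonic conjugate is \emph{not} safely parked in an ``unconstrained'' slot, contrary to your closing remark. Either the proposition's displayed formula has a slot typo (e.g.\ $(x,t,z)$, or the conventions for $\omega$/$\sigma$ in the paper are inconsistently recorded — there is internal tension in the paper between $\omega(t^-,t^+,t^0)=(t^+,t^0,t^-)$ and the matrix description $R_\omega(0,1,\infty)=(1,\infty,0)$), or you need to re-derive $K$ so that the middle slot comes out as $z$ exactly; you should flag this rather than assert a match.

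That said, the only thing the second assertion actually needs is that the first slot, i.e.\ $\partial^-$, is preserved by $K$, and both your $(x,w,y)$ and the printed $(x,z,t)$ agree on that. Combined with the characterization $\tau\in\mc U^{0,-}_\eta\iff\partial^-\tau=\partial^-\eta$ from \eqref{lastpoint}, the foliation statement follows exactly as you argue, so the essential conclusion of the proposition is established by your proof once the first-slot invariance is stated cleanly and the spurious claim that $(x,w,y)=(x,z,t)$ ``after renaming'' is removed or corrected.
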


\begin{proof}
This follows from the point \eqref{lastpoint} in Proposition \ref{pro:bas}. 	
\end{proof}

\subsection{Tripods, measures  and metrics}\label{sec:tritrimet}
 Let us equip once and for all $\mc G$ with a Riemannian metric $d$ invariant under the left action of $\ms G$, as well as the action of $\omega$.  We will denote by $d_0$ the metric on $\ms G_0$ so that $d(\tau \cdotp  g, \tau \cdotp h)=d_0(g,h)$ for all tripods $\tau$ and observe that $d_0$ is left invariant. The associated Lebesgue measure is now both left invariant by $\Aut(\G)$ and  right invariant by  $\Aut(\G_0)$.

We denote by $\sg$ the symmetric space of ${\G}$ seen as the space of Cartan involutions of $\mk g$. Let us first recall some facts about the totally geodesic space $\sg$.

Let $\ms H$ be a subgroup of $\ms G$. The $\ms H$-orbit of a Cartan involution $i$, so that $i(\mk h)=\mk h$,  is a  totally geodesic subspace of $\operatorname{Sym}(\ms G)$ isometric to $\operatorname{Sym}(\ms H)$ -- we then say {\em of type $\ms H$}.  

Any  two totally geodesic spaces $H_1$ and $H_2$ of the same type are {\em parallel}: that is for all $x_i\in H_i$, $\inf(d(x_i, y)\mid y\in H_{i+1})$ is constant and equal by definition to the distance  $h(H_1, H_2)$. 

The space of parallel totally geodesic subspaces to a given one is isometric to $\operatorname{Sym}(\ms Z)$ if $\ms Z$ is the centralizer of $\ms H$, and in particular reduced to a point if $\ms Z$ is compact.

\subsubsection{Totally geodesic hyperbolic planes}

By assumption \eqref{eq:xychev}, if $\tau $ is a tripod, the Cartan involution
$$\car_\tau\defeq\tau\circ\car_0\circ\tau^{-1}$$
send the correct $\msl$-triple $(a,x,y)$ associated to the tripod $\tau$ to $(-a,y,x)$. It follows that the image of a right $\sld$-orbit gives rise to a totally geodesic embedding of the hyperbolic plane denoted $\eta_\tau$ and that we call {\em correct} and which is equivariant under the action of a correct $\sld$. 

Observe also that a totally geodesic embedding of $\hh$ in $\sg$ is the same thing as a totally geodesic hyperbolic plane $H$ in $\sg$ with three given points in the boundary at infinity in $H$.

Let us consider  $\mc H$\index{$\mc H$} the space of correct totally geodesic maps from $\hh$ to the symmetric space $\sg $.

\begin{proposition}\label{pro:HT}  The space $\mc H$ is equipped with a transitive action of $\Aut (\G)$ and  a right action of $\sld$. 

We have also have  $\sld\times{\G}$ equivariant maps 
	\begin{eqnarray}
	&\mc G\to\mc H\to \mc T,&\cr
	&\tau\mapsto \eta_\tau\mapsto \partial\tau&	
	\end{eqnarray}
	so that the composition is the map $\partial$ which associates to a tripod its vertices. Moreover if the centralizer of the correct $\skd$-triple is compact then $\mc H=\mc T$. \index{$\eta_\tau$}

\end{proposition}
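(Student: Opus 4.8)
\textbf{Proof plan for Proposition \ref{pro:HT}.}
The plan is to unwind the definitions so that all three spaces $\mc G$, $\mc H$, $\mc T$ become homogeneous-type objects attached to a tripod, and then to read off the maps and their equivariance from the corresponding inclusions of stabilizer subgroups. First I would fix a base tripod $\tau_0 \in \mc G$, which by the correctness condition \eqref{eq:xychev} gives a correct $\msl$-triple $(a_0,x_0,y_0)$ together with a Cartan involution $\car_0$ compatible with it. Pushing $\car_0$ forward by an arbitrary tripod $\tau$ produces $\car_\tau = \tau\circ\car_0\circ\tau^{-1}$, and the restriction of $\car_\tau$ to the correct $\sld$ attached to $\tau$ together with the $\sld$-orbit of the associated totally geodesic hyperbolic plane in $\sg$ defines $\eta_\tau \in \mc H$; this is the map $\tau \mapsto \eta_\tau$. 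Composing with the (already constructed) map $\eta_\tau \mapsto \partial\tau$ — i.e. recording the three boundary points of the hyperbolic plane determined by the three distinguished fixed-point flags — must give back $\partial$ by the very definitions of $\partial\tau$ and of the circle map, since the perfect triangle $\partial\tau$ is exactly the triple of boundary flags read off from the correct hyperbolic plane $\eta_\tau$.

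Next I would establish the equivariance. The left $\Aut(\G)$-action on $\mc G$ is by post-composition; it acts on correct $\msl$-triples, hence on correct totally geodesic hyperbolic planes (a Cartan involution conjugate, an $\sld$-orbit conjugate), hence on $\mc H$; and it acts on $\mc T$ by $g\cdotp(f_1,f_2,f_3) = (g f_1, g f_2, g f_3)$. All three maps in the diagram intertwine these actions because conjugating a tripod by $g$ simply conjugates everything attached to it by $g$. The right $\sld$-action is the restriction of the right $\G_0$-action to $\ms S_0$; since $\ms S_0$ commutes with the business of choosing $\car_0$ in the sense that the $\sld$-orbit of a correct hyperbolic plane is well-defined, the right $\sld$-action descends consistently to $\mc H$ and to $\mc T$, and the maps are $\sld$-equivariant. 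Transitivity of $\Aut(\G)$ on $\mc H$ follows from transitivity of $\Aut(\G)$ on $\mc G$ together with surjectivity of $\mc G \to \mc H$.

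Finally, for the last assertion $\mc H = \mc T$ when the centralizer $\ms Z_\mk s$ of the correct $\msl$-triple is compact: by the Corollary to the Stabilizer Proposition, $\mc T = \mc G/\ms Z_0$, so it suffices to identify the stabilizer of $\eta_{\tau_0}$ in the relevant group with the stabilizer of $\partial\tau_0$, i.e. with $\ms Z_{\mk s}$. An element fixing $\eta_{\tau_0}$ must preserve its hyperbolic plane and its three boundary flags, hence fixes the perfect triangle and so lies in $\ms Z_\mk s$; conversely $\ms Z_\mk s$ fixes the correct $\msl$-triple, hence the Cartan involution restricted to $\ms S$ (using that $\car_{\tau_0}$ is the unique Cartan involution with the required behaviour, as the parallel-space argument recalled just above shows the space of parallel totally geodesic hyperbolic planes is a point when $\ms Z_\mk s$ is compact), hence fixes $\eta_{\tau_0}$. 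Therefore $\partial:\mc G\to\mc T$ and $\mc G\to\mc H$ have the same fibers and the induced map $\mc H\to\mc T$ is a bijection, in fact an isomorphism of $\sld\times\G$-spaces. I expect the main obstacle to be the last point: one has to argue carefully that, under the compactness hypothesis, a correct totally geodesic hyperbolic plane in $\sg$ is determined by its triple of boundary flags — this is precisely where the statement about the space of parallel totally geodesic subspaces being $\operatorname{Sym}(\ms Z)$, and hence a point when $\ms Z$ is compact, must be invoked, rather than any soft homogeneity nonsense.
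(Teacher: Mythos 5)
Your proof is correct and follows the same route as the paper: the maps and their $\sld\times\G$-equivariance come directly from the construction of $\car_\tau$ and $\eta_\tau$, and the crucial last assertion is settled exactly as in the paper by invoking that the space of parallel correct totally geodesic hyperbolic planes is $\operatorname{Sym}(\ms Z_\mk s)$, hence a point when $\ms Z_\mk s$ is compact. Your fleshed-out stabilizer argument for $\mc H = \mc T$ is simply a more detailed version of the paper's one-line "all correct parallel hyperbolic planes are identical".
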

\begin{proof} 

We described above that map $\tau\mapsto\eta_\tau$. By construction this map is $\sld\times{\G}$ equivariant.The map $\partial$ from $\mc G$ to $\mc T$ obviously factors through this map.  

If the centralizer of a correct $\sld$ in $\ms G$,  is compact then all correct parallel hyperbolic planes are identical. The result follows.
\end{proof}

From this point of view, a tripod $\tau$ defines 
\begin{enumerate}
	\item A totally geodesic hyperbolic plane $\hh_\tau$\index{$\hh_\tau$} in $S({\G})$, with three preferred points denoted   $\tau(0),\tau(\infty),\tau(1)$ in $\partial_\infty\hh_\tau$,
	\item An $\sld$-equivariant map $\phi^\tau$ from  $\partial_\infty\hh_\tau$ to $\gp$, so that
 $$\phi^\tau\left((\tau(0),\tau(\infty),\tau(1)\right)=\partial \tau.$$
\end{enumerate}

\subsubsection{Metrics, cones, and projection on the symmetric space}
\begin{definition}{\sc [Projection and metrics]}
	 We define the {\em projection} from $\mc G$ to $\sg$ to be the map
	 $$
	 s:\tau\mapsto s(\tau)\defeq\eta_\tau(i)\ .
	 $$\index{$s(\tau)$}
	 In other words, $s(\tau)$ is the orthogonal projection of $\tau(1)$ on the geodesic $]\tau(0),\tau(\infty)[$ -- see figure \eqref{fig:s(tau)}.
	 \begin{figure}
  \centering
  \includegraphics[width=0.4\textwidth]{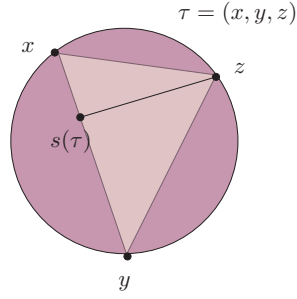}
  \caption{Projection}
  \label{fig:s(tau)}
\end{figure}
	 The metric on $\mk g$ associated to $s(\tau)$ is denoted by $d_\tau$ \index{$d_\tau$} and so are the associated metrics on $\gp$ -- seeing $\gp$ as a subset of the Grassmannian of  $\mk g$ -- and  the right invariant metric on ${\G}$ defined by \begin{equation}
d_\tau(g,h)=\sup\{d_\tau(g(x),h(x))\mid x\in\gp\}. \label{eq:dt-gp}	
\end{equation}
\end{definition}

As a particular case, a triple $\tau$ of three pairwise distinct points in  $\Rp$ defines a metric $d_\tau$ on $\Rp$ -- So that $\Rp$ is isometric to $S^1$ -- that is called the {\em visual metric} of $\tau$. 
The following properties of the assignment $\tau\mapsto d_\tau$, for $d_\tau$ a metric on $\gp$ will be crucial
\begin{enumerate}
	\item For every $g$ in ${\G}$, $d_{g\tau}(g(x),g(y))=d_{\tau}(x,y)$,\label{hyp:1-dist}
	\item The circle map associated to  any tripod $\tau$ is an isometry from $\Rp$ equipped with the visual metric of $(0,1,\infty)$ to  $\gp$ equipped with $d_\tau$. \label{hyp:2-dist}
\end{enumerate}
 
\subsubsection{Elementary properties}\label{sec:elemd}  
\begin{proposition} We have\label{all-cyc-same}\label{pro:comp-stab}
\begin{enumerate}
	\item 	For all tripod $\tau$: 
	$d_\tau=d_{\overline\tau}$.
	\item If the  stabilizer of $\mk s$ is compact, $d_\tau$ only depends on $\partial\tau$.
\end{enumerate}
\end{proposition}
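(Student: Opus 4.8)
The plan is to reduce both assertions to a single observation: by construction, the metric $d_\tau$ on $\mathfrak g$ — and hence the induced metrics on $\gp$ and the right invariant metric on $\G$ — depends on the tripod $\tau$ only through the point $s(\tau)=\eta_\tau(i)$ of the symmetric space $\sg$. Indeed, a point of $\sg$ is a Cartan involution of $\mathfrak g$, i.e. an inner product on $\mathfrak g$, and $d_\tau$ together with all the metrics derived from it are by definition functions of that inner product. So it suffices to understand how $s(\tau)$ transforms.

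For \emph{(1)}, I would use that $\tau\mapsto\eta_\tau$ is $\sld$-equivariant (Proposition \ref{pro:HT}). Since $\overline\tau=\tau\cdot\sigma$, this gives $\eta_{\overline\tau}=\eta_\tau\circ\sigma$, where $\sigma$ is now read as the isometry it induces on $\hh$, namely $z\mapsto -1/z$ in the upper half-plane. That isometry fixes the basepoint $i$ (it is the half-turn about $i$), so $s(\overline\tau)=\eta_{\overline\tau}(i)=\eta_\tau(\sigma(i))=\eta_\tau(i)=s(\tau)$, whence $d_{\overline\tau}=d_\tau$ by the reduction above. One can also read this off Figure \ref{fig:s(tau)}: passing to $\overline\tau$ keeps the hyperbolic plane $\hh_\tau$ fixed, interchanges $\tau(0)$ and $\tau(\infty)$ — hence preserves the geodesic $]\tau(0),\tau(\infty)[$ — and replaces $\tau(1)$ by its mirror image across that geodesic, which has the same orthogonal projection onto it.

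For \emph{(2)}, I would invoke the last statement of Proposition \ref{pro:HT}: when the stabilizer of $\mathfrak s$ — that is, by the proposition identifying the stabilizer of a perfect triangle with the centralizer $\ms Z_{\mathfrak s}$ of the correct $\skd$-triple — is compact, the map $\mc H\to\mc T$, $\eta_\tau\mapsto\partial\tau$, is a bijection, so $\tau\mapsto\eta_\tau$ factors through $\partial\colon\mc G\to\mc T$. Consequently $s(\tau)=\eta_\tau(i)$, and therefore $d_\tau$, depends only on $\partial\tau$. I do not anticipate any real obstacle; the only points needing care are the bookkeeping identification of the stabilizer of $\mathfrak s$ with $\ms Z_{\mathfrak s}$ (and with $\ms Z_0$) and the elementary fact that $\sigma$ fixes $i$, both immediate from the conventions of Section \ref{sec:tripods}.
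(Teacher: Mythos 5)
Your proposal is correct and follows the same route as the paper: reduce to the fact that $d_\tau$ depends only on $s(\tau)$, then observe $s(\overline\tau)=s(\tau)$ for item (1) and invoke the last part of Proposition \ref{pro:HT} for item (2). The only difference is that the paper leaves the verification $s(\overline\tau)=s(\tau)$ implicit, whereas you spell it out via the fixed point $\sigma(i)=i$ (equivalently the harmonic-division picture), which is exactly the detail needed.
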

\begin{proof}
	The first item comes from the fact that  $d_\tau$ only depends on $s(\tau)$.
For the second item, in that case  the map $\eta_\tau\mapsto\partial_\tau$ is an isomorphism, by Proposition \ref{pro:HT}. \end{proof}

\begin{proposition}{\sc [Metric equivalences]}\label{A-B}
For every positive numbers $A$ and $\epsilon$, 	there exists a positive number $B$   so that if $\tau,\tau'\in\mc T$ are tripods and $g\in {\G}$, then
$$
d_\tau (g,\id)\leq\epsilon \hbox{ and }d(\tau,\tau')\leq A\implies   d_\tau(g,\id)\leq B\cdotp d_{\tau'}(g,\id)\ .
$$
Similarly,  for all $u,v$ in $\gp$ and $g\in\G$
\begin{eqnarray}
d(\tau,\tau')\leq A&\implies&   d_\tau(u,v)\leq B\cdotp d_{\tau'}(u,v)\ ,\cr
d(\tau,g\tau)\leq\epsilon&\implies& d_\tau(g,\id)\leq B\cdotp d(\tau,g\tau)\ , \label{ineq:contrdtaud}
\end{eqnarray}
\end{proposition}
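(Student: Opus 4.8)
The plan is to reduce everything to a compactness argument on the space of tripods, using that the metric assignment $\tau \mapsto d_\tau$ depends continuously on $\tau$ and is $\ms G$-equivariant in the sense of property \eqref{hyp:1-dist}. First I would fix $\tau$ and note that $d_\tau$ is the Riemannian distance on $\mk g$ (hence on $\gp \subset \mathrm{Gr}(\mk g)$ and on $\ms G$ via \eqref{eq:dt-gp}) associated to the point $s(\tau) \in \sg$. The key observation is that two metrics on $\gp$ coming from two points of $\sg$ at bounded distance are uniformly bi-Lipschitz: the family of inner products on $\mk g$ parametrized by $\sg$ varies continuously (indeed smoothly), so on any compact subset of $\sg$ there is a uniform bi-Lipschitz constant between the corresponding norms, and this passes to the induced metrics on the compact manifold $\gp$ and to the sup-metric \eqref{eq:dt-gp} on $\ms G$. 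Since $d(\tau,\tau') \leq A$ forces $d(s(\tau), s(\tau')) \leq A$ (the projection $s$ is $1$-Lipschitz, or at least Lipschitz, being a composition of smooth equivariant maps restricted to... — more carefully, $s$ is $\ms G$-equivariant and smooth, so Lipschitz on compacta, and by homogeneity Lipschitz globally), we get the second displayed inequality: $d_\tau(u,v) \leq B \cdot d_{\tau'}(u,v)$ for all $u,v \in \gp$ with $B = B(A)$ only.

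For the first displayed inequality and for \eqref{ineq:contrdtaud}, the point is the same but applied to the right-invariant metric $d_\tau$ on $\ms G$ defined by the sup over $\gp$. By the bi-Lipschitz comparison just established, $d_\tau(g,\id) \leq B \cdot d_{\tau'}(g,\id)$ holds for \emph{all} $g$, not just those with $d_\tau(g,\id) \leq \epsilon$ — so the hypothesis "$d_\tau(g,\id) \leq \epsilon$" is actually not needed for that statement, and I would simply prove the stronger unconditional version (the authors presumably state it this way for uniformity with the third inequality). For the third inequality $d(\tau,g\tau) \leq \epsilon \implies d_\tau(g,\id) \leq B \cdot d(\tau,g\tau)$, I would argue as follows: by homogeneity (property \eqref{hyp:1-dist} and left-invariance of $d$) it suffices to prove this near a fixed basepoint tripod $\tau_0$ identifying $\mc G \cong \ms G$; then both $\varepsilon \mapsto $ (ball of radius $\varepsilon$ around $\tau_0$ in $d$) and the right-invariant metric $d_{\tau_0}$ on $\ms G$ are comparable near the identity because the map $g \mapsto g \cdot \tau_0$ (for the relevant action realizing $d(\tau_0, g\tau_0)$) has surjective, hence locally bi-Lipschitz, differential at $\id$ — more precisely $d(\tau_0, g\tau_0)$ and a bi-invariant-type local distance to $\id$ on $\ms G$ agree up to a constant on a small $\epsilon$-ball, and on that ball $d_{\tau_0}(g,\id)$ is in turn comparable to that local distance. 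Combining gives the constant $B$, uniform in $\tau$ by homogeneity once $\epsilon$ is fixed.

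The main obstacle is bookkeeping the three different metrics (the fixed left-invariant $d$ on $\mc G$, the visual/symmetric-space metric $d_\tau$ on $\gp$, and the right-invariant metric $d_\tau$ on $\ms G$ built from the latter via \eqref{eq:dt-gp}) and making sure the comparison constants genuinely depend only on $A$ and $\epsilon$ and not on $\tau,\tau'$. This is handled by the homogeneity: $\ms G$ acts transitively on each connected component of $\mc G$ by \emph{left} translations, which are isometries for $d$ and intertwine $d_\tau$ with $d_{g\tau}$ by \eqref{hyp:1-dist}; so every estimate "at $\tau$" can be transported to "at $\tau_0$" for a fixed $\tau_0$ in each of the finitely many components, reducing to a statement about a fixed compact neighborhood where continuity of the metric assignment gives the uniform constant. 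I expect no serious difficulty beyond this reduction; the geometric input — smoothness of $\tau \mapsto s(\tau)$ and of the Riemannian metric on $\sg$, and surjectivity of the relevant orbit-map differential — is all standard Lie theory already set up in the preceding paragraphs.
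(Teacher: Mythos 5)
Your proposal is correct and follows essentially the same route as the paper: reduce to a fixed basepoint $\tau_0$ by $\ms G$-equivariance and left-invariance of $d$, then invoke continuity (in your phrasing, smoothness of the family of inner products along $\sg$; in the paper's, continuity of $\tau\mapsto d_\tau$ into $C^1$) for the first two inequalities, and compare two right-invariant metrics near $\id$ for the third. Your side observation that the hypothesis $d_\tau(g,\id)\leq\epsilon$ is superfluous for the first inequality is correct — since $\gp$ is compact and the first inequality follows unconditionally from the second via the sup-definition \eqref{eq:dt-gp} — but this is a cosmetic strengthening, not a different method.
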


\begin{proof} Let  $U(\epsilon)$ be a compact neighborhood of $\id$.
The ${\G}$-equivariance of the map $d:\tau\mapsto d_\tau$ implies the continuity of $d$ seen as a map from $\mc G$ to  $C^1(U(\epsilon)\times U(\epsilon))$ -- equipped with uniform convergence. The first result follows. The second assertion follows by a similar argument. For the inequality \eqref{ineq:contrdtaud}, let us fix a tripod $\tau_0$. The metrics 
$$
(g,h)\mapsto d_{\tau_0}(g,h), \ \ (g,h)\mapsto d (h^{-1}\cdotp \tau_0,g^{-1}\cdotp \tau_0)\ ,
$$
are both right invariant Riemannian metrics on ${\G}$. In particular, they are locally bilipschitz and thus there exists some $B$ so that 
\begin{equation*}
d(\tau_0,g\tau_0)\leq\epsilon\implies d_{\tau_0}(g,\id)\leq B\cdotp d(g_0^{-1}\cdotp\tau_0,\tau_0)=B\cdotp d(\tau_0,g\cdotp\tau_0)\ .	\label{ineq:contrdtaud-1}
\end{equation*}
	We now propagate this inequality to any tripod using the equivariance:  writing $\tau=h\cdotp\tau_0$, we get that assuming  $d(\tau,g\cdotp\tau)\leq\epsilon$, then 
$$  d(\tau_0,h^{-1}gh\cdotp\tau_0)=d(h\cdotp\tau_0,gh\cdotp \tau)=d(\tau,g\cdotp\tau) \leq \epsilon\ .$$
Thus according to the previous implication,
$$ d_{\tau_0}(h^{-1}gh,\id)\leq B\cdotp d(\tau_0,h^{-1}gh\cdotp\tau_0)=B\cdotp d(\tau,g\cdotp\tau)\ .
	$$
	The result follows from the equalities
$
d_{\tau_0}(h^{-1}gh,\id)=d_{h\cdotp\tau_0}(gh,h)=d_\tau(g,\id).
$
\end{proof}
As a corollary
\begin{corollary}{\sc [$\omega$ is uniformly Lipschitz]}\label{coro:domega}
	There exists a constant $C$ so that for all $\tau$
	$$
	\frac{1}{C}d_\tau\leq d_{\omega(\tau)}\leq C\cdot d_\tau\ .
	$$
\end{corollary}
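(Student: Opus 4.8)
The plan is to deduce this from the metric comparison estimate of Proposition \ref{A-B}, the only new input being that $\tau$ and $\omega(\tau)$ sit at a uniformly bounded --- indeed constant --- $d$-distance from one another.

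First I would unwind the definition of $\omega$. Since $\omega(\tau)=\tau\cdot R_\omega$ for the single fixed element $R_\omega\in\ms S_0\subset\ms G_0$, the defining property of the metric $d_0$, namely $d(\tau\cdot g,\tau\cdot h)=d_0(g,h)$ for every tripod $\tau$, gives at once
$$
d(\tau,\omega(\tau)) = d(\tau\cdot\id,\tau\cdot R_\omega) = d_0(\id,R_\omega) \eqdef A_0,
$$
a positive number independent of $\tau$ (and likewise on $\mc T$, since the right $\ms S_0$-action descends).

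Then I would apply the second assertion of Proposition \ref{A-B} with $A=A_0$, obtaining a constant $B>0$ such that $d(\sigma,\sigma')\le A_0$ forces $d_\sigma(u,v)\le B\, d_{\sigma'}(u,v)$ for all $u,v\in\gp$. Feeding in $(\sigma,\sigma')=(\tau,\omega(\tau))$ and then $(\sigma,\sigma')=(\omega(\tau),\tau)$ --- both pairs being at distance exactly $A_0$ --- yields $d_\tau\le B\, d_{\omega(\tau)}$ and $d_{\omega(\tau)}\le B\, d_\tau$ as metrics on $\gp$, so $C=B$ works.

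I do not expect any real obstacle: the only point needing a word of care is that $d_\tau$ and $d_{\omega(\tau)}$ are compared as metrics on the compact space $\gp$, which is exactly the form of the second inequality in Proposition \ref{A-B}. If one prefers not to invoke that proposition at all, the same conclusion follows by noting that $\tau\mapsto\sup_{u\ne v} d_{\omega(\tau)}(u,v)/d_\tau(u,v)$ is a continuous function on $\mc G$ (by compactness of $\gp$ and continuity of $\tau\mapsto d_\tau$) which is left-$\ms G$-invariant, because $\omega$ commutes with the $\ms G$-action and $\tau\mapsto d_\tau$ is $\ms G$-equivariant; it is therefore constant on each of the finitely many connected components of $\mc G$, hence bounded, and the same argument applies to the reciprocal ratio.
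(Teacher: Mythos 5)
Your proof is correct and matches the paper's intended argument: the paper labels this a corollary of Proposition \ref{A-B} without writing out details, and your reduction — $d(\tau,\omega(\tau))=d_0(\id,R_\omega)$ is a constant, then feed the pair $(\tau,\omega(\tau))$ and the reversed pair into the second inequality of Proposition \ref{A-B} — is exactly the deduction being gestured at. The alternative argument you sketch (continuity and left-$\ms G$-invariance of the ratio, plus finitely many components) is a fine self-contained variant but is not needed.
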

 \subsubsection{Aligning tripods}\label{sec:cone contract}
We explain a slightly more sophisticated way to control tripod distances.

Let  $\tau_0$ and $\tau_1$ be two coplanar tripods associated to  a totally geodesic hyperbolic plane $\hh$ and a circle $C$ identified with $\partial_\infty\hh$  so that  $z_1,z_0\in C$.
We say that $(z_0,\tau_0,\tau_1,z_1)$ are {\em aligned} if there exists a geodesic $\gamma$ in $\hh$, passing through $s(\tau_0)$ and $s(\tau_1)$ starting at $z_0$ and ending in $z_1$. In the generic case $s(\tau_0)\not=s(\tau_1)$, $z_1$  and $z_0$ are uniquely determined.

We first have the following  property which is standard for ${\G}=\ms{SL}(2,\mathbb R)$,
\begin{proposition}{\sc [Aligning tripods]} \label{pro:cone-contrac0}
There exist  positive constants $\bK$, $c$ and $\alpha_0$ only depending on ${\G}$  so that if  $(z_0,\tau_{0},\tau_{1},z_1)$ are aligned and associated to a circle $C\subset \gp$ the following holds:  Let  $w\in C$ satisfying $d_{\tau_{1}}\left(w,z_{1}\right)\leq  3\pi/4$, 
then we have
\begin{eqnarray}
d_{\tau_{1}}(w,u)\leq  \alpha_0\ , \ d_{\tau_{1}}(w,v)\leq  \alpha_0\implies	d_{\tau_{0}}(u,v)\leq \frac{\bK}{4} e^{-c d(\tau_0,\tau_1)}\cdot d_{\tau_{1}}(u,v)\ .\label{ineq:gross-hyp}
\end{eqnarray}

\end{proposition}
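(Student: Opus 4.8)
\emph{Proof plan.} Since $d_\tau$ depends only on $s(\tau)\in\sg$, and since $s(\tau_0),s(\tau_1)$ lie on the common geodesic $\gamma$ joining $z_0$ to $z_1$ in the totally geodesic plane $\hh_{\tau_0}=\hh_{\tau_1}$, the inequality to be proved is, in the end, a uniform contraction estimate for the element of $\ms G$ that slides $s(\tau_0)$ to $s(\tau_1)$ along $\gamma$. I would first normalise. By \eqref{hyp:1-dist}, the left-invariance of the metric on $\mc G$ and the $\ms G$-equivariance of $\tau\mapsto\partial\tau$, $\tau\mapsto\phi^\tau$, $\tau\mapsto d_\tau$, the whole statement is invariant under the left $\ms G$-action; as $\ms G$ acts transitively on each of the finitely many connected components of $\mc G$, I may assume $\tau_0$ is one of finitely many fixed tripods. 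Then the circle $C$, the flags $t^\pm:=\partial^\pm\tau_0$, the point $o:=s(\tau_0)$, the element $a:=\T\tau_0(a_0)\in\mk g$ and the metric $d_{\tau_0}$ on $\gp$ are all fixed, depending only on $\ms G$; recall $d_{\tau_0}$ restricted to $C$ is the visual metric of $o$, so $d_{\tau_0}(t^-,t^+)=\pi$, and $\exp(sa)$ is loxodromic with repelling flag $t^-$ and attracting flag $t^+$.

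Being coplanar with $\tau_0$, one has $\tau_1=\tau_0\cdotp h$ with $h\in\ms S_0$; the Cartan decomposition gives $h=k_1\exp(\tfrac D2 a_0)k_2$ with $k_1,k_2$ in a maximal compact $K_0$ of $\ms S_0$ and $D\ge0$. Since $K_0$ fixes the point $i\in\mathbf H^2$ with $s(\tau_0)=\eta_{\tau_0}(i)$, the group $\tau_0(K_0)$ fixes $o$ and so acts on $\gp$ by $d_{\tau_0}$-isometries, and moreover $D=d_{\hh}(s(\tau_0),s(\tau_1))$. Writing $\kappa_i:=\tau_0(k_i)$ and $\alpha:=\tau_0(\exp(\tfrac D2 a_0))=\exp(\tfrac D2 a)$, we get $\tau_1=g\tau_0$ with $g=\kappa_1\alpha\kappa_2$; since $s(g\tau_0)=g\cdotp o$, since $\kappa_2$ fixes $o$, and since $\alpha$ pushes $o$ along $]t^-,t^+[$ towards $t^+$, the ``aligned'' hypothesis forces $z_1=\kappa_1(t^+)$, hence $\kappa_1^{-1}(z_1)=t^+$, which $\alpha$ fixes. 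Now put $\bar x:=\kappa_1^{-1}x$. Using \eqref{hyp:1-dist} and that $\kappa_1,\kappa_2$ are $d_{\tau_0}$-isometries, $d_{\tau_1}(x,y)=d_{\tau_0}(\alpha^{-1}\bar x,\alpha^{-1}\bar y)$ and $d_{\tau_0}(x,y)=d_{\tau_0}(\bar x,\bar y)$; so the hypotheses become ``$\tilde u:=\alpha^{-1}\bar u$ and $\tilde v:=\alpha^{-1}\bar v$ lie in the $d_{\tau_0}$-ball of radius $\alpha_0$ about $\tilde w:=\alpha^{-1}\bar w\in C$, with $d_{\tau_0}(\tilde w,t^+)\le 3\pi/4$'', and the desired inequality becomes ``$d_{\tau_0}(\alpha\tilde u,\alpha\tilde v)\le\tfrac{\bK}{4}e^{-c\,d(\tau_0,\tau_1)}d_{\tau_0}(\tilde u,\tilde v)$'', i.e.\ that $\alpha$ contracts $\tilde u,\tilde v$.

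For the contraction I would use that on the big cell $\mathcal O:=\gp\setminus\mc N$ — $\mc N$ being the closed proper set of flags not transverse to $t^-$ — the chart $X\mapsto\exp(X)t^+$ identifies $\mathcal O$ with $\mk n^-=\bigoplus_{\lambda<0}\mk g^\lambda$ and conjugates $\alpha$ to the \emph{linear} map $\Ad(\alpha)|_{\mk n^-}$, of operator norm $\le e^{-\mu D}$, where $\mu>0$ is the smallest positive eigenvalue of $\ad(a_0)$ (equal to $2$ when $\slt_0$ is even). Since $\tilde w\in C$ and $d_{\tau_0}(\tilde w,t^+)\le 3\pi/4<\pi=d_{\tau_0}(t^-,t^+)$, the point $\tilde w$ has $d_{\tau_0}$-distance $\ge\pi/4$ from $t^-$ along $C$; distinct points of a circle being transverse and $\mc N\cap C=\{t^-\}$ being closed, a compactness argument (here using that $\tau_0$, hence $C$ and $\mc N$, are fixed) produces a constant $\delta_*>0$ with $d_{\tau_0}(\tilde w,\mc N)\ge\delta_*$. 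Taking $\alpha_0\le\delta_*/2$, the points $\tilde u,\tilde v$ and — $\Ad(\alpha)$ being a contraction of $\mk n^-$ — their $\alpha$-images lie in a fixed compact subset of $\mathcal O$, on which $d_{\tau_0}$ is $L$-bi-Lipschitz to the flat metric of $\mk n^-$ for a constant $L$ depending only on $\ms G$; thus $d_{\tau_0}(\alpha\tilde u,\alpha\tilde v)\le L\|\Ad(\alpha)(\tilde u-\tilde v)\|\le L^2 e^{-\mu D}d_{\tau_0}(\tilde u,\tilde v)$. Finally $d(\tau_0,\tau_1)=d_0(e,h)\le 2\diam(K_0)+\tfrac D2\|a_0\|$, whence $e^{-\mu D}\le(\text{const})\,e^{-c\,d(\tau_0,\tau_1)}$ for a suitable $c>0$ and a constant depending only on $\ms G$, which finishes the proof with a suitable $\bK$.

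The crux — and the only place the constant $3\pi/4$ enters — is the uniformity in the last step: one must keep $\tilde u,\tilde v$ and their $\alpha$-translates in a piece of the big cell bounded away, \emph{independently of $\tau_1$}, from the non-transversality locus $\mc N$ of the repelling flag, since there the linearisation of $\alpha$, while still contracting, need not be uniformly bi-Lipschitz with $d_{\tau_0}$. The inequality $3\pi/4<\pi=d_{\tau_0}(t^-,t^+)$ is precisely what supplies this, once $\tau_0$ (hence $C$ and $t^-$) has been fixed by the $\ms G$-action. For $\ms G=\ms{SL}(2,\mathbb R)$ one has $\gp=C$ and $\mc N=\{t^-\}$, and the estimate degenerates to the classical computation with visual metrics on $\mathbf H^2$, which is the ``standard'' case mentioned in the statement.
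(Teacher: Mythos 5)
Your proof is correct and follows the same overall strategy as the paper's: reduce, via the fact that $d_\tau$ depends only on $s(\tau)$, to a uniform contraction estimate for the hyperbolic one-parameter subgroup that slides $s(\tau_0)$ to $s(\tau_1)$ along the aligned geodesic, and use the gap $3\pi/4<\pi=d_{\tau_0}(t^-,t^+)$ to keep the relevant points in a fixed compact region of the attracting basin, uniformly in $\tau_1$. The two treatments of the contraction step differ in packaging rather than in substance. The paper's proof picks a correct $\msl$-triple whose one-parameter group $\{\lambda_t\}$ has $z_1$ attractive and $z_0$ repulsive, takes a fixed closed neighbourhood $U$ of the $d_{\tau_1}$-ball $B$ of radius $3\pi/4$ inside the basin, and deduces the exponential estimate from $C^1$-uniform convergence of $\lambda_t$ to a constant on $U$; the constant is then compared to $d(\tau_0,\tau_1)$ via $d(\tau_0,\tau_1)\le Bt_1+C$. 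You instead normalise $\tau_0$ by the $\ms G$-action, write $\tau_1=\tau_0\cdotp(k_1\exp(\tfrac D2 a_0)k_2)$ by Cartan decomposition in $\ms S_0$, conjugate by $\kappa_1=\tau_0(k_1)$ to move the picture to the standard fixed points $t^\pm$, and then linearise the flow in the Bruhat chart $X\mapsto\exp(X)t^+$ on the big cell, getting the rate $e^{-\mu D}$ from the operator norm of $\Ad(\exp(\tfrac D2 a_0))|_{\mk n^-}$. What your version buys is an explicit exponent $\mu$ and a concrete compactness argument (distance to $\mc N$ in the chart); what the paper's buys is brevity, by never unpacking the Cartan or Bruhat decompositions. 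Two small points worth tightening in your write-up: (i) the ``compactness argument'' giving $\delta_*$ should be phrased for the whole compact arc $\{w\in C: d_{\tau_0}(w,t^+)\le 3\pi/4\}$ rather than for the single point $\tilde w$, since $\tilde w$ varies; and (ii) when you assert that $\tilde u,\tilde v$ \emph{and their $\alpha$-images} lie in a fixed compact set of $\mathcal O$, note that the image of a compact $B'\subset\mk n^-$ under the linear contraction $\Ad(\alpha)$ need not lie in $B'$ unless $B'$ is star-shaped about $0$, so one should pass to the norm-ball $\{X:\|X\|\le\sup_{Y\in B'}\|Y\|\}$, which is preserved. Neither affects the validity of the argument.
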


\begin{figure}
  \centering
  \includegraphics[width=0.3\textwidth]{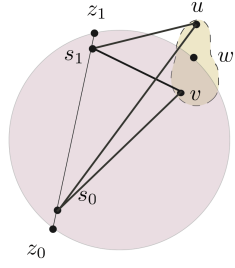}
  \caption{Aligning tripods}
  \label{fig:sszz}
\end{figure}

\begin{proof}  There exists a correct $\msl$-triple $s=(a,x,y)$ preserving  the totally geodesic plane $\hh_{\tau_0}$ so that the 1-parameter group  $\{\lambda_t\}_{t\in\mathbb R}$ generated by $a$  fixes  $C$  and has  $z_{1}$ as an attractive fixed point and $z_{0}$ as a repulsive fixed point in $\gp$. Let $t_{1}$ the positive number defined by $
\lambda_{t_{1}}(s(\tau_{0}))=s(\tau_{1})
$.

Recall  that by construction $d_{\tau}$ only depends on $s(\tau)$.
Let $B\subset C$ be the closed ball of center $z_{1}$ and radius $3\pi/4$ with respect to $d_{\tau_{1}}$. Observe that $B$ lies in the basin of attraction of $H$ and so does $U$ a closed neighborhood of $B$.  In particular, we have that the 1-parameter group $H$ converges $C^1$-uniformly to a constant on $U$. Thus, 
\begin{eqnarray}
\exists K_0, d > 0, \ \ \forall u,v\in U,\ \ \forall t\geq 0,& & \ d_{\tau_{1}}(\lambda_t(u),\lambda_t(v))\leq K_0e^{-dt}\cdotp d_{\tau_{1}}(u,v)\label{ineq:hypprop0}\ .
\end{eqnarray}
Recall that for all $u,v$ in $\gp$, since $
s\left(\lambda_{-t_1}(\tau_1)\right)=s(\tau_0)$.
\begin{equation}
d_{\tau_1}(\lambda_{t_1}(u),\lambda_{t_1}(u))=d_{\lambda_{-t_1}(\tau_1)}(u,v)=d_{\tau_0}(u,v)\label{ineq:hypprop2}
\end{equation}
Finally, there exists $\alpha>0$, only depending on ${\G}$ so that for any $w$ in $B$, the ball $B_w$ of radius $\alpha$ with respect to $d_{\tau_{1}}$ lies in $U$. 
Thus, combining \eqref{ineq:hypprop0} and \eqref{ineq:hypprop2} we get
$$
d_{\tau_{0}}(u,v)\leq K_{0}\cdotp  e^{-dt} d_{\tau_{1}}(u,v).
$$
This concludes the proof of Statement \eqref{ineq:gross-hyp} since there exists constants $B$ and $C$ so that 
$d(\tau_0,\tau_1)\leq Bt_1 + C$.
\end{proof}

\subsection{The contraction and diffusion constants}

The constant $\bK$ defined in Proposition \ref{pro:cone-contrac0} will be called the {\em diffusion constant}\index{$\bf K$}\index{Diffusion constant} and $\bk:=\bK^{-1}$ is called the {\em contraction constant}.\index{$\bk$}\index{Contraction constant}

\section{Quasi-tripods and  finite  paths of quasi-tripods}\label{sec:quas-trip}

 We now want to describe a  coarse geometry in the flag manifold; our main devices will be the following: paths of quasi-tripods and coplanar paths of  tripods. Since not all triple of points lie in a circle in $\gp$, we need to introduce a deformation of the notion of tripods. This is achieved through the definition of {\em quasi-tripod} \ref{def:qua-trip}.

 A {\em coplanar path of  tripods}\index{Coplanar path of  tripods} is just a sequence of non overlapping ideal triangles in some hyperbolic plane such that any ideal triangle have a common edge with the next one. Then a path of quasi-tripods is a deformation of that,  such a path can also be  described as a {\em model} which is deformed by a sequence of specific elements of ${\G}$. 

Our goal is the following. The common edges of a coplanar path of  tripods, considered as intervals in the boundary at infinity of the hyperbolic plane, defines a sequence of nested intervals. We want to show that in certain circumstances, the corresponding chords of the deformed path of quasi-tripods are still nested in the deformed sense that we introduced in the following sections.

One of our main result is then the Confinement Lemma \ref{lem:zigzag} which guarantees squeezing.

\subsection{Quasi-tripods} Quasi-tripods will make sense  of  the notion of a  ``deformed ideal triangle'' . Related notions are defined: swished quasi-tripods, and the  {\em foot map}.

\begin{definition}\label{def:qua-trip}{\sc[Quasi-tripods]}\index{Quasi-tripod}
	An {\em $\epsilon$-quasi tripod}\index{$\epsilon$-quasi tripod}  is a quadruple $\theta=(\dt{\theta},\theta^-,\theta^+,\theta^0)\in\mc G\times\gp^3$ so that
	 	$$
d_{\dt{\theta}}(\partial^+\dt{\theta},\theta^+))\leq\epsilon\ ,\ d_{\dt{\theta}}(\partial^-\dt{\theta},\ \ \theta^-))\leq\epsilon\ ,\ \  d_{\dt{\theta}}(\partial^0\dt{\theta},\theta^0)\leq\epsilon\ .
	$$
		
The set $\partial\theta\defeq\{\theta^+,\theta^-,\theta^0\}$ is the {\em set of vertices}\index{Vertices of an $\epsilon$-quasi tripod}
	\index{$\partial$} of $\theta$ and  $\dt{\theta}$ is the {\em interior} of $\theta$\index{$\dt{\theta}$}
	\index{Interior of an $\epsilon$-triangle}.
	An  $\epsilon$-quasi tripod $\tau$ is {\em reduced}  if $\partial^\pm\dt{\tau}=\tau^\pm$.\index{Reduced $\epsilon$-quasi tripod}
\end{definition}
Obviously
a tripod defines an  $\epsilon$-quasi tripod for all $\epsilon$.
Moreover,  some of the actions defined on tripods in paragraph \ref{sec:act} extend to $\epsilon$-quasi tripods, most notably, we have
 an action of a cyclic permutation $\omega$ of order three on the set of quasi-tripods, given by
$$
\omega(\dt{\theta},\theta^-,\theta^+,\theta^0)=(\omega(\dt{\theta}),\theta^+,\theta^0,\theta^-)\ .
$$
By Corollary \ref{coro:domega}, 
\begin{proposition}\label{pro:symqt}
There is a constant $\bM$ only depending on $\G$, such that  if $\theta$ is an	 $\epsilon$-quasi tripod then, $\omega(\theta)$ is an $\bM\epsilon$-quasi tripod 
\end{proposition}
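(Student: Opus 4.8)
The plan is to unwind the definitions and reduce everything to Corollary \ref{coro:domega}. Let $\theta=(\dt\theta,\theta^-,\theta^+,\theta^0)$ be an $\epsilon$-quasi tripod, so by definition
$$
d_{\dt\theta}(\partial^+\dt\theta,\theta^+)\leq\epsilon,\quad d_{\dt\theta}(\partial^-\dt\theta,\theta^-)\leq\epsilon,\quad d_{\dt\theta}(\partial^0\dt\theta,\theta^0)\leq\epsilon.
$$
By the formula for the order-three symmetry, $\omega(\theta)=(\omega(\dt\theta),\theta^+,\theta^0,\theta^-)$, whose interior is $\omega(\dt\theta)$. Recalling that $\partial\omega(\dt\theta)=(\partial^+\dt\theta,\partial^0\dt\theta,\partial^-\dt\theta)$ from the action of $\omega$ on $\mc T$ (item \eqref{act5} in Section \ref{sec:act}), I need to bound $d_{\omega(\dt\theta)}$ of the three pairs $(\partial^+\dt\theta,\theta^+)$, $(\partial^0\dt\theta,\theta^0)$ and $(\partial^-\dt\theta,\theta^-)$, which are exactly the three pairs already controlled, but now measured in the metric $d_{\omega(\dt\theta)}$ instead of $d_{\dt\theta}$.

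The key step is then an application of Corollary \ref{coro:domega}: there is a constant $C$ depending only on $\ms G$ so that $\frac1C d_\tau\leq d_{\omega(\tau)}\leq C\cdot d_\tau$ for every tripod $\tau$. Applying this with $\tau=\dt\theta$ gives $d_{\omega(\dt\theta)}(u,v)\leq C\, d_{\dt\theta}(u,v)$ for all $u,v\in\gp$, hence each of the three distances above is at most $C\epsilon$. Setting $\bM=C$ (or, to be safe with the other uses of $\bM$ in the paper, taking $\bM$ to be a universal constant dominating $C$), this shows precisely that $\omega(\theta)$ is an $\bM\epsilon$-quasi tripod.

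There is essentially no real obstacle here: the only point requiring a moment's care is matching the vertex labels under $\omega$ — one must check that the vertices of the reference tripod $\omega(\dt\theta)$ are the cyclic permutation $(\partial^+\dt\theta,\partial^0\dt\theta,\partial^-\dt\theta)$ of those of $\dt\theta$, so that the three inequalities defining ``$\bM\epsilon$-quasi tripod'' for $\omega(\theta)$ pair up correctly with the three hypotheses on $\theta$. Once that bookkeeping is in place, the estimate is immediate from the Lipschitz bound for $\omega$. I would therefore write the proof in two sentences: restate the definition and the $\omega$-formula, then invoke Corollary \ref{coro:domega}.

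\begin{proof}
Let $\theta=(\dt\theta,\theta^-,\theta^+,\theta^0)$ be an $\epsilon$-quasi tripod, and let $C$ be the constant of Corollary \ref{coro:domega}, which depends only on $\ms G$. By the definition of $\omega$ on quasi-tripods, the interior of $\omega(\theta)$ is $\omega(\dt\theta)$, whose vertices are $\partial\,\omega(\dt\theta)=(\partial^+\dt\theta,\partial^0\dt\theta,\partial^-\dt\theta)$. By Corollary \ref{coro:domega}, $d_{\omega(\dt\theta)}\leq C\cdot d_{\dt\theta}$ on $\gp$, so the three inequalities defining the $\epsilon$-quasi tripod $\theta$ yield
$$
d_{\omega(\dt\theta)}(\partial^+\dt\theta,\theta^+)\leq C\epsilon,\quad
d_{\omega(\dt\theta)}(\partial^0\dt\theta,\theta^0)\leq C\epsilon,\quad
d_{\omega(\dt\theta)}(\partial^-\dt\theta,\theta^-)\leq C\epsilon.
$$
These are exactly the conditions for $\omega(\theta)=(\omega(\dt\theta),\theta^+,\theta^0,\theta^-)$ to be a $C\epsilon$-quasi tripod. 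Taking $\bM$ to be a constant depending only on $\ms G$ with $\bM\geq C$ finishes the proof.
\end{proof}
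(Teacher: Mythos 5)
Your proof is correct and follows exactly the paper's route: the paper states Proposition \ref{pro:symqt} immediately after the phrase ``By Corollary \ref{coro:domega}'' and offers no further argument, so the intended proof is precisely the uniform Lipschitz comparison $\frac1C d_\tau\leq d_{\omega(\tau)}\leq C\,d_\tau$ applied to $\tau=\dt\theta$. Your careful matching of the cyclically permuted vertex labels is the right bookkeeping and fills in the detail the paper leaves implicit.
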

\subsubsection{A foot map}\index{Foot map}\index{$\Psi$|see{Foot map}}
For any positive $\beta$, let us consider the following $\ms G$-stable set
$$
W_\beta\defeq \{(\tau,a^+,a^-)\mid \tau \in\mc G, a^\pm\in\gp, \,\, d_\tau(a^\pm,\partial^\pm\tau) \leq\beta\}\subset\mc G\times\gp^2.
$$

\begin{lemma}\label{lem:footmap}
There exists  positive numbers  $\beta$ and $\bM_1$, a smooth $\ms G$-equivariant map  $\Psi:W_\beta\to\mc G$, so that 
\begin{enumerate}
	\item $\partial^\pm\Psi(\tau,a^+,a^-)=a^\pm$,
	\item $d(\tau, \Psi(\tau,a^+,a^-))\leq M\cdotp\sup(d_\tau(a^\pm,\partial^\pm\tau))$.
	\item $\Psi$ is $\bM_1$-Lipschitz.
\end{enumerate}
	\end{lemma}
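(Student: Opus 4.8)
The statement asserts the existence of a smooth $\ms G$-equivariant ``foot map'' $\Psi$ on the neighborhood $W_\beta$ of the zero section $(\tau,\partial^+\tau,\partial^-\tau)$, normalizing the first two coordinates while staying Lipschitz-close to the input tripod. The plan is to build $\Psi$ first in a slice, then spread it out by equivariance. Fix a base tripod $\tau_0$, so that $\mc G$ is identified with $\ms G$ and the stabilizer of the transverse pair $(\partial^+\tau_0,\partial^-\tau_0)$ is the group $\ms L_0=\ms Z(a_0)$. The datum of a pair $(a^+,a^-)$ transverse and close to $(\partial^+\tau_0,\partial^-\tau_0)$ determines a unique element of $\ms G$ moving $(\partial^+\tau_0,\partial^-\tau_0)$ to $(a^+,a^-)$ up to the left action of $\ms L_0$; concretely, near the identity the map $\ms N_0^-\times\ms N_0^+\to\gp\times\gp$, $(n^-,n^+)\mapsto(n^+\cdot\partial^+\tau_0,\,n^-\cdot\partial^-\tau_0)$ is a diffeomorphism onto a neighborhood, because $\mk n_0^+\oplus\mk n_0^-$ is a complement to $\mk p_0^+\cap\mk p_0^-=\mk l_0$ in $\mk g$ (this is just the Bruhat decomposition of $\gp\times\gp$ near the open transverse cell). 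So for $\beta$ small enough there is a well-defined smooth map $(\tau_0,a^+,a^-)\mapsto g(a^+,a^-)\in\ms G$ with $g(a^+,a^-)\cdot\partial^\pm\tau_0=a^\pm$ and $g(\partial^+\tau_0,\partial^-\tau_0)=\id$, depending smoothly on the data and fixing the identity value at the zero section.

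Next I would set $\Psi(\tau_0,a^+,a^-)\defeq g(a^+,a^-)\cdot\tau_0$ and then \emph{define $\Psi$ on all of $W_\beta$ by $\ms G$-equivariance}: writing any point of $\mc G$ as $\tau=h\cdot\tau_0$, put $\Psi(h\cdot\tau_0,a^+,a^-)\defeq h\cdot\Psi(\tau_0,h^{-1}a^+,h^{-1}a^-)$. One must check this is well-defined, i.e.\ independent of the choice of $h$ with $h\cdot\tau_0=\tau$ (two such differ by a right factor stabilizing $\tau_0$, which is trivial since $\mc G$ is a torsor under $\Aut(\ms G)$ on each component and we only move by $\ms G$ acting on the left — so on each connected component $h$ is unique), and that the $\ms G$-invariance condition $d_\tau(a^\pm,\partial^\pm\tau)\le\beta$ is exactly what guarantees $(h^{-1}a^+,h^{-1}a^-)$ lands in the slice neighborhood by property \eqref{hyp:1-dist} of the metric assignment $\tau\mapsto d_\tau$. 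Smoothness and $\ms G$-equivariance are then immediate from the construction.

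It remains to verify the three numbered properties. Property (i) $\partial^\pm\Psi(\tau,a^+,a^-)=a^\pm$ holds by construction of $g(a^+,a^-)$ together with equivariance of $\partial^\pm$. For (ii), note that on the slice $d(\tau_0,\Psi(\tau_0,a^+,a^-))=d_0(\id,g(a^+,a^-))$, and since $g$ is smooth with $g=\id$ on the zero section, a first-order Taylor estimate (compactness of the closure of the $\beta$-neighborhood) gives $d_0(\id,g(a^+,a^-))\le M\cdot\sup(d_{\tau_0}(a^\pm,\partial^\pm\tau_0))$ for a suitable constant $M$; propagating by the left $\ms G$-action, which preserves $d$ and $d_\tau(a^\pm,\partial^\pm\tau)$ simultaneously, yields (ii) for all $\tau$. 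For (iii), the left $\ms G$-action on $\mc G$ is by isometries of $d$ only up to the point that $d$ itself is left-invariant — which it is, by the choice in Section \ref{sec:tritrimet} — so it suffices to prove $\Psi$ is Lipschitz near the slice, i.e.\ Lipschitz as a function of $(\tau,a^+,a^-)$ restricted to $\{\tau=\tau_0\}\times\{d_{\tau_0}(a^\pm,\partial^\pm\tau_0)\le\beta\}$ and then use that any point of $W_\beta$ is carried there by an isometry; and near the slice, $\Psi$ is smooth on a set with compact closure, hence Lipschitz with some constant $\bM_1$.

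\textbf{Main obstacle.} The genuinely delicate point is the Lipschitz bound (iii) \emph{uniformly over all of $\mc G$}, not just on one slice: the issue is that moving from the slice at $\tau_0$ to the slice at a far-away tripod $\tau$ conjugates the relevant Bruhat-chart parametrization by an element $h$ of unbounded size, which could in principle distort derivatives. The resolution is precisely that $d$ is \emph{left} $\ms G$-invariant while $\Psi$ is \emph{$\ms G$-equivariant for the left action}, so the Lipschitz constant is genuinely transported without distortion — but one has to phrase this carefully, e.g.\ by observing that $d(\Psi(\tau,\cdot),\Psi(\tau',\cdot))$ and the distance between the data are both measured in left-invariant fashion, and invoke Proposition \ref{A-B} (metric equivalences) to pass between the fixed reference metric $d$ and the moving metrics $d_\tau$ that appear in the definition of $W_\beta$, keeping the constant $\beta$ small enough that the whole argument stays inside the chart where everything is a bona fide diffeomorphism. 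A secondary (but routine) point is the transversality input: one must check that for $\beta$ small, $d_\tau(a^\pm,\partial^\pm\tau)\le\beta$ forces $a^+$ and $a^-$ to be transverse, which follows from openness of the transverse locus in $\gp\times\gp$ and continuity.
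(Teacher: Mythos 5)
Your strategy — build $\Psi$ on a slice over a fixed base tripod $\tau_0$ via a Bruhat chart and then propagate by left $\ms G$-equivariance — is in spirit a concrete implementation of the paper's argument. The paper instead invokes the abstract existence of a $\ms G$-equivariantly chosen tubular neighbourhood $N_a$ of $\mathcal G_a=\{\tau\in\mc G\mid\partial^\pm\tau=a^\pm\}$ together with a $\ms G_a$-equivariant projection $\Pi_a:N_a\to\mathcal G_a$, and sets $\Psi(\tau,a^+,a^-)=\Pi_a(\tau)$. The inputs are the same (smoothness of $\mathcal G_a$, openness of the transverse locus, left-invariance of the reference metric $d$), and the uniform Lipschitz bound in both proofs is won from equivariance in the same way; your chart-based version simply makes the projection explicit.

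There is, however, an error in your Bruhat chart that must be corrected: as written, the map is \emph{constant}, not a local diffeomorphism. You write $(n^-,n^+)\mapsto(n^+\cdot\partial^+\tau_0,\ n^-\cdot\partial^-\tau_0)$, but $\ms N_0^+$ is contained in $\ms P_0^+$, which is precisely the stabilizer of $\partial^+\tau_0$, so $n^+\cdot\partial^+\tau_0=\partial^+\tau_0$ identically; likewise $n^-\cdot\partial^-\tau_0=\partial^-\tau_0$. The correct chart is $(n^-,n^+)\mapsto(n^-\cdot\partial^+\tau_0,\ n^+\cdot\partial^-\tau_0)$: one has $T_{\partial^+\tau_0}\gp\cong\mk g/\mk p_0^+\cong\mk n_0^-$, so it is $\ms N_0^-$ that moves $\partial^+\tau_0$ submersively, and dually for $\partial^-\tau_0$. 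With this fix the differential at the origin is the isomorphism $\mk n_0^-\oplus\mk n_0^+\to\bigl(\mk g/\mk p_0^+\bigr)\oplus\bigl(\mk g/\mk p_0^-\bigr)$, and the rest of your construction — well-definedness of the equivariant extension (the left $\ms G$-action is free on each connected component of $\mc G$ since $\ms G$ is center-free), verification of (i)–(iii), and transport of the Lipschitz constant by the left-invariant metric — goes through as you describe.
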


\begin{proof} For a transverse pair $a=(a^+,a^-)$ in $\gp$, let $\mathcal G_{a}$ be the set of tripods $\tau$ in $\mc G$ so that $\partial^\pm\tau=a^\pm$ and $\G_a$ the stabilizer of the pair $a^+$, $a^-$. 
Let us fix (in a ${\G}$-equivariant way) a small enough tubular neighborhood $N_a$ of $\mathcal G_a$ in $\mc G$  for all transverse pairs $a=(a^+,a^-)$ as well as a ${\G}_a$-equivariant projection $\Pi_a$ from $N_a$ to $\mathcal G_a$. By continuity one gets that for $\beta$ small enough, if $(\tau,a^+,a^-)\in W_\beta$ then $\tau\in N_a$. We now define
$$
\Psi(\tau,a^+,a^-)\defeq\Pi_a(\tau)\ .
$$ 
By $\G$-equivariance, $\Psi$ is uniformly Lipschitz.
\end{proof}
	\begin{definition}\label{def:footmap}{\sc [Foot map and feet]}
		A map $\Psi$ satisfying the  conclusion of the lemma is called a {\em foot map}.
For $\epsilon$  small enough, we define the {\em feet}\index{Feet of an $\epsilon$-quasi tripod} $\psi_1(\theta)$, $\psi_2(\theta)$ and $\psi_3(\theta)$ of the $\epsilon$-quasi tripod $\theta=(\dt{\theta},\theta^-,\theta^+,\theta^0)$ as the three tripods which are respectively defined by 
\begin{eqnarray*}\psi_1(\theta)\defeq\Psi\left(\dt{\theta},\theta^-,\theta^+\right)\ , \ 
\psi_2(\theta)\defeq \psi_1(\omega(\theta))\ ,\ 
\psi_3(\theta)\defeq \psi_1(\omega^2(\theta))\ .\end{eqnarray*}
Where $\Psi$ is the foot map defined in the preceding section. 
\end{definition}
By the last item of Lemma \ref{lem:footmap}, for an $\epsilon$, quasi tripod $\theta$
\begin{eqnarray}
	d\left(\psi_i(\theta),\omega^{i-1}\left(\dt{\theta}\right)\right)\leq \bM_1\epsilon\ ,
\end{eqnarray}
 Observe  also that, for $\epsilon$ small enough  there exists a constant $\bM_2$ only depending on ${\G}$, so that  for $\epsilon$ small enough if $\theta$ is an $\epsilon$-quasi tripod then \begin{eqnarray}
	d(\omega(\psi_1(\theta)),\psi_2(\theta))\leq \bM_2\epsilon\ , \  d(\omega(\psi_2(\theta)),\psi_3(\theta))\leq\bM_2\epsilon\ .\label{ineq:dist-feet}
\end{eqnarray}
Using the triangle inequality, this is a consequence of the previous inequality and  the assumption that $\omega$ is an isometry for $d$.

\subsubsection{Foot map and flow}
The following property explains how well the foot map behaves  with respect to the flow action.

\begin{proposition}{\sc [Foot and flow]}\label{pro:flow-foot}

There exists  positive constants $\beta_1$ and $\bM_3$ with the following property. Let $\epsilon\leq\beta_1$,   let $x_0$ in $\mc G$, $x_1:=\varphi_R(x_0)$ for some $R$. Let $a=(a^+,a^-)$  be a transverse pair of flags  $\gp$, so that $d_{x_i}(a^\pm,\partial^\pm x_i)\leq \epsilon$, then 
$$
d(y_1,\varphi_R(y_0))\leq \bM_3\epsilon\ , 
$$
where $y_i=\Psi(x_i,a^+,a^-)$.
\end{proposition}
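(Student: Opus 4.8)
The statement compares two feet tripods $y_1=\Psi(x_1,a^+,a^-)$ and $\varphi_R(y_0)=\varphi_R(\Psi(x_0,a^+,a^-))$, where $x_1=\varphi_R(x_0)$. The key geometric point is that the shearing flow $\varphi_R$ moves a tripod along (the $t^0$-direction of) its own perfect triangle while fixing $\partial^\pm$, so it interacts well with the fibers $\mathcal G_a$ of the map $\tau\mapsto(\partial^+\tau,\partial^-\tau)$. Indeed, the crucial observation is that $\varphi_R$ preserves each leaf of the central foliation $\mc L^0$ (Proposition \ref{pro:bas}, item \eqref{act-cent} and item \eqref{pro:iso-central}: $\varphi_t$ acts by isometries along the leaves of $\mc L^0$), and those leaves are exactly the orbits of $\ms L_0$, i.e. the sets where $(\partial^+,\partial^-)$ is constant. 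So if $\Psi$ were built to be $\varphi$-equivariant fiberwise — i.e. so that the projection $\Pi_a:N_a\to\mathcal G_a$ constructed in Lemma \ref{lem:footmap} commutes with $\varphi_R$ — the two tripods would coincide exactly. They need not coincide, but they are both $\ms G_a$-close to the respective $x_i$, and $\varphi_R$ is an isometry of the relevant leaf, which is what produces the bound.

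\textbf{First step: reduce to a leafwise statement.} Fix the transverse pair $a=(a^+,a^-)$. Note $\Psi(x_i,a^+,a^-)=\Pi_a(x_i)\in\mathcal G_a$, and by Lemma \ref{lem:footmap}(2), $d(x_i,\Psi(x_i,a^+,a^-))\le M\epsilon$. Because $\varphi_R$ commutes with nothing a priori, the honest move is: let $z_i:=\Psi(x_i,a^+,a^-)$, so $z_0,z_1\in\mathcal G_a$, and $d(x_i,z_i)\le M\epsilon$. Then $d(\varphi_R(z_0),\varphi_R(x_0))$ must be controlled. Here I would invoke Corollary \ref{coro:contractleaf} (contracting along leaves): if I also know $d(x_1,z_1)\le M\epsilon$ and $d(x_1=\varphi_R(x_0),\varphi_R(z_0))$ is controlled at the endpoint, then $d(\varphi_t(x_0),\varphi_t(z_0))\le\bM M\epsilon$ for all $t\in[0,R]$. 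But to apply that I need endpoint control of $d(\varphi_R(z_0),x_1)$, which is not yet in hand. The cleaner route is to use the leafwise isometry property directly: since $\partial^\pm x_0=\partial^\pm z_0$ up to the $\epsilon$-approximation — actually they are genuinely transverse-close, with $z_0\in\mathcal G_a$ and $x_0\in N_a$ — pass to the $\mathcal L^0$-geodesic picture. The tripods $x_0$ and $z_0$ need not lie on the same $\mathcal L^0$-leaf, but the tubular-neighborhood projection $\Pi_a$ is $\ms G_a$-equivariant and $\ms L_0\subset\ms G_a$, so $\Pi_a$ commutes with the $\ms L_0$-action and in particular with $\varphi_R=R_{\exp(Ra_0)}$ on $N_a$ (since $\exp(Ra_0)\in\ms L_0\subset\ms G_a$). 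Therefore $\Pi_a(\varphi_R(x_0))=\varphi_R(\Pi_a(x_0))$, i.e. $\Psi(x_1,a^+,a^-)=\varphi_R(\Psi(x_0,a^+,a^-))$ exactly — wait, this requires $x_1=\varphi_R(x_0)$ to still lie in $N_a$, which holds since $d_{x_1}(a^\pm,\partial^\pm x_1)\le\epsilon\le\beta$ by hypothesis.

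\textbf{Conclusion.} So in fact the two quantities are equal up to the approximation inherent in the construction of $\Pi_a$ being only an approximate (not canonical) projection. The honest statement is: $\Psi$ is $\ms G$-equivariant and constructed leafwise from a $\ms G_a$-equivariant projection $\Pi_a$; since $\exp(Ra_0)$ normalizes nothing troublesome but genuinely lies in $\ms G_a=\ms L^0(\text{the relevant leaf})$, commutation gives $\Psi(\varphi_R(x_0),a)=\varphi_R(\Psi(x_0,a))$ \emph{on the nose} whenever both sides are defined, so one may take $\bM_3=0$ — but since the paper states a bound $\bM_3\epsilon$, the intended argument is the robust one that does not assume exact commutation of $\Pi_a$ with $\varphi_R$: namely, set $y_i=\Psi(x_i,a)$, observe $d(y_i,x_i)\le M\epsilon$ and $\partial^\pm y_i=a^\pm$, hence $y_0$ and any tripod with $\partial^\pm=a^\pm$ within $O(\epsilon)$ of $x_0$ differ by an element of $\ms G_a$ of size $O(\epsilon)$; applying $\varphi_R$, which is an isometry along the $\mathcal L^0$-leaf through $a$ (Proposition \ref{pro:bas}\eqref{pro:iso-central}) and which maps $\mathcal G_a$ into itself, we get $d(\varphi_R(y_0),\varphi_R(x_0))=d(y_0,x_0)\le M\epsilon$ restricted to the leaf direction, and $d(\varphi_R(x_0),y_1)=d(x_1,y_1)\le M\epsilon$; the triangle inequality gives $d(\varphi_R(y_0),y_1)\le 2M\epsilon$ once we confirm, via Proposition \ref{A-B}, that the "leaf direction" distance and the ambient $d$ are uniformly comparable on the $O(\epsilon)$-scale. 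I expect the \textbf{main obstacle} to be precisely this last point: $\varphi_R$ is an isometry \emph{along} $\mathcal L^0$-leaves but an expansion/contraction transverse to them, so one must be careful that the relevant $O(\epsilon)$-displacements $x_i\mapsto y_i$ lie (up to uniformly bounded distortion) in the central-leaf direction, which is where the transversality hypothesis $d_{x_i}(a^\pm,\partial^\pm x_i)\le\epsilon$ and the Lipschitz/equivariance properties of $\Psi$ from Lemma \ref{lem:footmap} are used.
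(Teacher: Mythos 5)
Your proposal identifies the right framework (the central foliation, Lemma \ref{lem:footmap}, and the isometry of $\varphi_R$ along $\mc L^0$-leaves) but it misses the key mechanism that the paper's proof relies on, and the argument as written does not close.

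The first claim you try — that $\Pi_a$ commutes with $\varphi_R$ on the nose because $\exp(Ra_0)\in\ms L_0\subset\ms G_a$ — confuses two different groups. The element $\exp(Ra_0)$ lives in $\ms G_0$ and acts on $\mc G$ on the \emph{right}; the $\ms G_a$-equivariance of $\Pi_a$ refers to the \emph{left} action of the stabilizer of $(a^+,a^-)$ in $\ms G$. The right action $R_{\exp(Ra_0)}$ can be rewritten as a left action by $\exp(Ra)=\tau(\exp(Ra_0))$ only when $\tau$ already lies in $\mc G_a$; for $\tau\in N_a\setminus\mc G_a$ the element $\tau(\exp(Ra_0))$ is not in $\ms L_a$, so $\varphi_R$ does not commute with $\Pi_a$ on the tubular neighborhood.

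The robust version you fall back on has the gap you yourself flag at the end. You compare $\varphi_R(y_0)$ with $\varphi_R(x_0)$ by saying $\varphi_R$ is an isometry "along the leaf direction" and then hoping the displacement $x_0\mapsto y_0$ is essentially leafwise. But it is not: by construction $y_0\in\mc G_a$ while $x_0\notin\mc G_a$, so the displacement is entirely \emph{transverse} to $\mc L^0$ — it has components in the $\mc U^+$ and $\mc U^-$ directions (moving $\partial^\pm x_0$ to $a^\pm$). Under $\varphi_R$ the $\mc U^-$-component is expanded exponentially (Proposition \ref{pro:bas}\eqref{pro:bas:cont}), so no uniform metric-comparison argument (Proposition \ref{A-B}) applied once can control $d(\varphi_R(x_0),\varphi_R(y_0))$ for large $R$. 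What saves the statement is the hypothesis at \emph{both} ends, $d_{x_i}(a^\pm,\partial^\pm x_i)\le\epsilon$ for $i=0,1$, used with the contraction of $\mc U^+$ under $\varphi_R$ and of $\mc U^-$ under $\varphi_{-R}$: the paper first treats the case $\partial^+x_0=a^+$ (displacement lies in a stable direction, contracted going forward), then the symmetric case $\partial^-x_0=a^-$ (contracted going backward), then handles the general case by introducing an intermediate projection onto $\mc G_{a^+,\partial^-x_0}$ and composing the two one-sided arguments. This two-step decomposition through the stable/unstable foliations is the missing idea; your proof proposal never decomposes the transverse displacement, and the isometry-along-leaves observation alone cannot absorb the exponential distortion transverse to the leaves.
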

\begin{proof} In the proof $M_i$ will denote a constant only depending on $\ms G$.

It is enough to prove the weaker result that there exists $z_0$, $z_1=\varphi_R(z_0)$ in $\mc G_a$ so that $d(z_i,x_i) \leq M_7\epsilon$. Indeed, it first follows that $d(z_i,y_i)\leq M_8\epsilon$ by the triangle inequality. Secondly, $\ms G_{a}$ is a central leaf of the foliation and the flow acts by isometries on it (see Property \eqref{pro:iso-central} of Proposition \ref{pro:bas}), it then follows that 
$d(y_1,\varphi_R(y_0))\leq M_9\epsilon$ and the result follows

Observe first that $d(x_i,y_i)\leq M\epsilon$ by definition of a foot map. Assume $R>0$.
	Let $x^\pm=\partial^\pm x_0=\partial^\pm x_1$. Let us first assume that $x^+=a^+$. Thus by the contraction property
	$$
	d(\varphi_R(x_0),\varphi_R(y_0))\leq M_2\epsilon\ .
	$$
	It follows by the triangle inequality that 
	$$
	d(\varphi_R(y_0),y_1)\leq M_3\epsilon\ .
	$$
	Thus this works with $z_0=y_0$, $z_1=\varphi_R(z_0)$. 
	
The same results hold symmetrically  whenever $x^-=a^-$ by taking $z_1=y_1$,  $z_0=\varphi_{-R}(z_1)$. 
	
The general case follows by considering intermediate projections. First (as a consequence of our initial argument) we find  $w_0$ and $w_1=\varphi_R(w_0)$ in $\mc G_{a^+,x^-}$ with $d(w_i,x_i)\leq M_3\epsilon$. 

Applying now the symmetric  argument with the pair $w_0$, $w_1$  and projection on $\mc G_{a^+,x^-}$ we get $z_0$ and $z_1:=\varphi_R(z_0)$ so that $d(w_i,z_i)\leq M_3\epsilon$.    

A simple combination of triangle  inequalities yield the result.
	\end{proof}

\subsubsection{Swishing quasi-tripods}

\begin{definition}{\sc[Swishing quasi-tripods]}\label{def:shear-qt}
The $\epsilon$-quasi-tripod $\theta'$ is {\em $(R,\alpha)$-swished} from the $\epsilon$-quasi tripod $\theta$ if
\begin{enumerate}
\item $\partial^\pm\theta=\partial^\mp\theta'$.
\item The tripods $\psi_1(\theta')$ and  $\overline{\varphi_R(\psi_1(\theta))}$ are  $\alpha$-close.
\end{enumerate}
\end{definition}  

Being swished is a reciprocal condition:
\begin{proposition}
If $\theta'$ is $(R,\alpha)$-swished from $\theta$, then $\theta$ is $(R,\alpha)$-swished from $\theta'$.
\end{proposition}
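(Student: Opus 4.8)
The plan is to unwind the definition of $(R,\alpha)$-shearing and check both conditions are symmetric in $\theta$ and $\theta'$, using the compatibility of the reflection $\sigma$ with the flow.

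First I would observe that condition (i), namely $\partial^\pm\theta = \partial^\mp\theta'$, is manifestly symmetric: it says $\partial^+\theta = \partial^-\theta'$ and $\partial^-\theta = \partial^+\theta'$, and swapping the roles of $\theta$ and $\theta'$ gives back the same pair of equalities. So condition (i) transfers immediately.

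The real content is condition (ii). By hypothesis, $\psi_1(\theta')$ and $\overline{\varphi_R(\psi_1(\theta))}$ are $\alpha$-close. I want to deduce that $\psi_1(\theta)$ and $\overline{\varphi_R(\psi_1(\theta'))}$ are $\alpha$-close. The key algebraic fact is that $\sigma$ (equivalently the bar operation) is an involution intertwining $\varphi_R$ and $\varphi_{-R}$: by Proposition \ref{pro:bas}(ii), $\overline{\varphi_s(\tau)} = \varphi_{-s}(\overline\tau)$, and $\overline{\overline\tau} = \tau$ since $\sigma$ is an involution in $\sld$. Moreover, since $d$ is chosen invariant under $\omega$ and right-$\sld$-invariance is not assumed but the bar map $\tau\mapsto\tau\cdot\sigma$ is a fixed right translation, I should check (or invoke) that the bar map is an isometry for $d$, or at least bi-Lipschitz with a constant absorbed into the hypothesis — actually, since the statement asserts the same constant $\alpha$, I will need the bar map to be a genuine isometry for $d$; this follows because $d$ is a left-$\ms G$-invariant Riemannian metric and one may arrange it to also be right-$\sigma$-invariant (or note that on each component $\mc G\cong\ms G$ the bar map is a fixed right translation conjugate to $\omega$-type symmetries, hence isometric for the chosen $\omega$-invariant metric). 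Granting this, apply the bar map to the hypothesis: $\overline{\psi_1(\theta')}$ and $\overline{\overline{\varphi_R(\psi_1(\theta))}} = \varphi_R(\psi_1(\theta))$ are $\alpha$-close. Then apply $\varphi_{-R}$, which is an isometry for $d$ along the relevant orbits — wait, $\varphi_{-R}$ is not globally isometric, so instead I rewrite: $\varphi_R(\psi_1(\theta))$ is $\alpha$-close to $\overline{\psi_1(\theta')}$, i.e. $\psi_1(\theta)$ is $\alpha$-close to $\varphi_{-R}(\overline{\psi_1(\theta')}) = \overline{\varphi_R(\psi_1(\theta'))}$ by Proposition \ref{pro:bas}(ii). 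This is exactly condition (ii) for "$\theta$ is $(R,\alpha)$-sheared from $\theta'$", provided the displacement by $\varphi_{-R}$ preserves distance — but here I only moved the point equation across an exact intertwining identity, not an inequality through a flow, so no distance distortion occurs: the identity $\varphi_{-R}(\overline{\psi_1(\theta')}) = \overline{\varphi_R(\psi_1(\theta'))}$ is an equality of tripods, and I simply substitute it.

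The main obstacle I anticipate is the bookkeeping around isometry of the bar map $\sigma$ for the chosen metric $d$: the paper only insists $d$ is left-$\ms G$-invariant and $\omega$-invariant, so I must either note that $\sigma$ lies in the group generated by $\omega$ (it does not, since $\sigma$ has order $2$ and $\omega$ order $3$, but together they generate the symmetric group $S_3$ acting on the three boundary points, and one can enlarge the invariance assumption to all of $S_3$ for free) or absorb a fixed bi-Lipschitz constant — but the latter breaks the clean statement with the same $\alpha$. The cleanest fix is to have declared $d$ invariant under the full $S_3$ of symmetries; assuming that (it costs nothing and is consistent with "invariant under $\omega$"), the proof above goes through verbatim, and the reciprocity is a one-line consequence of Proposition \ref{pro:bas}(ii) together with $\sigma^2 = \operatorname{Id}$.
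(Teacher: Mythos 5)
Your handling of condition (i) and of the potential worry about $\sigma$-invariance of the metric $d$ is fine (the paper implicitly relies on $\sigma$-invariance too, and your observation that one can and should take $d$ invariant under the full $S_3$-action is reasonable). But there is a genuine gap in the step you wave away, and it is precisely the step the paper's proof is about.

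You go from
\[
d\bigl(\varphi_R(\psi_1(\theta)),\overline{\psi_1(\theta')}\bigr)\leq\alpha
\qquad\text{to}\qquad
d\bigl(\psi_1(\theta),\varphi_{-R}(\overline{\psi_1(\theta')})\bigr)\leq\alpha
\]
by writing ``i.e.'' and then claiming that ``no distance distortion occurs'' because the identity $\varphi_{-R}(\overline{\psi_1(\theta')})=\overline{\varphi_R(\psi_1(\theta'))}$ is exact. But that identity only relabels one of the two points; it does nothing to justify applying $\varphi_{-R}$ to both arguments of $d$. Passing from $d(\varphi_R X, Y)\leq\alpha$ to $d(X,\varphi_{-R} Y)\leq\alpha$ \emph{is} pushing an inequality through the flow, and $\varphi_{-R}$ is not a global isometry of $\mc G$: left-invariance does not give right-invariance, and in fact $\varphi_{-R}$ contracts and expands along the unipotent directions (Proposition \ref{pro:bas}\eqref{pro:bas:cont}). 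So this step is not valid as stated.

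What rescues the argument --- and what the paper's proof uses --- is the observation that condition~(i), $\partial^\pm\theta=\partial^\mp\theta'$, forces $\varphi_R(\psi_1(\theta))$ and $\overline{\psi_1(\theta')}$ to lie on the \emph{same right $\ms L_0$-orbit}: both tripods have the same $\partial^+$ and $\partial^-$ (the flow preserves $\partial^\pm$, and $\sigma$ swaps them, so $\overline{\psi_1(\theta')}$ inherits $\partial^\pm\theta$ from $\partial^\mp\theta'$). Proposition \ref{pro:bas}\eqref{pro:iso-central} says that $\varphi_t$ acts by isometries along $\ms L_0$-orbits, and it is \emph{only on that orbit} that one may apply $\varphi_{-R}$ to both arguments of $d$ without distortion. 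Your proof never records that the two tripods share boundary data, so it never makes contact with Proposition \ref{pro:bas}\eqref{pro:iso-central}; as a result the ``i.e.'' is unsupported. If you insert the one-line observation about the common $\ms L_0$-orbit and cite Proposition \ref{pro:bas}\eqref{pro:iso-central}, your argument becomes essentially the paper's.
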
 

\begin{proof} We have 
$
d(\sigma(\varphi_R(\theta)),\theta')=d(\varphi_R(\theta),\sigma(\theta'))
$.
Since $\partial^\pm\theta=\partial^\mp\theta'$, $\sigma(\theta')=(\theta)g$ for some $g\in L_0$. Since by Proposition \ref{pro:bas}\ \eqref{pro:iso-central},
$\varphi_R$ acts by isometries on the orbits of $\ms L_0$, we get 
$$
d(\sigma(\varphi_R(\theta)),\theta')=d(\varphi_R(\theta),\sigma(\theta'))=d\left(\theta,\varphi_{-R}\left(\sigma(\theta')\right)\right).
$$
But, by Proposition \ref{pro:bas} again,
$
\varphi_{-R}\circ\sigma=\sigma\circ\varphi_{R}
$.
The result follows.
	\end{proof}

\subsection{Paths of quasi-tripods and coplanar paths of  tripods}

\begin{figure}[h]
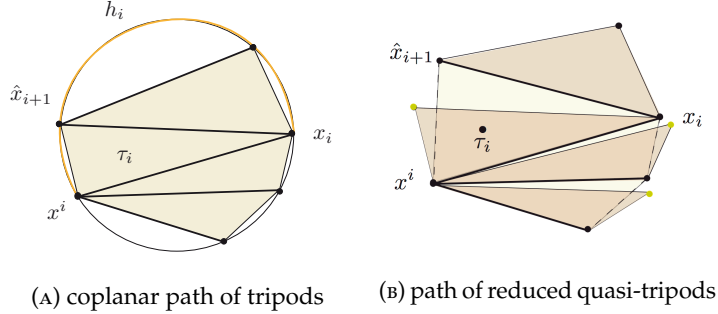

 \centering
 \begin{subfigure}[ht]{0.35\textwidth}
 \centering
 \includegraphics[width=\textwidth]{zigzag.pdf}
 \caption{coplanar path of tripods}\label{fig:2a}
 \end{subfigure}
 \ \ \ \
 \begin{subfigure}[ht]{0.35\textwidth}		
 \centering
		\includegraphics[width=\textwidth]{path-tripoda.pdf}
\caption{path of reduced quasi-tripods}\label{fig:2b}
	\end{subfigure}
\caption{A deformation of a path of quasi-tripods}
 \label{fig:zdz}
\end{figure}
\subsubsection{Swished paths of quasi-tripods and their model}

Let $\vect{R}(N)=(R_0,\ldots, R_{N})$ be a finite sequence of positive numbers.

\begin{definition}{\sc [coplanar paths of  tripods]}
 An {\em $\vect{R}(N)$-swished coplanar path of  tripods} is a sequence of tripods $\vect{\tau}(N)=(\tau_0,\ldots\tau_{N})$ such that $\tau_{i+1}$ is $R_i$-swished from $\omega^{n_i}\tau_i$, where $n_i\in \{1,2\}$. The sequence $(n_1,\ldots,n_N)$ is the {\em combinatorics of the path}\index{Combinatorics of a path}.
\end{definition}
We remark that a coplanar path of  tripods consists of pairwise coplanar tripods and is totally determined up to the action of ${\G}$ by $\vect{R}(N)$ and the combinatorics. 
These coplanar paths of  tripods will represent the model situation and we need to deform them.

\begin{definition}{\sc [paths of quasi-tripods]}
 An {\em $(\vect{R}(N),\epsilon)$-swished path of quasi-tripods}\index{Path of quasi-tripods} is a sequence of  $\epsilon$-quasi tripods $\vect{\theta}(N)=(\theta_0,\ldots\theta_{N})$, 
 and such that $\theta_{i+1}$ is $(R_i,\epsilon)$-swished from $\omega^{n_i}\theta_i$, where $n_i\in \{1,2\}$. The sequence 
 $(n_1,\ldots,n_N)$ is the {\em combinatorics of the path}.

A {\em model} \index{Model of a path} of  an  $(\vect{R}(N),\epsilon)$-swished path of quasi-tripods is an $\vect{R}(N)$-swished coplanar path of  tripods with the same combinatorics.

\end{definition}
Let us introduce some notation and terminology: $\partial\theta_i$, $\partial\theta_{i+1}$  and $\partial\theta_{i-1}$ have exactly one point in common denoted $x_i$ and called the {\em pivot} of $\theta_i$\index{Pivot}.
\rmks Observe that given a path of quasi-tripods, 
\begin{enumerate}
\item There exists some constant $M$, so that any  $(\vect{R}(N),\epsilon)$-swished path of quasi-tripods give rise to an  $(\vect{R}(N), M\epsilon)$-swished path of quasi-tripods with the same vertices but which are all reduced. In the sequel, we shall mostly consider such reduced paths of quasi-tripods.
\item From the previous items, in the  case of reduced path, the sequence of triangles  $(\theta_0,\ldots,\theta_{N})$ is actually determined by the sequence of (not necessarily coplanar) tripods $(\dt{\theta_0},\ldots,\dt{\theta_{N}})$.
\end{enumerate}
One immediately have

\begin{proposition}\label{pro:model}
	Any $(\vect{R}(N),\epsilon)$-swished path of quasi-tripods admits a model which is unique up to the action of 
	${\G}$.
	\end{proposition}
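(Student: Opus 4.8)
The statement is essentially a bookkeeping consequence of the principal-torsor structure of $\mc G$ (Section~\ref{subsec:tripods}) together with Proposition~\ref{pro:bas}(i). The key observation I would start from is that every elementary operation occurring in the definition of a coplanar path of tripods is a right translation by a fixed element of the subgroup $\ms S_0$: by the formulas of Section~\ref{sec:act}, $\omega$ is right translation by $R_\omega$, the reflection $\overline{\,\cdot\,}$ is right translation by $\sigma$, and $\varphi_s=R_{\exp(sa_0)}$, so ``passing to the $R$-shear'' of a tripod is right translation by a fixed element $g_R\in\ms S_0$ depending only on $R$ (assembled from $\exp(Ra_0)$ and the fixed elements $\sigma$, $R_\omega$). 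In particular each such operation is a \emph{bijection} of $\mc G$ which, by Proposition~\ref{pro:bas}(i), \emph{commutes with the left $\ms G$-action}; hence the assertion ``$\tau'$ is $R$-sheared from $\omega^{n}\tau$'' both determines $\tau'$ uniquely from $\tau$ and is $\ms G$-equivariant.

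For existence I would fix an arbitrary tripod $\tau_0\in\mc G$ and define $\tau_{i+1}$ recursively to be the $R_i$-shear of $\omega^{n_i}\tau_i$, where $(n_1,\dots,n_N)$ is the combinatorics of the given path of quasi-tripods. Since all these operations keep one inside a single right $\ms S_0$-orbit, every $\tau_i$ lies in the $\ms S_0$-orbit of $\tau_0$, so the tripods $\tau_0,\dots,\tau_N$ are pairwise coplanar; and by construction the path $(\tau_0,\dots,\tau_N)$ is $\vect R(N)$-sheared with combinatorics $(n_1,\dots,n_N)$. It is therefore a model of the given $(\vect R(N),\epsilon)$-sheared path of quasi-tripods.

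For uniqueness, let $(\tau_i)$ and $(\tau'_i)$ be two models. Since $\mc G$ is a left principal $\Aut(\ms G)$-torsor and $\ms G$ acts simply transitively on each connected component of $\mc G$, I would choose $g$ with $g\tau_0=\tau'_0$, taken in $\ms G$ once $\tau_0$ and $\tau'_0$ are seen to lie in the same component (and in $\Aut(\ms G)$ in general). Then I would induct on $i$: assuming $g\tau_i=\tau'_i$, the equivariance recorded above gives
$$
g\tau_{i+1}=g\cdot\bigl(R_i\text{-shear of }\omega^{n_i}\tau_i\bigr)=R_i\text{-shear of }\omega^{n_i}(g\tau_i)=R_i\text{-shear of }\omega^{n_i}\tau'_i=\tau'_{i+1},
$$
so $g\tau_i=\tau'_i$ for every $i$ and the two models differ by the action of $g$.

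I do not expect a genuine obstacle here: the only points that need a word of care are that the $R$-shear is an honest map and not merely a relation — which is exactly what the explicit description of $\varphi_s$, $\sigma$ and $\omega$ as right translations by fixed group elements provides — and the mild connected-component bookkeeping in the uniqueness step, where ``unique up to the action of $\ms G$'' should be read as ``unique up to $\ms G$ once the two models are placed over the same component of $\mc G$'', and up to $\Aut(\ms G)$ without restriction.
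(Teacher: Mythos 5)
Your proof is correct and supplies the argument the paper treats as immediate: the proposition is preceded by ``One immediately have'' and no proof is given, and the paper has already remarked, right after introducing coplanar paths of tripods, that such a path ``is totally determined up to the action of ${\G}$ by $\vect{R}(N)$ and the combinatorics.'' Your route — observing that each elementary step (shear, $\omega$, reflection) is right translation by a fixed element of $\ms G_0$, hence a $\ms G$-equivariant bijection of $\mc G$, then inducting for both existence and uniqueness — is exactly the intended one, and your parenthetical about connected components and $\Aut(\G)$ versus $\G$ is a fair (if minor) precision of the stated form of uniqueness.
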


\subsubsection{Coplanar paths of  tripods and sequence of chords} 

To a reduced path of quasi-tripods $\vect{\theta}(N)$ we associate a {\em path of chords}\index{Path of chords} $$\vect{h}(N)=(h_0,\ldots,h_{N})$$ such that $h_i\defeq h_{\dt{\theta_i}}$ has $x_i$ and $x^i$ as extremities. Observe, that the subsequence of triangles  $(\theta_0,\ldots,\theta_{N-1})$ is actually determined by the sequence of chords $(h_0,\ldots,h_{N})$.

In the sequel, by an abuse of language, we shall call the sequence of chords $\vect{h}(N)$ a path of quasi-tripods as well.

Observe that for a coplanar path of  tripods the associated path of chords is so that $(h_i,h_{i+1})$ is nested.

\subsubsection{Deformation of coplanar paths of  tripods}	Let $\vect{\tau}=(\tau_0,\ldots,\tau_{N})$ be a coplanar path of  tripods. 

\begin{definition}{\sc [Deformation of paths]}
	 A {\em deformation of $\vect{\tau}$}\index{Deformation fo a path} is a sequence $v=(g_0,\ldots, g_{N_1})$ with $g_i\in \ms P_{x_i}$, the stabilizer of $x_i$ in ${\G}$, where $x_i$ is the pivot of $\tau_i$.
	 The  deformation is an {\em $\epsilon$-deformation} if furthermore
$ d_{\tau_i}(g_i,\id)\leq  \epsilon$.

	Given a deformation $v=(g_0,\ldots, g_{N-1})$,  the {\em deformed path of quasi-tripods} is  the  path of quasi-tripods $\vect{\theta}^v=(\theta_0^v,\ldots,\theta_{N}^v)$ where
  \begin{eqnarray}
     \hbox{for }&i<N,&\, \theta_i^v=(b_i\tau_i, b_i\tau_i^-, b_i\tau_i^+,b_{i+1}\tau_i^0),\cr
       \hbox{for }&i=N,&\, \theta_N^v=(b_N\tau_N, b_N\tau_N^-, b_N\tau_N^+,b_N\tau_N^0)\ ,
  	\end{eqnarray}
  	where $b_0=\id$ and $b_i=g_0\circ\ldots\circ g_{i-1}$.

\end{definition}

From the point of view of sequence of chords, the sequence of chords associated to the deformed coplanar path of  tripods as above is  
$$\vect{h^v}\defeq(h_0^v,\ldots, h_N^v)\defeq (b_0\cdotp h_0,\ldots, b_N\cdotp h_N),$$
where  $(h_0,\ldots,h_N)$ is the sequence of chords associated to $\vect{\tau}$.

\subsection{Deformation of coplanar paths of  tripods and swished path of quasi-tripods} 
We want to relate our various notions and we have the following  two propositions.

\begin{proposition}\label{pro:modeldef} There exists a constant $\bM$ only depending on ${\G}$, so that 
given an $(\vect{R}(N),\epsilon)$-swished path of reduced quasi-tripods $\vect{\theta}$ with model $\vect{\tau}$, there exists a unique $\bM\epsilon$-deformation $v$ so that $\vect{\theta}=g\cdotp\vect{\tau^v}$ for some $g$ in ${\G}$.
\end{proposition}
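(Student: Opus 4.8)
The plan is to reconstruct the deformation $v = (g_0,\ldots,g_{N-1})$ inductively, one index at a time, using the foot maps and the shearing relation to pin down each $g_i$ up to the ambiguity that is already quotiented out by the leading action of $\ms G$. First I would normalize: act by an element of $\ms G$ so that the interior $\dt\theta_0$ of the first quasi-tripod agrees with the first tripod $\tau_0$ of the model; this uses that both are reduced and that $\ms G$ acts transitively on $\mc G$, and it fixes the freedom in the statement. For the inductive step, suppose $b_i \tau_i$ has been matched to (a tripod $\alpha_i\epsilon$-close to) $\dt{\theta_i}$, where $b_i = g_0\cdots g_{i-1}$ and $\alpha_i$ is a controlled multiple of $\epsilon$. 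I want to produce $g_i \in \ms P_{x_i}$ with $d_{\tau_i}(g_i,\id) \leq \bM'\epsilon$ so that $b_{i+1}\tau_{i+1} = g_0\cdots g_i \cdot \tau_{i+1}$ is again $\alpha_{i+1}\epsilon$-close to $\dt{\theta_{i+1}}$, and so that the vertex data of $\theta_i^v$ matches that of $\theta_i$; the pivot $x_i$ is the common vertex, which is exactly why $g_i$ is required to stabilize $x_i$.

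The mechanism for extracting $g_i$ is the shearing relation combined with Lemma~\ref{lem:footmap} and Proposition~\ref{pro:flow-foot}. Both $\theta_{i+1}$ and $\tau_{i+1}$ are obtained from $\theta_i$, resp.\ $\tau_i$, by $R_i$-shearing after applying $\omega^{n_i}$: in the model this is exact, $\tau_{i+1}$ is $R_i$-sheared from $\omega^{n_i}\tau_i$; in the path, $\psi_1(\theta_{i+1})$ is $\epsilon$-close to $\overline{\varphi_{R_i}(\psi_1(\omega^{n_i}\theta_i))}$. Applying the foot map to the reduced quasi-tripod $\theta_{i+1}$ recovers its interior up to $\bM_1\epsilon$ (the displayed inequality after Definition~\ref{def:footmap}), and Proposition~\ref{pro:flow-foot} together with Proposition~\ref{pro:bas}\eqref{pro:bas:cont}–\eqref{pro:iso-central} propagates the $\epsilon$-closeness through the flow $\varphi_{R_i}$ and the involution $\sigma$ with a loss only by a universal constant — crucially \emph{not} by a factor growing with $R_i$, because shearing moves along central leaves on which the flow is isometric. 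Composing: $b_{i+1}\tau_{i+1}$ and $\dt{\theta_{i+1}}$ are two tripods with the same pair $\partial^\pm$ (namely $\partial^\mp$ of the previous stage, which is preserved under the allowed modification) that lie within controlled distance; the element carrying one to the other therefore lies in the stabilizer $\ms L^0$ of that transverse pair, and in particular fixes the pivot $x_i$, and has $d_{\tau_i}(g_i,\id)$ bounded by $\bM'\epsilon$ via the metric-equivalence estimate Proposition~\ref{A-B}, inequality~\eqref{ineq:contrdtaud}. This is the definition of an $\bM\epsilon$-deformation once we absorb all universal constants into a single $\bM$.

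For uniqueness, suppose $v$ and $v'$ are two $\bM\epsilon$-deformations with $g\cdot\vect{\tau^v} = g'\cdot\vect{\tau^{v'}} = \vect\theta$. Comparing interiors at index $0$ forces $g = g'$ after the normalization; then comparing at each successive index, since $\theta_i^v$ and $\theta_i^{v'}$ have the same interior $b_i\tau_i = b_i'\tau_i$ and hence $b_i = b_i'$, one gets $g_i\tau_i^0 = g_i'\tau_i^0$ while $g_i,g_i'$ both stabilize $(\tau_i^-,\tau_i^+)$; but a tripod is determined by its three vertices, so $g_i = g_i'$, completing the induction. I expect the main obstacle to be the careful bookkeeping of how the additive constants $\alpha_i$ accumulate across the $N$ inductive steps: a priori each step multiplies the error by a universal constant, which would blow up exponentially in $N$. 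The resolution — and the reason the constant $\bM$ in the statement is $N$-independent — is that the deformation element $g_i$ is \emph{measured in the model metric $d_{\tau_i}$}, not in a fixed metric, and each $\theta_i^v$ is compared to the fixed model tripod $\tau_i$ (translated by the exact $b_i$), so the errors do not chain; one only needs $\epsilon$ small enough that every foot map and tubular-neighborhood projection invoked is defined. Making this decoupling precise — exhibiting $g_i$ as depending only on the \emph{local} data near index $i$ rather than on the accumulated $b_i$ — is the technical heart of the argument.
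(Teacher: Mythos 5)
Your proposal lands on the same mechanism the paper uses, and you correctly diagnose why the constant $\bM$ is $N$-independent: the element $g_i$ is extracted from data localized at the pair $(\theta_i,\theta_{i+1})$, and the flow acts by isometry along central leaves (Proposition~\ref{pro:bas}~\eqref{pro:iso-central}), so no factor of $R_i$ or of $N$ can creep in. The uniqueness argument is also essentially right (in fact it is even simpler than you make it: free transitivity of $\ms G$ on tripods and $\theta_i^v = \theta_i^{v'}$ already force $b_i = b_i'$ for every $i$, hence $g_i = g_i'$).

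Where you differ from the paper is the framing, and the difference is worth noting because your framing creates the very problem you then have to dissolve. You propose to normalize at index $0$ and build $b_i = g_0\cdots g_{i-1}$ inductively, which immediately raises the specter of exponential error accumulation; you then spend the last third of the proposal arguing that the accumulation is illusory, and you defer the ``technical heart'' of exhibiting $g_i$ as local. The paper never enters this cul-de-sac. It constructs each $g_i$ in one shot, in parallel: from $\theta_i$ and $\theta_{i+1}$ alone, using the feet $\psi_1(\theta_i)$, $\psi_1(\theta_{i+1})$ and the shearing relation, it manufactures a purpose-built local model pair $(\tau_i,\tau_{i+1})$ (the re-use of the letter $\tau$ in that proof is an abuse — these are auxiliary tripods, not the global model) and declares $g_i \in \ms P_{x_i}$ to be the unique element carrying $\omega^{n_i}\psi_1(\theta_i)$ to the local model. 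The estimate $d_{\dt{\theta_i}}(g_i,\id)\leq\bM\epsilon$ then follows from inequality~\eqref{ineq:dist-feet}, inequality~\eqref{ineq:contrdtaud}, Corollary~\ref{coro:domega}, and Proposition~\ref{A-B}, with no normalization and no $b_i$ ever appearing. Your version, made precise, amounts to the observation $g_i = h_i^{-1}h_{i+1}$ where $h_j$ is the element with $h_j\tau_j = \dt{\theta_j}$, which is indeed local — but you would still need to carry out that computation, which is exactly what the paper's direct construction accomplishes.

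A few minor imprecisions. For a \emph{reduced} quasi-tripod $\theta$ one has $\psi_1(\theta) = \dt{\theta}$ exactly, not merely up to $\bM_1\epsilon$; what does cost a factor $\bM_2$ is comparing $\omega^{n_i}\psi_1(\theta_i)$ with $\psi_1(\omega^{n_i}\theta_i)$, which is inequality~\eqref{ineq:dist-feet}. You assert that the element carrying $b_{i+1}\tau_{i+1}$ to $\dt{\theta_{i+1}}$ lies in $\ms L^0$ (stabilizer of a transverse pair); the requirement in Definition of a deformation is only $g_i\in\ms P_{x_i}$ (stabilizer of the single pivot), and the paper uses exactly that weaker containment — claiming $\ms L^0$-membership is both stronger than needed and not obviously true. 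Finally, Proposition~\ref{pro:flow-foot} is a plausible tool here but is not in fact invoked; the isometry along central leaves already does the work, because the two tripods being compared after shearing share $\partial^\pm$.
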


\begin{proof}
Given any path of quasi-tripods $\vect{\theta}$. Let $x_i$ be the pivot of $\theta_i$. We know that $\theta_{i+1}$ is $(R_i,\epsilon)$-swished from $\omega^{n_i}\theta_i$.

Let then $\tau_{i}$, so that  $\psi_1(\theta_{i+1})$ is  $R_i$-swished from $\tau_i$,  and symmetrically $\tau_{i+1}$ the tripod $R_i$-swished from $\omega^{n_i}\psi_1(\theta_{i})$.

Since ${\G}$ acts transitively on the space of tripods and commutes with the right $\sld$-action, there exists a unique $g_i\in \ms P_{x_i}$ so
$$
g_i(\omega^{n_i}\psi_1(\theta_i))=\tau_i, \ \ g_i(\tau_{i+1})=\psi_1(\theta_{i+1}),
$$
We have thus recovered $\vect{\theta}$ as a $(g_0,\ldots,g_{N-1})$- deformation of its model. It remains to show that this is an $\bM\epsilon$-deformation, for some $\bM$. 
$$
d(g_i(\omega^{n_i}\psi_1(\theta_i)),\omega^{n_i}\psi_1(\theta_i))
\leq d(\tau_i, \psi_1(\omega^{n_i}\theta_i))+d(\omega^{n_i}\psi_1(\theta_i), \psi_1(\omega^{n_i}\theta_i))\ .
$$
Since $\theta_{i+1}$ is $(R_i,\epsilon)$-swished from  $\omega^{n_i}\theta_{i})$, we have $d(\tau_i, \psi_1(\omega^{n_i}\theta_i))\leq \epsilon$. Moreover, since $\theta_{i}$ is a quasi-tripod,
by inequality \eqref{ineq:dist-feet}: $d(\omega^{n_i}\psi_1(\theta_i), \psi_1(\omega^{n_i}\theta_i))\leq \bM_2\epsilon$. Thus
$$
d(g_i(\omega^{n_i}\psi_1(\theta_i),\omega^{n_i}\psi_1(\theta_i))\leq (\bM_2+1)\epsilon\ .
$$
Then inequality \eqref{ineq:contrdtaud}  and Corollary \ref{coro:domega} yields,
$$
d_{\psi_{1}(\theta_i)}(g_i,\id)\leq C ^2 d_{\omega^{n_i}\psi_{1}(\theta_i)}(g_i,\id)\leq B_0\epsilon.
$$
for some constant $B_0$ only depending on ${\G}$.
Using Proposition \ref{A-B},  this yields that there exists $\bM$ only depending on ${\G}$, so that
$$
d_{\dt{\theta_i}}(g_i,\id)\leq \bM\epsilon.
$$
This yields the result. 	
\end{proof}

\section{Cones, nested tripods and chords}\label{sec:cone}

We will describe geometric notions that generalize the inclusion of intervals in $\bf P^1(\mathbb R)$ (which corresponds to the case of $\sld$):  we will introduce {\em chords} which generalize intervals as well as the notions of squeezing and nesting which 
replace -- in a quantitative way-- the notion of being included for intervals. We will study how nesting and squeezing is invariant under perturbations. 

Our motto in this paper is that we can phrase all the geometry that we need using the notions of tripods and their associated dynamics, circles and the assignment of a metric to a tripod. These will be the basic geometric objects that we will manipulate throughout all the paper.
\subsection{Cones and nested tripods}
\begin{definition}{\sc [Cone and nested tripods]}\index{Cone}\index{$C_\alpha(\tau)$|see{Cone}}
Given a tripod  $\tau$ and a positive number $\alpha$, the {\em $\alpha$-cone} of $\tau$ is the subset of $\gp$ defined by
$$
C_\alpha(\tau)\defeq\{u\in\gp  \,|\, d_\tau(\partial^0\tau,u)\leq\alpha\}.
$$
Let $\alpha$ and $\kappa$  be positive numbers. A pair of tripods $(\tau_0,\tau_1)$ is {\em $(\alpha,\kappa)$-nested}\index{Nested tripods}\index{$(\alpha,\kappa)$-nested} if 
\begin{eqnarray}
C_\alpha(\tau_{1})&\subset& C_{\kappa\cdotp \alpha}(\tau_0)\ ,\\
 \forall 
	u,v\in C_{\alpha}(\tau_1)\ , \ \ \ 
	d_{\tau_0}(u,v)&\leq& \kappa \cdotp d_{\tau_1}(u,v)\ . \label{def:nest1}
\end{eqnarray}	
We write this symbolically as 
$
C_\alpha(\tau_{1})\prec  \kappa\cdotp  C_{\kappa\cdotp \alpha}(\tau_0)\ .
$
\end{definition}

The following immediate transitivity property justifies our symbolic notation.

\begin{lemma}{\sc[Composing cones]}\label{pro:composcone}
Assume $(\tau_0,\tau_1)$ is $(\alpha\cdotp\kappa_2,\kappa_1)$-nested and $(\tau_1,\tau_2)$ is $(\alpha, \kappa_2)$-nested,  then
$(\tau_0,\tau_2)$ is $(\alpha,\kappa_1\cdotp \kappa_2)$-nested. Or in other words
$$
C_\alpha(\tau_2)\prec\kappa_2 C_{\kappa_2\alpha}(\tau_1)\hbox{ and } C_{\kappa_2\alpha}(\tau_1)\prec\kappa_1 C_{\kappa_1\kappa_2\alpha}(\tau_0)\implies C_{\alpha}(\tau_2)\prec \kappa_1\kappa_2 C_{\kappa_1\kappa_2\alpha}(\tau_0)\ .
$$
\end{lemma}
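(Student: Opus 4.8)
The plan is to unwind the two definitions in the hypotheses and check that the conclusion's two requirements—the cone inclusion and the metric comparison on the smaller cone—follow immediately. Since $(\tau_0,\tau_1)$ is $(\alpha\kappa_2,\kappa_1)$-nested, we have $C_{\alpha\kappa_2}(\tau_1)\subset C_{\kappa_1\kappa_2\alpha}(\tau_0)$ together with $d_{\tau_0}(u,v)\leq\kappa_1 d_{\tau_1}(u,v)$ for all $u,v\in C_{\alpha\kappa_2}(\tau_1)$. Since $(\tau_1,\tau_2)$ is $(\alpha,\kappa_2)$-nested, we have $C_\alpha(\tau_2)\subset C_{\kappa_2\alpha}(\tau_1)$ and $d_{\tau_1}(u,v)\leq\kappa_2 d_{\tau_2}(u,v)$ for all $u,v\in C_\alpha(\tau_2)$.

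First I would establish the inclusion: $C_\alpha(\tau_2)\subset C_{\kappa_2\alpha}(\tau_1)=C_{\alpha\kappa_2}(\tau_1)\subset C_{\kappa_1\kappa_2\alpha}(\tau_0)$, which is exactly the first clause of $(\alpha,\kappa_1\kappa_2)$-nestedness. Then for the metric comparison, take any $u,v\in C_\alpha(\tau_2)$. By the inclusion just noted, $u,v$ also lie in $C_{\alpha\kappa_2}(\tau_1)$, so both inequalities above apply and chain: $d_{\tau_0}(u,v)\leq\kappa_1 d_{\tau_1}(u,v)\leq\kappa_1\kappa_2 d_{\tau_2}(u,v)$. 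This is precisely the second clause, so $(\tau_0,\tau_2)$ is $(\alpha,\kappa_1\kappa_2)$-nested, i.e.\ $C_\alpha(\tau_2)\prec\kappa_1\kappa_2\, C_{\kappa_1\kappa_2\alpha}(\tau_0)$.

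There is essentially no obstacle here: the lemma is a bookkeeping statement, and the only point requiring a moment's care is making sure the radii match up—that the cone $C_{\alpha\kappa_2}(\tau_1)$ appearing as the \emph{smaller} cone in the first nesting relation is the same as the cone $C_{\kappa_2\alpha}(\tau_1)$ appearing as the \emph{larger} cone in the second. That is why the first hypothesis is stated with parameter $\alpha\kappa_2$ rather than $\alpha$. Once that alignment is observed, both conclusions are one-line consequences of transitivity of inclusion and transitivity of the multiplicative Lipschitz bound, and this matches the symbolic rewriting rule the notation $\prec$ was designed to encode.
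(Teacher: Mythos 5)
Your proof is correct and is precisely the direct unwinding of the definitions that the authors had in mind when they labelled the lemma ``immediate'' (the paper itself offers no written proof). You correctly identify the one small point of care, namely matching the radius $\alpha\kappa_2$ of the inner cone of the first nesting with the radius of the outer cone of the second; nothing more is needed.
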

 
\subsubsection{Convergent sequence of cones}
We say a sequence of tripods $\{\tau_i\}_{i\in \{1,\ldots,N\}}$ -- where $N$ is finite of infinite -- defines a {\em $(\alpha,\kappa)$-contracting sequence of cones}\index{Contracting sequence of cones} if for all $i$, the pair  $(\tau_i,\tau_{i+1})$ is $(\alpha,\kappa)$-nested and $\kappa<\frac{1}{2}$

As a corollary of Lemma \ref{pro:composcone} one gets, 
\begin{corollary}{\sc[Convergence Corollary]} \label{coro-conv-nest}There exists a positive constant $\alpha_3$ so that 
If $\{\tau_i\}_{i\in\mathbb N}$ defines an infinite $(\alpha,\kappa)$-contracting sequence of cones, with $\kappa< \frac{1}{2}$ and $\alpha\leq \alpha_3$, then there exists a point $x\in\gp$ called the {\em limit of the contracting sequence of cones}\index{Limit of a sequence of cone} such that
$$
\bigcap_{i=1}^\infty C_\alpha(\tau_i)=\{x\}.
$$ 
Moreover, for all $n$, for all $q$, for all $u,v$ in $C_\alpha(\tau_{n+q})$ we have
\begin{equation}
	d_{\tau_n}(u,v)\leq  \frac{1}{2^{q}}d_{\tau_{n+q}}(u,v)\leq \frac{1}{2^{q-1}}\alpha.\label{eq:convseq}
\end{equation}
 \end{corollary}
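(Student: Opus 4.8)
The plan is to reduce everything to iterating the transitivity Lemma \ref{pro:composcone} and then passing to the limit using completeness of $\gp$. First I would unwind the definition: a $(\alpha,\kappa)$-contracting sequence means that for each $i$ the pair $(\tau_i,\tau_{i+1})$ is $(\alpha,\kappa)$-nested, i.e. $C_\alpha(\tau_{i+1})\subset C_{\kappa\alpha}(\tau_i)$ and $d_{\tau_i}\leq\kappa\, d_{\tau_{i+1}}$ on $C_\alpha(\tau_{i+1})$. The point is that I need a \emph{telescoped} version relating $\tau_n$ to $\tau_{n+q}$, not just consecutive indices. So I would prove by induction on $q$, using Lemma \ref{pro:composcone}, that $(\tau_n,\tau_{n+q})$ is $(\alpha,\kappa^q)$-nested, being slightly careful that the Lemma as stated composes a $(\alpha\kappa_2,\kappa_1)$-nested pair with an $(\alpha,\kappa_2)$-nested pair. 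Concretely: $(\tau_{n+q-1},\tau_{n+q})$ is $(\alpha,\kappa)$-nested, and by induction $(\tau_n,\tau_{n+q-1})$ is $(\kappa\alpha,\kappa^{q-1})$-nested (which follows from the $(\alpha',\kappa)$-nested hypotheses at scale $\alpha'=\kappa\alpha\le\alpha$ — here one needs that the contracting hypothesis holds for the relevant range of scales, or one restricts to a single scale $\alpha$ and notes all the cones $C_\alpha(\tau_i)$ are nested, so the smaller scale statements follow automatically from monotonicity of $C_\beta(\tau)$ in $\beta$). Applying Lemma \ref{pro:composcone} with $\kappa_1=\kappa^{q-1}$, $\kappa_2=\kappa$ gives that $(\tau_n,\tau_{n+q})$ is $(\alpha,\kappa^q)$-nested. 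In particular $C_\alpha(\tau_{n+q})\subset C_{\kappa^q\alpha}(\tau_n)$ and $d_{\tau_n}(u,v)\le\kappa^q d_{\tau_{n+q}}(u,v)$ for $u,v\in C_\alpha(\tau_{n+q})$. Since $\kappa<1/2$, $\kappa^q\le 2^{-q}$, which is exactly the first inequality of \eqref{eq:convseq}; the second follows because $d_{\tau_{n+q}}$-diameter of $C_\alpha(\tau_{n+q})$ is at most $2\alpha$ (two points each within $\alpha$ of $\partial^0\tau_{n+q}$), giving $\kappa^q\cdot 2\alpha\le 2^{-q}\cdot 2\alpha=2^{-(q-1)}\alpha$.

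For the intersection being a single point, I would argue as follows. The sets $C_\alpha(\tau_i)$ are nested decreasing closed subsets of the compact space $\gp$, hence their intersection is a nonempty closed set. To see it is a singleton, take $x,y\in\bigcap_i C_\alpha(\tau_i)$; then for every $q$, $x,y\in C_\alpha(\tau_q)$, and by the telescoped estimate with $n$ fixed (say $n=1$) we get $d_{\tau_1}(x,y)\le 2^{-(q-1)}\alpha\to 0$, so $x=y$. The only subtlety is that a priori $d_{\tau_1}$ is a genuine metric on $\gp$ only on the region where it is comparable to a fixed background metric — but $C_\alpha(\tau_1)$ is contained in a fixed neighborhood of $\partial^0\tau_1$ for $\alpha$ small, and there $d_{\tau_1}$ is an honest metric (this is why the hypothesis $\alpha\le\alpha_3$ is needed, with $\alpha_3$ chosen via Proposition \ref{A-B} or the local-metric discussion so that $d_\tau$ restricted to an $\alpha_3$-cone is bi-Lipschitz to the background Riemannian distance). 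So $d_{\tau_1}(x,y)=0$ does force $x=y$.

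The main obstacle, I expect, is bookkeeping the scales correctly when iterating Lemma \ref{pro:composcone}: the Lemma shifts the scale parameter by a factor of $\kappa$ at each composition, so one must check that all the scales that appear stay $\le\alpha_3$ (they do, since they are $\kappa^j\alpha\le\alpha\le\alpha_3$) and that the contracting hypothesis, stated at the single scale $\alpha$, actually implies the nested-ness statements at the smaller scales $\kappa^j\alpha$ that the composition Lemma wants as input. This last point is the only place requiring a small additional observation: if $(\tau_i,\tau_{i+1})$ is $(\alpha,\kappa)$-nested then it is also $(\beta,\kappa)$-nested for every $\beta\le\alpha$, because $C_\beta(\tau_{i+1})\subset C_\alpha(\tau_{i+1})$ so both the inclusion $C_\beta(\tau_{i+1})\subset C_{\kappa\beta}(\tau_i)$ and the Lipschitz inequality are inherited (the inclusion at scale $\beta$ uses that $C_\beta(\tau_{i+1})\subset C_\alpha(\tau_{i+1})\subset C_{\kappa\alpha}(\tau_i)$ together with the fact that points of $C_\beta(\tau_{i+1})$ are within $\kappa\beta$, not just $\kappa\alpha$, of $\partial^0\tau_i$ — which again follows from the Lipschitz bound $d_{\tau_i}(\partial^0\tau_i,u)\le d_{\tau_i}(\partial^0\tau_i,\partial^0\tau_{i+1})$-type estimates, or more cleanly by re-reading the definition of nested at the smaller scale directly). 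Once this monotonicity-in-scale remark is in place, the induction and the limit argument are both routine, and I would choose $\alpha_3$ to be the radius below which $d_\tau$-cones are genuine metric balls comparable to the background metric, uniformly in $\tau$ by $\ms G$-equivariance and compactness.
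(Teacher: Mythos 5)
Your proposal has the right skeleton and reaches the correct conclusion, but it contains a genuine gap in the ``monotonicity-in-scale'' claim, and I want to be precise about both the gap and the fact that the Corollary is true anyway for a simpler reason, so that the repair is clear.

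The gap: you assert that $(\alpha,\kappa)$-nested implies $(\beta,\kappa)$-nested for $\beta\leq\alpha$, and you need this (with $\beta=\kappa\alpha$) to feed the composition Lemma~\ref{pro:composcone} at the scales it requires. The Lipschitz half of the definition is indeed inherited on the smaller cone, but the inclusion half is not. Unwinding: you need $C_\beta(\tau_1)\subset C_{\kappa\beta}(\tau_0)$, i.e.\ that any $u$ with $d_{\tau_1}(\partial^0\tau_1,u)\leq\beta$ satisfies $d_{\tau_0}(\partial^0\tau_0,u)\leq\kappa\beta$. Split via the triangle inequality at $\partial^0\tau_1$: the term $d_{\tau_0}(\partial^0\tau_1,u)\leq\kappa\beta$ is fine by the Lipschitz estimate, but the other term $d_{\tau_0}(\partial^0\tau_0,\partial^0\tau_1)$ is only controlled by $\kappa\alpha$ (from $\partial^0\tau_1\in C_{\kappa\alpha}(\tau_0)$), not by $\kappa\beta$. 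Nothing in the hypotheses forces $\partial^0\tau_0=\partial^0\tau_1$, so this term does not go away. Hence the iterated nesting $(\tau_n,\tau_{n+q})$ being $(\alpha,\kappa^q)$-nested — which requires the inclusion $C_\alpha(\tau_{n+q})\subset C_{\kappa^q\alpha}(\tau_n)$ — is not established by your argument. (In fact, a direct telescoping of the centers suggests only the much weaker $C_\alpha(\tau_{n+q})\subset C_{(1+\frac{\kappa}{1-\kappa})\alpha}(\tau_n)$.)

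The repair, which also simplifies the proof: the sharp inclusion is never used. What the Corollary needs is only that the cones are nested \emph{at the single scale $\alpha$}, plus the iterated Lipschitz inequality, and both hold by a one-line induction with no scale change. For (i): $C_\alpha(\tau_{i+1})\subset C_{\kappa\alpha}(\tau_i)\subset C_\alpha(\tau_i)$ since $\kappa<1$, so the $C_\alpha(\tau_i)$ are a decreasing chain of closed sets in the compact space $\gp$ and have nonempty intersection. For (ii): if $u,v\in C_\alpha(\tau_{n+q})$ then by (i) $u,v\in C_\alpha(\tau_{n+j})$ for all $j\leq q$, so the Lipschitz part of the definition of $(\alpha,\kappa)$-nested applies at each consecutive step, giving
$$
d_{\tau_n}(u,v)\;\leq\;\kappa\,d_{\tau_{n+1}}(u,v)\;\leq\;\cdots\;\leq\;\kappa^q\,d_{\tau_{n+q}}(u,v)\;\leq\;\tfrac{1}{2^q}\,d_{\tau_{n+q}}(u,v)\;\leq\;\tfrac{1}{2^{q-1}}\alpha\,,
$$
the last two steps using $\kappa<1/2$ and $\mathrm{diam}_{d_{\tau_{n+q}}}C_\alpha(\tau_{n+q})\leq 2\alpha$. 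This is exactly \eqref{eq:convseq}, and then the singleton follows as you say: for $x,y$ in the intersection, $d_{\tau_0}(x,y)\leq 2^{1-q}\alpha\to 0$, and since $d_{\tau_0}$ is a genuine Riemannian distance on $\gp$ this forces $x=y$. So you do not even need the detour through Lemma~\ref{pro:composcone}; your worry about the scales is correct, but it dissolves once you notice that only the Lipschitz chain is needed.

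One smaller remark: your caution about $d_\tau$ ``only being a metric near $\partial^0\tau$'' is not necessary — $d_\tau$ is by construction the distance of a Riemannian metric on all of $\gp$, so $d_{\tau_0}(x,y)=0$ directly gives $x=y$. The role of $\alpha_3$ here is just to keep $\alpha$ in the range where the nesting machinery of the surrounding sections applies; it does not enter the convergence argument itself.
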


We then write $x=\lim_{i\to\infty}\tau_i$. 
\begin{proof} 
This follows at once from the fact that
$
C_\alpha(\tau_{n+p})\prec \frac{1}{2^p} C_{\frac{1}{2^p}\alpha}( \tau_{n}) 
$;
\end{proof}

\subsubsection{Deforming nested cones}
The next proposition will be very helpful in the sequel by proving the notion of being nested is stable under sufficiently small deformations.

\begin{lemma}{\sc[Deforming nested pair of tripods]}\label{pro:def-nt} There exists a constant $\beta_0$ only depending on ${\G}$, such that if $\beta\leq \beta_0$ then if 
\begin{itemize}
	\item The pair of tripod $(\tau_0,\tau_1)$ is $(\beta, \kappa/2)$-nested, with $\beta\leq \beta_0$,
	\item The element $g$ in ${\G}$ is so that  
	$d_{\tau_0}(\id  ,g)\leq \frac{\kappa\cdotp\beta}{2}$,
	\end{itemize}

Then the pair $(\tau_0,g(\tau_1))$ is   $(\beta,\kappa)$-nested
\end{lemma}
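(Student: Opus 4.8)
The statement asserts that nestedness is stable under a small perturbation $g$ fixing the pivot $x_1=\partial^+\tau_1=\partial^-\tau_0$ (or the appropriate common flag), provided $g$ is $\frac{\kappa\beta}{2}$-close to the identity in the $d_{\tau_0}$-metric. The plan is to unwind the two defining conditions of $(\beta,\kappa)$-nestedness for the pair $(\tau_0,g(\tau_1))$ and check each against the hypotheses on $(\tau_0,\tau_1)$ together with the control on $g$.

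\emph{Step 1: cone inclusion.} We must show $C_\beta(g(\tau_1))\subset C_{\kappa\beta}(\tau_0)$. By $\ms G$-equivariance of the metric assignment (property \eqref{hyp:1-dist}), $C_\beta(g(\tau_1))=g\bigl(C_\beta(\tau_1)\bigr)$. By the hypothesis that $(\tau_0,\tau_1)$ is $(\beta,\kappa/2)$-nested, $C_\beta(\tau_1)\subset C_{\kappa\beta/2}(\tau_0)$, and moreover $d_{\tau_0}\leq\tfrac{\kappa}{2}d_{\tau_1}$ on $C_\beta(\tau_1)$. Now let $u=g(w)$ with $w\in C_\beta(\tau_1)$. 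Then $d_{\tau_0}(\partial^0\tau_0,u)\leq d_{\tau_0}(\partial^0\tau_0,w)+d_{\tau_0}(w,g(w))$. The first term is at most $\kappa\beta/2$ since $w\in C_{\kappa\beta/2}(\tau_0)$; for the second term one uses that $g$ is $\tfrac{\kappa\beta}{2}$-close to $\id$ in $d_{\tau_0}$, which by the definition \eqref{eq:dt-gp} of the metric $d_{\tau_0}$ on $\ms G$ gives $d_{\tau_0}(w,g(w))\leq d_{\tau_0}(g,\id)\leq\kappa\beta/2$. Hence $d_{\tau_0}(\partial^0\tau_0,u)\leq\kappa\beta$, i.e. $u\in C_{\kappa\beta}(\tau_0)$.

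\emph{Step 2: metric comparison.} We must show $d_{\tau_0}(u,v)\leq\kappa\, d_{g(\tau_1)}(u,v)$ for all $u,v\in C_\beta(g(\tau_1))$. Again write $u=g(w)$, $v=g(w')$ with $w,w'\in C_\beta(\tau_1)$, so $d_{g(\tau_1)}(u,v)=d_{\tau_1}(w,w')$ by equivariance. By the nestedness hypothesis, $d_{\tau_0}(w,w')\leq\tfrac{\kappa}{2}d_{\tau_1}(w,w')$. It remains to control the ``defect'' $d_{\tau_0}(u,v)-d_{\tau_0}(w,w')$, i.e. to compare $d_{\tau_0}(g(w),g(w'))$ with $d_{\tau_0}(w,w')$. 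This is where the choice of $\beta_0$ small enters: since $g$ is $C^1$-close to $\id$ (all the relevant points $w,w',g(w),g(w')$ lie in the compact cone $C_{\kappa\beta}(\tau_0)$, and $d_{\tau_0}(g,\id)\leq\kappa\beta_0/2$ is small), the map $g$ is $(1+\text{small})$-bi-Lipschitz for $d_{\tau_0}$ on this region, uniformly in $\tau_0$ by equivariance. Choosing $\beta_0$ small enough that this Lipschitz constant is at most $2$ gives $d_{\tau_0}(g(w),g(w'))\leq 2\,d_{\tau_0}(w,w')\leq 2\cdot\tfrac{\kappa}{2}d_{\tau_1}(w,w')=\kappa\,d_{g(\tau_1)}(u,v)$, as desired.

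\emph{Main obstacle.} The delicate point is Step 2: getting a \emph{uniform} (independent of $\tau_0$) bi-Lipschitz bound on the action of a small element $g$ in the $d_{\tau_0}$-metric. This should follow from the $\ms G$-equivariance of $\tau\mapsto d_\tau$ exactly as in the proof of Proposition \ref{A-B}: fix a reference tripod, note that $g\mapsto(\text{Lipschitz constant of }g\text{ on }d_{\tau_0})$ depends continuously on $g$ and equals $1$ at $g=\id$, and transport by equivariance. One must be slightly careful that $g$ lies in the parabolic $\ms P_{x_1}$ and hence a priori could move points a definite amount even when close to $\id$ in a right-invariant metric — but since $d_{\tau_0}(g,\id)$ is by \eqref{eq:dt-gp} the sup over $\gp$ of the displacement, smallness of $d_{\tau_0}(g,\id)$ genuinely bounds the $C^0$-displacement, and a standard compactness/continuity argument upgrades this to the needed $C^1$ (bi-Lipschitz) control on the relevant compact cone. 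Once $\beta_0$ is fixed accordingly, both steps go through and the lemma follows.
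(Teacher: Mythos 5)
Your proposal is correct and takes essentially the same route as the paper. The only cosmetic difference is that the paper observes it is equivalent to check that $(g^{-1}(\tau_0),\tau_1)$ is nested, which keeps $\tau_1$ and its cone fixed and pushes the whole perturbation onto $\tau_0$; you work on $(\tau_0,g(\tau_1))$ directly. In both versions the cone inclusion is the same triangle inequality $d_{\tau_0}(\partial^0\tau_0,g(w))\leq d_{\tau_0}(\partial^0\tau_0,w)+d_{\tau_0}(g,\id)\leq \kappa\beta$, and the metric comparison is the same appeal to Proposition \ref{A-B} (which you rephrase as a uniform bi-Lipschitz bound for $g$ in the $d_{\tau_0}$-metric; by $\ms G$-equivariance $d_{\tau_0}(g(w),g(w'))=d_{g^{-1}\tau_0}(w,w')$, so your Step 2 estimate is literally the inequality $d_{g^{-1}\tau_0}\leq 2\,d_{\tau_0}$ the paper cites).
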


\begin{proof} Let  $z=\partial^0\tau_0$. It is equivalent to prove that $(g^{-1}(\tau_0),\tau_1)$ is $(\beta,\kappa)$-nested. Let $u\in C_\beta(\tau_1)\subset C_{\frac{\kappa\cdotp\beta}{2}}(\tau_0)$.
In particular,
$
d_{\tau_0}(u,z)\leq \frac{\kappa\cdotp\beta}{2}$.
It follows that
\begin{eqnarray}
d_{g^{-1}(\tau_0)}(u,g^{-1}(z))=d_{\tau_0}(g(u),z)
\leq d_{\tau_0}(g(u),u)+d_{\tau_0}(u,z)
\leq  \frac{\kappa\cdotp \beta}{2} +  \frac{k\cdotp \beta}{2}=k\cdotp\beta.
\end{eqnarray} Thus 
$$
C_\beta(\tau_1)\subset C_{\frac{k\cdotp\beta}{2}}(\tau_0)\subset C_{k\cdotp\beta}(g^{-1}(\tau_0)).
$$
Moreover for $\beta$ small enough, by Proposition \ref{A-B}, $d_{\tau_0}\leq 2d_{g^{-1}(\tau_0)}$ thus for all $(u,v)\in C_\beta(\tau_1)$

$$
d_{g^{-1}\tau_0}(u,v)\leq  2 d_{\tau_0}(u,v)\leq \kappa d_{\tau_1}(u,v)\ .
$$
Thus  $(g^{-1}(\tau_0), \tau_1)$ is $(\beta,k)$-nested.
\end{proof}
\subsubsection{Sliding out}

\begin{lemma}\label{coro:shcont}
		There exists constants  $k$ and $\delta_0$ depending only on the group ${\G}$, such that if   $\tau_0$ is a tripod $R$-swished from $\tau_1$ then	$$ \forall 
	u,v\in C_{\delta_0}(\tau_1), \ \ \ 
	d_{\tau_0}(u,v)\leq k\cdotp d_{\tau_1}(u,v)\ .
	$$ 
\end{lemma}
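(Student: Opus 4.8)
The plan is to reduce the statement to the already-established contraction estimate in Proposition~\ref{pro:cone-contrac0} (Aligning tripods) by exhibiting $\tau_0$ and $\tau_1$ as the two endpoint-tripods of an aligned configuration. First I recall what $R$-sheared means: if $\tau_0$ is $R$-sheared from $\tau_1$ then $\psi_1(\tau_0)$ (here $\tau_0$ is a genuine tripod, so $\psi_1(\tau_0)=\tau_0$ up to a bounded error, in fact exactly $\tau_0$ for tripods) is $\overline{\varphi_R(\psi_1(\tau_1))}=\overline{\varphi_R(\tau_1)}$; equivalently $\tau_0=\overline{\varphi_R(\tau_1)}$ up to the action of some $g\in\ms L_0$ fixing $(\partial^+\tau_1,\partial^-\tau_1)$, which does not move the relevant data. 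Thus, after passing to $\overline{\tau_0}$ (which has the same associated metric, $d_{\overline{\tau_0}}=d_{\tau_0}$ by Proposition~\ref{all-cyc-same}), I may assume $\tau_0=\varphi_R(\tau_1)$, i.e. $\tau_0$ and $\tau_1$ are coplanar, lie in the same totally geodesic hyperbolic plane $\hh$ with boundary circle $C$, share the same endpoints $z_0=\partial^-\tau_1=\partial^-\tau_0$ and $z_1=\partial^+\tau_1=\partial^+\tau_0$, and $s(\tau_0)$, $s(\tau_1)$ lie on the geodesic $]z_0,z_1[$. In particular $(z_0,\tau_1,\tau_0,z_1)$ is aligned in the sense of Section~\ref{sec:cone contract}, with $d(\tau_0,\tau_1)$ comparable to $R$ (by the final estimate in the proof of Proposition~\ref{pro:cone-contrac0}).

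Next I would apply Proposition~\ref{pro:cone-contrac0} with the roles $\tau_1 \leftrightarrow \tau_{1}$, $\tau_0\leftrightarrow\tau_{0}$ and the point $w:=\partial^0\tau_1$. We need $d_{\tau_1}(w,z_1)\le 3\pi/4$: since $\partial^0\tau_1=\phi^{\tau_1}(1)$ while $z_1=\partial^+\tau_1=\phi^{\tau_1}(\infty)$, and the circle map is an isometry from $\Rp$ with the visual metric of $(0,1,\infty)$ onto $(\gp,d_{\tau_1})$ (property~\eqref{hyp:2-dist}), the distance $d_{\tau_1}(\partial^0\tau_1,\partial^+\tau_1)$ is a fixed constant $<\pi$, and by possibly using $\omega(\tau_1)$ or shrinking we may arrange it to be $\le 3\pi/4$; in any case it is a universal constant depending only on $\ms G$. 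Then Proposition~\ref{pro:cone-contrac0} gives a universal $\alpha_0>0$ such that whenever $u,v\in C$ satisfy $d_{\tau_1}(w,u)\le\alpha_0$ and $d_{\tau_1}(w,v)\le\alpha_0$, one has $d_{\tau_0}(u,v)\le \tfrac{\bK}{4}e^{-c\,d(\tau_0,\tau_1)}d_{\tau_1}(u,v)\le \tfrac{\bK}{4}d_{\tau_1}(u,v)$. Setting $\delta_0:=\alpha_0$ and $k:=\bK/4$ (or any constant $\ge\bK/4$), the cone $C_{\delta_0}(\tau_1)=\{u:d_{\tau_1}(\partial^0\tau_1,u)\le\delta_0\}=\{u: d_{\tau_1}(w,u)\le\delta_0\}$ consists exactly of points within $\alpha_0$ of $w$, so the hypothesis of Proposition~\ref{pro:cone-contrac0} is met for any $u,v\in C_{\delta_0}(\tau_1)$, and the desired inequality $d_{\tau_0}(u,v)\le k\cdot d_{\tau_1}(u,v)$ follows.

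The only subtlety — and the main point requiring care rather than genuine difficulty — is the passage from "$\tau_0$ is $R$-sheared from $\tau_1$'' (a statement about the feet $\psi_1$ and the reflection $\sigma$) to the clean aligned configuration used in Proposition~\ref{pro:cone-contrac0}. Here one must check that the extra element $g\in\ms L_0$ relating $\sigma(\varphi_R(\tau_1))$ to $\tau_0$ fixes $\partial^\pm$ (true since $\ms L_0$ is the stabilizer of $(\partial^+,\partial^-)$ by \eqref{act-cent}), hence does not affect $z_0,z_1$, and that it changes $s(\tau_0)$ only along the geodesic $]z_0,z_1[$ (since $\varphi_t$ acts by isometries on leaves of $\mc L_0$, Proposition~\ref{pro:iso-central}), so that $(z_0,\tau_1,\tau_0,z_1)$ is still aligned; also that $d(\tau_0,\tau_1)$ remains $\le BR+C$, which is exactly what lets us absorb $e^{-c\,d(\tau_0,\tau_1)}\le 1$. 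All of these are bookkeeping consequences of Proposition~\ref{pro:bas} and the definitions in Section~\ref{sec:act}. Once this reduction is in place the lemma is immediate from Proposition~\ref{pro:cone-contrac0}, with $k$ and $\delta_0$ depending only on $\ms G$ as required.
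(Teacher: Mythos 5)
Your argument is correct and takes the same route as the paper's own (one-sentence) proof: it is a direct application of Proposition~\ref{pro:cone-contrac0}, with $\{z_0,z_1\}$ the appropriate ordering of $\{\partial^-\tau_1,\partial^+\tau_1\}$, $w=\partial^0\tau_1$, $\delta_0:=\alpha_0$ and $k:=\bK/4$, absorbing the factor $e^{-c\,d(\tau_0,\tau_1)}\le 1$.

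The opening paragraph of your proof, however --- the part you flag at the end as ``the only subtlety'' --- rests on a misreading of the hypothesis. For genuine tripods, ``$\tau_0$ is $R$-sheared from $\tau_1$'' is defined in Section~\ref{sec:act} in the description of the shearing flow and means exactly $\tau_0=\varphi_R(\tau_1)$; it is \emph{not} the quasi-tripod Definition~\ref{def:shear-qt}, so there is no reflection $\sigma$, no foot map $\psi_1$, and no spurious element $g\in\ms L_0$ to account for. With the correct reading, $\tau_0$ and $\tau_1$ lie on the same chord by definition, $s(\tau_0)$ and $s(\tau_1)$ sit on the geodesic $]\partial^-\tau_1,\partial^+\tau_1[$ automatically, and the alignment is immediate; the ``passage to the clean aligned configuration'' that you describe as the main point requiring care is simply not in the statement. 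Your detour happens to be harmless only because $d_{\overline{\tau}}=d_\tau$ would absorb the spurious $\sigma$ in any case, but it is noise. Two smaller points: $d_{\tau_1}(\partial^0\tau_1,\partial^{\pm}\tau_1)=\pi/2<3\pi/4$ holds automatically by the visual-metric isometry, so the parenthetical about ``possibly using $\omega(\tau_1)$'' is unnecessary; and there is a $\pm$ inconsistency in the alignment bookkeeping --- you write the aligned quadruple as $(z_0,\tau_1,\tau_0,z_1)$ with $z_0=\partial^-\tau_1$ and then apply the proposition ``with the roles $\tau_0\leftrightarrow\tau_0$, $\tau_1\leftrightarrow\tau_1$,'' whereas the proof of Proposition~\ref{pro:cone-contrac0} implicitly needs $s(\tau_0)$ on the $z_0$-side. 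For $R>0$ this forces $z_0=\partial^+\tau_1$, $z_1=\partial^-\tau_1$; the paper's terse proof exhibits the same $\pm$ slip, quietly covered by its ``$R<0$ is symmetric'' remark, but you should state the choice consistently.
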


\begin{proof} This is an immediate consequence of  Proposition \ref{pro:cone-contrac0}, with  (for $R>0$) $$
z_0=\partial^-\tau_1\ , \ z_1=\partial^+\tau_1\ , \ w=\partial^0\tau_1\ .$$
The case $R<0$ being symmetric.
\end{proof}

\subsection{Chords and slivers} A {\em chord}\index{Chord} is an orbit of the shearing flow. We denote by $h_\tau$ the chord associated to a tripod $\tau$ and denote  $\check h_{\tau}\defeq h_{\sigma(\tau)}$. Observe that all pairs of tripods in $\check h_\tau\times h_\tau$ are coplanar. We also say that $h_\tau$ goes from $\partial^-\tau$ to $\partial^+\tau$ which are its {\em end points}.

The  {\em $\alpha$-sliver} of a chord $H$ is the subset of $\gp$ defined by  \index{Sliver}\index{$S_\alpha(H)$}
$$
S_\alpha(H)\defeq\bigcup_{\tau\in H}C_\alpha(\tau)\subset\gp.
$$
In particular,
$
S_0(H)=\{\partial^0\tau\mid\tau\in H\}
$.
Observe that  two  points $a$ and $b$ in the closure of  $S_0(H)$ define a unique chord $H_{ab}$ which is coplanar to $H$ so that $S_0(H_{a,b})$ is a subinterval of  $S_0(H)$ with end points $a$ and $b$.

\subsubsection{Nested, squeezed and controlled pairs of chords}\label{netsted:squeezed:controlled} We shall need the following definitions
\begin{enumerate}\index{Nested pair of chords}
	\item The pair $(H_0,H_1)$ of chords is {\em nested} if $H_0\not=H_1$, $H_0$ and $H_1$ are coplanar and  $S_0(H_1)\subset S_0(H_0)$. Given a nested pair $(H_0,H_1)$  -- with no end points in common -- the {\em projection} of $H_1$ on $H_0$  is the tripod $\tau_0\in H_0$, so that $s(\tau_0)$ is the closest point in the geodesic joining the endpoints of $H_0$, to the geodesic joining the end points of  $H_1$. Observe finally  that if $(H_0,H_1)$ is nested, then every pair of tripods in $H_0\cup H_1$ is coplanar
	\item The pair $(H_0,H_1)$ of chords is $(\alpha,k)$-{\em squeezed} if 
	$$
	\exists \tau_0\in H_0, \, \forall \tau_1\in H_1,\, \, \, (\tau_0,\tau_1) \hbox{ is $(\alpha,k)$-nested.}
	$$
	The tripod $\tau_0$ is called a {\em commanding tripod}\index{Commanding tripod} of the pair.
\item The pair $(H_0,H_1)$ of chords is $(\alpha,k)$-{\em controlled}\index{Controlled pair of chords} if 
	$$ \forall \tau_1\in H_1,\,\exists \tau_0\in H_0, \, \, (\tau_0,\tau_1) \hbox{ is $(\alpha,k)$-nested.}
	$$
\item The {\em shift} of two chords $H_0$, $H_1$ is
	$$\index{$\delta(H_0,H_1)$|see{Shift}}\index{Shift}
\delta (H_0,H_1))\defeq\inf\{d(\tau_0,\tau_1)|\tau_0 \in H_0, \tau_1\in H_1\}\ .
$$
\end{enumerate}

\begin{figure}[h]
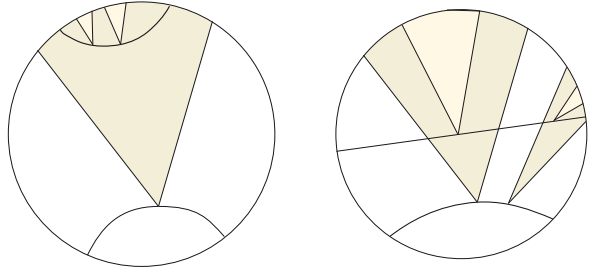

 \centering
 \begin{subfigure}[h]{0.3\textwidth}
 \centering
  \includegraphics[width=\textwidth]{nested-chord.pdf}
  \caption{Squeezed chords}\label{fig:1a}
 \end{subfigure}
\quad    
 \begin{subfigure}[h]{0.3\textwidth}
 \centering
  \includegraphics[width=\textwidth]{controlled-chord.pdf}
  \caption{Controlled chords}\label{fig:1b}
 \end{subfigure}  
 \caption{Controlled and squeezed chords}
 \label{fig:chord}
\end{figure}

\subsubsection{Squeezing nested pair of chords}

In all the sequel $\bK$ is the diffusion constant defined in Proposition \ref{pro:cone-contrac0} and $\bk=\bK^{-1}$ the contraction constant
 
The following   proposition provides our first example of nested  pairs of chords in the coplanar situation.

\begin{proposition}{\sc[Nested pair of chords]}
\label{exis-cont} There exists $\beta_1$ only depending on ${\G}$, and a  decreasing function 
$$
\ell: ]0,\beta_1]\to \mathbb R,
$$
such that for any positive numbers $\beta$  with $\beta\leq\beta_1$,  any nested pair $(H_0,H_1)$ with $\delta(H_0,H_1)\geq\ell(\beta)$  is $(\bK\beta,\bk^{9})$-squeezed. The projection $\tau_0$ of $H_1$ on  $H_0$ is a  commanding tripod of $(H_0,H_1)$  \end{proposition}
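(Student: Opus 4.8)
The plan is to reduce the statement to the coplanar Aligning Tripods estimate of Proposition~\ref{pro:cone-contrac0}. Since $(H_0,H_1)$ is nested, all tripods involved lie in a single totally geodesic hyperbolic plane $\hh$ with boundary circle $C\subset\gp$, and $S_0(H_1)\subset S_0(H_0)$ is a genuine subinterval. Let $\tau_0\in H_0$ be the projection of $H_1$ onto $H_0$, i.e.\ $s(\tau_0)$ is the foot of the perpendicular from the geodesic $\gamma_1$ (joining the endpoints of $H_1$) onto the geodesic $\gamma_0$ (joining the endpoints of $H_0$). I claim $\tau_0$ is the commanding tripod. Fix an arbitrary $\tau_1\in H_1$; we must produce a function $\ell$ so that $\delta(H_0,H_1)\geq\ell(\beta)$ forces $(\tau_0,\tau_1)$ to be $(\bK\beta,\bk^9)$-nested, meaning $C_{\bK\beta}(\tau_1)\subset C_{\bk^9\bK\beta}(\tau_0)$ together with the contraction estimate $d_{\tau_0}\leq \bk^9 d_{\tau_1}$ on $C_{\bK\beta}(\tau_1)$.

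First I would set up the geometry in $\hh$: the endpoints of $H_1$ are two points of $C$, and $\partial^0\tau_1$ sits on the arc $S_0(H_1)$; the $\bK\beta$-cone $C_{\bK\beta}(\tau_1)$ is a small $d_{\tau_1}$-ball around $\partial^0\tau_1$ which, for $\beta\leq\beta_1$, is contained in the basin of attraction used in Proposition~\ref{pro:cone-contrac0} — specifically in the ball of radius $3\pi/4$ around the appropriate endpoint $z_1$ of the chord obtained by aligning. The alignment data $(z_0,\tau_0,\tau_1,z_1)$ is exactly what the nesting provides: there is a geodesic through $s(\tau_0)$ and $s(\tau_1)$ — namely, one can choose $\tau_1$'s position so this holds, and for a general $\tau_1\in H_1$ one slides along $H_1$, which only changes $d(\tau_0,\tau_1)$ by a bounded amount relative to the shift $\delta(H_0,H_1)$. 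The key quantitative input is that $d(\tau_0,\tau_1)\geq c_1\,\delta(H_0,H_1) - c_2$ for constants depending only on $\ms G$ (this is the standard comparison between the shift and the parameter $t_1$ appearing in Proposition~\ref{pro:cone-contrac0}, combined with $d(\tau_0,\tau_1)\leq B t_1 + C$ going the other way). Then Proposition~\ref{pro:cone-contrac0} gives
\[
d_{\tau_0}(u,v)\leq \frac{\bK}{4}\,e^{-c\,d(\tau_0,\tau_1)}\,d_{\tau_1}(u,v)
\]
for $u,v$ in a small cone around $\partial^0\tau_1$; choosing $\ell(\beta)$ large enough (decreasing in $\beta$, since smaller $\beta$ means we can afford a smaller basin but the exponential must beat $\bk^{-9}$ uniformly) makes $\tfrac{\bK}{4}e^{-c\,d(\tau_0,\tau_1)}\leq \bk^9$. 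This yields the contraction inequality \eqref{def:nest1}.

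For the cone inclusion, I would argue: if $u\in C_{\bK\beta}(\tau_1)$ then $d_{\tau_1}(u,\partial^0\tau_1)\leq\bK\beta$, and by the contraction just proved (applied with $v=\partial^0\tau_1$, which also lies in the relevant small cone once $\beta\leq\beta_1$) we get $d_{\tau_0}(u,\partial^0\tau_1)\leq \bk^9\bK\beta$. It remains to bound $d_{\tau_0}(\partial^0\tau_1,\partial^0\tau_0)$: since $\partial^0\tau_1$ lies on the subinterval $S_0(H_1)\subset S_0(H_0)$, and $\tau_0$ is the projection, $\partial^0\tau_1$ is within $d_{\tau_0}$-distance that also contracts exponentially in $\delta(H_0,H_1)$ — again by Proposition~\ref{pro:cone-contrac0} applied to the endpoints of $H_1$ as seen from $H_0$, using Lemma~\ref{coro:shcont}-type sliding-out to control the whole interval $S_0(H_1)$. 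So for $\ell(\beta)$ large, $d_{\tau_0}(\partial^0\tau_1,\partial^0\tau_0)$ is also at most, say, $\tfrac12\bk^9\bK\beta$ (one should budget the constants so the sum lands inside $C_{\bk^9\bK\beta}(\tau_0)$ — here I would instead simply take the contraction constant to be $\bk^9$ by design, absorbing the inclusion into the same exponential estimate and the factor $\bk^9$ versus $\bk^{10}$, noting $\bk<1$). Hence $u\in C_{\bk^9\bK\beta}(\tau_0)$, as required, and $\tau_0$ serves as commanding tripod for every $\tau_1\in H_1$ simultaneously because the estimate depends on $\tau_1$ only through the lower bound on $d(\tau_0,\tau_1)\geq\delta(H_0,H_1)\geq\ell(\beta)$.

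The main obstacle I anticipate is bookkeeping the constants so that the single commanding tripod $\tau_0$ works for \emph{all} $\tau_1\in H_1$ at once, and so that the exponent $\bk^9$ (rather than some other power of $\bk$) comes out — this forces a careful choice of how much of the exponential decay in $\delta(H_0,H_1)$ is spent on the contraction inequality versus the cone inclusion, and requires that $\ell$ be monotone decreasing in $\beta$ (smaller $\beta$ keeps us safely inside the $3\pi/4$-basin, so the hypothesis $\beta\leq\beta_1$ is exactly what lets the basin-of-attraction argument of Proposition~\ref{pro:cone-contrac0} apply uniformly). The geometric content — that nesting in the coplanar case is governed by hyperbolic-plane contraction toward an attracting endpoint — is entirely supplied by Proposition~\ref{pro:cone-contrac0}; the work is purely in extracting the stated uniform quantitative form.
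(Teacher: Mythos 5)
Your plan identifies the right ingredients — Proposition~\ref{pro:cone-contrac0} as the engine, the projection $\tau_0$ as the commanding tripod, and the lower bound $d(\tau_0,\tau_1)\geq\delta(H_0,H_1)$ driving the exponential contraction. But the paper does \emph{not} apply the aligning estimate directly to the pair $(\tau_0,\tau_1)$: for each $\tau_1\in H_1$ it first introduces an auxiliary tripod $\check\tau_0\in H_0$ defined by $\partial^0\check\tau_0=\partial^0\tau_1$, applies Proposition~\ref{pro:cone-contrac0} to $(\check\tau_0,\tau_1)$, and only afterwards transfers the conclusion to the fixed $\tau_0$ by observing that $d(\tau_0,\check\tau_0)$ is uniformly small once the shift is large, so that Proposition~\ref{A-B} lets one replace $d_{\check\tau_0}$ by $d_{\tau_0}$ at the cost of a factor $2$. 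This intermediate tripod does two things at once: the angle hypothesis $d_{\tau_1}(w,z_1)\leq 3\pi/4$ holds with $w=\partial^0\tau_1=\partial^0\check\tau_0$ automatically (the paper notes it is in fact $\leq\pi/2$), and the cone inclusion $C_\alpha(\tau_1)\subset C_{k\alpha}(\check\tau_0)$ follows from the contraction alone, because $\partial^0\check\tau_0=\partial^0\tau_1$ kills the extra term you are worrying about.

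Your direct route through $\tau_0$ can be made to work, but as written it has two genuine gaps, both of which $\check\tau_0$ was designed to avoid. First, you need to verify the angle condition $d_{\tau_1}(\partial^0\tau_1,z_1)\leq 3\pi/4$ \emph{uniformly} over $\tau_1\in H_1$ when $z_1$ comes from aligning $(z_0,\tau_0,\tau_1,z_1)$; you wave at this with the ``sliding along $H_1$'' remark, but the accompanying claim that sliding ``only changes $d(\tau_0,\tau_1)$ by a bounded amount relative to the shift'' is false — $d(\tau_0,\tau_1)$ is unbounded as $\tau_1$ ranges over the whole flow orbit $H_1$ — so that argument does not establish the angle bound (the correct reason it holds is that both $\partial^0\tau_1$ and $z_1$ lie in the half-plane on the far side of $\gamma_1$ from $\gamma_0$, which is a nesting fact you do not invoke). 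Second, your cone-inclusion step leaves the term $d_{\tau_0}(\partial^0\tau_1,\partial^0\tau_0)$ to be bounded; you explicitly defer this (``one should budget the constants''), and the gesture at ``Lemma~\ref{coro:shcont}-type sliding-out'' is not a proof. Both issues vanish in the paper's two-step argument precisely because $\partial^0\check\tau_0=\partial^0\tau_1$ by construction, after which the passage from $\check\tau_0$ to $\tau_0$ is a single application of Proposition~\ref{A-B}.
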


Observe in particular that $S_0(H_1)\subset S_{\bK\beta}(H_1) \subset C_{\bk^{8}\beta}(\tau_0)$.
The choice of $\bk^{9}$ is rather arbitrary in this proposition but will make our life easier later on.

\begin{proof} Let $\tau_1\in H_1$. Let then  $\check\tau_0\in H_0$, with $\partial^0\check\tau_0=\partial^0\tau_1$.  Let as in paragraph 
\ref{sec:cone contract}, $s_0$, $s_1$, $z_0$ and $z_1$ be constructed from $\check\tau_0$ and $\tau_1$.  One notices that $d_{\tau_1}(z,z_1)\leq \pi/2$. Then given $\epsilon$,  for $\delta(H_0,H_1)$ large enough   the second part of  Proposition \ref{pro:cone-contrac0} yields that $(\check\tau_0,\tau_1)$ is $(\alpha,\epsilon)$-nested.

 Observe now, that for any $\beta$, there exists $\delta_1$ so that $\delta(H_0,H_1)>\delta_1$ yields $d(\tau_0,\check\tau_0)\leq \beta$, where $\tau_0$ is the projection of $H_1$ on $H_0$ Thus, using Proposition  \ref{A-B} for $\beta$ small enough, we have that the pair of tripods $(\check\tau_0,\tau_1)$ is $(\alpha,2\cdotp\epsilon)$-nested. In other words, since $\tau_0$ is independent from the choice of $\tau_1$, we have proved that the pair of chords $(H_0,H_1)$ is $(\alpha,2\cdotp\epsilon)$-squeezed for $\delta(H_0,H_1)$ large enough.

\end{proof}

\subsubsection{Controlling nested pair of chords}

Our second result about coplanar pair of chords is the following
\begin{lemma}{\sc[Controlling diffusion]}\label{exis-cont2} There exists a  positive numbers $\beta_2$, with $\beta_2\leq\beta_3$ only depending on ${\G}$, such that given a positive $\beta\leq\beta_2$,
  a nested pair $(H_0,H_1)$,
then $(H_0,H_1)$ is $(\xi\beta,\bK)$ controlled for all $\xi\leq 1$.

Assume furthermore that $\ell_0\geq\delta(H_0,H_1)$, where $\ell_0\geq\ell(\beta)$
Then, given   $\tau_1\in H_1$, there exists $H_2$ so that \begin{itemize}
	\item $(H_1,H_2)$ is nested,
	\item $0<\delta(H_0,H_2)\leq\ell_0$,
	\item $(\tau_0,\tau_1)$ is $(\bk^{2}\beta,\bK)$-nested, where $\tau_0$ is the projection of $H_2$  on  $H_0$.
\end{itemize}  
\end{lemma}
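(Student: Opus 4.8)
The statement has two parts, and I treat them separately.

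\emph{The controlled property.} Fix a nested pair $(H_0,H_1)$ and $\tau_1\in H_1$. Since $\partial^0\tau_1\in S_0(H_1)\subset S_0(H_0)$, there is a unique $\check\tau_0\in H_0$ with $\partial^0\check\tau_0=\partial^0\tau_1$; moreover $\check\tau_0$ and $\tau_1$ are coplanar, because every pair of tripods in $H_0\cup H_1$ is. Exactly as in the proof of Proposition~\ref{exis-cont}, form the geodesic through $s(\check\tau_0)$ and $s(\tau_1)$, with ideal endpoints $z_0,z_1$, so that $(z_0,\check\tau_0,\tau_1,z_1)$ is aligned; the elementary hyperbolic-planar computation used there — $s(\tau_1)$ is the foot of the perpendicular from $\partial^0\tau_1$ to the geodesic carrying $H_1$, while $H_0$ lies on the opposite side of that geodesic — shows $d_{\tau_1}(\partial^0\tau_1,z_1)\leq\pi/2$. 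Take $\beta_2$ small enough that $\beta_2\leq\beta_1$ and $\beta_2\leq\alpha_0$, where $c$ and $\alpha_0$ are the constants of Proposition~\ref{pro:cone-contrac0}. If $\beta\leq\beta_2$, $\xi\leq1$, and $u,v\in C_{\xi\beta}(\tau_1)$, then $d_{\tau_1}(\partial^0\tau_1,u),d_{\tau_1}(\partial^0\tau_1,v)\leq\xi\beta\leq\alpha_0$, and Proposition~\ref{pro:cone-contrac0}, applied with $w=\partial^0\tau_1$, gives
$$
d_{\check\tau_0}(u,v)\ \leq\ \frac{\bK}{4}\,e^{-c\,d(\check\tau_0,\tau_1)}\,d_{\tau_1}(u,v)\ \leq\ \bK\,d_{\tau_1}(u,v).
$$
Taking $v=\partial^0\tau_1=\partial^0\check\tau_0$ gives $C_{\xi\beta}(\tau_1)\subset C_{\bK\xi\beta}(\check\tau_0)$. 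Hence $(\check\tau_0,\tau_1)$ is $(\xi\beta,\bK)$-nested, so $(H_0,H_1)$ is $(\xi\beta,\bK)$-controlled for every $\xi\leq1$.

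\emph{Reduction of the refined statement.} Assume moreover $\ell(\beta)\leq\ell_0$ and $\delta(H_0,H_1)\leq\ell_0$, and fix $\tau_1\in H_1$; write $g_0,g_1$ for the geodesics carrying $H_0,H_1$. As $(H_0,H_1)$ is nested, $\partial^0\tau_1$ and $g_0$ lie on opposite sides of $g_1$, so that: (i) for \emph{any} $\tau_0\in H_0$ the geodesic through $s(\tau_0)$ and $s(\tau_1)$, prolonged past $s(\tau_1)$, exits into the closed arc $\overline{S_0(H_1)}$ (the $\partial^0\tau_1$-side of $g_1$); and (ii) that arc has $d_{\tau_1}$-length $\pi$ with $\partial^0\tau_1$ as its midpoint, hence lies within $d_{\tau_1}$-distance $\pi/2$ of $\partial^0\tau_1$. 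Thus for \emph{any} $\tau_0\in H_0$ the resulting quadruple $(z_0,\tau_0,\tau_1,z_1)$ is aligned with $d_{\tau_1}(\partial^0\tau_1,z_1)\leq\pi/2<3\pi/4$, and Proposition~\ref{pro:cone-contrac0} (as in the first part) yields $d_{\tau_0}(u,v)\leq\frac{\bK}{4}\,d_{\tau_1}(u,v)$ for all $u,v\in C_{\bk^2\beta}(\tau_1)$. Taking $v=\partial^0\tau_1$ there, we see that \emph{as soon as} $d_{\tau_0}(\partial^0\tau_0,\partial^0\tau_1)\leq\frac34\bk\beta$ we get $C_{\bk^2\beta}(\tau_1)\subset C_{\bk\beta}(\tau_0)=C_{\bK\cdot\bk^2\beta}(\tau_0)$, so that $(\tau_0,\tau_1)$ is $(\bk^2\beta,\bK)$-nested. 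Hence it suffices to produce $H_2$ nested in $H_1$, with $0<\delta(H_0,H_2)\leq\ell_0$, whose projection $\tau_0$ on $H_0$ satisfies $d_{\tau_0}(\partial^0\tau_0,\partial^0\tau_1)\leq\frac34\bk\beta$.

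\emph{Construction of $H_2$, and the main obstacle.} One builds $H_2$ by sliding a sub-chord of $H_1$ towards $H_0$. If $\delta(H_0,H_1)\geq\ell(\beta)$, Proposition~\ref{exis-cont} already says $(H_0,H_1)$ is $(\bK\beta,\bk^9)$-squeezed with commanding tripod the projection $\hat\tau_0$ of $H_1$ on $H_0$, so that $(\hat\tau_0,\tau_1)$ is $(\bK\beta,\bk^9)$-nested and hence $(\bk^2\beta,\bK)$-nested (using $\bk^2\beta\leq\bK\beta$ and $\bk^9\leq\bk$); one then takes $H_2$ to be a chord just inside $H_1$ whose supporting geodesic is orthogonal to the common perpendicular of $g_0$ and $g_1$, so that its projection on $H_0$ is again $\hat\tau_0$ and $\delta(H_0,H_2)$ stays $\leq\ell_0$. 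If instead $\delta(H_0,H_1)<\ell(\beta)$, one pushes a sub-chord of $H_1$ inward — keeping its supporting geodesic orthogonal to the perpendicular dropped from $\partial^0\tau_1$ onto $g_0$, which forces the projection $\tau_0$ on $H_0$ to equal $\check\tau_0$, so $d_{\tau_0}(\partial^0\tau_0,\partial^0\tau_1)=0$ — until $\delta(H_0,H_2)$ reaches $\ell(\beta)\ (\leq\ell_0)$; the estimate of the second part then concludes. The delicate point, and the only non-routine one, is precisely this joint arrangement: forcing the projection of $H_2$ on $H_0$ to sit (nearly) opposite $\partial^0\tau_1$ tends to move $H_2$ away from $H_0$, increasing $\delta(H_0,H_2)$, whereas the bound $\delta(H_0,H_2)\leq\ell_0$ pulls the other way; the two hypotheses $\ell(\beta)\leq\ell_0$ and $\delta(H_0,H_1)\leq\ell_0$ are exactly what makes the two regimes above compatible with both requirements.
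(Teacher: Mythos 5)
The first half (the controlled property) is correct, and essentially inlines the paper's Proposition~\ref{pro:cnc}, which the paper proves separately and then cites. The second half, however, has a genuine gap in the construction of $H_2$ in the case $\delta(H_0,H_1)<\ell(\beta)$. You parametrize candidate chords $H_2^{(t)}$ by the distance $t$ from $s(\check\tau_0)$ along the perpendicular ray $\gamma$ from $s(\check\tau_0)$ to $\partial^0\tau_1$, with the supporting geodesic $g_2^{(t)}$ orthogonal to $\gamma$ at the point at distance $t$; you then take $t=\ell(\beta)$. But $H_2^{(t)}$ is nested in $H_1$ only for $t$ beyond a threshold $t_{\min}(\tau_1)$, namely once $g_2^{(t)}$ has stopped crossing $g_1$, and this threshold has nothing to do with $\delta(H_0,H_1)$. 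As $\partial^0\tau_1$ approaches an endpoint of $H_1$ (i.e.\ as $\tau_1$ runs to the end of the chord, which the Lemma permits), the ray $\gamma$ becomes nearly parallel to $g_1$, the crossing point of $\gamma$ with $g_1$ recedes to infinity, and $t_{\min}(\tau_1)\to\infty$ while $\delta(H_0,H_1)$ stays fixed. The hypotheses $\ell(\beta)\leq\ell_0$ and $\delta(H_0,H_1)\leq\ell_0$ therefore give no control on $t_{\min}(\tau_1)$, and at $t=\ell(\beta)$ (indeed at any $t\leq\ell_0$) the chord $H_2^{(t)}$ may still cross $g_1$, so $(H_1,H_2)$ need not be nested.

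The underlying difficulty is that you reduced to the sufficient condition $d_{\tau_0}(\partial^0\tau_0,\partial^0\tau_1)\leq\tfrac34\bk\beta$ — a closeness of the $\partial^0$-vertices measured in the metric of the projection $\tau_0$ — which is strictly stronger than what the argument needs and which the nesting and shift constraints cannot jointly accommodate. The paper's Proposition~\ref{pro:cnc} asks only that the cones $C_{\bk^2\beta}(\tau_0)$ and $C_{\bk^2\beta}(\tau_1)$ overlap, and this is arranged while respecting $\delta(H_0,H_2)\leq\ell_0$ as follows: let $H_3$ be the chord with $S_0(H_3)=C_{\bk^2\beta}(\tau_1)\cap\partial_\infty\hh$ (a small sub-chord of $H_1$ around $\partial^0\tau_1$). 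If $\delta(H_0,H_3)\leq\ell_0$, take $H_2=H_3$; then $\partial^0\tau_0\in S_0(H_3)$ lies in both cones. If $\delta(H_0,H_3)>\ell_0$, slide back to an $H_2$ between $H_1$ and $H_3$ with $\delta(H_0,H_2)=\ell_0$ (which exists since $\delta(H_0,H_1)\leq\ell_0\leq\delta(H_0,H_3)$), and the squeezing estimate of Proposition~\ref{exis-cont} (valid because $\ell_0\geq\ell(\beta)$) gives $S_0(H_3)\subset S_0(H_2)\subset C_{\bk^8\beta}(\tau_0)\subset C_{\bk^2\beta}(\tau_0)$, so the nonempty set $S_0(H_3)$ witnesses the overlap. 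The missing observation is that the witness point in the cone intersection need not be $\partial^0\tau_1$ itself, nor does the closeness have to be measured in $d_{\tau_0}$; any point of $S_0(H_3)$ will do.
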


Let us first prove
\begin{proposition}\label{pro:cnc}
	There exists $\alpha_4$ with the following property. Let $(H_0,H_1)$  be two nested chords and $\tau_0\in H_0$, $\tau_1\in H_1$ so that
 for some $\alpha\leq \alpha_4$, $C_\alpha(\tau_0)\cap C_\alpha(\tau_1)\not=\emptyset$.

Then $(\tau_0,\tau_1)$ are $(\alpha,\bK)$-nested.
\end{proposition}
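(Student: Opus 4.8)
The plan is to reduce everything to the coplanar $\sld$-situation captured by Proposition \ref{pro:cone-contrac0} (Aligning tripods), using that $(H_0,H_1)$ nested forces all relevant tripods to be coplanar. First, let $C\subset\gp$ be the common circle carrying $H_0$ and $H_1$, and fix a point $w\in C_\alpha(\tau_0)\cap C_\alpha(\tau_1)$, which exists by hypothesis. Since $(H_0,H_1)$ is nested we have $S_0(H_1)\subset S_0(H_0)$, so the endpoints of $H_1$ separate the endpoints of $H_0$ along the circle $C$; moreover the projection tripod $\tau_0'\in H_0$ of $H_1$ onto $H_0$ is well defined and $(z_0,\tau_0',\tau_1',z_1)$ is aligned for $z_0=\partial^-\tau_0'$, $z_1=\partial^+\tau_0'$ and $\tau_1'\in H_1$ suitably chosen. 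I would first observe that I may replace $\tau_0$ by the projection $\tau_0'$ and $\tau_1$ by an appropriate reparametrization inside $H_1$: by Proposition \ref{A-B}, sliding along the chords changes $d_{\tau_i}$ only by a bounded factor, at the cost of enlarging the nesting constant from $c$ to $\bK$ for a suitable threshold $\alpha_4$. The key point to make precise is that $w\in C_\alpha(\tau_0)\cap C_\alpha(\tau_1)$ with $\alpha$ small forces $w$ to lie in the sliver $S_{O(\alpha)}(H_0)\cap S_{O(\alpha)}(H_1)$ near both $\partial^0$-vertices, hence near the geodesic joining the endpoints of $H_1$; this is exactly the configuration where the geodesic $\gamma$ from $z_0$ to $z_1$ through $s(\tau_0')$, $s(\tau_1')$ passes, so $w$ lands in the basin $B$ of radius $\leq 3\pi/4$ around $z_1$ in the $d_{\tau_1'}$-metric.

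Next I would invoke Proposition \ref{pro:cone-contrac0} directly: for $u,v\in C_\alpha(\tau_1)$ with $\alpha\leq\alpha_0$, and $d_{\tau_1}(w,z_1)\leq 3\pi/4$, we get $d_{\tau_0}(u,v)\leq\frac{\bK}{4}e^{-cd(\tau_0,\tau_1)}d_{\tau_1}(u,v)\leq\frac{\bK}{4}d_{\tau_1}(u,v)\leq \bK\, d_{\tau_1}(u,v)$, which is the second nesting inequality \eqref{def:nest1} (in fact with room to spare). For the first inclusion $C_\alpha(\tau_1)\subset C_{\bK\alpha}(\tau_0)$, I would argue that $\partial^0\tau_0$ and $\partial^0\tau_1$ are both within $\alpha$ of $w$ in the respective metrics — hence, using the contraction inequality just proved plus the triangle inequality, $d_{\tau_0}(\partial^0\tau_0,u)\leq d_{\tau_0}(\partial^0\tau_0,w)+d_{\tau_0}(w,u)$; the first term is $\leq\alpha$ since $w\in C_\alpha(\tau_0)$, and the second term is controlled by $\bK/4$ times $d_{\tau_1}(w,u)\leq d_{\tau_1}(w,\partial^0\tau_1)+d_{\tau_1}(\partial^0\tau_1,u)\leq 2\alpha$ (both $w$ and $u$ lie in $C_\alpha(\tau_1)$), giving $d_{\tau_0}(\partial^0\tau_0,u)\leq\alpha+\frac{\bK}{2}\alpha\leq\bK\alpha$ once $\bK\geq 2$ (which we may assume, enlarging the diffusion constant if necessary). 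Choosing $\alpha_4\leq\alpha_0$ small enough that all the "$O(\alpha)$" error terms stay inside the neighborhood $U$ of $B$ where \eqref{ineq:hypprop0} applies, and small enough for the Proposition \ref{A-B} bilipschitz comparisons to hold with constant $2$, completes the argument.

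The main obstacle I anticipate is the bookkeeping in the reduction step: Proposition \ref{pro:cone-contrac0} is stated for an \emph{aligned} quadruple $(z_0,\tau_0,\tau_1,z_1)$, whereas here $\tau_0$ and $\tau_1$ are arbitrary tripods on the two nested chords, so I must check that the nonemptiness of $C_\alpha(\tau_0)\cap C_\alpha(\tau_1)$ together with the nesting of $(H_0,H_1)$ genuinely places $w$ in the attracting basin $B$ of the relevant $1$-parameter subgroup — i.e.\ that the common point cannot be "on the far side". This is where smallness of $\alpha_4$ is really used: if $\alpha$ is small, $C_\alpha(\tau_0)$ is a tiny neighborhood of $\partial^0\tau_0\in S_0(H_0)$ and $C_\alpha(\tau_1)$ a tiny neighborhood of $\partial^0\tau_1\in S_0(H_1)\subset S_0(H_0)$, so their intersection being nonempty forces $\partial^0\tau_1$ close to $\partial^0\tau_0$, which (being an interior point of the subinterval $S_0(H_1)$) is automatically uniformly away from the endpoints $z_0,z_1$ of $H_0$, hence in $B$ with margin. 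Once this geometric fact is nailed down, the rest is the routine triangle-inequality chase sketched above.
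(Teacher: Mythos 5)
Your final computations (the application of Proposition~\ref{pro:cone-contrac0} for the second nesting inequality, and the triangle-inequality chase with $w$ for the cone inclusion) do match what the paper does. But the reasoning leading up to them has two genuine gaps, and the second is the heart of the matter.

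First, the proposed reduction from $\tau_0,\tau_1$ to the projection tripods $\tau_0',\tau_1'$ does not work. Proposition~\ref{A-B} only gives a bilipschitz comparison of $d_\tau$ and $d_{\tau'}$ when $d(\tau,\tau')$ is bounded, and nothing in the hypotheses forces $\tau_0$ to be anywhere near the projection $\tau_0'$: the tripods $\tau_0\in H_0$ and $\tau_1\in H_1$ are arbitrary, and ``sliding along the chord'' moves them arbitrarily far. So the claim that this replacement costs only a bounded factor is false, and the detour through $\tau_0',\tau_1'$ has to be abandoned.

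Second — and this is where you correctly sense the obstacle but then resolve it incorrectly — you have misidentified $z_0$ and $z_1$. In the definition of aligned (paragraph~\ref{sec:cone contract}) and in Proposition~\ref{pro:cone-contrac0}, $z_0$ and $z_1$ are the endpoints of the geodesic through $s(\tau_0)$ and $s(\tau_1)$, not the endpoints of $H_0$. In particular, they are not $\partial^\pm\tau_0'$, they depend on the choice of $\tau_0$ and $\tau_1$, and they move as $\tau_1$ slides along $H_1$. Consequently the argument ``$\partial^0\tau_1$ is an interior point of $S_0(H_1)\subset S_0(H_0)$, hence uniformly away from the endpoints $z_0,z_1$ of $H_0$'' is doubly off: not only are $z_0,z_1$ not the endpoints of $H_0$, but an interior point of $S_0(H_1)$ need not be uniformly far from those endpoints anyway (think of $\tau_1$ near an end of $H_1$, especially when $H_0,H_1$ share an endpoint). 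The fact that actually does the work — and which your argument does not establish — is an unconditional piece of plane hyperbolic geometry: for \emph{any} $\tau_0\in H_0$, $\tau_1\in H_1$ with $(H_0,H_1)$ nested, when one forms the aligned quadruple $(z_0,\tau_0,\tau_1,z_1)$, the direction from $s(\tau_1)$ to $z_1$ points into the component of the complement of $H_1$ containing $S_0(H_1)$, and since $\partial^0\tau_1$ is the endpoint of the perpendicular from $s(\tau_1)$ into that same component, $d_{\tau_1}(\partial^0\tau_1,z_1)\leq\pi/2$. This holds with no smallness assumption on $\alpha$ and no replacement of $\tau_0,\tau_1$ by projections. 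Once that observation is in hand, one applies Proposition~\ref{pro:cone-contrac0} directly to the aligned quadruple $(z_0,\tau_0,\tau_1,z_1)$ with reference point $\partial^0\tau_1\in C$ (so that $u,v$ and the common point $w$ of the two cones all lie within $\alpha\leq\alpha_0$ of it), and your final two displayed estimates go through exactly as you wrote them. The smallness of $\alpha_4$ is used only to keep $u,v,w$ within the $\alpha_0$-basin of the reference point, not to constrain the position of $\partial^0\tau_1$ relative to $z_1$.
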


\begin{proof} 
Observe first that if $(z_0,\tau_0,\tau_1,z_1)$ are aligned then in that context $d_{\tau_1}(\partial^0\tau_1,z_1)\leq \pi/2$ -- see figure  \eqref{fig:hyp-weak} --.
 \begin{figure}
  \centering
  \includegraphics[width=0.35\textwidth]{NC1.pdf}
  \caption{Aligned points and angle}
  \label{fig:hyp-weak}
\end{figure} Then
Let $u,v\in C_\alpha(\tau_1)$ and $w\in C_\alpha(\tau_0)\cap C_\alpha(\tau_1)$ then by Proposition \ref{pro:cone-contrac0}.
\begin{eqnarray}
		d_{\tau_0}(u,v)&\leq& \frac{\bK}{4}	d_{\tau_1}(u,v)\ ,\label{eq:1CNC}\\
		d_{\tau_0}(u,\partial^0\tau_0)\leq d_{\tau_0}(u,w)+d_{\tau_0}(w,\partial^0\tau_0)&\leq& \frac{\bK}{4}d_{\tau_1}(u,w)+\alpha\leq \bK\alpha\ .
\end{eqnarray}
Thus from the second equation $C_\alpha(\tau_1)\leq C_{\bK\alpha}(\tau_1)$. This concludes the proof of the proposition
\end{proof}
Let us now move to the proof of Lemma\ref{exis-cont2}:
\begin{proof} 
Let $\tau_1\in H_1$. Let $\hh$ be the associated hyperbolic plane to the coplanar pair $(H_0,H_1)$. 
Let $\tau_0\in H_0$ so that $\partial^0\tau_0=\partial^0\tau_1$. Then $\partial^0\tau^0\in C_{\xi\beta}(\tau_1)\cap C_{\xi\beta}(\tau_0)\not= \emptyset$. We conclude proof of the first assertion
 by  Proposition \ref{pro:cnc}:  
that  $(\tau_0,\tau_1)$ is $(\xi\beta, \bK)$-nested.
\vskip 0.2truecm

Assume now that $\delta(H_0,H_1)\leq\ell_0$. Let $H_3$ so that $S_0(H_3)=C_{\bk^2\beta}(\tau_1)\cap \partial_\infty \hh$. We have two cases.

\vskip 0.1 truecm
\noindent (1)\,\, If $\delta(H_0,H_3)\leq\ell_0$, we can take   $H_2=H_3$,  and  $\tau_0$  the projection of $H_2$ on $H_0$.  Thus  $\partial^0\tau^0\in C_{\bk^2\beta}(\tau_1)\cap C_{\bk^2\beta}(\tau_0)\not= \emptyset$ and    we conclude by Proposition \ref{pro:cnc}: $(\tau_0,\tau_1)$ is $(\bk^2\beta, \bK)$-nested.
\vskip 0.1 truecm
\noindent (2)\,\,  If $\delta(H_0,H_3)\geq\ell_0\geq\ell(\beta)$, a continuity argument shows the existence of $H_2$ such that the pairs $(H_1,H_2)$ and $(H_2,H_3)$ are nested and $\delta(H_0,H_2)=\ell_0$. Let $\tau_0$ be the projection of $H_2$ on $H_0$. Then we have,
$$\left(C_{\bk^2\beta}(\tau_1)\cap\hh\right)=S_0(H_3)\subset S_0(H_2)
\subset \left(C_{\bk^8\beta}(\tau_0)\cap\hh\right)\subset 
\left(C_{\bk^2\beta}(\tau_0)\cap\hh\right)\ ,
$$ 
where the first inclusion follows from the definition of $H_3$, the second by the fact of $(H_2,H_3)$ is nested, and the previous to last one  by Proposition \ref{exis-cont} since $\delta(H_0,H_2)\geq\ell(\beta)$. 
In particular $C_{\bk^2\beta}(\tau_1)\cap C_{\bk^2\beta}(\tau_0)\not=\emptyset$. Again we conclude by   Proposition \ref{pro:cnc} : $(\tau_0,\tau_1)$ is $(\bk^2\beta, \bK)$-nested.

\end{proof}

\section{The Confinement  Lemma}\label{sec:conf-lem}

The main results of this section are the Confinement  Lemma and the Weak Confinement Lemma  that guarantee that a deformed path of quasi-tripods is  squeezed or controlled, provided that the deformation is small enough.

Let us say a coplanar path of  tripods associated to a path of chords $(h_i)_{0\leq i\leq N}$ is a  {\em weak $(\ell,N)$-coplanar path of  tripods}\index{Weak coplanar path of  tripods} if 
\begin{eqnarray}
		\delta(h_0,h_i)&\leq& \ell, \hbox{ for } i< N.
		\end{eqnarray}

A coplanar path of  tripods associated to a sequence of chords $(h_i)_{0\leq i\leq N}$ is  a {\em strong $(\ell,N)$-coplanar  path of  tripods}\index{Strong coplanar path of  tripods} if  furthermore
\begin{eqnarray}
		\delta(h_0,h_N)&\geq& \ell.
\end{eqnarray}.

The main result of this section  is the following, 

\begin{lemma}{\sc[confinement]}
	\label{lem:zigzag} There exists $\beta_3$ only depending on ${\G}$, such that for every $\alpha$ with $\alpha\leq\beta_3$ then there exists $\ell_0(\alpha)$, so that for all $\ell_0\geq\ell_0(\alpha)$, there is  $\eta_0$,  so that for all $N$ 
\begin{itemize}
\item for all weak $(\ell_0,N)$-coplanar paths of  tripods $\vect{\tau}=(\tau_0,\ldots,\tau_N)$, associated to a path of chords
	$
	\vect{h}(N)=(h_0,\ldots,h_N),
	$
	\item for all $\epsilon/N$-deformation $v=\left(g_0,\ldots,g_{N-1}\right)$ with $\epsilon\leq\eta_0$ 
	\end{itemize}
The following holds:
\begin{enumerate}
	\item the pair $(h_0^v,h_N^v)$ is $(\bk^2\alpha,\bK^2)$-controlled,
	\item if furthermore $\vect{h}$ is a strong coplanar path of  tripods then $(h_0^v,h_N^v)$ is $(\alpha,\bk^7)$-squeezed. Moreover $(h_0^v,h_N^v)$ and  $(h_0,h_N)$ both have the same commanding tripod.

\item If finally, $\vect{h}$ is a strong coplanar path  with $\delta(h_0,h_N)=\ell_0$, then $\check\tau_0$, the projection of $h_N$ on $h_0$, is a commanding tripod of $(h_0^v,h_N^v)$.
\end{enumerate}  
\end{lemma}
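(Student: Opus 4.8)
The overall strategy is to propagate the nesting/squeezing properties along the deformed path one step at a time, using the cone-composition Lemma \ref{pro:composcone} as the engine, and the deformation stability Lemma \ref{pro:def-nt} to absorb the small perturbations $g_i$. The key point is that the deformation is an $\epsilon/N$-deformation, so the errors $d_{\tau_i}(g_i,\id)$ are each of size $\epsilon/N$; after conjugating by $b_i$ and carefully tracking how the tripod-metrics $d_{\tau_i}$ compare along the path (using Proposition \ref{A-B} and the contraction along sheared chords from Lemma \ref{coro:shcont}), the accumulated error remains of order $\epsilon$. First I would set up the bookkeeping: write $h_i^v = b_i h_i$ with $b_i = g_0\circ\cdots\circ g_{i-1}$, and observe that since consecutive chords $(h_i,h_{i+1})$ are nested in the model (being a coplanar path of tripods), the commanding tripods interleave. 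The plan is to show inductively that for each $j$, the pair $(h_0^v, h_j^v)$ is $(\bk^{c_j}\alpha, \bK^{d_j})$-squeezed (in the strong case) or controlled (in general), where the exponents $c_j,d_j$ degrade in a controlled way and are ultimately bounded by the constants $7$ and $2$ claimed, thanks to the arbitrary slack built into the exponent $9$ in Proposition \ref{exis-cont}.

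For item (2), the strong case, I would first apply Proposition \ref{exis-cont} to the \emph{model} strong coplanar path: since $\delta(h_0,h_N)\geq\ell_0\geq\ell(\alpha)$, the undeformed pair $(h_0,h_N)$ is $(\bK\alpha,\bk^9)$-squeezed with commanding tripod $\tau_0$ = the projection of $h_N$ on $h_0$. Then I would deform: each $g_i$ conjugation moves the relevant cones by at most $\bM\epsilon/N$ in the $d_{\tau_i}$-metric, and by Lemma \ref{coro:shcont} (sliding out along sheared chords) these displacements, when transported back to $\tau_0$'s metric, are contracted geometrically, so their sum telescopes to something bounded by $C\epsilon$. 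Choosing $\eta_0$ small enough that $C\eta_0$ is below the threshold $\beta_0$ of Lemma \ref{pro:def-nt} and below the gap between $\bk^9$ and $\bk^7$, repeated application of Lemma \ref{pro:def-nt} upgrades the $(\bK\alpha,\bk^9)$-squeezing of the model to $(\alpha,\bk^7)$-squeezing of $(h_0^v,h_N^v)$, with the \emph{same} commanding tripod $\tau_0$ (since Lemma \ref{pro:def-nt} fixes $\tau_0$ and only perturbs $\tau_1\mapsto g(\tau_1)$). This simultaneously gives item (3): when $\delta(h_0,h_N)=\ell_0$ exactly, $\tau_0$ is by definition $\check\tau_0$, the projection of $h_N$ on $h_0$.

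For item (1), the general (weak) case, I cannot use Proposition \ref{exis-cont} directly because $\delta(h_0,h_N)$ may be small. Instead I would use Lemma \ref{exis-cont2}: for each $\tau_1\in h_N$ there is an auxiliary chord $H_2$ with $(h_N,H_2)$ nested, $\delta(h_0,H_2)\leq\ell_0$, and $(\tau_0',\tau_1)$ being $(\bk^2\alpha,\bK)$-nested for $\tau_0'$ the projection of $H_2$ on $h_0$; deforming this along the path and again absorbing errors via Lemma \ref{pro:def-nt}, one gets, for each $\tau_1\in h_N^v$, a tripod in $h_0^v$ nested with it at level $(\bk^2\alpha,\bK^2)$ — which is exactly $(\bk^2\alpha,\bK^2)$-controlled. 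The main obstacle I anticipate is the error accounting: one must verify that conjugating the deformations $g_i$ by $b_i$ and measuring them in the varying metrics $d_{b_i\tau_i}$ does not blow up the bound — this is where the geometric contraction from Lemma \ref{coro:shcont} (or equivalently Proposition \ref{pro:cone-contrac0}) is essential, and where the precise dependence $\ell_0\mapsto\eta_0$ gets fixed. Everything else is a careful but routine induction on $N$ using Lemmas \ref{pro:composcone}, \ref{pro:def-nt}, \ref{exis-cont}, and \ref{exis-cont2}.
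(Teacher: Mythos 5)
Your outline gets the general shape right — squeeze the model via Proposition \ref{exis-cont}, then transfer to the deformed path via Lemma \ref{pro:def-nt} — but the error-accounting mechanism you propose does not work, and the missing step is precisely the key device of the paper's proof.

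The fatal step is the proposed \emph{repeated} application of Lemma \ref{pro:def-nt}. That lemma takes a $(\beta,\kappa/2)$-nested pair to a $(\beta,\kappa)$-nested pair: each invocation doubles the nesting constant. If you peel off the deformations one at a time and apply it $N$ times, the constant degrades by $2^N$, which is unbounded in $N$ — yet the Lemma must hold for all $N$. The slack between $\bk^9$ and $\bk^7$ only absorbs a bounded number of doublings, not $N$ of them. Your accompanying claim that the displacements ``are contracted geometrically, so their sum telescopes'' is also unsubstantiated: Lemma \ref{coro:shcont} gives a \emph{uniform} Lipschitz bound on the transfer of the cone-metric (not geometric decay), and Proposition \ref{pro:cone-contrac0}'s geometric factor applies to distances between points near the attracting fixed point, not to the operator size $d_{\tau_0}(g_i,\id)$ of an element of $\ms P_{x_i}$. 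There is no telescoping of the errors.

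What makes the paper's proof work is a different and essential idea: one bounds the \emph{composed} deformation $b_N=g_0\cdots g_{N-1}$ directly in a single fixed metric $d_{\check\tau_0}$, and then applies Lemma \ref{pro:def-nt} exactly \emph{once} with this total $b_N$. This is the Barrier Proposition \ref{lem:zigzag0}. Its engine is the Confinement Control Proposition \ref{pro:control}, which exploits the fact that each $g_i$ lies in the stabilizer $\ms P_{x_i}$ of a pivot that always falls on the same side of $\check\tau_0$'s chord; conjugation by the shearing flow then keeps $\Ad$ bounded on $\mk p_{x_i}$, giving a uniform constant $k$ (depending only on $\ell_0$, not on $N$) with $d_{\check\tau_0}(g_i,\id)\leq k\, d_{\tau_i}(g_i,\id)\leq k\epsilon/N$. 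Right-invariance of $d_{\check\tau_0}$ then gives $d_{\check\tau_0}(\id,b_N)\leq\sum_i d_{\check\tau_0}(g_i,\id)\leq k\epsilon$: the sum is controlled not by geometric decay but by the fact that there are $N$ terms each of size $O(\epsilon/N)$. Also note that in the strong case one cannot use $h_N$ directly as the reference chord, since the Barrier Proposition requires $\delta(h_0,H)\leq\ell_0$; the paper interpolates a chord $\check h_N$ at shift exactly $\ell_0$ and takes $\check\tau_0$ to be its projection, which is where the commanding-tripod claims in items (2) and (3) really come from. Without Proposition \ref{pro:control} and the one-shot application of Lemma \ref{pro:def-nt} to $b_N$, the induction you sketch cannot close.
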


In the sequel, we shall refer the first case as the {\em Weak Confinement  Lemma} and the second case as the {\em Strong Confinement Lemma}. 

\subsubsection{Controlling deformations from a tripod} 
We first prove a proposition that allows us to control the size of deformation from a tripod depending only on the last and first chords.

\begin{proposition}\label{lem:zigzag0}{\sc [Barrier]} For any positive $\ell_0$, there exists positive constants $k$ and $\eta_1$  so that for all integer $N$ 
\begin{itemize}
\item for all weak $(\ell_0,N)$-coplanar paths of  tripods $\vect{\tau}=(\tau_0,\ldots,\tau_N)$, associated to a path of chords
	$
	\vect{h}(N)=(h_0,\ldots,h_N), 
	$
	\item for all chord $H$ so that $(h_{N},H)$ is nested with $0<\delta(h_0,H)\leq\ell_0$, 
	\item for all $\frac{\epsilon}{N}$-deformation $v=\left(g_0,\ldots, g_{N-1}\right)$ with $\epsilon\leq\eta_1$, 
 	\end{itemize}
we have
 \begin{eqnarray}
 \, \, d_{\check\tau_0}(\id,b_N)&\leq& k\cdotp\epsilon,\label{ineq:zig}
\end{eqnarray}
where $b_N=g_0\cdots g_{N-1}$ and $\check\tau_0$ is the projection of $H$ on $h_0$.
\end{proposition}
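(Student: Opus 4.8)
The strategy is an induction on $N$ that simultaneously tracks two things: the accumulated deformation $b_i = g_0\cdots g_{i-1}$ measured in the metric of the appropriate commanding tripod, and the fact that the commanding tripod can be taken to be the projection of a suitable chord onto $h_0$. The key geometric input is Lemma \ref{exis-cont2} (Controlling diffusion), which lets us replace a chord $H$ nested below $h_N$ by a chord $H_2$ with $\delta(h_0,H_2)\le\ell_0$ and such that the projection $\tau_0$ of $H_2$ onto $h_0$ forms a $(\bk^2\beta,\bK)$-nested pair with the tripod $\tau_1\in h_N$ we care about; combined with the sliding-out estimate (Lemma \ref{coro:shcont}) and the metric-equivalence Proposition \ref{A-B}, this gives uniform contraction of $d_{\check\tau_0}$ along the backward direction of the path.

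First I would fix $\beta\le\beta_2$ and set $\ell(\beta)$ as in Proposition \ref{exis-cont}; then, given $\ell_0\ge\ell(\beta)$, I would produce $\eta_1$ small enough that Lemma \ref{pro:def-nt} (Deforming nested pair of tripods) applies with room to spare. The induction hypothesis at stage $i$ asserts: for every chord $H$ with $(h_i,H)$ nested and $0<\delta(h_0,H)\le\ell_0$, writing $\check\tau_0^{(i)}$ for the projection of $H$ onto $h_0$, one has $d_{\check\tau_0^{(i)}}(\id,b_i)\le k_i\epsilon_i$ where $\epsilon_i = (i/N)\epsilon$ is the total deformation budget used so far, and $k_i$ stays bounded by some $k$ independent of $N$. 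The base case $i=0$ is trivial since $b_0=\id$. For the inductive step, given such an $H$ for index $i+1$, I first apply Lemma \ref{exis-cont2} to $(h_0,h_{i+1})$ (note $\delta(h_0,h_{i+1})\le\ell_0$ by the weak coplanarity hypothesis when $i+1<N$, and the hypothesis on $H$ handles $i+1=N$) to get an intermediate chord; then the nesting between consecutive chords of a coplanar path together with the sliding-out Lemma \ref{coro:shcont} shows that the tripod $g_i(\tau_{i+1})$ sits in a cone controlled by a tripod of $h_i$, so the pair feeding the next stage of the induction is again nested with $\delta\le\ell_0$. The deformation estimate propagates because $d_{\check\tau_0}(\id,b_{i+1})\le d_{\check\tau_0}(\id,b_i)+d_{\check\tau_0}(b_i,b_i g_i)$, the second term is $d_{b_i^{-1}\check\tau_0}(\id,g_i)$, and $b_i^{-1}\check\tau_0$ is within bounded $d$-distance of $\dt{\tau_i}$ by the inductive nesting control, so Proposition \ref{A-B} converts the $d_{\tau_i}(g_i,\id)\le\epsilon/N$ hypothesis into a bound $\le B\epsilon/N$ in the $\check\tau_0$-metric. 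Summing the geometric-type series over $i=0,\dots,N-1$ — the contraction coming from the $\bk<1/2$ factors in the nested-cone composition (Lemma \ref{pro:composcone} and Corollary \ref{coro-conv-nest}) — yields $d_{\check\tau_0}(\id,b_N)\le k\epsilon$ with $k$ a fixed geometric sum, proving \eqref{ineq:zig}.

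The main obstacle is bookkeeping the commanding tripod correctly through the induction: the natural commanding tripod at stage $i$ is the projection of some chord onto $h_0$, but that chord changes at each step, and one must show the corresponding projections stay within bounded $d$-distance of each other (so that Proposition \ref{A-B} gives a uniform constant $B$) while the cone radii shrink by definite factors. This is exactly where the quantitative content of Lemma \ref{exis-cont2} — producing $H_2$ with $\delta(h_0,H_2)=\ell_0$ and a $(\bk^2\beta,\bK)$-nested projection — is used to keep all relevant tripods in a compact region of $\mc G$ relative to $h_0$, and where one must be careful that the loss factor $\bK$ per step is compensated by the $\bk$-contraction of the nested cones so the series actually converges. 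The hypothesis $\epsilon\le\eta_1$ is needed precisely to ensure that each single-step deformation is small enough for Lemma \ref{pro:def-nt} and Proposition \ref{A-B} to apply with the fixed constant $B=2$, so that the per-step multiplicative constant never degrades as $N\to\infty$.
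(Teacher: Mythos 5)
Your proposal misses the key tool and as a result contains a genuine gap. The paper's proof of the Barrier Proposition is short and direct: the single observation is that each pivot $x_i$ lies in the connected component of $\partial_\infty\hh\setminus H$ away from $h_0$, so the hypotheses of Proposition~\ref{pro:control} (Confinement control) are met with $w=x_i$, $\tau=\tau_i$, $\tau_0=\check\tau_0$. That proposition gives directly, for a constant $k$ depending only on $\ell_0$,
\[
d_{\check\tau_0}(g_i,\id)\leq k\cdot d_{\tau_i}(g_i,\id)\leq k\,\frac{\epsilon}{N}\quad\text{for every }i,
\]
and then right-invariance of $d_{\check\tau_0}$ telescopes the product: $d_{\check\tau_0}(\id,b_N)\leq\sum_i d_{\check\tau_0}(g_i,\id)\leq k\epsilon$. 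No induction, no contraction, no geometric series.

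Your argument instead peels $b_{i+1}=b_ig_i$ from the right, writes the increment as $d_{b_i^{-1}\check\tau_0}(\id,g_i)$, and invokes Proposition~\ref{A-B} to compare $d_{b_i^{-1}\check\tau_0}$ with $d_{\tau_i}$. This is where the gap is: Proposition~\ref{A-B} requires $d(b_i^{-1}\check\tau_0,\tau_i)\leq A$ for a \emph{fixed} $A$, and you have not established — and in general cannot — that $\check\tau_0$ and $\tau_i$ stay at uniformly bounded $d$-distance. The hypothesis $\delta(h_0,h_i)\leq\ell_0$ only bounds the shift between chords, not the distance between the chosen tripods on them; $\tau_i$ can sit arbitrarily far along $h_i$ from the point of closest approach, in which case $d_{\tau_i}$ and $d_{\check\tau_0}$ are not Lipschitz-equivalent with any uniform constant. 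This is precisely the problem Proposition~\ref{pro:control} is built to solve: by shearing $\tau_i$ along its own chord before comparing to $\check\tau_0$, and using that $\ad(\exp(-ta))$ is uniformly bounded on $\mk p_+=\operatorname{Lie}(\ms P_{x_i})$, it gives a uniform constant \emph{only on the subgroup $\ms P_{x_i}$} — which is where each $g_i$ lives. Your proposal never isolates this fact, and the appeal to nested-cone composition and $\bk$-contraction (Lemma~\ref{pro:composcone}, Corollary~\ref{coro-conv-nest}) is extraneous here — those enter in the Confinement Lemma proper, not in the Barrier Proposition, whose estimate is a plain sum of $N$ uniformly controlled terms, not a convergent series.
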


In this proposition, the position of $h_N$ plays no role.

\subsubsection{The confinement control}

We shall use in the sequel the following proposition.

\begin{proposition}{\sc [Confinement control]}\label{pro:control}
There exists a positive  $\epsilon_0$ so that for every positive $\ell_0$,  there exists a constant $k$  with the following property;
\begin{itemize}
	\item Let $(H,h)$ be a pair of nested chords, associated to the circle  $C\subset \gp$, so that $0<\delta(h,H)\leq \ell_0$ and let  $\tau_0$ be the projection of $h$ on $H$. 
	\item Let  $(X,Y)$  and $(x,y)$ be the extremities of $H$ and $h$ respectively.
	\item  Let $u,v,w\in C\subset\gp$ be pairwise distinct  so that $(X,u,v,x,y,w,Y)$ is cyclically oriented --possibly with repetition -- in $C$ and $\tau$ be the tripod coplanar to $H$ so that $\partial\tau=(u,w,v)$
	\item  Let $g\in\ms P_w$ with $d_{\tau}(g,\id)\leq\epsilon_0\ .$
	\end{itemize}
Then
$$
d_{\tau_0}(g,\id)\leq k\cdotp d_{\tau}(g,\id).
$$  
\end{proposition}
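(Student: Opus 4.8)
The plan is to move into the totally geodesic hyperbolic plane $\hh$ with $C=\partial_\infty\hh$ that contains the coplanar tripods, to reduce the statement to a pointwise comparison of the visual metrics $d_\tau$ and $d_{\tau_0}$ applied to the displacement of $g$, and to exploit throughout the one structural hypothesis that makes the estimate true: $g$ stabilises $w=\partial^+\tau$.

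First a reduction. Since $\epsilon_0$ may be chosen as small as we wish, $d_\tau(g,\id)\leq\epsilon_0$ keeps $g$ in a fixed small neighbourhood of $\id$. Because $d_{\tau_0}(g,\id)=\sup_{p\in\gp}d_{\tau_0}(g(p),p)$ and $d_\tau(g(p),p)\leq d_\tau(g,\id)$ for every $p$, it suffices to produce a constant $k=k(\ell_0,\ms G)$ with $d_{\tau_0}(g(p),p)\leq k\cdot d_\tau(g,\id)$ for every $p\in\gp$; taking the supremum over $p$ then yields the proposition.

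Now fix $p$ and distinguish two regimes according to the position of $p$ relative to $w$ in the metric $d_\tau$. If $d_\tau(p,w)\leq\alpha_1$ for a small constant $\alpha_1=\alpha_1(\ms G)$, then $g(p)$ lies in the $2\alpha_1$-ball of $w$ for $d_\tau$, because $g$ fixes $w$ and $d_\tau(g,\id)\leq\epsilon_0\leq\alpha_1$. In this ball the comparison is carried out through the aligning/contraction machinery: since $g$ stabilises $\partial^+\tau=w$ it lies in $\mathsf U^{+,0}(\tau)$, and conjugating by the shearing flow along the chord of $\tau$ toward $w$ (which, up to multiplication by $\exp(sa)$, has $w$ as its attracting fixed point) one sees that the local factor relating $d_{\tau_0}(g(p),p)$ to $d_\tau(g(p),p)$ near $w$ is exactly compensated by the contraction factor $e^{-c\,d(\tau,\tau_0)}$ of Proposition \ref{pro:cone-contrac0}, because the displacement of $g$ near its fixed point $w$ scales like the metric itself; the hypothesis $0<\delta(h,H)\leq\ell_0$, which forbids $h$ from lying too deep inside $H$ and so confines $s(\tau_0)$, keeps the residual constant depending only on $\ell_0$ and $\ms G$, giving $d_{\tau_0}(g(p),p)\leq k\cdot d_\tau(g(p),p)\leq k\cdot d_\tau(g,\id)$. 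If instead $d_\tau(p,w)\geq\alpha_1$, the rôle of the hypothesis is reversed: the farther $s(\tau)$ sits from $s(\tau_0)$ — equivalently, the larger the distortion between the two metrics away from $w$ — the more strongly $d_\tau(g,\id)\leq\epsilon_0$ forces $g$ to be small in absolute terms, and one checks, again using $\delta(h,H)\leq\ell_0$ to keep the comparison constants controlled by $\ell_0$, and Proposition \ref{A-B} to pass between $d_\tau$, $d_{\tau_0}$ and the left-invariant metric on $\mc G$, that $d_{\tau_0}(g(p),p)\leq k\cdot d_\tau(g,\id)$ persists.

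The main obstacle is precisely the absence of any bound on $d(\tau,\tau_0)$: these tripods can be arbitrarily far apart (take $u,v$ tending to $X$), so Proposition \ref{A-B} cannot be quoted globally and $d_\tau$, $d_{\tau_0}$ are genuinely incomparable on the bulk of $\gp$. The whole argument rests on the dichotomy that the distortion of $d_{\tau_0}$ relative to $d_\tau$ is localised near $w$ — exactly where $g$, fixing $w$, moves points little, with the metric blow-up and the smallness of the displacement cancelling — while on the region where $d_\tau$ is largest the hypothesis $d_\tau(g,\id)\leq\epsilon_0$ forces $g$ itself to be proportionately small. Making both halves quantitative, with the size of the critical neighbourhood of $w$ and all comparison constants depending only on $\ell_0$ and $\ms G$, and handling the degenerate configurations permitted by the ``possibly with repetition'' clause ($u=X$, $v=x$, $y=w$, $w=Y$), is the delicate part of the proof.
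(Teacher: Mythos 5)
Your proposal correctly identifies the two structural facts that drive the argument---that $d(\tau,\tau_0)$ can be unbounded, and that the hypothesis $g\in\ms P_w$ with $w=\partial^+\tau$ is what saves the day---and you even gesture at ``conjugating by the shearing flow along the chord of $\tau$ toward $w$.'' But the proof as written has a genuine gap, and the case analysis on the position of $p$ is not the right skeleton.

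The key device you are missing is the intermediate tripod $\tau_1\defeq\varphi_t(\tau)$: after normalising $\delta(h,H)=\ell_0$ (which is harmless), one shears $\tau$ along its own chord $[u,w]$ by a \emph{positive} time $t$ so that $\tau_1$ lands at distance $\leq k_3(\ell_0)$ from $\tau_0$. The inequality $d_{\tau_1}(g,\id)\leq k_1\, d_\tau(g,\id)$ then holds for a constant $k_1$ independent of everything, because $d_{\varphi_t(\tau)}(g,\id)=d_\tau(\exp(-ta)\,g\,\exp(ta),\id)$ and $\Ad(\exp(-ta))$ is uniformly bounded on $\mk p_w=\tau(\mk p_0^+)$ for $t\geq 0$ (the eigenvalues there are nonnegative). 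One finishes with Proposition \ref{A-B}, using the bound $d(\tau_1,\tau_0)\leq k_3$. This two-step conjugation argument is uniform in $p$ and does not require any dichotomy.

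In your Case 1 you invoke Proposition \ref{pro:cone-contrac0} as if $\tau$ and $\tau_0$ were aligned; they are not---$\tau$ lives on the chord $[u,w]$, $\tau_0$ on the chord $H$ from $X$ to $Y$, and for generic configurations the geodesic through $s(\tau)$ and $s(\tau_0)$ does not connect the right endpoints. The ``exact compensation'' you claim there does not follow from that proposition. In Case 2 the statement that a larger distortion between the metrics ``forces $g$ to be small in absolute terms'' is the heart of the matter, but you give no mechanism: without the observation that shearing towards $w$ has sign $t>0$ precisely because $w$ lies past $h$ on the side $[y,Y]$, and that this sign is what makes the $\Ad$-conjugation on $\mk p_w$ contracting rather than expanding, the claim is not justified. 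Finally, the ``repetition'' degeneracies you flag at the end are benign in the actual proof and not where the delicacy lies.
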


Figure \eqref{fig:CN} illustrates the configuration of this proposition. 

\begin{figure}[h]
  \centering
  \includegraphics[width=0.35\textwidth]{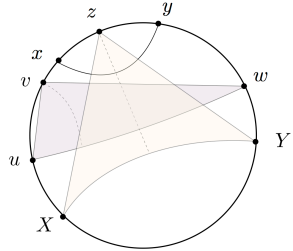}
  \caption{Confinement control}
  \label{fig:CN}
\end{figure}

\begin{proof} It is no restriction to assume    that $\delta(h,H)=\ell_0$. Let $\tau$ and $\tau_0$ be as in the statement and 
 $\tau_1$ be the tripod coplanar to $H$, so that $\partial\tau_1=(u,w,z)$. Observe that, there is a positive $t$  so that 
 $\varphi_t(\tau)= \tau_1$.  Let   $a=\T\tau (a_0)\in\mk g$, we have 
$$
d_{\varphi_t(\tau)}(g,\id)=d_{\exp(ta)(\tau)}(g,\id)=d_\tau(\exp(-ta)g\exp(ta),\id)\ .
$$
Let $\mk p_{+}$ be the Lie algebra of $\ms P_{+}\defeq \ms P_{\partial^+\tau}=P_w$, that we consider also equipped with the Euclidean norm $\Vert.\Vert_\tau$. By construction $\ms P_{+}=\tau(\ms P^+_0)$, thus
$$
\sup_{t>0}\left.\left\Vert \operatorname{ad}(\exp(-ta))\right\Vert\right\vert_{\mk p_+}<\infty.
$$ For $\epsilon$ small enough and independent of $\partial^+\tau$, $\exp$ is $k_1$-bilipschitz from the ball of radius $\epsilon$ in $\mk p_{+}$ onto its image in $\ms P_{+}$ for some constant $k_1$ independent of $\partial^+\tau$. 
Thus for $\epsilon_0$-small enough, there exists a constant $k_1$ so that
\begin{equation}
d_{\tau_1}(g,\id)\leq k_1d_\tau(g,\id).	\label{ineq:barrier1}
\end{equation}
Now the set  $K$  of tripods $\sigma$ coplanar to $\tau_0$, with  $\partial\sigma=(u,w,z)$ with $z$ fixed, $u$, $w$ as above, is compact  In particular there exists $k_2$ only depending on $\ell_0$ so that for any tripod $\sigma$  in $K$, 
$$
d(\tau_1,\tau_0)\leq k_3.
$$
Thus by Proposition \ref{A-B}, there exists $k_4$ so that
$$
d_{\tau_1}(g,\id)\leq k_4\cdotp d_{\sigma}(g,\id).
$$
The proposition now follows by combining with inequality \eqref{ineq:barrier1}. \end{proof}

\subsubsection{Proof of the Barrier Proposition \ref{lem:zigzag0} }
Let $(x_i,x^i)$ be the extremities of $h_i$ where $x_i$ is the pivot. Let $\widehat{x}_{i+1}$ the vertex of $\tau_i$ different from $x_i$ and $x^i$. 

Let $\check\tau_0$
 be the projection of $H$ on $h_0$. Observe that $x_i$ lies in one of the connected component of $h_0\setminus H$, while $x^i$ and  lie in the other (see Figure \eqref{fig:zz}).
 \begin{figure}[hbt]
\centering
  \includegraphics[width=0.4\textwidth]{zz}
  \caption{}\label{fig:zz}
\end{figure}

  Thus, according to Proposition \ref{pro:control} for $\epsilon$ small enough there exists $k$, only depending on $\ell$ so that 
$$
d_{\tau_0}(g_i,\id)\leq k\cdotp d_{\tau_i}(g_i,\id) \leq k\cdotp \frac{\epsilon}{N}.
$$
Thus, using the right invariance of $d_{\tau_0}$, 
$$
d_{\tau_0}\left(\id ,b_N\right)\leq \sum_{i=1}^{N}
d_{\tau_0}\left(\Pi_{j=i}^Ng_j,\Pi_{j=i+1}^N g_j\right)=\sum_{i=1}^{N}
d_{\tau_0}\left(g_i,\id\right)\leq k\cdotp\epsilon.
$$
Observe that this proves inequality \eqref{ineq:zig}
and concludes the proof of the Barrier Proposition \ref{lem:zigzag0}.

\subsubsection{Proof of the Confinement Lemma \ref{lem:zigzag}}

Let $\beta_1$ as in Proposition \ref{exis-cont}. Let then $\alpha$ with $\alpha\leq\beta_1$.  According to Proposition \ref{exis-cont}, there exists  $\ell=\ell_0(\alpha)$ so that  if  $(H_0,H_1)$ is a nested pair of chords with $\delta(H_0,H_1)\geq \ell$, then for any $\sigma_1\in H_1$,  the pair  $(\tau_0,\sigma_1)$ is $(\bK\alpha,\bk^9)$-nested, where  $\tau_0$ is the projection of $H_1$ on $H_0$. Let now fix $\ell_0\geq \ell_0(\alpha)$

\vskip 0.2truecm

\noindent{\sc First step: strong coplanar}

Consider first the case where $\delta(h_0,h_N)\geq \ell_0$. By continuity we may find a chord $\check h_N$ so that the pairs $(h_{N-1},\check h_N)$ and $(\check h_N, h_N)$ are nested and so that $\delta(\check h_N,h_0)=\ell_0$.

Let $\check\tau_0$ be the projection of $\check h_N$ on $h_0$. Then by Proposition \ref{exis-cont} for any $\sigma_1$ in $\check h_N$, $(\check \tau_0,\sigma_1)$ is $(\bK\alpha,\bk^9)$-nested

By  Lemma \ref{exis-cont2}, for any $\sigma_N$ in $h_N$, there exist $\sigma_1$ in $\check h_N$ so that $(\sigma_1,\sigma_N)$ is $(\alpha,\bK)$-nested and thus   $(\check\tau_0,\sigma_N)$ is $(\alpha,\bk^8)$-nested.

By the Barrier Proposition \ref{lem:zigzag0} applied  to $\vect{h}(N)$ and $H=\check h_N,$ we get that 
$$
d_{\check\tau_0}(\id,b_N)\leq k\cdotp\epsilon\, .
$$
for $k$ only depending on ${\G}$ and where $\ell$ and $b_N$ are defined in the Barrier Proposition.

We now furthermore assume that  $\alpha\leq\beta_0$, where $\beta_0$ comes from Proposition \ref{pro:def-nt}. For  $\epsilon$ is small enough, Proposition \ref{pro:def-nt} shows that for any $\sigma_1$ in $h_N$;  $(\check\tau_0,b_N(\sigma_1))$ is $(\alpha,2\bk^8)$-nested. Thus $(h_0,b_N(h_N))$ is  $(\alpha,2\bk^8)$-squeezed hence $(\alpha,\bk^7)$-since $2\bk\leq 1$, with $\check\tau_0$ as a commanding tripod. 

This applies of course if the deformation is trivial and we see that $(h_0,h_N)$ and $(h_0^v,h_N^v)$ both have $\check\tau_0$ as a commanding tripod.

This concludes this first step and the proof of the second item and the third item in Lemma  \ref{lem:zigzag}.

\vskip 0.2truecm

\noindent{\sc Second step}

Let us consider the remaining case when $\delta(h_0,h_N)\leq \ell$. Let us apply Proposition \ref{exis-cont2} to $(H_0,H_1)=(h_0,h_N)$ and $\tau_1$ in $h_N$. Thus there exists $H_2$ so that $(h_N,H_2)$ is nested, $0<\delta(H_0,H_2)\leq \ell$,  and $(\tau_0,\tau_1)$ is $(\bk^2\alpha,\bK)$ nested where $\tau_0$ is the projection of $H_2$ on $h_0$.

Applying the Barrier Proposition \ref{lem:zigzag0} to $h=H_2$ and $H=H_0$, yields that
$
d_{\tau_0}(\id, b_N)\leq k\cdotp\epsilon$. 
Thus for $\epsilon$ small enough, then Proposition \ref{pro:def-nt} yields that $(\tau_0,b_N(\tau_1)$ is $(\bk^2\alpha,2\bK)$ nested, hence $(\bk^2\alpha,\bK^2)$ nested.

This shows that $(h_0,b_N(h_N))$ is $(\bk^2\alpha,\bK^2)$-controlled.
This concludes the proof of Lemma \ref{lem:zigzag}.

\section{Infinite paths of quasi-tripods and their limit points} \label{sec:Morse}

The goal of this section is to make sense of the limit point of an infinite sequence of quasi-tripods and to give a condition under which such a limit point exists. The {\it ad hoc} definitions are motivated by the last section of this paper as well as by the discussion of Sullivan maps.

One mays think of our main Theorem \ref{theo:exislimi}  as a refined version of a Morse Lemma in higher rank: instead of working with quasi-geodesic paths in the symmetric space, we work with  sequence of quasi-tripods in the flag manifold; instead of making the quasi-geodesic converge to a point at infinity, we make the sequence of quasi-tripods  shrink to a point in the flag manifold. This is guaranteed by some local conditions that will allow us to use our nesting and squeezing concepts defined in the preceding section.

 Theorem \ref{theo:exislimi} is the goal of our efforts in this first part and will be used several times in the future.

\subsection{Definitions:  \texorpdfstring{$Q$}{Q}-sequences and their deformations}\label{sec:cuff}

\begin{definition} \begin{enumerate}
	\item A {\em $Q$ coplanar sequence of tripods}\index{$Q$--sequence } is an infinite sequence of tripods $\vect{T}=\seq{T}$ so that the associated sequence of coplanar chords   $\vect{c}=\{c_i\}_{i\in\mathbb N}$ satisfies: for all integers $m$ and $p$ we have
	$$\vert m-p\vert\leq Q\delta(c_m,c_p)+Q$$	
	where $\delta(\cdot,\cdot)$ is the shift defined in \ref{netsted:squeezed:controlled}.
	\item A  sequence of quasi-tripod $\vect{\tau}=\seq{\tau}$ is a {\em $(Q,\epsilon)$-sequence of quasi-tripods} if there exists a 
a  coplanar $Q$ coplanar sequence of tripods $T=\seq{T}$, so that for every $n$, $\{\tau_m\}_{m\in [0,n]}$ is an $\epsilon$-deformation of  $\{T_m\}_{m\in [0,n]}$.  
\item The associated sequence of chords to a $(Q,\epsilon)$-sequence of  quasi-tripods is called a {\em $(Q,\epsilon)$-sequence of  chords} 
\end{enumerate}
\end{definition}

\subsection{Main result: existence of a limit point}

Our main theorem asserts the existence of limit points for some deformed $(Q,\epsilon)$- sequence and their quantitative properties.

\begin{theorem}\label{theo:exislimi}{\sc [Limit point]}\label{coro:exislimi} There exist some positive constants $\bA$ and $\bQ$ only depending on ${\G}$, with $\bQ<1$,  such that for every positive number $\beta$ and $\ell_0$ with $\beta\leq \bA$, there exist a positive constant  $\epsilon>0$, so that for any $R>1$:

 For any  $(\ell_0R,\eR)$-deformed sequence of quasi tripods   $\vect{\theta}=\seq{\theta}$, with associated sequence of chords $\vect{\Gamma}=\seq{\Gamma}$ there exists some  $\delta>0$ so that
\begin{eqnarray}  \bigcap_m^\infty S_\delta(\Gamma_m)
:=\{\xi(\vect{\theta})\}\ , \hbox{ with }
\xi(\vect{\theta})=\lim_{m\to\infty}(\partial^j\theta_m) \ \ \hbox{ for } j\in\{+,-,0\} \ ,\end{eqnarray}
moreover we have the following quantitative estimates: 
\begin{enumerate}
 \item 
 for any $\tau$ in $\Gamma_0$, and $m>(\ell_0+1)^2R$, 
\begin{eqnarray}
 d_\tau(\xi(\vect{\theta}),\partial^j\theta_m))\leq \bQ^m\beta\ \ \hbox{ for } j\in\{+,-,0\}\  .	\label{ineq:limit-dist00}
\end{eqnarray} 
\item Let  $\tau$ in $\Gamma_0$. Assume $\seq{\theta}$  is the deformation of a sequence of coplanar tripods    $\vect{\tau}=\seq{\tau}$ with $\tau_0=\dt{\theta_0}$,  then
 \begin{eqnarray}
	d_{\tau}( \xi(\vect{\theta}),\xi(\vect{\tau}))\leq \beta\, . \label{ineq:limit-dist0}
\end{eqnarray}
\item  Finally, let $\seq{\theta^\prime}$ be another $(\ell_0R,\eR)$-deformed sequence of quasi tripods. Assume that $\seq{\theta^\prime}$ and $\seq{\theta}$ coincides up to the $n$-th chord with $n>(\ell_0+1)^2R$, then for all $\tau\in \Gamma_0$, 
\begin{eqnarray}
	d_{\tau}(\xi(\vect{\theta^\prime}),\xi(\vect{\theta}))\leq  \bQ^n\beta\label{ineq:limit-dist}\ .
	\end{eqnarray}
\end{enumerate}

\end{theorem}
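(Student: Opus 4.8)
The strategy is to convert a $(\ell_0R,\epsilon/R)$-deformed sequence of quasi-tripods into a contracting sequence of cones and then apply the Convergence Corollary \ref{coro-conv-nest}. The obstacle is that the chords of a $Q$-sequence are \emph{not} consecutively nested: consecutive quasi-tripods are sheared, so neighbouring chords merely overlap. The device to get around this is to group the path into blocks of length on the order of $\ell_0 R$, so that each block corresponds to a \emph{strong} $(\ell_0 R,\cdot)$-coplanar path of tripods in the sense of the Confinement Lemma, and use squeezing of $(h^v_{\text{block start}}, h^v_{\text{block end}})$ rather than of individual neighbours.

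\emph{Step 1 (Set-up of constants and blocks).} Choose $\beta_3$ from the Confinement Lemma \ref{lem:zigzag}; set $\bA\leq\min(\beta_3,\beta_0,\beta_1)$ and pick $\alpha$ comparable to $\beta$. Given $\ell_0$, let $\ell_0(\alpha)$ be as in \ref{lem:zigzag}; after possibly enlarging $\ell_0$ we may assume $\ell_0\geq\ell_0(\alpha)$. Fix the resulting $\eta_0$ and declare $\epsilon\leq\eta_0$. Using the $Q$-sequence property $|m-p|\leq Q\,\delta(c_m,c_p)+Q$ (here with parameter controlled by $\ell_0$ after rescaling by $R$, since the shearing lengths are $\geq$ something like $\ell_0 R$), partition $\mathbb N$ into consecutive blocks $[n_k,n_{k+1}]$ so that each restricted path $(\tau_{n_k},\ldots,\tau_{n_{k+1}})$ is a strong $(\ell_0 R, n_{k+1}-n_k)$-coplanar path of tripods, i.e. $\delta(c_{n_k},c_j)\leq \ell_0 R$ for $n_k\leq j<n_{k+1}$ and $\delta(c_{n_k},c_{n_{k+1}})\geq \ell_0 R$. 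The $Q$-inequality bounds the block lengths $n_{k+1}-n_k$ linearly in $R$ from above and away from $0$ from below.

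\emph{Step 2 (Squeezing across each block).} On each block apply the Strong Confinement Lemma \ref{lem:zigzag}(2) to the coplanar path $(\tau_{n_k},\ldots,\tau_{n_{k+1}})$ and the $\epsilon/R$-deformation $v$ restricted to that block — which is an $(\text{block length})^{-1}\cdot(\text{something}\lesssim\epsilon)$-deformation, hence admissible since block length $\asymp R$. This yields that $(h^v_{n_k}, h^v_{n_{k+1}})$ is $(\alpha,\bk^7)$-squeezed with a common commanding tripod $\sigma_k\in h^v_{n_k}$; in particular $C_\alpha$ of the commanding tripod of block $k{+}1$ is nested inside $C_{\bk^7\alpha}$ of that of block $k$, after translating by the accumulated $b_{n_k}$. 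Since $\bk^7<\tfrac12$, the sequence $(b_{n_k}\sigma_k)_k$ — or rather the tripods read off the commanding chords — forms an $(\alpha,\bk^7)$-contracting sequence of cones in the sense of Corollary \ref{coro-conv-nest}. That corollary then produces the limit point $\xi(\vect\theta)=\bigcap_k C_\alpha(\cdot)$, and because every vertex $\partial^j\theta_m$ for $n_k\le m< n_{k+1}$ lies in the corresponding sliver $S_{\bK\alpha}(h^v_{n_k})$, hence in $C_{\bk^8\alpha}$ of the block-$k$ commanding tripod (the observation after Proposition \ref{exis-cont}), all three endpoint sequences converge to the same point. This also identifies $\xi(\vect\theta)$ with $\bigcap_m S_\delta(\Gamma_m)$ for suitable $\delta$.

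\emph{Step 3 (Quantitative estimates).} Estimate \eqref{ineq:limit-dist00} follows from \eqref{eq:convseq} in Corollary \ref{coro-conv-nest}: after $q$ blocks one gains a factor $2^{-q}$, and $q\asymp m/R$ blocks are consumed by step $m$ once $m>(\ell_0+1)^2 R$; absorbing the constant and the metric-change factor (Proposition \ref{A-B}) into the base gives $\bQ^m\beta$ for a universal $\bQ<1$. For \eqref{ineq:limit-dist0}, note the undeformed coplanar sequence $\vect\tau$ itself produces its own limit $\xi(\vect\tau)$ by the classical nesting of intervals in $\partial_\infty\hh$; the Barrier Proposition \ref{lem:zigzag0} bounds $d_{\check\tau_0}(\id,b_N)\leq k\epsilon$ uniformly, so the deformed chords stay within $\lesssim\epsilon$ of the undeformed ones in the commanding metric, and passing to the limit gives distance $\leq\beta$ after choosing $\epsilon$ small relative to $\beta$. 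For \eqref{ineq:limit-dist}, if the two deformed sequences agree through chord $n$, their tails start from the same block data once $n>(\ell_0+1)^2R$, so both limits lie in the common cone $C_\alpha$ of that block's commanding tripod, whose $d_\tau$-diameter is $\leq\bQ^n\beta$ by the same geometric-decay computation as in \eqref{ineq:limit-dist00}.

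\emph{Main obstacle.} The delicate point is the bookkeeping in Step 1–2: one must check that the block decomposition is genuinely available (the $Q$-inequality must force the shift to grow by $\gtrsim\ell_0 R$ over blocks of bounded combinatorial length) and, crucially, that the deformation $v$ restricted to a block of length $\asymp R$ is an $\epsilon'/(\text{block length})$-deformation with $\epsilon'\leq\eta_0$ — i.e. that the per-step smallness $\epsilon/R$ is exactly calibrated so that accumulating over $\asymp R$ steps still lands under the Confinement Lemma's threshold, uniformly in $R$. Getting these constants to chain together (contraction $\bk^7<1/2$, metric-comparison constant $B$ from Proposition \ref{A-B}, the Lipschitz constant of $\omega$, and the foot-map constants) without circularity is where the real work lies; the convergence itself is then a formal consequence of Corollary \ref{coro-conv-nest}.
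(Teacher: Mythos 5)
Your overall architecture matches the paper's: block decomposition of the sequence at shift thresholds, Strong Confinement Lemma on each block to get squeezed pairs of chords with matching commanding tripods, composition via Lemma~\ref{pro:composcone}, then the Convergence Corollary~\ref{coro-conv-nest}. The paper factors this through a separate Squeezing Chords Theorem (\ref{theo:nes-cuf}) rather than proving convergence directly, but that is cosmetic.

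There is, however, an internal inconsistency in your Step~1 that would derail the argument if read literally. You partition into blocks at shift threshold $\ell_0 R$ (``$\delta(c_{n_k},c_{n_{k+1}})\geq\ell_0 R$''). With the $Q$-sequence bound $|m-p|\leq Q\delta(c_m,c_p)+Q$ and $Q=\ell_0 R$, a block ending at shift $\approx\ell_0 R$ can have combinatorial length $\lesssim \ell_0^2 R^2$, not $\lesssim R$. The Confinement Lemma then demands an $\eta_0/N$-deformation with $N\lesssim R^2$, while you are handed only $\epsilon/R$; you would be forced to take $\epsilon\lesssim\eta_0/R$, which makes $\epsilon$ depend on $R$, contradicting the theorem statement. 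The correct threshold — and the one the paper uses in Lemma~\ref{lem:cut} — is the \emph{constant} $\ell_0=\ell_0(\alpha)$ from the Confinement Lemma. Then the $Q=\ell_0 R$ bound gives block length $N\leq L=\lfloor(\ell_0+1)\ell_0 R\rfloor+1\asymp_{\ell_0} R$, the per-block accumulated deformation is $\asymp\ell_0^2\epsilon$, and $\epsilon\leq\eta_0/(\ell_0(\ell_0+1))$ works uniformly in $R$. Your own follow-up comments (``block lengths linearly in $R$'', ``accumulating over $\asymp R$ steps'') are consistent only with the constant threshold, so this reads as a conflation of the $Q$-sequence parameter $\ell_0 R$ with the confinement threshold $\ell_0$; fix the statement of the cut and the rest of Step~1--2 goes through as written.

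Two smaller points. First, you assert $\bk^7<\tfrac12$ without justification; the diffusion constant $\bK$ is not known a priori to exceed $2^{1/7}$. The paper sidesteps this by composing a squeezed pair with a controlled pair to land on a $(\bk^2\alpha,\bk)$-squeezed pair, and the ``$<\tfrac12$'' needed by Corollary~\ref{coro-conv-nest} is achieved by iterating over several blocks (this is what gives the geometric decay with base $\bQ=2^{-1/L}$); you should spell out that extra composition step rather than assume one block already contracts by more than half. Second, your justification of estimate~\eqref{ineq:limit-dist0} via the Barrier bound $d_{\check\tau_0}(\id,b_N)\leq k\epsilon$ is close but not quite an argument: the $b_N$ need not converge, so you cannot ``pass to the limit'' directly on the translations. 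The paper's cleaner observation is that the deformed and undeformed first blocks share the \emph{same} commanding tripod (last item of Lemma~\ref{lem:zigzag}), hence both limit points $\xi(\vect\theta)$ and $\xi(\vect\tau)$ lie in the same cone $C_\delta(\check\tau_0)$, whose $d_\tau$-diameter is controlled by $\delta\lesssim\beta$ after Lemma~\ref{coro:shcont}; item~(3) then follows from item~(1) and the triangle inequality.
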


The limit point theorem will be the consequence of a more technical one:
\begin{theorem}\label{theo:nes-cuf} {\sc [Squeezing chords]} There exists some constant $\bA$,  only depending on ${\G}$, such that for every positive number $\delta$ with $\delta\leq \bA$, there exists positive constants  $R_0$, $\ell_0$ and $\epsilon$  with the following property:

If $\vect{\Gamma}$ is an $(\ell_0R,\eR)$-deformed sequence of chords of the coplanar sequence of chords $\vect{c}$ with $R\geq R_0$, if $j>i$ are so that $\delta(c_i,c_j)\geq \ell_0$ then $(\Gamma_i,\Gamma_j)$ is  $(\delta,\bk)$-squeezed.
\end{theorem}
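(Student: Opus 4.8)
The analytic content needed here is already packaged in the Confinement Lemma~\ref{lem:zigzag}; what remains is to iterate it along the sequence. The plan is to cut the index interval $[i,j]$ into consecutive blocks, each of which is a strong coplanar path of \emph{bounded combinatorial length}, to apply the Strong Confinement Lemma on each complete block and the Weak Confinement Lemma on the (at most one) leftover block, and then to compose the resulting nested–cone estimates via the Composing Cones Lemma~\ref{pro:composcone}. The place where the $Q$–sequence hypothesis is indispensable is precisely in bounding the length of each block independently of the sizes of the shears, which is what makes the per–step deformation $\eR$ small enough, relative to the reciprocal block length, for Lemma~\ref{lem:zigzag} to apply.

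\emph{Constants and partition.} Let $\bK$ be the diffusion constant and $\bk=\bK^{-1}$. Fix $\delta\leq\bA$, with $\bA$ small enough that $\alpha\defeq\bK^2\delta\leq\beta_3$, where $\beta_3$ is as in Lemma~\ref{lem:zigzag}. Let $\ell_*$ be the threshold (written $\ell_0(\alpha)$ in the notation of that lemma) attached to the cone radius $\alpha$, and $\eta_0$ the corresponding constant; put $\ell_0\defeq 2\ell_*$, so the hypothesis reads ``$\vect{\Gamma}$ is $(\ell_0R,\eR)$–deformed'' and we set $Q\defeq\ell_0R=2\ell_*R$. Taking $R_0$ large and $\epsilon$ small enough — e.g.\ $\epsilon\leq\eta_0/(2\ell_*^2+2\ell_*+1)$ — guarantees that for every $R\geq R_0$ and every sub-path of $\vect{c}$ of length $N\leq 2\ell_*^2R+2\ell_*R+1$, the restricted deformation is an $\eta_0/N$–deformation of that sub-path (up to the left translation by the accumulated $b_p$, which is irrelevant by $\ms G$–equivariance, and using Proposition~\ref{A-B} to pass between the relevant tripod metrics). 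Since consecutive $(c_m,c_{m+1})$ are nested, the chords $c_i\supset c_{i+1}\supset\cdots\supset c_j$ form a nested chain. Set $i_0=i$ and, given $i_m<j$, let $i_{m+1}$ be the first $p$ with $i_m<p\leq j$ and $\delta(c_{i_m},c_p)\geq\ell_*$ if such a $p$ exists, otherwise stop. Every complete block $[i_m,i_{m+1}]$ then has $\delta(c_{i_m},c_p)<\ell_*$ for $i_m\leq p<i_{m+1}$, so the $Q$–sequence inequality $|i_m-(i_{m+1}-1)|\leq Q\,\delta(c_{i_m},c_{i_{m+1}-1})+Q$ forces $i_{m+1}-i_m\leq Q\ell_*+Q+1\leq 2\ell_*^2R+2\ell_*R+1$ — a bound not involving the shears — and $\vect{c}$ restricted to $[i_m,i_{m+1}]$ is a strong $(\ell_*,\cdot)$–coplanar path; the leftover final block $[i_k,j]$, if non-empty, is a weak $(\ell_*,\cdot)$–coplanar path of the same bounded length. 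Because $\delta(c_i,c_j)\geq\ell_0=2\ell_*>\ell_*$, the construction produces at least one complete block, so $k\geq 1$.

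\emph{Applying Confinement and composing.} Apply item (2) of Lemma~\ref{lem:zigzag} with cone radius $\alpha=\bK^2\delta$ to each complete block: $(\Gamma_{i_m},\Gamma_{i_{m+1}})$ is $(\bK^2\delta,\bk^7)$–squeezed, with a commanding tripod $\tau^{(m)}\in\Gamma_{i_m}$. If the final block $[i_k,j]$ is non-empty, apply item (1) to it with the same $\alpha$: since $\bk^2\alpha=\delta$, the pair $(\Gamma_{i_k},\Gamma_j)$ is $(\delta,\bK^2)$–controlled. Fix $\rho\in\Gamma_j$. In the non-empty case the controlled estimate gives $\sigma\in\Gamma_{i_k}$ with $C_\delta(\rho)\subset C_{\bK^2\delta}(\sigma)$ and $d_\sigma\leq\bK^2 d_\rho$ on $C_\delta(\rho)$; feeding $\sigma$ into the last complete block yields $C_{\bK^2\delta}(\sigma)\subset C_{\bk^7\bK^2\delta}(\tau^{(k-1)})=C_{\bk^5\delta}(\tau^{(k-1)})$ together with a further $\bk^7$ contraction of the metric. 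Since $\bk^5<1<\bK^2$, the output cone $C_{\bk^5\delta}$ sits inside the input cone $C_{\bK^2\delta}$ of the previous block, so the step repeats; after passing through all complete blocks,
\[
C_\delta(\rho)\ \subset\ C_{\bk^5\delta}\big(\tau^{(0)}\big)\ \subset\ C_{\bk\delta}\big(\tau^{(0)}\big),\qquad d_{\tau^{(0)}}\ \leq\ \bK^2\bk^{7k}\,d_\rho\ =\ \bk^{\,7k-2}\,d_\rho\ \leq\ \bk\,d_\rho\quad\text{on }C_\delta(\rho).
\]
When $[i_k,j]$ is empty one starts directly from the last complete block and the factor $\bK^2$ is absent, which only improves the estimate. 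As $\tau^{(0)}\in\Gamma_i$ does not depend on $\rho$, this says exactly that $(\Gamma_i,\Gamma_j)$ is $(\delta,\bk)$–squeezed, with $\tau^{(0)}$ a commanding tripod, and the Limit Point Theorem~\ref{theo:exislimi} follows from this via the Convergence Corollary~\ref{coro-conv-nest}.

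\emph{Main obstacle.} The real work is all inside the Confinement Lemma. The delicate part here is the composition bookkeeping: one must choose the internal cone radius $\alpha=\bK^2\delta$ so that (i) the Weak Confinement output is controlled exactly at radius $\delta$, (ii) every Strong Confinement step produces an output cone that still nests inside the input cone of the next step, so the iteration never runs out of room, and (iii) the accumulated metric constant $\bK^2\bk^{7k}$ stays below $\bk$ for all $k\geq 1$. The second, subtler point is that the block length must be bounded independently of the shear parameters $R_p$ — a single large shear could otherwise swamp the per–step deformation budget — and this is the unique place where the $Q$–sequence hypothesis is genuinely used.
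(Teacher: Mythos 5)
Your proof is correct and follows essentially the same route as the paper's: cut the index interval into blocks at shift-threshold crossings, bound each block's combinatorial length uniformly via the $Q$-sequence inequality so that the global per-step deformation budget $\eR$ suffices for the Confinement Lemma, apply the Strong Confinement estimate to the complete blocks and the Weak one to the leftover, and compose. The only differences are cosmetic bookkeeping: you choose $\delta$ first and set the internal cone radius $\alpha=\bK^2\delta$, whereas the paper fixes $\alpha\leq\inf(\beta_3,\alpha_3)$ and reads off $\delta=\bk^2\alpha$ at the end (the extra $\alpha\leq\alpha_3$ constraint in the paper is only needed for the subsequent Limit Point Theorem, not for the squeezing statement, so your omission is harmless); you take $\ell_0=2\ell_*$ to force at least one complete block, which is slightly more conservative than necessary ($\ell_0=\ell_*$ already suffices since $\delta(c_i,c_j)\geq\ell_*$ guarantees the first cut point $i_1\leq j$ exists); and you track the cone inclusions $C_\delta(\rho)\subset C_{\bk^5\delta}(\tau^{(m)})\subset C_{\bK^2\delta}(\tau^{(m)})$ and the metric factors $\bK^2\bk^{7k}$ by hand rather than invoking the observation that $(\alpha,\bk^n)$-nesting implies $(\bk^p\alpha,\bk^q)$-nesting together with the Composing Cones Lemma formally, but the resulting estimates are identical.
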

\subsection{Proof of the squeezing chords theorem \ref{theo:nes-cuf}}

As a preliminary, we make the choice of constants, then we cut a sequence of chords into small more manageable  pieces.  Finally we  use the Confinement  Lemma to obtain the proof.
\subsubsection{Fixing constants and choosing a threshold}
Let $\alpha_3$ as in Corollary \ref{coro-conv-nest},
let  $\beta_3$ as in the Confinement  Lemma \ref{lem:zigzag}.
We now choose $\alpha$ so that   
\begin{equation}
\alpha\leq  \inf(\beta_3,\alpha_3).
\end{equation}
Then $\ell_0=\ell_0(\alpha)$  be the {\em threshold}, and $\eta_0$   be obtained by the Confinement  Lemma \ref{lem:zigzag}. Let finally 
\begin{eqnarray}
\epsilon\leq\frac{\eta_0}{\ell_0\cdotp(\ell_0+1)}\ .
\end{eqnarray}

\subsubsection{Cutting into pieces}

Let $\vect{c}$  be a sequence of coplanar chords  admitting an $\ell_0R$-coplanar path of  tripods.
\begin{lemma}\label{lem:cut}
We can cut $\vect{\tau}$ into successive pieces $\vect{\tau}^n:=\{\tau_p\}_{p_n\leq p< p_{n+1}}$ for $n\in\{0,M\}$ so that
\begin{enumerate}
	\item for $n\in \{0,M-1\}$, $\vect{\tau}^n$ is a strong  $(\ell_0,N)$ coplanar path of  tripods 	\item $\vect{\tau}^M$ is  a weak $(\ell_0,N)$ coplanar path of  tripods.
\end{enumerate}
where in both cases, 
$N\leq L\defeq \lfloor(\ell_0 +1)(\ell_0 R)\rfloor+1$, where $\lfloor x\rfloor$ denotes the integer value of the real number $x$.

\end{lemma}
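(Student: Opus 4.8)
The plan is to perform a greedy left-to-right subdivision of $\vect{\tau}$. I would set $p_0 = 0$ and, assuming $p_n$ has been defined with $\tau_{p_n}$ not the last tripod of $\vect{\tau}$, ask whether there is an index $q > p_n$ (within the range of $\vect{\tau}$) with $\delta(c_{p_n}, c_q) \geq \ell_0$. If so, let $q_n$ be the smallest such index and set $p_{n+1} \defeq q_n + 1$; the $n$-th piece $\vect{\tau}^n = \{\tau_p\}_{p_n \leq p < p_{n+1}}$ then has chords $(c_{p_n}, \dots, c_{q_n})$, which are pairwise coplanar since $\vect{\tau}$ is, and has parameter $N_n \defeq q_n - p_n$. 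By minimality of $q_n$ one has $\delta(c_{p_n}, c_{p_n+i}) < \ell_0$ for $0 \leq i < N_n$ — the defining inequality of a weak $(\ell_0, N_n)$-coplanar path — together with $\delta(c_{p_n}, c_{q_n}) \geq \ell_0$, which upgrades it to a strong one; this gives item (1). As soon as no such $q$ exists, i.e. $\delta(c_{p_n}, c_q) < \ell_0$ for every admissible $q > p_n$, I would stop, set $M \defeq n$, and take $\vect{\tau}^M$ to be the remaining (possibly trivial) piece $\{\tau_p\}_{p \geq p_M}$, which is then a weak $(\ell_0, N_M)$-coplanar path — item (2).

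Two quantitative points then remain. First, termination and finiteness: since $\vect{c}$ is the chord sequence of an $(\ell_0 R)$-coplanar sequence of tripods, the shift inequality $|m-p| \leq \ell_0 R\,\delta(c_m, c_p) + \ell_0 R$ holds for all $m, p$; applied with the bound $\delta < \ell_0$ it shows that any stretch of chords on which the shift from the first one stays below $\ell_0$ has length at most $(\ell_0+1)(\ell_0 R)$, so the tail is finite and (for finite $\vect{\tau}$) the process stops after finitely many steps, $M < \infty$. Second, the length bound $N \leq L$: for a strong piece the last interior chord $c_{q_n - 1}$ still satisfies $\delta(c_{p_n}, c_{q_n-1}) < \ell_0$, so the same inequality gives $(q_n - 1) - p_n < (\ell_0+1)(\ell_0 R)$, hence $N_n = q_n - p_n < (\ell_0+1)(\ell_0 R) + 1$ and, $N_n$ being an integer, $N_n \leq \lfloor (\ell_0+1)(\ell_0 R) \rfloor + 1 = L$; the identical estimate bounds the length of the weak tail.

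This is essentially a greedy-stopping bookkeeping lemma and I do not expect a genuine obstacle. The only points requiring care are matching the stopping rule to the precise definitions — keeping track of strict versus non-strict shift inequalities and of which chord of a piece plays the role of $h_N$ — and disposing of the degenerate cases (a piece consisting of one or two chords, or an empty tail), all of which are routine. If one wants to run the construction on an honest infinite $(\ell_0 R)$-coplanar sequence rather than on a finite segment, one instead observes that $\delta(c_{p_n}, c_q) \to \infty$ as $q \to \infty$ by the same shift inequality, so only strong pieces occur, each still of length $\leq L$, and one applies the cutting to finite truncations as needed.
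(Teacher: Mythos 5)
Your proposal is correct and uses the same approach as the paper: a greedy subdivision at the first index where the shift from the left endpoint reaches $\ell_0$ (the paper phrases this via maximality and monotonicity of $q\mapsto\delta(c_p,c_q)$, but it is the same decomposition), followed by the bound $|m-p|\le \ell_0 R\,\delta(c_m,c_p)+\ell_0 R$ applied to the last interior chord to get $N\le L$. The only cosmetic difference is that you set $p_{n+1}=q_n+1$ rather than $p_{n+1}=q_n$; this makes your pieces non-overlapping whereas the paper implicitly lets the final chord $c_{p_{n+1}}$ be shared with the next piece (as is clear from how Proposition \ref{pro:cut-control} then applies the Confinement Lemma to the pair $(\Gamma_{p_n},\Gamma_{p_{n+1}})$), but the bound and the strong/weak dichotomy come out the same either way.
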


	\begin{proof} Let $\vect{c}$ be the corresponding sequence of chords. Recall that the function $q\mapsto \delta(c_p,c_q)$ is increasing for $q>p$.  Thus we can further cut into (maximal) pieces  so that 
	$$
	\delta(c_{p_n}, c_{p_{n+1}-1}))\leq \ell_0,\ \ 	\delta(c_{p_n}, c_{p_{n+1}}))\geq \ell_0.
	$$
This gives the lemma: the bound on $N$ comes from the fact that $\vect{\tau}$ is a $\ell_0 R$-sequence. In particular, since $\delta(c_{p_n},c_{p_{n+1}-1})\leq \ell_0$, then 
	$
	\vert p_{n+1}-p_n\vert -1 \leq (\ell_0 R)(\ell_0+1)
	$.\end{proof}
\subsubsection{Completing the proof} Let $\vect{\theta}$ be an  $(\ell_0R,\eR)$-sequence of quasi-tripods, with $R>R_0$.  Let $\vect{\Gamma}$ be the associated sequence of chords. Assume $\vect{\theta}$  is the deformation of an $\ell_0R$- coplanar sequence of  tripods $\vect{\tau}$, cut in smaller sub-pieces as in Lemma \ref{lem:cut}.
\begin{proposition}\label{pro:cut-control}
 for all $n$
 \begin{enumerate}
 	\item for $n<M$,  $(\Gamma_{p_n},\Gamma_{p_{n+1}})$ is $(\alpha,\bk^7)$-squeezed,
 	\item  Moreover  $(\Gamma_{p_{M}},\Gamma_{p_{M+1}})$ is $(\bk^2\alpha,\bK^2)$-controlled. 
\end{enumerate} 
\end{proposition}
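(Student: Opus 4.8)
The plan is to derive Proposition~\ref{pro:cut-control} by applying the Confinement Lemma~\ref{lem:zigzag} separately on each of the sub-pieces $\vect{\tau}^0,\dots,\vect{\tau}^M$ produced by Lemma~\ref{lem:cut}; once the hypotheses of that lemma are matched to the present data, the two items are just its two cases. First I would fix $n$ and pass to the \emph{closed} sub-piece, that is the coplanar path of tripods $(\tau_{p_n},\dots,\tau_{p_{n+1}})$ with its chords $(h_{p_n},\dots,h_{p_{n+1}})$, of length $N\defeq p_{n+1}-p_n$, so that consecutive sub-pieces overlap exactly at the cut chords. Since $q\mapsto\delta(c_{p_n},c_q)$ is nondecreasing, the cutting conditions $\delta(c_{p_n},c_{p_{n+1}-1})\leq\ell_0$ and --- when $n<M$ --- $\delta(c_{p_n},c_{p_{n+1}})\geq\ell_0$ say precisely that this sub-piece is a weak $(\ell_0,N)$-coplanar path of tripods, which is moreover strong for $n<M$; and Lemma~\ref{lem:cut} gives $N\leq L=\lfloor(\ell_0+1)(\ell_0 R)\rfloor+1$.

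Next I would check that the deformation restricted to the sub-piece is small enough. The restriction $v^{n}\defeq(g_{p_n},\dots,g_{p_{n+1}-1})$ is again an admissible deformation --- the pivots $x_p$ and the constraints $g_p\in\ms P_{x_p}$ are inherited --- and each of its factors satisfies $d_{\tau_p}(g_p,\id)\leq\epsilon/R$, so $v^n$ meets the hypothesis of Lemma~\ref{lem:zigzag} as soon as $N\cdot(\epsilon/R)\leq\eta_0$, and indeed
$$
N\cdot\frac{\epsilon}{R}\leq L\cdot\frac{\epsilon}{R}\leq(\ell_0+1)\ell_0\,\epsilon+\frac{\epsilon}{R}\leq\eta_0
$$
by the preliminary choice $\epsilon\leq\eta_0/(\ell_0(\ell_0+1))$ together with $R\geq R_0$. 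To transfer the conclusion I would use $\dt{\theta_p}=b_p\tau_p$ and $\Gamma_p=h^{v}_p=b_p\cdot h_p$: writing $b_p=b_{p_n}\,\widetilde b_p$ with $\widetilde b_p=g_{p_n}\circ\cdots\circ g_{p-1}$ (so $\widetilde b_{p_n}=\id$) exhibits the pair $(\Gamma_{p_n},\Gamma_{p_{n+1}})$ as $b_{p_n}$ applied to the deformed pair $(h^{v^n}_0,h^{v^n}_N)$ attached to the model sub-piece. Since tripods, chords and the metrics $d_\tau$ --- hence the relations of being $(\alpha,\kappa)$-squeezed, $(\alpha,\kappa)$-controlled and $(\alpha,\kappa)$-nested --- all transform equivariantly under the left $\ms G$-action, the conclusions of Lemma~\ref{lem:zigzag} for $(h^{v^n}_0,h^{v^n}_N)$ hold verbatim for $(\Gamma_{p_n},\Gamma_{p_{n+1}})$.

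Finally I would read off the two cases. For $n<M$ the sub-piece is strong, so the second assertion of Lemma~\ref{lem:zigzag} gives that $(\Gamma_{p_n},\Gamma_{p_{n+1}})$ is $(\alpha,\bk^{7})$-squeezed, which is item~(1). For $n=M$ the sub-piece is only weak, so the first assertion of Lemma~\ref{lem:zigzag} gives that $(\Gamma_{p_M},\Gamma_{p_{M+1}})$ is $(\bk^{2}\alpha,\bK^{2})$-controlled, which is item~(2). I do not expect a genuine obstacle here: all the geometry is already packaged in the Confinement Lemma and in Lemma~\ref{lem:cut}. The one delicate point is the bookkeeping --- arranging the closed sub-pieces so that consecutive ones share exactly the chord $\Gamma_{p_n}$, and checking that the per-step deformation $\epsilon/R$, accumulated over a sub-piece of length $N\leq L\approx(\ell_0+1)\ell_0 R$, stays under the threshold $\eta_0$ of the Confinement Lemma; this accumulation is exactly why $\epsilon$ was normalised to $\eta_0/(\ell_0(\ell_0+1))$ in the preliminary step.
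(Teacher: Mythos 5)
Your proposal is correct and follows essentially the same route as the paper: for $n<M$ the sub-piece $\vect{\tau}^n$ is a strong $(\ell_0,N)$-coplanar path, for $n=M$ it is a weak one, and in both cases the bound $N\leq L$ together with $\epsilon\leq\eta_0/(\ell_0(\ell_0+1))$ lets you invoke the Strong (resp.\ Weak) Confinement Lemma. The paper's own proof is a two-sentence appeal to Lemma~\ref{lem:zigzag} and ``the choice of our constants''; what you have added is exactly the bookkeeping it leaves implicit (the factorisation $b_p=b_{p_n}\widetilde b_p$, the $\G$-equivariance transfer, and the arithmetic check that the per-step deformation accumulates to at most $\eta_0$ over a sub-piece of length $\leq L$), all of which is sound.
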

\begin{proof}
 If $n<M$, $\vect{\tau^n}$ is a strong $(\ell_0,L)$-path.  Then according to the Confinement  Lemma \ref{lem:zigzag} and the choice of our constants 
	$(\Gamma_{p_n},\Gamma_{p_{n+1}})$ is $(\alpha,\bk^7)$-squeezed.
 
Since $\vect{\tau}^M$ is a weak $(\ell_0,L)$-path, it follows by our choice of constants and  the Confinement  Lemma \ref{lem:zigzag}  that $( \Gamma_{p_M}, \Gamma_{p_{M+1}})$ is $(\bk^2\alpha,\bK^2)$ controlled.\end{proof}

We now  prove the Squeezing Chord Theorem \ref{theo:nes-cuf} with $\delta=\bk^2\alpha$:
\begin{proposition}
	Assuming, $\delta(c_i,c_j)>\ell_0$ and  $j>i$,  the pair $(\Gamma_{i},\Gamma_{j})$  is  $(\bk^2\alpha,\bk)$-squeezed.
\end{proposition}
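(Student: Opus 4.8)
The plan is to bootstrap from Proposition \ref{pro:cut-control}, which already handles the case where $i$ and $j$ are the endpoints of a single maximal piece. Given $i<j$ with $\delta(c_i,c_j)>\ell_0$, I would apply the cutting Lemma \ref{lem:cut} not to the whole sequence but to the tail $\{c_k\}_{k\geq i}$, producing cut points $i=q_0<q_1<\cdots$ for which every sub-path $\{c_k\}_{q_m\leq k\leq q_{m+1}}$ is a strong $(\ell_0,\cdot)$-coplanar path (the tail being infinite, there is no final weak piece), with the same length bound $N\leq L$ as before since $\vect c$ is still $\ell_0R$-coplanar. Since $q\mapsto\delta(c_i,c_q)$ is increasing and $\delta(c_i,c_{q_1-1})\leq\ell_0<\delta(c_i,c_j)$ we get $j\geq q_1$; let $n\geq 1$ be such that $q_n\leq j<q_{n+1}$. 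The sub-path $\{c_k\}_{q_n\leq k\leq j}$ is then a weak $(\ell_0,\cdot)$-coplanar path, so exactly as in Proposition \ref{pro:cut-control} the Confinement Lemma \ref{lem:zigzag} gives that $(\Gamma_{q_n},\Gamma_j)$ is $(\bk^2\alpha,\bK^2)$-controlled, while $(\Gamma_{q_m},\Gamma_{q_{m+1}})$ is $(\alpha,\bk^7)$-squeezed for each $0\leq m\leq n-1$, with a commanding tripod $\tau_{q_m}\in\Gamma_{q_m}$. (Passing from the re-cut picture based at $q_m$ to the global picture is a left translation by $b_{q_m}\in\ms G$, under which all these relations are equivariant.)

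It remains to amalgamate these into the single statement that $(\Gamma_i,\Gamma_j)$ is $(\bk^2\alpha,\bk)$-squeezed, which I would do by iterating the Composing Cones Lemma \ref{pro:composcone} from $\Gamma_j$ backwards. The first step combines the controlled pair $(\Gamma_{q_n},\Gamma_j)$ with the squeezed pair $(\Gamma_{q_{n-1}},\Gamma_{q_n})$: for $\sigma\in\Gamma_j$ pick $\rho_\sigma\in\Gamma_{q_n}$ with $(\rho_\sigma,\sigma)$ being $(\bk^2\alpha,\bK^2)$-nested, note $(\tau_{q_{n-1}},\rho_\sigma)$ is $(\alpha,\bk^7)$-nested, and since $\bK^2\cdot\bk^2\alpha=\alpha$ the radii match, so Lemma \ref{pro:composcone} yields $(\tau_{q_{n-1}},\sigma)$ is $(\bk^2\alpha,\bk^7\bK^2)=(\bk^2\alpha,\bk^5)$-nested; hence $(\Gamma_{q_{n-1}},\Gamma_j)$ is $(\bk^2\alpha,\bk^5)$-squeezed with the single commanding tripod $\tau_{q_{n-1}}$. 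Each subsequent step replaces $\Gamma_{q_m}$ by $\Gamma_{q_{m-1}}$: one composes the $(\alpha,\bk^7)$-nestedness of $(\tau_{q_{m-1}},\tau_{q_m})$ (valid because the commanding tripod of the left squeezed pair controls all of $\Gamma_{q_m}\ni\tau_{q_m}$) with the already obtained $(\bk^2\alpha,c)$-squeezing of $(\Gamma_{q_m},\Gamma_j)$, after first shrinking the radius of the left pair from $\alpha$ to $\bk^2\alpha\cdot c$ by the elementary remark that an $(\alpha,\bk^7)$-nested pair is $(\lambda,\kappa_1)$-nested for any $\lambda\leq\bk^7\alpha$ with $\kappa_1=\max\!\big(\bk^7,\tfrac{\bk^7\alpha(1+2\bk^7)}{\lambda}\big)$. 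The straightforward bookkeeping then shows the contraction constant $c$ stabilises in the interval $[\bk^5,2\bk^5]$ through all $n$ steps, the commanding tripod migrates back to $\tau_{q_0}\in\Gamma_{q_0}=\Gamma_i$, and since $2\bk^5\leq\bk$ (as $\bk<1/2$) we conclude $(\Gamma_i,\Gamma_j)$ is $(\bk^2\alpha,\bk)$-squeezed.

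The main obstacle — and the point that makes the argument work rather than blow up — is precisely this last bit of constant management: naively chaining the $(\alpha,\bk^7)$-squeezings would lose a factor $\bK$ of contraction per piece, and the number of pieces between $c_i$ and $c_j$ is unbounded. What rescues it is that once the working radius is pinned at the target value $\bk^2\alpha$, the radius-shrinking step in each composition costs a factor $\kappa_1\approx 1$ rather than $\approx\bK$, so the constant reaches a fixed point of size $\sim\bk^5$ after the very first amalgamation and stays there. This is exactly what the slack between the exponent $7$ produced by the Confinement Lemma and the exponent $2$ of the target radius is there to supply.
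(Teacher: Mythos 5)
Your proof is correct and follows the paper's overall strategy exactly: cut the tail starting at $c_i$ into strong pieces plus one final weak piece via Lemma \ref{lem:cut}, apply the Confinement Lemma \ref{lem:zigzag} to each, and amalgamate with the Composition Lemma \ref{pro:composcone}. Where you diverge is only in the direction and bookkeeping of the amalgamation, and here you have made things harder than they need to be. Your stated worry, that ``naively chaining the $(\alpha,\bk^7)$-squeezings would lose a factor $\bK$ per piece,'' is in fact unfounded: if $(H_0,H_1)$ and $(H_1,H_2)$ are each $(\alpha,\kappa)$-squeezed with $\kappa<1$, then directly from the definitions (take $\tau_0$ the commanding tripod of the first pair, $\tau_1^*$ that of the second; for $\tau_2\in H_2$ one has $C_\alpha(\tau_2)\subset C_{\kappa\alpha}(\tau_1^*)\subset C_\alpha(\tau_1^*)\subset C_{\kappa\alpha}(\tau_0)$, and the metric inequality chains as $d_{\tau_0}\leq\kappa d_{\tau_1^*}\leq\kappa^2 d_{\tau_2}\leq\kappa d_{\tau_2}$ on $C_\alpha(\tau_2)$) the pair $(H_0,H_2)$ is again $(\alpha,\kappa)$-squeezed, with \emph{no} degradation whatsoever. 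This is precisely the ``recall that thanks to the Composition Proposition \ldots'' remark at the start of the paper's proof, which lets the paper chain all the strong pieces forward to get $(\Gamma_{q_0},\Gamma_{q_n})$ $(\alpha,\bk^7)$-squeezed by a trivial induction, and then spends its single genuine composition on attaching the weak tail $(\Gamma_{q_n},\Gamma_j)$, yielding $(\bk^2\alpha,\bk^7\bK^2)=(\bk^2\alpha,\bk^5)$-squeezed. Your backward iteration with the radius-shrinking remark and the fixed-point analysis of the recursion $c\mapsto\kappa_1(c)\cdotp c$ arrives at the same place (I checked: the fixed point sits in $[\bk^5,2\bk^5]$ using $\bk^7<\tfrac12$, as you assert), but it is replacing a one-line monotonicity observation with an unnecessary dynamical argument. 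I would also note in your favor that your re-cut of the tail from $c_i$ (rather than from $c_0$) is more careful than the paper, which uses indices $p_0,\dots,p_{M+1}$ without spelling out that the cut must be anchored at $i$ for the statement to cover arbitrary $i<j$.
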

\begin{proof}
		
We will use freely the observation that $(\alpha,\bk^n)$-nesting implies $(\bk^p\alpha,\bk^q)$-nesting for $p,q\geq 0$ with $p+q\leq n$.

	Recall that thanks to the Composition Proposition \ref{pro:composcone}, if the pairs of chords $(H_0,H_1)$ and $(H_1,H_2)$   which are both $(\alpha,\bk^7)$-squeezed. (in particular 
$(H_1,H_2)$ is $(\alpha,\bk^5)$-squeezed),  then  $(H_0,H_2)$ is $(\alpha,\bk^7)$-squeezed.

We cut $\vect{\tau}$ as above in pieces and control every sub-piece using Proposition \ref{pro:cut-control}.

Thus, by induction, $(\Gamma_{p_0},\Gamma_{p_M})$ is $(\alpha,\bk^7)$-squeezed and thus $(\bK^2(\bk^2\alpha),\bk^7)$-squeezed since $\bk\bK=1$.

Finally since $(\Gamma_{p_M},\Gamma_{p_{M+1}})$ is $(\bk^2\alpha,\bK^2)$-controlled, the Composition Proposition \ref{pro:composcone} yields that $(\Gamma_{p_0},\Gamma_{p_{M+1}})$ is $(\bk^2\alpha,\bk^7\bK^2)$-squeezed and thus $(\bk^2\alpha,\bk)$-squeezed. This finishes the proof.
\end{proof}

\subsection{Proof of the existence of limit points, Theorem \ref{theo:exislimi}} Let $\seq{\theta}$  and $\seq{\tau}$ be  sequences of quasi tripods and tripods as in Theorem \ref{theo:exislimi}.

Let $\vect{\Gamma}$ be the sequence of chords associated to $\seq{\theta}$ and similarly $\vect{c}$ associated to $\seq{\tau}$ 	as in Theorem \ref{theo:exislimi}, then according to Theorem \ref{theo:nes-cuf} if $j>i$ are so that if $\delta(c_i,c_j)\geq \ell_0$ then $(\Gamma_i,\Gamma_j)$ is  $(\delta,\bk)$-squeezed. Since $\vect{c}$ is $Q_0$-controlled  (with $Q_0=\ell_0 R$) we have
$$\delta(c_i,c_j)\geq
\frac{\vert i-j\vert}{\ell_0 R}-1,$$
Thus
$$
j-i\geq L\implies \delta(c_i,c_j)\geq\ell_0, 
$$
We can summarize this discussion in the following statement
\begin{equation}
j-i\geq L\implies S_{\delta}(\Gamma_j)\subset S_{\bk\delta}(\Gamma_i)\, ,\label{inc:lacunary}
\end{equation}

\subsubsection{Convergence for lacunary subsequences}

We first prove an intermediate result.

\begin{corollary} There exists a constant $\bC$ only depending on ${\G}$, with $\bQ<1$, such that  for $\delta$ small enough, if $\vect{l}=\seq{l}$ is a sequence so that $l_{m+1}\geq l_m+L$ and $l_0=0$, then 
\begin{eqnarray}
	S_{\delta}
	(\Gamma_{l_{m+1}})\subset 	S_{\bk\delta}(\Gamma_{l_{m}}),
\end{eqnarray}
and furthermore there exists a unique point $\xi(\vect{l})\in\gp$ so that
\begin{eqnarray}
 \bigcap_{m=1}^\infty S_
   {\delta}
  \left(\Gamma_{l_m}\right)
   =\left\{\xi(\vect{l})\right\}\subset C_\delta(\check\tau_0).	
\end{eqnarray}
where $\check\tau_0$ is a commanding tripod for $(\Gamma_0,\Gamma_{l_1})$.

Finally, if  $\tau\in \Gamma_{0}$ then
 for all  $u$ in $S_{\delta}(\Gamma_{l_m})$ with $m\geq 1$ we have 
\begin{eqnarray}
d_{\tau}(u,\xi(\vect{l}))\leq 2^{-m}\bM\delta.\label{ineq:xi-cent}
\end{eqnarray}
\end{corollary}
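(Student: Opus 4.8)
The plan is to deduce this corollary directly from the key inclusion \eqref{inc:lacunary} together with the convergence machinery already assembled in Corollary \ref{coro-conv-nest} (Convergence Corollary) and Proposition \ref{exis-cont}. First I would observe that since $l_{m+1}\geq l_m+L$ for all $m$, applying \eqref{inc:lacunary} with $i=l_m$ and $j=l_{m+1}$ gives immediately that $S_\delta(\Gamma_{l_{m+1}})\subset S_{\bk\delta}(\Gamma_{l_m})$, which is the first claim. To get the nested sequence of \emph{cones} needed to apply Corollary \ref{coro-conv-nest}, I would use the squeezing statement of Theorem \ref{theo:nes-cuf}: for each $m$, the pair $(\Gamma_{l_m},\Gamma_{l_{m+1}})$ is $(\delta,\bk)$-squeezed, so there is a commanding tripod $\sigma_m\in\Gamma_{l_m}$ with $(\sigma_m,\tau')$ being $(\delta,\bk)$-nested for every $\tau'\in\Gamma_{l_{m+1}}$; in particular $C_\delta(\sigma_{m+1})\subset C_{\bk\delta}(\sigma_m)$ and the $d_{\sigma_m}$-to-$d_{\sigma_{m+1}}$ comparison holds on $C_\delta(\sigma_{m+1})$. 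Since $\bk=\bK^{-1}<1$ and, by choosing $\delta$ small enough, $\bk<\tfrac12$ and $\delta\leq\alpha_3$, the sequence $\{\sigma_m\}$ defines an $(\delta,\bk)$-contracting sequence of cones in the sense of Corollary \ref{coro-conv-nest}.

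Next I would invoke Corollary \ref{coro-conv-nest} to conclude that $\bigcap_{m} C_\delta(\sigma_m)=\{\xi(\vect l)\}$ for a unique point $\xi(\vect l)\in\gp$, with the quantitative estimate \eqref{eq:convseq}: for $u,v\in C_\delta(\sigma_{n+q})$ one has $d_{\sigma_n}(u,v)\leq 2^{-q}d_{\sigma_{n+q}}(u,v)\leq 2^{-q+1}\delta$. To match the statement of the corollary I need to replace the intersection of cones by the intersection of slivers $S_\delta(\Gamma_{l_m})$: on one hand $S_\delta(\Gamma_{l_{m+1}})\subset C_{\bk\delta}(\sigma_m)\subset C_\delta(\sigma_m)$ by the squeezing property (the commanding tripod $\sigma_m$ controls all of $\Gamma_{l_{m+1}}$, hence all of $S_\delta(\Gamma_{l_{m+1}})=\bigcup_{\tau'\in\Gamma_{l_{m+1}}}C_\delta(\tau')$), so $\bigcap_m S_\delta(\Gamma_{l_m})\subset\bigcap_m C_\delta(\sigma_m)=\{\xi(\vect l)\}$; on the other hand $\xi(\vect l)$ lies in every $C_\delta(\sigma_m)\subset S_\delta(\Gamma_{l_m})$, so the intersection is exactly $\{\xi(\vect l)\}$. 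The inclusion $\xi(\vect l)\in C_\delta(\check\tau_0)$ follows because $\check\tau_0$ can be taken to be $\sigma_0$, a commanding tripod for $(\Gamma_0,\Gamma_{l_1})$ — here one uses that $l_1\geq L$ so $\delta(c_0,c_{l_1})\geq\ell_0$ and Theorem \ref{theo:nes-cuf} applies.

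For the final distance estimate \eqref{ineq:xi-cent}, fix $\tau\in\Gamma_0$. For $u\in S_\delta(\Gamma_{l_m})$ with $m\geq1$, both $u$ and $\xi(\vect l)$ lie in $C_\delta(\sigma_m)\subset C_{\bk\delta}(\sigma_{m-1})\subset\cdots$, so applying \eqref{eq:convseq} from Corollary \ref{coro-conv-nest} with base index $0$ and $q=m$ gives $d_{\sigma_0}(u,\xi(\vect l))\leq 2^{-m+1}\delta$. Then I would pass from $d_{\sigma_0}$ to $d_\tau$ for an arbitrary $\tau\in\Gamma_0$: since $\sigma_0$ and $\tau$ both lie in the chord $\Gamma_0$ and — after the reduction implicit in the Confinement Lemma — are at uniformly bounded distance, Proposition \ref{A-B} (Metric equivalences) gives $d_\tau(u,\xi(\vect l))\leq B\,d_{\sigma_0}(u,\xi(\vect l))$ for a constant $B$ depending only on $\ms G$; absorbing the factor $2B$ into the constant $\bM$ yields $d_\tau(u,\xi(\vect l))\leq 2^{-m}\bM\delta$.

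The main obstacle I anticipate is bookkeeping rather than conceptual: one must carefully track which tripod is the commanding tripod at each stage and verify that the chain of cones $C_\delta(\sigma_{m+1})\subset C_{\bk\delta}(\sigma_m)\subset C_\delta(\sigma_m)$ genuinely feeds into the hypotheses of Corollary \ref{coro-conv-nest} (in particular that the metric-comparison inequality \eqref{def:nest1} is inherited, not just the cone inclusion), and that the passage from cones to slivers does not lose the uniqueness of the limit point. One also has to be slightly careful that the constant $\bM$ in \eqref{ineq:xi-cent} — and the threshold on $\delta$ — depend only on $\ms G$ and not on the particular lacunary sequence $\vect l$; this is exactly where the uniformity in Theorem \ref{theo:nes-cuf} and Proposition \ref{A-B} is used.
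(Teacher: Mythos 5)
The proposal follows the paper's overall structure: use \eqref{inc:lacunary} plus the squeezing theorem to produce commanding tripods $\check\tau_m$, verify that the resulting cones $C_\delta(\check\tau_m)$ form a contracting sequence, invoke the Convergence Corollary, and then deduce the quantitative distance estimate. Up to that point you are essentially reproducing the paper's argument and it is correct.

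The gap is in the last step, where you pass from the commanding tripod $\check\tau_0$ to an arbitrary $\tau\in\Gamma_0$. You claim that $\sigma_0$ and $\tau$ are ``at uniformly bounded distance'' so that Proposition \ref{A-B} applies, and you attribute this bound to some ``reduction implicit in the Confinement Lemma.'' But no such reduction exists: $\Gamma_0$ is a chord, i.e.\ a full orbit of the shearing flow, and tripods on a chord can be at arbitrarily large distance from $\check\tau_0$. Proposition \ref{A-B} only gives a comparison constant $B$ when $d(\tau,\tau')\leq A$ for some fixed $A$, so it cannot be used here with a constant depending only on $\ms G$.

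The correct tool is Lemma \ref{coro:shcont} (Sliding out), which is precisely tailored for this situation: if $\tau$ is $R$-sheared from $\check\tau_0$ for any $R$, then there is a constant $k$ depending only on $\ms G$ (in particular \emph{not} on $R$) such that for all $u,v\in C_{\delta_0}(\check\tau_0)$ one has $d_\tau(u,v)\leq k\, d_{\check\tau_0}(u,v)$. This uniformity over all shears is what the argument needs, and it is available exactly because both $u$ and $\xi(\vect l)$ have already been shown to lie in the cone $C_\delta(\check\tau_0)$ (and one takes $\delta\leq\delta_0$). Replacing your appeal to Proposition \ref{A-B} with Lemma \ref{coro:shcont} at this point repairs the argument and recovers the paper's proof.
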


\begin{proof}
From the squeezed condition for chords, we obtain that there exists $\check\tau_m\in \Gamma_{l_m}$ so that 
$$
S_{\delta}(\Gamma_{l_{m+1}})\subset C_{\bk\delta}(\check\tau_m)\subset S_{\bk\delta}(\Gamma_{l_{m}}).
$$
This proves the first assertion. 
As a consequence,
$
C_{\delta}(\check\tau_{m+1})\subset C_{\bk\delta}(\check\tau_m)
$.
Combining with the Convergence  Corollary \ref{coro-conv-nest}, we get the second  assertion, with
$$
\{\xi(\vect{l})\}:=\bigcap_{m=1}^\infty C_{\delta}(\check\tau_{m})=\bigcap_{m=1}^\infty S_{\delta}(\check\tau_{m})
$$

Using the second assertion of the Convergence  Corollary \ref{coro-conv-nest}, we obtain that if $u,v\in S_\delta(\Gamma_{l_m})\subset C_{\delta}(\check\tau_{m-1})$, then 
$$
d_{\tau_0}(u,v)\leq 2^{2-m}\delta.
$$
and in particular $u,\xi(\vect{l})\in C_\delta(\check\tau_0)$ and 
\begin{equation}
d_{\tau_0}(u,\xi(\vect{l}))\leq 2^{2-m}\delta.
\label{proo-exislimi12}	
\end{equation}
We now  extend the  previous inequality  when we replace $\tau_0$ by any $\tau\in C_n$.  We use Lemma \ref{coro:shcont} which produces constant $\delta_0$ and $k$ only depending on ${\G}$ so that if $\delta$ is smaller than $\delta_0$ then since $u,\xi(\vect{l})\in C_\delta(\check\tau_0)$, 
\begin{equation}
d_{\tau}(u,\xi(\vect{l}))\leq k.d_{\tau_0}(u,\xi(\vect{l})).\label{proo-exislimi11}	
\end{equation}
This concludes the proof of the corollary since we now get from inequalities \eqref{proo-exislimi11} and \eqref{proo-exislimi12}
$$
d_{\tau}(u,\xi(\vect{l}))\leq k.2^{2-m}\delta.
$$
\end{proof}
\subsubsection{Completion of the proof}

Let $\seq{l}$ and $\seq{l'}$ be two subsequences. It follows from inclusion \eqref{inc:lacunary}, that
$$
 \bigcap_{m=1}^\infty S_
   {\delta}
  \left(\Gamma_{l_m}\right)= \bigcap_{m=1}^\infty S_
   {\delta}
  \left(\Gamma_{l'_m}\right).
$$
As an immediate consequence, we get
that
$$
 \bigcap_{m=1}^\infty S_
   {\delta}
  \left(\Gamma_{m}\right)=\left\{\xi\left(\vect{\lambda}\right)\right\},
  $$
where $\vect{\lambda}=\seq{\lambda}$, with $\lambda_m=m.L$. Thus we may write $
\xi\left(\vect{\lambda}\right)\eqdef\xi\left(\vect{\Gamma}\right)
$.
The existence of $\xi(\vect{c})$ follows form the fact that $\vect{c}$ is also a $(\ell_0R,\eR)$-deformed sequence of cuffs.

By construction -- and see the second item in the Confinement Lemma \ref{lem:zigzag} -- the commanding tripod $\check\tau_0$ of $(\Gamma_0, \Gamma_L)$ is the commanding tripod of $(c_0, c_L)$.

It follows that both $\xi(\vect{l})$ and $\xi(\vect{\lambda})$ belong to $C_\delta(\check\tau_0)$. Thus using the triangle inequality and 
Lemma \ref{coro:shcont}, for all $\tau\in\Gamma_0$,
\begin{equation}
	d_\tau(\xi(\vect{l}),\xi(\vect{\lambda}))\leq k \delta\ ,\label{ineq:m=0}
\end{equation}
where $k$ only depends on $\G$.
By inequality \eqref{ineq:xi-cent}, if $\seq{l}$ is a lacunary subsequence, for any $\tau\in\gamma_0$, for $u\in S_\delta(\Gamma_{l_m})$ with $m\geq 1$,
\begin{eqnarray}
d_\tau(\xi(\vect{\lambda}),u)\leq 2^{-m}\bM\delta\ .\label{eq:comp11}	\end{eqnarray}
In particular taking $l_m=mL$, one gets
\begin{eqnarray}
d_\tau(\xi(\vect{\lambda}),\theta^j_{m.L})\leq 2^{-m}\bM\delta. \label{eq:comp12}
\end{eqnarray}
Let now $n=(m+1)L+p$, with $p\in [0,L]$.  The inclusion \eqref{inc:lacunary}, gives the first inclusion below, whereas the second is a consequence of the fact that $\bk<1$
\begin{eqnarray}
S_\delta(\Gamma_n)\subset S_{\bk\delta}(\Gamma_{m.L})\subset S_{\bk\delta}(\Gamma_{m.L})\ . \label{eq:comp13}
\end{eqnarray}
Thus combining  the previous assertion with assertion \eqref{eq:comp11}  for all $u\in S_\delta(\Gamma_n)$, with $n>L$ we have
\begin{eqnarray}
d_\tau(\xi(\vect{\lambda}),u)\leq 2^{-m}\bM\delta\leq  (2^{-\frac{1}{L}})^{n}4\bM\delta\ .
\end{eqnarray}
Taking $\bQ=2^{-\frac{1}{L}}$ and $\beta=4\bM\delta$, and $u=\theta^j_n$ yields the inequality
\begin{eqnarray}
d_\tau(\xi(\vect{\lambda}),\theta^j_n)\leq \bQ^{n}\beta\ .
\end{eqnarray} 
This completes the proof of  inequality \eqref{ineq:limit-dist00} for $n>L$. 

The second item comes from  inequality \eqref{ineq:m=0} after possibly changing $\beta$.

The third item comes form the first and the triangle inequality, again after changing $\beta$.

\section{Sullivan limit curves}\label{sec:sull}

The purpose of this section is to define and describe some properties of an analog of the Kleinian property: being a $K$-quasi-circle with $K$ close to 1. 

This is achieved in Definition \ref{def:sull-map}. We then show, under the hypothesis of a compact centralizer for the $\skd$, three main theorems of independent interest: Sullivan maps are H{\"o}lder (Theorem \ref{theo:sull-hold}), a representation with a Sullivan limit map is Anosov (Theorem \ref{theo:sull-anos}), and finally one can weaken the notion of being Sullivan under some circumstances (Theorem \ref{theo:boot}).

In this paragraph, as usual, $\G$ will be a semisimple group, $\mk s$ an $\skd$-triple, $\gp$ the associate flag manifold.
We will furthermore assume in this section  that 
\vskip 0.2truecm
\begin{center}
	\fbox{The centralizer of $\mk s$ is  compact}
\end{center}
\vskip 0.2truecm
We will comment on the case of non compact centralizer later.

Let us start with a comment on our earlier definition of circle maps \ref{def:circle}. Let $T=(x^-,x^+,x_0)$ be a triple of pairwise distinct points in $\Rp$ -- also known as a tripod for $\sld$ -- and $\tau$ a tripod in $\mc G$. Such a pair $(T,\tau)$ defines uniquely
\begin{itemize}
\item an associated circle map $\eta$ from $\Rp$ to $\gp$ so that $\eta(T)=\partial \tau$,
\item an associated {\em extended circle map}\index{Extended circle map} which is a map $\nu$ from the space of   triples of pairwise distinct points $(x,y,z)$  in $\Rp$ to $\mc G$ whose image consists of coplanar tripods and so that $$
	\partial\nu(x,y,z)=(\eta(x),\eta(y),\eta(z))\ , \ \ \ \nu(x^-,x^+,x_0)=\tau\ .
	$$ 	
\end{itemize}

\subsection{Sullivan curves: definition and main results}

\begin{definition}{\sc [Sullivan curve]}\label{def:sull-map}  We say a map $\xi$ from ${\bf P^1}(\mathbb R)$ to $\gp$ is a {\em $\zeta$-Sullivan curve}  with respect to $\mk s$ \index{Sullivan curve}  if the following property holds:

Let $T=(x^-,x^+,x^0)$ be any triple of  pairwise distinct point  in $\Rp$. Then there exists a tripod  $\tau$ -- called {\em a compatible tripod} -- a circle map $\eta$, with $\eta(T)=\partial\tau$, so that for all $y\in {\bf P^1}(\mathbb R)$,   
\begin{eqnarray}
d_\tau(\xi(y),\eta(y))&\leq& \zeta\, .
\end{eqnarray}	
\end{definition}
 Obviously if $\zeta$ is large, for instance greater than $\diam(\gp)$, the definition is pointless: every map is a  $\zeta$-Sullivan. We will however show that the definition makes sense for $\zeta$ small enough.

We also leave to the reader to check that in the case of ${\G}=\ms{PSL}_2(\mathbb C)$ -- so that $\gp$ is ${\bf P}^1(\mathbb C)$ -- the following holds: for $K>1$ and any compact interval  $C$ containing  $-1$, there exists a positive $\epsilon$ such that if $\xi$ is $\epsilon$-Sullivan, then for all $(x,y,z,t)$ in $\Rp$ so that $
[x,y,z,t]$ belongs to $C$, then 
$$
\frac{1}{K}\leq \left\vert\frac{[\xi(x),\xi(y),\xi(z),\xi(t)]}{[x,y,z,t]}\right\vert\leq K. 
$$
This readily implies that $\xi$ is a quasicircle. 
Thus in that case, an $\epsilon$-Sullivan map is quasi-symmetric for $\epsilon$-small enough.
The following results of independent interest justify our interest of  $\zeta$-Sullivan maps.

\begin{theorem}{\sc[H{\"o}lder property]}\label{theo:sull-hold}
	 There exists some positive numbers $\zeta$ and $\alpha$,  so that any  $\zeta$-Sullivan map  is $\alpha$-H{\"o}lder.
\end{theorem}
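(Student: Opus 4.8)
The plan is to recast the Sullivan condition, scale by scale, as a small deformation of a coplanar path of tripods, and to feed this into the Limit Point Theorem \ref{theo:exislimi}. Fix a reference tripod $\tau_\ast$ and cover $\Rp$ by finitely many arcs $J^{(1)},\dots,J^{(r)}$, each of $d_{\tau_\ast}$-length $\leq 1/2$, so that any pair $p,q$ with $d_{\tau_\ast}(p,q)$ below a fixed threshold $\epsilon_0$ lies in a common $J^{(i)}$. Since $\gp$ is compact it suffices to bound $d_{\tau_\ast}(\xi(p),\xi(q))$ for such pairs. Given one, set $J_0\defeq J^{(i)}$, pick a scale factor $\lambda>1$ (to be taken large), and $m\defeq\lfloor\log_\lambda(1/d_{\tau_\ast}(p,q))\rfloor$. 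I would build an infinite coplanar path of tripods $\vect{\tau}=\seq{\tau}$ with associated chords $\seq{c}$ as follows: a bounded number of coplanar steps carries each nested arc $J_k$ to the middle $\lambda^{-1}$-subarc $J_{k+1}$ by ``zigzagging'' down to its centre, all shears staying in a fixed compact interval; the arcs are arranged so that $J_m=[p,q]$ and $\partial\tau_m=(p,q,c)$ with $c$ the midpoint of $[p,q]$; after step $m$ the path is continued arbitrarily (say shrinking to a point) so as to remain coplanar with bounded shears. Then $\vect{\tau}$ is an $\ell_0R$-coplanar sequence for constants $\ell_0,R$ obtained from Theorem \ref{theo:exislimi} (fix a small $\beta$, take $R\geq 1$ large enough that $\ell_0R$ dominates the sequence's intrinsic constant), and $m\asymp\log_\lambda(1/d_{\tau_\ast}(p,q))$.

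Next, apply the Sullivan hypothesis to each triple $T_k\defeq\partial\tau_k\subset\Rp$: this gives a compatible tripod $\hat\tau_k$ and a circle map $\eta_k$ with $\eta_k(T_k)=\partial\hat\tau_k$ and $d_{\hat\tau_k}(\xi(y),\eta_k(y))\leq\zeta$ for all $y\in\Rp$; since the centralizer of $\mk s$ is compact, $\hat\tau_k$ is determined by $\eta_k$ up to bounded ambiguity. Form the $\zeta$-quasi-tripods $\theta_k$ with interior $\hat\tau_k$ and vertices $\xi$ of the three marked points of $J_k$, placed in the matching slots; this is legitimate because $\eta_k$ carries $T_k$ onto $\partial\hat\tau_k$ and $\xi$ stays within $\zeta$ of $\eta_k$ in $d_{\hat\tau_k}$. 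By construction two of the vertices of $\theta_m$ are $\xi(p)$ and $\xi(q)$. The central claim is that, for $\zeta$ small, $\seq{\theta}$ is an $\eta_1$-deformation of $\vect{\tau}$ with $\eta_1=O(\zeta)$ --- equivalently that each $\theta_{k+1}$ is $(R_k,O(\zeta))$-sheared from $\omega^{n_k}\theta_k$ for the combinatorics of the model.

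Granting the claim and choosing $\zeta$ so small that $O(\zeta)\leq\epsilon/R$ (with $\epsilon$ supplied by Theorem \ref{theo:exislimi}), that theorem applies to $\seq{\theta}$: a limit point $\xi(\vect{\theta})\in\gp$ exists, and inequality \eqref{ineq:limit-dist00}, applied with $\tau=\hat\tau_0\in\Gamma_0$ and at step $m>(\ell_0+1)^2R$, gives $d_{\hat\tau_0}(\xi(\vect{\theta}),\partial^j\theta_m)\leq\bQ^m\beta$ for every $j\in\{+,-,0\}$. Taking $j$ to be the two slots carrying $\xi(p)$ and $\xi(q)$ and using the triangle inequality, $d_{\hat\tau_0}(\xi(p),\xi(q))\leq 2\bQ^m\beta$. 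Because $J_0$ is one of finitely many fixed arcs, $T_0$ is one of finitely many fixed triples, so $\hat\tau_0$ lies in a bounded set and $d(\tau_\ast,\hat\tau_0)\leq D_0$; Proposition \ref{A-B} then yields $d_{\tau_\ast}(\xi(p),\xi(q))\leq 2B\bQ^m\beta$. Since $\bQ<1$ and $m\geq\log_\lambda(1/d_{\tau_\ast}(p,q))-1$, this is $\leq C\,d_{\tau_\ast}(p,q)^{\alpha}$ with $\alpha\defeq\log_\lambda(1/\bQ)>0$ and $C$ uniform; for pairs above the threshold the bound is trivial. Hence $\xi$ is $\alpha$-H{\"o}lder.

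The one substantial point is the central claim that $\seq{\theta}$ is a genuine $O(\zeta)$-deformation of the model, i.e. that consecutive compatible tripods $\hat\tau_k,\hat\tau_{k+1}$ sit within $O(\zeta)$ of the sheared position (up to the $\omega^{n_k}$-twist). The mechanism: $\eta_k$ and $\eta_{k+1}$ both track $\xi$ within $\zeta$, hence track each other within $2\zeta$ on the three points of $T_{k+1}$, which are uniformly non-degenerate and at bounded mutual $d_{\hat\tau_k}$-distance since the shears are bounded and $T_{k+1}$ is a definite-shape sub-triple of $T_k$; a rigidity statement for circle maps then forces $d(\hat\tau_k,\hat\tau_{k+1})$ to be bounded, after which Propositions \ref{A-B} and \ref{pro:def-nt} upgrade the $2\zeta$-agreement to the $O(\zeta)$-sheared relation. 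The mild circularity --- comparing $d_{\hat\tau_k}$ with $d_{\hat\tau_{k+1}}$ presupposes what is being proved --- is broken by a crude first bound followed by a bootstrap, and one must also carry out the (elementary but fussy) bookkeeping of the rotations $n_k\in\{1,2\}$ realising the ``zigzag to the centre'' as a coplanar path. An alternative that bypasses Theorem \ref{theo:exislimi} is to run the same construction but extract from the cones $C_\alpha(\hat\tau_k)$ --- each containing both $\xi(p)$ and $\xi(q)$ --- an $(\alpha,\bk)$-contracting sequence via the Confinement Lemma \ref{lem:zigzag} and Lemma \ref{coro:shcont}, giving $d_{\hat\tau_0}(\xi(p),\xi(q))\leq 2\alpha\bk^{\Theta(m)}$ with the same technical heart.
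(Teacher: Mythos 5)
Your approach is correct in outline but takes a genuinely different route from the paper. The paper (Theorem \ref{theo:HolQuant}, proved via Lemmas \ref{pro:holcuff}, \ref{lem:holcuf}, \ref{lem:cuffsul}) builds \emph{two} coplanar paths of $\sld$-tripods from a common base, forking at a point $Z$ with $d_\hh(z_0,Z)\gtrsim -\log d_{z_0}(x_1,x_2)$ and converging to $x_1$ and $x_2$ respectively; it then feeds the two associated deformed chord sequences into item~(3) of Theorem~\ref{theo:exislimi} (the ``coincide up to the $n$-th chord'' estimate, inequality \eqref{ineq:limit-dist}). You instead build \emph{one} zooming path, arranged so that at step $m\asymp -\log d(p,q)$ the tripod has $p$ and $q$ as two of its vertices, and you invoke item~(1) of the same theorem (inequality \eqref{ineq:limit-dist00}), closing with a triangle inequality through the limit point. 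Both approaches rest on the same infrastructure --- the conversion of the Sullivan hypothesis to a small deformation of a model path, the compact-centralizer hypothesis via Lemma~\ref{lem:properA}/Proposition~\ref{lem:proper}, and Theorem~\ref{theo:exislimi} --- but the underlying geometric construction in $\hh$ is different (tree-like fork vs.\ linear zoom). The paper's fork construction is shared with the proof of Theorem~\ref{theo:sull-anos} via the degenerate case $x_1=x_2$, which is one reason it is organized that way; your version does not seem to admit that reuse, but it is conceptually cleaner for the H\"older statement alone.

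Two points need more care than your sketch allows. First, the homing-in: if $J_{k+1}$ is always literally the \emph{middle} $\lambda^{-1}$-subarc of $J_k$, then $J_m$ is forced to be concentric with $J_0$ and you cannot in general arrange $J_m=[p,q]$; you must let $J_{k+1}$ slide, with its offset controlled so the shears stay bounded, and start from a cover so fine that every small $[p,q]$ lies comfortably in the middle portion of some $J^{(i)}$. Second, and more substantively, the ``central claim'' cannot be established by taking the compatible tripods $\hat\tau_k$ \emph{independently} from the Sullivan condition. With compact centralizer, a compatible tripod for a fixed triple $T_k$ is determined only up to the $\ms Z_0$-orbit (Proposition~\ref{pro:comp-stab}), a bounded but not $O(\zeta)$ ambiguity, and the shearing condition of Definition~\ref{def:shear-qt} compares the actual feet $\psi_1(\theta_k)=\hat\tau_k$ and $\psi_1(\theta_{k+1})=\hat\tau_{k+1}$, not just their metrics. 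So an arbitrary independent choice would only give an $O(1)$-sheared relation. You must choose $\hat\tau_{k+1}$ \emph{inductively} within its $\ms Z_0$-orbit to lie $O(\zeta)$-close to $\nu_k(T_{k+1})$, which is exactly what the second assertion of Lemma~\ref{lem:properA} permits and what the paper does in Lemma~\ref{lem:holcuf}. Your ``crude first bound plus bootstrap'' paragraph gestures at this but does not make the inductive refinement explicit; without it the deformation estimate does not close.
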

We prove a more quantitative version of this theorem with an explicit modulus of continuity  in paragraph \ref{sec:holder}.This  modulus of continuity will be needed in other proofs.

The existence of $\zeta$-Sullivan limit maps implies some strong dynamical  properties. We refer to \cite{Labourie:2006,Guichard:2012eg} for background and references on Anosov representations.

\begin{theorem} {\sc [Sullivan implies Anosov]}\label{theo:sull-anos}
There exists some positive $\zeta_1$ with the following property. Assume $S$ is a closed hyperbolic surface and $\rho$ a representation of $\pi_1(S)$ in ${\G}$. Assume  there exists a $\rho$-equivariant $\zeta_1$-Sullivan map $$
	\xi:
	\partial_\infty\pi_1(S)=\Rp\to 
	\gp\ .$$
	 Then  $\rho$ is $\ms P$-Anosov and $\xi$ is its limit curve.
\end{theorem}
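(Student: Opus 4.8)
The plan is to produce continuous, injective, transverse, $\rho$--equivariant limit maps into $\gp$ and to verify the dynamics--preserving property element by element using the Limit Point Theorem \ref{theo:exislimi}; since the opposite parabolic to $\ms P$ is conjugate to $\ms P$ (as $a$ is conjugate to $-a$ in $\sld$), the flag manifold is its own opposite and a single map $\xi$ serves as both limit maps. Fixing $\zeta_1$ no larger than the $\zeta$ of Theorem \ref{theo:sull-hold}, the map $\xi$ is H{\"o}lder, in particular continuous. To see that $\xi$ is injective and carries distinct points to transverse flags, take distinct $x,y\in\Rp$, pick a third point $z$, and apply the Sullivan condition to $T=(x,y,z)$, obtaining a compatible tripod $\tau$ and circle map $\eta$ with $\eta(T)=\partial\tau$. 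As $\eta$ is a circle map, $\eta(x)=\partial^-\tau$ and $\eta(y)=\partial^+\tau$, which are transverse by construction of a perfect triangle and satisfy $d_\tau(\eta(x),\eta(y))=\pi$ by property \ref{hyp:2-dist} of the visual metric; hence $d_\tau(\xi(x),\xi(y))\geq \pi-2\zeta_1>0$ once $\zeta_1<\pi/2$, so $\xi$ is injective, and since $(\partial^-\tau,\partial^+\tau)$ lies in the unique (hence $\G$--uniformly open) orbit of transverse pairs, $\xi(x)$ and $\xi(y)$ are transverse for $\zeta_1$ small.

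The heart of the proof is to show that for every $\gamma\in\Gamma\setminus\{1\}$, with fixed points $\gamma^\pm\in\Rp$ on its axis, $\rho(\gamma)$ is $\ms P$--loxodromic with attracting flag $\xi(\gamma^+)$ and repelling flag $\xi(\gamma^-)$, at a uniform rate. First I would fix a bi--infinite, $\gamma$--periodic coplanar path of tripods $\vect{\tau}=\{\tau_m\}_{m\in\mathbb Z}$ in the hyperbolic plane following the axis of $\gamma$ --- a periodic sequence of ideal triangles marching along the axis, with shears bounded by some constant $R_0>1$ and, suitably reparametrized, a $\ell_0R_0$--coplanar sequence for $\ell_0$ large --- so that the chords $c_m$ have endpoints converging to $\gamma^+$ as $m\to+\infty$ and to $\gamma^-$ as $m\to-\infty$; this is a construction of planar hyperbolic geometry. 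For each $m$, applying the $\zeta_1$--Sullivan property to the triple $\partial\tau_m=(t_m^-,t_m^+,t_m^0)\subset\Rp$ gives a tripod $\dt{\theta_m}\in\mc G$ over $\gp$ and a circle map $\eta_m$ with $\eta_m(\partial\tau_m)=\partial\dt{\theta_m}$ and $d_{\dt{\theta_m}}(\xi(\cdot),\eta_m(\cdot))\leq\zeta_1$; set $\theta_m\defeq(\dt{\theta_m},\xi(t_m^-),\xi(t_m^+),\xi(t_m^0))$, which is a $\zeta_1$--quasi tripod by the Sullivan bound. The key lemma is that $\{\theta_m\}$ is, after reduction, an $(\ell_0R_0,\eR)$--deformed sequence of quasi--tripods with $\epsilon=O(\zeta_1)$, whose model is $\vect{\tau}$ pushed through a single circle map (Proposition \ref{pro:model}). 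Condition (1) in the definition of ``sheared'' holds exactly, because $\xi$ is a genuine function and $\partial\tau_m,\partial\tau_{m+1}$ share two points of $\Rp$; condition (2) reduces, via Proposition \ref{A-B} (bounded shears give uniformly comparable metrics $d_{\dt{\theta_m}}$) and the identification of a tripod with its perfect triangle when the centralizer of $\mk s$ is compact (Propositions \ref{pro:HT} and \ref{all-cyc-same}), to the estimate that $\psi_1(\theta_{m+1})$ and $\overline{\varphi_{R_m}(\psi_1(\omega^{n_m}\theta_m))}$ are $O(\zeta_1)$--close; here one uses that the two tripods have the same transverse pair $\partial^\pm$ (equal to $\xi$ of the shared edge), nearly equal third vertices, and circle maps both $O(\zeta_1)$--close to $\xi$, the last point constraining the residual compact ambiguity because the centralizer of a correct $\sld$ fixes its circle pointwise.

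Granting the key lemma, Theorem \ref{theo:exislimi} applies once $\zeta_1$ is small enough that $O(\zeta_1)\leq\eR$: the slivers of $\{\theta_m\}$ shrink to a point $\lim_{m\to+\infty}\partial^j\theta_m=\lim_{m\to+\infty}\xi(t_m^j)$ which, by continuity of $\xi$ and $t_m^j\to\gamma^+$ (for each $j\in\{-,+,0\}$), equals $\xi(\gamma^+)$; the reversed path (using $\sigma$ and $\varphi_{-R}$) gives $\xi(\gamma^-)$ for $m\to-\infty$. Equivariance of $\xi$ gives $\rho(\gamma)\xi(\gamma^\pm)=\xi(\gamma^\pm)$, these are transverse by the first step, and the quantitative estimate \eqref{ineq:limit-dist00}, applied to the $\gamma$--translates of the path, shows $\rho(\gamma)$ contracts a neighborhood of $\xi(\gamma^+)$ in $\gp$ at a definite exponential rate, so it is $\ms P$--loxodromic with these fixed flags and a uniform proximality gap. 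Thus $(\xi,\xi)$ is a continuous, injective, transverse, $\rho$--equivariant, dynamics--preserving pair of limit maps with uniform estimates, and the standard characterization of Anosov representations of surface groups by such limit maps (see \cite{Labourie:2006,Guichard:2012eg}) gives that $\rho$ is $\ms P$--Anosov with limit curve $\xi$.

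The main obstacle is the key lemma of the second paragraph: turning the tripods $\dt{\theta_m}$, produced separately by the Sullivan condition at each $m$, into a genuinely controlled deformation of one coplanar path with deformation size proportional to $\zeta_1$. The difficulty is that the auxiliary circle maps $\eta_m$ depend on $m$; bounding the change from $\eta_m$ to $\eta_{m+1}$ forces one to use the shared edge of consecutive triangles together with the compactness of the centralizer of $\mk s$, and to check carefully that the ambiguity of a tripod over its perfect triangle --- a compact group --- is itself pinned down by closeness to $\xi$ and does not accumulate along the path.
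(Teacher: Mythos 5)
Your approach is genuinely different in its global structure from the paper's, even though it draws on the same technical ingredients. The paper does not verify Anosov via an element-by-element loxodromicity statement plus a black-box characterization; it instead verifies the flow-based \emph{definition} of Anosov directly. Concretely, the paper first proves Lemma \ref{lem:anosul}, a contraction estimate for the metrics $d_{\tau^\perp}$ along a geodesic flowline of the hyperbolic plane (proved using the degenerate case $x_1=x_2=\partial^+T$ of Lemma \ref{pro:holcuff} and the Confinement Lemma); it then builds the flat bundle $\mc F\to\Gamma\backslash\mc S$ with fiber $\gp$, defines the candidate sections $\sigma^\pm(T)\defeq\xi(\partial^\pm T)$ and a fibrewise metric from compatible tripods (using the compactness of the centralizer, exactly as you anticipate), and reads off the Anosov property from Lemma \ref{lem:anosul}: $\sigma^+$ is attracting towards the future and $\sigma^-$ towards the past. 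The uniformity you would need is supplied automatically by the compactness of $\Gamma\backslash\mc S$ and the boundedness of the family of compatible tripods (Lemma \ref{lem:properA}).

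Your plan buys an explicit loxodromicity statement for every $\gamma$ with the fixed flags identified as $\xi(\gamma^\pm)$, which is pleasant, and your first two paragraphs (continuity via the H\"older theorem, injectivity and transversality from $d_\tau(\partial^-\tau,\partial^+\tau)=\pi$, and the construction of the deformed sequence of quasi-tripods pinned down by the compact centralizer) closely match what the paper does in Proposition \ref{pro:suldef0}, Lemma \ref{lem:cuffsul}, and the proof of Lemma \ref{lem:anosul}. The weak link is your last step. The reduction to ``continuous, injective, transverse, equivariant, dynamics-preserving limit maps with uniform estimates implies Anosov'' is a real theorem, but it is not in \cite{Labourie:2006,Guichard:2012eg} in the form you need; the characterizations of Anosov via limit maps in those references still require a flow-level contraction hypothesis, not merely per-element proximality. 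The cleanest correct versions of such a characterization (e.g.\ transverse and \emph{strongly} dynamics-preserving limit maps imply Anosov) are due to later work of Kapovich--Leeb--Porti and Gu\'eritaud--Guichard--Kassel--Wienhard, and even there one must be careful about exactly which form of ``dynamics preserving'' suffices: mere proximality with a per-element gap does not obviously package into the required uniform statement without an argument. So as written, your proof has a genuine gap at the final citation, which would need to be replaced either by quoting the right theorem from the post-2012 literature and checking its hypotheses, or by doing what the paper does: pass from your sequence-of-quasi-tripods estimate to a contraction of the flat-bundle flow on sections (which amounts to restating your ``key lemma'' over all of $\Gamma\backslash\mc S$ rather than along axes of individual $\gamma$), so that the definition of $\gp$-Anosov is verified directly. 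This would also moot the concern about how the proximality gap depends on the translation length of $\gamma$, since the relevant compactness is then that of $\Gamma\backslash\mc S$ rather than a bound over all conjugacy classes.
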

Recall that $\ms P$ is the stabilizer of a point in $\gp$. We recall that a $\ms P$-Anosov representation is in particular faithful and a quasi-isometric embedding and that all its elements are loxodromic \cite{Labourie:2006,Guichard:2012eg}    . Recall also that in that context, the  parabolic is isomorphic to its opposite. We prove this theorem in paragraph \ref{sec:anos}.

During the proof we shall also prove the following lemma of independent interest 
\begin{lemma}\label{lem:sull-ano}
	Let $\rho_0$ be an Anosov representation of a Fuchsian group $\Gamma$. Assume that the limit map $\xi_0$ is $\zeta$-Sullivan, then, for any positive $\epsilon$,  any nearby (i.e., sufficiently close to $\rho_0$) representation $\rho$ is Anosov with a $(\zeta+\epsilon)$-Sullivan limit map.
\end{lemma}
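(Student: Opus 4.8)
The statement concerns representations near $\rho_0$ (the ``$\rho$'' in the hypothesis is a typo for $\rho_0$); as throughout this section, we understand $\zeta$ to be below the smallness thresholds of the earlier results, so that being $\zeta$-Sullivan is a genuine constraint, and we take $\Gamma$ to be cocompact. The plan is to combine the openness of the Anosov condition and the $C^0$-continuity of limit curves with the cocompactness of the $\Gamma$-action on triples and the uniform comparability of the metrics $d_\tau$ on compacta (Proposition \ref{A-B}). First I would invoke \cite{Labourie:2006,Guichard:2012eg}: since $\rho_0$ is $\ms P$-Anosov there is a neighbourhood $\mathcal V$ of $\rho_0$ consisting of $\ms P$-Anosov representations, and $\rho\mapsto\xi_\rho$ (the limit curve $\xi_\rho\colon\Rp\to\gp$, which is $\rho$-equivariant, injective, with pairwise transverse image) is continuous for uniform convergence; since $\Rp$ is compact this yields $\delta(\rho):=\sup_{y\in\Rp}d(\xi_\rho(y),\xi_0(y))\to 0$ as $\rho\to\rho_0$, where $d=d_{\tau_\ast}$ is the metric attached to a fixed tripod $\tau_\ast$.

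Next I would pass to a fundamental domain. As $\Gamma$ acts properly discontinuously and cocompactly on the space $\mathcal T_3$ of triples of pairwise distinct points of $\Rp$, fix a compact fundamental domain $D\subset\mathcal T_3$, and for each $T\in D$ a compatible tripod $\tau^T$ for $(\xi_0,T)$ together with a circle map $\eta^T$ satisfying $\eta^T(T)=\partial\tau^T$ and $d_{\tau^T}(\xi_0(y),\eta^T(y))\le\zeta$ for all $y$. The crucial point --- which I expect to be the main obstacle and which is implicitly established in the proof of Theorem \ref{theo:sull-anos} --- is that \emph{the family $\{\tau^T:T\in D\}$ is contained in a compact subset $\mathcal K\subset\mc G$}. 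The mechanism: compatibility forces the (fixed) triple $\xi_0(T)$ to lie within $d_{\tau^T}$-distance $\zeta$ of the perfect triangle $\partial\tau^T$; since the centralizer of $\mk s$ is compact, $\mc T$ is a homogeneous $\G$-space with compact point stabilizers, and as $T$ ranges over the compact set $D$ the triples $\xi_0(T)$ range over a compact set of pairwise transverse flags, so a properness argument --- valid precisely because $\zeta$ is small, i.e.\ because $\xi_0(T)$ is forced to be uniformly close to being a perfect triangle --- keeps $\partial\tau^T$ in a compact subset of $\mc T$, hence $\tau^T$ in a compact $\mathcal K\subset\mc G$.

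Granting this, Proposition \ref{A-B} provides $B\ge 1$ with $d_\tau(u,v)\le B\,d(u,v)$ for all $u,v\in\gp$ and all $\tau\in\mathcal K$. Given $\epsilon>0$, for $\rho$ close enough to $\rho_0$ we have $B\,\delta(\rho)\le\epsilon$, and then for every $T\in D$ and $y\in\Rp$,
\[
d_{\tau^T}(\xi_\rho(y),\eta^T(y))\le d_{\tau^T}(\xi_\rho(y),\xi_0(y))+d_{\tau^T}(\xi_0(y),\eta^T(y))\le B\,\delta(\rho)+\zeta\le\zeta+\epsilon,
\]
so $(\tau^T,\eta^T)$ witnesses the $(\zeta+\epsilon)$-Sullivan property of $\xi_\rho$ at $T$. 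To pass from $D$ to an arbitrary triple $T'=\gamma T$ ($\gamma\in\Gamma$, $T\in D$), set $\tau':=\rho(\gamma)\cdot\tau^T$ and $\eta':=\rho(\gamma)\circ\eta^T\circ\gamma^{-1}$; using that post-composition by $\rho(\gamma)\in\G$ and pre-composition by $\gamma$ preserve the class of circle maps, the $\rho$-equivariance of $\xi_\rho$, the $\G$-equivariance of $\partial$, and the identity $d_{g\tau}(gu,gv)=d_\tau(u,v)$ from \ref{hyp:1-dist}, one checks $\eta'(T')=\partial\tau'$ and, after the substitution $y=\gamma z$, $d_{\tau'}(\xi_\rho(y),\eta'(y))=d_{\tau^T}(\xi_\rho(z),\eta^T(z))\le\zeta+\epsilon$ for all $y$. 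Hence $\xi_\rho$ is $(\zeta+\epsilon)$-Sullivan, and since $\rho\in\mathcal V$ it is also Anosov, which proves the lemma.

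The only genuinely delicate input is the boundedness of the compatible tripods over a fundamental domain; everything else is soft (openness/continuity of Anosov data, equivariance bookkeeping, and Proposition \ref{A-B}). If a self-contained argument for that boundedness is wanted, I would extract it from the argument proving Theorem \ref{theo:sull-hold} (which controls how a $\zeta$-Sullivan curve sits relative to its compatible circles) rather than re-deriving it here.
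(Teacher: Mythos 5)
Your proof is correct and follows essentially the same route as the paper's: openness of the Anosov condition and uniform continuity of limit curves, a compact fundamental domain for the $\Gamma$-action on triples (equivalently on $\sld$-tripods), boundedness of the set of compatible $\ms G$-tripods over that domain so that Proposition~\ref{A-B} applies uniformly, and equivariance to propagate the estimate. The boundedness you flag as ``the only genuinely delicate input'' is not something you need to extract implicitly from Theorem~\ref{theo:sull-hold}: it is exactly the first assertion of Lemma~\ref{lem:properA} (applied with $M$ equal to the diameter of the fundamental domain), which the paper's proof invokes directly, and whose proof via Proposition~\ref{lem:proper} is precisely the properness argument you sketch.
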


The following result is  is worth stating, although we will not use it in the proof.

\begin{proposition}
		Let $\rho_n$ be a family of $\ms P$-Anosov representations of a Fuchsian group, whose limit maps are $\zeta_n$-Sullivan, with $\zeta_n$ converging to zero.  Then, after conjugation,  $\rho_n$ converges to a representation whose limit curve is a circle. 
\end{proposition}
\begin{proof}  Let $\xi_n$ be the limit curve of $\rho_n$.  By definition, there exists a tripod $\tau_n$ and and associated circle map $\eta_n$ so that
\begin{eqnarray*}
	\eta_n(0,1,\infty)&=&\partial \tau_n \\
	d_{\tau_n}(\eta_n,\xi_n)&\leq& \zeta_n\ .
\end{eqnarray*}
After conjugating $\rho_n$ by an element $g_n$, we may assume that 
$\tau_n$ and $\eta_n$ are constant equal to $\tau$ and $\eta$ respectively.
Thus we have 
\begin{eqnarray*}
	\eta(0,1,\infty)&=&\partial \tau\\
	d_{\tau}(\eta,\xi_n)&\leq& \zeta_n\ .
\end{eqnarray*}
In particular, it follows that $\xi_n$ converges uniformly to $\eta$.  Let $\rho_0$ be the Fuchsian representation associated to $\eta$.
Let now $\gamma^i$ be generators of $\Gamma$. The same argument as above show that $\xi_n\circ\gamma_i=\rho_n(\gamma^i)\circ\xi_n$ converges to $\eta\circ\gamma_i=\rho_0(\gamma_i)$. It follows that,
using the fact that the centralizer of a circle is compact, that we may extract a subsequence so that $\rho_n(\gamma_i)$ converges for all $i$ to $\rho(\gamma^i)$, where $\rho$ is a representation in $\mathsf H\times \mathsf K$ of the form $(\rho_0,\rho_1)$, where $\mathsf H$ the group isomorphic to $\sld$ associated to $\eta$, and $\mathsf K$ its centralizer..
\end{proof}

In the first paragraph of this section, we single out the consequence of the  ``compact stabilizer hypothesis''  that we shall use. 

\subsubsection{The compact stabilizer hypothesis} Our standing hypothesis will have the following consequence
\begin{lemma}\label{lem:properA} The following holds
	\begin{enumerate}
		\item	There exists a positive constant $\boldsymbol{\zeta}$, so that for every  positive real number  $M$, there exists a positive real number $N$, such that if $\xi$ is a $\boldsymbol{\zeta}$-Sullivan map, if $T_1$ and $T_2$ are two triples of distinct points in $\Rp$ with  $d(T_1,T_2)\leq M$, if $\tau_1$ and $\tau_2$ are the respective compatible tripods with respect to $\xi$, then $$
	d(\tau_1,\tau_2)\leq N.
	$$
	\item For any positive $\epsilon$ and $M$, then for $\zeta$ small enough, for any $\zeta$-Sullivan map $\xi$,   if $T_1$ and $T_2$ are two triples of distinct points in $\Rp$ with $d(T_1,T_2)\leq M$, if $\tau_1,\nu_1$ are the compatible tripods and extended circle maps of $T_1$ with respect to $\xi$,  then we may choose a compatible  tripod $\tau_2$ for $T_2$ so that
	$$
	d(\tau_2, \nu_1(T_2))\leq\epsilon.
	$$
	\end{enumerate}
\end{lemma}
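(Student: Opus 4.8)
The plan is to exploit the two consequences of the compact--centralizer hypothesis: by Proposition~\ref{pro:HT}, $\mc H=\mc T$, so (Proposition~\ref{pro:comp-stab}) $d_\tau$ depends only on $\partial\tau$, and $\partial\colon\mc G\to\mc T$ is a principal $\ms Z_0$--bundle with \emph{compact} structure group $\ms Z_0$. First I would normalise: acting by an element of $\G$ on the target and by an element of $\sld$ on the source $\Rp$ --- both operations preserve the class of $\zeta$--Sullivan maps and send compatible tripods to compatible tripods, directly from Definition~\ref{def:sull-map} --- we may assume $T_1=(0,1,\infty)$ and $\tau_1=\tau_\ast$ a fixed tripod, so that $\eta_1$ and the extended circle map $\nu_1$ become fixed. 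Then $T_2$ ranges over the compact set $\{T:d(T_1,T)\le M\}$, and (a direct check using the defining properties of $\nu_1$) $\nu_1(T_2)=R_h(\tau_\ast)$ where $h\in\sld$ is the bounded element with $h\cdot(0,1,\infty)=T_2$; hence $d(\tau_\ast,\nu_1(T_2))=d_0(h,\id)\le N_0(M)$, and by Proposition~\ref{A-B} all metrics $d_\tau$ with $\tau$ in a bounded set are uniformly bilipschitz to $d_{\tau_\ast}$.

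The core is a rigidity statement. \emph{Key Lemma:} there exist $\epsilon_1>0$ and $N_1>0$, depending only on $\G$, such that if $\sigma,\sigma'$ are tripods, $p=(p^-,p^+,p^0)\in\gp^3$, and $d_\sigma(\partial^j\sigma,p^j)\le\epsilon_1$, $d_{\sigma'}(\partial^j\sigma',p^j)\le\epsilon_1$ for $j\in\{+,-,0\}$, then $d(\sigma,\sigma')\le N_1$. I would prove this by contradiction and compactness. Normalising $\sigma=\sigma_\ast$, the first hypothesis forces $p\to\partial\sigma_\ast$ in $\gp^3$ as $\epsilon_1\to0$, so it is enough to bound, uniformly for $p$ near a perfect triangle, the locus $\{\sigma':d_{\sigma'}(\partial^j\sigma',p^j)\le\epsilon_1\ \forall j\}$. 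If $s(\sigma')$ stays bounded in $\sg$ then the $d_{\sigma'}$ are uniformly bilipschitz to a fixed metric, so $\partial\sigma'\to p$ and, as $\mc G\to\mc T$ is proper (here $\ms Z_0$ compact), $\sigma'$ stays bounded. The remaining case $s(\sigma')\to\infty$ is excluded because, as $s(\sigma')$ leaves every compact subset of $\sg$, the metric $d_{\sigma'}$ on $\gp$ degenerates (collapsing the complement of a Schubert divisor onto a single flag), so a triple at the fixed mutual visual distances of $\partial\sigma'$ cannot remain $\epsilon_1$--close to the fixed triple $p$. \textbf{This exclusion is the main obstacle}: it is a quantitative ``almost--round circles are close to round circles'' statement and it uses compactness of $\ms Z_0$ essentially --- without it, $s(\sigma')$ could escape to infinity inside $\operatorname{Sym}(\ms Z)$ with $\partial\sigma'\equiv p$, and the Key Lemma would fail.

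\emph{The first assertion.} It suffices to prove the following for one fixed small $M_0$: for every $\boldsymbol\zeta$--Sullivan $\xi$ and every $T,T'$ with $d(T,T')\le M_0$, any compatible tripods $\tau,\tau'$ satisfy $d(\tau,\tau')\le N(M_0)$; the general case then follows by interpolating $T_1=S_0,\dots,S_k=T_2$ along the $\sld$--geodesic of triples with $d(S_i,S_{i+1})\le M_0$ and $k\le M/M_0+1$, choosing (by the Sullivan property) compatible tripods $\sigma_i$ for the $S_i$ with $\sigma_0=\tau_1$, $\sigma_k=\tau_2$, and summing: $d(\tau_1,\tau_2)\le kN(M_0)\le(M/M_0+1)N(M_0)=:N(M)$. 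For the $M_0$--claim: write $T'=(x^-,x^+,x^0)$; evaluating the Sullivan inequality for $T'$ at the points of $T'$ gives $d_{\tau'}(\xi(x^j),\partial^j\tau')\le\boldsymbol\zeta$, while evaluating the Sullivan inequality for $T$ at the points of $T'$ gives $d_{\tau}(\xi(x^j),\partial^j\nu(T'))\le\boldsymbol\zeta$, where $\nu$ is the extended circle map of $(T,\tau)$ and $d(\tau,\nu(T'))\le N_0(M_0)$; by Proposition~\ref{A-B}, $d_{\nu(T')}(\xi(x^j),\partial^j\nu(T'))\le B(N_0(M_0))\boldsymbol\zeta$. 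Fixing $\boldsymbol\zeta$ with $\max(1,B(N_0(M_0)))\boldsymbol\zeta\le\epsilon_1$ and applying the Key Lemma with $\sigma=\tau'$, $\sigma'=\nu(T')$, $p=\xi(T')$ yields $d(\tau',\nu(T'))\le N_1$, hence $d(\tau,\tau')\le N_0(M_0)+N_1=:N(M_0)$.

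\emph{The second assertion.} Fix $\epsilon,M$; let $\xi$ be $\zeta$--Sullivan with $\zeta\le\boldsymbol\zeta$ and $\tau_2'$ any compatible tripod for $T_2$, with circle map $\eta'$; by the first assertion $d(\tau_\ast,\tau_2')\le N(M)$. For all $y\in\Rp$, combining $d_{\tau_\ast}(\xi(y),\eta_1(y))\le\zeta$, $d_{\tau_2'}(\xi(y),\eta'(y))\le\zeta$ and Proposition~\ref{A-B} gives $d_{\tau_\ast}(\eta_1(y),\eta'(y))\le(1+B(N(M)))\zeta=:C_M\zeta$: the circle maps $\eta_1,\eta'$ are uniformly $C_M\zeta$--close in the fixed metric $d_{\tau_\ast}$. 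Since a circle map is determined (continuously, within a bounded region) by its perfect triangle $(\eta(0),\eta(1),\eta(\infty))$ --- Definition~\ref{def:circle} together with the identification of correct $\skd$--triples with perfect triangles --- there is a modulus $\varpi$, $\varpi(0^+)=0$, with $d_{\mc T}(\eta_1,\eta')\le\varpi(C_M\zeta)$, and, evaluating at $T_2$ (in a compact set), $d_{\mc T}(\partial\tau_2',\partial\nu_1(T_2))\le\varpi'(C_M\zeta)$. Since $\partial\colon\mc G\to\mc T$ is a principal $\ms Z_0$--bundle with $\ms Z_0$ compact and $\nu_1(T_2)$ lifts $\partial\nu_1(T_2)$, there is $\tau_2$ in the $\ms Z_0$--fibre over $\partial\tau_2'$ with $d(\tau_2,\nu_1(T_2))\le\varpi''(C_M\zeta)$; and $\tau_2$ is again compatible for $T_2$, witnessed by $\eta'$, because $\partial\tau_2=\partial\tau_2'$ (the $\ms Z_0$--action on $\mc G$ fixes the perfect triangle, by the proposition computing the stabiliser of a perfect triangle) and so $d_{\tau_2}=d_{\tau_2'}$ (Proposition~\ref{pro:comp-stab}). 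Taking $\zeta$ with $\varpi''(C_M\zeta)\le\epsilon$ completes the proof.
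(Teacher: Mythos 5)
Your overall architecture matches the paper's. You isolate a rigidity statement (your Key Lemma, which is exactly the paper's Proposition~\ref{lem:proper}(1)), feed the Sullivan inequality into it with $p=\xi(T')$, and conclude; your interpolation to reach arbitrary $M$ just makes explicit a step the paper elides, and your circle-map uniqueness argument for the second assertion is a reasonable variant of the paper's use of Proposition~\ref{lem:proper}(2) (the paper directly gets a lift $\tau_3$ with $\partial\tau_3=\partial\tau_2$ near $\nu_1(T_2)$, and then invokes Proposition~\ref{pro:comp-stab} to see $\tau_3$ is still compatible). So the skeleton is correct.

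The genuine gap is the one you flag yourself: excluding $s(\sigma')\to\infty$ in the Key Lemma. Your proposed argument --- ``as $s(\sigma')$ leaves every compact, $d_{\sigma'}$ degenerates, so a triple at fixed mutual visual distances cannot remain $\epsilon_1$-close to the fixed triple $p$'' --- is a plausible heuristic but is not a proof as written. The degeneration of the visual metric is not so easy to turn into the needed exclusion: as $s(\sigma')$ escapes, $d_{\sigma'}$ \emph{shrinks} distances on a large open cell, which in principle makes it \emph{easier}, not harder, for arbitrary flags to be $d_{\sigma'}$-close; and the concentration locus of $d_{\sigma'}$ and the location of the fixed $p$ need to be analysed direction-by-direction in the Cartan compactification. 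Nothing in your sketch rules out a sequence $\sigma'_n$ with $s(\sigma'_n)\to\infty$, $d_{\sigma'_n}(\partial^j\sigma'_n,p^j)\le\epsilon_1$ and $d_{\sigma}(p^j,\partial^j\sigma)\le\epsilon_1$. The paper's proof of Proposition~\ref{lem:proper} avoids this entirely: from $(x,y,z)$ near a perfect triangle it builds the geodesically convex function $h_x+h_y+h_z$ on $\sg$ (a sum of Busemann functions), shows it has a critical point on the associated totally geodesic $\hh$, and uses the compact-centralizer hypothesis to make this critical point non-degenerate (${\rm D}^2H(v,v)=0$ would force $v$ to generate a one-parameter subgroup commuting with the correct $\sld$). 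This yields a continuous $\G$-equivariant map $G$ from a $\G$-invariant neighbourhood of the vertex locus in $\gp^3$ to $\sg$; since $\G$ acts properly on $\sg$, it acts properly on that neighbourhood, and the Key Lemma is the statement of this properness after the normalisation you already made. You should replace your degeneration sketch with some such proper equivariant map to $\sg$; without it, the central compactness input of the whole lemma is missing.
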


Actually this lemma will be the unique consequence of our standard hypothesis which will be used in the proof. This lemma is itself a corollary of the following proposition.
\begin{proposition}\label{lem:proper}
\begin{enumerate}
    \item There exists positive constants $\boldsymbol A$ and $\boldsymbol \zeta_0$, such that if $\tau_1$ and $\tau_2$ are two tripods and  $X$  is a triple of points in $\gp$,  we have the implication
	$$
	d_{\tau_1}(X,\partial \tau_1)\leq \boldsymbol\zeta_0, \ \
	d_{\tau_2}(X,\partial \tau_2)\leq \boldsymbol\zeta_0 \implies
	d(\tau_1,\tau_2)\leq \boldsymbol{A}\ .
	$$
	\item Moreover, given $\alpha>0$, there exist $\epsilon>0$ so that
		$$
	 d_{\tau_1}(X,\partial \tau_1)\leq \epsilon, \ \  	d_{\tau_2}(X,\partial \tau_2)\leq \epsilon\implies \exists \tau_3\ ,\hbox{ with } \partial\tau_3=\partial\tau_2 \hbox{ and } d(\tau_1,\tau_3)\leq \alpha\ .
$$
\end{enumerate}
\end{proposition}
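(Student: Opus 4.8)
\emph{Strategy.} Both parts will be compactness arguments whose single input from the standing hypothesis is that the stabiliser in $\ms G$ of a perfect triangle is a conjugate of the compact group $\ms Z_{\mk s}$; equivalently $\partial\colon\mc G\to\mc T$ has compact fibres, $\mc T$ is a locally closed $\ms G$--invariant submanifold of $\gp^3$, and, by Proposition \ref{pro:comp-stab}, $d_\tau$ depends only on $\partial\tau$. I will also use freely that $d_\tau(X,\partial\tau)=0$ forces $X=\partial\tau$, that $\tau\mapsto d_\tau$ and $\tau\mapsto\partial\tau$ are $\ms G$--equivariant and continuous (so that Proposition \ref{A-B} applies and that the combined assignment is Lipschitz near the diagonal), and that $\ms G$ acts simply transitively on each component of $\mc G$, which lets me normalise one of the two tripods. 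Throughout, $\zeta_0$ is taken small and, in particular, $\le\beta$ from Lemma \ref{lem:footmap}; for $\zeta_0$ small the hypotheses also force $\tau_1,\tau_2$ into a common component.

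\emph{Proof of (1).} Since $d_{\tau_i}(X^\pm,\partial^\pm\tau_i)\le\zeta_0$ and $\zeta_0$ is small, $X^+$ and $X^-$ are transverse, so by Lemma \ref{lem:footmap} the tripods $\tau_i'\defeq\Psi(\tau_i,X^+,X^-)$ satisfy $\partial^\pm\tau_i'=X^\pm$ and $d(\tau_i,\tau_i')\le\bM_1\zeta_0$. As $\tau_1'$ and $\tau_2'$ have the same $\partial^\pm$, there is a unique $b$ in the stabiliser $\ms L_{(X^+,X^-)}$ of the transverse pair $(X^+,X^-)$ (a conjugate of the Levi $\ms L$, acting simply transitively on the tripods with those two vertices in a given component) with $\tau_2'=b\,\tau_1'$, and $d(\tau_1,\tau_2)\le 2\bM_1\zeta_0+d(\tau_1',b\,\tau_1')$; so it is enough to bound $b$. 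Feeding $d_{\tau_i}(X^0,\partial^0\tau_i)\le\zeta_0$ and $d(\tau_i,\tau_i')\le\bM_1\zeta_0$ into Proposition \ref{A-B} and the Lipschitz dependence of $(\partial^0\tau,d_\tau)$ on $\tau$ gives $d_{\tau_i'}(X^0,\partial^0\tau_i')\le C\zeta_0$ for $i=1,2$, with $C$ depending only on $\ms G$ and $\mk s$; since $\partial^0\tau_2'=b\,\partial^0\tau_1'$, the case $i=2$ reads $d_{\tau_1'}(b^{-1}X^0,\partial^0\tau_1')\le C\zeta_0$, whence $d_{\tau_1'}(X^0,b^{-1}X^0)\le 2C\zeta_0$. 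So $b^{-1}\in\ms L_{(X^+,X^-)}$ displaces the flag $X^0$ by at most $2C\zeta_0$ for $d_{\tau_1'}$. Here the hypothesis is used: the stabiliser of $X^0$ in $\ms L_{(X^+,X^-)}$ is compact — it is a conjugate of $\ms Z_{\mk s}$ when $X^0=\partial^0\tau_1'$, and it remains compact for $X^0$ in a fixed neighbourhood of $\partial^0\tau_1'$ — so the orbit map $\ms L_{(X^+,X^-)}\to\gp$ at $X^0$ is proper onto its image, which is locally closed in $\gp$. Hence, once $\zeta_0$ is small enough that the $d_{\tau_1'}$--ball of radius $2C\zeta_0$ about $X^0$ is disjoint from the frontier of that orbit (a uniform threshold, by equivariance), $b^{-1}$, and therefore $b$, lies in a compact subset of $\ms L_{(X^+,X^-)}$ depending only on $\ms G$ and $\mk s$, which yields the bound $\boldsymbol A$. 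The genuinely non-formal step is this compactness/properness point — equivalently, that a tripod cannot drift off to infinity while a fixed (near-)perfect triangle stays approximately fixed: in rank one this is transparent (collapse of points on the circle), but in higher rank one has to use the Cartan decomposition of the offending elements together with the rigidity of perfect triangles (their three vertices lie on a common circle, i.e.\ form an $\sld$--orbit configuration). I expect this to be the main obstacle, and in particular the reason one must keep track of $\partial^0\tau'_i$ rather than the free data $X^0$.

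\emph{Proof of (2).} Part (1) does most of the work. Fix $\alpha>0$ and suppose the statement fails for every $\epsilon$; then there are $\epsilon_n\to0$, tripods $\tau_1^n,\tau_2^n$ and triples $X^n$ with $d_{\tau_i^n}(X^n,\partial\tau_i^n)\le\epsilon_n$ but $d(\tau_1^n,\tau_2^n\ms Z_0)>\alpha$, where $\tau_2^n\ms Z_0=\partial^{-1}(\partial\tau_2^n)$. By (1), $d(\tau_1^n,\tau_2^n)\le\boldsymbol A$ for $n$ large; passing to a subsequence and acting by $\ms G$ to normalise $\tau_1^n=\tau_*$, the points $\tau_2^n$ lie in a fixed compact set, so we may assume $\tau_2^n\to\tau_2^\infty$. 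From $d_{\tau_*}(X^n,\partial\tau_*)\le\epsilon_n\to0$ we get $X^n\to\partial\tau_*$, and letting $n\to\infty$ in $d_{\tau_2^n}(X^n,\partial\tau_2^n)\le\epsilon_n\to0$ (continuity of $\tau\mapsto d_\tau$ and $\tau\mapsto\partial\tau$) gives $d_{\tau_2^\infty}(\partial\tau_*,\partial\tau_2^\infty)=0$, i.e.\ $\partial\tau_2^\infty=\partial\tau_*$. Hence $\tau_2^\infty=\tau_*z_0$ for some $z_0\in\ms Z_0$, so $\tau_2^n z_0^{-1}\to\tau_*$ and $d(\tau_*,\tau_2^n\ms Z_0)\le d(\tau_*,\tau_2^n z_0^{-1})\to0$, contradicting $d(\tau_1^n,\tau_2^n\ms Z_0)>\alpha$ for large $n$. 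Unwinding this, the required $\tau_3$ is $\tau_2z$ with $z\in\ms Z_0$ realising $\inf_{z'\in\ms Z_0}d(\tau_1,\tau_2z')$, which the argument shows is $\le\alpha$.
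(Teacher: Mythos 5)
Your part (2) is essentially the paper's argument (contradiction, normalisation, convergence, conclude the limiting tripod has $\partial\tau_2^\infty=\partial\tau_*$) and is fine. The issue is part (1), where you take a different route from the paper and leave a genuine gap precisely at the step you flag as ``the main obstacle.''

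Your reduction — project both tripods to $\tau_i'=\Psi(\tau_i,X^+,X^-)$ with the same attracting/repelling vertices, write $\tau_2'=b\,\tau_1'$ with $b$ in the Levi $\ms L_{(X^+,X^-)}$, and observe that $b$ moves the flag $X^0$ by at most $O(\zeta_0)$ — is a legitimate and clean decomposition of the problem. But the concluding step is not a formality. You assert that the orbit map $\ms L_{(X^+,X^-)}\to\gp$ at $X^0$ is proper onto its image because the stabiliser of $X^0$ is compact and the orbit is locally closed, and then claim a \emph{uniform} bound (``a uniform threshold, by equivariance'') on the distance from $X^0$ to the orbit frontier as $X^0$ ranges over a neighbourhood of $\partial^0\tau_1'$. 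Equivariance under $\ms G$ lets you fix $\tau_1'$ (equivalently fix $(X^+,X^-)$), but it cannot fix $X^0$ — the group does not act transitively on pairs (tripod, nearby third flag) — so the needed uniformity over $X^0$ is not a consequence of equivariance and remains unproved. Compactness of $\ms Z_{\mk s}=\operatorname{Stab}_{\ms L}(\partial^0\tau_1')$ also does not by itself control stabilisers of nearby $X^0$ uniformly, nor the geometry of their orbit closures. This is exactly the ``cannot drift off to infinity'' phenomenon you identify, and you leave it as a sketch (Cartan decomposition plus rigidity of perfect triangles) rather than a proof.

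The paper fills that gap with a structurally different argument and it is worth comparing. Rather than working inside a Levi, the paper shows directly that $\ms G$ acts properly on a $\ms G$-invariant neighbourhood of the set $V$ of vertices of tripods inside $\gp^3$. To each flag $x$ it attaches a horofunction $h_x$ on the symmetric space (via the $\ms G$-conjugacy class of $a$); for a triple $(x,y,z)$ on a circle, the convex function $H=h_x+h_y+h_z$ has a critical point on the associated totally geodesic hyperbolic plane, and the compact-centraliser hypothesis is used exactly to show this critical point is \emph{non-degenerate}, hence persists and depends continuously under perturbation of $(x,y,z)$. This yields a continuous $\ms G$-equivariant map from a neighbourhood of $V$ in $\gp^3$ to the symmetric space, and properness of the $\ms G$-action on $\gp^3$ near $V$ is then inherited from properness of the $\ms G$-action on the symmetric space — no orbit-closure analysis is needed. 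Part (1) then follows by a direct contradiction argument (two tripods far apart with $g_n(\tau)$ escaping to infinity while $X_n$ and $g_n(X_n)$ both converge to $\partial\tau$ would violate that properness). So the paper trades your orbit-geometry question for a convexity/Morse-theoretic statement on the symmetric space, and it is there — not in a Lipschitz bound on $b$ — that the compactness of $\ms Z_{\mk s}$ is actually deployed. If you want to pursue your Levi-orbit route, you would need to prove the uniform embeddedness/frontier estimate for $\ms L$-orbits near $\partial^0\tau$, and it is not clear that this is easier than the paper's argument; in the worst case it reproduces it.
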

We first prove Lemma \ref{lem:properA} from   Proposition \ref{lem:proper}.
\begin{proof} Let $\boldsymbol\zeta_0$ and $\bf A$ be as in Proposition \ref{lem:proper}. 
Let first $\nu_0$ be an extended circle map with associated map $\eta_0$.
By  continuity of $\eta_0$ there exists $M$,
$$
d(T_1,T_2)\leq M\implies d_{\nu_0(T_1)}(\eta_0(T_1),\eta_0(T_2))\leq \frac{1}{2}\boldsymbol\zeta_0.
$$
The equivariance under the action of $\ms G$ then shows that the previous inequality holds for all $\nu=g\nu_0$.

Let $\boldsymbol\zeta=\frac{1}{2}\boldsymbol\zeta_0$ and 
 $\xi$ be a $\boldsymbol\zeta$-Sullivan map.
\vskip 0.1 truecm
\noindent{\em 
Proof of  the first assertion:} Let $T_1$ and $T_2$ be two tripods with $d(T_1,T_2)<M$.  Let us denote $\eta_1$ and $\eta_2$ the corresponding compatible circle maps, $\tau_1$ and $\tau_2$ the corresponding compatible tripods,  $\nu_1$ and $\nu_2$ the corresponding extended circle maps so that  $\tau_i=\nu_i(T_i)$.
Let $X=\xi(T_2)$.

Then the $\boldsymbol\zeta$-Sullivan  property  implies that 
$
d_{\tau_1}(X,\eta_1(T_2))\leq \boldsymbol\zeta$.
Then 
\begin{equation*}
d_{\tau_1}(X,\partial\tau_1)
\leq d_{\tau_1}(X,\eta_1(T_2))+d_{\nu_1(T_1)}(\eta_1(T_2),\eta_1(T_1))
\leq 2\boldsymbol{\zeta}=\boldsymbol{\zeta_0}\ .	
\end{equation*}
From the $\boldsymbol{\zeta}$-Sullivan  condition, we get$$
d_{\tau_2}(X,\partial\tau_2)=d_{\tau_2}(\xi(T_2),\nu_2(T_2))\leq\boldsymbol{\zeta}\leq\boldsymbol{\zeta_0} \ .
$$
Thus Proposition  \ref{lem:proper} implies 
$
d(\tau_2,\tau_1)\leq\boldsymbol{A}
$.
This proves  the first assertion with  $N=\boldsymbol{A}$. 
\vskip 0.1 truecm
\noindent{\em 
Proof of  the second assertion:} Let $\xi$ be a $\zeta$-Sullivan map, $T_i$, $\eta_i$ and $\tau_i$ as above.  Let  again $X=\xi(T_2)\in\gp^3$ and  $\tau_0=\eta_{1}(T_2)$. Using the definition of a $\zeta$-Sullivan map
$$
d_{\tau_1}(X,\partial\tau_0)\leq\zeta\ , \ d_{\tau_2}(X,\partial\tau_2)\leq\zeta\ .
$$ 
Moreover  
$d(\tau_0,\tau_1)=d(T_1,T_2)\leq M$.
Thus by Proposition \ref{A-B}, $d_{\tau_0}$ and $d_{\tau_1}$ are uniformly equivalent. It  follows that, for any positive $\beta$, for $\zeta$ small enough we have 
$$
d_{\tau_0}(X,\partial\tau_0)\leq\beta\ , \ d_{\tau_2}(X,\partial\tau_2)\leq\beta.
$$ 
The second part of Lemma \ref{lem:proper} guarantees us that for any positive  $\alpha$, then for $\zeta$ small enough, 
we may  choose $\tau_3$ with  the same vertices as $\tau_2$,
so that 
$$   
 d(\tau_3,\eta_1(T_2))=d(\tau_3,\tau_0)\leq\alpha\ .
$$
Observe finally that $\tau_3$ is a compatible tripod, recalling that in the case of the compact stabilizer hypothesis $d_\tau$ and the circle maps associated to $\tau$, only depends on $\partial\tau$ by Proposition \ref{pro:comp-stab}. Thus choosing $\tau_3$ concludes the proof of the lemma.
\end{proof}
Next we prove Proposition \ref{lem:proper}.
\begin{proof}  Let us first prove that  ${\G}$ acts properly on some open subset of $\gp^3$ containing the set  $V$ of vertices of tripods.

We shall use the geometry of the associated symmetric space $\ms S(\G)$. Let  $x$ be an element of  $\gp$, let $A_x$ be the family of hyperbolic elements conjugate to $a$ fixing $x$ ; observe that $A_x$ is a $\operatorname{Stab}(x)$-orbit under conjugacy.

 The family of hyperbolic elements  in  $A_x$ corresponds in the symmetric space  to an asymptotic class of geodesics at $+\infty$. Thus $A_x$ defines  a Busemann function $h_x$ well defined up to a constant. Each gradient line of $h_x$ is one of the above described geodesic. The  function $h_x$ is convex on every geodesic $\gamma$, or in other words ${\rm D}_w^2h_x(u,u)\geq 0$ for all tangent vectors  $u$. Moreover ${\rm D}_w^2h_x(u,u)=0$ if and only if the one parameter subgroup associated to the geodesic $\gamma$ in the direction of $u$ commutes with the one-parameter group associated to the gradient line of $H_x$ though the  point $w$. If now $(x,y,z)$ are three point on a circle $C$, the function $C:=h_x+h_y+h_z$ is geodesically convex. Let $\hh_C$ be the hyperbolic geodesic plane associated to the circle $C$, then $x$, $y$ and $z$ correspond to three point at infinity in $\hh_C$ and all gradient lines of $h_x$, $h_y$ and $h_z$ along $\hh_C$ are tangent to $\hh_C$. There is a unique point $M$ in $\hh_C$ which is a critical point of $H$ restricted to $\hh$. Every vector $u$ normal to $\hh$ at $M$, is  then also normal to the gradient lines of of $h_x$, $h_y$ and $h_z$ which are tangent to $\hh$, and as a consequence ${\rm D}_MH(u)=0$. Thus $M$ is a critical point of $H$. By the above discussion, ${\rm D}^2H(v,v)=0$, if and only if the one parameter subgroup generated by $u$ commutes with the $\sld$ associated to $\hh_C$.   Since, by hypothesis, this $\sld$ has a compact centralizer, $M$ is a  non degenerate critical point.

The map $G:(x,y,z)\mapsto M$ is $\ms G$ equivariant and extends continuously to some $\ms G$-invariant neighborhood $U$ of $V$ in $\gp^3$ with values in $\ms S(\G)$: to have a non degenerate minimum is an open condition on  convex functions of class $C^2$. It follows that the action of $\ms G$ on $U$ is proper since the action of $\G$ on the symmetric space $\ms S(\G)$ is proper.

We now prove the first assertion of the proposition. Let's work by contradiction, and assume that for all $n$ there exists tripods  $\tau_1^n$ and $\tau_2^n$, triple of points $X_n$ so that 
$$
d_{\tau_1^n}(X_n,\partial \tau_1^n)<\frac{1}{n},\ \ d_{\tau_2^n}(X_n,\partial \tau_2^n)<\frac{1}{n},\ \ n<d(\tau_1^n,\tau_2^n)\ .
$$
We may as well assume $\tau^1_n$ is constant and equal to $\tau$ and consider $g_n\in G$ so that $g_n(\tau^n_1)=\tau^2_n$. Thus we have,

$$
d_{\tau}(X_n,\partial \tau)\rightarrow 0,\ \ d_{\tau}(g_n(X_n),\partial \tau)\rightarrow 0,\ \ d(\tau,g_n(\tau))\rightarrow \infty.
$$	
However this last assertion contradicts the properness of the action of ${\G}$ on a neighborhood of $\partial\tau\in \gp^3$.

For the second assertion, working by contradiction again and taking limits as in the proof of the first part, we obtain two tripods $\tau_1$ and $\tau_2$ so that 
$d_{\tau_1}(\partial\tau_1,\partial\tau_2)=0$ and for all $\tau_3$ with $\partial\tau_3=\partial\tau_2$, then $d(\tau_1,\tau_3)>0$. This is obviously a contradiction. \end{proof}

\subsection{Paths of quasi tripods and Sullivan maps} \label{subsec:pathsld}

Let in this paragraph $\xi$ be a  $\zeta$-Sullivan map from a dense set $W$ of $\Rp$ to $\gp$. To make life simpler, assuming the axiom of choice, we may extend $\xi$ -- a priori non continuously -- to a $\zeta$-Sullivan map defined on all of $\Rp$: We choose for every element $z$ of $\Rp\setminus W$ a sequence $\left(w_n\right)_{n\in\mathbb N}$ in $W$ converging to $z$ so that $\xi(w_n)$ converges, and for $\xi(z)$ the limit of $\left(\xi(w_n)\right)_{n\in\mathbb N}$.

Our technical goal is, given a point $z_0$ in $\hh$ and two (possibly equal) close points $x_1$, $x_2$  with respect to $z_0$ in $\Rp$ we construct, paths of quasi-tripods ``converging'' to 
 $\xi(x_i)$. This is achieved in Proposition \ref{pro:suldef0} and its consequence Lemma \ref{lem:cuffsul}. This preliminary construction will be used  for  the main results of this section:  Theorem \ref{theo:sull-hold} and Theorem \ref{theo:sull-anos}

\subsubsection{Two paths of tripods for the hyperbolic plane}
We start with the model situation in $\hh$ and prove the following lemma which only uses hyperbolic geometry and concerns tripods for $\sld$, which in that case are triple of pairwise distinct points in $\Rp$.

\begin{lemma}\label{pro:holcuff}
	There exist universal positive constants $\bk_1$ and $\bk_2$ with the following property:

Let  $z_0$ be a point  in $\hh$,  $x_1$ and $x_2$ be two points in $\Rp$, so that $d_{z_0}(x_1,x_2)$ is small enough (and possibly zero),  then there exists two 2-sequences of tripods $\vect{T}^1$ and $\vect{T}^2$,  where $z_0$ belongs to the geodesic arc corresponding to the initial chord of both $\vect{T}^1$ and $\vect{T}^2$,  with the following properties -- see Figure \eqref{fig:2cuff}
	\begin{enumerate}
		\item we have that $\lim \vect{T}_i=x_i$.
	\item the sequences  $\vect{T}_1$ and $\vect{T}_2$ coincide for the first $n$ tripods, for $
	n$ greater than $-\bk_1\log d_{z_0}(x_1,x_2)$,
    \item Two successive tripods  $T^i_m$ and $T^i_{m+1}$ are at most $\bk_2$-swished. \label{pro:holcuff4}
    \item Defining the $\sld$-tripods $x^i_m\defeq(\partial^-T^i_m,\partial^+T^i_m, x^i)$  then $d(x^i_m,T^i_m)\leq \bk_2$. \label{pro:holcuff5},
 \end{enumerate}
\end{lemma}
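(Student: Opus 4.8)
This is pure hyperbolic planar geometry: all tripods here are triples of distinct points in $\Rp = \partial_\infty \hh$, ``sheared'' means the two common vertices are fixed and the third moves along the opposite horocycle/geodesic picture, and $d_{z_0}$ is the visual metric based at $z_0$. The natural strategy is to build the two paths of tripods by a nested-interval (dyadic subdivision) construction driven by the geodesics from $z_0$ toward $x_1$ and toward $x_2$. First I would normalize by the action of $\psld$ so that $z_0 = \car$ (the base point) and choose the initial chord $h_0$ to be a geodesic through $z_0$ whose endpoints $(p^-,p^+)$ separate $x_1$ from $x_2$ on $\Rp$ — this is possible precisely because $d_{z_0}(x_1,x_2)$ is small, so both points lie in a small arc, and one can pick $h_0$ so that $z_0$ is its projection and $x_1,x_2$ lie in the sub-interval $S_0(h_0)$ of that arc. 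The initial tripod $T^i_0$ is then $(p^-, p^+, w_0)$ for a suitable third point $w_0$; by construction $z_0$ lies on the geodesic arc corresponding to $h_0$, giving the requirement on the initial chord.

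Then I would recursively define $T^i_{m+1}$ from $T^i_m$ as follows: $T^i_m$ determines a chord $h^i_m$ (two of whose endpoints are among the vertices) whose interval $S_0(h^i_m)$ contains $x_i$; cut this interval at its ``midpoint'' with respect to the visual metric $d_{T^i_m}$ (equivalently, pick the geodesic through $s(T^i_m)$ in the chosen combinatorial direction toward $x_i$), obtaining a sub-chord $h^i_{m+1}$ whose interval is the half containing $x_i$, and let $T^i_{m+1}$ be the tripod coplanar to $T^i_m$ on $h^i_{m+1}$ with third vertex chosen on the appropriate side. The combinatorics $n_i\in\{1,2\}$ records which half was taken. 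This produces a sequence of nested intervals whose $d_{z_0}$-lengths shrink by a uniform factor (bounded above by $1$), so $\bigcap S_0(h^i_m) = \{x_i\}$, proving (i). Because consecutive chords differ by a single bisection step, $s(T^i_m)$ and $s(T^i_{m+1})$ are at uniformly bounded distance in $\hh$, which gives a uniform bound $\bk_2$ on the shear between $T^i_m$ and $T^i_{m+1}$, proving \eqref{pro:holcuff4}; and since $x^i_m$ shares the two ``long'' vertices with $T^i_m$ and $x^i$ lies in the small interval cut off at stage $m$, the distance $d(x^i_m, T^i_m)$ is likewise uniformly bounded, giving \eqref{pro:holcuff5}.

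For (ii), the key quantitative point is that $x_1$ and $x_2$ both lie in $S_0(h^i_m)$ for as long as the visual length (in the metric of the current tripod) of that interval exceeds roughly $d_{z_0}(x_1,x_2)$; equivalently, writing $\ell_m$ for the $d_{z_0}$-diameter of $S_0(h_m)$, we have $\ell_m \asymp e^{-cm}$ for a universal $c>0$ (uniform shear $\Rightarrow$ geometric decay of interval length, by the contraction estimate of Proposition \ref{pro:cone-contrac0} applied to $\sld$, or directly by hyperbolic trigonometry). As long as $\ell_m \gtrsim d_{z_0}(x_1,x_2)$, both $x_1,x_2$ fall in the same half at the bisection step, so the choices — hence the tripods $T^1_m$ and $T^2_m$ — coincide; this fails only once $e^{-cm}\lesssim d_{z_0}(x_1,x_2)$, i.e. for $m \lesssim -\bk_1 \log d_{z_0}(x_1,x_2)$, which is exactly the claimed bound. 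One must take a little care that ``coincide for the first $n$ tripods'' is consistent with the later divergence, but since each step only depends on the current tripod and on which side $x_i$ lies, once the paths have agreed through step $n$ and $x_1,x_2$ land on opposite sides at step $n+1$, the two paths split cleanly and never re-merge.

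The main obstacle is bookkeeping rather than conceptual: making the bisection rule genuinely $\psld$-equivariant and canonical (so that ``midpoint with respect to $d_{T^i_m}$'' is well-defined and the resulting shear is uniformly bounded independent of $m$ and of the configuration), and verifying the geometric decay rate $\ell_m \asymp e^{-cm}$ with a clean universal constant so that the logarithmic threshold in (ii) comes out with a single constant $\bk_1$. Both are standard hyperbolic-geometry computations — the decay follows, for instance, from the fact that a bounded shear moves the projection point $s(\cdot)$ a bounded hyperbolic distance, and visual diameters of the shadowed intervals decay exponentially in that distance — but they need to be done carefully once to fix the constants $\bk_1,\bk_2$.
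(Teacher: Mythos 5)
Your overall structural idea (build the two sequences so that they agree for roughly $-\log d_{z_0}(x_1,x_2)$ steps and then diverge, with bounded shear throughout) is the right shape for the lemma, but your bisection construction has a genuine gap at item (ii), and it is precisely the gap that the paper's construction is designed to close.

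The wrong step is the sentence ``As long as $\ell_m \gtrsim d_{z_0}(x_1,x_2)$, both $x_1,x_2$ fall in the same half at the bisection step.'' That implication is false, and the converse is what your argument actually gives. If at each step you cut $S_0(h_m)$ at a canonical point determined by $T_m$ (its visual midpoint, or the point where the geodesic from $s(T_m)$ toward $x_i$ hits the circle), then an adversary may place $x_1$ and $x_2$ symmetrically on either side of the very first such cut point, at mutual distance $\epsilon$ as small as you please. The two sequences would then split at step $1$, no matter how small $d_{z_0}(x_1,x_2)$ is, and item (ii) fails. What your nesting does give is the \emph{upper} bound: if both points lie in $S_0(h_n)$ then $\ell_n \geq d_{z_0}(x_1,x_2)$, so the number of agreeing steps is at most $O(-\log d_{z_0}(x_1,x_2))$. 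The lemma asks for a \emph{lower} bound, which a non-adaptive bisection rule cannot provide.

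The paper's proof fixes exactly this by aiming not at $x_1$ or $x_2$ individually but at an auxiliary target determined by the pair: it constructs three geodesic arcs $\gamma_0,\gamma_1,\gamma_2$ meeting at a forking point $Z$ with equal angles $2\pi/3$, ending at $z_0$, $x_1$, $x_2$ respectively, and the key estimate \eqref{ineq:minlog} is that $d_{\hh}(z_0,Z)\geq -2\bk_1\log d_{z_0}(x_1,x_2)$. The two sequences of tripods are then manufactured from a discrete lamination transverse to these three arcs with unit spacing, so they coincide for all tripods supported on $\gamma_0$ and only fork once past $Z$. The control of the forking depth is therefore built in from the start, instead of being extracted (incorrectly) from a bisection rule. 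If you wanted to salvage your bisection scheme, you would in effect have to change the rule so that the first phase aims at the geodesic joining $x_1$ to $x_2$ (or equivalently at $Z$) and only switches to aiming at $x_i$ after reaching it --- at which point you have essentially reconstructed the paper's forking tripod, and the ``bisection'' framing no longer buys you anything.

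Two smaller points worth flagging. First, you should be careful that your recursion genuinely produces a coplanar path of tripods in the paper's sense: the new third vertex must be obtainable as $\varphi_R(\partial^0\omega^{n}T_m)$ for some $n\in\{1,2\}$ and $R$ uniformly bounded, and ``midpoint of $S_0(h_m)$ in the visual metric of $T_m$'' is not automatically such a point; fixing a shear value rather than a metric midpoint is cleaner. Second, your verifications of items (iii) and (iv) are plausible but are stated as consequences of the same ``one bisection step is a bounded move'' heuristic, which itself depends on the precise recursion rule; once that is pinned down they do follow by compactness, as in the paper.
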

 In  item \ref{pro:holcuff4} of this lemma, we use a slight abuse of language by saying $T$ and $T'$ are swished whenever actually $\omega^p\tau$ and $\omega^{q}T'$ are swished for some integers $p$ and $q$.

In the the proof of the Anosov property for equivariant Sullivan curve, we will use the  ``degenerate construction'', when $x_1=x_2\eqdef x_0$, in which  case $\vect{T}^1=\vect{T}^2\eqdef \vect{\tau}$, whereas we shall use the full case for the proof of the H{\"o}lder property.

\begin{figure}[h]
\centering
  \includegraphics[width=0.35\textwidth]{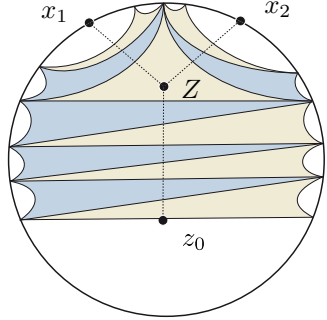}
  \caption{Two paths of tripods}\label{fig:2cuff}
\end{figure}

\begin{proof} The process is clear from Picture \eqref{fig:2cuff}. Let us make it formal. Let $x_1$ and $x_2$ be two points in $\Rp$ and  assume that $d_{z_0}(x_1,x_2)$ is small enough. If $x_1\not= x_2$, we can now find three geodesic arcs $\gamma_0$, $\gamma_1$ and $\gamma_2$ joining in a point $Z$ in $\hh$ with angles $2\pi/3$ so that their other extremities are respectively $z_0$, $x_1$ and $x_2$. The arc $\gamma_0$ is oriented from $z_0$ to $Z$, whilst the others are from $Z$ to $x_i$ respectively. The tripod $\tau^0$ orthogonal  to all three geodesic arcs $\gamma_0$, $\gamma_1$, and $\gamma_2$ will be referred in this proof as the {\em forking tripod} and the point of intersection of $\gamma_i$ with $\tau^0$ is denoted $y_i$. 

Observe now that there exists a universal positive constant $\bk_1$ so that 
\begin{equation}
{\operatorname{length}}
(\gamma_0)=d_{\hh}(z_0,Z)\geq  - 2\bk_1\log\left(d_{z_0}(x_1,x_2)\right)\ , \label{ineq:minlog}	
\end{equation}
where $d_{\hh}$ is the hyperbolic distance. We now construct a (discrete) lamination $\Gamma$ with the following properties
\begin{enumerate}
\item $\Gamma$ contains the three sides of the forking tripod, and $z_0$ is in the support of $\Gamma$.
\item All geodesics in $\Gamma$ intersect orthogonally, either $\gamma_0$,  $\gamma_1$ or $\gamma_2$. Let $X$ be the set of these intersection points.
\item The distance between any two successive points in $X$ (for the natural ordering of $\gamma_0$, $\gamma_1$ and $\gamma_2$)  is greater than 1 and less than 2.
\end{enumerate} 
We orient each geodesic in $\Gamma$ so that its intersection with $\gamma_0$, $\gamma_1$ or $\gamma_2$ is positive. We may now construct two sequences of geodesics $\Gamma^1$ and $\Gamma^2$ so that $\Gamma^i$  contains all the geodesics in $\Gamma$ that are encountered successively when going from $z_0$ to $x_i$.

For two successive geodesics  $\gamma_i$ and $\gamma_{i+1}$ -- in either $\Gamma^1$ or $\Gamma^2$ --  we consider the associated finite paths of tripods given by the following construction: 
\begin{enumerate}
\item In the case  $\gamma_i$ and $\gamma_{i+1}$ are both sides of the forking tripod:, the path consists of just one tripod: the forking tripod
\item In the other case: we consider the path of tripods with two elements  $\tau_i$ and $\hat\tau_i$ where
\begin{eqnarray*}
	\tau_i&=&(\gamma_i(-\infty), \gamma_i(+\infty), \gamma_{i+1}(+\infty))\ ,\\
\tau_{i+1}&=&(\gamma_i(-\infty), \gamma_{i+1}(+\infty), \gamma_{i+1}(-\infty))\ .
\end{eqnarray*} 
\end{enumerate}
Combining these finite paths of tripods in infinite sequences one obtains two sequences of tripods $\vect{T}^i=\{T^i_m\}_{m\in\mathbb N}$, with $i\in\{1,2\}$ which coincides up to the first $-\bk_1\log\left(d_{z_0}(x_1,x_2)\right)$ tripods.
Moreover the swish between $T_m^i$ and $T^i_{m+1}$ is bounded by a universal constant $\bk_2$ -- of which actual value we do not care, since obviously the set of configurations $(T_m^i,T^i_{m+1})$ is compact (up to the action of $\sld$).

An easy check shows that these sequences of tripods are 3-sequences. 
The last condition is immediate after possibly enlarging the value of $\bk_2$ obtained previously.
\end{proof}

\subsubsection{Sullivan curves as deformations}\label{subsec:defcuff}

Let $z_0$, $x_1$, $x_2$, $\vect{T}^1$ and $\vect{T}^2$ be as in Lemma \ref{pro:holcuff}. Let $\xi$ be an $\zeta$-Sullivan map. The main idea is that  $\xi$ will define a deformation of the sequences of tripods. Our first step is the following lemma

\begin{lemma}\label{lem:holcuf}
For every positive $\epsilon$, there exists $\zeta$, so that for every $i\in\{1,2\}$ and $m\in\mathbb N$ there exist a compatible tripod  $\tau^i_m$ for $T^i_m$ with respect to $\xi$, with associated circle maps $\eta^i_m$ and extended circle maps $\nu^i_m$,  
so that denoting by $d_m^i$ the metric $d_{\tau^i_m}$ we have
\begin{eqnarray}
\partial^\pm\tau^i_m&=&\xi(\partial^\pm T^i_m)\ ,\label{eq:lemdeftau1} \\
d_m^i(\xi,\eta^i_m)&\leq&\epsilon\ , \label{eq:lemdeftau0}\\ 
d(\tau^i_{m},\nu^i_{m-1}(T^i_{m}))&\leq&\epsilon\  .\label{eq:lemdeftau}
\label{lem:holcufreduce}
\end{eqnarray}
Moreover for all $m$ smaller than $-\bk_1\log d_{z_0}(x_1,x_2)$, we have $\tau^1_m=\tau^2_m$.
\end{lemma}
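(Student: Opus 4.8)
The plan is to build the compatible tripods $\tau^i_m$ inductively along each path $\vect{T}^i$, using the Sullivan hypothesis together with the ``compact stabilizer'' consequence packaged in Lemma~\ref{lem:properA}. First I would note that for each $m$ there is \emph{some} compatible tripod for $T^i_m$ simply by the definition of a $\zeta$-Sullivan map; the content of the lemma is that one can choose these compatible tripods coherently so that consecutive ones are $\epsilon$-close after applying the previous extended circle map, and so that the two sequences agree on their common initial segment. The reductions \eqref{eq:lemdeftau1} are free: by Proposition~\ref{pro:comp-stab} under the compact stabilizer hypothesis $d_\tau$ depends only on $\partial\tau$, so we may always replace a compatible tripod by the reduced one with vertices $(\xi(\partial^-T^i_m),\xi(\partial^+T^i_m),\ast)$ without affecting the Sullivan inequality \eqref{eq:lemdeftau0}, which is just a restatement of the defining property of a $\zeta$-Sullivan map once we take $\zeta\le\epsilon$.

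The inductive step is where Lemma~\ref{lem:properA}(2) does the work. Since by Lemma~\ref{pro:holcuff}\eqref{pro:holcuff4} consecutive tripods $T^i_m$ and $T^i_{m+1}$ are at most $\bk_2$-sheared, we have a uniform bound $d(T^i_m,T^i_{m+1})\le M$ for some $M$ depending only on $\bk_2$ (hence universal). Applying Lemma~\ref{lem:properA}(2) with this $M$ and the target accuracy $\epsilon$: given the compatible tripod $\tau^i_m$ for $T^i_m$ (with its extended circle map $\nu^i_m$), for $\zeta$ small enough we may choose a compatible tripod $\tau^i_{m+1}$ for $T^i_{m+1}$ with $d(\tau^i_{m+1},\nu^i_m(T^i_{m+1}))\le\epsilon$, which is exactly \eqref{eq:lemdeftau}. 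Starting from $m=0$ with any compatible tripod $\tau^i_0$ and iterating gives the whole sequence. The one subtlety is that Lemma~\ref{lem:properA}(2) requires ``$\zeta$ small enough'' for a \emph{fixed} $M$ and $\epsilon$; since $M$ is universal and $\epsilon$ is the given target, a single choice of $\zeta=\zeta(\epsilon)$ works for all steps $m$ simultaneously.

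For the coincidence statement, recall from Lemma~\ref{pro:holcuff}\eqref{pro:holcuff5}-type bookkeeping that $\vect{T}^1$ and $\vect{T}^2$ share their first $n$ tripods for $n\ge-\bk_1\log d_{z_0}(x_1,x_2)$. On that common segment we simply run the construction once: start with $\tau^1_0=\tau^2_0$ and define $\tau^1_m=\tau^2_m$ by the same inductive choice for $m\le n$, then let the two sequences diverge for $m>n$ following their respective paths. Since the inductive choice at step $m$ depends only on $T^i_m$, $T^i_{m+1}$, and $\tau^i_m$, identical inputs give identical outputs, so $\tau^1_m=\tau^2_m$ for $m$ below the stated threshold, and in particular $\partial^\pm\tau^1_m=\partial^\pm\tau^2_m$ so the reduction in \eqref{eq:lemdeftau1} is consistent with this.

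The main obstacle — really the only non-formal point — is making sure the single threshold $\zeta=\zeta(\epsilon)$ suffices uniformly over all indices $m$ and both $i$, i.e.\ that the family of configurations $(T^i_m, T^i_{m+1})$ is uniformly bounded (up to the $\sld$-action) so that Lemma~\ref{lem:properA}(2) applies with one $M$. This is precisely guaranteed by the universal shear bound $\bk_2$ in Lemma~\ref{pro:holcuff}, so the argument closes. Everything else is an unwinding of definitions and the compact-stabilizer consequences already established in Proposition~\ref{pro:comp-stab} and Lemma~\ref{lem:properA}.
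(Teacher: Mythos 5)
Your overall strategy --- inductively applying Lemma~\ref{lem:properA}(2), with uniformity over $m$ and $i$ coming from the universal shear bound $\bk_2$ of Lemma~\ref{pro:holcuff} --- is precisely the paper's argument, and the treatment of the coincidence statement is also the same.

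There is, however, a genuine flaw in your justification of the vertex-matching condition \eqref{eq:lemdeftau1}. You assert that the replacement of a compatible tripod by one with vertices $\xi(\partial^{\pm}T^i_m)$ is ``free'' because Proposition~\ref{pro:comp-stab} says $d_\tau$ depends only on $\partial\tau$. This is circular: $\partial\tau$ is exactly what changes under the replacement, so both the metric $d_\tau$ and the compatible circle map $\eta$ (the unique circle map through the new vertices) change, and it does not follow that the new tripod satisfies the Sullivan bound \eqref{eq:lemdeftau0} at the same level. The correct argument, which the paper uses, is a small deformation: the compatible tripod has $\partial^{\pm}\tau=\eta(\partial^{\pm}T^i_m)$ at $d_\tau$-distance at most $\zeta$ from $\xi(\partial^{\pm}T^i_m)$, so one may deform $\eta$ slightly (and $\tau$ correspondingly) to make the $\pm$ vertices agree with $\xi(\partial^{\pm}T^i_m)$; after deformation the estimate \eqref{eq:lemdeftau0} holds with a constant of order $2\zeta$, which one then absorbs by shrinking $\zeta$ as a function of $\epsilon$ --- the same freedom you already exploit for the inductive step. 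With this fix the proof is correct and agrees with the paper's.
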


\begin{proof} Let us construct inductively the sequence $\underline{\tau}_i$ . Let us first construct $\tau^1_0=\tau^2_0$.  We first choose  a compatible tripod for $T_0$,   with associated circle maps $\eta^1_0=\eta^2_0$ and extended  circle maps $\nu^1_0=\nu^2_0$. Let  $\tau_0^i=\eta_0^i(T_0)$ 
so that denoting by $d_0^i$ the metric $d_{\tau^i_0}$, we have the inequality
\begin{equation}
d^i_0(\xi,\eta^i_0)\leq\zeta\ . \label{eq:defsul}
\end{equation}
In particular $$
d_0^i(\partial^\pm \tau^i_0,\xi(\partial^\pm T^i_0))\leq\zeta\, .
$$
we may thus slightly deform  $\eta^i_0$ (with respect to the metric $d^i_0$) so that assertion \eqref{eq:lemdeftau1} holds.   Then for $\zeta$ small enough, the relation  \eqref{eq:lemdeftau0} holds for $m=0$, where $\epsilon=2\zeta$

Assume now that we have built the sequence up to $\tau^i_{m-1}$.
Let then 
$$
\tau_1=\tau^i_{m-1}\ , \ \ T_1=T^i_{m-1}\ , \ \ T_2=T^i_{m}\ , 
$$ 
and finally $\nu_1=\nu^i_{m-1}$. Recall that by the construction of $T_1$ and  $T_2$, 
  $$d(T_1,T_2)\leq\bk_2\eqdef M\ ,\ \  d(\nu_1(T_1),\tau_1)\leq\epsilon\ . $$ We may now apply the second part of Lemma \ref{lem:properA}, which shows that given $\epsilon$ and $\zeta$ small enough, we may choose a compatible  tripod $\tau_2$ for $T_2$ with respect to $\xi$ so that,  
$$
d(\tau_2,\nu_1(T_2))\leq \frac{1}{2}\epsilon\ . 
$$
We now set $\tau^i_m\eqdef\tau_2$, possibly deforming it a little so that assertion \eqref{eq:lemdeftau1} holds. Then by the definition of compatibility  assertion \eqref{eq:lemdeftau0} holds, while 
assertion \eqref{eq:lemdeftau} is by construction.
The last part of the lemma follows from the inductive nature of our construction and some bookkeeping. \end{proof}
\subsubsection{Main result}
Let $\xi$ be a $\zeta$-Sullivan curve. We use the notation of  the two previous lemmas. Our main result is
\begin{proposition}\label{pro:suldef0}
For all positive $\epsilon$, for $\zeta$ small enough,
\begin{enumerate}
	\item the quadruples $\theta_m^i\defeq\left(\tau^i_m,\xi(\partial T^i_m)\right)$ are reduced  $\epsilon$-quasi-tripods.
	\item If $T^i_m$ and $T^i_{m+1}$ are $R^i_m$-swished then $\theta^i_m$ and $\theta^i_{m+1}$ are $(R^i_m,\epsilon)$-swished. 
	\item \label{it:eps-def} The sequences  $\underline{\theta}^1$ and $\underline{\theta}^2$  are $\epsilon$-deformations of the sequence $\nu_0(\underline T^1)$ and $\nu_0(\underline T^2)$ respectively. 
	\item The $n$ first elements  of $\underline{\theta}^1$ and $\underline{\theta}^2$  coincide for $n$ equal $-\bk_1\log(d_{z_0}(x_1,x_2))$.
 \item For all $m$, $\xi(x_i)$ belongs to the sliver $S_\epsilon(\tau^i_m)$.
\end{enumerate} 
\end{proposition}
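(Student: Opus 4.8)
The plan is to obtain items (1)--(3) as bookkeeping built directly on Lemma~\ref{pro:holcuff} and Lemma~\ref{lem:holcuf}, and to deduce (4)--(5) from the geometry of the model path. First I would fix the target $\epsilon$ and introduce an auxiliary constant $\epsilon'>0$, to be chosen small relative to $\epsilon$ and the structural constants $\bM$ (Proposition~\ref{pro:modeldef}), $\bM_1,\bM_2$ (Section~\ref{sec:quas-trip}), the comparison constant of Proposition~\ref{A-B}, and the nesting constants of Section~\ref{sec:cone}; then I would pick $\zeta$ so small that Lemma~\ref{lem:holcuf} applies with $\epsilon'$ in place of $\epsilon$, yielding tripods $\tau^i_m$, circle maps $\eta^i_m$ and extended circle maps $\nu^i_m$ satisfying \eqref{eq:lemdeftau1}, \eqref{eq:lemdeftau0} and \eqref{eq:lemdeftau}.

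For item~(1): by \eqref{eq:lemdeftau1}, $\partial^\pm\dt{\theta^i_m}=\partial^\pm\tau^i_m=\xi(\partial^\pm T^i_m)=(\theta^i_m)^\pm$, so $\theta^i_m$ is reduced; and since $\partial^0\tau^i_m=\eta^i_m(\partial^0 T^i_m)$, evaluating \eqref{eq:lemdeftau0} at $y=\partial^0 T^i_m$ gives $d_{\tau^i_m}(\partial^0\tau^i_m,\xi(\partial^0 T^i_m))\le\epsilon'$, which is the remaining $\epsilon$-quasi-tripod inequality (as $\epsilon'\le\epsilon$). For item~(2), suppose $T^i_{m+1}$ is $R^i_m$-sheared from $\omega^{n_i}T^i_m$. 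The swapped-vertex condition $\partial^\pm\theta^i_m=\partial^\mp\theta^i_{m+1}$ follows by applying $\xi$ to the corresponding relation among the vertices of $T^i_m$ and $T^i_{m+1}$, via \eqref{eq:lemdeftau1}. For the foot condition I would transport the model shearing through $\nu^i_m$: since $\nu^i_m$ is $\sld$-equivariant with $\nu^i_m(T^i_m)=\tau^i_m$, the tripod $\nu^i_m(T^i_{m+1})$ is precisely the coplanar-tripod shear of $\omega^{n_i}\tau^i_m$ prescribed by Definition~\ref{def:shear-qt}, and by \eqref{eq:lemdeftau} it is $\epsilon'$-close to $\dt{\theta^i_{m+1}}=\tau^i_{m+1}$. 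On the other side, $\psi_1(\theta^i_{m+1})$ is $\bM_1\epsilon'$-close to $\dt{\theta^i_{m+1}}$ and $\psi_1(\omega^{n_i}\theta^i_m)$ is $(\bM_1+\bM_2)\epsilon'$-close to $\omega^{n_i}\dt{\theta^i_m}=\omega^{n_i}\tau^i_m$ by the foot estimates of Section~\ref{sec:quas-trip} (see \eqref{ineq:dist-feet}); because both quasi-tripods are reduced, these feet lie on the same central leaves as their interiors, so applying $\overline{\varphi_{R^i_m}}$ --- an isometry along the leaves of the central foliation $\mc L^0$, since $\varphi_{R^i_m}$ is such by Proposition~\ref{pro:bas} and $\sigma$ is an isometry --- and chaining triangle inequalities shows that $\psi_1(\theta^i_{m+1})$ and $\overline{\varphi_{R^i_m}(\psi_1(\omega^{n_i}\theta^i_m))}$ are $\le\epsilon$-close. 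This step, together with the correct handling of the twists $\omega^{n_i}$ and of the reflection $\sigma$ built into Definition~\ref{def:shear-qt}, is the heart of the argument and the point I expect to be the main obstacle.

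By (1) and (2), each truncation $(\theta^i_0,\dots,\theta^i_n)$ is an $(\vect{R}^i(n),\epsilon)$-sheared path of reduced quasi-tripods, with model $\nu_0(T^i_0,\dots,T^i_n)$; the latter is a genuine $\vect{R}^i(n)$-sheared coplanar path of tripods with the same combinatorics, because the extended circle map $\nu_0$ conjugates the $\sld$-dynamics on triples in $\Rp$ to the shearing flow, $\omega$ and $\sigma$ acting on coplanar tripods. Item~(3) then follows by applying Proposition~\ref{pro:modeldef} to each truncation (with $\epsilon/\bM$ in place of $\epsilon$): uniqueness of the deformation makes the deformations compatible as $n$ grows, so they assemble into a deformation of the whole sequence, and since $\dt{\theta^i_0}=\tau^i_0=\nu_0(T^i_0)$ already equals the base tripod of the model, the ambient $\ms G$-factor is the identity. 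Item~(4) is immediate: the second assertion of Lemma~\ref{pro:holcuff} gives $T^1_m=T^2_m$ for $m<-\bk_1\log d_{z_0}(x_1,x_2)$, the closing line of Lemma~\ref{lem:holcuf} gives $\tau^1_m=\tau^2_m$ in the same range, and $\theta^i_m=(\tau^i_m,\xi(\partial T^i_m))$ forces $\theta^1_m=\theta^2_m$ there.

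Finally, for item~(5): the meaning of $\lim\vect{T}^i=x_i$ (Lemma~\ref{pro:holcuff}), interpreted through contracting sequences of cones (Corollary~\ref{coro-conv-nest}) after coarsening $\vect{T}^i$ to a sufficiently lacunary subsequence --- legitimate because $\vect{T}^i$ is a $Q$-sequence with shears bounded by $\bk_2$, so the squeezing results of Section~\ref{sec:cone} apply --- is that $x_i\in C_\alpha(T^i_m)$ for every $m$, with $\alpha$ as small as we please. Since circle maps are isometries for the visual metrics, $\eta^i_m$ carries $(\Rp,d_{T^i_m})$ isometrically onto its image equipped with $d_{\tau^i_m}$, with $\eta^i_m(\partial^0 T^i_m)=\partial^0\tau^i_m$, so $\eta^i_m(x_i)\in C_\alpha(\tau^i_m)$; combining with \eqref{eq:lemdeftau0} at $y=x_i$ gives $\xi(x_i)\in C_{\alpha+\epsilon'}(\tau^i_m)\subset S_{\alpha+\epsilon'}(\tau^i_m)\subset S_\epsilon(\tau^i_m)$ once $\alpha+\epsilon'\le\epsilon$, which we arrange in the initial choice of constants.
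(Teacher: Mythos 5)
Items (1)--(4) of your proof follow the paper's line of argument essentially unchanged: (1) and (2) are read off from Lemma~\ref{lem:holcuf} and the bound $d(T^i_m,T^i_{m+1})\leq\bk_2$; (3) comes from Proposition~\ref{pro:modeldef} applied to truncations; (4) is the last sentence of Lemma~\ref{lem:holcuf}. Your elaboration of (2) is more verbose than the paper's one-line justification but tracks the same mechanism and looks sound.

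Item (5) has a genuine gap. You assert that by Corollary~\ref{coro-conv-nest} the limit point satisfies $x_i\in C_\alpha(T^i_m)$ for every $m$, ``with $\alpha$ as small as we please.'' This is false on two counts. First, the squeezing results of Section~\ref{sec:cone} give the cone inclusion only along a lacunary subsequence, not for every $m$, and the statement requires the conclusion for all $m$. Second, and more fundamentally, even for the subsequence the constant $\alpha$ is a fixed structural constant (e.g.\ $\alpha_3$ of Corollary~\ref{coro-conv-nest}), not something you can shrink: what shrinks is the cone as seen from \emph{earlier} tripods, not the visual radius $d_{T^i_m}(\partial^0 T^i_m, x_i)$. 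In fact item~\ref{pro:holcuff5} of Lemma~\ref{pro:holcuff} makes precisely this point quantitatively: the tripod $x^i_m=(\partial^- T^i_m,\partial^+ T^i_m,x^i)$, obtained from $T^i_m$ by sliding the middle vertex to $x^i$, is at distance at most $\bk_2$ from $T^i_m$ --- bounded, but not small --- so $d_{T^i_m}(\partial^0 T^i_m,x_i)$ is bounded away from zero in general. Consequently your final inclusion $\xi(x_i)\in C_{\alpha+\epsilon'}(\tau^i_m)\subset S_\epsilon(\tau^i_m)$ cannot be arranged for arbitrary target $\epsilon$, since $\alpha$ does not shrink with $\zeta$.

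The correct route, which the paper takes, is exactly to use the auxiliary $\sld$-tripod $x^i_m$ of Lemma~\ref{pro:holcuff}~\ref{pro:holcuff5}: set $\sigma^i_m\defeq\nu^i_m(x^i_m)$, which lies in the same chord as $\tau^i_m$ (coplanar, same $\partial^\pm$) and has $\partial^0\sigma^i_m=\eta^i_m(x_i)$. The Sullivan estimate gives $d_{\tau^i_m}(\xi(x_i),\eta^i_m(x_i))\leq\zeta$, and since $d(\sigma^i_m,\tau^i_m)\leq\bk_2$ the metrics $d_{\sigma^i_m}$ and $d_{\tau^i_m}$ are uniformly equivalent by Proposition~\ref{A-B}; hence $d_{\sigma^i_m}(\xi(x_i),\partial^0\sigma^i_m)\leq\epsilon$ once $\zeta$ is small, i.e.\ $\xi(x_i)\in C_\epsilon(\sigma^i_m)\subset S_\epsilon(\tau^i_m)$. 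The point you missed is that the sliver $S_\epsilon(\tau^i_m)$ is a union of $\epsilon$-cones over the whole chord, so one should target the member of the chord whose distinguished vertex already is $\eta^i_m(x_i)$ rather than trying to prove $\xi(x_i)$ is close to the distinguished vertex of $\tau^i_m$ itself.
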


\begin{proof}
	Equation \eqref{eq:defsul} guarantees that $\theta^i_m$ is a $\zeta$-quasi tripod and reduced  by condition \eqref{eq:lemdeftau1}. Furthermore since $T^i_m$ is at most $\bk_2$ swished from $T^i_{m-1}$ by Proposition \ref{pro:holcuff},   inequality \eqref{eq:lemdeftau} implies that  $\theta^i_{m+1}$ is at most $(R^i_m,\epsilon)$-swished from $\theta^i_{m}$, and thus $\vect{\tau}^i$ is a model for $\vect{\theta}^i$.  Statement \ref{it:eps-def} then follows from Proposition \ref{pro:modeldef}.
	  The coincidence up to $-\bk_1\log(d_{z_0}(x_1,x_2))$. follows from the last part of Lemma \ref{lem:holcuf}. Let us prove the last item in the proposition. By the $\zeta$-Sullivan condition
	$$
	d^i_m(\xi(x^i),\eta^i_m(x^i)) \leq\zeta.
	$$
	Let $x^i_m$ be the $\sld$-tripod   as in Proposition \ref{pro:holcuff},  let   $\sigma^i_m=\nu^i_m(x^i_m)$ and $
	\underline d^i_m\defeq d_{\sigma^i_m}.
	$. By construction, $\sigma^i_m$ and $\tau_i^m$ are coplanar
	By statement \ref{pro:holcuff5} of Lemma \ref{pro:holcuff}  , $d(x^i_m, T^i_m)$ is bounded by a constant $\bk_2$, thus  by Proposition \ref{A-B}  $d^i_m$ and $\underline d^i_m$ are uniformly equivalent by constants only depending on ${\G}$ and $\bk_2$. Thus for $\zeta$ small enough we have
	$$
	\underline d^i_m(\xi(x^i),\partial^0 \sigma^i_m)=\underline d^i_m(\xi(x^i),\eta^i_m(x^i)) \leq\epsilon\ .
	$$
	In other words, $\xi(x^i)$ belongs to the cone $C_\epsilon(\sigma^i_m)$ hence to the sliver $S_\epsilon(\hat\tau^i_m)$ as required, since $\sigma^i_m$ and $\tau_i^m$ are coplanar and $\partial^\pm\sigma^i_m= \partial^\pm\tau^i_m$.
	\end{proof}

\subsubsection{Limit points}\label{subsec:cufflimi}

Let then $\Gamma^i_m$ be the chords generated by the tripods $\hat{\theta}^i_{2m}$,  and let us consider the sequences of chords $\underline{\Gamma}_i\defeq\{\Gamma^i_m\}_{m\in\mathbb N}$. 
The final part of our construction is the following lemma

\begin{lemma}\label{lem:cuffsul}
	The sequence of chords $\underline\Gamma^i$ are $(1,\epsilon)$-deformed sequences of cuffs for  $\zeta$ small enough. Furthermore these two sequences coincides up to
 $N>-\bk_1d_{\tau}(z_1,z_2)$. Finally
\begin{equation}
	\bigcap_{m=0}^\infty\underline{\Gamma}^i=\{\xi(x_i)\}\label{eq:limsul}
\end{equation}
\end{lemma}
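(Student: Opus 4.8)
The plan is to recognise $\vect{\Gamma}^i$ as the sequence of chords of a $(1,\epsilon)$-deformed path of quasi-tripods, apply the Limit Point Theorem \ref{theo:exislimi} to it, and then match the limit point it produces with $\xi(x_i)$ using the last item of Proposition \ref{pro:suldef0}.

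\emph{Step 1: $\vect{\Gamma}^i$ is a $(1,\epsilon)$-deformed sequence of chords.} By Proposition \ref{pro:suldef0}, for $\zeta$ small enough $\vect{\theta}^i$ is an $\epsilon$-deformation of the coplanar model $\nu_0(\vect{T}^i)$, which by Lemma \ref{pro:holcuff} is a $2$-coplanar sequence of tripods with consecutive shears bounded by $\bk_2$; hence $\vect{\theta}^i$ is a $(2,\epsilon)$-sequence of quasi-tripods. Passing to the even-indexed subsequence $\hat\theta^i_{2m}$ replaces the model by the even subsequence of $\vect{T}^i$, whose associated chords $c_{2m}$ still have shifts bounded below and above linearly in the index and for which $|2m-2p|\le 2\,\delta(c_{2m},c_{2p})+2$ rewrites as $|m-p|\le \delta(c_{2m},c_{2p})+1$, i.e. a $1$-coplanar sequence; regrouping the deformation into the composed elements and re-applying Proposition \ref{pro:modeldef} keeps the deformation size of order $\epsilon$. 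Shrinking $\zeta$, this shows $\vect{\Gamma}^i$ is a $(1,\epsilon)$-deformed sequence of chords.

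\emph{Step 2: coincidence.} By the last assertion of Proposition \ref{pro:suldef0}, the first $\sim -\bk_1\log d_{z_0}(x_1,x_2)$ terms of $\vect{\theta}^1$ and $\vect{\theta}^2$ agree, hence so do the first $\sim -\tfrac12\bk_1\log d_{z_0}(x_1,x_2)$ terms of $\vect{\Gamma}^1$ and $\vect{\Gamma}^2$; absorbing the factor into $\bk_1$ gives the stated $N$.

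\emph{Step 3: the limit point.} Fix $R>1$, pick $\beta\le\bA$ as large as permitted so that the radius $\delta$ furnished by Theorem \ref{theo:exislimi} is of order $\bA$, and let $\epsilon'$ be the corresponding threshold of that theorem. A $(1,\epsilon)$-deformed sequence of chords is a fortiori an $(\ell_0R,\epsilon)$-deformed one once $\ell_0R\ge 1$, so choosing $\zeta$ small enough that the deformation size $\epsilon$ satisfies $\epsilon\le\epsilon'/R$ and $\epsilon\le\delta$, Theorem \ref{theo:exislimi} applies to $\hat\theta^i_{2m}$ and yields $\bigcap_m S_\delta(\Gamma^i_m)=\{\xi(\vect{\Gamma}^i)\}$ with $\xi(\vect{\Gamma}^i)=\lim_m\partial^j\theta^i_m$. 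Finally the last item of Proposition \ref{pro:suldef0}, read at index $2m$, gives $\xi(x_i)\in S_\epsilon(\Gamma^i_m)\subset S_\delta(\Gamma^i_m)$ for every $m$, so $\xi(x_i)$ lies in the intersection and therefore $\xi(x_i)=\xi(\vect{\Gamma}^i)$, which is \eqref{eq:limsul}.

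\emph{Main obstacle.} The only non-formal points are making the passage to the even subsequence precise — that the even subsequence of a $(2,\epsilon)$-deformed coplanar path is again a $(1,O(\epsilon))$-deformed one, both for the combinatorial $Q$-condition and for the deformation size — and threading the chain of constants ($R$, then $\beta$ and $\ell_0$, then $\zeta$) so that the hypotheses of Theorem \ref{theo:exislimi} hold while still forcing $\epsilon\le\delta$; both are routine but fussy.
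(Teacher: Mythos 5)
Your proof is correct and takes essentially the same route as the paper's: both invoke Proposition \ref{pro:suldef0} together with Proposition \ref{pro:modeldef} to recognise $\underline\Gamma^i$ as a deformed sequence, then apply the Limit Point Theorem \ref{theo:exislimi} with $\ell_0=R=1$ and use the last item of Proposition \ref{pro:suldef0} to identify the limit with $\xi(x_i)$. The paper dispatches this in three sentences and leaves implicit both the passage to the even-indexed subsequence (which turns the $2$-coplanar model into a $1$-coplanar one) and the chain of constants; you have simply made those routine steps explicit.
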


\begin{proof}
The first two items of Proposition \ref{pro:suldef0}, together with Proposition \ref{pro:modeldef} implies that for $\zeta$ small enough  the sequence $\underline{\theta}^i$ are $\epsilon$-deformations of the model sequences $\nu_0\left(\underline{T}^i\right)$.  This implies the first two  assertions. Equation \eqref{eq:limsul} follows by Theorem \ref{theo:exislimi} (taking $\ell_0=R=1$ and $\beta={\rm A}$), and the last item of Proposition \ref{pro:suldef0}. 
\end{proof}

\subsection{Sullivan curves and the H{\"o}lder property}\label{sec:holder}

We prove a more precise version of  Theorem \ref{theo:sull-hold} that we state now

\begin{theorem}\label{theo:HolQuant}{\sc [Hölder modulus of continuity]}
There exists positive constants ${\rm M}$, $\zeta_0$ and $\alpha$ so that given a $\zeta_0$-Sullivan  map $\xi$ from ${\bf P}^1(\mathbb R)$ to $\gp$, then for every tripod $T$ in ${\bf P}^1(\mathbb R)$, with  associated $\ms G$-tripod $\tau$, with respect to $\xi$, we have
$$
d_\tau(\xi(x),\xi(y))\leq {\rm M}\cdotp d_T(x,y)^{\alpha}\ ,
$$
\end{theorem}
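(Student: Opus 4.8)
The plan is to exploit the two-path construction of Lemma \ref{pro:holcuff} and its Sullivan deformation (Proposition \ref{pro:suldef0} and Lemma \ref{lem:cuffsul}) together with the quantitative convergence estimates of the Limit Point Theorem \ref{theo:exislimi}. Fix a tripod $T=(x^-,x^+,x^0)$ in $\Rp$ with compatible $\ms G$-tripod $\tau$ (with respect to $\xi$), and let $x,y\in\Rp$. Using the $\ms G$-action we may normalize so that $T$ is the standard tripod and $z_0\defeq s(\tau)$ is the projection point. The key geometric input is the \emph{logarithmic} bound \eqref{ineq:minlog}: if $x,y$ are two points whose visual distance $d_{z_0}(x,y)$ is small, then the two paths of tripods $\vect{T}^1$ (converging to $x$) and $\vect{T}^2$ (converging to $y$) coincide for the first $n$ tripods with $n\geq -\bk_1\log d_{z_0}(x,y)$.

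First I would run the construction of Section \ref{subsec:pathsld}: for $\zeta$ small enough (so that $\zeta\leq\boldsymbol\zeta_0$ and $\zeta$ is below all thresholds appearing in Proposition \ref{pro:suldef0} and Theorem \ref{theo:exislimi} with $\ell_0=R=1$), Lemma \ref{lem:holcuf} and Proposition \ref{pro:suldef0} produce reduced $\epsilon$-quasi-tripods $\theta^i_m$ with $\xi(\partial T^i_m)$ as vertices, forming $(1,\epsilon)$-deformed sequences of chords $\underline\Gamma^i$ whose limits are $\xi(x)$ and $\xi(y)$ respectively (Lemma \ref{lem:cuffsul}), and which agree up to the $N$-th chord with $N\geq -\bk_1\log d_{z_0}(x,y)$ (after halving, $N\geq -\tfrac{\bk_1}{2}\log d_{z_0}(x,y)$, say). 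Now apply estimate \eqref{ineq:limit-dist} of Theorem \ref{theo:exislimi}: since the two deformed sequences coincide up to chord $n=N$ and $\tau_0\defeq\dt{\theta^1_0}$ lies in $\Gamma_0$, we get
$$
d_{\tau_0}(\xi(x),\xi(y))\leq \bQ^{N}\beta\leq \beta\cdot\bQ^{-\frac{\bk_1}{2}\log d_{z_0}(x,y)} = \beta\cdot d_{z_0}(x,y)^{\alpha},
$$
where $\alpha\defeq -\tfrac{\bk_1}{2}\log\bQ>0$ (recall $\bQ<1$). Finally, since $\tau_0$ is coplanar to the initial chord of $\vect{T}^1$ through $z_0=s(\tau)$ and $\partial^\pm\tau_0=\xi(\partial^\pm T^1_0)$, the tripods $\tau$ and $\tau_0$ are at uniformly bounded distance (both project near $z_0$; use Proposition \ref{A-B}, specifically the inequality $d_\tau(u,v)\leq B\, d_{\tau_0}(u,v)$ under $d(\tau,\tau_0)\leq A$), so $d_\tau(\xi(x),\xi(y))\leq B\beta\, d_{z_0}(x,y)^\alpha = {\rm M}\, d_T(x,y)^\alpha$ with ${\rm M}\defeq B\beta$, using that $d_{z_0}=d_\tau=d_T$ on $\Rp$ by the compact-stabilizer hypothesis and property \eqref{hyp:2-dist}.

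It remains to handle $x,y$ that are \emph{not} close in the visual metric $d_T$: but then $d_T(x,y)$ is bounded below by some constant $c_0>0$ (since $\diam(\Rp,d_T)$ is finite and ``not close'' means outside a fixed neighborhood), while $d_\tau(\xi(x),\xi(y))\leq\diam(\gp,d_\tau)=:D$ is bounded above; enlarging ${\rm M}$ to $\max({\rm M}, D/c_0^\alpha)$ settles this range trivially. The main obstacle I expect is \emph{bookkeeping of the constants}: one must check that a single $\zeta_0$ works simultaneously for the Sullivan-deformation lemmas, for the smallness required in Lemma \ref{pro:holcuff} (so that $d_{z_0}(x,y)$ small suffices to build the forking tripod), and for the hypotheses $\beta\leq\bA$ of Theorem \ref{theo:exislimi}; and that the degenerate $x=y$ case is harmless (then the bound is $0\leq 0$). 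Once the constants are pinned down, the estimate is a direct substitution.
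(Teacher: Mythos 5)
Your proof is correct and follows essentially the same route as the paper: build the two coinciding paths of quasi-tripods from Proposition \ref{pro:suldef0}/Lemma \ref{lem:cuffsul}, feed the logarithmic coincidence length from Lemma \ref{pro:holcuff} into estimate \eqref{ineq:limit-dist} of Theorem \ref{theo:exislimi} to get the H\"older exponent $\alpha=-\bk_1\log\bQ$ (up to a factor), and then transfer from $d_{\tau_0}$ to $d_\tau$ via Lemma \ref{lem:properA} and Proposition \ref{A-B}. One small notational slip: you should take $z_0=s(T)$ (a point in $\hh$, the projection of the $\sld$-tripod $T$), not $s(\tau)\in\sg$, since Lemma \ref{pro:holcuff} is a statement in planar hyperbolic geometry; this does not affect the argument.
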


\begin{proof} Since $d_\tau$ has uniformly bounded diameter,
It is enough to prove this inequality, for $T$ so that $d_T(x,y)$ is small enough. 
Let  then $x_1=x$, $x_2=x$ be  in $\Rp$ and $z_0=s(T)$, $\xi$ a $\zeta$-Sullivan map  (for $\zeta$ small enough) and ${\underline T}^i$, $\underline{\tau}_i$, the sequences of $\sld$-tripods and ${\G}$-tripods  constructed  in the preceding section, let  $\underline\Gamma_i$ the sequence of chords satisfying Lemma \ref{lem:cuffsul}. Let 
$$
\tau_0\defeq \tau^1_0=\tau^2_0,\ \ \nu_0\defeq \nu^1_0=\nu^2_0\ , \ T_0\defeq T^1_0=T^2_0
$$
Let $N>-\bk_1 \log(d_{T_0}(x_1,x_2))$ so that $\underline\tau^1$ and $\underline\tau^2$ coincide up to the first $N$ tripods.
By Theorem \ref{theo:exislimi} using Lemma \ref{lem:cuffsul},  we have \begin{equation}
	d_{\tau_0}(\xi(x_1),\xi(x_2))\leq {\rm q}^N\cdotp {\rm A}\leq {\rm B} \cdotp d_{z_0}(x_1,x_2)^{\boldsymbol \alpha}= {\rm B} \cdotp d_{T_0}(x_1,x_2)^{\boldsymbol \alpha}\, \label{ineq:hol1}
\end{equation}
for some positive constants ${\rm B}$ and ${\boldsymbol \alpha}$ only depending on ${\rm q}$, ${\rm A}$ and $\bk_1$.

Here $\tau_0$ is associated to $T_0$. But since $d(T_0,T)$ is uniformly bounded, by the first assertion in Lemma \ref{lem:properA}, $d(\tau_0,\tau)$ is uniformly bounded (for $\zeta$ small enough), thus by Proposition \ref{A-B}, $d_\tau$ and $d_{\tau_0}$ are uniformly equivalent. In particular,
$$
d_\tau(\xi(x),\xi(y))\leq {\rm F}\cdotp d_{\tau_0}(\xi(x),\xi(y))\leq {\rm M}\cdotp d_T(x,y)^{\alpha}\ .
$$
This concludes the proof.\end{proof}

\subsection{Sullivan curves and the Anosov property}\label{sec:anos}

In this section, let $\xi$ be a $\zeta$-Sullivan map equivariant under the action of a cocompact Fuchsian group $\Gamma$ for a representation $\rho$ of $\Gamma$ in ${\G}$.

\subsubsection{A short introduction to Anosov representations}\label{subsec:introAnos} Intuitively, a hyperbolic group is $\ms P$-Anosov if every element is $\ms P$-loxodromic, with ``contraction constant'' comparable with the word length of the the group.

Let us be more precise, let $\ms P^+$ be a parabolic and $\ms P^-$ its opposite associated to the decomposition
$$
\mk g= \mk n^+\oplus \mk l \oplus \mk n^-\ , \ \ \mk p^\pm=\mk n^\pm\oplus \mk l 
$$
For a hyperbolic surface $S$, let $\ms U S$ be its unit tangent bundle equipped with its geodesic flow $h_t$. Let $\rho$ be a representation of $\pi_1(S)$ into $\G$. Let $\mk G_\rho$ be the flat Lie algebra bundle over $S$ with monodromy $\Ad\circ\rho$. The action of $h_t$ lifts by parallel transport the action of a flow $H_t$ on $\mk g_\rho$. We say that the action is Anosov if we can find a continuous splitting into vector sub-bundles, invariant under the action of $H_t$ 
$$
\mk G_\rho=\mk N^+\oplus \mk l \oplus \mk N^- ,
$$
such that 
\begin{itemize}
	\item at each point $x\in \ms U S$, the splitting is conjugate to the splitting $\mk g=\mk p^+\oplus \mk l \oplus \mk p^-$,
	\item The action of $H_t$ is contracting towards the future on 
	$\mk N^+$ and contracting towards the past on $\mk N^-$ 
\end{itemize}
Equivalently let $\ms F^\pm_\rho$ be the associated flat bundles to $\rho$ with fibers $\ms G/\ms P^\pm$.  The action of $h_t$ lifts to an action denoted $H_t$ by parallel transport.
Then, the representation $\rho$ is Anosov, if we can find continuous $\rho$-equivariant maps  $\xi^\pm$ from $\partial_\infty\pi_1(S)$ into $\G/\ms P^\pm$ so that
\begin{itemize}
	\item for $x\not= y$, $\xi^+(x)$ is transverse to $\xi^-(y)$,
 \item the associated  sections $\Xi^\pm $ of  $\ms F^\pm$ over $\ms U S$ by $\rho$ are attracting points, respectively towards the future and the past, for the action of $H_t$ on the space of sections endowed with the uniform topology.
 \end{itemize}

\subsubsection{A preliminary lemma}

 For a tripod $\tau$, let $\tau^\perp$ be the coplanar tripod to $\tau$ so that $\tau^\perp$ is obtained after a $\pi/2$ rotation of $\tau$ with respect to $s(\tau)$. In other words, $\partial \tau^\perp=(\partial^0\tau, x,\partial^+\tau)$ where $x$ is the symmetric of $\partial^0\tau$ with respect to the geodesic whose endpoints are $\partial^-\tau$ and $\partial^+\tau$.  Observe that $s(\tau)=s(\tau^\perp)$ and thus $d_\tau=d_{\tau^\perp}$.

Our key lemma is the following
 
\begin{lemma}\label{lem:anosul} There exists $\boldsymbol{\zeta}$  with  such that if $\xi$ is a $\boldsymbol{\zeta}$-Sullivan map, then there exists positive constants $R$ and $c$ so that if $T$ is a tripod in $\sld$, then for any    $\tau$ and $\sigma$ compatible tripods (with respect to $\xi$) to $T$ and $\varphi_R(T)$ satisfying 
$$
\partial^+\sigma=\partial^+\tau=\xi(\partial^+T)\ ,
$$
we have 
$$
\forall x,y\in C_c(\sigma^\perp), \ \ d_{\tau^\perp}(x,y)\leq \frac{1}{2}\cdotp d_{\sigma^\perp}(x,y)\ .
$$
\end{lemma}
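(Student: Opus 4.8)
The statement is a quantitative contraction estimate that should follow from the ``Aligning tripods'' Proposition \ref{pro:cone-contrac0} together with the $\zeta$-Sullivan hypothesis, which forces the compatible tripods $\tau$ and $\sigma$ to be close to genuine vertices of circles. The key geometric picture: in the model situation $T$ and $\varphi_R(T)$ are coplanar $\sld$-tripods sharing the attractive point $\partial^+T$; the points $\partial^+ T$, $s(T)$, $s(\varphi_R(T))$ lie on a common geodesic of $\hh$, so $(\partial^-T, T, \varphi_R(T), \partial^+T)$ are aligned in the sense of paragraph \ref{sec:cone contract} (after passing to $\tau^\perp$, $\sigma^\perp$ the roles of the relevant points get permuted, but alignment persists). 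The first step is to pin down, using the compact-centralizer hypothesis and Lemma \ref{lem:properA}(1), that $d(\tau,T)$ and $d(\sigma,\varphi_R(T))$ are bounded by a universal constant once $\boldsymbol\zeta$ is small enough; since $d_\tau$ depends only on $\partial\tau$ (Proposition \ref{pro:comp-stab}) and $d_\tau = d_{\tau^\perp}$, this gives that $d_{\tau^\perp}$, $d_{\sigma^\perp}$ are uniformly equivalent to the corresponding visual metrics of $T$, $\varphi_R(T)$ via Proposition \ref{A-B}.

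\textbf{Main argument.} With the alignment established, apply Proposition \ref{pro:cone-contrac0} to the aligned quadruple determined by $\tau^\perp$ and $\sigma^\perp$: there is a $1$-parameter loxodromic subgroup $\{\lambda_t\}$ fixing the common circle $C$ with attractive fixed point corresponding to $\partial^+\tau = \partial^+\sigma$ (here one uses $\partial^+\sigma = \partial^+\tau$ crucially — it guarantees the two tripods sit on the \emph{same} contracting flow line), and $s(\sigma^\perp) = \lambda_{t_1}(s(\tau^\perp))$ for some $t_1 > 0$ comparable to $R$. The proposition yields constants $\bK$, $c$ (the diffusion and the angle/radius constants) and a universal exponential rate so that for $w$ in the relevant ball of $C$ around the attractive point and $u,v$ in a $c$-ball,
\[
d_{\tau^\perp}(u,v) \leq \tfrac{\bK}{4}\, e^{-c'\, d(\tau^\perp,\sigma^\perp)}\, d_{\sigma^\perp}(u,v).
\]
Since $d(\tau^\perp,\sigma^\perp) \geq d(T,\varphi_R(T)) - (\text{bounded error}) \geq B R - C$ by the distance comparison at the end of the proof of Proposition \ref{pro:cone-contrac0} and the bounds from Step 1, choosing $R$ large enough makes the prefactor $\tfrac{\bK}{4} e^{-c'(BR-C)} \leq \tfrac12$. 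One must also check that the cone $C_c(\sigma^\perp)$ lies inside the basin/ball where Proposition \ref{pro:cone-contrac0} applies: this is exactly why $c$ is taken small, and it holds because $\partial^0\sigma^\perp$ is within bounded $d_{\sigma^\perp}$-distance of the attractive fixed point $\partial^+\sigma$ (by construction of $\tau^\perp$, $\partial^0\tau^\perp$ sits on the geodesic side near $\partial^+\tau$), so $C_c(\sigma^\perp)$ is contained in the $3\pi/4$-ball hypothesis of the proposition once $c$ is chosen appropriately. Fixing $c \leq \alpha_0$ from the proposition and then $R$ from the exponential estimate completes the proof.

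\textbf{Expected main obstacle.} The delicate point is not the contraction estimate per se — that is a direct citation of Proposition \ref{pro:cone-contrac0} — but verifying the geometric \emph{configuration}: namely that $\tau^\perp$ and $\sigma^\perp$ (or rather $\omega$-rotates thereof needed to bring them into the ``aligned'' normal form of paragraph \ref{sec:cone contract}) are genuinely aligned along a circle with the shared attractive point $\partial^+\tau = \partial^+\sigma$, and that the perturbation from ``$\tau$, $\sigma$ are only \emph{compatible} tripods'' rather than actual vertices of circle maps does not destroy this. This requires carefully unwinding the definition of $\tau^\perp$ ($\partial\tau^\perp = (\partial^0\tau, x, \partial^+\tau)$ with $x$ the reflection of $\partial^0\tau$) to see that $s(\tau^\perp)$ still lies on the geodesic $]\partial^-\tau,\partial^+\tau[$, hence on the axis of $\{\lambda_t\}$, so the alignment hypothesis is met; and then absorbing the bounded discrepancy between $\tau$ and the model into the constant $B$ in $d(\tau^\perp,\sigma^\perp) \geq BR - C$ via Lemma \ref{lem:properA} and Proposition \ref{A-B}. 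Once the normal form is set up correctly, the rest is bookkeeping of constants.
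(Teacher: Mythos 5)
Your proposal has a genuine gap, and it is precisely at the point you flag at the end as the ``delicate point''.

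You want to apply the Aligning Tripods Proposition \ref{pro:cone-contrac0} directly to $\tau^\perp$ and $\sigma^\perp$, asserting that they sit on a ``common circle $C$'' with a one-parameter subgroup $\lambda_t$ carrying $s(\tau^\perp)$ to $s(\sigma^\perp)$. But Proposition \ref{pro:cone-contrac0} requires the two tripods to be \emph{coplanar} (same $\sld$-orbit in $\mc G$, so that a single totally geodesic $\hh$ contains both feet). In the lemma's setting $\tau$ and $\sigma$ are two \emph{compatible} tripods of the Sullivan map $\xi$ at $T$ and at $\varphi_R(T)$; they arise from two \emph{different} circle maps $\eta_T$ and $\eta_{\varphi_R(T)}$ (that is the whole point of the Sullivan condition). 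The only relation the hypotheses supply is the shared flag $\partial^+\tau=\partial^+\sigma=\xi(\partial^+T)$, which is far from putting $\tau^\perp$ and $\sigma^\perp$ on a common circle. So the alignment hypothesis fails and the proposition simply does not apply.

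You try to absorb the discrepancy as a bounded error via Lemma \ref{lem:properA} and Proposition \ref{A-B}, but this cannot work: Lemma \ref{lem:properA} compares compatible tripods only when the two underlying $\sld$-tripods $T_1$, $T_2$ are within a \emph{fixed} distance $M$, and the constant $N$ it produces grows with $M$. Here $d(T,\varphi_R(T))$ is on the order of $R$, and $R$ is exactly the quantity you want to take large to beat the constant $\bK$. So you cannot conclude that $\sigma$ is within a bounded distance of $\varphi_R(\tau)$, which is what you would need to replace the non-coplanar pair $(\tau^\perp,\sigma^\perp)$ by a coplanar one plus an error term. (Note also that ``$d(\tau,T)$ bounded'' in your first step is not even meaningful as written — $T$ lives in the $\sld$-tripod space, $\tau$ in $\mc G$.)

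This is why the paper's proof cannot be a one-shot application of Proposition \ref{pro:cone-contrac0}. Instead it builds, via Lemma \ref{pro:holcuff} and Proposition \ref{pro:suldef0}, a long finite chain of quasi-tripods — an $\eR$-deformation of a genuinely coplanar path of $\sld$-tripods — where each \emph{step} only ever compares compatible tripods over a uniformly bounded $\sld$-distance (so Lemma \ref{lem:properA} applies step by step). The contraction from $\tau^\perp$ to $\sigma^\perp$ is then propagated through the chain using the Confinement Lemma \ref{lem:zigzag} and the convergence/nesting Corollary \ref{coro-conv-nest}, with the tripods $\lambda_n$ (commanding tripods of the nested chords) playing the role of the coplanar intermediaries that your argument tries to skip. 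The Sullivan condition enters a second time at the very end, to verify that $C_c(\sigma^\perp)$ lies inside the cone $C_b(\lambda_{n_0})$ where the chain-contraction estimate has been established.
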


In this lemma, $\xi$ does not have to be equivariant. Observe also, that with the notation of the lemma $\partial^0(\sigma^\perp)=\xi(\partial^+T)$.

\begin{proof}  We will use the Confinement Lemma \ref{lem:zigzag}. Let then, using the notation of the Confinement Lemma,  $b\defeq\beta_3$, and $\ell_0$ an integer greater than $\ell(\beta_3)$, and $\eta_0$ as in the conclusion of the lemma. 

 	Let $z_0\defeq s(T)$ be the orthogonal projection of $\partial^0T$ on the geodesic joining $\partial^-T$ to $\partial^+T$.  Let $x_1=x_2\defeq \partial^+T$. Let us now construct, for $\epsilon\leq \frac{\eta_0}{2\ell_0}$ and  $\boldsymbol{\zeta}$ small enough as in paragraphs \ref{subsec:pathsld} and \ref{subsec:defcuff} 
\begin{itemize}
\item The sequence of $\sld$-tripods $\underline T\defeq\underline T^1=	\underline T^2$ with $T_0=T^\perp$, associated to the coplanar sequence of chords  $\underline h$, 
\item  The tripods $\tau_m\defeq\tau^1_m=\tau^2_m$, and the corresponding sequence of reduced $\epsilon$-quasi tripods $\underline\theta\defeq\underline\theta^1=\underline\theta^2$, which is an $\epsilon$-deformation of $\nu_0(\underline\tau)$ -- according to Proposition \ref{pro:suldef0} --  and associated to the deformed sequence of chords $\underline\Gamma$, 
\item we also denote by $\nu_i$ the extended circle map associated to $T_i$ that satisfies $\nu(T_i)=\tau_i$. Let us also denote by $\mu_i$ the $\sld$-tripods which is the projection of $h_{2(i+1)}$ on $h_{2i}$, and 
 $$
 \lambda_i\defeq \nu_{2i\ell_0}(\mu_{i\ell_0}).
 $$
\end{itemize}
It follows that $T_{2\ell_0m},\ldots,T_{2\ell_0(m+1)}$ is a strong $(\ell_0,2\ell_0)$-coplanar path of tripods. And thus according to the Confinement Lemma \ref{lem:zigzag} and our choice of constants, $(\Gamma_{2\ell_0m},\Gamma_{2\ell_0(m+1)})$ is $(b,\bk^7)$-squeezed and its  commanding tripod  is the projection of $\nu_{2\ell_0m}(h_{2\ell_0{m+1}})$ on  $\nu_{2\ell_0m}(h_{2\ell_0m})$ that is $\lambda_m$. In other words, since $\lambda_{m+1}\in S_0(\nu_{2\ell_0m}(h_{2\ell_0m}))$ we have for all $m$
$$
C_b(\lambda_m)\prec \bk^7 C_{\bk^7b}(\lambda_{m+1})
$$
Thus by Corollary \ref{coro-conv-nest}, using the fact that  $\beta_3\leq \alpha_3$, where $\alpha_3$ is the  constant in Proposition, we have 
\begin{equation}
	\forall u,v\in C_b(\lambda_n), \ d_{\lambda_0}(u,v)\leq \frac{1}{2^n}d_{\lambda_n}(u,v)\ .\label{ineq:lambdan}
\end{equation}

We now make the following claim
\vskip 0.2 truecm

\noindent{\em Claim1: there exists a constant  $N$ only depending in ${\G}$ so that for any tripod $\beta$ compatible with $\varphi_{2n\ell_0}(T)$ then 
\begin{equation}
	d(\beta^\perp,\lambda_n)\leq N\, .\label{ineq:lambdatau}
\end{equation}}

Elementary hyperbolic geometry first shows that there exist positive constants  $N_1$ and $M_2$ so that 
\begin{eqnarray*}
d(\lambda_n,\tau_{2n\ell_0})=d(\mu_{n\ell_0},T_{2n\ell_0})&\leq &N_1\ , \\
d(\varphi_{2n\ell_0}(T),T_{2n\ell_0})&\leq& M_2.
\end{eqnarray*}
By Lemma \ref{lem:properA},  there exists a constant $N_2$ so that 
\begin{equation*}
	d(\beta,\tau_{2n\ell_0})\leq N_2\ ,
\end{equation*}

Since there exists a constant $N_3$ so that 
$d(\beta,\beta^\perp)\leq N_3$,
The triangle  inequality  yields the claim.
\vskip 0.2 cm
inequality \eqref{ineq:lambdatau} and Proposition \ref{A-B} yields that there exists a constant $C$ so that if $\sigma_n$ is compatible with $\varphi_{n\ell_0}(T)$, then 
\begin{equation}
	\frac{1}{C} d_{\sigma^\perp_n}\leq d_{\lambda_n}\leq C d_{\sigma^\perp_n}\  . \label{ineq:eqdis}
\end{equation}
Then taking $n_0$ so that $2^{n_0-1}> C^2$, $R=n_0\ell_0$, we  get from inequality \eqref{ineq:lambdan}
$$
\forall x,y\in C_b(\lambda_{n_0}), \ \ d_{\tau^\perp}(x,y)\leq \frac{1}{2}\cdotp d_{\sigma^\perp}(x,y)\ .
$$
To conclude, it is therefore enough to prove that 

\vskip 0.2 truecm
\noindent{\em Final Claim : There exists a constant $c$ only depending on ${\G}$ so that 
$$
C_c(\sigma^\perp)\subset C_b(\lambda_{n_0})\ .
$$
}
Recall that by hypothesis, $\partial^+(\sigma)=\xi(x)$.
By the last item in Proposition \ref{pro:suldef0}, for $\zeta$ small enough $$\partial^0(\sigma^\perp)=\xi(x)\in S_{b/2}(h_n)\ ,$$
 for all $n$. By the squeezing property, it follows that $\xi(x)\in C_{b/2}(\lambda_m)$ for all $m$.

Since $d_{\lambda_{n_0}}$ and $d_{\sigma^\perp}$ are uniformly equivalent
by inequality \eqref{ineq:eqdis}, we obtain, taking$c=b(2C)^{-1}$, 
\begin{eqnarray*}C_c(\sigma^\perp)=\{u\in \gp, d_{\sigma ^\perp}(u,\xi(x))\leq c\}
\subset \{u\in \gp, d_{\lambda_{n_0}}(u,\xi(x))\leq b/2\}
=  C_{b}(\lambda_{n_0})\ .
\end{eqnarray*}
This concludes the proof of the final claim, hence of the lemma.
\end{proof}

\subsubsection{Completion of the proof of Theorem \ref{theo:sull-anos}}
The proof is now standard. Let $\rho$ be a representation of a cocompact torsion free Fuchsian group $\Gamma$. Let  $\mathcal S$ be the space of $\sld$ tripods, $U=\Gamma\backslash\mathcal S$ the space of tripods in the quotient equipped with the flow $\varphi$. The space $U$ with its flow $U$ is naturally conjugate to the geodesic flow of the underlying hyperbolic surface. Let finally $\mathcal F$ be the $\rho$-associated flat bundle over $U$ with fiber $\gp$.  This fiber bundle is equipped with a flow $\{\Phi_t\}_{t\in\mathbb R}$ lifting the flow $\{\varphi_t\}_{t\in\mathbb R}$ by parallel transport along the orbit.

Let $\xi$ be a $\rho$-equivariant $\zeta$-Sullivan map for $\zeta$ small enough so that Lemma \ref{lem:anosul} holds. Observe that $\xi$ give now rise to two transverse  $\Phi_t$-invariant sections of  $\gp$:
\begin{eqnarray*}
\sigma^+(T)\defeq \xi(\partial^+ T)\ , & &  \sigma^-(T)\defeq \xi(\partial^- T)\	
\end{eqnarray*}
These sections are transverse for $\zeta$ small enough: more precisely for $\zeta<k/2$, where $k=d_\tau(\partial^+\tau,\partial^-\tau)$ for any tripod $\tau$. 

We now choose a fiberwise metric $d$ on  $\mathcal F$ as follows: for every $T\in \mathcal S$, let $\tau(T)$ be a compatible tripod. We may choose the assignment $T\mapsto\tau(T)$ to be $\Gamma$-invariant. We define our fiberwise metric at $T$ to be $d_T\defeq d_{\tau(T)}$. This metric may not be continuous transversely to the fibers, but it is locally bounded: locally at a finite distance to a continuous metric since the set of compatible tripods has a uniformly bounded diameter by Lemma \ref{lem:properA}.

Now, lemma \ref{lem:anosul} exactly tells us that $\sigma^+$ is a attracting fixed section of $\{\phi_t\}_{t\in\mathbb R}$ towards the future, and by symmetry that  $\sigma^-$ is a attracting  fixed section of $\{\phi_t\}_{t\in\mathbb R}$ towards the past. By definition, $\rho$ is $\gp$-Anosov and this concludes the proof of Theorem \ref{theo:sull-anos}

\subsubsection{Anosov and Sullivan Lemma}

As an another relation of the Anosov property and Sullivan curves, let us prove the  following 
\begin{lemma}
	Let $\rho_0$ be an Anosov representation of a Fuchsian group $\Gamma$. Assume that the limit map $\xi_0$ is $\zeta$-Sullivan, then, for any positive $\epsilon$,  any nearby representation to $\rho$ is Anosov with a $(\zeta+\epsilon)$-Sullivan limit map.
\end{lemma}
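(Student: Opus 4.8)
Here is the plan. The statement is exactly Lemma~\ref{lem:sull-ano} again, and the proof I would write rests on two ingredients: the structural stability of Anosov representations (which gives the Anosov conclusion for free and provides a limit map depending continuously on $\rho$), and the fact that, $\Gamma$ being cocompact, the $\zeta$-Sullivan condition only needs to be tested on a \emph{compact} family of triples, where a $C^0$-perturbation of the limit map is cheap. Concretely: recall that $\ms P$-Anosov representations form an open subset of $\operatorname{Hom}(\Gamma,\G)$ and that the limit curve varies continuously, so that for $\rho$ close enough to $\rho_0$ the representation $\rho$ is $\ms P$-Anosov with limit curve $\xi_\rho\colon\Rp\to\gp$, and moreover, since $\partial_\infty\Gamma=\Rp$ is compact and $\Gamma$ acts cocompactly on it, $\sup_{y\in\Rp}d(\xi_\rho(y),\xi_0(y))\to 0$ as $\rho\to\rho_0$, where $d$ is a fixed auxiliary Riemannian metric on $\gp$ (see \cite{Labourie:2006,Guichard:2012eg}). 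This last uniform $C^0$-convergence statement is the only non-bookkeeping input, and it is the step I expect to be the delicate one to write carefully, although by now it is standard.

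Next I would fix a compact fundamental domain $D$ for the $\Gamma$-action on the space of triples of pairwise distinct points of $\Rp$ (such a $D$ exists because this space is identified with $\psld$ and $\Gamma$ is cocompact). For each $T\in D$, pick a compatible tripod $\tau_0(T)$ for $\xi_0$, with associated circle map $\phi^{\tau_0(T)}$, so that $d_{\tau_0(T)}(\xi_0(y),\phi^{\tau_0(T)}(y))\le\zeta$ for all $y\in\Rp$. Assuming $\zeta$ is below the threshold $\boldsymbol{\zeta}$ of Lemma~\ref{lem:properA}, part (1) of that lemma applied with $M=\diam(D)$ shows that the set $\{\tau_0(T)\mid T\in D\}$ lies in a fixed compact subset $\mathcal K\subset\mc G$. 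By Proposition~\ref{A-B}, all metrics $d_\tau$ with $\tau\in\mathcal K$ are uniformly bi-Lipschitz to a fixed $d_{\tau^\ast}$, which is itself comparable to the Riemannian metric $d$ on the compact manifold $\gp$; hence there is $\eta>0$ such that if $\rho$ is within distance $\eta$ of $\rho_0$ then $\sup_{y\in\Rp}d_{\tau_0(T)}(\xi_\rho(y),\xi_0(y))\le\epsilon$ for \emph{every} $T\in D$.

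Finally I would check that, for such $\rho$, the curve $\xi_\rho$ is $(\zeta+\epsilon)$-Sullivan. For $T\in D$ and $y\in\Rp$ the triangle inequality gives
\[
d_{\tau_0(T)}\bigl(\xi_\rho(y),\phi^{\tau_0(T)}(y)\bigr)\le d_{\tau_0(T)}\bigl(\xi_\rho(y),\xi_0(y)\bigr)+d_{\tau_0(T)}\bigl(\xi_0(y),\phi^{\tau_0(T)}(y)\bigr)\le \epsilon+\zeta,
\]
so $\tau_0(T)$ is a compatible tripod for $\xi_\rho$ with constant $\zeta+\epsilon$. For an arbitrary triple $T=\gamma\cdot T'$ with $T'\in D$ and $\gamma\in\Gamma$, set $\tau(T)\defeq\rho(\gamma)\cdot\tau_0(T')$; using the $\rho$-equivariance of $\xi_\rho$ together with the $\G$-equivariance of the assignments $\tau\mapsto d_\tau$ and $\tau\mapsto\phi^\tau$ (Section~\ref{sec:tritrimet}), one gets $d_{\tau(T)}(\xi_\rho(y),\phi^{\tau(T)}(y))=d_{\tau_0(T')}(\xi_\rho(\gamma^{-1}y),\phi^{\tau_0(T')}(\gamma^{-1}y))\le\zeta+\epsilon$ for all $y$. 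Hence $\xi_\rho$ is $(\zeta+\epsilon)$-Sullivan, and combined with the structural stability recalled in the first paragraph this proves the lemma. The only genuine difficulty, as noted, is the uniform continuity of the limit map in $\rho$; everything else is a routine combination of Lemma~\ref{lem:properA} and Proposition~\ref{A-B}.
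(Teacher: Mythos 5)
Your proof follows the same route as the paper's: structural stability of Anosov representations plus continuity of the limit map in the uniform topology, then a compact fundamental domain $D$ for the $\Gamma$-action on triples, Lemma~\ref{lem:properA}~(1) to confine the compatible tripods $\{\tau_0(T)\mid T\in D\}$ in a bounded set, Proposition~\ref{A-B} for uniform comparison of the metrics $d_\tau$, the triangle inequality, and finally equivariance to propagate the estimate from $D$ to all triples. Your write-up just makes explicit a couple of steps the paper leaves implicit (the bi-Lipschitz reduction to a single reference metric, and the equivariance computation at the end); the argument is the same.
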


\begin{proof} By the stability property of Anosov representations \cite{Labourie:2006,Guichard:2012eg} any nearby  representation $\rho$ is Anosov. Let  $\xi_{\rho}$ be its limit map.

	By Guichard--Wienhard \cite{Guichard:2012eg}  -- see also \cite{Bridgeman:2015ba} --  $\xi_\rho$ depends continuously on $\rho$ in the uniform topology. More precisely, for any positive $\epsilon$, for any  tripod $\tau$ for $\ms G$, there exists a neighborhood $U$ of $\rho_0$, so that for all $\rho$ in $U$, for all $x\in\partial_\infty\hh$,
	\begin{equation}
		d_\tau(\xi_\rho(x),\xi_{\rho_0}(x))\leq \epsilon\ . \label{ineq:ano01}
	\end{equation}
Instead of fixing $\tau$, we may as well assume that $\tau$ belongs to a bounded set $K$ of $\mc G$, using for instance Proposition \ref{A-B}.
	
	Let us consider a compact fundamental domain $D$ for the action of $\Gamma$ on the space of tripods with respect to $\hh$. For every tripod $T$ in $D$, we have a compatible  $\ms G$-tripod $\tau_\T$ with circle map $\nu_T$ with respect to $\xi_0$. Then by Lemma \ref{lem:properA}, the set 
	$$
	D_{\ms G}:=\{\tau_T\mid T\in D\}\ ,
	$$
	is bounded. Thus inequality \eqref{ineq:ano01} holds for all $\tau$ in $D_{\ms G}$ . It follows that for all $T\in D$,
	\begin{equation}
		d_{\tau_T}(\xi_\rho(x),\eta_T(x))\leq \zeta+\epsilon\ . \label{ineq:ano02}
	\end{equation}
	Using the equivariance under $\Gamma$, the inequality \eqref{ineq:ano02} now holds for all tripods $T$ for $\hh$.
	In other words, $\xi_\rho$ is $(\zeta+\epsilon)$-Sullivan. 	
\end{proof}

\subsection{Improving H\"older derivatives}
Our goal is to explain that under certain hypothesis  we can can promote a Sullivan curve with respect to a smaller subset to a full Sullivan curve.  We need a series of technical definitions before actually stating our theorem
\begin{enumerate}
	\item For every tripod $T$ for $\hh$, let $d_T$ be the visual distance on $\partial_\infty\hh$ associated to $T$. We say a subset $W$ of $\partial_\infty\hh$ is {\em $(a,T)$-dense} if 
\begin{eqnarray}
\forall x\in \partial_\infty\hh,\ \exists y\in W, \ d_T(x,y)\leq a
\end{eqnarray}
\item Let $a$ and $\zeta$ be a positive number, $Z$ a dense subset of $\partial_\infty\hh$.
Let us say a map $\xi$ from $\partial_\infty\hh$  to $\gp$ is {\em $(a,\zeta)$-Sullivan} if given any tripod $T$ in $\hh$, there exists \begin{itemize}
 \item 	a circle map $\xi_T$,
 \item  an $(a,T)$-dense subset $W_T$ of $Z$,
 \end{itemize}
so that, writing $\tau\defeq\xi_T(T)$, we have 
 for all $x$ in $W_T$, $
 d_\tau(\xi_T(x),\xi(x))\leq\zeta$.
 \item Let  $\Gamma$ a be cocompact Fuchsian group and $\rho$ a representation of $\Gamma$ in $\ms G$. Let $\xi$ be a $\rho$-equivariant  map from $\partial_\infty\hh$ to $F$. We say $\xi$ is {\em attractively coherent}  if given any $y$ point in $\partial_\infty\hh$, there exists a sequence  $\seq{\gamma}$ of elements of $\Gamma$ such that
\begin{itemize}
 \item the limit of  $\{\gamma^+_m\}_{m\in\mathbb N}$ is $y$,
 \item $\xi(y)$ is the limit of  $\seq{z}$, where $z_m$ is an attractive fixed point for $\rho(\gamma_m)$.	
 \end{itemize}
\end{enumerate}

Our Improvement Theorem is the following

\begin{theorem}{\sc[Improvement Theorem]}\label{theo:boot} Let $\Gamma$ be a cocompact Fuchsian group. Then there exists a positive constant $\zeta_2$ and there exists a positive  $a_0$ such that given
\begin{enumerate}
 \item 	a continuous family of  representations $\{\rho_t\}_{t\in [0,1]}$ of $\Gamma$ in $\ms G$,
 \item  an $(a_0,\zeta$)-Sullivan map $\xi_t$, with $\zeta\leq\zeta_2$, attractively coherent, and $\rho_t$ equivariant for each $t\in [0,1]$. Assume also that $\xi_0$ is $2\zeta$-Sullivan,  
 \end{enumerate}
Then for all $t$, $\xi_t$ is a $2\zeta$-Sullivan map.
	\end{theorem}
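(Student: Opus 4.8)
The plan is to run a continuity/connectedness argument on the interval $[0,1]$, turning the "$(a_0,\zeta)$-Sullivan plus attractively coherent" hypothesis into a genuine $2\zeta$-Sullivan statement at each $t$, and showing the set of good $t$ is both open and closed. Concretely, let $A\defeq\{t\in[0,1]\mid \xi_t \text{ is } 2\zeta\text{-Sullivan}\}$. By hypothesis $0\in A$, so $A$ is nonempty. The two things to prove are that $A$ is closed and that $A$ is open in $[0,1]$; connectedness then gives $A=[0,1]$.

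\emph{Closedness.} Suppose $t_n\to t_\infty$ with $t_n\in A$. Fix a tripod $T$ in $\hh$; for each $n$ we get a circle map $\eta_{t_n}$ and a compatible $\ms G$-tripod $\tau_n=\eta_{t_n}(T)$ with $d_{\tau_n}(\xi_{t_n}(x),\eta_{t_n}(x))\le 2\zeta$ for all $x$. Since $\xi_{t_n}$ is $2\zeta$-Sullivan (hence, for $\zeta$ small, $\zeta_0$-Sullivan of Lemma~\ref{lem:properA}), the compatible tripods $\tau_n$ stay in a bounded set, so after extraction $\tau_n\to\tau_\infty$ and $\eta_{t_n}\to\eta_\infty$, a circle map with $\eta_\infty(T)=\partial\tau_\infty$. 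Because $\xi_t$ depends continuously on $t$ in the relevant sense — this is where I would invoke that the $\xi_t$ are limit maps/equivariant and continuity of $\eta_\infty$, $\tau_\infty$ and the metric assignment $\tau\mapsto d_\tau$ (Proposition~\ref{A-B}) — we pass to the limit in the inequality and get $d_{\tau_\infty}(\xi_{t_\infty}(x),\eta_\infty(x))\le 2\zeta$ for all $x$. Hence $t_\infty\in A$.

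\emph{Openness.} This is the heart of the matter and where the Limit Point Theorem and the paths-of-quasi-tripods machinery of Section~\ref{subsec:pathsld} enter. Fix $t_0\in A$. For $t$ near $t_0$, $\xi_t$ is still $(a_0,\zeta)$-Sullivan and attractively coherent. I would argue as follows. Given a tripod $T$ for $\hh$ and the $(a_0,T)$-dense subset $W_T$ together with the circle map $\xi_T$ on which the $(a_0,\zeta)$-bound holds, one builds — exactly as in Lemma~\ref{pro:holcuff} and Lemma~\ref{lem:holcuf} — for each point $x$ of $\partial_\infty\hh$ a coplanar path of tripods converging to $x$ whose vertices can be chosen in $W_T$ (here the $(a_0,T)$-density guarantees we can always find the required points of $W_T$ close enough to build the next tripod), and $\xi_t$ defines an $\epsilon$-deformation of this path with $\epsilon$ controlled by $\zeta$. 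By Theorem~\ref{theo:exislimi}, this deformed path has a limit point, and by attractive coherence that limit point is forced to be $\xi_t(x)$: the attracting fixed points $z_m=\rho_t(\gamma_m)^+$ lie in the appropriate slivers (since the path of tripods is built from the lamination converging to $y=\lim\gamma_m^+$, and $z_m$ is close to the vertices of late tripods), so $\xi_t(y)=\lim z_m$ sits in $\bigcap_m S_\delta(\Gamma_m)=\{\xi(\vect\theta)\}$. The quantitative estimate \eqref{ineq:limit-dist00} of Theorem~\ref{theo:exislimi}, combined with the $2\zeta$-Sullivan property at $t_0$ controlling how close $\xi_{t_0}$ stays to the circle, and continuity $d_\tau(\xi_t,\xi_{t_0})\le\epsilon(t)$ with $\epsilon(t)\to0$, then yields $d_\tau(\xi_t(x),\eta_T(x))\le 2\zeta$ for all $x$ once $t$ is close enough to $t_0$ — i.e.\ $\xi_t$ is $2\zeta$-Sullivan, so a whole neighborhood of $t_0$ lies in $A$.

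\emph{Main obstacle.} The delicate point is the openness step: one must produce, uniformly over all tripods $T$ and all $x\in\partial_\infty\hh$, a deformed path of quasi-tripods converging \emph{precisely} to $\xi_t(x)$ rather than to some uncontrolled point of $\gp$, using only that $\xi_t$ is defined/controlled on the $a_0$-dense set $W_T$. Attractive coherence is exactly the hypothesis that bridges this gap, but wiring it correctly into the Limit Point Theorem — ensuring the attracting fixed points $z_m$ genuinely fall in the slivers $S_\delta(\Gamma_m)$ and that the threshold $a_0$ is small enough for the whole Section~\ref{sec:Morse} apparatus (choice of $\ell_0$, $R$, $\epsilon$) to apply — is the real work. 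A secondary subtlety is making the constant $a_0$ and the neighborhood of $t_0$ depend only on $\Gamma$ and $\zeta_2$, not on $t$ or $T$; this is handled by the equivariance under $\Gamma$ (reducing to a compact fundamental domain of tripods, as in the proof of Lemma~\ref{lem:sull-ano}) and by the $\ms G$-equivariance of all the metric and tripod constructions.
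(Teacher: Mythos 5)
The overall strategy—take $K=\{t\mid \xi_t \text{ is }2\zeta\text{-Sullivan}\}$, show $K$ is nonempty, closed, and open—matches the paper. But both the closedness and the openness steps have gaps, and you miss the mechanism the paper actually uses.

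\emph{Closedness.} You treat $t\mapsto\xi_t$ as continuous and "pass to the limit in the inequality", but continuity of $\xi_t$ in $t$ is not a hypothesis — the $\xi_t$ are just assumed equivariant, attractively coherent, and $(a_0,\zeta)$-Sullivan, with no a priori relation between $\xi_t$ and $\xi_{t'}$. The paper instead uses the H{\"o}lder theorem (Theorem~\ref{theo:HolQuant}, which applies since $2\zeta\leq\zeta_0$) to get equicontinuity of the family $\{\xi_{t_m}\}$, extracts a subsequential limit $\hat\xi$ which is automatically $\rho_{t_\infty}$-equivariant and $2\zeta$-Sullivan, deduces from Theorem~\ref{theo:sull-anos} that $\rho_{t_\infty}$ is Anosov with limit map $\hat\xi$, and only then uses attractive coherence via Lemma~\ref{lem:ac} to conclude $\xi_{t_\infty}=\hat\xi$. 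Without the attractive coherence step there is no reason the a priori given map $\xi_{t_\infty}$ should coincide with the constructed limit $\hat\xi$.

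\emph{Openness.} Here you propose to re-run the quasi-tripod / Limit Point Theorem machinery using only the $a_0$-dense data $W_T$, and you correctly flag the difficulty: the inductive construction of Lemma~\ref{lem:holcuf} produces specific $\sld$-tripods $T^i_m$, and the $(a_0,\zeta)$-Sullivan hypothesis controls $\xi_t$ only on a set $W_T$ depending on $T$, which need not contain the vertices of the $T^i_m$. You acknowledge this is "the real work" but do not resolve it, and your eventual appeal to "continuity $d_\tau(\xi_t,\xi_{t_0})\leq\epsilon(t)$" circles back to the unjustified assumption. The paper avoids all of this with a much shorter two-step argument: (i) $\xi_{t_0}$ being $2\zeta$-Sullivan implies $\rho_{t_0}$ is Anosov, so by stability of Anosov representations $\rho_s$ is Anosov for $s$ near $t_0$, with a continuously varying limit curve (Guichard--Wienhard); Lemma~\ref{lem:ac} identifies $\xi_s$ with that limit curve, and Lemma~\ref{lem:sull-ano} shows it is $\zeta_0$-Sullivan since $2\zeta<\zeta_0$; (ii) the Bootstrap Lemma~\ref{lem:boot}: knowing $\xi_s$ is $\zeta_0$-Sullivan gives the H{\"o}lder modulus $M_0 a_0^{\alpha_0}$, and combining this via the triangle inequality with the $(a_0,\zeta)$-Sullivan bound on the dense set upgrades $\xi_s$ to $2\zeta$-Sullivan once $a_0$ is chosen small enough that $M_0 a_0^{\alpha_0}+a_0+\zeta\leq 2\zeta$. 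It is this bootstrap via H{\"o}lder and triangle inequality, not a fresh Limit Point argument, that closes the loop and determines the constant $a_0$.
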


\subsubsection{Bootstrapping and the proof of Theorem \ref{theo:boot} }

Let us first start with a preliminary lemma

\begin{lemma}\label{lem:ac}
	Let $\rho$ be an Anosov representation of a Fuchsian group. Let $\xi$ be an attractively coherent map from $\partial_\infty\hh$ to $\gp$. Then $\xi$ is the limit map of $\rho$.
\end{lemma}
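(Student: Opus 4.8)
The plan is to show that the two $\rho$-equivariant boundary maps $\xi$ and $\xi_\rho^+$ (the limit map of the Anosov representation $\rho$, which exists and is continuous by the theory recalled in paragraph \ref{subsec:introAnos}) agree. First I would observe that both maps are $\rho$-equivariant: $\xi$ by hypothesis, and $\xi_\rho^+$ because it is the limit map of $\rho$. The strategy is to verify that they coincide on a dense subset of $\partial_\infty\hh$, namely on the set of attractive fixed points of loxodromic elements $\rho(\gamma)$, $\gamma\in\Gamma$; density follows from the fact that attractive fixed points of hyperbolic elements are dense in $\partial_\infty\hh=\Rp$ and that every element of an Anosov representation is loxodromic.

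The core step is the following. Fix $y\in\partial_\infty\hh$. By the attractive coherence hypothesis there is a sequence $\seq{\gamma}$ in $\Gamma$ with $\gamma_m^+\to y$ and with $\xi(y)=\lim_m z_m$, where $z_m$ is an attractive fixed point of $\rho(\gamma_m)$. On the other hand, since $\rho$ is Anosov, for each $m$ the element $\rho(\gamma_m)$ is $\ms P$-loxodromic with a unique attractive fixed point in $\gp$, so $z_m=\rho(\gamma_m)^+$ is uniquely determined, and the Anosov limit map satisfies $\xi_\rho^+(\gamma_m^+)=\rho(\gamma_m)^+=z_m$. Now I would use the continuity of $\xi_\rho^+$: since $\gamma_m^+\to y$, we get $z_m=\xi_\rho^+(\gamma_m^+)\to\xi_\rho^+(y)$. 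Combining with $z_m\to\xi(y)$ from attractive coherence yields $\xi(y)=\xi_\rho^+(y)$. Since $y$ was arbitrary, $\xi=\xi_\rho^+$, i.e. $\xi$ is the limit map of $\rho$.

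The main (minor) obstacle is bookkeeping around the fact that $\xi$ itself is not assumed continuous, so one cannot argue by continuity on the $\xi$ side; the argument must be arranged so that continuity is invoked only for $\xi_\rho^+$, while for $\xi$ one uses only the pointwise convergence $z_m\to\xi(y)$ provided directly by attractive coherence. One should also double-check that the attractive fixed point $z_m$ appearing in the definition of attractive coherence is indeed the Anosov attractive fixed point $\rho(\gamma_m)^+\in\gp$ (and not, say, the one in the opposite flag variety), which is immediate here since $\ms P$ is conjugate to its opposite in the Anosov setting and the limit map takes values in $\gp$. No further input is needed: the statement is essentially the observation that an equivariant map that is compatible with the attracting dynamics of $\rho$ on a dense set must be the (unique, continuous) Anosov limit map.
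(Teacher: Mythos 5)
Your core argument is the same as the paper's: let $\eta$ be the Anosov limit map, observe that $\eta(\gamma_m^+)=\rho(\gamma_m)^+=z_m$, and use continuity of $\eta$ together with the pointwise convergence $z_m\to\xi(y)$ from attractive coherence to conclude $\eta(y)=\xi(y)$. The opening remark about proving agreement on a dense subset is unnecessary and slightly misleading (the argument you actually give is pointwise for every $y$, with density playing no role), but the proof itself is correct and matches the paper.
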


\begin{proof}  Let $\eta$ be the limit map of $\rho$.
Let $y\in\partial_\infty\hh$.	Let $\seq{\gamma}$ be as in the definition of attractively continuous. Since $\gamma^+_m$ is the attractive fixed point of $\gamma$, it follows that $\eta(\gamma^+_m)=z_m$. The  continuity of $\eta$ shows that $\eta(y)=\xi(y)$.
\end{proof}

We may now proceed to the proof. Let $\{\xi_t\}_{t\in[0,1]}$, $\{\rho_t\}_{t\in[0,1]}$, and $\Gamma$ as in the hypothesis of the theorem that we want to prove. Let  $\zeta_0$, ${\rm }_0$, $\alpha$ be as in  Theorem \ref{theo:HolQuant}. Let $\zeta_1$ so that Theorem \ref{theo:sull-anos} holds. Let finally $\zeta_2=\frac{1}{4}\min(\zeta_1,\zeta_0)$ and $\zeta\leq\zeta_2$.

Let us consider the subset $K$ of $[0,1]$ of those parameters $t$ so that $\xi_t$ is $2\zeta$-Sullivan. 

\begin{lemma}
	The set $K$ is closed.
\end{lemma}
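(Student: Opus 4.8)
Let $(t_n)$ be a sequence in $K$ with $t_n\to t_\infty\in[0,1]$; we must show $\xi_{t_\infty}$ is $2\zeta$-Sullivan. The plan is to extract a $2\zeta$-Sullivan limit $\xi_\infty$ of a subsequence of the $\xi_{t_n}$, observe that such a map can only be the limit curve of an Anosov representation, and then identify it with $\xi_{t_\infty}$ by invoking attractive coherence and the uniqueness of limit curves. Recall $\zeta\leq\zeta_2=\tfrac14\min(\zeta_1,\zeta_0)$, so $2\zeta\leq\zeta_1$ and $2\zeta\leq\zeta_0$; since a $\zeta'$-Sullivan map is $\zeta''$-Sullivan for $\zeta''\geq\zeta'$, each $\xi_{t_n}$ is in particular $\zeta_1$-Sullivan and $\zeta_0$-Sullivan.

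\textbf{Extracting a limit.} For every $n$, Theorem \ref{theo:sull-anos} applies to the $\rho_{t_n}$-equivariant $\zeta_1$-Sullivan map $\xi_{t_n}$: thus $\rho_{t_n}$ is $\ms P$-Anosov and $\xi_{t_n}$ is its limit curve (after replacing $\Gamma$ by a torsion free finite index subgroup if necessary, which affects nothing below). By Theorem \ref{theo:HolQuant} the maps $\xi_{t_n}$ are $\alpha$-H\"older; the H\"older bound there is expressed in the metric $d_{\tau_n}$ attached to a tripod compatible with a chosen triple, but since all $\xi_{t_n}$ are equivariant under the \emph{fixed} group $\Gamma$ along the convergent, hence relatively compact, family $\{\rho_{t_n}\}$, and since the $\ms G$-action on a neighbourhood of the set of vertices of tripods in $\gp^3$ is proper (the mechanism exploited in the proof of Proposition \ref{lem:proper}), the tripods compatible with a triple in a fixed compact fundamental domain remain in a bounded set; together with Lemma \ref{lem:properA} and Proposition \ref{A-B} this upgrades the estimate to an equicontinuity statement for the $\xi_{t_n}$ with respect to one fixed metric on the compact space $\gp$. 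Arzel\`a--Ascoli then yields a subsequence along which $\xi_{t_n}$ converges uniformly to a continuous $\xi_\infty\colon\Rp\to\gp$, and passing the equivariance relation to the limit (using continuity of $t\mapsto\rho_t$) gives that $\xi_\infty$ is $\rho_{t_\infty}$-equivariant. Finally $\xi_\infty$ is $2\zeta$-Sullivan: for a fixed triple $T$ pick compatible tripods $\tau_n$ for $\xi_{t_n}$ with circle maps $\eta_n$; the same properness argument keeps the $\tau_n$ bounded, so along a further subsequence $\tau_n\to\tau_\infty$, $\eta_n\to\eta_\infty$, and the inequalities $d_{\tau_n}(\xi_{t_n}(y),\eta_n(y))\leq 2\zeta$ pass to the limit to give $d_{\tau_\infty}(\xi_\infty(y),\eta_\infty(y))\leq 2\zeta$ for all $y$ (a diagonal argument handles all $T$ at once).

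\textbf{Identification and conclusion.} Since $\xi_\infty$ is a $\rho_{t_\infty}$-equivariant $2\zeta$-Sullivan map with $2\zeta\leq\zeta_1$, Theorem \ref{theo:sull-anos} gives that $\rho_{t_\infty}$ is $\ms P$-Anosov and $\xi_\infty$ is its limit curve. On the other hand $\xi_{t_\infty}$ is, by hypothesis, attractively coherent and $\rho_{t_\infty}$-equivariant; now that $\rho_{t_\infty}$ is known to be Anosov, Lemma \ref{lem:ac} shows $\xi_{t_\infty}$ is \emph{also} the limit curve of $\rho_{t_\infty}$. By uniqueness of the limit curve of an Anosov representation, $\xi_{t_\infty}=\xi_\infty$, which is $2\zeta$-Sullivan; hence $t_\infty\in K$, and $K$ is closed. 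The main obstacle is the compactness step of the second paragraph: one has to exclude that the tripods compatible with the $\xi_{t_n}$ run off to infinity, and it is exactly here that the standing compact centralizer hypothesis enters, via the properness of the $\ms G$-action near vertices of tripods (Proposition \ref{lem:proper}) combined with equivariance under the fixed lattice $\Gamma$, which confines everything after reducing, through $\Gamma$-equivariance, to triples in a fixed fundamental domain.
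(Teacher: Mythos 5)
Your proof is correct and follows essentially the same route as the paper: equicontinuity via the quantitative H\"older estimate (Theorem~\ref{theo:HolQuant}), extraction of a uniformly convergent subsequence whose limit is $\rho_{t_\infty}$-equivariant and $2\zeta$-Sullivan, hence an Anosov limit curve by Theorem~\ref{theo:sull-anos}, and identification with $\xi_{t_\infty}$ through attractive coherence and Lemma~\ref{lem:ac}. The paper's proof is more terse; you usefully spell out why the H\"older bound, stated in tripod-dependent metrics, actually yields equicontinuity on $\gp$—via $\Gamma$-equivariance, a compact fundamental domain, boundedness of compatible tripods (Lemma~\ref{lem:properA}, Proposition~\ref{A-B}), and the compact-centralizer hypothesis underlying Proposition~\ref{lem:proper}.
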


\begin{proof} Let $\seq{t}$ be a sequence of elements of $K$ converging to $s$. For all $n$, $\{\xi_{t_m}\}_{m\in\mathbb N}$ forms an equicontinuous family by Theorem \ref{theo:HolQuant} since $2\zeta_2\leq \zeta_0$. We may extract a subsequence converging to a map $\hat\xi$ which is $\rho_s$ equivariant and $\zeta_2$-Sullivan. In particular since $2\zeta_2\leq \zeta_1$, it follows that $\rho_s$ is Anosov and
$\hat\xi$ is the limit map of $\rho_s$. By hypothesis, $\xi_s$ is attractively continuous and thus $\xi_s=\hat\xi$ by Lemma \ref{lem:ac}. This proves that $s\in K$.
\end{proof}
We prove that $K$ is open in two steps:
\begin{lemma} \label{lem:zetan}
Assume $\xi_t$ is $2\zeta$-Sullivan. Then there exists a neighborhood $U$ of $t$ so that for $s\in U$, $\xi_s$ is $\zeta_0$-Sullivan.
\end{lemma}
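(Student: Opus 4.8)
The plan is to deduce this openness step from three results already established for Sullivan curves: Theorem \ref{theo:sull-anos} (a Sullivan limit map forces the representation to be Anosov), Lemma \ref{lem:sull-ano} (an Anosov representation with a $\zeta$-Sullivan limit map stays Anosov with a $(\zeta+\epsilon)$-Sullivan limit map after a small perturbation), and Lemma \ref{lem:ac} (an attractively coherent equivariant map for an Anosov representation is its limit map). The constant bookkeeping is: $\zeta_2=\tfrac14\min(\zeta_1,\zeta_0)$ and $\zeta\leq\zeta_2$, hence $2\zeta\leq\tfrac12\min(\zeta_1,\zeta_0)$, which is $\leq\zeta_1$ and also $\leq\tfrac12\zeta_0<\zeta_0$; these two inequalities are exactly what makes the argument go through.

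First I would apply Theorem \ref{theo:sull-anos} to the parameter $t$: since $\xi_t$ is $\rho_t$-equivariant and $2\zeta$-Sullivan with $2\zeta\leq\zeta_1$, the representation $\rho_t$ is $\ms P$-Anosov and $\xi_t$ is its limit curve. Next, set $\epsilon\defeq\zeta_0-2\zeta$, which is positive because $2\zeta\leq\tfrac12\zeta_0$. Applying Lemma \ref{lem:sull-ano} to the Anosov representation $\rho_t$, whose limit map $\xi_t$ is $2\zeta$-Sullivan, with this value of $\epsilon$, yields a neighborhood $U$ of $t$ in $[0,1]$ such that for every $s\in U$ the representation $\rho_s$ is Anosov and its limit map is $(2\zeta+\epsilon)$-Sullivan, i.e. $\zeta_0$-Sullivan.

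The last step is to identify this limit map with $\xi_s$ itself. This is where the hypotheses of the Improvement Theorem enter: $\xi_s$ is $\rho_s$-equivariant and attractively coherent, and we have just seen $\rho_s$ is Anosov, so Lemma \ref{lem:ac} gives that $\xi_s$ is the limit map of $\rho_s$. Combined with the previous paragraph, $\xi_s$ is $\zeta_0$-Sullivan for every $s\in U$, which is the desired conclusion.

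The main obstacle is really contained in the two lemmas being invoked rather than in the assembly: Lemma \ref{lem:sull-ano} must produce a genuine neighborhood, not merely a pointwise statement, which it does by combining stability of Anosov representations with the Guichard--Wienhard continuity of limit maps in the uniform topology over a compact fundamental domain; and the identification via Lemma \ref{lem:ac} is essential, since \emph{a priori} $\xi_s$ is only an $(a_0,\zeta)$-Sullivan, possibly discontinuous, map, and without attractive coherence there is no reason for it to coincide with the (continuous) limit map produced by the perturbation argument. Once one trusts these inputs, the proof is just the chain of implications above, together with the elementary verification that $2\zeta\leq\zeta_1$ and $2\zeta+(\zeta_0-2\zeta)=\zeta_0$.
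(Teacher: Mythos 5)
Your proposal is correct and follows essentially the same route as the paper: one first gets Anosovness of $\rho_t$ (via Theorem \ref{theo:sull-anos}, using $2\zeta\leq\zeta_1$), then propagates Anosovness and a $\zeta_0$-Sullivan limit map to nearby $\rho_s$ (via Lemma \ref{lem:sull-ano}, with $2\zeta<\zeta_0$), and finally identifies $\xi_s$ with that limit map using attractive coherence (Lemma \ref{lem:ac}). The paper states the stability step and the Sullivan-perturbation step slightly apart and names the constant gap only as $2\zeta<2\zeta_2<\zeta_0$, whereas you package both into a single application of Lemma \ref{lem:sull-ano} with $\epsilon=\zeta_0-2\zeta$; this is a cosmetic difference of ordering, not of substance.
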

\begin{proof} Our assumptions guarantee that $\rho_t$ is Anosov and by the stability condition for Anosov representations \cite{Labourie:2006,Guichard:2012eg} the representation $\rho_s$ is Anosov for $s$ close to $t$.  Lemma \ref{lem:ac} implies that $\xi_s$ is the limit curve of $\rho_s$.  Lemma \ref{lem:sull-ano}  then shows  that for $s$ close enough to $t$, $\xi_s$ is $\zeta_0$-Sullivan since $2\zeta<2\zeta_2<\zeta_0$.
\end{proof}
We now prove a bootstrap lemma: 
\begin{lemma}{\sc [Bootstrap]}\label{lem:boot} There exists some constant $A$ so that for $a_0<A$, if  $\xi_s$ is $\zeta_0$-Sullivan,  
	then $\xi_s$ is $2\zeta$-Sullivan
	\end{lemma}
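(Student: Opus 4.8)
Plan of proof. The strategy is to exploit the two hypotheses on $\xi_s$ in complementary ways: the $(a_0,\zeta)$‑Sullivan condition produces, for each tripod $T$ for $\hh$, a genuine circle map $\xi_T$ which is $\zeta$‑close to $\xi_s$ along an $a_0$‑net $W_T$, and the $\zeta_0$‑Sullivan condition, through Theorem \ref{theo:HolQuant}, forces $\xi_s$ to be $\alpha$‑H\"older; since every point of $\partial_\infty\hh$ lies at $d_T$‑distance at most $a_0$ from $W_T$, I would interpolate across the gaps of the net, the resulting interpolation error being $O(a_0^\alpha)$ for $\xi_s$ and $O(a_0)$ for the circle map $\xi_T$ (which is a $d_T$‑to‑$d_\tau$ isometry onto its image, by property \ref{hyp:2-dist} and equivariance). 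Choosing $A$ small enough that these errors together are at most $\zeta$ then turns the bound $\zeta$ on $W_T$ into the bound $2\zeta$ everywhere, which is exactly the $2\zeta$‑Sullivan property.

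Concretely, fix $T=(x^-,x^+,x^0)$. The $(a_0,\zeta)$‑Sullivan hypothesis gives a circle map $\xi_T$, an $(a_0,T)$‑dense $W_T\subset Z$, and the tripod $\tau$ with $\partial\tau=\xi_T(T)$, with $d_\tau(\xi_T(x),\xi_s(x))\le\zeta$ for $x\in W_T$; the $\zeta_0$‑Sullivan hypothesis gives a compatible tripod $\hat\tau$ for $T$ with circle map $\hat\eta$, $\hat\eta(T)=\partial\hat\tau$, $d_{\hat\tau}(\xi_s(x),\hat\eta(x))\le\zeta_0$ for all $x$, and, by Theorem \ref{theo:HolQuant}, $d_{\hat\tau}(\xi_s(x),\xi_s(y))\le {\rm M}\cdotp d_T(x,y)^\alpha$. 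The first genuine step is to bound $d(\tau,\hat\tau)$: pick $y^j\in W_T$ with $d_T(x^j,y^j)\le a_0$ and set $X=(\xi_s(y^-),\xi_s(y^+),\xi_s(y^0))$; using that $\xi_T$ and $\hat\eta$ are $d_T$‑isometries onto their images one gets $d_\tau(X,\partial\tau)\le\zeta+a_0$ and $d_{\hat\tau}(X,\partial\hat\tau)\le\zeta_0+a_0$. Having arranged (shrinking the constant $\zeta_0$ of Theorem \ref{theo:HolQuant} once and for all at the start of the proof of Theorem \ref{theo:boot}, which is harmless, and taking $A$ small) that both quantities are at most the constant $\boldsymbol\zeta_0$ of Proposition \ref{lem:proper}, that proposition yields $d(\tau,\hat\tau)\le\boldsymbol A$, and then Proposition \ref{A-B} produces a constant $B$, depending only on $\ms G$, with $d_\tau\le B\cdotp d_{\hat\tau}$ on $\gp$; in particular $d_\tau(\xi_s(x),\xi_s(y))\le B\cdotp {\rm M}\cdotp d_T(x,y)^\alpha$ for all $x,y$.

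The conclusion is then immediate. Given any $y\in\partial_\infty\hh$, choose $y'\in W_T$ with $d_T(y,y')\le a_0$ and estimate
$$
d_\tau(\xi_s(y),\xi_T(y))\le d_\tau(\xi_s(y),\xi_s(y'))+d_\tau(\xi_s(y'),\xi_T(y'))+d_\tau(\xi_T(y'),\xi_T(y))\le B\cdotp {\rm M}\cdotp a_0^\alpha+\zeta+a_0 .
$$
Since $\zeta$ is a fixed positive constant throughout the proof of Theorem \ref{theo:boot}, I would choose $A$ so small that $B\cdotp {\rm M}\cdotp A^\alpha+A\le\zeta$ (in addition to the smallness conditions used above). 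Then $d_\tau(\xi_s(y),\xi_T(y))\le 2\zeta$ for every $y$, and, as $\xi_T$ is a circle map with $\xi_T(T)=\partial\tau$, this is precisely the statement that $\tau$ and $\xi_T$ are compatible data at $T$ exhibiting $\xi_s$ as $2\zeta$‑Sullivan; since $T$ was arbitrary, the lemma follows.

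The main obstacle — and the reason for the factor $2$ and for the need to take $A$ small (depending on the already fixed $\zeta$) — is exactly this interpolation over the net scale $a_0$: one copy of $\zeta$ is spent on the value of $\xi_s$ on $W_T$, and the H\"older displacement of $\xi_s$ together with the Lipschitz displacement of $\xi_T$ over a $d_T$‑ball of radius $a_0$ must be fitted into a second copy of $\zeta$, which is possible only for $a_0$ small compared with $\zeta$. A secondary technical point, handled in the middle paragraph, is that the ad hoc tripod $\tau=\xi_T(T)$ supplied by the $(a_0,\zeta)$‑condition is not a priori a compatible tripod for $\xi_s$, so one must first certify via Proposition \ref{lem:proper} that it lies within bounded distance of a true compatible tripod, in order to transport the H\"older estimate of Theorem \ref{theo:HolQuant} to the metric $d_\tau$.
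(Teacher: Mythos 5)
Your proposal is correct and follows the same route as the paper: triangle inequality over the $a_0$-dense net $W_T$, spending one $\zeta$ on the net condition, and controlling the interpolation over a gap of scale $a_0$ by the H\"older estimate of Theorem~\ref{theo:HolQuant} for $\xi_s$ together with the isometry property of the circle map, then choosing $A$ small in terms of $\zeta$. The one place where you do more than the paper is the middle paragraph: the paper directly writes $d_\tau(\xi(x),\xi(y))\leq M_0 a_0^{\alpha_0}$, silently using that the H\"older bound of Theorem~\ref{theo:HolQuant} (which is stated for the \emph{compatible} tripod $\hat\tau$ of the $\zeta_0$-Sullivan structure) can be transported to the metric $d_\tau$ of the tripod $\tau=\xi_T(T)$ supplied by the $(a_0,\zeta)$-Sullivan hypothesis; you supply the justification by producing the common triple $X=\xi_s(y^-,y^+,y^0)$ close to $\partial\tau$ and $\partial\hat\tau$, invoking Proposition~\ref{lem:proper} to bound $d(\tau,\hat\tau)$, and then Proposition~\ref{A-B} to compare $d_\tau$ with $d_{\hat\tau}$. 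This is a genuine detail that the paper elides (absorbing the comparison constant into $M_0$), and your treatment of it is the right one; otherwise the two arguments coincide.
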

\begin{proof}
	This is an easy consequence of the triangle  inequality. Since $\xi_s$ is $(a_0,\zeta)$-Sullivan, for every tripod  $T$ for $\hh$,  there exists an $a_0$-dense subset $W$, a circle map  $\eta$ so that for all $
y \in W$, $d_\tau(\xi_s(y),\eta(y))\leq \zeta$,
where $\tau=\eta(T)$. Let then $x\in\partial_\infty\hh$ and $y\in W$ so that $d_{T}(x,y)\leq a_0$.
	Then
	\begin{eqnarray*}
		d_\tau(\xi_s(x),\eta(x))\leq d_\tau(\xi(x),\xi(y))+d_\tau(\eta(x),\eta(y))+d_\tau(\xi(y),\eta(y)
		\leq M_0 a_0^{\alpha_0}+a_0 +\zeta\ .
\end{eqnarray*}
 The last quantity is less than $2\zeta$ for $a_0$ small enough. This concludes the proof. \end{proof}

Thus $K$ is open:
let  $t\in K$, then by Lemma \ref{lem:zetan}, for any nearby $s$ in $K$, $\xi_s$ is $\zeta_0$-Sullivan hence $2\zeta$ Sullivan by the Bootstrap Lemma \ref{lem:boot}.  Since $K$ is non empty, closed and open,  $K=[0,1]$ and this concludes the proof of the theorem.

\section{Pair of pants from triangles}\label{sec:pairpants}

The purpose of this section is to define {\em almost closing  pairs of pants}.  These almost closing pairs of pants will play the role of almost Fuchsian pair of pants in \cite{Kahn:2009wh}. Section \ref{sec:equi} will reveal they are ubiquitous in $\Gamma\backslash\mc G$. 

These almost closing pair of pants are the building blocks for the construction of surfaces whose fundamental group injects. Themselves are built out of two tripods using symmetries, a construction reminiscent of building hyperbolic pair of pants using ideal triangles.

Our main results here  will be  a result  describing the structure of a pair of pantsTheorem \ref{theo:struct-pant} whose proof relies on the Closing Lemma \ref{lem:shad}; We will also prove proposition \ref{pro:boundaryloop} that gives information onf the boundary loops.

\vskip 0.2truecm
\begin{center}
	\fbox{In all this section, $\epsilon$ and $R$ are positive constants.}
\end{center}
\vskip 0.2truecm

 \subsection{Almost closing pair of pants}

Let $\Gamma$ be a subgroup of $\ms G$. We will  consider  not only the case of a discrete $\Gamma$ but also the case $\Gamma=\ms G$.

Given a tripod $\tau_0$, the {\em $R$-perfect pair of pants} associated to $\tau_0$ is the quintuple $(\alpha,\beta,\gamma,\tau_0,\tau_1)$ so that $\tau_0$ and $\tau_1$ are tripods, $\alpha$, $\beta$ and $\gamma$ are elements of $\ms G$ so that $\alpha\gamma\beta=\id$, and moreover the pairs $(\tau_0,\omega^2\tau_1)$, $(\omega(\tau_0),\omega\beta(\tau_1))$ and 
$(\omega^2(\tau_0),\alpha^{-1}\tau_1)$ are all $R$-sheared. 

We also consider alternatively an $R$-perfect pair of pants to be a quadruple $(T,S_0,S_1,S_2)$ so that 
$$
S_0=K\varphi_R T\ , S_2=\omega K\phi_R(\omega T)\ , S_1=\omega^2 K\phi_R(\omega^2 T)\ .
$$
with the {\em boundary loops} $\alpha$, $\beta$ and $\gamma$ so that $S_0=\alpha S_1$, $S_2=\beta S_0$ and $S_1=\gamma S_2$, so that $(\alpha,\beta,\gamma,T,S_0)$ is a perfect  pair of pants with respect to the previous definition.

\begin{figure}[h]
  \centering
  \includegraphics[width=0.5\textwidth]{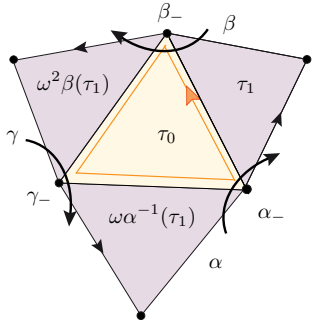}
  \caption{Pair of pants from triangles}
  \label{fig:pants}
\end{figure}

More generally we will navigate freely between quadruple of tripods $(T,S_0,S_1,S_2)$ with {\em boundary loops}  $\alpha$, $\beta$ and $\gamma$ so that $S_0=\alpha S_1$, $S_2=\beta S_0$ and $S_1=\gamma S_2$, and quintuple $(\alpha,\beta,\gamma,T,S_0)$ where $\alpha$, $\beta$ and $\gamma$ are elements of $\G$ so that $\alpha\gamma\beta=\id$ using the construction described above in the particular case of $R$-perfect pair of pants.

We now wish to deform that perfect situation.

Let $K$ be the map $x\to\omega(\overline{x})$ defined in paragraph
  \ref{mapK}.
  \begin{definition}{\sc [Almost closing]}\label{def:almost-closing}  \begin{enumerate}
	\item  Let $T$ and $S$ be two tripods in $\mc G$, $\alpha$ an element in $\ms G$ and $\mu$ a positive constant.  We say $T$, $S$ are {\em $(\mu,R)$-almost closing for $\alpha$}\index{Almost closing} if there
 exist tripods $u$ and $v$  so that
 \begin{eqnarray}
 	d(u,T)\leq \mu\ &,& \ d(v,S)\leq \mu\ ,\label{hyp:cl3}\label{hyp:cl1}\\
 	d(K\circ\varphi_R(u),S)\leq \mu\ &,&\  d(K\circ\varphi_R(v),\alpha(T))\leq \mu\ .\label{hyp:cl4} \label{hyp:cl2}
  \end{eqnarray}
\item Let $P=(\alpha,\beta,\gamma, \tau_0,\tau_1)$ so that $\alpha,\beta,\gamma\in\ms G$ with $\alpha\gamma\beta=1$, and $\tau_0,\tau_1$ are tripods, we say $P$ is  a {\em $(\mu,R)$ almost closing pair of pants}  if 
\begin{enumerate}
	\item $\tau_0,$ and $\tau_1$ are  $(\mu,R)$-almost closing for $\alpha$,
	\item $\omega^2 \tau_0,$ and $\omega \alpha^{-1}(\tau_1)$ are $(\mu,R)$-almost closing for $\gamma$, 
	\item $\omega \tau_0$ and $\omega^2 \beta(\tau_1)$ are $(\mu,R)$-almost closing for $\beta$.
 \end{enumerate}
\end{enumerate}
\end{definition}
Let us first make immediate remarks: 
\begin{proposition}\label{pro:stitc-clos}
 if  $(T,S)$ are $(\mu,R)$ almost closing for $\alpha$, then $(S,\alpha(T))$  (and then $(\alpha^{-1}(S),T)$) are also almost closing for $\alpha$.  	
\end{proposition}
\begin{proof} For the first item observe that if $(u,v)$ is the pair of tripods working in the definition for $(T,S)$, then $(v,\alpha(u))$ works for $(S,\alpha(T))$. For the second item observe that the pair $(\alpha^{-1}\tau_1,\tau_0)$ is $(\mu,R)$-almost closing for $\alpha$: we use $(\alpha_*^{-1}\tau^*_1,\tau^*_0)$ for $(u,v)$ in the definition. We apply the  first item to get that $(\tau_0,\tau_1)$ is $(\mu,R)$-almost closing. The other results for other pairs follows from symmetric considerations.
	\end{proof}
	
 We observe also the following symmetries 
 \begin{proposition}{\sc[Symmetries]}\label{pro:ac} If 	$(\alpha,\beta,\gamma,\tau_0,\tau_1)$ is an 
	$(\mu,R)$-almost closing of pants, then both
 \begin{eqnarray*}
	 & &\omega\left( \alpha,\beta,\gamma,\tau_0,\tau_1\right)\defeq\left(\beta, \gamma,\alpha, \omega(\tau_0),\omega^2(\beta\tau_1)\right)\ ,\crcr
	& &(\alpha,\beta^{-1}\alpha^{-1},\beta,\tau_1,\alpha(\tau_0))
	\end{eqnarray*}
are also $(\mu,R)$ almost closing. 
\end{proposition}
\begin{proof} We have that, using the definition of almost closing that 
\begin{enumerate}
		\item $\tau_1,$ and $\alpha(\tau_0)$ are  $(\mu,R)$-almost closing for $\alpha$, from the first item in the previous proposition.	\item After taking the image by $\alpha$,  $\omega^2 \alpha(\tau_0),$ and $\omega(\tau_1)$ are $(\mu,R)$-almost closing for $\alpha\gamma\alpha^{-1}=\beta^{-1}\alpha^{-1}$, and thus from  from the first item in the previous proposition,  $\omega(\tau_1)$ and $\omega^2 \beta^{-1}(\tau_0)$ are also $(\mu,R)$-almost closing for $\beta^{-1}\alpha^{-1}$, 
	\item $\omega \tau_0$ and $\omega^2 \beta(\tau_1)$ are $(\eR,R)$-almost closing for $\beta$ and thus from the first item in the previous proposition,  $\omega^2 \tau_1$ and  $\omega \tau_0$ are $(\eR,R)$-almost closing for $\beta$
	\end{enumerate}	
	This proves the result.
\end{proof}

\subsection{Closing Lemma for tripods} The first step in the proof of the Closing Pant Theorem is  the following lemma

 \begin{lemma}{\sc [Closing lemma]}\label{lem:shad} There exists constants $\bM_2$, $\epsilon_2$ and $R_2$, so that assuming $T$, $S$ are $(\mu,R)$ almost closing for $\alpha$ for $R>R_2$, $\mu\leq\epsilon_2$, then 
\begin{enumerate}
	\item $\alpha$ is $\ms P$-loxodromic,
	\item $d_T(T^\pm,\alpha^\pm)\leq \bM_2(\mu+\exp(-R))$ \label{it:ii-shad}
	\item Moreover, if $\tau_\alpha=\psi(T,\alpha^-,\alpha^+)$,
	$\sigma_\alpha=\Psi(S,\alpha^-,\alpha^+)$ then 
 \begin{eqnarray}
	 d(\varphi_{2R}(\tau_\alpha),\alpha(\tau_\alpha))&\leq&\bM_2(\mu+\exp(-R))\ ,\label{eq:cl1}\\ 
	 d(\varphi_{R}(\tau_\alpha),\sigma_\alpha)&\leq&\bM_2(\mu+\exp(-R))\ .\label{eq:cl2}
	\end{eqnarray}
	\item $d(T,S)\leq 2R$.
\end{enumerate}
  \end{lemma}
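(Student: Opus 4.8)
The plan is to reduce everything to the partially hyperbolic behaviour of the flow $\{\varphi_s\}$ and the contraction properties of the foliations ${\mc U}^\pm$ recorded in Proposition \ref{pro:bas} and Corollary \ref{coro:contractleaf}. First recall that $K=\omega\circ\sigma$ satisfies $K(x,y,z)=(x,z,t)$ and preserves the leaves of ${\mc U}^{0,-}$ (Proposition \ref{pro:Kpreserv}); combined with the formula $\sigma\circ\varphi_R=\varphi_{-R}\circ\sigma$ and $\varphi$ being an isometry along ${\mc L}_0$-leaves (Proposition \ref{pro:bas}\,\eqref{pro:iso-central}), one computes that $K\circ\varphi_R$ acts on a tripod $u$ by: keeping $\partial^-u$ fixed, moving $\partial^+u$ and $\partial^0 u$ by an amount comparable to $e^{-cR}$ towards a pair of flags lying ``opposite'' to $u$. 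Concretely, the hypotheses \eqref{hyp:cl1}--\eqref{hyp:cl2} say that going from $T$ around the loop $T\to S\to \alpha(T)$ via two applications of $K\circ\varphi_R$ comes back $O(\mu)$-close to $\alpha(T)$; since $(K\circ\varphi_R)^2$ is (up to $O(e^{-cR})$) a hyperbolic-type map with a strong contraction/expansion splitting coming from ${\mc U}^+/{\mc U}^-$, the element $\alpha^{-1}\circ(K\circ\varphi_R)^2$ fixes a tripod to within $O(\mu+e^{-R})$.

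Next I would run the standard closing-lemma fixed point argument. Working in the chart where $\mc G$ is identified with $\ms G$, the map $g\mapsto \alpha^{-1}\circ(K\circ\varphi_R)^2(g)$ is, after restricting to a suitable neighbourhood, a contraction in the ${\mc U}^+$-directions and the central directions (using Proposition \ref{pro:bas}\,\eqref{pro:bas:cont}) and an expansion in the ${\mc U}^-$-directions; a graph-transform / Banach fixed point argument on the local product structure produces a genuine fixed tripod $\tau_*$ with $d(\tau_*,T)\le \bM_2(\mu+e^{-R})$, which forces $\alpha$ to be $\ms P$-loxodromic with $\alpha^\pm$ the attractive/repulsive flags of that fixed tripod; this gives (1) and (2), the estimate $d_T(T^\pm,\alpha^\pm)\le\bM_2(\mu+e^{-R})$ following from inequality \eqref{ineq:contrdtaud} of Proposition \ref{A-B} applied to the displacement just bounded. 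For (3): once $\alpha^\pm$ are known, set $\tau_\alpha=\Psi(T,\alpha^-,\alpha^+)$, $\sigma_\alpha=\Psi(S,\alpha^-,\alpha^+)$ using the foot map of Lemma \ref{lem:footmap}; since $\tau_\alpha$ is within $O(\mu+e^{-R})$ of $T$ and has exactly $\partial^\pm\tau_\alpha=\alpha^\pm$, the loop relation collapses: $K\circ\varphi_R$ applied to a tripod with the ``right'' pair of endpoints is, up to $O(e^{-R})$, exactly $\varphi_R$ composed with the reflection interchanging the two endpoints, so $d(\varphi_R(\tau_\alpha),\sigma_\alpha)$ and $d(\varphi_{2R}(\tau_\alpha),\alpha(\tau_\alpha))$ are both $O(\mu+e^{-R})$; the $\varphi$-equivariance of $\Psi$ (Proposition \ref{pro:flow-foot}) is what lets one transfer the $O(\mu+e^{-R})$ estimates from $T,S$ to $\tau_\alpha,\sigma_\alpha$ without loss. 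Finally (4): from \eqref{eq:cl2}, $d(T,S)\le d(T,\tau_\alpha)+d(\tau_\alpha,\varphi_R(\tau_\alpha))+d(\varphi_R(\tau_\alpha),\sigma_\alpha)+d(\sigma_\alpha,S)\le R + O(\mu+e^{-R})\le 2R$ once $R\ge R_2$, using that $\varphi_R$ displaces by exactly $R$ in the left-invariant metric restricted to a flow line.

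The main obstacle, as usual for closing lemmas, is the fixed point step: one must set up the local coordinates so that $K\circ\varphi_R$ genuinely splits into contraction and expansion with rates bounded away from $1$ uniformly in $R\ge R_2$ and in the basepoint, and then show the perturbation of size $O(\mu+e^{-R})$ does not destroy this — i.e. making precise that $(K\circ\varphi_R)^2$ is a small perturbation of a partially hyperbolic map with a well-controlled invariant splitting, so that the implicit function theorem / Hadamard graph transform applies with constants depending only on $\ms G$. Everything else is bookkeeping with the triangle inequality, Proposition \ref{A-B}, Corollary \ref{coro:contractleaf} and the equivariance of the foot map. I would phrase the hyperbolicity input abstractly (``a $\bM_2$-bilipschitz conjugate of a map with $1/2$-contraction on one factor and $2$-expansion on the other, perturbed by $\eta$, has a fixed point within $C\eta$'') and cite Proposition \ref{pro:bas}\,\eqref{pro:bas:cont} for the hyperbolicity, keeping the routine estimates to a minimum.
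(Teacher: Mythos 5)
Your overall strategy is in the right spirit---exploit the contraction/expansion of $\mc U^\pm$ under $K\circ\varphi_R$, close up the approximate orbit, deduce loxodromicity, and then use the foot map and Proposition \ref{pro:flow-foot} for the quantitative estimates---and your handling of items (3) and (4) is essentially what the paper does. But there is a genuine gap at the central step: you assert that $g\mapsto \alpha^{-1}\circ(K\circ\varphi_R)^2(g)$ is ``a contraction in the ${\mc U}^+$-directions \emph{and the central directions},'' citing Proposition \ref{pro:bas}\,\eqref{pro:bas:cont}. That proposition only says $\mc U^+$ is contracted and $\mc U^-$ is dilated. The central direction (the $\ms L_0$-orbit direction) is \emph{neutral}: by Proposition \ref{pro:bas}\,\eqref{pro:iso-central} the flow $\varphi_t$ acts by \emph{isometries} on $\mc L^0$-leaves, and $K$ is also an isometry there. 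So the map you want to apply the Banach/graph-transform argument to is partially hyperbolic with a nontrivial neutral centre; its fixed-point set, even in the idealized case, is a whole $\ms L_0$-coset (a tripod $\tau$ with $\alpha\tau=\varphi_{2R}\tau$ can be replaced by $\ell\tau$ for any $\ell$ commuting with the relevant data), and no contraction-mapping argument as stated will single out a unique fixed tripod or even guarantee existence. To make your route work you would have to quotient out the centre---i.e.\ run the fixed-point argument not on $\mc G$ but on the space of transverse flag pairs, fixing only $(\alpha^+,\alpha^-)$ rather than a full tripod. That is doable but is a different and more delicate argument than what you wrote.

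The paper avoids this issue entirely. It never seeks a genuine fixed tripod. Proposition \ref{pro:cl1} merely produces, via the local product structure of $\mc U^{0,-}$ and $\mc U^+$ (transversality plus Lipschitz control of $\varphi_{-R}\circ K^{-1}$ along $\mc U^{0,-}$ and contraction of $K\circ\varphi_R$ along $\mc U^+$), a tripod $u_1$ whose $\varphi$-orbit approximately closes: $u_1,\varphi_R(u_1),\varphi_{2R}(u_1)$ are $O(\mu+e^{-R})$-close to $T,S,\alpha(T)$. Loxodromicity then comes from the separate, elementary Lemma \ref{lem:loxo}: passing to the chart at $u_1$ gives $d_0(\xi^{-1}(\alpha),\exp(2Ra_0))\le B\epsilon$ via inequality \eqref{ineq:contrdtaud}, and for $R$ large the spectral gap of $\Ad(\exp(2Ra_0))$ is enormous, so this perturbation stays loxodromic with attractive/repulsive flags $\epsilon$-close to $\partial^\pm u_1\approx\partial^\pm T$. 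No fixed-point theorem and no treatment of the neutral direction is needed. So while your proposal could perhaps be repaired by moving the fixed-point argument to flag pairs, as written it rests on a false contraction claim and is harder than the paper's shadowing-plus-perturbation argument.
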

 In the sequel $\bM_i$, $R_i$ and $\epsilon_i$ will denote positive constants only depending on $\ms G$.
 
As an immediate  consequence and using  Proposition \ref{pro:ac}, we get the following  structure theorem for almost closing pair of pants:

\begin{theorem}{\sc[Structure of pair of pants]}\label{theo:struct-pant}
There exist positive constants  $\bM_0$, $\epsilon_0$ and $R_0$  only depending on ${\G}$ with the following property.
Let $\epsilon\leq \epsilon_0$ and $R\geq R_0$. Then for any  $(\epsilon,R)$-almost closing pair of pants
  $(\alpha,\beta,\gamma,\tau_0,\tau_1)$, we have that
	\begin{enumerate}
	\item $\alpha$, $\beta$ and $\gamma$ are all $\ms P$-loxodromic.
	\item the quadruples $
	(\tau_0,x,y,z)
	$, with $x\in\{\alpha^-,\gamma^+\}$, $y\in \{\alpha^+,\beta^-\}$ and  $z\in \{\gamma^-,\beta^+\}$  and $
	(\tau_1,u,v,w),
	$ with $u\in\{\alpha^-,\beta^+\}$, $v\in\{\alpha^+,\beta^{-1}(\gamma^-)\}$ and $w\in\{\beta^-,\beta^{-1}(\gamma^+)\}$, 
	are all $\bM_0(\epsilon+\exp(-R))$-quasi tripod,
	\item Moreover, if $\tau_\alpha=\Psi(\tau_0,\alpha^-,\alpha^+)$ and $\sigma_\alpha=\Psi(\tau_1,\alpha^-,\alpha^+)$ then 
 \begin{eqnarray}
	 d(\varphi_{2R}(\tau_\alpha),\alpha(\tau_\alpha))&\leq&\bM_0(\epsilon+\exp(-R))\ ,\label{eq:bdrqf1}\\ 
	 d(\varphi_{R}(\tau_\alpha),\sigma_\alpha)&\leq&\bM_0(\epsilon+\exp(-R))\ .\label{eq:bdrqf2}
	\end{eqnarray}
\end{enumerate}
\end{theorem}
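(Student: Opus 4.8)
The plan is to derive everything directly from the Closing Lemma \ref{lem:shad}, applied to each of the three ``almost closing for'' pairs built into the definition of an almost closing pair of pants, together with the symmetries recorded in Propositions \ref{pro:stitc-clos} and \ref{pro:ac}. Fix $\epsilon_0\leq\epsilon_2$ and $R_0\geq R_2$, with $\epsilon_2,R_2$ the constants of the Closing Lemma, and let $\bM_0$ be a sufficiently large multiple of $\bM_2$ (the precise multiple to be read off at the end from the Lipschitz constants involved). Since $P=(\alpha,\beta,\gamma,\tau_0,\tau_1)$ is $(\epsilon,R)$-almost closing, the pairs $(\tau_0,\tau_1)$, $(\omega^2\tau_0,\omega\alpha^{-1}\tau_1)$ and $(\omega\tau_0,\omega^2\beta\tau_1)$ are $(\epsilon,R)$-almost closing for $\alpha$, $\gamma$ and $\beta$ respectively, so the Closing Lemma applies to each, and assertion (1) follows at once: $\alpha$, $\beta$ and $\gamma$ are $\ms P$-loxodromic.

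For assertion (3), apply part (3) of the Closing Lemma to the pair $(\tau_0,\tau_1)$, which is almost closing for $\alpha$: with $\tau_\alpha=\Psi(\tau_0,\alpha^-,\alpha^+)$ and $\sigma_\alpha=\Psi(\tau_1,\alpha^-,\alpha^+)$ it gives $d(\varphi_{2R}(\tau_\alpha),\alpha(\tau_\alpha))\leq\bM_2(\epsilon+\exp(-R))$ and $d(\varphi_{R}(\tau_\alpha),\sigma_\alpha)\leq\bM_2(\epsilon+\exp(-R))$, which are exactly \eqref{eq:bdrqf1} and \eqref{eq:bdrqf2}.

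For the quadruples based at $\tau_0$ in assertion (2), use part (2) of the Closing Lemma three times. From $(\tau_0,\tau_1)$ almost closing for $\alpha$ one gets $d_{\tau_0}(\partial^\pm\tau_0,\alpha^\pm)\leq\bM_2(\epsilon+\exp(-R))$. From $(\omega^2\tau_0,\omega\alpha^{-1}\tau_1)$ almost closing for $\gamma$ one gets $d_{\omega^2\tau_0}(\partial^\pm(\omega^2\tau_0),\gamma^\pm)\leq\bM_2(\epsilon+\exp(-R))$; since $\partial^-(\omega^2\tau_0)=\partial^0\tau_0$ and $\partial^+(\omega^2\tau_0)=\partial^-\tau_0$, and since $\tfrac{1}{C^2}d_{\tau_0}\leq d_{\omega^2\tau_0}\leq C^2 d_{\tau_0}$ by Corollary \ref{coro:domega}, this yields $d_{\tau_0}(\partial^0\tau_0,\gamma^-)\leq C^2\bM_2(\epsilon+\exp(-R))$ and $d_{\tau_0}(\partial^-\tau_0,\gamma^+)\leq C^2\bM_2(\epsilon+\exp(-R))$. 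Likewise $(\omega\tau_0,\omega^2\beta\tau_1)$ almost closing for $\beta$, together with $\partial^-(\omega\tau_0)=\partial^+\tau_0$, $\partial^+(\omega\tau_0)=\partial^0\tau_0$ and Corollary \ref{coro:domega}, gives $d_{\tau_0}(\partial^+\tau_0,\beta^-)\leq C\bM_2(\epsilon+\exp(-R))$ and $d_{\tau_0}(\partial^0\tau_0,\beta^+)\leq C\bM_2(\epsilon+\exp(-R))$. Hence $\partial^-\tau_0$ is $\bM_0(\epsilon+\exp(-R))$-close, in $d_{\tau_0}$, to both $\alpha^-$ and $\gamma^+$; $\partial^+\tau_0$ to both $\alpha^+$ and $\beta^-$; and $\partial^0\tau_0$ to both $\gamma^-$ and $\beta^+$ — which is exactly the statement that $(\tau_0,x,y,z)$ is an $\bM_0(\epsilon+\exp(-R))$-quasi-tripod for the stated ranges of $x,y,z$. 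For the quadruples based at $\tau_1$, run the same computation on the $(\epsilon,R)$-almost closing pair of pants $(\alpha,\beta^{-1}\alpha^{-1},\beta,\tau_1,\alpha(\tau_0))$ provided by Proposition \ref{pro:ac} (whose first tripod is $\tau_1$), using Proposition \ref{pro:stitc-clos}(1) to reverse the roles of the two tripods wherever needed and the relation $\alpha\gamma\beta=1$ to rewrite the fixed points of $\beta^{-1}\alpha^{-1}$ in terms of $\alpha^\pm$, $\beta^\pm$ and $\beta(\gamma^\pm)$; this produces the second family of quadruples.

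The only real work is bookkeeping: tracking how $\partial^+,\partial^-,\partial^0$ transform under $\omega$, $\omega^2$ and under the left $\ms G$-action (via the equivariance $d_{g\tau}(g\cdot,g\cdot)=d_\tau(\cdot,\cdot)$), and checking at the end that all the accumulated factors of $C$ from Corollary \ref{coro:domega} collapse into one constant $\bM_0$ and that a single choice of $\epsilon_0$, $R_0$ is admissible in every application of the Closing Lemma. There is no new geometric difficulty — the substance is entirely contained in the Closing Lemma \ref{lem:shad}.
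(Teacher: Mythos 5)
Your proposal follows the paper's intended argument exactly: the paper derives this theorem as an ``immediate consequence'' of the Closing Lemma \ref{lem:shad} applied to the three almost-closing pairs of tripods built into the definition, together with the symmetry Proposition \ref{pro:ac} to produce the quadruples based at $\tau_1$. Your bookkeeping of how $\partial^\pm,\partial^0$ transform under $\omega,\omega^2$, the use of Corollary \ref{coro:domega} to compare $d_{\omega^j\tau}$ with $d_{\tau}$, and the invocation of Proposition \ref{pro:stitc-clos}(1) to read off information at both tripods of each almost-closing pair are all as intended, and assertion (3) is literally item (3) of the Closing Lemma for $(T,S)=(\tau_0,\tau_1)$. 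One small caveat that is worth recording, since you mention rewriting the fixed points of $\beta^{-1}\alpha^{-1}$ as $\beta(\gamma^\pm)$: from $\alpha\gamma\beta=1$ one has $\beta^{-1}\alpha^{-1}=\alpha\gamma\alpha^{-1}=\beta^{-1}\gamma\beta$, so its fixed points are $\alpha(\gamma^\pm)=\beta^{-1}(\gamma^\pm)$, not $\beta(\gamma^\pm)$; the theorem's statement appears to carry a typographical slip here, and your computation actually yields the corrected form.
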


Proposition \ref{pro:boundaryloop} will give further information on the boundary loops.

\subsection{Preliminaries}
Our first lemma is  essentially a result on hyperbolic plane geometry.

\begin{lemma}\label{lem:K2R} There exists constants $R_3$ and $\bM_3$ so that for $R\geq R_3$ the following holds.  Let $u$ be any tripod.	Then  $v\defeq\varphi_{-2R}\left(\left(K\circ\varphi_R\right)^2(u)\right)$ satisfies
\begin{eqnarray}
d(v,u)&\leq& 	\bM_3	\exp{\left(-R\right)}\,  \\
d(\varphi_R(v), K\circ\varphi_R(u))&\leq& \bM_3	\exp{\left(-R\right)}\, \\
\partial^-v&=&\partial^-u
\end{eqnarray}
\end{lemma}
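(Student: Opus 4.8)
The statement is purely about the hyperbolic plane (equivalently, about $\sld$-tripods, which are triples of distinct points in $\Rp$), since the maps $\varphi_R$, $K=\omega\circ\sigma$ and the metric assignments all restrict to $\ms{PSL}_2(\mathbb R)$ and intertwine with the inclusion of a correct $\sld$ into $\ms G$. So I would first reduce to the case $\ms G=\ms{PSL}_2(\mathbb R)$ (all constants being $\ms G$-equivariantly propagated to the general case by $\sld$-equivariance of the actions described in Section~\ref{sec:act}), and then just compute. The point is that $\omega(\overline{\cdot})$ followed by a large shear and again $\omega(\overline{\cdot})$ followed by a large shear returns, up to an error that is exponentially small in $R$, to (a reshearing of) the original tripod; this is the "two steps of the pants construction bring you back" phenomenon visible in Figure~\ref{fig:pants} and Figure~\ref{fig:mapK}.

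Concretely: take $u$ with $\partial u=(0,\infty,1)$ after normalising by the $\ms{PSL}_2(\mathbb R)$-action (here I use that $\mc G$ is identified with $\ms{PSL}_2(\mathbb R)$ on a connected component). Then $\varphi_R(u)$ has vertices $(0,\infty,e^{R})$ (by the formula for $\varphi_s$ in \eqref{act2}, since $\exp(sa_0)$ acts on $\Rp$ by $t\mapsto e^{2s}t$, up to the normalisation of $a_0$), and I compute $\overline{\varphi_R(u)}$ using the harmonic-division description of $\sigma$ in \eqref{act4}, then apply $\omega$ using \eqref{act5}. Doing this twice and then applying $\varphi_{-2R}$, the first two vertices $\partial^-v,\partial^+v$ and in particular $\partial^-v=\partial^-u$ come out on the nose from the fact that $K$ preserves the leaves of $\mc U^{0,-}$ (Proposition~\ref{pro:Kpreserv}, which gives $\partial^-$ fixed), and also $\varphi_s$ preserves $\partial^\pm$; the content is that the \emph{third} vertex $\partial^0 v$ converges to $\partial^0 u=1$ at rate $O(e^{-R})$, because the modifications introduced by the two applications of $\overline{\cdot}$ involve points that $\varphi_R$ has pushed exponentially close to the attracting fixed point $\infty$. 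The estimate $d(v,u)\leq \bM_3 e^{-R}$ then follows from comparing vertices in a fixed visual metric (the configuration space of tripods with prescribed $\partial^-,\partial^+$ and $\partial^0$ within bounded $d$-distance of $1$ is compact, so the vertex distance controls $d$), and $d(\varphi_R(v),K\circ\varphi_R(u))\le \bM_3 e^{-R}$ follows the same way after applying $\varphi_R$ — here one uses that $\varphi_R$ is $e^{cR}$-Lipschitz in the wrong direction but only on a set of vertices already $O(e^{-2R})$-close, which absorbs into the constant; more cleanly, one checks directly that $K\circ\varphi_R(u)$ and $\varphi_R(v)=(K\circ\varphi_R)^2(u)\cdot\varphi_{-R}$ differ only in the last vertex by $O(e^{-R})$ in the visual metric of $K\circ\varphi_R(u)$, using Corollary~\ref{coro:contractleaf} (contraction along central-stable leaves) rather than a crude Lipschitz bound.

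The main obstacle I anticipate is bookkeeping: getting the signs and the exact combinatorics of $\sigma$ and $\omega$ right (whether $K\circ\varphi_R$ advances by "one triangle" as in Figure~\ref{fig:2a}, and hence $(K\circ\varphi_R)^2$ by two), and making sure the reference tripod used to measure the $O(e^{-R})$ error is chosen so that the compactness/contraction argument applies uniformly. I would handle this by working with an explicit normal form: fix $\partial u=(0,\infty,1)$, write everything as fractional linear transformations, and verify $\partial^0 v = 1 + O(e^{-R})$ by a direct cross-ratio computation, then invoke Proposition~\ref{pro:bas}\eqref{lastpoint} for the exact identity $\partial^- v=\partial^- u$ and Corollary~\ref{coro:contractleaf} (or directly Proposition~\ref{A-B}) to convert the vertex estimates into the claimed inequalities for $d$. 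The role of $\varphi_{-2R}$ in the definition of $v$ is precisely to undo the net shear of $2R$ accumulated over the two steps, so that after this correction $v$ is genuinely close to $u$ and not merely to a far-sheared relative of it; this is why the factor is $2R$ and not $R$, matching the $\varphi_{2R}$ appearing in \eqref{eq:cl1}.
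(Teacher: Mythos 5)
Your normal-form reduction is legitimate (all the operators $\sigma$, $R_\omega$, $\exp(Ra_0)$ act on the right by elements of $\ms S_0$, so $v=u\cdot g_R$ for a fixed $g_R\in\ms S_0$ and it suffices to show $d_0(\id,g_R)=O(e^{-R})$), but the proposal as written contains a real error: you assert that both $\partial^-v$ and $\partial^+v$ equal $\partial^-u$ and $\partial^+u$ ``on the nose.'' Only $\partial^-v=\partial^-u$ is exact --- that is precisely what Proposition~\ref{pro:Kpreserv} together with $\varphi_s$ fixing $\partial^\pm$ gives, and it is all the lemma claims. Tracking vertices with $\partial u=(0,\infty,1)$: $\varphi_R(u)$ has vertices $(0,\infty,e^{2R})$ (the exponent is $2R$, not $R$, since $\exp(Ra_0)$ acts on $\Rp$ by $z\mapsto e^{2R}z$), so $\overline{\varphi_R(u)}$ has vertices $(\infty,0,-e^{2R})$ and $K\varphi_R(u)$ has vertices $(0,-e^{2R},\infty)$; already $\partial^+K\varphi_R(u)=-e^{2R}\ne\infty$, and continuing the computation one finds $\partial^+v=-e^{4R}/(e^{2R}+1)$, which is not $\infty$. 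Your compactness step (``tripods with prescribed $\partial^-,\partial^+,\partial^0$\dots'') therefore cannot be invoked with $\partial^+v=\partial^+u$ prescribed; what you actually need, and what the computation does give, is that $\partial^+v$ and $\partial^0v$ are $O(e^{-2R})$-close to $\partial^+u$ and $\partial^0u$ in the visual metric $d_u$. After this correction the route does yield $d(u,v)\le\bM_3 e^{-R}$, so the gap is local and fixable, but as written your argument does not close.

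Your ``more cleanly'' alternative is, up to presentation, the paper's actual proof, and I would lead with it rather than keep it as a fallback. The paper uses only: (i) $d(w,K(w))\le M$ uniformly; (ii) $K(w)$ is a ``horizontal translation'' of $w$ once $\partial^-w$ is normalized to $\infty$ --- equivalently $K(w)=w\cdot k$ with $k\in\ms N^-_0$ --- so $w$ and $K(w)$ lie on the same $\mc U^-$-leaf with a purely unipotent displacement; and (iii) $\varphi_{-R}$ contracts such a displacement exponentially. Applying (i)--(iii) with $w=\varphi_RK\varphi_R(u)$ gives the second displayed inequality; the triangle inequality using (i) for $w=\varphi_R(u)$ bounds $d(\varphi_Ru,\varphi_Rv)$ by $2M$; and one further backward contraction (valid since $\partial^-\varphi_Ru=\partial^-\varphi_Rv$) gives the first. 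This sidesteps the $\sigma$/$\omega$ bookkeeping you identify as the main obstacle and requires no cross-ratio computation at all.
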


\begin{proof} 
	There exist a constant $M$, so that for all $w$, 
	\begin{eqnarray}d(w,K(w))\leq M\label{ineq:6110} .\end{eqnarray}
	 Recall that $w$ and $K(w)$ are coplanar. In the upper half plane model where $\partial^-w=\partial^-K(w)=\infty$,  $K(w)$ is obtained from $w$ by an horizontal translation. Thus, for $R$ large enough,	\begin{eqnarray*}
	d(\varphi_{-R}(w),\varphi_{-R}(K(w)))\leq  \bM_3	\exp{\left(-R\right)}.
	\end{eqnarray*}
	Applying this inequality to $w=\varphi_R(K\circ\varphi_R(u))$,  gives	
	\begin{eqnarray*}
		d\left(K\circ\varphi_{R}(u),\varphi_{R}(v)\right)=d\left(K\circ\varphi_{R}(u),\varphi_{-R}(K\circ\varphi_R)^2(u)\right)\leq  \bM_3	\exp{\left(-R\right)}\ ,\label{ineq:6112}
	\end{eqnarray*}
	and thus the second assertion. Proceeding further,  for $R$ large enough, the previous inequality and inequality \eqref{ineq:6110} gives, together with the triangle  inequality
\begin{eqnarray*}
		d(\varphi_{R}(u),\varphi_{-R}(K\circ\varphi_R)^2(u))\leq 2M\ .\label{ineq:6113}
	\end{eqnarray*}
Then, for $R$ large enough, 
	\begin{eqnarray*}
		d(u,\varphi_{-2R}(K\circ\varphi_R)^2(u))\leq \bM_3	\exp{\left(-R\right)}\ .\label{ineq:6114}
	\end{eqnarray*}
This concludes the proof.
\end{proof}
The second lemma gives a way to prove an element is loxodromic
\begin{lemma}\label{lem:loxo} There exist  constants $\bM_4$, $R_4$, $\epsilon_4$  only depending on $\ms G$, so that for any $\epsilon\leq \epsilon_4$ and $R\geq R_4$, then given $\alpha\in\ms G$, assuming that there exists a tripod  $v$ so that 
$$d(\varphi_{2R}(v),\alpha(v))\leq \epsilon\ ,$$ then $\alpha$ is loxodromic.

\end{lemma}
\begin{proof} Let $\xi$ be the isomorphism from $\ms G_0$ to $\ms G$ associated to $v$, it follows that for some constant $B$ only depending on $\ms G$, by inequality \eqref{ineq:contrdtaud}, 
$$
d_0(\xi^{-1}(\alpha),\exp(2Ra_0))\leq B \epsilon.
$$
Thus $\alpha$ is $\ms P$-loxodromic and $d_v(\alpha^\pm,\partial^\pm v)\leq \epsilon$ for $R$ large enough.

\end{proof}

\subsection{Proof of Lemma \ref{lem:shad}}
We now start the proof of the Closing Lemma \ref{lem:shad}, referring to  ``$T$, $S$ are $(\mu,R)$ almost closing for $\alpha$ ''  as assumption $(*)$.

\subsubsection{A better tripod}

\begin{proposition} \label{pro:cl1}
There exist constants $\bM_2$, $\epsilon_2$ and $R_1$, so that assuming $(*)$, $\mu\leq\epsilon_1$ and $R>R_1$, then
there exist
\begin{enumerate}
	\item a tripod $u_0$ so that $u_0$, $K\circ\varphi_R(u_0)$
and $(K\circ\varphi_R)^2(u_0)$ are respectively $\bM_2\mu$-close to $T$,  $S$ and $\alpha(T)$,
\item a  tripod $u_1$ so that $u_1$, $\varphi_R(u_1)$ and $\varphi_{2R}(u_1)$ 
   are $\bM_2(\mu+\exp(-R))$-close respectively to $T$,
  $S$  and $\alpha (T)$.
\end{enumerate}

\end{proposition}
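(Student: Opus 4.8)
The statement to prove is Proposition~\ref{pro:cl1}, which extracts, from the two defining displays \eqref{hyp:cl1}--\eqref{hyp:cl2} of ``$(*)$'', two clean tripods: one, $u_0$, on which the iteration of $K\circ\varphi_R$ reproduces the orbit $(T,S,\alpha(T))$ up to error $\bM_2\mu$, and a second, $u_1$, on which the honest flow $\varphi_R$ does the same up to error $\bM_2(\mu+\exp(-R))$. The first part is essentially a bookkeeping of the hypothesis: by $(*)$ there are tripods $u$ and $v$ with $d(u,T)\le\mu$, $d(v,S)\le\mu$, $d(K\circ\varphi_R(u),S)\le\mu$, $d(K\circ\varphi_R(v),\alpha(T))\le\mu$. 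Since $K\circ\varphi_R$ is Lipschitz on $\mc G$ with constant depending only on $\ms G$ (it is built from $\varphi_R$, the involution $\sigma$ and the permutation $\omega$, the latter two isometries — see Corollary~\ref{coro:domega} and Proposition~\ref{pro:bas}), I would set $u_0\defeq u$ and estimate by the triangle inequality: $K\circ\varphi_R(u_0)$ is within $\mu$ of $S$; then $(K\circ\varphi_R)^2(u_0)=K\circ\varphi_R\big(K\circ\varphi_R(u)\big)$ is within $\mathrm{Lip}(K\circ\varphi_R)\cdot\mu$ of $K\circ\varphi_R(v)$ (since $d(K\circ\varphi_R(u),v)\le d(K\circ\varphi_R(u),S)+d(S,v)\le 2\mu$), hence within $\mathrm{Lip}\cdot 2\mu+\mu$ of $\alpha(T)$. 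Absorbing the Lipschitz constant into $\bM_2$ gives the first bullet.

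\textbf{The second tripod.} For $u_1$ the point is to convert the ``$K\circ\varphi_R$'' dynamics of $u_0$ into genuine ``$\varphi_R$'' dynamics, and this is exactly what Lemma~\ref{lem:K2R} is for. Apply that lemma to $u\defeq u_0$: it produces $v\defeq\varphi_{-2R}\big((K\circ\varphi_R)^2(u_0)\big)$ with $d(v,u_0)\le\bM_3\exp(-R)$, $d(\varphi_R(v),K\circ\varphi_R(u_0))\le\bM_3\exp(-R)$, and $\partial^-v=\partial^-u_0$. Set $u_1\defeq v$. Then: (i) $d(u_1,T)\le d(u_1,u_0)+d(u_0,T)\le\bM_3\exp(-R)+\bM_2\mu$; (ii) $d(\varphi_R(u_1),S)\le d(\varphi_R(u_1),K\circ\varphi_R(u_0))+d(K\circ\varphi_R(u_0),S)\le\bM_3\exp(-R)+\bM_2\mu$; (iii) for the last one, $\varphi_{2R}(u_1)=\varphi_{2R}(v)=(K\circ\varphi_R)^2(u_0)$ by the very definition of $v$ in Lemma~\ref{lem:K2R}, and the first bullet already gives $d\big((K\circ\varphi_R)^2(u_0),\alpha(T)\big)\le\bM_2\mu$. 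Relabelling constants so that everything is bounded by a single $\bM_2(\mu+\exp(-R))$ finishes the second bullet, and with it the proposition. (If a cleaner $u_1$ with $\varphi_{2R}(u_1)$ landing precisely on $\alpha(T)$'s neighbourhood on the nose is wanted, one may alternatively apply Lemma~\ref{lem:loxo} later; but for Proposition~\ref{pro:cl1} as stated the above suffices.)

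\textbf{Main obstacle.} There is no deep obstacle here — the proposition is a ``straightening'' lemma and the work is entirely in Lemma~\ref{lem:K2R}, which is already available. The one point requiring a little care is making sure the error terms compose correctly: each application of the Lipschitz map $K\circ\varphi_R$ multiplies the $\mu$-error by a fixed constant, and one must check that only boundedly many such applications occur (here: two), so that the accumulated constant depends only on $\ms G$ and not on $R$ or $\mu$. The other subtlety is purely notational: Lemma~\ref{lem:K2R} is phrased so that $v=\varphi_{-2R}\big((K\circ\varphi_R)^2(u)\big)$ literally satisfies $\varphi_{2R}(v)=(K\circ\varphi_R)^2(u)$, so the ``third'' estimate for $u_1$ is an identity composed with the already-proved first-bullet estimate, not a fresh perturbation argument — this is where the $\exp(-R)$ genuinely enters (through $d(v,u_0)$ and $d(\varphi_R(v),K\circ\varphi_R(u_0))$) while the $\mu$ enters through the closeness of $u_0$ to the $(T,S,\alpha T)$ data. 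I would also remark, for use in the sequel, that $\partial^-u_1=\partial^-u_0$, which is recorded in Lemma~\ref{lem:K2R} and will matter when one later feeds $u_1$ into the contraction estimates of Proposition~\ref{pro:flow-foot}.
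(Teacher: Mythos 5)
Your construction of $u_1$ from $u_0$ via Lemma~\ref{lem:K2R} matches what the paper does, but your construction of $u_0$ itself has a fatal gap. You set $u_0\defeq u$ and estimate $(K\circ\varphi_R)^2(u_0)$ by applying a claimed uniform Lipschitz bound for $K\circ\varphi_R$: ``Lipschitz on $\mc G$ with constant depending only on $\ms G$.'' This is false. Although $\sigma$ and $\omega$ are isometries, $\varphi_R=R_{\exp(Ra_0)}$ is not: by Proposition~\ref{pro:bas}\,\eqref{pro:bas:cont}, the right action of $\exp(Ra_0)$ for $R>0$ uniformly \emph{dilates} the leaves of $\mc U^-$, by a factor that grows like $e^{cR}$. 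So the Lipschitz constant of $K\circ\varphi_R$ on $\mc G$ blows up with $R$, and your triangle-inequality chain for $(K\circ\varphi_R)^2(u_0)$ produces an error of order $e^{cR}\mu$ rather than the required $\bM_2\mu$.

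This is not a cosmetic issue — it is precisely the content of the proposition and the reason a shadowing argument is needed. The paper does \emph{not} take $u_0=u$. It first observes $d(K\circ\varphi_R(u),v)\le2\mu$, then uses the local product structure: for $\mu$ small enough the leaf $\mc U^{0,-}_{K\circ\varphi_R(u)}$ meets $\mc U^+_v$ in a unique point $w$ which is $O(\mu)$-close to both $v$ and $K\circ\varphi_R(u)$, and $u_0$ is \emph{defined} by $K\circ\varphi_R(u_0)=w$. The reason the errors stay of size $O(\mu)$ is that each direction is only traversed in the direction in which the dynamics is \emph{nonexpanding}: from $u_0$ one goes forward along the $\mc U^+$-leaf, where $K\circ\varphi_R$ contracts (Proposition~\ref{pro:bas}, and $K$ preserves $\mc U^{0,-}$ by Proposition~\ref{pro:Kpreserv}); from $K\circ\varphi_R(u_0)$ one goes backward along the $\mc U^{0,-}$-leaf, where $\varphi_{-R}\circ K^{-1}$ is uniformly Lipschitz. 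Your argument ignores this asymmetry and pushes the error through the expanding direction. Once $u_0$ is built correctly, your second paragraph (apply Lemma~\ref{lem:K2R} with $u_1=\varphi_{-2R}(K\circ\varphi_R)^2(u_0)$ and triangle inequalities) is exactly the paper's route and is fine.
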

\begin{proof} Let $u$ and $v$ associated to $S$ and $T$ by assumption $(*)$.
 Recall that  that $K\circ\varphi_t$ is contracting on $\mc U^+$ for positive $t$ (large enough) -- See Proposition \ref{pro:bas}. Similarly, by Proposition \ref{pro:Kpreserv} $K$ preserves each leaf of $\mc U^{0,-}$, and thus 
     $\varphi_{-t}\circ K^{-1}$ is uniformly $\kappa$-Lipschitz (for some $\kappa$) along $\mc U^{0,-}$ for all positive  $t$. 

By hypothesis \eqref{hyp:cl2}, \eqref{hyp:cl3} and the triangle  inequality  
$$
d(K\circ\varphi_R(u),v)\leq 2\mu\ .
$$
Thus if  $\mu$ is small enough, $\mathcal
U^{0,-}_{K(\varphi_R(u))}$ intersects ${\mc U}^{+}_v$ in a unique
point $w$ which is $4\mu$-close to both $v$ and
$K\circ\varphi_R(u)$  -- Hence $5\mu$ close to $S$ --  as in Figure
(\ref{fig:shad}).\begin{figure}[htbp]   \begin{center}
    \includegraphics[width=3in]{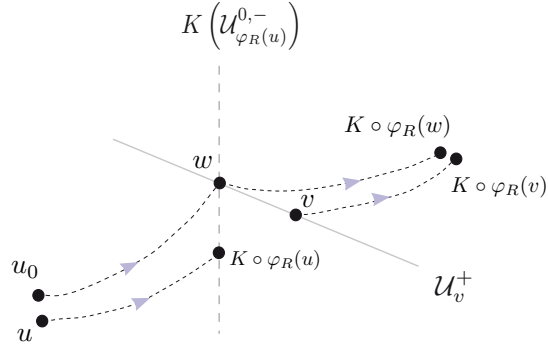}
    \caption{Closing quasi orbits}\label{fig:shad}
  \end{center}

\end{figure}

Recall that $K$ preserves each leaf of  $\mc U^{0,-}$ by Proposition \ref{pro:Kpreserv}. Thus
$${\mc U}^{0,-}_{K\varphi_R(u)}=K\left(\mathcal
U^{0,-}_{\varphi_R(u)}\right)\ .$$

Let now $u_0$ be so that $K\circ \varphi_{R}(u_0)=w$. According to our initial remark $\varphi_{-R}\circ K^{-1}$ is $\kappa$-Lipschitz, since 
\begin{eqnarray}
d(K\circ \varphi_{R}(u_0),K\circ \varphi_{R} (u))\leq 2\mu\ ,
\end{eqnarray}
we get that  
\begin{eqnarray*}
d(u_0,u)\leq \kappa\left(2\mu\right)\ &,& \
d(u_0,T)\leq (2\kappa+1)\mu\ ,	
\end{eqnarray*}
where the second inequality used hypothesis  \eqref{hyp:cl1}.

Symmetrically, using now that $K\circ\varphi_R$ is contracting for $R$ large enough along the
leaves of ${\mc U}^+$, it follows that 
\begin{eqnarray}
	d\left((K\circ\varphi_R)^2(u_0),(K\circ\varphi_R)(v)\right)&\leq&\mu\ . 
\end{eqnarray}
Combining with hypothesis \eqref{hyp:cl4}, this yields
\begin{eqnarray}
	d\left((K\circ\varphi_R)^2(u_0),\alpha(T)\right)&\leq&2\mu\ . 
\end{eqnarray}
 Thus with  $M=2\kappa+1$, we obtain a tripod $u_0$ so that 
so that $u_0$, $K\circ\varphi_R(u_0)$
  and $(K\circ\varphi_R)^2(u_0)$ are respectively $\bM_1\mu$-close to $T$, 
  $S$ and $\alpha(T)$.

Now, according to Lemma \ref{lem:K2R}, it is enough to take $u_1=\varphi_{-2R}(K\circ\varphi_R)^2(u_0)$,  then applies the triangle  inequality.
\end{proof}

\subsubsection{Proof of the closing Lemma \ref{lem:shad}}\label{sec:prooclemma}

Combining Proposition \ref{pro:cl1} and Lemma \ref{lem:loxo}, we obtain that for $\epsilon$ small enough and $R$ large enough, $\alpha$ is loxodromic and moreover
$$
d_{u_1}(\partial^-u_1,\alpha^-)\leq M_3(\mu+\exp(-R))\ .
$$
Since $u_1$ is $\bM_2(\mu+\exp(-R))$-close to $T$, applications of Proposition \ref{A-B} yields
\begin{eqnarray}
	d_{T}(\partial^-T,\alpha^-)\leq M_4(\mu+\exp(-R))\ .\label{ineq:procl1}
\end{eqnarray}

Observe that $\overline T$, $\overline S$ are $(\mu,-R)$ almost closed with respect to $\alpha$. Thus, reversing the signs in the proof, on gets symmetrically that 
$$
d_{\overline T}(\partial^+T,\alpha^+)\leq M_4(\mu+\exp(-R))\ ,
$$
and thus
\begin{eqnarray}
d_{T}(\partial^+T,\alpha^+)\leq M_5(\mu+\exp(-R))\ .\label{ineq:procl2}
\end{eqnarray}
It remains to prove the last statement in the lemma.
Since 
\begin{eqnarray}
	d(T,u_1)\leq \bM_2(\mu+\exp(-R)), \ \ d(\alpha(T),\varphi_{2R}(u_1)\leq \bM_2(\mu+\exp(-R))\ ,\label{eq:cl10}
\end{eqnarray} 
it follows that $u_1$, $\alpha^\pm$ satisfies the hypothesis of  Proposition \ref{pro:flow-foot}. Thus, setting $u_\alpha:=\Psi(u_1,\alpha^-,\alpha^+)$, 
\begin{eqnarray}
d\left(\Psi(\varphi_{2R}(u_1),\alpha^-,\alpha^+), \varphi_{2R}\left(u_\alpha)\right)\right)\leq M_6(\mu+\exp(-R)) \label{ineq:cl11}
\ .
\end{eqnarray}
Using inequalities \eqref{eq:cl10} a second time and Lemma \ref{lem:footmap}, we obtain that
\begin{eqnarray}
	d(\Psi(\varphi_{2R}(u_1),\alpha^-,\alpha^+),\alpha(\tau_\alpha))&\leq& M_6(\mu+\exp(-R))\ ,\label{ineq:cl12}\\
	d\left(\varphi_{2R}(\tau_\alpha),\varphi_{2R}(u_\alpha)\right)=d(\tau_\alpha,u_\alpha) &\leq& M_7(\mu+\exp(-R))\label{ineq:cl13}\ ,
\end{eqnarray}
where the equality in the line \eqref{ineq:cl13} comes from the fact that the flow acts by isometry on the leaves of the central foliation ({\it cf.} Property \eqref{pro:iso-central}). The triangle  inequality yields from inequalities \eqref{ineq:cl11} and \eqref{ineq:cl12}
\begin{eqnarray*}
d(\varphi_{2R}(u_\alpha),\alpha(\tau_\alpha))&\leq& M_8(\mu+\exp(-R))\ .
\end{eqnarray*}
Combining finally with \eqref{ineq:cl13}, we get 
\begin{eqnarray}
d(\varphi_{2R}(\tau_\alpha),\alpha(\tau_\alpha))&\leq&   M_9(\mu+\exp(-R))\ .	\label{ineq:procl3}
\end{eqnarray}
This proves inequality \eqref{eq:cl1}. A similar argument shows inequality \eqref{eq:cl2}. The other assertions of the lemma were proved as inequalities \eqref{ineq:procl1}, \eqref{ineq:procl2} and \eqref{ineq:procl3}.

The last statement is an obvious consequence of the previous ones.

\subsection{Boundary loops}\label{sec:boundaryloop}  We show that the boundary loops of an almost closing pair of pants are close to be perfect in a precise sense.

Let $W=(T,S_0,S_1,S_2)$ be  an $(\epsilon,R)$-almost closing pair of pants with boundary loops 
$\alpha$, $\beta$ and $\gamma$. 

Let us say a triple of tripods $(S^*_1,T^*,S^*_0)$ is {\em $R$-perfect} for $\alpha^*$. If $S^*_0=\alpha_*(S_1^*)$, $T^*=K\phi_R(S_1)$, $S_0^*=K\phi_R T^*$.

We finally say $\alpha$ is $R$-perfect if it is conjugate to $\exp(Ra_0)$.

\begin{proposition}{\sc [Boundary loop]}\label{pro:boundaryloop}  There exists a constant $C$ so that given  $\epsilon$ small enough then $R$  large enough,  and an $(\epsilon,R)$-almost closing pair of pants, there exists an $R$- perfect triple   $(S^*_1,T^*,S^*_0)$  for $\alpha^*$ so that  
$$
d(S_0^*,S_0)\leq C\frac{\epsilon}{R}\ , d(S_1^*,S_1)\leq C\frac{\epsilon}{R}\ , d(T^*,T)\leq C\frac{\epsilon}{R}\ .
$$
Moreover $\alpha=\cdotp\alpha_*\cdotp k$, with $k\in\ms L_{\alpha_*}$ and 
	\begin{eqnarray}
	\sup\left(d_{S^*_0}(k,\id), d_{T^*}(k,\id), d_{S^*_1}(k,\id)\right) &\leq&  C\eR\ \label{pro:bl2} 	\end{eqnarray} 
	If furthermore $\alpha$ is $R$-perfect then $k=\id$. Similar results holds for $\beta$ and  $\gamma$, for different $R$-perfect triples.
		\end{proposition}

In the next proofs,  $C_i$ will denote constants only depending on $\ms G$.

\begin{proof} Since $W$  is an $({\bf M}\eR,R)$-almost closing pair of pants,  let  $(S^*_0,T^*,S_1^*)$ be the perfect triple obtained by the first item in Proposition \ref{pro:cl1} and $\alpha^*$ so that $S_0^*=\alpha^* S_1$.  Let then
\begin{eqnarray*}
	\tau=\Psi(S_0,\alpha^-,\alpha^+)\ , \ & \sigma=\Psi(S_1,\alpha^-,\alpha^+)\ ,&    u=\Psi(T,\alpha^-,\alpha^+)  \\ \tau_*=\Psi(S^*_0,\alpha_*^-,\alpha_*^+)\ , \ &\sigma^*=\Psi(S_1^*,\alpha_*^-,\alpha_*^+)\ , & u^*=\Psi(T^*,\alpha_*^-,\alpha_*^+)  \ . 
\end{eqnarray*}
Recall that $\alpha(\sigma)=\tau$ and $\alpha^*(\sigma^*)=\tau^*$.  For $\epsilon$ small enough then $R$ large enough, by the Structure Theorem \ref{theo:struct-pant}, we obtain that the four points $\sigma,\sigma^*, S_1,S_1^*$ are all $C_1\eR$ close and thus $d_\sigma$, $d_{\sigma^*}$, $d_{S_1},d_{S_1^*}$ are all 2-Lipschitz equivalent, for $\eR$ small enough. The same holds for $\tau,\tau^*, S_0,S_0^*$ as well as for $u,u^*,T,T^*$.

Let $g$ in $\G$  so that $\sigma=g\cdotp\sigma^*$. Since $\sigma$ and $\sigma^*$ are $C_1\eR$ close, it follows that
\begin{equation}
	d_{S_1^*}(g,\id)\leq 2d_{\sigma}(g,\id)\leq C_2\eR\ .\label{eq:S1g}
\end{equation}
Applying the third item of Theorem \ref{theo:struct-pant} and then the first, we obtain that for some constant $C_3$ only depending on  $\G$.
\begin{eqnarray*}
	d(\varphi_{2R}(\sigma),\alpha(\sigma))\leq C_3\eR\ & , & \ \	d(\varphi_{2R}(\sigma^*),\alpha^*(\sigma^*))\leq C_3\eR\ ,\\ d(\varphi_R(\sigma),u)\leq C_3\eR\ &,& d(\varphi_R(\sigma^*),u^*)\leq C_3\eR\ .
\end{eqnarray*}
Since $\varphi_{2R}(\sigma)=g\cdotp \varphi_{2R}(\sigma^*)$, we have
\begin{eqnarray*}
		d(g\tau,\tau)&\leq& d(g(\tau),g\varphi_{2R}(\sigma))+d(g\varphi_{2R}(\sigma),\tau^*)+d(\tau^*,\tau)\cr
		&\leq& d(\tau,\varphi_{2R}(\sigma))+d(\varphi_{2R}(\sigma^*),\tau^*)+d(\tau^*,\tau)
		\leq(2C_3+C_1)\eR\ ,
\end{eqnarray*}
thus as above 
\begin{equation}
	d_{S_0^*}(g,\id)\leq 2d_{\tau}(g,\id)\leq C_4\eR\ .\label{eq:S0g}
\end{equation}
Similarly
\begin{eqnarray*}
		d(gu,u)&\leq& d(g(u),g\varphi_{R}(\sigma))+d(g\varphi_{R}(\sigma),u^*)+d(u^*,u)\cr
		&\leq& d(u,\varphi_{R}(\sigma))+d(\varphi_{R}(\sigma^*),u^*)+d(u^*,u)
		\leq(2C_3+C_1)\eR\ ,
\end{eqnarray*}
thus as above 
\begin{equation}
	d_{T^*}(g,\id)\leq 2d_{u}(g,\id)\leq C_4\eR\ .\label{eq:S0Tg}
\end{equation}
Inequalities \eqref{eq:S1g}, \eqref{eq:S0g} and \eqref{eq:S1g} prove  the inequality
$$	\sup\left(d_{S^*_0}(g,\id), d_{T^*}(g,\id), d_{S^*_1}(g,\id)\right) \leq C\eR\ $$
We can thus replace $S_0^*,T^*, S_1^*$ with  $g^{-1}S_0^*,g^{-1}T^*, g^{-1}S_1^*$ so that for this new perfect triple $g=1$.
\vskip 0.2truecm
Let us write now $k=\alpha_*^{-1}\alpha $.  Then 
\begin{eqnarray*}
d(k\sigma^*,\sigma^*)= d(\alpha(\sigma),\alpha_*(\sigma^*))=d(\tau,\tau^*)\leq \leq (2C_3+2C_1)\eR\ .
	\end{eqnarray*}
This implies  that
\begin{equation}
	d_{S_1^*}(k,\id)\leq 2d_{\sigma^*}(k,\id)\leq 2C_5\eR\ .\label{eq:S1k}
\end{equation}	
Finally since 
$\sigma^*=\sigma$, then  $\alpha_*^\pm=\alpha^\pm$. Thus $k(\alpha_*^\pm)=\alpha_*^\pm$ and in particular  $k$ commutes with $\alpha_*$.  We deduce
\begin{eqnarray}
		d_{S_1^*}(k,\id)
		=d_{S_1^*}(k\alpha_*^{-1},\alpha_*^{-1})
		=d_{S_1^*}(\alpha_*^{-1}k,\alpha_*^{-1})
		=d_{\alpha_*(S_1^*)}(k,\id)
	=d_{S_0^*}(k,\id)\leq 2C_5\eR\ .\label{eq:S0k}
\end{eqnarray}
We deduce then that 
\begin{equation*}
	d_{\sigma^*}(k,\id)\leq C_6\eR\ , d_{\tau^*}(k,\id)\leq C_6\eR\ 
\end{equation*}
Since $k$ belongs to $\ms L_{\alpha^*}$, $k$  commutes with $\alpha_*^{1/2}$. But $u^*=\alpha_*^{1/2}(tau^*)$ and thus the equation above yields
\begin{equation*}
		d_{u^*}(k,\id)\leq C_6\eR\  \ , 
\end{equation*}
From which we deduce 
\begin{equation}
		d_{T}(k,\id)\leq C_7\eR\  \ . \label{eq:Tk}
\end{equation}
Inequalities \eqref{eq:S1k},  \eqref{eq:S0k}, \eqref{eq:S0k}, prove inequalities \eqref{pro:bl2}. 
\vskip 0.1truecm
Assume finally that $\alpha$ is perfect, so that $\alpha=f\alpha^*f^{-1}$. Then,
$$
\alpha^*k=h\alpha^* f^{-1}\ .
$$
Since $k$ is small,  $\alpha^*k$ is $\ms P$-loxodromic. Since $k$ fixes $\alpha_*^\pm$ and $\alpha^*k$  has $\alpha^+$ as unique attracting fixed point and $\alpha^-$ as unique repulsive fixed point, $\alpha^*k$ also  has $\alpha^+$ as unique attracting fixed point and $\alpha^-$ as unique repulsive fixed point. It follows that  $f(\alpha_*^\pm)=\alpha_*^\pm$. Thus $f$ belongs to $\ms L_{\alpha_*}$ and as such commutes with $\alpha_*$. It follows that $k=\id$.
\end{proof}

\subsection{Negatively almost closing pair of pants}

In this section, we have only dealt with positively almost closing pair of pants. Perfectly symmetric results are obtained for negatively  almost closing pair of pants, once they have been defined correctly -- which we have not done yet. We postpone this discussion to paragraph \ref{sec:DefNegSt} after the discussion of the "inversion".

\section{Triconnected tripods and pair of pants}\label{sec:triconn}

We define  in this section triconnected pairs of tripods. These objects consist of a pair of tripods together with three homotopy classes of path between them.  One may think of them as a very loosely almost closing pair of pants.

We them define weights for these tripods, and show that when the weight of a triconnected pair of tripod is non zero, then this triconnected pair of tripods actually defines a almost closing pair of pants. 

Apart from important definitions, and in particular the inversion of tripods discussed in the last section, the main result of this section is the Closing up Tripod Theorem \ref{lem:shatri}. 

This section will make use of a discrete subgroup $\Gamma$ of  $\ms G$,  with {\em non zero injectivity radius} -- or more precisely  so that $\Gamma\backslash\sg$ has a non zero injectivity radius. When $\Gamma$ is a lattice this is equivalent to the lattice being uniform.

\subsection{Triconnected  and biconnected pair of  tripods and their lift}\label{sec:lift}

\begin{definition}{\sc[triconnected and biconnected  pair of  tripods]}
\begin{enumerate} 
\item	A {\em triconnected pair of  tripods} in $\lga\ms G$ \index{Triconnected pair of tripods}  -- see Figure
(\ref{fig:TricTrip}) -- is a quintuple
$$
W=(t,s,c_0,c_1,c_2),
$$
where $t$ and $s$ are two tripods in $\Gamma\backslash\mc G$ and
$c_0$, $c_1$ and $c_2$ are three homotopy classes of paths from $t$ to
$s$, $\omega^2(t)$ to $\omega(s)$, and  $\omega(t)$ to $\omega^2(s)$ respectively , up to loops defined in a $\ms K_0$-orbit. 
The associated   {\em
  boundary loops} are the elements of $\pi_1(\Gamma\backslash\mc G\slash\ms K_0,s)\simeq \Gamma$
\begin{eqnarray*}
  \alpha=c_0\bullet c_1^{-1}\ , \ 
  \beta=c_2\bullet c_0^{-1}\ ,\ 
  \gamma=c_1\bullet c_2^{-1}\ . 
\end{eqnarray*}
The {\em associated pair of pants} is  the triple $P=(\alpha,\beta,\gamma)$. Observe that $\alpha\cdotp\gamma\cdotp\beta=1$.
\item  A {\em triconnected pair of tripods in the universal cover} is 
 a quadruple  $(T,S_0,S_1,S_2)$ so that $T$,$S_0$, $S_1$,  and $S_2$ are tripods in the same connected component of  $\mc G$. The {\em boundary loops} of $(T,S_0,S_1,S_2)$ are the elements $\alpha$, $\beta$ and $\gamma$ of $\G$ so that $S_0=\alpha(S_1)$, $S_2=\beta(S_1)$, $S_1=\gamma(S_2)$.
\end{enumerate}
Similarly we have
\begin{enumerate} 
\item  A {\em  biconnected pair of tripods} is a quadruple $b=(t,s,c_0,c_1)$, where $t$ and $s$ are tripods and $c_0$, $c_1$ are homotopy classes of paths from $t$ to $s$  and $\omega^2 t$ to $\omega s$, respectively, in $\lga\mc G$ (up to loops in $\ms K^m_0$-orbits). Its {\em boundary loop} is $\alpha=c_0\bullet c_1^{-1}.$

 \item  A {\em biconnected pair of tripods in the universal cover} is 
 a  triple  $(T,S_0,S_1)$ so that $T$,$S_0$ and $S_1$ are tripods in the same connected component of  $\mc G$. The {\em boundary loop} of $(T,S_0,S_1)$ is the element $\alpha$ of $\G$ so that $S_0=\alpha(S_1)$.
\end{enumerate}
\end{definition}
A triconnected pair of tripods  $q=(t,s,c_0,c_1,c_2)$  defines a  triconnected pair of tripods $(T,S_0,S_1,S_2)$  in the universal cover up to the diagonal action of $\Gamma$,  called the {\em lift of a triconnected pair of tripods}\index{Lift of a triconnected pair of tripods}, 
where $T$ is a lift of $t$ in $\mc G$, and $S_0$, $S_1$, $S_2$ are the three
lifts of $s$, so that  $S_0$, $\omega S_1$, $\omega^2 S_2$ which are the end points of the paths lifting
respectively $c_0$, $ c_1$ and $ c_2$ starting respectively at $T$, $\omega^2 T$ and $\omega T$   as in Figure
(\ref{fig:TricTripUni}).  Observe  that $S_0=\alpha(S_1)$,
$S_1=\gamma(S_2)$ and $S_2=\beta(S_0)$, where $\alpha$, $\beta$ and $\gamma$ are the three boundary loops of $q$.


\begin{figure}[h]
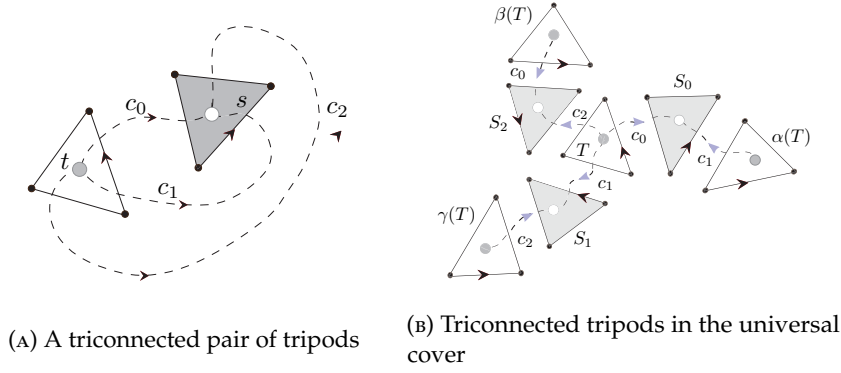

 \centering
  \begin{subfigure}[h]{0.45\textwidth}
 \centering
  \includegraphics[width=\textwidth]{TricTrip.pdf}
      \caption{A triconnected pair of  tripods}\label{fig:TricTrip}
 \end{subfigure}    
 \begin{subfigure}[h]{0.45\textwidth}   \begin{center}
    \includegraphics[width=\textwidth]{TricTripUni.pdf}
    \caption{Triconnected tripods in the universal
      cover}\label{fig:TricTripUni}
\end{center}
  \end{subfigure}
 \caption{Triconnected tripods and their lifts}
 \label{fig:tpt}
\end{figure}

Conversely,  since $\mc G\slash \ms K_0$ is contractible, we may think of a triconnected pair of tripods as a  quadruple  of tripods $(T,S_0,S_1,S_2)$ in the same connected component of $\ms G$ well defined up to the diagonal action of $\Gamma$,  so that $S_i$ all lie in the same $\Gamma$ orbit. In particular, we define an action of $\omega$ on the space of triconnected tripod by
\begin{equation}
\omega(T,S_0,S_1,S_2)\defeq(\omega T,\omega^2 S_2,\omega^2 S_0,\omega^2 S_1)\ .	
\label{eq:om-lift}
\end{equation}
On triconnected  pair of tripods the corresponding rotation is 
$$
\omega(t,s,c_0,c_1,c_2)=(\omega(t),\omega^2(s),c_2,c_0,c_1)
$$

\subsection{Weight functions}
We  fix a positive  $\epsilon_0$ less than half the injectivity radius of $\lga\mc G$. Let us fix a smooth positive function $\Xi$ from $\mathbb R$ to $\mathbb R$, with support in $]-1,1[$ and define  for every tripod $\tau$ and real $\epsilon$ the  {\em bell function}\index{$\Theta$} $\Theta_{\tau,x}$ by

$$
\Theta_{\tau,\epsilon}(x)= \frac{1}{\int_{B(\tau,\epsilon)}\Xi\left(\frac{1}{\epsilon}d(y,\tau)\right )\ \d\mu (y)} \Xi\left(\frac{1}{\epsilon}d(x,\tau)\right )
$$
The following proposition is immediate.

\begin{proposition}\label{pro:bell}
	The function $\Theta_{\tau,\epsilon}$ has its support in an $\epsilon$ neighborhood of $\tau$, is positive and of integral 1.  For any isometry  $g$ of $\mc G$,  
	$$\Theta_{g(\tau),\epsilon}\circ g=\Theta_{\tau,\epsilon}\ .$$ Finally, there exists a constant $D$ independent of $\epsilon$ and $\tau$, so that
	\begin{equation}
	\Vert \Theta_{\tau,\epsilon} \Vert_{C^k}\leq D \epsilon^{-k-D}\label{eqn:ckT}\ .
	\end{equation}
\end{proposition}
\begin{proof} The technical part is to prove the inequality \eqref{eqn:ckT}. Let 
$G_\lambda(x)=\Xi\left(\lambda d(x,\tau)\right)$, for $\lambda>1$. An induction shows that for an auxiliary connection $\nabla$ on $\Gamma\backslash \mathcal G$ the $k^{\tiny th}$-derivatives of $G_\lambda$,  $
\nabla^{(k)}G_\lambda\ ,
$
is a  polynomial of degree at most $k$ in $\lambda$, the derivatives of $d(x,\tau)$ and the derivatives of $\Xi$, and whose coefficients only depend on $\mathcal G/\Gamma$. Since moreover, this polynomial vanishes when $d(x,\tau)>1$, we obtain that
\begin{eqnarray}
	\Vert \nabla^{(k)}G_\lambda\Vert \leq K\cdotp \lambda^k\ ,\label{ineq:bell1}
\end{eqnarray}
where $K$ only depends on $\Gamma\backslash \mathcal G$. Let us also consider the function 
\begin{eqnarray*}
f(\lambda)=\int_{\Gamma\backslash \mathcal G}\Xi\left(\frac{1}{\epsilon}d(y,\tau)\right)\ {\rm d}\mu(y)=\int_{\Gamma\backslash \mathcal G}\Xi\left(\frac{1}{\epsilon}d(y,\tau)\right){\rm d}\mu(y)\ .
\end{eqnarray*}
Taking a lower bound of the function $\Xi$ be a step function equal to $A$ on  the interval $]-C,C[$ and zero elsewhere. Then
\begin{eqnarray}
f(\lambda)\geq A \int_{B(\tau,\frac{C}{\lambda})}
{\rm d}\mu(y)
\geq 
K\left(\frac{1}{\lambda}\right)^{\dim (\mathcal G)}\ ,\label{ineq:bell2}
\end{eqnarray}
where $K$ only depends on $\mathcal G$ and $\Xi$.
The proposition now follows at once from inequalities \eqref{ineq:bell1} and \eqref{ineq:bell2}.
\end{proof}

As a consequence, the family of functions $\Theta_{\tau,\epsilon}$ also makes sense on $\lga\mc G$ and the same property holds: notice that this is the point where we make use of the fact that the lattice is uniform.

\begin{definition}{\sc[Weight Functions]}\index{Weight functions} Let  $\epsilon<\epsilon_0$ and $R$ a positive real of absolute value greater than 1. 
The {\em upstairs weight function} is defined on the space of pairs of tripods $(T,S)$ in $\mc G$ by
\begin{eqnarray}
	\wa\seR(T,S)\defeq\int_{\mc G}\Theta_{T,\frac{\epsilon}{\vert R\vert}}(x)\cdotp \Theta_{S,\frac{\epsilon}{\vert R\vert}}(K\circ \varphi_R(x))\, {\rm d}x\ .
\end{eqnarray}
The {\em downstairs weight function} is defined on the space of pairs of tripods $(t,s)$ in $\lga \mc G$ by
$$
{\rm a}\seR(t,s):=\int_{\Gamma\backslash\mc G}\Theta_{t,\frac{\epsilon}{\vert R\vert}}(x)\cdotp  \Theta_{s,\frac{\epsilon}{\vert R\vert}}(K\circ\varphi_R(x))\, {\rm d}x
$$
Let $t$ and $s$ be tripods in  $\lga \mc G$. Let $c_0$ be
a path from $t$ to $s$. The {\em connected tripod weight
  function} is defined by 
$$
{\rm a}\seR(t,s,c_0):= {\wa}\seR(T, S_0),$$
where $T$ is any lift of $t$ in $\mc G$, and $S_0$ the lift of
$s$ which is the end point of the lift of $c_0$ starting at $T$.
\end{definition}
\rmks
\begin{enumerate}
\item For  $R$  negative, we define
\begin{eqnarray}
	\wa\seR(T,S)\defeq\int_{\mc G}\Theta_{T,\frac{\epsilon}{\vert R\vert}}(x)\cdotp \Theta_{S,\frac{\epsilon}{\vert R\vert}}(K^-\circ \varphi_R(x))\, {\rm
  d}x\ ,
\end{eqnarray}
where $K^-(x)=\omega \circ K (x)=\omega^2(\bar x)$. Similarly we define $\rm a\seR$.
\item By construction, for any $g\in{\G}$ we have  $\wa\seR(gT,gS)=\wa\seR(T,S)$
  \item the value of ${\rm a}\seR(t,s,c)$ only depends  on $t$, $s$ and the homotopy class of $c$.
\item
 Let
$\pi(t,s)$ be the set of homotopy classes of paths from $t$ to $s$, then
\begin{eqnarray}
  \sum_{c\in\pi(t,s)}{\rm a}\seR(t,s,c)=\int_{\lga\mc G }\Theta_{t,\frac{\epsilon}{\vert R\vert}}(x). \Theta_{s,\frac{\epsilon}{\vert R\vert}}(K\circ\varphi_R(x)){\rm d}x={{\rm a}}\seR(t,s)\, .\label{eqn:asum}
\end{eqnarray}
\end{enumerate}

\begin{definition}\label{def:wight}{\sc[Weight of a  triconnected pair of  tripods]} Let $R$ be a positive real. Let $W=(T,S_0,S_1,S_2)$ be a  triconnected pair of  tripods in the universal cover 
 The  {\em weight} of $W$   is defined by
\begin{eqnarray}
\wb\seR(W)&=&{\wa}\seR\left(T,S_0\right)\cdotp {
 \wa}\seR\left(\omega^2T,\omega S_1\right)\cdotp {\wa}\seR\left(\omega T,\omega^2 S_2\right)\ .\label{defb}
\end{eqnarray}
Similarly, the {\em weight} of of a biconnected pair of  tripods
$B=(T,S_0,S_1)$  is defined by
\begin{eqnarray}
\wD\seR(B) \defeq {\wa}\seR\left(T,S_0\right)\cdotp \wa\seR\left(\omega^2 T,\omega S_1\right)\ .\label{defd}
\end{eqnarray}
\end{definition}
The functions $\wb\seR$ and $\wD\seR$ are $\Gamma$ invariant and thus descends to functions ${\rm b}\seR$ and ${\rm d}\seR$ for respectively triconnected tripods and biconnected tripods in $\Gamma\backslash \mc G$.
Using the definition of ${\rm b}\seR$ and equation \eqref{eq:om-lift}
\begin{eqnarray}
	{\rm b}\seR\circ\omega&=&{\rm b}\seR\ .\label{eq:bwo}\\
	\sum_{c_0,c_1,c_2}{\rm b}\seR(t,s,c_0,c_1,c_2)&=&{\rm a}\seR(t,s).{\rm a}\seR(\omega^2(t),\omega(s)).{\rm a}\seR(\omega(t),\omega^2(s))\,  \label{eq:brar}.
\end{eqnarray}
	 where the last equation used equations \eqref{defb} and \eqref{eqn:asum}, 

As an immediate consequence of the definitions of the weight functions we have 
\begin{proposition}\label{rem:st2w}
Let  $(\alpha,\beta,\gamma,\tau_0,\tau_1)$ be an $(\eR,R)$-almost closing pair of pants, let $W\defeq (\tau_0,\tau_1,\gamma(\tau_1),\beta(\tau_1))$, Then $\wb\seR(W)$ is non zero. 	
\end{proposition}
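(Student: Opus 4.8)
The plan is to unwind the definitions and reduce the positivity of $\wb\seR(W)$ to three instances of positivity of the upstairs weight function $\wa\seR$. By Definition \ref{def:wight}, $\wb\seR(W)$ is the product
$\wa\seR(\tau_0,\tau_1)\cdotp\wa\seR(\omega^2\tau_0,\omega(\gamma(\tau_1)))\cdotp\wa\seR(\omega\tau_0,\omega^2(\beta(\tau_1)))$,
so it suffices to show each of these three factors is strictly positive. Each factor is an integral of the nonnegative function $x\mapsto \Theta_{T,\epsilon/|R|}(x)\cdotp\Theta_{S,\epsilon/|R|}(K\circ\varphi_R(x))$ over $\mc G$, which is positive as soon as there exists a single $x$ in the $(\epsilon/|R|)$-neighborhood of $T$ whose image under $K\circ\varphi_R$ lies in the $(\epsilon/|R|)$-neighborhood of $S$ (using that $\Theta$ is strictly positive on its support, which is an open neighborhood, together with continuity of $K\circ\varphi_R$).

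First I would treat the first factor $\wa\seR(\tau_0,\tau_1)$. Here I would use that $(\alpha,\beta,\gamma,\tau_0,\tau_1)$ is $(\eR,R)$-stitched: by definition this means $(\tau_0,\omega^2\tau_1)$, $(\omega\tau_0,\omega\beta\tau_1)$ and $(\omega^2\tau_0,\tau_1)$ are all close to $R$-sheared (more precisely, close to the $R$-perfect stitched pair of pants based at $\tau_0$). The key geometric input is that if a pair of tripods $(u,v)$ is $R$-sheared, then $K\circ\varphi_R(u)$ is within $\bM_3\exp(-R)$ of $v$: this is essentially the content of Lemma \ref{lem:K2R} combined with the fact (used repeatedly in the pair-of-pants section) that $K$ and the shearing flow together move an $R$-sheared tripod to a tripod controlling the other endpoint. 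Applying this to the relevant pairs, and absorbing the $\eR$ error from the stitched condition, I get that $K\circ\varphi_R(\tau_0)$ (or a tripod $O(\eR + e^{-R})$-close to it) lies $O(\eR+e^{-R})$-close to $\tau_1$; since the bell functions have support of radius $\epsilon/|R|$ which dominates this error for $\epsilon$ small and $R$ large in the regime where ``stitched'' is meaningful, the integral is positive. The second and third factors follow by the same argument after applying the order-3 symmetry $\omega$ of stitched pairs of pants recorded just after Definition \ref{def:stitched}: $\omega(\alpha,\beta,\gamma,\tau_0,\tau_1)=(\beta,\gamma,\alpha,\omega\tau_0,\omega^2(\beta\tau_1))$ is again $(\eR,R)$-stitched, which feeds the needed closeness statements for the pairs $(\omega^2\tau_0,\omega\gamma\tau_1)$ and $(\omega\tau_0,\omega^2\beta\tau_1)$.

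The main obstacle — really the only non-bookkeeping point — is to pin down precisely the statement ``$(u,v)$ $R$-sheared $\implies$ $K\circ\varphi_R(u)$ is $O(e^{-R})$-close to $v$''. This is the hyperbolic-plane fact that the map $K\circ\varphi_R$ closes up an $R$-sheared path of tripods back to a tripod near the target, which is exactly the mechanism behind Lemma \ref{lem:shad}; one reads it off from the explicit $\sld$-picture (where $\varphi_R$ slides the pivot out by hyperbolic distance $\sim R$ and $K$ reflects/rotates back), and Lemma \ref{lem:K2R} supplies the exponential error bound. Once this is in hand, the proof is a three-line matter of matching the stitched-pair closeness estimates against the radius $\epsilon/|R|$ of the bell function supports and invoking positivity of $\Theta$ on open sets. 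I would phrase the write-up as: unwind $\wb\seR(W)$; observe positivity reduces to each $\wa\seR$-factor being a positive integral; show each factor's integrand is positive somewhere using the stitched condition plus Lemma \ref{lem:K2R}; conclude.

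\medskip

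\noindent\textbf{Proof sketch.} By Definition \ref{def:wight},
$$
\wb\seR(W)=\wa\seR(\tau_0,\tau_1)\cdotp\wa\seR\bigl(\omega^2\tau_0,\omega(\gamma\tau_1)\bigr)\cdotp\wa\seR\bigl(\omega\tau_0,\omega^2(\beta\tau_1)\bigr),
$$
so it is enough to check each factor is positive. Each factor has the form
$$
\wa\seR(T,S)=\int_{\mc G}\Theta_{T,\eR}(x)\cdotp\Theta_{S,\eR}\bigl(K\circ\varphi_R(x)\bigr)\,\d x,
$$
with $\Theta_{T,\eR},\Theta_{S,\eR}\geq 0$ and strictly positive on open $\eR$-neighborhoods of $T$ and $S$ respectively (Proposition \ref{pro:bell}). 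Hence $\wa\seR(T,S)>0$ as soon as there is one $x$ with $d(x,T)<\eR$ and $d(K\circ\varphi_R(x),S)<\eR$; by continuity of $K\circ\varphi_R$ it suffices that $d(K\circ\varphi_R(T),S)$ be smaller than $\eR$ up to a fixed multiple.

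Now $(\alpha,\beta,\gamma,\tau_0,\tau_1)$ being $(\eR,R)$-stitched means (Definition \ref{def:stitched}) that $\tau_1$ and its translates are $O(\eR)$-close to the $R$-perfect stitched pair of pants based at $\tau_0$, in which $(\tau_0,\omega^2\tau_1)$, $(\omega\tau_0,\omega\beta\tau_1)$, $(\omega^2\tau_0,\tau_1)$ are exactly $R$-sheared. For an $R$-sheared pair $(u,v)$, Lemma \ref{lem:K2R} (in its hyperbolic-plane form, using that $K$ preserves each $\mc U^{0,-}$-leaf, Proposition \ref{pro:Kpreserv}, and that $\varphi_R$ slides the pivot) gives $d\bigl(K\circ\varphi_R(u),v\bigr)\leq \bM_3 e^{-R}$. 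Applying this to the three perfect pairs above and adding the $O(\eR)$ errors from the stitched condition and the uniform Lipschitz bound for $\omega$ (Corollary \ref{coro:domega}), we obtain
$$
d\bigl(K\circ\varphi_R(\tau_0),\tau_1\bigr),\ \ d\bigl(K\circ\varphi_R(\omega^2\tau_0),\omega(\gamma\tau_1)\bigr),\ \ d\bigl(K\circ\varphi_R(\omega\tau_0),\omega^2(\beta\tau_1)\bigr)\ \leq\ \bM(\eR + e^{-R}),
$$
for a constant $\bM$ depending only on $\ms G$; the second and third estimates use the order-$3$ symmetry $\omega(\alpha,\beta,\gamma,\tau_0,\tau_1)=(\beta,\gamma,\alpha,\omega\tau_0,\omega^2(\beta\tau_1))$ of stitched pairs of pants. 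In the regime in which $(\eR,R)$-stitched is meaningful (i.e. $\epsilon$ small, $R$ large, as in Theorem \ref{theo:clospant}), $\bM(\eR+e^{-R})$ is smaller than $\eR$ up to the fixed radius tolerance above, so each of the three factors $\wa\seR$ is strictly positive, and therefore $\wb\seR(W)>0$. \qed
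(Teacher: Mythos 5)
Your overall strategy is the right one — and it is essentially the only one, since the paper offers no proof and labels the proposition an immediate consequence of the definitions: expand $\wb\seR(W)$ as the product of three $\wa\seR$ factors, observe that each is an integral of a nonnegative function, and show the integrand is strictly positive on an open set by placing a point of the bell function near the relevant tripods. That reduction is correct. The defect is in the quantitative chase.

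Your key intermediate claim — that for an $R$-sheared pair of tripods $(u,v)$ one has $d(K\circ\varphi_R(u),v)\leq\bM_3 e^{-R}$ — is false as stated, and this propagates into a bound that does not close. If $(u,v)$ is $R$-sheared in the sense of Definition~\ref{def:shear-qt}, then $v=\overline{\varphi_R(u)}$, and since $K=\omega\circ\sigma$ one gets $K\circ\varphi_R(u)=\omega(v)$ \emph{exactly}, which is a bounded but not small distance from $v$. The correct observation is simpler: the condition ``$(\tau_0,\omega^2\tau_1^*)$ is $R$-sheared'' in the definition of the $R$-perfect stitched pair of pants unwinds to $\omega^2\tau_1^*=\overline{\varphi_R(\tau_0)}$, hence
$$
K\circ\varphi_R(\tau_0)=\omega\bigl(\overline{\varphi_R(\tau_0)}\bigr)=\omega(\omega^2\tau_1^*)=\tau_1^*
$$
with no error term at all. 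Consequently $d(K\circ\varphi_R(\tau_0),\tau_1)=d(\tau_1^*,\tau_1)\leq\eR$, coming solely from the stitched condition; Lemma~\ref{lem:K2R}, Corollary~\ref{coro:domega}, and the constant $\bM$ never enter. Your final step asserts that $\bM(\eR+e^{-R})<\eR$ ``up to the fixed radius tolerance,'' but for $\bM>1$ — which is the expected situation for a constant absorbing Lipschitz and exponential bounds — this inequality cannot hold for any $R$, so the argument as written does not establish that $K\circ\varphi_R(\tau_0)$ lands inside the support of $\Theta_{\tau_1,\eR}$, whose radius is exactly $\eR$. With the exact identity $\tau_1^*=K\circ\varphi_R(\tau_0)$ in hand the bound is precisely $\eR$ and the factor is positive (up to the standard openness/boundary point that $\Theta$ is positive on the open $\eR$-ball and $K\circ\varphi_R$ is a diffeomorphism, so one can perturb $x$ off $\tau_0$ if the stitched inequality is an equality). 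The other two factors should then follow by applying the same exact identity to the $\omega$-translated stitched pair of pants, but you should actually carry the bookkeeping through rather than gesture at it: the second factor $\wa\seR(\omega^2\tau_0,\omega S_1)$ requires knowing $S_1$ in terms of $\tau_1$ and the boundary loops, and verifying that the $\omega^2$-translate of the stitched pant produces precisely that pair.
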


One of our main goal is to prove the converse. 

For $R<0$, for reasons that will become clear in Proposition \ref{pro:Imu}, we  define 
\begin{eqnarray}
\wb\seR(W)&\defeq &{\wa}\seR(T,\omega^2 S_1)\cdotp {\wa}\seR(\omega T, \omega S_0)\cdotp{\wa}\seR(\omega^2 T,S_2)\ ,\\
\wD\seR(W)&\defeq &{\wa}\seR(T,\omega^2 S_1)\cdotp {\wa}\seR(\omega T, \omega S_0)\ .
\end{eqnarray}

We have

\begin{proposition}{\sc [Symmetry]}
	We have for $R>0$,
\begin{eqnarray}
		\wa\seR(S,T)&=&\wa\seR(\omega^2T,\omega S)\\
	\wD\seR(B) &=&\wa\seR\left(T,S_0\right)\cdotp \wa\seR\left(S_1, T\right)\ .
\end{eqnarray}
\end{proposition}
\begin{proof} By definition, we have
\begin{eqnarray}
	\wa\seR(S,T)&=&\int_{\mc G}\Theta_{S,\frac{\epsilon}{\vert R\vert}}(x)\cdotp \Theta_{T,\frac{\epsilon}{\vert R\vert}}(K\circ \varphi_R(x))\, {\rm
  d}x\cr
  &=&\int_{\mc G}\Theta_{S,\frac{\epsilon}{\vert R\vert}}(\varphi_{-R} \circ K^{-1}(x))\cdotp \Theta_{T,\frac{\epsilon}{\vert R\vert}}(x))\, {\rm
  d}x\cr
    &=&\int_{\mc G}\Theta_{S,\frac{\epsilon}{\vert R\vert}}(\varphi_{-R} (\overline{\omega^2(x)}))\cdotp \Theta_{T,\frac{\epsilon}{\vert R\vert}}(x))\, {\rm
  d}x\cr
      &=&\int_{\mc G}\Theta_{S,\frac{\epsilon}{\vert R\vert}}(\overline{\varphi_{R}\omega^2(x)})\cdotp \Theta_{T,\frac{\epsilon}{\vert R\vert}}(x))\, {\rm
  d}x\cr
  &=&\int_{\mc G}\Theta_{S,\frac{\epsilon}{\vert R\vert}}(\overline{\varphi_{R}(x)})\cdotp \Theta_{T,\frac{\epsilon}{\vert R\vert}}(\omega(x))\, {\rm
  d}x\cr
    &=&\int_{\mc G}\Theta_{\omega S,\frac{\epsilon}{\vert R\vert}}(K\circ \varphi_{R}(x))\cdotp \Theta_{\omega^2 T,\frac{\epsilon}{\vert R\vert}}(x)\, {\rm
  d}x\cr
  &=&\wa\seR(\omega^2T,\omega S).
 \end{eqnarray}
Where we used the following  facts \begin{enumerate}
	\item in  the first equation, $K$ and $\varphi_R$ preserves the volume form,
	\item in  the second that $K(x)=\omega(\overline x)$
	\item in  the third that $\varphi_R(\overline x)=\overline{\varphi_{-R}}$
	\item in  the fourth the change of variable $y=\omega^2(x)$,
	\item for the fifth that $\omega$ is an isometry.
\end{enumerate} 
The second equation in the proposition is an immediate consequence of the first.	
\end{proof}

\subsubsection{Weight functions and mixing}

Recall that a flow $\{\varphi_t\}_{t\in\mathbb R}$ is
{\em exponential mixing} if there exists some integer $k$, positive constants $C$ and $a$ so that given two smooth $C^k$ functions $f$ and $g$, then for all positive $t$, 
\begin{equation}
	\left\vert \int_X f.g\circ \varphi_t \  {\rm d} \mu - \int_X f \  {\rm d} \mu. \int_X g  \  {\rm d} \mu\right\vert \leq C e^{-at} \Vert f\Vert_{C^k}\cdotp \Vert g\Vert_{C^k}.	
\end{equation}

In the Appendix \ref{app:mix}, we recall the fact  that the action of  $\{\varphi_{t}\}_{t\in\mathbb R}$ on
 $\Gamma\backslash \mc G$ is exponentially mixing when $\Gamma$ is a lattice. As an immediate corollary:

\begin{proposition}{\sc[Weight function and mixing]} \label{pro:expmix} Assume $\Gamma$ is a uniform lattice,
there exists a positive constant $q=q(\Gamma)$ depending only on $\Gamma$, a  positive constant $K=K(\epsilon,\Gamma)$ only depending on $\epsilon$ and $\Gamma$ so that, for $R$ large enough and every $t$, $s$ in $\Gamma\backslash\mc G$, we have 
\begin{eqnarray}
\vert {\rm a}\seR(t,s) -1\vert\leq \exp(-q\vert R\vert)\cdotp K.\label{mix1}
\end{eqnarray}

\end{proposition}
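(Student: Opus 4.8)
The plan is to derive Proposition~\ref{pro:expmix} as a direct consequence of the exponential mixing of the shearing flow $\{\varphi_t\}_{t\in\mathbb R}$ on $\Gamma\backslash\mc G$ (recalled in the Appendix) applied to the bell functions $\Theta_{t,\epsilon/\vert R\vert}$ and $\Theta_{s,\epsilon/\vert R\vert}$. First I would unwind the definition of ${\rm a}_{\epsilon,R}(t,s)$: it equals $\int_{\Gamma\backslash\mc G}\Theta_{t,\epsilon/\vert R\vert}(x)\cdot\Theta_{s,\epsilon/\vert R\vert}(K\circ\varphi_R(x))\,{\rm d}x$. The only subtlety compared with a textbook application of mixing is the presence of the map $K=\omega\circ\sigma$ instead of the identity composed with $\varphi_R$. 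Since $K$ is a fixed diffeomorphism of $\Gamma\backslash\mc G$ (independent of $R$, $\epsilon$, $t$, $s$) that preserves the measure $\mu$ — it is built from the right actions of elements of $\ms{PSL}_2(\mathbb R)$ and hence commutes with / preserves the Haar-induced measure — we may set $g\defeq\Theta_{s,\epsilon/\vert R\vert}\circ K$ and $f\defeq\Theta_{t,\epsilon/\vert R\vert}$, so that ${\rm a}_{\epsilon,R}(t,s)=\int f\cdot (g\circ\varphi_R)\,{\rm d}\mu$. Then exponential mixing gives
$$
\left\vert {\rm a}_{\epsilon,R}(t,s)-\int f\,{\rm d}\mu\cdot\int g\,{\rm d}\mu\right\vert\leq C e^{-aR}\Vert f\Vert_{C^k}\cdot\Vert g\Vert_{C^k}\ .
$$
By Proposition~\ref{pro:bell} each of $\int f\,{\rm d}\mu$ and $\int g\,{\rm d}\mu$ equals $1$ (integral~$1$ is preserved by the measure-preserving $K$), so the main term is exactly $1$.

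It remains to absorb the error term. By Proposition~\ref{pro:bell}, $\Vert \Theta_{\tau,\delta}\Vert_{C^k}\leq D\delta^{-k-D}$ for any tripod $\tau$; since $K$ is a fixed smooth map, $\Vert g\Vert_{C^k}\leq D'\,\Vert\Theta_{s,\epsilon/\vert R\vert}\Vert_{C^k}$ for a constant $D'$ depending only on $K$ (hence only on $\ms G$). Taking $\delta=\epsilon/\vert R\vert$ we get $\Vert f\Vert_{C^k}\cdot\Vert g\Vert_{C^k}\leq D''\,(\vert R\vert/\epsilon)^{2(k+D)}$, a polynomial in $\vert R\vert$. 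Therefore
$$
\vert{\rm a}_{\epsilon,R}(t,s)-1\vert\leq C D''\,e^{-aR}\,(\vert R\vert/\epsilon)^{2(k+D)}\ .
$$
Now I would simply note that for any $q<a$ there is a constant $C_q$ with $e^{-aR}R^{2(k+D)}\leq C_q e^{-qR}$ for all $R\geq 1$; fixing once and for all such a $q=q(\Gamma)$ (any $q\in(0,a)$, e.g.\ $q=a/2$) and setting $K(\epsilon,\Gamma)\defeq C D'' C_q \epsilon^{-2(k+D)}$ yields $\vert{\rm a}_{\epsilon,R}(t,s)-1\vert\leq e^{-q\vert R\vert}K(\epsilon,\Gamma)$ for $R$ large enough, which is exactly \eqref{mix1}. (The case $R<0$ is identical after replacing $\varphi_R$ by $\varphi_{-R}$ and $K$ by its inverse, or is subsumed by running mixing for the reversed flow.)

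The only genuinely non-routine point — and the one I would state carefully — is that $K$ is a fixed measure-preserving diffeomorphism with uniformly bounded $C^k$-norm, so that pre-composing with it changes neither the integral nor, up to a fixed multiplicative constant, the $C^k$-norm of the bell functions; everything else is bookkeeping, invoking Proposition~\ref{pro:bell} for the bounds, the Appendix for the mixing estimate, and the elementary inequality that polynomial growth is dominated by any slightly weaker exponential decay. I do not expect a serious obstacle; the proof is essentially a one-line corollary of exponential mixing, and indeed the statement of Proposition~\ref{pro:expmix} already advertises itself as such.
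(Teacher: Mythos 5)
Your proof is correct and follows the same approach as the paper, which cites exponential mixing together with equations \eqref{eqn:asum} and \eqref{eqn:ckT} in exactly the way you have fleshed out: absorb the fixed measure-preserving diffeomorphism $K$ into the test function, apply the mixing estimate, and dominate the polynomial blow-up of the $C^k$-norms of the bell functions from Proposition~\ref{pro:bell} by a slightly weaker exponential decay. Your handling of $K$ (that it preserves Haar measure by bi-invariance on a semisimple group and has uniformly bounded $C^k$-norm as a fixed right translation) and of the $R<0$ case are both the right, if routine, points to record explicitly.
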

\begin{proof} This follows from the definition of exponential mixing and the definition of the function ${\rm a}\seR$ by equation \eqref{eqn:asum} and equation \eqref{eqn:ckT}.
\end{proof}
Here is an easy corollary
\begin{corollary}\label{coro:bnonzero}
	For any positive $\epsilon$ then for $R$ large enough, the function ${\rm a}\seR$ never vanishes. Moreover, given any $t$ and $s$, there exists $(c_0,c_1,c_2)$ so that ${\rm b}\seR(t,s,c_0,c_1,c_2)$ is not zero.
\end{corollary}
\begin{proof}
	The first part follows from the previous proposition (\ref{pro:expmix}), the second part from equation \eqref{eq:brar}.
\end{proof}

\subsection{Triconnected pair of tripods and almost closing pair of pants}
The main theorem of this section is  to relate triconnected tripods to a almost closing pair of pants and to prove the converse of Proposition \ref{rem:st2w}
\begin{theorem}{\sc[Closing up tripods]}\label{lem:shatri} There exists a constant $\bM$ only depending on $\ms G$, so that the following holds. 
  For any $\epsilon>0$, there exists $R_0$ so that for any
triconnected pair of  of tripods $W=(T,S_0,S_1,S_2)$ 
with boundary loops $\alpha$, $\beta$, and $\gamma$, so that ${\rm B}\seR(W)\not=0$ with $R>R_0$, then $(\alpha,\beta,\gamma,T,S_0)$ is an
$(\bM\eR,R)$--positively almost closing pair of pants.
\end{theorem}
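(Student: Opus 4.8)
The plan is to show that the non-vanishing of the weight ${\rm B}\seR(W)$ forces $W$ to be a $(\bM\eR,R)$-almost closing pair of pants, and then invoke the Closing Pant Theorem \ref{theo:clospant} to conclude that it is a stitched pair of pants. So the argument is really a translation lemma between the ``weight'' language (integrals against bell functions) and the ``almost closing'' language (existence of tripods $u,v$ satisfying the distance inequalities \eqref{hyp:cl1}--\eqref{hyp:cl2}), followed by a black-box application of the previous section.

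First I would unwind the definition. By \eqref{defb}, ${\rm B}\seR(W)\neq 0$ means that each of the three factors $\wa\seR(T,S_0)$, $\wa\seR(\omega^2 T,\omega S_1)$, $\wa\seR(\omega T,\omega^2 S_2)$ is non-zero. Consider the first one: by definition $\wa\seR(T,S_0)=\int_{\mc G}\Theta_{T,\epsilon/R}(x)\,\Theta_{S_0,\epsilon/R}(K\circ\varphi_R(x))\,\mathrm dx$, and by Proposition \ref{pro:bell} the bell function $\Theta_{T,\epsilon/R}$ is supported in the $(\epsilon/R)$-neighborhood of $T$ while $\Theta_{S_0,\epsilon/R}$ is supported in the $(\epsilon/R)$-neighborhood of $S_0$. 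Hence the integrand can be non-zero only if there is a point $x$ with $d(x,T)\leq\epsilon/R$ and $d(K\circ\varphi_R(x),S_0)\leq\epsilon/R$. Since $R>R_0\geq 1$ we have $\epsilon/R\leq\epsilon$, so setting $u:=x$ we get a tripod with $d(u,T)\leq\epsilon$ and $d(K\circ\varphi_R(u),S_0)\leq\epsilon$. Doing the same for the third factor $\wa\seR(\omega T,\omega^2 S_2)$ we produce a tripod $x'$ with $d(x',\omega T)\leq\epsilon$ and $d(K\circ\varphi_R(x'),\omega^2 S_2)\leq\epsilon$; recalling $S_2=\beta(S_1)$, $S_0=\alpha(S_1)$ hence $\omega^2 S_2$ relates to $\alpha(\omega^2 S_2)$ appropriately, and using that $\omega$ is an isometry for $d$ (Proposition \ref{pro:bas}), we translate this into a tripod $v$ adapted to the pair $(T,S_0)$ so that \eqref{hyp:cl4} holds, i.e. $(T,S_0)$ is $(\epsilon,R)$-almost closing for $\alpha$. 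Here one has to be careful to match the two ``halves'' of the almost-closing condition \eqref{hyp:cl1}--\eqref{hyp:cl2}: the pair $(u,v)$ required in the definition of almost closing pulls one tripod from the factor indexed by $T\to S_0$ and the other from a cyclically rotated factor, exactly as in the proof of Proposition \ref{pro:stitc-clos}. The same reasoning applied to the $\omega$- and $\omega^2$-rotates of $W$ (and using the $\omega$-invariance \eqref{eq:bwo}, together with the fact that $\omega$ of a triconnected tripod has the prescribed boundary loops) gives that $\omega^2 T,\omega\alpha^{-1}(S_0)$ are $(\epsilon,R)$-almost closing for $\gamma$ and $\omega T,\omega^2\beta(S_0)$ are $(\epsilon,R)$-almost closing for $\beta$. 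That is precisely the definition of $W$ being a $(\epsilon,R)$-almost closing pair of pants $(\alpha,\beta,\gamma,T,S_0)$.

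Next I would feed this into the Closing Pant Theorem \ref{theo:clospant}: provided $\epsilon\leq\epsilon_1$ and $R\geq R_1$, an $(\epsilon,R)$-almost closing pair of pants is automatically a $(\bM_1(\epsilon+\exp(-R)),R)$-stitched pair of pants, with $\alpha,\beta,\gamma$ all $\ms P$-loxodromic. To finish I need the error bound in the form $\bM\eR$ rather than $\bM_1(\epsilon+\exp(-R))$; this is cosmetic: I first replace $\epsilon$ by $\epsilon/R$ in all of the above (which is legitimate since the bell functions used in the weight are $\Theta_{\cdot,\epsilon/R}$, so the tripods $u,v$ actually satisfy the stronger inequalities with $\epsilon/R$ in place of $\epsilon$), obtaining that $W$ is a $(\bM_1(\epsilon/R+\exp(-R)),R)$-stitched pair of pants; then since $\exp(-R)\leq 1/R$ for $R\geq R_0$ large enough (indeed $\exp(-R)\le \epsilon/R$ once $R$ is large relative to $\epsilon$), the error is at most $\bM_1(\epsilon/R+\epsilon/R)=2\bM_1\eR=:\bM\eR$. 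One also has to choose $R_0=R_0(\epsilon)$ large enough that simultaneously $R_0\geq R_1$, $R_0\geq 1$, and $\exp(-R_0)\leq \epsilon/R_0$; all of these are satisfiable. Finally the sign: since all the $\varphi_R$ and $K\circ\varphi_R$ appearing have $R>0$, the resulting stitched pair of pants is \emph{positively} stitched, which is what is claimed.

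The main obstacle, and the part requiring genuine care rather than routine estimation, is the bookkeeping in the middle step: correctly matching the three factors of $\wb\seR(W)$ — which are $\wa\seR(T,S_0)$, $\wa\seR(\omega^2 T,\omega S_1)$, $\wa\seR(\omega T,\omega^2 S_2)$ — against the three ``almost closing for $\alpha$, for $\gamma$, for $\beta$'' conditions, keeping track of where the $\omega$'s sit, which tripods play the role of $u$ and of $v$ in each condition (they come from \emph{different} factors of the product, via the cyclic structure, exactly as in Proposition \ref{pro:stitc-clos}\,(2)), and verifying that the boundary loops $\alpha=c_0\bullet c_1^{-1}$ etc. transform correctly under $\omega$ via \eqref{eq:om-lift}. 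Once that combinatorial dictionary is set up cleanly, the analytic content is entirely contained in Proposition \ref{pro:bell} (support of bell functions), the isometry property of $\omega$, and the already-proved Closing Pant Theorem, so the remainder is short. I would also remark that the converse direction (Proposition \ref{rem:st2w}) is already available, so this theorem together with it gives the desired equivalence between having positive weight and being stitched.
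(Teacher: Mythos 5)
Your overall skeleton matches the paper's: nonvanishing of $\wb\seR(W)$ forces each factor $\wa\seR(\cdot,\cdot)$ to be non-zero, the support of the bell functions then produces tripods within $\eR$ of the reference tripods, these are repackaged into the $(\eR,R)$-almost-closing condition, and the Closing Pant Theorem \ref{theo:clospant} upgrades almost-closing to stitched. Your final cleanup turning $\bM_1(\eR+\exp(-R))$ into $\bM\eR$ by taking $R_0=R_0(\epsilon)$ with $\exp(-R_0)\leq\epsilon/R_0$ is also fine. The paper proves an intermediate biconnected result, Proposition \ref{pro:clemma-step1}, and then applies it three times; you inline that step, which is fine in principle.

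However, the middle step — which you yourself flag as the part requiring genuine care — is carried out with the wrong pairing of factors, and this is a gap that would not close if pursued. To produce the tripod $v$ for the almost-closing condition of $(T,S_0)$ with respect to $\alpha$, you propose using the \emph{third} factor $\wa\seR(\omega T,\omega^2 S_2)$, obtaining $x'$ with $d(x',\omega T)\leq\eR$ and $d(K\circ\varphi_R(x'),\omega^2 S_2)\leq\eR$. The paper instead uses the \emph{second} factor $\wa\seR(\omega^2 T,\omega S_1)$, obtaining $z$ with $d(z,\omega^2 T)\leq\eR$ and $d(K\circ\varphi_R(z),\omega S_1)\leq\eR$, and sets $v\defeq\alpha\bigl(\omega^2 K\varphi_R(z)\bigr)$. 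The computation that makes this work is the identity $K\varphi_R\circ\omega^2\circ K\varphi_R=\omega$ (which follows from $\overline{\varphi_R(\overline{y})}=y$ and $\omega^3=\id$), together with the relation $S_0=\alpha(S_1)$: one gets $d(v,S_0)=d\bigl(K\varphi_R(z),\omega S_1\bigr)\leq\eR$ and $K\varphi_R(v)=\alpha\omega(z)$, hence $d(K\varphi_R(v),\alpha(T))=d(z,\omega^2 T)\leq\eR$. If you instead start from $x'$ coming from the third factor, the analogous candidate is $v=\beta^{-1}\omega K\varphi_R(x')$, which indeed satisfies $d(v,S_0)\leq\eR$ using $S_2=\beta(S_0)$; but then $K\varphi_R(v)=\beta^{-1}K\varphi_R\bigl(\omega K\varphi_R(x')\bigr)$, and $K\varphi_R\circ\omega\circ K\varphi_R$ does \emph{not} reduce to a single $\omega$- or $\varphi$-shift (in right-multiplication notation it is $a_R R_\omega a_{-R} R_\omega$, a generic element of $\sld$), so the second inequality $d(K\varphi_R(v),\alpha(T))\leq\eR$ does not follow. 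The third factor is instead the ``second half'' for the $\gamma$-condition and the ``first half'' for the $\beta$-condition; what you need here is factor~2. In short: the three biconnected weights $\wD\seR(T,S_0,S_1)$, $\wD\seR(\omega T,\omega^2 S_2,\omega^2 S_0)$, $\wD\seR(\omega^2 T,\omega S_1,\omega S_2)$ pair the $\wa$-factors as $\{1,2\}$, $\{3,1\}$, $\{2,3\}$ respectively, giving in turn the $\alpha$-, $\beta$-, $\gamma$-almost-closing conditions; your pairing $\{1,3\}$ for $\alpha$ is not one of these.

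So the gap is not a subtle estimate but a concrete combinatorial mismatch in the ``dictionary'' you defer. The fix is to replace your use of the third factor by the second factor and write out the short computation above (the paper packages this as Proposition \ref{pro:clemma-step1} on biconnected tripods and then applies it cyclically), after which the remainder of your argument goes through as written.
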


The proof of Theorem \ref{lem:shatri} is an immediate consequence  of the following proposition.

\begin{proposition}\label{pro:clemma-step1} For $\mu$ small enough and then $R$ large enough. Assuming  $B=(T,S_0,S_1)$ is a biconnected tripod with boundary loop $\alpha$ so that that ${\wD}_{\mu,R}(B)\not=0$.  Then $T$ and $S_0$ are $(\frac{\mu}{R},R)$-almost closing for $\alpha$. \end{proposition}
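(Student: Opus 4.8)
\textbf{Proof plan for Proposition \ref{pro:clemma-step1}.}

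The plan is to unwind the definitions: the hypothesis ${\wD}_{\mu,R}(B)\neq 0$ means that both factors $\wa_{\mu,R}(T,S_0)$ and $\wa_{\mu,R}(\omega^2 T,\omega S_1)$ are nonzero, and by the definition of $\wa_{\mu,R}$ together with Proposition \ref{pro:bell} (the bell function $\Theta_{\tau,\epsilon/|R|}$ is supported in an $\epsilon/|R|$-neighborhood of $\tau$), nonvanishing of $\wa_{\mu,R}(T,S_0)$ forces the existence of a tripod $x$ with $d(x,T)\leq \mu/R$ and $d(K\circ\varphi_R(x),S_0)\leq \mu/R$. Applying this to the first factor produces the tripod I will call $u$: $d(u,T)\leq\mu/R$ and $d(K\circ\varphi_R(u),S_0)\leq\mu/R$, which gives the first halves of \eqref{hyp:cl1} and \eqref{hyp:cl2} directly (with the ``$\mu$'' in the definition of almost closing being $\mu/R$).

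For the second required tripod $v$ — which must satisfy $d(v,S_0)\leq\mu/R$ and $d(K\circ\varphi_R(v),\alpha(T))\leq\mu/R$ — I would use the second factor $\wa_{\mu,R}(\omega^2 T,\omega S_1)\neq 0$, which yields a tripod $y$ with $d(y,\omega^2 T)\leq\mu/R$ and $d(K\circ\varphi_R(y),\omega S_1)\leq\mu/R$. The point is then to translate this ``$\omega$-rotated'' statement back into an honest almost-closing statement for the pair $(T,S_0)$ and the element $\alpha$, using that $S_0=\alpha(S_1)$ (boundary loop) and the $\Gamma$-equivariance together with the $\omega$-equivariance of $K$ and $\varphi_R$ (Proposition \ref{pro:bas}, the commutation of the $\ms G$-action with $\varphi_s$, $\sigma$, $\omega$, and the fact that $\omega$ is an isometry for $d$). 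Concretely I expect that setting $v := \omega^{-?}(\cdots y \cdots)$ appropriately and using $\alpha(S_1)=S_0$ along with $K = \omega\circ\sigma$ and Proposition \ref{pro:Kpreserv} (so $K$ respects the relevant leaf structure) produces a $v$ with the two desired inequalities, up to a bounded multiplicative constant absorbed into ``$\mu$ small enough''. The triangle inequality and the uniform Lipschitz bounds from Proposition \ref{A-B} / Corollary \ref{coro:domega} handle the error terms.

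The main obstacle is exactly this bookkeeping step: correctly matching the $\omega$-twisted data coming from the factor $\wa_{\mu,R}(\omega^2 T,\omega S_1)$ with the untwisted almost-closing condition for $\alpha$ on the pair $(T,S_0)$. One has to be careful that the boundary loop relations are $S_0=\alpha(S_1)$, $S_2=\beta(S_1)$, $S_1=\gamma(S_2)$, and that $K\circ\varphi_R$ intertwines the $\omega$-rotated tripods in the precise way needed; a sign/rotation-index error here would break the argument. I would verify the identity $d(\omega^{a}u,\omega^{a}v)=d(u,v)$ and the equivariance $K(g\cdot x)=g\cdot K(x)$, $\varphi_R(g\cdot x)=g\cdot\varphi_R(x)$ for $g\in\ms G$ at the outset, then push the inequalities for $y$ through $\omega^{-2}$ (or $\omega$) and through multiplication by $\alpha^{-1}$, landing the result. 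Once the pair $(u,v)$ is exhibited with all four inequalities of the definition of $(\tfrac{\mu}{R},R)$-almost closing for $\alpha$ (after shrinking $\mu$ so that the constant multiples stay below $\epsilon_2$ and taking $R$ past $R_2$), the proof is complete.
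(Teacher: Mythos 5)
Your plan matches the paper's proof essentially step for step: extract $u$ from the first factor, extract $z$ from the second factor $\wa_{\mu,R}(\omega^2 T,\omega S_1)$ (which you call $y$), and twist by $\omega$ and $\alpha$ to produce $v$; the paper takes $v=\alpha\bigl(\omega^2\circ K\circ\varphi_R(z)\bigr)=\alpha\bigl(\overline{\varphi_R(z)}\bigr)$ and verifies $K\circ\varphi_R(v)=\alpha(\omega(z))$ by the commutation relations in Proposition~\ref{pro:bas}. One small correction: no multiplicative constants need to be absorbed into $\mu$ here. The Riemannian metric $d$ on $\mathcal G$ was fixed at the start of paragraph~\ref{sec:tritrimet} to be exactly invariant under the left $\ms G$-action and under $\omega$, so the chain of identities $d(v,S_0)=d(\omega^2 K\varphi_R(z),S_1)=d(K\varphi_R(z),\omega S_1)$ is an equality, not an estimate; your appeal to Proposition~\ref{A-B} and Corollary~\ref{coro:domega} (which bound the tripod metrics $d_\tau$, not $d$) is unnecessary, and the thresholds $\epsilon_2,R_2$ from Lemma~\ref{lem:shad} play no role in this particular proposition.
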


\begin{proof}  We have  $S_0=\alpha(S_1)$. Since ${\rm\wa}_{R,\mu}(T,S_0)\not=0$, there exists $u$ so that
$$
\Theta_{T,\frac{\mu}{R}}(u).\Theta_{S_{0},\frac{\mu}{R}}(K\circ\varphi_R(u))\not=0.
$$
Thus, from the definition of $\Theta$,
\begin{eqnarray}
d(u,T)\leq \frac{\mu}{R}\ ,
\ \ d(K\circ\varphi_R(u), S_0)\leq \frac{\mu}{R}. \label{proo:clemma4}
\end{eqnarray}
Similarly,  since ${\rm\wa}_{R,\mu}(\omega^2(T),\omega(S_1))\not=0$, there exists a tripod $z$ so that
\begin{eqnarray} 
d(\omega(z),T)\leq d(z,\omega^2(T))\leq \frac{\mu}{R}&,&
	d(K\circ\varphi_R(z),\omega(S_1))\leq\frac{\mu}{R}\ .\label{proo:clemma1}\end{eqnarray}
	here we used that $\omega$ is an isometry for $d$ (see beginning of paragraph \ref{sec:tritrimet}).
Let
$$v\defeq\alpha(\omega^2\circ K\circ\varphi_R(z))=\alpha(\overline{\varphi_R(z)}).
$$
Then using the fact the metric on $\mc G$ is invariant by ${\G}$ and $\omega$ and using Corollary \ref{coro:domega} \begin{eqnarray} 
d(v,S_0)=d(v,\alpha(S_1))
=d(\omega^2\circ K\circ\varphi_R(z),S_1)
=d(K\circ\varphi_R(z),\omega(S_1))\leq 
 \frac{\mu}{R},\label{pro:clemma5}
\end{eqnarray}
Moreover, using the
commutations properties \eqref{gpsi}, we have
\begin{eqnarray*}
K\circ\varphi_R(v)=\omega(\overline{\varphi_R(\alpha(\overline{\varphi_R(z)})})
=
  \alpha\circ\omega(\varphi_{-R}(\varphi_R(z)))=\alpha(\omega(z)).
\end{eqnarray*}
Thus,  first inequality \eqref{proo:clemma1} and combining with inequality \eqref{pro:clemma5}
\begin{eqnarray}
	d(K\circ\varphi_R(v),\alpha(T))\leq   \frac{\mu}{R} &,& d(v,S_0)\leq \frac{\mu}{R}\ \label{pro:clemma6}\ .
\end{eqnarray}
The result now follows from inequality \eqref{pro:clemma6} and \eqref{proo:clemma4}.
\end{proof}

\subsection{Reversing orientation on triconnected and biconnected  pair of tripods}

We need an analogue of the transformation that reverse the orientation on pair of pants. Let $\bJ_0$ in $\Aut(\G_0)$ be a reflexion for $\mk s_0$ (see  \ref{sec:sld}). Let $\sigma$  be the involution $x\mapsto\overline x$  defined in paragraph \ref{sec:act}. For an even $\skd$-triple,  $\bJ_0$ and $\sigma$ commute: this follows from a direct matrix computation. Recall also that the automorphism $\bJ_0$ fixes $\ms L_0$ pointwise since by definition $\bJ_0\in \ms Z(\ms L_0)$.

\begin{definition}{\sc [Reverting orientation on $\mc G$]}\label{deg:bI0}
The  {\em reverting orientation involution} $\bI_0$ is the automorphism of $\G_0$ defined by  $\bI_0\defeq\bJ_0\circ\sigma$. 

We use the same notation to define its action on the space of tripods $\mc G=\operatorname{Hom}(\G_0,\G)$ by precomposition.
\end{definition}
\rmks
\begin{enumerate}
	\item  $\bI_0$ commutes with $\sigma$, and if  $\mk s_0=(a_0,x_0,y_0)$ is the fundamental $\msl$-triple, then 
\begin{equation}
	\bI_0(a_0,x_0,y_0)=(-a_0,y_0,x_0)\ .
\end{equation}
	\item  we have
$\bI_0\circ\varphi_R=\varphi_{-R}\circ{\bI}_0$,  similarly $
\omega\circ\bI_0=\bI_0\circ\omega^2$ and $\bI_0\circ K\circ\bI_0=\omega\circ K$ \label{eq:KI}
\item for any tripod $\tau$, $\delta^\pm\bI_0(\tau)=\delta^\mp\tau$ and $\delta^0(\bI_0(\tau))=\delta^0(\tau)$.
\item Since the action of ${\bI_0}$ commutes with the action of ${\G}$ and generates together with $\omega$ a finite group, we may assume that it preserves the left invariant metric on  $\mc G$.
\item When ${\G}$ is isomorphic to $\ms{PSL}_2(\mathbb C)$, $\bI_0$ corresponds to the symmetry $J$ with respect to a geodesic.
\end{enumerate}
\begin{definition}{\sc [Reverting orientation  and rotation on $\mc Q$]}\label{def:revI}
The {\em reverting involution} $\bI_0$ -- see Figure \ref{fig:TTI} -- on the set of triconnected pairs of tripods $\mc Q$, given by 
\begin{eqnarray}
	\bI_0(t,s,c_0,c_1,c_2)&\defeq&(\bI_0(t),\omega\bI_0(s)),\omega^2\bI_0(c_1),\omega^2\bI_0(c_0),\omega^2 \bI_0(c_2))\ .	\label{eqdef:I}
\end{eqnarray}
On the set $\mc B$ of biconnected pairs  of tripods, it is given by
\begin{equation}
\bI_0(t,s,c_0,c_1)\defeq (\bI_0(t),\omega\bI_0(s),\omega^2\bI_0(c_1),\omega^2\bI_0(c_0))\ .	\label{eqdef:IB}
\end{equation}
\end{definition}
In order for this definition  to make sense, we need to check that $\bI_0$ sends a triconnected pairs  of tripods to  a triconnected pairs  of tripods
\begin{proof}
Recall that  a  triconnected  pair of tripods  is a quintuple $(t,s,c_0,c_1,c_2)$,
\begin{itemize}
	\item $c_0$ goes from $t$ to $s$,
	\item $c_1$ goes from $\omega^2 t$ to $\omega s$
	\item $c_2$ goes from $\omega t$ to $\omega^2 s$
\end{itemize}
Let us  check that $(\bI_0(t),\omega\bI_0(s)),\omega^2\bI_0(c_1),\omega^2\bI_0(c_0),\omega \bI_0(c_2))$ is a triconnected pair of tripods.
	Indeed, denoting $u=\bI_0(t)$ and $v=\omega\bI_0(s)$, we have 
	\begin{enumerate}
		\item 	 $\omega^2 \bI_0(c_1)$ goes from $\omega^2 \bI_0 \omega^2(t)= \bI_0(t)=u$ to $\omega^2 \bI_0 \omega (s)=\omega\bI_0(s)=v$,
		\item  $\omega^2 \bI_0(c_0)$ goes from $\omega^2 \bI_0 (t)=\omega^2(u)$ to $\omega^2 \bI_0 (s)=\omega(v)$,
		\item $\omega^2 \bI_0(c_2)$ goes from $\omega^2 \bI_0 \omega(t)= \omega\bI_0(t)=\omega(u)$ to $\omega^2 \bI_0 \omega^2(s)=\bI_0(s)=\omega^2(v)$,
\end{enumerate}
Thus, indeed, $\bI_0$ sends  triconnected  pairs of tripods to  triconnected  pairs of tripods.
\end{proof}

\begin{figure}[h]
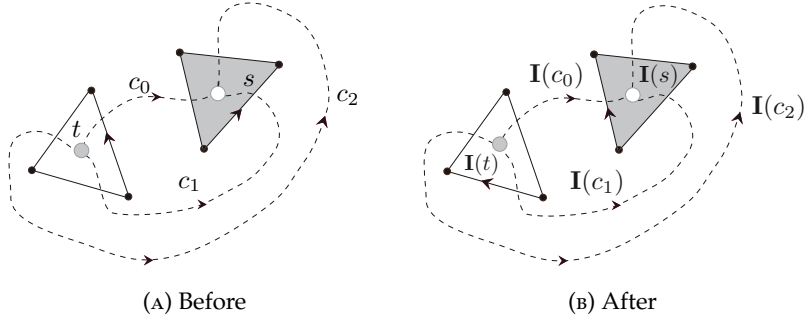

 \centering
 	\begin{subfigure}[h]{0.4\textwidth}
		\centering
\includegraphics[width=\textwidth]{TT1.pdf}
		\caption{Before}\label{fig:3a}
	\end{subfigure}
\quad  
 	\begin{subfigure}[h]{0.4\textwidth}
		\centering
\includegraphics[width=\textwidth]{TT2.pdf}
\caption{After}\label{fig:3b}
	\end{subfigure}    
 \caption{Reverting orientation on triconnected tripods: here $\bI=\omega\bI_0$.}
 \label{fig:TTI}
\end{figure}
 Reverting orientation plays well with the weight functions and boundary loops:

\begin{proposition}\label{pro:Imu}
Let $q=(t,s,c_0,c_1,c_2)$ be a triconnected pair of tripods
\begin{enumerate}
	\item If the lift of $q$ is 
	$W=(T,S_0,S_1,S_2)$ then the lift of $\bI_0(q)$ is $$
	\bI_0(W)\defeq  \left(\bI_0(T), \omega \bI_0(S_1),\omega \bI_0 (S_0),\omega \bI_0 (S_2)\right)$$
	\item If the boundary loops of $q$ are $(\alpha,\beta,\gamma)$ then those of $\bI_0(q)$ are $(\alpha^{-1}, \gamma^{-1}, \beta^{-1})$. In particular $\bI_0$ sends $\mc Q_{[\alpha]}$ to $\mc Q_{[\alpha^{-1}]}$.
	\item Finally,	\begin{eqnarray}
	 \bI_0\circ\omega =\omega^2\circ \bI_0\ \ , \ \ 
	{\rm b}\seR\circ\bI_0 ={\rm b}_{\epsilon,-R}&,&  {\rm d}\seR\circ\bI_0 ={\rm d}_{\epsilon,-R}\  .\label{eq:rIb}
\end{eqnarray}
\end{enumerate}

\end{proposition}

\begin{proof}
This follows either from squinting at symmetries in Figure \eqref{fig:TTI} or from tedious  computations that we officiate now. 

For the first item,  observe that 
\begin{itemize}
\item 	the lift of $\omega^2\bI_0 (c_1)$ goes from $\omega^2\bI_0 \omega^2(T)=\bI_0(T)$ to $\omega^2\bI_0 \omega(S_1)=\omega\bI_0(S_1)$,  
\item  the lift of $\omega^2\bI_0 (c_0)$ goes from $\omega^2\bI_0 (T)$ to $\omega^2\bI_0(S_0)=\omega\left(\omega\bI_0(S_0)\right)$,
\item   the lift of $\omega^2\bI_0 (c_2)$ goes from $\omega^2 \bI_0 \omega (T)=\bI_0(T)$ to $\omega^2 \bI_0 \omega^2(S_2)=\omega^2\left(\omega\bI_0(S_2)\right)$.
\end{itemize}

The second item now follows from the first and the fact that $\Gamma$ commutes with $\bI_0$ and $\omega$.

Let us now check the third item.  
For the sake of convenience, we use the abusive notation $
\Theta_{t,\frac{\epsilon}{\vert R\vert}}=\Theta_t
$. Thus for $R>0$, 
\begin{eqnarray*}
\wa\seR(\bI_0 (T),\bI_0 (S))
	&=&\int_{\mc G}\Theta_{\bI_0 (T)}(x)\cdot  \Theta_{\bI_0 (S)}(K\circ \varphi_R(x))\, {\rm
  d}x\cr
  &=&\int_{\mc G}\Theta_{ T}(\bI_0 (x))\cdotp \Theta_{ S}(\bI_0 \circ K\circ \varphi_R( x))\, {\rm
  d}x\label{eq=Ia1}\\ 
  &=&\int_{\mc G}\Theta_{ T}(x)\cdotp \Theta_{ S}(\bI_0 \circ K\circ \varphi_R \circ \bI_0 (x))\, {\rm
  d}x\label{eq=Ia2}\\    &=&\int_{\mc G}\Theta_{ T}(x)\cdotp \Theta_{ S}(\omega \circ K\circ \varphi_{-R} (x))\, {\rm
  d}x\label{eq=Ia4}\\ 
  &=& \wa_{\epsilon,-R}(T,S)\ ,
\end{eqnarray*}
where for equation \eqref{eq=Ia1}  we used the equivariance of $\Theta$, for equation \eqref{eq=Ia2} a change of variables in $\mc G$ and in equation \eqref{eq=Ia4} the commuting relations \eqref{eq:KI} following definition \ref{def:revI}.
Let $q=(t,s,c_0,c_1,c_2)$ with lift $W=(T,S_0,S_1,S_2)$. 
Recall that
\begin{eqnarray*}
		{\wb}\seR(W)&=&\wa\seR(T,S_0).\wa\seR(\omega^2T,\omega S_1).\wa\seR(\omega T,\omega^2 S_2)\ ,\\
		\wb_{\epsilon -R}(W)&=&\wa\seR(\omega T, \omega S_0).\wa\seR( T,\omega ^2S_1).\wa\seR(\omega^2 T,  S_2)\ . 
\end{eqnarray*}
Thus 
\begin{eqnarray*}
	{\rm b}\seR(\bI_0(W))
	&=&{\wa}\seR(\bI_0(T),\omega\bI_0(S_1)\cdotp {\wa}\seR(\omega^2\bI_0(T),\omega^2\bI_0(S_0))\cdotp{\wa}\seR(\omega\bI_0(T),\bI_0(S_2)) \cr
	&=&{\wa}\seR(\bI_0(T),\bI_0(\omega^2 S_1))\cdotp 
	{\wa}\seR(\bI_0(\omega T),\bI_0(\omega S_0))\cdotp{\wa}\seR(\bI_0(\omega^2T),\bI_0(S_2)) \cr
		&=&{\wa}_{\epsilon,-R}(T,\omega^2 S_1)\cdotp {\wa}_{\epsilon,-R}(\omega T, \omega S_0)\cdotp{\wa}\seR(\omega^2 T,S_2)\cr
	&=&{\wb}_{\epsilon,-R}(T,S_0,S_1,S_2)\cr
	&=&{\rm b}_{\epsilon,-R}(q)\  .
	\end{eqnarray*}

The commutations properties between $\omega$ and $\bI_0$ are straightforward. And the relations for $\rm d\seR$ are obtained in the same way.
\end{proof}

\subsection{Definition of negatively almost closing pair of pants}\label{sec:DefNegSt}
Assume $\epsilon$ and $R$ are positive constants.
If $p=(\alpha,\beta,\gamma,\tau_0,\tau_1)$ is an $(\epsilon,R)$- (positively) almost closing pair of pants. Then by definition 
$$
\bI_0(p)\defeq (\alpha^{-1}, \gamma^{-1},\beta^{-1}, \bI_0(\tau_0),\omega\bI_0(\tau_1))
$$
is an $(\epsilon,-R)$- (negatively)-almost closing pair of pants. 

Then Theorem \ref{lem:shatri} holds for $R<0$, as an immediate symmetry consequence of the properties of $\bI_0$.

	\section{Spaces of biconnected tripods and triconnected tripods}\label{sec:biconn}

We present in this section  the spaces of biconnected and triconnected tripods that we shall discuss in the next sections. 
Our goal in this section are
\begin{enumerate}
	\item The definition of the various spaces involved
	\item The Equidistribution and Mixing Proposition \ref{pro:munu}
	\item Some local connectedness properties of the space of almost closing pair of pants.
\end{enumerate}
Throughout this section, 
 $\Gamma$ will be a uniform lattice in $\ms G$. Let $\alpha\in\Gamma$ be a $\ms P$-loxodromic element. Recall that (see for instance \cite[Proposition 3.5]{Gelander:2014wq}) the centralizer $$\Gamma_\alpha\defeq\ms Z_{\Gamma}(\alpha),$$
 of $\alpha$ in $\Gamma$ is a uniform lattice in the centralizer $\ms Z_\G(\alpha)$ of $\alpha$ in $\ms G$.

\subsection{Biconnected tripods}
 Let $\alpha$ be a $\ms P$-loxodromic element and $\Lambda$ be a uniform lattice in $\ms Z_\ms G(\alpha)$. We define the {\em upstairs space of biconnected tripods} $\mc B_\alpha$ as
$$
\mc B_\alpha\defeq \{(T,S_0,S_1) \hbox{ biconnected tripods in the universal cover} \mid S_0=\alpha S_1\}
$$
and the {\em downstairs space of biconnected tripods as}
$$
\mc B^\Lambda_\alpha\defeq \Lambda\backslash\mc B_\alpha\ .$$

We shall also denote by $[\Gamma]$ be the set of conjugacy classes of elements in $\Gamma$, that we also interpret as the set of free homotopy classes in $\Gamma\backslash\ms G/\ms K_0$, where $K_0$ is the maximal compact of $\ms G_0$.  

\subsubsection{An invariant measure}\label{def:nu} Observe that $\mc B_\alpha$ can be identified with $(\mc G\times \mc G)^*$ (that is the space of pairs of tripods in the same connected component). From the covering map from $\mc B^\Lambda_\alpha$ to $\Lambda\backslash\ms G\times  \Lambda\backslash\ms G$, we deduce a  invariant form $\lambda$ in the Lebesgue measure class of $\mc B_\alpha$ and $\mc B^\Lambda_\alpha$. 

Let also ${\rm D}_{\epsilon,R}$  and ${\rm d}_{\epsilon,R}$ the weight functions defined in Definition \ref{def:wight} (with respect to $\Gamma=\Lambda$)
By construction ${\rm D}_{\epsilon,R}$ is a function on $\mc B_\alpha$, while ${\rm d}_{\epsilon,R}$ is a function on  $\mc B^\Lambda_\alpha$. We now consider the measures
$$
\tilde\nu\seR={\rm D}\seR\cdotp\lambda\ , \ \nu\seR={\rm d}\seR\cdotp\lambda\ ,
$$
on $\mc B^u_\alpha$ and $\mc B^\Lambda_\alpha$ respectively.
The following are obvious
\begin{proposition}
	The measure $\nu\seR$  is locally finite and invariant under $\ms C_\alpha\defeq {\bf Z}^\circ_\G(\Lambda)$.
\end{proposition}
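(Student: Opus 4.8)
The statement splits into two independent assertions, and for each the work has essentially been done earlier; what remains is to assemble it.

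\emph{Local finiteness.} The form $\lambda$ is by construction the descent, along the covering $\mc B^\Lambda_\alpha\to\Lambda\backslash\ms G\times\Lambda\backslash\ms G$, of the product of the two left-$\ms G$-invariant Haar measures on the components of $\mc G$; thus $\lambda$ is a smooth positive density on the manifold $\mc B^\Lambda_\alpha$, hence locally finite. The density ${\rm d}\seR$ is the descent of $\wD\seR$, which is continuous (an integral of products of the smooth bell functions $\Theta$) and bounded: since $\Theta\geq 0$ and $\int_{\mc G}\Theta_{T,\frac{\epsilon}{\vert R\vert}}=1$ (Proposition \ref{pro:bell}),
\[
\wa\seR(T,S)=\int_{\mc G}\Theta_{T,\frac{\epsilon}{\vert R\vert}}(x)\,\Theta_{S,\frac{\epsilon}{\vert R\vert}}(K\circ\varphi_R(x))\,{\rm d}x\ \leq\ \big\Vert\Theta_{S,\frac{\epsilon}{\vert R\vert}}\big\Vert_{C^0}\ \leq\ D\Big(\tfrac{\epsilon}{\vert R\vert}\Big)^{-D},
\]
so $\wD\seR$, and therefore ${\rm d}\seR$, is bounded. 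A bounded continuous density times a locally finite measure is locally finite, so $\nu\seR$ is locally finite.

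\emph{Invariance.} First I would pin down the action: $\ms C_\alpha$ is the relevant (abelian, central) subgroup of the centralizer $\ms Z_{\ms G}(\alpha)$, and it acts on $\mc B_\alpha$ by the diagonal left action $g\cdot(T,S_0,S_1)=(gT,gS_0,gS_1)$. This preserves the defining condition $S_0=\alpha S_1$ precisely because $g$ commutes with $\alpha$; and since elements of $\ms C_\alpha$ commute with $\Lambda\subseteq\ms Z_{\ms G}(\alpha)$, this left action commutes with the $\Lambda$-action and so descends to $\mc B^\Lambda_\alpha=\Lambda\backslash\mc B_\alpha$, compatibly with the covering used to define $\lambda$. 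Because $\lambda$ comes from left-$\ms G$-invariant Haar measures it is invariant under the diagonal left $\ms G$-action on $\mc B_\alpha$, in particular under $\ms C_\alpha$, and this invariance passes to the quotient. It then remains to see that the density ${\rm d}\seR$ is $\ms C_\alpha$-invariant: this is the $\ms G$-equivariance of the weight functions already recorded after Definition \ref{def:wight}. Concretely, by construction \eqref{eqn:ckT} one has $\Theta_{g\tau,\epsilon}(gx)=\Theta_{\tau,\epsilon}(x)$; by the first assertion of Proposition \ref{pro:bas} the left $\ms G$-action commutes with $\varphi_R$, $\sigma$ and $\omega$, hence with $K=\omega\circ\sigma$, so $K\circ\varphi_R(gx)=g\cdot(K\circ\varphi_R(x))$; and ${\rm d}x$ is left-$\ms G$-invariant. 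Substituting $x\mapsto gx$ in the integral defining $\wa\seR$ gives $\wa\seR(gT,gS)=\wa\seR(T,S)$, hence $\wD\seR$ and its descent ${\rm d}\seR$ are $\ms C_\alpha$-invariant. Therefore $\nu\seR={\rm d}\seR\cdot\lambda$ is $\ms C_\alpha$-invariant.

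I do not expect a genuine obstacle here: the proposition is a bookkeeping statement. The one point requiring care is checking that the $\ms C_\alpha$-action is well defined on the quotient $\mc B^\Lambda_\alpha$ and is compatible with the covering defining $\lambda$ — that is, that the property of $\ms C_\alpha$ we are using (centrality in $\ms Z_{\ms G}(\alpha)$, equivalently commuting with $\Lambda$) is exactly what makes the left action, the quotient, and the weight function fit together consistently.
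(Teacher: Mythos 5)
The paper dispenses with a proof here, stating only ``The following are obvious,'' so your write-up is the actual argument and it is correct. Both of your points are exactly what is needed: the local finiteness is just that a bounded continuous density multiplied against a smooth positive Haar-type measure remains locally finite (boundedness of $\wa\seR$ by $\Vert\Theta\Vert_{C^0}$ is the only estimate required), and the invariance reduces to the left $\ms G$-equivariance of $\wa\seR$ combined with the two facts that make $\ms C_\alpha$ compatible with the whole setup: $\ms C_\alpha$ commutes with $\alpha\in\Lambda$, so the left diagonal action preserves the condition $S_0=\alpha S_1$ defining $\mc B_\alpha$, and $\ms C_\alpha$ commutes with $\Lambda$, so the action descends to $\mc B^\Lambda_\alpha$. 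The paper also already records, in the remarks after Definition \ref{def:wight}, that $\wa\seR(gT,gS)=\wa\seR(T,S)$ for all $g\in\G$, which you rederive from $\eqref{eqn:ckT}$ and the commutation of $K\circ\varphi_R$ with the left action (Proposition \ref{pro:bas}); citing that remark would have shortened the argument. One small imprecision worth flagging: $\ms C_\alpha$ is defined as $\ms Z^\circ_\G(\Lambda)$, the identity component of the centralizer of the lattice $\Lambda$ in $\ms Z_\G(\alpha)$, not as ``the abelian, central subgroup of $\ms Z_\G(\alpha)$.'' Nothing in your argument depends on abelianness or centrality; what you actually use is that $\ms C_\alpha$ commutes with $\Lambda$ (hence with $\alpha$), which holds by definition, so the conclusion is unaffected.
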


We finally consider
$\mc B\seR(\alpha)$ and $\mc B^\Lambda\seR(\alpha)$ the supports of the functions ${\rm D}\seR$ and ${\rm d}\seR$. It will be convenient in the sequel to distinguish between positive and negative and we introduce 
for $R>0$,  
\begin{eqnarray*}
	\mc B^+\seR(\alpha)=\{B\in \mc B_\alpha\mid {\rm  D}\seR(B)>0\} &,& \mc B^{\Lambda,+}\seR(\alpha)=\{B\in \mc B^\Lambda_\alpha\mid {\rm d}\seR(B)>0\}\ ,\\
	\mc B^-\seR(\alpha)=\{B\in \mc B_\alpha\mid {\rm  D}_{\epsilon,-R}(B)>0\}&,& \mc B^{\Lambda,-}\seR(\alpha)=\{B\in \mc B^\Lambda_\alpha\mid {\rm d}_{\epsilon,-R}(B)>0\}\ .
\end{eqnarray*}
 Recall that 
by Proposition \ref{pro:clemma-step1}, if $(T,S_0,\alpha(S_0))$ belong to $\mc B\seR(\alpha)$, then $T$ and $S_0$ are $(\eR,R)$-almost closing for $\alpha$.

\subsubsection{Biconnected tripods and lattices} Let $\Gamma$ be a uniform lattice in $\G$ and $\alpha$ a $\ms P$-loxodromic element in $\Gamma$. We may now consider the set of biconnected tripods in $\Gamma\backslash\G$ whose loop is in the homotopy class defined by $\alpha$.
$$
\mc B^\Gamma_{[\alpha]}\defeq\{(t,s,c_0,c_1)\hbox{ biconnected tripods in }\Gamma\backslash \ms G\mid c_0\bullet c_1^{-1}\in [\alpha]\}
$$
We have the following interpretation.
\begin{proposition}
The projection from $\mc B^{\Gamma_\alpha}_\alpha$ to $\mc B^\Gamma_{[\alpha]}$ is an isomorphism.
\end{proposition}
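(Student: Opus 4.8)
The plan is to exhibit both spaces as the quotient of $(\mc G\times\mc G)^*$ by one and the same free, properly discontinuous diagonal $\Gamma_\alpha$-action, the two identifications differing only by a choice of normalisation inside the conjugacy class $[\alpha]$. On the source side I would use the identification $\mc B_\alpha\cong(\mc G\times\mc G)^*$, $(T,S_0,S_1)\mapsto(T,S_1)$ (with $S_0=\alpha S_1$ recovered), under which the diagonal $\Gamma_\alpha$-action on $\mc B_\alpha$ becomes the diagonal action of $\Gamma_\alpha\subset\ms Z_\ms G(\alpha)$; this action is free since the left $\G$-action on $\mc G$ is free ($\G$ being centre free, so $\G\hookrightarrow\Aut(\G)$) and properly discontinuous since $\Gamma_\alpha$ is a uniform lattice in $\ms Z_\ms G(\alpha)$, so $\mc B^{\Gamma_\alpha}_\alpha=\Gamma_\alpha\backslash\mc B_\alpha$. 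On the target side, since $\mc G/\ms K_0$ is contractible, a homotopy class of paths (up to loops in $\ms K_0$-orbits) is determined by its endpoints, so a biconnected pair of tripods $(t,s,c_0,c_1)$ in $\lga\mc G$ is exactly a triple $(T,S_0,S_1)\in\mc G^3$ — $T$ a lift of $t$, $S_0,S_1$ the lifts of $s$ read off from $c_0,c_1$ — modulo the diagonal $\Gamma$-action; its boundary loop is the unique $g\in\Gamma$ with $S_0=gS_1$, well defined up to $\Gamma$-conjugacy, and $\mc B^\Gamma_{[\alpha]}$ consists of those $\Gamma$-orbits with $g\in[\alpha]$.

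Next I would write down the two maps. In one direction, the covering projection $(T,S_0,S_1)\mapsto\Gamma\cdot(T,S_0,S_1)$ is defined on $\mc B_\alpha$ (a triple there has boundary loop exactly $\alpha\in[\alpha]$), is $\Gamma_\alpha$-invariant, hence descends to $\Phi\colon\mc B^{\Gamma_\alpha}_\alpha\to\mc B^\Gamma_{[\alpha]}$. In the other, given $\Gamma\cdot(T,S_0,S_1)$ with $S_0=gS_1$, $g\in[\alpha]$, choose $h\in\Gamma$ with $h\alpha h^{-1}=g$ (possible because $[\alpha]$ is a $\Gamma$-conjugacy class); then $h^{-1}\cdot(T,S_0,S_1)\in\mc B_\alpha$, and one sets $\Psi(\Gamma\cdot(T,S_0,S_1)):=\Gamma_\alpha\cdot\bigl(h^{-1}\cdot(T,S_0,S_1)\bigr)$. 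Then I would check that $\Psi$ is well defined: replacing the representative triple by its $\gamma$-translate replaces $g$ by $\gamma g\gamma^{-1}$ and forces $h\mapsto\gamma h$, so $h^{-1}\cdot(T,S_0,S_1)$ is unchanged, while the remaining indeterminacy in $h$ is exactly right multiplication by $\ms Z_\Gamma(\alpha)=\Gamma_\alpha$; granting this, $\Phi$ and $\Psi$ are manifestly mutually inverse.

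Finally, since in suitable local charts both $\Phi$ and $\Psi$ are restrictions of the covering projections of the two free properly discontinuous actions, they are local diffeomorphisms, so the bijection is an isomorphism of manifolds (and in particular matches the measures $\nu\seR$ and the weight functions ${\rm d}\seR$ on the two sides). The main obstacle is precisely the bookkeeping in the previous paragraph: checking that the normalisation $g\leadsto\alpha$ makes sense — which rests on $[\alpha]$ being a $\Gamma$-conjugacy class, not merely a $\G$-conjugacy class — and that its residual symmetry group is exactly $\Gamma_\alpha$ by definition of the centraliser. Everything else is formal.
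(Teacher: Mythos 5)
Your proof is correct and supplies the argument the paper leaves implicit — the proposition is stated there as an "interpretation" with no proof given. The construction of the inverse via a normalising conjugator $h$ with $h\alpha h^{-1}=g$, and the check that its ambiguity is exactly $\Gamma_\alpha$, is the right bookkeeping, and matching the two quotients of $\mc B_\alpha\cong(\mc G\times\mc G)^*$ by $\Gamma_\alpha$ is consistent with how the paper handles the analogous identification $\mc Q^\Gamma_{[\alpha]}\simeq\mc Q^{\Gamma_\alpha}_\alpha$ and the covering argument in Proposition~\ref{pro:tri-cov}.
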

In the sequel we will use the following abuse of language: $\mc B^\Gamma_\alpha\defeq\mc B^{\Gamma_\alpha}_\alpha$.

\subsection{Triconnected  tripods} We need to give names to various spaces of triconnected tripods, including their ``boundary related'' versions.  Let as above $\Gamma$ be a uniform lattice, $\alpha$ be an element in $\Gamma$ and $\Lambda$ a lattice in $\ms Z_\G(\alpha)$. We introduce the following spaces
\begin{eqnarray*}
	\mc Q&\defeq& \{(T,S_0,S_1,S_2)\in \mc G^4\mid S_1,S_2\in\Gamma\cdotp S_0\}\ , \cr
	\mc Q^\Gamma&\defeq&\{ (t,s,c_0,c_1,c_2) \hbox{ triconnected tripods in } \Gamma\backslash\G\}\ ,\cr
		\mc Q_\alpha&\defeq& \{(T,S_0,S_1,S_2)\in \mc Q\mid S_1=\alpha S_0\}\ , \cr
		\mc Q^\Lambda_\alpha&\defeq& \Lambda\backslash \mc Q_\alpha\ , \cr
		\mc Q^\Gamma_{[\alpha]} &\defeq& \{(t,s,c_0,c_1,c_2)\in \mc Q^\Gamma\mid c_0\bullet c_1^{-1}\in [\alpha]\}\ . \end{eqnarray*}
 The following identifications are obvious
 \begin{proposition}
 	We have that $\mc Q^\Gamma$ is isomorphic to $\Gamma\backslash \mc Q$. Similarly $\mc Q^\Gamma_{[\alpha]}$ is isomorphic to $ \mc Q^{\Gamma_\alpha}_\alpha$. Finally
 	$$
 	\mc Q^\Gamma=\bigsqcup_{[\alpha]\in [\Gamma]}\mc Q^{\Gamma}_{[\alpha]}.
 	$$
 \end{proposition}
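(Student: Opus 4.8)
The plan is to exhibit, for each of the three assertions, explicit mutually inverse maps; all of them are instances of the standard dictionary between $\Gamma$-orbits of configurations in $\mc G$ and configurations in $\Gamma\backslash\mc G$ decorated by homotopy data, so the only work is bookkeeping.

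First I would treat $\mc Q^\Gamma\cong\Gamma\backslash\mc Q$. Given a triconnected pair $(t,s,c_0,c_1,c_2)\in\mc Q^\Gamma$, choose a lift $T\in\mc G$ of $t$; each path class $c_i$ lifts uniquely to a path starting at $T$, whose endpoint $S_i\in\mc G$ is a lift of $s$. Since the $S_i$ are lifts of the same point they lie in a common $\Gamma$-orbit, so $(T,S_0,S_1,S_2)\in\mc Q$. Because the $c_i$ are only defined up to loops lying in $\ms K_0$-orbits, while the connected components of $\mc G/\ms K_0$ are copies of the symmetric space $\sg$ and hence contractible, the endpoints $S_i$ depend only on the classes $c_i$ and not on the representing paths. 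A different choice of lift $T'=\gamma T$ replaces $(T,S_0,S_1,S_2)$ by its image under the diagonal action of $\gamma\in\Gamma$. Hence $(t,s,c_0,c_1,c_2)\mapsto[(T,S_0,S_1,S_2)]$ is a well-defined map $\mc Q^\Gamma\to\Gamma\backslash\mc Q$. Conversely, given $[(T,S_0,S_1,S_2)]\in\Gamma\backslash\mc Q$, project to $t,s\in\Gamma\backslash\mc G$; joining $T$ to $S_i$ by a path (possible since they lie in one component of $\mc G$) and projecting yields a path from $t$ to $s$ whose class modulo $\ms K_0$-orbit loops is independent of the chosen path, again by contractibility. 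These two constructions are visibly inverse, giving the first isomorphism; it carries the boundary loops $\alpha=c_0\bullet c_1^{-1}$ etc.\ to the relations $S_0=\alpha(S_1)$, $S_2=\beta(S_1)$, $S_1=\gamma(S_2)$ recorded earlier.

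Next, for $\mc Q^\Gamma_{[\alpha]}\cong\mc Q^{\Gamma_\alpha}_\alpha$, I would restrict the previous identification. An element of $\mc Q^\Gamma_{[\alpha]}$ has boundary loop $c_0\bullet c_1^{-1}$ conjugate to $\alpha$, so its lift satisfies $S_0=\alpha'(S_1)$ with $\alpha'=g\alpha g^{-1}$ for some $g\in\Gamma$; replacing the lift $(T,S_0,S_1,S_2)$ by $g^{-1}(T,S_0,S_1,S_2)$ normalises it so that $S_0=\alpha(S_1)$ exactly, i.e.\ the lift lies in $\mc Q_\alpha$. The freedom remaining in this normalised lift is precisely the diagonal action of those $\gamma\in\Gamma$ with $\gamma\alpha\gamma^{-1}=\alpha$: indeed $\gamma S_0=\gamma\alpha\gamma^{-1}(\gamma S_1)$, so $\gamma(T,S_0,S_1,S_2)$ again lies in $\mc Q_\alpha$ iff $\gamma\in\Gamma_\alpha=\ms Z_\Gamma(\alpha)$. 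Therefore the normalised lift gives a bijection $\mc Q^\Gamma_{[\alpha]}\cong\Gamma_\alpha\backslash\mc Q_\alpha=\mc Q^{\Gamma_\alpha}_\alpha$.

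Finally, the decomposition $\mc Q^\Gamma=\bigsqcup_{[\alpha]\in[\Gamma]}\mc Q^\Gamma_{[\alpha]}$ holds because every triconnected pair $(t,s,c_0,c_1,c_2)$ has a boundary loop $c_0\bullet c_1^{-1}\in\pi_1(\Gamma\backslash\mc G/\ms K_0,s)\simeq\Gamma$, well defined once a base lift is fixed and hence well defined up to conjugacy in $\Gamma$ independently of all choices; this assigns to each element of $\mc Q^\Gamma$ a unique class $[\alpha]\in[\Gamma]$, the subsets $\mc Q^\Gamma_{[\alpha]}$ are pairwise disjoint by definition, and every class occurs. I expect the only delicate point, and hence the step I would write most carefully, to be the verification that passing to endpoints of lifted paths is insensitive both to the chosen representing path (contractibility of the components of $\mc G/\ms K_0$) and to loops in $\ms K_0$-orbits; once that is in place the rest is formal covering-space bookkeeping, which is why the paper calls the statement obvious.
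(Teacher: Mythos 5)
The paper states this proposition without proof, labelling it ``obvious,'' and your covering-space bookkeeping (lift one endpoint, recover the others by path lifting, well-defined because the components of $\mc G/\ms K_0$ are contractible, then normalise the lift to land in $\mc Q_\alpha$ modulo the residual $\Gamma_\alpha$-action) is precisely the argument the authors evidently have in mind. Your proof is correct and matches that intent; the only wrinkle is the paper's own notational inconsistency ($\mc Q_\alpha$ is defined by $S_1=\alpha S_0$ while the lift description uses $S_0=\alpha(S_1)$), which you silently follow on the second convention's side --- a harmless ambiguity in the source, not a gap in your argument.
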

By a slight abuse of language we shall write $ \mc Q^{\Gamma}_\alpha\defeq \mc Q^{\Gamma_\alpha}_\alpha$.

\subsubsection{Triconnected tripods in $\Gamma\backslash\ms G$}Parallel to what we did for biconnected tripods, let  us introduce the  following spaces. First 
Let $\mc Q^\Gamma$ be the set of triconnected pairs of  tripods in $\Gamma\backslash\mc G$  and let $$
\mc Q^\Gamma\seR=\{w\in \mc Q^\Gamma\mid {\rm b}\seR(w)>0\}.
$$
We will assume $R>0$ and write  accordingly
 $
\mc Q^{\Gamma,+}\seR=\mc Q^\Gamma\seR
$ and $
\mc Q^{\Gamma,-}\seR=\mc Q^\Gamma_{\epsilon,-R}
$.
Let $(\lga\mc G\times\lga\mc G)^*$ be the set of pairs of points in $\lga\mc G$ in the same connected component.
We first observe
\begin{proposition}\label{pro:tri-cov}
	The (forgetting) map $p$ from  $\mc Q^\Gamma$ to $(\lga\mc G\times\lga\mc G)^*$ sending 
	$(t,s,c_0,c_1,c_2)$ to $t,s)$
	is a covering. \end{proposition}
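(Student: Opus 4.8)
The plan is to exhibit $p$ as a locally trivial fibre bundle with discrete fibre, which is automatically a covering map. First I would unwind the description already recorded above: $\mc Q^\Gamma\cong\Gamma\backslash\mc Q$ with $\mc Q=\{(T,S_0,S_1,S_2)\in\mc G^4\mid S_1,S_2\in\Gamma\cdot S_0\}$ and $\Gamma$ acting diagonally, while $(\lga\mc G\times\lga\mc G)^*$ is the honest product $\lga\mc G\times\lga\mc G$ restricted to same-component pairs (it is \emph{not} the diagonal quotient $\Gamma\backslash(\mc G\times\mc G)$), and $p$ is induced by $(T,S_0,S_1,S_2)\mapsto([T],[S_0])$, which is well defined because $[S_0]=[S_1]=[S_2]$. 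I would also recall that each connected component of $\mc G$ is a $\ms G$-torsor, so $\ms G$, and hence the discrete subgroup $\Gamma$, acts freely and properly discontinuously on $\mc G$; in particular the quotient map $\pi\colon\mc G\to\lga\mc G$ is a covering.

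The core step is local triviality. Fix a point $(t_0,s_0)$ of the target and choose evenly covered connected neighbourhoods $V_t\ni t_0$ and $V_s\ni s_0$ with distinguished sheets $\tilde V_t,\tilde V_s\subset\mc G$ mapped homeomorphically onto $V_t,V_s$ by $\pi$. Given a class in $p^{-1}(V_t\times V_s)$, the diagonal $\Gamma$-action lets me move $T$ to the unique lift of $[T]$ lying in $\tilde V_t$, and freeness of the action makes this representative unique. For that representative, $[S_0]\in V_s$ together with $S_1,S_2\in\Gamma S_0$ forces $S_0,S_1,S_2$ to be three lifts of one and the same point of $V_s$, so injectivity of $\pi|_{\tilde V_s}$ forces each $S_i$ to equal $\gamma_i\tilde S$ for a single common $\tilde S\in\tilde V_s$ and elements $\gamma_i\in\Gamma$. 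This yields a homeomorphism $p^{-1}(V_t\times V_s)\cong\tilde V_t\times\tilde V_s\times\Gamma^3\cong(V_t\times V_s)\times\Gamma^3$ under which $p$ becomes the first projection; continuity of the inverse uses that each $\gamma_i$ is locally constant, the usual consequence of proper discontinuity together with freeness. Surjectivity of $p$ is clear (take $\gamma_i=e$). Hence $p$ is a fibre bundle with discrete fibre $\Gamma^3$, thus a covering map.

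The one genuine subtlety --- the rest being routine --- is that the base is the product $\lga\mc G\times\lga\mc G$ and not a quotient of $\mc G\times\mc G$ by the diagonal $\Gamma$, so the fibre of $p$ is $\Gamma^3$ rather than $\Gamma^2$; the extra factor of $\Gamma$ arises precisely from simultaneously trivialising the three lifts $S_0,S_1,S_2$ over a common sheet $\tilde V_s$, and getting that bookkeeping right is the main point to be careful about.
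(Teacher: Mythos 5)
Your proof is correct, and moreover you correctly pin down the subtle point that the paper's own argument mishandles. The paper introduces the covering $\pi\colon\mc Q_u\to(\mc G\times\mc G)^*$ with fibre $\Gamma^2$ and claims that quotienting by the $\Gamma\times\Gamma$-action $(\gamma,\eta)\cdot(T,S_0,S_1,S_2)=(\gamma T,\eta S_0,\eta S_1,\eta S_2)$ produces $\mc Q^\Gamma$ over $\lga\mc G\times\lga\mc G$. But this is not the space $\mc Q^\Gamma$: by the paper's own earlier identification (and by the definition of the lift of a triconnected pair of tripods, which is explicitly stated to be ``well defined up to the diagonal action of $\Gamma$''), one has $\mc Q^\Gamma\cong\Gamma\backslash\mc Q$ for the \emph{diagonal} $\Gamma$-action. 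The resulting fibre of $p$ over a point of $(\lga\mc G\times\lga\mc G)^*$ is therefore $\Gamma^3$ --- one factor $\Gamma$ for the choice of lift $S_0$ once $T$ has been normalized, and one for each of $S_1,S_2$ --- whereas $(\Gamma\times\Gamma)\backslash\mc Q_u$ has fibre only $\Gamma^2$. So the two quotient spaces are genuinely different covers of the base, and the paper's proof as written does not establish what is wanted.

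Your direct local-triviality argument sidesteps this entirely and is the safer route. You fix evenly covered neighborhoods $V_t,V_s$ with distinguished sheets, normalize the representative quadruple so that $T$ lies in $\tilde V_t$ using freeness of the diagonal action, and then express $S_0,S_1,S_2$ as $\gamma_i\tilde S$ for a common $\tilde S\in\tilde V_s$, obtaining the trivialization $p^{-1}(V_t\times V_s)\cong(V_t\times V_s)\times\Gamma^3$; proper discontinuity makes the $\gamma_i$ locally constant, which is exactly what continuity of the inverse requires. One remark for context: the cleanest repair of the paper's argument would be to observe that the composite $\mc Q_u\xrightarrow{\pi}(\mc G\times\mc G)^*\to(\lga\mc G\times\lga\mc G)^*$ is a covering, that it factors as $\mc Q_u\to\Gamma\backslash\mc Q_u=\mc Q^\Gamma\xrightarrow{p}(\lga\mc G\times\lga\mc G)^*$ with the first map a covering, and then to invoke the standard fact that if $g\circ f$ and $f$ are coverings with $f$ surjective, so is $g$. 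This is essentially the bookkeeping you carried out by hand, packaged more abstractly. Either way, you have a complete and correct argument.
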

\begin{proof} Let  ${\mc Q}_u$ be the space of quadruples $(T,S_0,S_1,S_2)$ where all $S_i$ lie in the same $\Gamma$ orbit.  The map $\pi:(T,S_0,S_1,S_2)\mapsto (T,S_0)$ is a covering. Let $\Gamma\times\Gamma$ be acting on  ${\mc Q}_u$ by $(\gamma,\eta)\cdotp(T,S_0,S_1,S_2)=(\gamma T,\eta S_0,\eta S_1,\eta S_2)$. Then 
$(\Gamma\times\Gamma)\backslash{\mc Q}_u=\mc Q$ 
and $\pi$ being equivariant gives rise to $p$. Thus $p$ is a covering.
\end{proof}

\begin{definition}{\sc [Measures]}\label{def:mu}
The {\em  Lebesgue measure} $\Lambda$ is the  locally finite measure on $\mc Q$ associated to the pullback of the $\G$-invariant volume form on $\lga\mc G$.

Given positive $R$ and $\epsilon$, the {\em   weighted measure} $\mu\seR$ on $\mc Q$ is the measure supported on  
	$\mc Q\seR$ given by 
	$
	\mu\seR\defeq{\rm b}\seR\Lambda
	$.  
\end{definition}

For the sake of convenience, we will assume that $R>0$ and write
$\mu^+\seR\defeq\mu\seR$ and $ \mu^-\seR\defeq\mu_{\epsilon,-R}
$

\begin{proposition} \label{exispant} For any positive $\epsilon$, then $R$ large enough,
	  $\mc Q^\Gamma\seR$ is non empty,  relatively compact and  $\mu\seR$ is finite.
	  Moreover 
	  \begin{eqnarray*}
	  \omega_*\mu^\pm\seR=\mu^\pm\seR\  \ , 	\ \ {\bI_0}_*\mu^\pm\seR=\mu^\mp\seR \  .
 \end{eqnarray*}\end{proposition}
\begin{proof} By  Corollary \ref{coro:bnonzero}, ${\rm b}\seR$ is not always zero, thus $\mc Q^\Gamma\seR$ is non empty.  Let $(T,S_0,S_1,S_2)$ be a lift of  a triconnected tripod $w=(t,s,c_0,c_1,c_2)$ satisfying  ${\rm b}\seR(w)\not=0$. Then, by Proposition \ref{pro:clemma-step1},  $d(T,S_i)\leq R+\epsilon$. This implies that $\mc Q\seR$ is relatively compact and thus $\mu\seR$ is finite. 
The invariance by  $\omega$ comes from the invariance of ${\rm b}\seR$  by $\omega$ (Equation \eqref{eq:bwo}) and the invariance of the metric by $\omega$.
The last assertion comes from the fact that ${\rm b}_{\epsilon,R}={\rm b}_{\epsilon,-R}\circ I$ by Proposition \ref{pro:Imu} and that $\Lambda$ is invariant by $I$, since the invariant measure on $\mc G$ is invariant by $\Aut(\G_0)$ (see beginning of paragraph \ref{sec:tritrimet}).
\end{proof}
Let's finally define 
$\mc Q^\Gamma\seR(\alpha)\defeq\mc Q^\Gamma\seR\cap \mc Q^\Gamma_\alpha$.
\subsection{Mixing: From triconnected tripods to biconnected tripods}

We have a natural forgetful map $\pi$ from $\mc Q^\Gamma_\alpha$ to $\mc B^\Gamma_\alpha$: $\pi(T,S_0,S_1,S_2)\defeq  (T,S_0,S_1)$. We then have 	
the following proposition which says that adding a third path is probabilistically independent for large $R$.

\begin{proposition}	\label{pro:munu}{\sc[Equidistribution and mixing]}
	We have the inclusion $\pi(\mc Q\seR(\alpha))\subset \mc B\seR(\alpha)$. Moreover,
there exists a function $C\seR$ depending on $R$ and $\epsilon$, a  constant $q$ and a constant  $K(\epsilon,\Gamma)$ so that
$
\pi_*\left(\mu\seR\right)=C\seR\cdotp\nu\seR
$.
The function ${\rm C}\seR$ is a smooth almost constant function: there exists a  constant $q$ and a constant  $K(\epsilon,\Gamma)$ so that
\begin{eqnarray}
\left\Vert {\rm C\seR}-1\right\Vert_{C^0}\leq K(\epsilon,\Gamma) \exp(-q\vert R\vert)\ .\label{ccons}
\end{eqnarray}
In particular, given $\epsilon$, for $R$ large enough,  the measure $\nu\seR$ is finite with relatively compact support.
\end{proposition}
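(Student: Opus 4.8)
The statement compares two measures on $\mathcal B^\Lambda_\alpha$ (resp. on $\mathcal B_\alpha$): the push-forward $\pi_*(\mu\seR)$ of the weighted measure on triconnected tripods, and $\nu\seR$. The plan is to compute $\pi_*(\mu\seR)$ directly by integrating out the third path, and then to recognize the ``fiber integral'' as the weight function $\mathrm{a}\seR$ of a suitable pair of tripods, to which exponential mixing (Proposition \ref{pro:expmix}) applies.

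\emph{Step 1: identify the fibre of $\pi$.} Work first in the universal cover. A point of $\mc Q_\alpha$ is $(T,S_0,S_1,S_2)$ with $S_1=\alpha S_0$ and $S_2\in\Gamma\cdot S_0$; fixing $(T,S_0,S_1)\in\mc B_\alpha$ the remaining datum is $S_2$, ranging over the $\Gamma$-orbit of $S_0$, i.e. over the lifts of a point of $\lga\mc G$. Since $\Lambda=\Gamma_\alpha$ acts compatibly, the fibre of $\pi:\mc Q^\Gamma_\alpha\to\mc B^\Gamma_\alpha$ over a biconnected tripod $(t,s,c_0,c_1)$ is exactly the set of homotopy classes $c_2$ of paths from $t$ to $s$, by the covering statement implicit in Proposition \ref{pro:tri-cov} and the identifications of the previous section. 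The Lebesgue measure $\Lambda$ on $\mc Q$ is, by Definition \ref{def:mu}, the pullback of the invariant volume on $\lga\mc G\times\lga\mc G$ through the ``forget $c_2$'' map composed with ``forget $S_2$''; hence $\pi$ is measure-compatible in the sense that $\pi_*$ of $\Lambda$-integration amounts to summing the $c_2$-fibre.

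\emph{Step 2: integrate out the third path.} Using Definition \ref{def:wight}, $\wb\seR(W)={\wa}\seR(T,S_0)\cdot{\wa}\seR(\omega^2T,\omega S_1)\cdot{\wa}\seR(\omega T,\omega^2 S_2)$, and only the last factor depends on $S_2$. Summing over the $\Gamma$-orbit of $S_0$ and using the identity \eqref{eqn:asum}, $\sum_{c_2\in\pi(\omega t,\omega^2 s)}{\rm a}\seR(\omega t,\omega^2 s,c_2)={\rm a}\seR(\omega t,\omega^2 s)$, we obtain
$$
\pi_*(\mu\seR)={\wa}\seR\!\left(T,S_0\right)\cdot{\wa}\seR\!\left(\omega^2T,\omega S_1\right)\cdot{\rm a}\seR(\omega t,\omega^2 s)\cdot\lambda
={\rm D}\seR\cdot{\rm a}\seR(\omega t,\omega^2 s)\cdot\lambda,
$$
where we recognized the first two factors as ${\rm D}\seR={\rm d}\seR$ (Definition \ref{def:wight}) so that ${\rm D}\seR\cdot\lambda=\nu\seR$. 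Thus $\pi_*(\mu\seR)={\rm C}\seR\cdot\nu\seR$ with ${\rm C}\seR(t,s)\defeq{\rm a}\seR(\omega t,\omega^2 s)$; care is needed to phrase this equality $\Lambda$/$\lambda$-almost everywhere and to check $\Lambda$-integrability of each factor, which follows from Proposition \ref{exispant} ($\mc Q^\Gamma\seR$ relatively compact for $R>1$). The inclusion $\pi(\mc Q\seR(\alpha))\subset\mc B\seR(\alpha)$ is immediate from the factorization of $\wb\seR$: if $\wb\seR(W)\neq0$ then each factor is nonzero, in particular ${\wa}\seR(T,S_0)\cdot{\wa}\seR(\omega^2T,\omega S_1)\neq0$, i.e. ${\rm D}\seR(\pi W)\neq0$.

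\emph{Step 3: almost-constancy of ${\rm C}\seR$.} This is exactly Proposition \ref{pro:expmix}: $\vert{\rm a}\seR(t',s')-1\vert\le\exp(-q\vert R\vert)\,K(\epsilon,\Gamma)$ for all $t',s'\in\lga\mc G$, applied to $(t',s')=(\omega t,\omega^2 s)$ and using that $\omega$ is a diffeomorphism of $\lga\mc G$. Hence $\Vert{\rm C}\seR-1\Vert_{C^0}\le K(\epsilon,\Gamma)\exp(-q\vert R\vert)$, which is \eqref{ccons}. Finiteness of $\nu\seR$ with relatively compact support then follows: for $R$ large ${\rm C}\seR\ge 1/2$, so $\nu\seR\le 2\,\pi_*(\mu\seR)$, and $\mu\seR$ is finite with relatively compact support by Proposition \ref{exispant}; since $\pi$ is proper on the relevant supports (the support of $\mu\seR$ projects into that of $\nu\seR$, a subset of the relatively compact $\mc B^\Lambda\seR(\alpha)$), we conclude.

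\emph{Main obstacle.} The only delicate point is Step 1–2: making the ``sum over the third path equals total weight'' computation rigorous at the level of measures rather than pointwise functions, i.e. justifying that $\pi_*$ commutes with the disintegration of $\Lambda$ along the $c_2$-fibres and that the reindexing via \eqref{eqn:asum} is valid $\lambda$-a.e.; this is where the covering-space bookkeeping of Proposition \ref{pro:tri-cov} and the precise definition of $\Lambda$ as a pullback measure must be used carefully. Everything after that is a direct appeal to exponential mixing and to the relative compactness already established.
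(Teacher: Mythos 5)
Your proof is correct and follows essentially the same route as the paper's: factor $\wb\seR={\rm D}\seR\cdot\wa\seR(\omega T,\omega^2S_2)$, sum the last factor over $c_2$ via \eqref{eqn:asum}, and invoke exponential mixing (Proposition \ref{pro:expmix}). If anything, you are slightly more careful: you correctly identify $C\seR(t,s)={\rm a}\seR(\omega t,\omega^2 s)$, whereas the paper writes ${\rm a}\seR(t,s)$, a harmless slip since Proposition \ref{pro:expmix} bounds ${\rm a}\seR$ uniformly over all pairs.
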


\begin{proof} 
By construction for the second  equality, and assertion (\ref{eqn:asum}) for the third 
$$
\pi_*\left({\mu}\seR\right)=\pi_*\left({\rm b}\seR\Lambda\right)=\left(\sum_{c_2} {\rm a}\seR(\omega(t),\omega^2(s),c_2)\right)\cdotp{\rm d}\seR\Lambda=a\seR(\omega(t),\omega^2(s))\Lambda\ .
$$
Thus the result follows from exponential mixing: Proposition \ref{pro:expmix} and taking
$$
{\rm C}\seR(t,s)\defeq \sum_{c_2} {\rm a}\seR(\omega(t),\omega^2(s),c_2)\ = a\seR(\omega(t),\omega^2(s).
$$ 
Observe that ${\rm C}\seR$ is smooth since only finitely many terms in the sum are non zero: there are only finitely many homotopy classes of arcs of bounded length.
\end{proof}

\subsection{Perfecting pants and varying the boundary holonomies}

By a slight abuse of language we will say that $(T,S_0,S_1,S_2)$ with boundary loops $(\alpha,\beta,\gamma)$ is $(\eR,R)$-almost closing if $(\alpha,\beta,\gamma,T,S_0)$, where $S_2=\beta(\omega^2 S_0)$ and $S_0=\alpha(\omega^2 S_1)$ is $(\eR,R)$-almost closing.

Let us denote by $P\seR$ the space of $(\epsilon,R)$-almost closing pair of pants.

We say a boundary loop of a triconnected pair of pants is {\em $T$-perfect} if 
it is conjugate to $\exp(2Ta)$. Given $R$, recall that the boundary loops of an $R$-perfect pair of pants are $R$-perfect (See \cite[Proposition 5.2.2]{Labourie:2009}).

Recall that if $\alpha$ is a $\ms P$-loxodromic element, we denote by $\ms L_\alpha$  the stabilizer in $\G$ of the attractive and repulsive points $\alpha^+$ and $\alpha^-$.

Our main result is the following 

\begin{theorem}{\sc[Varying the boundary holonomies]}\label{theo:pantconn}
Let $B$ be a positive number. Then there exists positive constants $\epsilon_0$ and $C$, so that given  $\epsilon<\epsilon_0$, then for $R$ large enough the following assertion holds:

Let $W_0=(T,S_0,S_1,S_2)$ be a pair of pants in $P\seR$ with boundary loops $\alpha_0$, $\beta_0$ and $\gamma_0$. 

Let $\{k_t\}_{t\in [0,1]}$ be a smooth path in $\ms L_{\alpha_0}$ in the ball of radius of length less than  $B\eR$  with respect to $d_{T}$.

Then there exists a a continuous family $\{W_t\}_{t\in [0,1]}$ in $P_{C\epsilon,R}$ with boundary loops $\alpha_t$, $\beta_t$, $\gamma_t$ so that  $W_0=W$ and 
	\begin{enumerate}
		\item for all $t$, $\beta_t$ and $\gamma_t$ are conjugate to $\beta_0$ and  $\gamma_0$ respectively, 
		 	\item  $\alpha_t$ is conjugate to $\alpha_0\cdotp k_t$.
	\end{enumerate}
\end{theorem}

\subsubsection{Lifting holonomies}
Let $W=(T,S_0,S_1,S_2)$. For any $h$ in $\G$ let 
$$
W_h=(T,S_0,hS_1,S_2)\ .
$$
Observe that if $h$ belongs to the ball of radius $\eR$ with respect to $d_T$ then $W$ is $(M\eR,R)$ almost closing for some uniform $M$.

The boundary loops of $W_h$ are now $\alpha_h\defeq\alpha\cdotp h^{-1}$, $\beta_h=\beta$ and  $\gamma_h\defeq h\cdotp\gamma$.

Let $\G\seR$, respectively $\ms L\seR$, the ball of radius $\eR$ in $\G$, respectively $\ms L_0$.

Let us consider, for this section, 
\begin{enumerate}
	\item the map $A$ from the ball $\G\seR$ of radius $\eR$ in $\G$ to the ball $\ms L_{M\epsilon, R}$ so that $\alpha_h$ is conjugate to $\exp(Ra_0)\cdotp A(h)$. 
	\item Similarly let $C$ the map from  $\G\seR$ to  $\ms L_{M\epsilon, R}$ so that $\gamma_h$ is conjugate to $\exp(Ra_0)\cdotp C(h)$.
\end{enumerate}  
For $\epsilon$ small enough and $M$ large enough, so these maps are well defined by the Boundary Loop proposition.

We need a succession of technical result

\begin{lemma}{\sc [Horizontal distribution]} There exists constant $K_0$ and $\epsilon_0$ so that for $\epsilon$ less than $\epsilon_0$
	For any $h$ in $\G\seR$, there exists a linear subspace $H_h$ in $\T_h\G$, depending smoothly on $h$,  so that
	\begin{enumerate}
		\item $\T_h C$ is zero restricted to $H_h$
		\item $\T_h A$ is uniformly $K_0$ bilipschitz from $H_h$ to $T_{A(h)} \ms L_0$.
	\end{enumerate}
\end{lemma}
We will refer to $H$ as the {\em horizontal distribution}.
\begin{proof} Let us linearize the problem. Given a deformation $\{k_t\}_{t\in[0,1]}$, we are looking for $\{h_t\}_{t\in[0,1]}$, $\{f_t\}_{t\in[0,1]}$ and $\{g_t\}_{t\in[0,1]}$ so that  
$$
\alpha \cdotp h_t^{-1}=f_t\cdotp \alpha\cdotp k_t\cdotp f_t^{-1}\ \ , \ \ h_t\cdotp\gamma=g_t \cdotp\gamma\cdotp  g_t^{-1}\ .
$$
Writing $\dt{v}=\left.\frac{{\rm d}v}{{\rm d}t}\right\vert\mid_{t=0}$,  we get the linearized equation
\begin{equation}
\dt{h}=(\Ad(\alpha)-1)\cdotp\dt{f}+ \dt{k}\ \ , \ \ \dt{h}=(1-\Ad(\gamma))\cdotp\dt{g}\ .	\label{eq:ode}
\end{equation}
Given $\dt{k}$ in the Lie algebra $\mk l_\alpha$ of $\ms L_\alpha$, we want to find 
$\dt{h}$, $\dt{f}$, $\dt{g}$  depending linearly on $\dt{k}$ with 
$$
\frac{1}{K} \Vert \dt{k}\Vert\leq  \Vert \dt{h}\Vert \leq {\rm  K}\  \Vert \dt{k}\Vert\ ,
$$
for some constant ${\rm K}$ only depending on $\G$, $\epsilon$ and $R$, and where the norm comes from $d_{T}$ such that furthermore the choice of  $\dt{h}$ is smooth in $h$.

For $p\in \gp$, let $\ms M_p$ be the stabilizer of $p$ and $\ms N_p$ be its nilpotent radical. 

As a consequence of the Boundary Loop Proposition \ref{pro:boundaryloop} for $R$ large enough both  $\Ad(\alpha)$  contracts $\ms N_{\alpha}^-$ by a factor less than  $1/2$, dilates $\ms N_{\alpha}^-$ by a factor at least $2$.  A similar statement holds for $\Ad(\gamma)$ since the same holds for $\alpha_*$ and  $\gamma_*$

Moreover, since by the Structure Pant Theorem, $(T,\alpha^-,\alpha^+,\gamma^-)$ is an $\eR$-quasitripod, there exists  a  positive constant $C$,  for $\epsilon$ small enough and $R$ large enough, that 
$$
d_{T}(\alpha^-,\alpha^+)> C , \ \ d_{T}(\gamma^-,\alpha^+)>C , \ \ d_{T}(\gamma^-,\alpha^-)>C\ .
$$
Indeed, noting that $D:=d_\tau(\partial^+\tau,\partial^-\tau)$ is a positive constant only depending on $\G$, it follows that if $\theta^-$ is an $\mu$ quasi tripod, then for $i\not=j$
$$
d_{\dt{\theta}}(\partial^i\theta,\partial^j\theta)\geq d_{\dt{\theta}}(\partial^i\tau,\partial^j\tau)-d_{\dt{\theta}}(\partial^i\theta,\partial^i\tau)-d_{\dt{\theta}}(\partial^j\theta,\partial^j\tau)\geq D-2\mu\ .$$ 
Thus $\alpha^-$, $\alpha^+$, $\gamma^-$ are $D-2\mu$ apart using $d_{S_1}$.
Thus $
	\mk g= \mk n_{\gamma^-}+ \mk n_{\alpha^-}+\mk n_{\alpha^+}$.
Then denoting $\mk n^\circ_{\gamma^-}$ the orthogonal in $\mk n_{\gamma^-}$ to $(\mk n_{\alpha^-}\oplus \mk n_{\alpha^+})\cap \mk n_{\gamma^-}$
 , we have
\begin{equation}
	\mk g= \mk n^\circ_{\gamma^-}\oplus \mk n_{\alpha^-}\oplus \mk n_{\alpha^+}\ .\label{eq:dcg10}
\end{equation}
Observe now that $\dim {\ms L_\alpha}= \dim \mk n^\circ_{\gamma^-}$ and that the projection 
from $\ms L_\alpha$ to $\mk n^\circ_{\gamma^-}$ using the above projection is uniformly bilipschitz by a function that depends only on $h$.

We now claim that $H_h= \mk n^\circ_{\gamma^-}$ solves our problem. Indeed for $\dt{k}$ in $\G$ let us consider the decomposition of $\dt{k}$ using the  above decomposition \label{eq:dcg10} of $\mk g$ as
$$
\dt{k}=k^{\circ}_{\gamma^-}+ k_{\alpha^-}+  k_{\alpha^+}\ ,\hbox{where }\ k^{\circ}_{\gamma^-}, k_{\alpha^-}, k_{\alpha^+} \ \ \hbox{belong to }\ \ \mk n^\circ_{\gamma^-},\   
\mk n_{\alpha^-},\ \mk n_{\alpha^+}\hbox{respectively. }  
$$
We now define $\dt{f}$, $\dt{h}$ and $\dt{g}$ by
$$
\dt{f}=-(\Ad(\alpha)-1)^{-1}(k_{\alpha^-}+  k_{\alpha^+})\ , \ \ \dt{h}=k^{\circ}_{\gamma^-}\ , \ \ \dt{g}=(\Ad(\gamma)-1)^{-1}(k^{\circ}_{\gamma^-})\ .
$$
Observe that $\dt{h}$ belongs to $H_h$ and $\dt{f}$, $\dt{g}$ and $\dt{h}$ solve equation \eqref{eq:ode}. In particular $\T_hA(\dt{h})=\dt{k}$ and $T_hC(\dt{h})=0$.
\end{proof}

An horizontal distribution (here $H$) for a submersion (here $A$) provides a way to lift paths from $\ms L_0$ to $\G$ starting from any point in the fiber above the origin of the path. Using this classical idea form differential geometry we now prove the following result that complete the proof of Theorem \ref{theo:pantconn}.

\begin{lemma}{\sc [Existence of a section]}
Let $\epsilon_0$ and $K_0$ as in the previous lemma.
	For $\epsilon$ less than $\epsilon_0(2K_0)^{-2}$, then $R$, large enough,  given  $h^0$ in $\G\seR$ and  $k^0\defeq A(h^0)$,  there exists a continuous map $\Xi$ from $\ms L_{(2K_0\epsilon,R)}$ to  $\G_{(4K_0^2\epsilon,R)}$, so that
	\begin{enumerate}
		\item $\Xi(k^0)=h^0$
		\item  Letting $h=\Xi(k)$, we have $\alpha_h$ is conjugate to $\alpha\cdotp k$, while $\beta_h$ and $\gamma_h$ are respectively  conjugate to $\beta$ and $\gamma$. 
\end{enumerate}\end{lemma}

\begin{proof} Let $h^0$ in $\G\seR$. Let us write $k^0=\exp(u)$ and $k^0_t=\exp((1-t)u)$.  Observe that the path  $\chem{k^0}$ has length less than $K_0\eR$. Let $\chem{h^0}$ be the path lifting  $\chem{k^0}$ using the horizontal distribution $H$ and  starting from $h^0$. In particular $A(h^0_1)=k^0_1=\id$.  Since $\chem{h^0}$ has length less than $K_0^2(\eR)$, it follows that $h_1$ belongs to the ball of radius $2K_0^2\eR$.

For any $k$ in $\ms L_{(2K_0\epsilon,R)}$, let us write $k=\exp(v)$ and $k_t=\exp(tv)$. Let us lift $\chem{k}$ to a path $\chem{h}$ starting from $h^0_1$ and define $\Xi(k)\defeq h_1$. The map $\Xi$ satisfies all the conditions of the lemma:
\begin{enumerate}
	\item By uniqueness of the lift $\Xi(k^0)=h^0$
	\item Since all paths are tangent to the horizontal distribution,  the holonomy around $\gamma$ are all conjugate (Since the horizontal distribution lies in the kernel of $\T C$). By construction the holonomy around $\beta$ is fixed.. 
\end{enumerate}
Observe that the path $\chem{k}$ has length less than  $2K_0\eR$, thus   $\chem{h}$ has length less than $2K^2_0\eR$, and thus $\Xi(k)$ is in the ball of radius $4K_0^2\eR$
\end{proof}

\begin{theorem}{\sc [Deforming into perfect pair of pants]}\label{theo:defo-pant}
There exists a positive constant $K$, so that for $\epsilon$ small enough, then $R$ large enough the following holds.
Let $W_0=(T,S_0,S_1,S_2)$ is an $(\epsilon, R)$-almost closing pair of pants. Then there exists a deformation path  $\{W_t\}_{t\in[0,1]}$ with $W_0=W$  so that
 for each $t$, $W_t$ is a $(K\epsilon, R)$-pair of pants and $W_1$ is $R$-perfect.
 
 Moreover if one of the boundary is perfect, we may choose the deformation so that this boundary stays perfect.
 
\end{theorem}

 Let $W=(T,S_0,S_1,S_2)$ be an $(\epsilon, R)$-almost closing pair of pants. Let $W^*=(T,S^*_0,S_1^*,S_2^*)$ be the $R$-perfect pair of pants based at $T$. Let $\zeta^*_i$ be the elements of $\G$ so that 
 $$
 S^*_0=\zeta^*_0 T\ , \ \ \omega^2 S^*_2=\zeta^*_2\omega T\ , \ \ \omega S_1=\zeta^*_1\omega^2 T_2\ .
 $$
 Then the above theorem is the consequence of the following lemma
 \begin{lemma}\label{lem:defo-pant}  The quadruple $W$ is an $(\epsilon,R)$-almost closing pair of pants, if and only if there exist $f_i$ and $g_i$ in $\G$, for $i\in\{0,1,2\}$ which are $K\eR$ close to the identity   so that 
\begin{eqnarray}
	 S_0=\zeta_0 T\ , \ \ \omega^2 S_2=\zeta_2\omega T\ , \ \ \omega S_1=\zeta_1\omega^2 T_2\ , \label{eq:defor-ac}
\end{eqnarray}
where $\zeta_i=f_i\zeta_i^*g_i$.
\end{lemma}

\begin{proof}	 Say a pair $(T,S)$ is $(\mu,R)$ {\em almost closing} if there exists a tripod $u$ so that $u$ is $\mu$ close to $T$ and $K\varphi_R (u)$ is close to $S$. Thus $u=gT$ and $K\varphi_R (u)=g'S$ where $g$ and $g'$ are $\mu$ close to the identity with respect to $d_T$ and $d_S$ respectively.

Let us consider $\zeta$ and $\zeta^*$ in $\G$, so that
$$ 
S=\zeta T\ , S^*\defeq K\varphi_R T=\zeta^* T\ ,
$$
Let us consider $h$ so that $g'\zeta=\zeta h$. Then $h$ is $\mu$ close to the identity with respect to $d_T$ since
$$
d_T(h,\id)=d(T,hT)=d(\zeta T, \zeta h T)=d(S,g'S)=d_S(g,g')\ .
$$
Since 
$$
\zeta h^{-1}T = g'^{-1} \zeta  T =g'^{-1} S = K\varphi_R(u)=K\varphi(g T)=g\zeta^* T
$$
Thus
$$
\zeta= g\zeta^* h\ .
$$
It follows (using the notation above) that $(T,S)$ is almost closing if and only if
$
\zeta= g\zeta^* h
$
with $h$ and $g$,  $\mu$-close to the identity with respect to $d_T$.

Repeating this argument for all the pairs described in the lemma proves this lemma.
\end{proof}
\subsubsection{Proof of Theorem \ref{theo:defo-pant}}

Assume now that one of the boundary of $W=(T,S_0,S_1,S_2)$ --say $\alpha$-- is perfect. Let us use the Boundary Loop Proposition \ref{pro:boundaryloop} and  let $(S_1^*,T^*,S_0^*)$ be the perfect triple associated to $\alpha^*=\alpha$.

Let $g$ so that $S_1=gS_1^*$, it follows that $S_0=\alpha g\alpha^{-1} S_0^*$.  Then $g$ is close to the identity with respect to $d_{S_1}$, let then $\{g_t\}_{t\in[0,1]}$ be a path from  $g$  to $\id$, so that    $g_t$ and is close to the identity with respect to $d_{S_1}$.  Let $S_1^t=g_tS^*_1$ and $S_0^t=\alpha g_t\alpha^{-1} S^*_0=\alpha S_1^t$.
Observe that $d(S_0^t,S_0)=d(\alpha S_1^t,\alpha S_1)=d(S_1^t, S_1)=d_{S_1}(g_t,\id)$.
\vskip 0.1 truecm
As a first step in the  deformation, we consider 
$$
W^{(1)}_t=(T,S_0^t,S_1^t,S_2)  \hbox{ so that  } W^{(1)}_1=(T,S_0^*,S_1^*,S_2)\hbox{ and } W^{(1)}_0=(T,S_0,S_1,S_2)\ .
$$
We remark that the first boundary loop does not change.
\vskip 0.1 truecm
As a second step, we choose a small path $\{T^t\}_{t\in[0,1]}$ joining $T$ to $T^*$, and define
$$
W^{(2)}_t=(T^t ,S_0^*,S_1^*,S_2)\hbox{ so that  } W^{(2)}_1=(T^*,S_0^*,S_1^*,S_2)\hbox{ and } W^{(2)}_0=(T,S^*_0,S^*_1,S_2)\ .
$$
Again, we remark that the first boundary loop does not change.
\vskip 0.1 truecm
Let $S_2^*$ so that $(T^*,S_0^*,S_1^*,S^*_2)$ is perfect.
Finally,  we write as in Lemma \ref{lem:defo-pant}, $S_2=f\zeta_2^*hT^*$ with $f$ and $h$ close to the identity with respect to $d_{T^*}$, we find paths $\{f^t\}_{t\in[0,1]}$ and $\{h^t\}_{t\in[0,1]}$ joining respectively $f$ and $h$ to $\id$. We finally write $S^t_2=f^t\zeta_2^*h^tT^*$, and choose the final step as 
$$
W^{(3)}_t=(T^* ,S_0^*,S_1^*,S^t_2)\hbox{ so that  } W^{(3)}_1=(T^*,S_0^*,S_1^*,S^*_2)\hbox{ and } W^{(3)}_0=(T^*,S^*_0,S^*_1,S_2)\ .
$$
Again, we remark that the first boundary loop does not change.

\section{Cores and  feet projections}\label{sec:feet}

In this section we concentrate on discussing  the analogues of the normal bundle to closed geodesics for hyperbolic 3-manifolds  in our higher rank situation. Ultimately, in the next situation we want to show that pair of pants with having a ``boundary component'' in common are nicely distributed in this ``normal bundle''. For now we need to investigate and define the objects that we shall need for this study.

More precisely, we  define the {\em feet space} which is a  higher rank version of the normal space to a geodesic in the hyperbolic space of dimension 3. We also explain how biconnected tripods and triconnected tripods project to this feet space.

We will also introduce an important subspace of this feet space, called the {\em core}. The main result of this section is Theorem  \ref{theo:mesfeet} about measures on the feet space. 

In all this section $\alpha$ will be a semisimple ${\ms P}$-loxodromic element in $\ms G$ and $\Lambda$ a uniform lattice in $\ms Z_\ms G(\alpha)$, the centralizer of $\alpha$ in $\G$, so that  $\alpha\in\Lambda$.

\subsection{Feet spaces and their core}

\begin{definition}{\sc [Feet spaces for $\alpha$]} The {\em upstairs feet space of $\alpha$} and {\em downstairs feet space of $\alpha$} \index{Feet space} denoted  respectively $\mc F_\alpha$ \index{Feet space}\index{$\mc F_\alpha$} and $\mc F^\Lambda_\alpha$ are respectively \begin{eqnarray}
\mc F_\alpha&\defeq&\{\tau\in\mc G\mid  \partial^{\pm}\tau=\alpha^\pm\}\ ,\\
{\mc F}^\Lambda_\alpha&\defeq& {\Lambda}\backslash {\mc F}_\alpha\  .	\end{eqnarray}
We denote by ${\rm p}$ the projection from $\mathcal F_\alpha$ to ${\mc F}^\Lambda_\alpha$.
\end{definition}

If $g\in\G$, the map $F_g:\tau\mapsto g\tau$, defines a natural map $f_g$  from $\mc F_\alpha$ to $\mc F_{g\alpha g^{-1}}$ which gives rise to 
$$ f_g: \mc F^\Lambda_\alpha\to\mc F^{g\Lambda g^{-1}}_{g\alpha g^{-1}} \hbox{ so that } p\circ F_g=f_g\circ p\ ,$$ which is the identity if $g\in\Lambda$. We also introduce the groups \index{$\ms Z^{{\G}}(\alpha)$} \index{$\ms L_\alpha$} \index{${\ms Z^\Gamma}(\alpha)$}
\begin{eqnarray}
	\ms C_\alpha&\defeq&\ms Z^\circ_{\ms G}(\Lambda)\ ,\label{def:ca}\\
	\ms L_\alpha&\defeq&\{g\in\G\mid g(\alpha^\pm)=\alpha^\pm\}\ .\label{def:La}
\end{eqnarray}
Let also consider $\ms K_\alpha$ the maximal  compact factor of $\ms L_\alpha$. Below are some elementary remarks
\begin{enumerate}
\item Any tripod in ${\mc F}_\alpha$, gives an isomorphism of $\ms L_\alpha$ with  $\ms L_0$, and the space ${\mc F}_\alpha$ is a principal  left $\ms L_\alpha$ torsor, as well as a principal right $\ms L_0$ torsor. It follows that $\tau^{-1}(\exp(ta_0))$ does not depend on $\tau$ in $\mc F_\alpha$. Let then $a\in\mk g$ so that $\tau^{-1}(\exp(ta_0))=\exp(ta)$. As a consequence, for all $\tau$ in $\mc F_\alpha $, $\varphi_t(\tau)=\exp(ta)\tau$.
 \item The group  $\ms C_\alpha$ acts by isometries on $\mc F_\alpha$ and $\ms C_\alpha\subset L_\alpha$.
\end{enumerate}

\subsubsection{The lattice case} When $\Gamma$ is a lattice in $\G$, we write by a slight abuse of language $\mc F^\Gamma_\alpha\defeq \mc F^{\Gamma_\alpha}_\alpha$, where we recall that $\Gamma_\alpha=\ms Z_\Gamma(\alpha)$.  In that case, we define for $[\alpha]$ a conjugacy class in $\Gamma$, $\mc F^\Gamma_{[\alpha]}$ as the set of equivalence in $\bigsqcup_{\beta\in[\alpha]}\mc F^\Gamma_\beta$ under  the action of  $\Gamma$ given by the maps $f_g$. Since for $g\in\Gamma_\alpha$, $f_g$ gives the identity on $\mc F_\alpha^\Gamma$, the space $\mc F^\Gamma_{[\alpha]}$ is canonically identified with $\mc F^\Gamma_{\beta}$ for all $\beta\in[\alpha]$.

\subsubsection{The core of the feet space}
A (possibly empty) special subset  of the space of feet requires consideration.

\begin{definition}{\sc [Core]} Given $(\epsilon, R)$,  the {\em $(\epsilon, R)$-core of the space of feet} is
 the closed subset ${\mc X}_\alpha$ of ${\mc F}_{\alpha}$, defined by
$$
{\mc X}_\alpha=\left\{\tau\in \mc F_{\alpha}\mid  d(\varphi_{2R}(\tau),\alpha(\tau))\leq  \eR\right\},
$$
We denote by $\mc X^\Lambda_\alpha$ be the projection of ${\mc X}_\alpha$ on ${\mc F}^\Lambda_{\alpha}$
\end{definition}

We immediately have
\begin{proposition}\label{pro:procore}
	The sets $\mc X_\alpha$ and  $\mc X^\Lambda_\alpha$,  are invariant under the action of $\ms C_\alpha$. Moreover, ${\rm p}^{-1}\mc X^\Lambda_\alpha={\mc X}_\alpha$. Finally, when non empty,  $\mc X^\Lambda_\alpha$ is compact.
\end{proposition}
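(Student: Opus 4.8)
\textbf{Proof plan for Proposition \ref{pro:procore}.}

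The plan is to verify the three assertions in the order they are stated, each being a short direct consequence of the definitions and of the structural facts established earlier. For the invariance under $\ms C_\alpha$: let $c\in\ms C_\alpha=\ms Z^\circ_{\ms G}(\Lambda)$. Since $\alpha\in\Lambda$, the element $c$ commutes with $\alpha$, so $c$ fixes $\alpha^+$ and $\alpha^-$; hence $c\in\ms L_\alpha$ and $F_c$ maps $\mc F_\alpha$ to itself. Moreover, from the elementary remark following the definition of the feet space, for $\tau\in\mc F_\alpha$ one has $\varphi_{2R}(\tau)=\exp(2Ra)\tau$ where $a$ is characterized by $\tau^{-1}(\exp(ta_0))=\exp(ta)$; the crucial point is that $a$ does not depend on $\tau\in\mc F_\alpha$, so $\exp(2Ra)$ is a fixed element of $\ms G$ depending only on $\alpha$ and $R$. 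Therefore $\varphi_{2R}(c\tau)=\exp(2Ra)c\tau=c\exp(2Ra)\tau=c\varphi_{2R}(\tau)$ (using that $c$ commutes with $\exp(2Ra)$, which follows from $c$ commuting with $\Lambda\ni\alpha$ together with the fact that $\exp(2Ra)$ lies in the one-parameter group determined by $\alpha$; alternatively, since $c$ commutes with $\varphi_t$ along $\mc F_\alpha$ by Proposition \ref{pro:bas} as $\ms C_\alpha\subset\ms L_\alpha$). Likewise $\alpha(c\tau)=c\,\alpha(\tau)$. Since $d$ is left $\ms G$-invariant, $d(\varphi_{2R}(c\tau),\alpha(c\tau))=d(c\varphi_{2R}(\tau),c\alpha(\tau))=d(\varphi_{2R}(\tau),\alpha(\tau))$, so the defining inequality $\leq\eR$ is preserved. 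This proves $c\cdot\mc X_\alpha=\mc X_\alpha$, and passing to the quotient by $\Lambda$ gives $\ms C_\alpha$-invariance of $\mc X^\Lambda_\alpha$ (noting $\ms C_\alpha$ normalizes $\Lambda$, being its connected centralizer, hence descends to act on $\mc F^\Lambda_\alpha$).

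For the identity $\mathrm{p}^{-1}(\mc X^\Lambda_\alpha)=\mc X_\alpha$: the inclusion $\mathrm{p}(\mc X_\alpha)=\mc X^\Lambda_\alpha$ is the definition, so $\mc X_\alpha\subset\mathrm{p}^{-1}(\mc X^\Lambda_\alpha)$. Conversely, if $\mathrm{p}(\tau)\in\mc X^\Lambda_\alpha$, then $\tau=\lambda\tau'$ for some $\lambda\in\Lambda$ and $\tau'\in\mc X_\alpha$; since $\alpha\in\Lambda\subset\ms L_\alpha$ and $\Lambda$ is contained in the $\ms G$-action, the same left-invariance computation as above — now with $\lambda$ in place of $c$, using that $\lambda$ commutes with $\exp(2Ra)$ because $\lambda$ centralizes $\alpha$ in $\Lambda\subset\ms Z_{\ms G}(\alpha)$ — gives $d(\varphi_{2R}(\tau),\alpha(\tau))=d(\varphi_{2R}(\tau'),\alpha(\tau'))\leq\eR$, so $\tau\in\mc X_\alpha$. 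Hence the reverse inclusion holds.

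For compactness of $\mc X^\Lambda_\alpha$ when non-empty: $\mc X^\Lambda_\alpha$ is closed in $\mc F^\Lambda_\alpha$ (it is the projection of the closed $\Lambda$-invariant set $\mc X_\alpha$, and $\mathrm{p}^{-1}(\mc X^\Lambda_\alpha)=\mc X_\alpha$ is closed, so $\mc X^\Lambda_\alpha$ is closed). It therefore suffices to show $\mc X^\Lambda_\alpha$ is relatively compact, equivalently that $\mc X_\alpha$ is contained in finitely many $\Lambda$-translates of a compact set. Here one uses that $\Lambda$ is a uniform lattice in $\ms Z_{\ms G}(\alpha)$ and that $\mc F_\alpha$ is a principal $\ms L_\alpha$-torsor; write $\ms L_\alpha=\ms C_\alpha\ms K_\alpha$ up to finite index (with $\ms K_\alpha$ the maximal compact factor), so $\mc F_\alpha/\ms K_\alpha$ is essentially a $\ms C_\alpha$-torsor and $\mc F^\Lambda_\alpha$ fibers over $\Lambda\backslash\ms C_\alpha$ with compact fibers. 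Since $\Lambda$ is cocompact in $\ms Z_{\ms G}(\alpha)$ and $\ms C_\alpha$ has finite index issues only up to the compact $\ms K_\alpha$-part, the space $\mc F^\Lambda_\alpha$ itself need not be compact — but the core is: the condition $d(\varphi_{2R}(\tau),\alpha(\tau))\leq\eR$, i.e. $d(\exp(2Ra)\tau,\alpha\tau)\leq\eR$, forces $\tau^{-1}\exp(-2Ra)\alpha\tau$ (equivalently the element of $\ms L_0$ moving $\tau$ appropriately) to lie in a fixed compact neighborhood of the identity, which by inequality \eqref{ineq:contrdtaud}-type estimates pins down the $\ms C_\alpha$-coordinate of $\tau$ up to a bounded set. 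Modulo $\Lambda$, this bounded set has compact image in $\Lambda\backslash\ms C_\alpha$, so $\mc X^\Lambda_\alpha$ is relatively compact, hence compact. I expect this last point — extracting the a priori boundedness of the $\ms C_\alpha$-component of the core modulo $\Lambda$ — to be the only place requiring genuine care; the first two assertions are immediate from the left-invariance of $d$.
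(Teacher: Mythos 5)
Your treatment of the first two assertions is correct and essentially identical to the paper's: both rest on the fact that $\ms Z_\G(\alpha)$ acts by left isometries on $\mc F_\alpha$ commuting with $\alpha$ and the flow $\{\varphi_t\}$, and you merely spell out the computation that the paper compresses into one line. No issue there.

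The compactness argument, however, has a genuine gap. You correctly reduce the constraint to the statement that, fixing $\tau_0\in\mc F_\alpha$ and writing $\tau=g\tau_0$ with $g\in\ms L_\alpha$, membership in the core forces $g^{-1}\beta g\cdotp\tau_0$ to lie within distance $\eR$ of $\tau_0$, where $\beta\defeq\exp(-2Ra)\alpha$ — i.e. $g^{-1}\beta g$ lies in a fixed compact subset of $\ms G$. But then you assert that this "pins down the $\ms C_\alpha$-coordinate of $\tau$ up to a bounded set" by "inequality \eqref{ineq:contrdtaud}-type estimates." That is precisely the step that needs to be established, and those estimates do not do it: they are local metric comparisons and give no control whatsoever on how far $g$ can wander while keeping $g^{-1}\beta g$ bounded. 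What is actually needed, and what the paper uses, is a properness statement about the conjugation-orbit map
$$
\ms L_\alpha/\ms Z_{\ms L_\alpha}(\beta)\longrightarrow \mc G\ , \qquad g\mapsto g^{-1}\beta g\cdotp\tau_0\ ,
$$
which holds because $\beta$ is \emph{semisimple}: since $\alpha$ is semisimple and centralizes $a$, the product $\beta=\exp(-2Ra)\alpha$ is semisimple, so its conjugacy class is closed and the orbit map is proper. You never observe the semisimplicity of $\beta$, and without it the claim is simply false — for a unipotent $\beta\neq\id$ the conjugacy class is not closed and the orbit map is not proper, so boundedness of $g^{-1}\beta g$ imposes no bound on $g$ modulo the centralizer. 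Once you have compactness of the set of admissible $g$ modulo $\ms Z_{\ms L_\alpha}(\beta)$, you should also note that $\ms Z_{\ms L_\alpha}(\beta)=\ms Z_\G(\alpha)$ (not $\ms C_\alpha$, which is in general a proper subgroup), and then invoke that $\Lambda$ is a uniform lattice in $\ms Z_\G(\alpha)$ to pass to compactness of $\mc X^\Lambda_\alpha$; your fibration-over-$\Lambda\backslash\ms C_\alpha$ picture is not the right quotient to use here.
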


\begin{proof}
	The first statement  follows from the fact that  $\ms Z_{\G}(\alpha)$ acts by isometries on $\mc F_{\alpha}$ commuting both with $\alpha$ and the flow $\{\varphi_t\}_{t\in\mathbb R}$.
	The second statement comes from the fact that $\mc X_\alpha$ is in particular invariant under the action of $\ms Z_{\G}(\alpha)$.
Let us finally prove the compactness assertion, the action of the flow  $\{\varphi_t\}_{t\in\mathbb R}$on ${\mc F}_\alpha$  is given by the left action of the one parameter subgroup generated by $a$.  Indeed, let $\tau\in\mc F_\alpha$, Thus
	$$
	d(\varphi_{2R}(\tau),\alpha(\tau))=d(\tau,\exp(-2Ra)\alpha(\tau))\ .
	$$ 
	Let $\beta\defeq\exp(-2Ra)\alpha$. Let $\tau_0$ be an element of $\mc F_\alpha$. Then the core $\mc X_\alpha$ is the set of those elements $g\tau_0$, where $g\in \ms L_\alpha$ satisfies
	$$
	d(\tau_0, g^{-1}\beta g\cdotp \tau_0)\leq \eR\ .
	$$
	Since $\alpha$ is semisimple and centralizes $a$,  $\beta$ is semisimple as well.  Thus the orbit map 
	$$
	\ms L_\alpha/\ms Z_{\ms L_\alpha}(\beta)\to \mc G, \ \ g\mapsto g^{-1}\beta g\cdotp\tau_0\ ,
	$$
	is proper.  	
	It follows that the set 
	$$
	\left\{h\in {\ms L}_\alpha/\ms Z_{\ms L_\alpha}(\beta), \ \ d(\tau_0, g^{-1}\beta g \cdotp \tau_0)\leq \eR\right\}\ ,
	$$
	is compact. The result now follows from the fact that 
 $\ms Z_\G(\alpha)=\ms Z_\G (\beta)$ and $\Lambda$ is uniform lattice in $\ms Z_\G(\alpha)$ by hypothesis. \end{proof}
\subsection{Main result} The result uses the Levy--Prokhorov distance as described in the Appendix \ref{P}.
\begin{theorem}
\label{theo:mesfeet}
For $\epsilon$ small enough, then $R$ large enough, there exists a constant $\bM$ only depending on $\G$,
with the following property.
Let $\alpha$ be a loxodromic element. Let $\bmu$ be a measure supported on the core $\mc X^\Lambda_\alpha$. 

Let $\ms T_0$ be a compact torus in $\ms C_\alpha\cap \ms K_\alpha$. Let $\bnu$ be a measure on $\mc F^\Lambda_\alpha$ which is invariant under $\ms C_\alpha$  and supported on $\mc X^\Lambda_\alpha$. Assume that we have a function $C$ so that 
\begin{eqnarray}
	\bmu=C\bnu, \hbox{ with }\  \Vert C-1\Vert_\infty \leq \frac{\epsilon}{R^2} \label{eq=mix} \ .
\end{eqnarray}
Then for all ${\bf j}$ in $\ms T_0$, denoting by $d_L$ the Levy--Prokhorov distance between measures on $\mc F_\alpha$.
\begin{eqnarray}
	d_L({\bf j}_*(\varphi_1)_*(\bmu),\bmu)\leq \bM \eR\ .
\end{eqnarray}
	\end{theorem}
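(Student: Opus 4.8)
The plan is to reduce the statement to an estimate comparing $\varphi_1^*(\mu)$ with $\mu$ on the core, and then to handle the rotation $\sigma$ separately using the structure of the feet space. First I would recall that on $\mc F_\alpha$ the flow $\varphi_t$ is realized by the left action of $\exp(ta)$ and that $\ms C_\alpha$ acts by isometries commuting with both $\varphi_t$ and $\alpha$; in particular $\sigma\in\ms T_0\subset\ms C_\alpha$ moves points a uniformly bounded amount and preserves the core $\mc X^\Lambda_\alpha$ by Proposition \ref{pro:procore}. Since the Levy--Prokhorov distance is dominated (up to constants, on a space of bounded geometry) by a transport-type bound, it suffices to produce, for $\mu$-almost every $\tau\in\mc X^\Lambda_\alpha$, a nearby point $\tau'\in\mc X^\Lambda_\alpha$ with $d(\sigma\varphi_1(\tau),\tau')\leq\bM\eR$ together with control of how the measure is spread; the density hypothesis $\Vert C-1\Vert_\infty\leq\eR R^{-2}$ then only contributes an error of the same order after summing the $\leq R$ ``unit steps'' implicit in the core condition.

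The key geometric input is the core condition itself: $d(\varphi_{2R}(\tau),\alpha(\tau))\leq\eR$ for $\tau\in\mc X_\alpha$. Writing $\beta=\exp(-2Ra)\alpha\in\ms Z_\ms G(\alpha)$ as in the proof of Proposition \ref{pro:procore}, this says $d(\tau,\beta\tau)\leq\eR$, i.e. $\tau$ is moved a tiny amount by the isometry $\beta$ of $\mc F_\alpha$. The next step is to show that $\varphi_1$ almost preserves $\mu$: because $\varphi_1=\exp(a)\cdot$ and $\mu$ is supported on the $\beta$-almost-fixed set, one propagates the inequality $d(\tau,\beta\tau)\leq\eR$ along the orbit — this is exactly the kind of propagation carried out in Corollary \ref{coro:contractleaf} and in Proposition \ref{pro:flow-foot}: since $\tau$ and $\varphi_R(\tau)$ stay $\eR$-close to the $\alpha$-orbit, all the intermediate $\varphi_t(\tau)$ with $0\leq t\leq 2R$ stay $\bM\eR$-close, hence each unit step $\varphi_1$ displaces a core point by $O(\eR/R)$ relative to staying in the core. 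Combined with $\nu$ being $\ms C_\alpha$-invariant (so $\sigma^*\nu=\nu$ and $\varphi_1^*\nu=\nu$ on the feet space, since $\varphi_1$ is multiplication by $\exp(a)\in\ms C_\alpha\subset\ms L_\alpha$), we get $\sigma\varphi_1^*(\nu)=\nu$, and therefore
\begin{equation*}
d(\sigma\varphi_1^*(\mu),\mu)\leq d(\sigma\varphi_1^*(C\nu),\nu)+d(\nu,C\nu)\leq \bM\Vert C-1\Vert_\infty + (\text{transport error})\ .
\end{equation*}
The transport error is where the factor $R$ appears: the natural transport plan from $\mu$ to $\sigma\varphi_1^*\mu$ moves mass by the amount $\sigma$ and $\varphi_1$ displace a point, and although each is $O(1)$ a priori, the constraint of remaining in the compact core $\mc X^\Lambda_\alpha$ forces the displacement to be $O(\eR)$ after using the almost-invariance; one has to be careful that iterating does not lose the $1/R$, which is precisely why the hypothesis on $C$ carries an $R^{-2}$ rather than $R^{-1}$.

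I would organize the write-up as: (1) reduce to upstairs, working on $\mc F_\alpha$ with the $\ms C_\alpha$-action and the lifted core $\mc X_\alpha$; (2) establish $\varphi_1^*\nu=\nu$ and $\sigma^*\nu=\nu$ from $\ms C_\alpha$-invariance; (3) use the core condition, rewritten via $\beta$, together with Corollary \ref{coro:contractleaf} / Proposition \ref{pro:flow-foot} to show that on $\mc X_\alpha$ the map $\sigma\varphi_1$ is $\bM\eR$-close to a measure-preserving map of the core; (4) convert the almost-invariance of $\mu$ (which differs from the $\varphi_1$-and-$\sigma$-invariant $\nu$ by the density $C$) into a Levy--Prokhorov bound using the elementary fact, recorded in the Appendix on the Levy--Prokhorov distance, that $d_{LP}(f\lambda,\lambda)\leq\Vert f-1\Vert_\infty$ on a probability space and that $d_{LP}(F_*\lambda,G_*\lambda)\leq\sup_x d(F(x),G(x))$. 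The main obstacle I anticipate is step (3): making precise that although $\sigma$ and $\varphi_1$ individually are not close to the identity on $\mc F_\alpha$, their combined effect on the core is, because a core point together with its $\alpha$-translate pins down a quasi-orbit of length $2R$ along which the $\eR$-closeness propagates with only $O(1/R)$ loss per step — this is the genuinely quantitative heart of the statement and the place where the $\eR$ (rather than $\epsilon$) and the role of $R$ in the hypothesis on $C$ both become essential.
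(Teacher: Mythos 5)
Your observation that $\exp(a)\in\ms C_\alpha$ and hence $\varphi_1^*\nu=\nu$ and $\sigma^*\nu=\nu$ is correct, and step (3) of your outline — finding, on each core piece, a torus element $f$ with $d(\varphi_1(\tau),f(\tau))\leq\bM\eR$ — is essentially the content of Lemma \ref{pro:flipw} in the paper. But the conversion step (4) contains the critical error: the fact ``$d_{LP}(f\lambda,\lambda)\leq\Vert f-1\Vert_\infty$ on a probability space'' that you attribute to the Appendix is simply false and is not what Theorem \ref{prokho} says. A two-point counterexample: take $\lambda$ uniform on $\{x,y\}$ with $d(x,y)=1$ and $f=1+\delta$ at $x$, $f=1-\delta$ at $y$; then taking $A=\{x\}$ shows $d_L(f\lambda,\lambda)=1$, not $O(\delta)$. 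Density closeness gives you nothing about transport distance without geometric input. Also note that $\mu=C\nu$ and $\nu$ need not even have the same total mass, so ``$d(\nu,C\nu)$'' is ill-posed under the paper's definition of the metric.

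What Theorem \ref{prokho} actually says is that if $\phi$ is $\kappa$-close to its average $\bar\phi$ over a free action of a compact torus $\ms T$ of dimension $n$ preserving the reference measure, then $d_L(\phi\mu,\bar\phi\mu)\leq 4n\kappa\sup_x\diam(\ms T\cdot x)$. The factor $\sup_x\diam(\ms T\cdot x)$ is indispensable, and in the present setting it is exactly of order $R$: this is the second estimate of Lemma \ref{pro:flipw}, $\diam(\ms T_\alpha\cdot\tau)\leq\bM_1 R$ for the one-parameter torus $\ms T_\alpha\subset\ms Z(\ms Z(\alpha))$ through $\alpha$ built in Proposition \ref{lem:ua}. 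This is what converts the $\epsilon/R^2$ in the hypothesis into the $\eR$ in the conclusion; your proposal never identifies this torus, never establishes the $O(R)$ orbit-diameter bound on the core, and instead attributes the gain to a vague propagation of the core condition along unit steps, which does not produce a bound on a mass-transport quantity. Finally, since $u_\alpha$ (hence $\ms T_\alpha$ and the averaging torus $\ms Q_\alpha^i=\ms T_0\times\ms T_\alpha^i$) depends on the connected component of $\mc X^\Lambda_\alpha$, the proof must decompose $\mu=\sum_i\mu_i$ over components, apply the averaging argument componentwise, and reassemble via Proposition \ref{pro:mui}; your outline never addresses the possible disconnectedness of the core. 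So the approach as written has a genuine gap at exactly the quantitative heart of the statement.
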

The difficulty here is that $\varphi_1$ is not an element of $\ms C_\alpha$, however, {\em on the core}, it behaves pretty much like an element of 	$\ms C_\alpha$ (see Lemma \ref{pro:flipw}). The fact that the core might not be connected add more difficulties.

In the specific case of $\ms{SL}(n,\mathbb C)$, or more generally complex groups, and the principal  $\skd$, this difficulty disappears: the foot space is a torus isomorphic to $\ms C_\alpha$ and the action of the flow (which is a right action) is interpreted as an  element of $\ms C_\alpha$. In this situation, this whole section becomes trivial and the theorem follows from Theorem \ref{prokho}
	
After some preliminaries, we prove this theorem in paragraph \ref{sec:prooffeet}. We then describe an example where some of the hypothesis of the theorem are satisfied in the last paragraph \ref{sec:hypmes} of this section.

\subsubsection{A 1-dimensional torus}
A critical point in the proof is to find a 1-dimensional parameter subgroup containing $\alpha$.
\begin{lemma}\label{pro:flipw}
	We assume $\epsilon$ small enough, then $R$ large enough.
There exists a constant $\bM_1$ only depending on $\G$ so that the following holds.
	Let $\alpha$ be a loxodromic element. Let $\mc A$  be a non empty connected component  of  $\mc X^\Lambda_\alpha$. Then there exists a 1-parameter subgroup $\ms T_\alpha\subset \ms Z(\ms Z(\alpha))$ containing $\alpha$ as well as an element $f\in\ms T_\alpha$  such that for any $\tau\in \mc A$
	\begin{eqnarray}
		\diam(\ms T_\alpha. \tau)&\leq& \bM_1\cdotp R\ , \\
		d(\varphi_1(\tau),f(\tau))&\leq& \bM_1\eR \ .\label{ineq:diamT}
		\end{eqnarray} 
\end{lemma}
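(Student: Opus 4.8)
The goal is, given a connected component $\mc A$ of the core $\mc X^\Lambda_\alpha$, to produce a one-parameter subgroup $\ms T_\alpha\subset\ms Z(\ms Z(\alpha))$ through $\alpha$ and an element $f\in\ms T_\alpha$ that well-approximates $\varphi_1$ on $\mc A$. The plan is to work upstairs on $\mc X_\alpha$ and exploit the explicit description of the core established in the proof of Proposition \ref{pro:procore}: fixing $\tau_0\in\mc F_\alpha$ and writing $\beta\defeq\exp(-2Ra)\alpha$ (which is semisimple and commutes with $a$ since $\alpha$ is semisimple and centralizes $a$), the core is identified with those $g\tau_0$, $g\in\ms L_\alpha$, with $d_0(\id,g^{-1}\beta g)\le\eR$, i.e. $g^{-1}\beta g$ lies in an $\eR$-ball of $\beta$ inside the conjugacy orbit. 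Since this orbit map is proper, the relevant set of $g$'s (modulo $\ms Z_{\ms L_\alpha}(\beta)$) is bounded, which gives the diameter bound $\diam(\ms L_\alpha^{bdd}.\tau)\le\bM_1 R$ after accounting for the $R$-dependence coming from $\beta$ itself (the distance $d(\tau_0,\beta\tau_0)$ is on the order of $R$). So the first step is: on a fixed component $\mc A$, all the relevant $g$'s differ from a fixed one by a uniformly bounded amount, hence $\beta$ is, up to a uniformly bounded conjugation, constant on $\mc A$.

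The key construction is then: since $\beta$ is semisimple, it embeds in a one-parameter subgroup; more precisely write $\beta=\exp(b)$ with $b$ in the centralizer of $a$ (this is possible because $\alpha=\exp(2Ra)\beta$ with $\alpha$ semisimple commuting with $a$, so $\alpha$ and $\beta$ lie in a common torus together with $a$). Set $\ms T_\alpha$ to be the one-parameter group generated by $a+\tfrac{1}{2R}b$, which contains $\alpha=\exp(2R a+b)$ at time $2R$, and which lies in $\ms Z(\ms Z(\alpha))$ since both $a$ and $b$ do (indeed $a$ centralizes $\alpha$, and $b$ lies in the centralizer of the centralizer of $\alpha$ by construction as the logarithm of the $a$-commuting semisimple part). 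The candidate element is $f\defeq\exp(a+\tfrac{1}{2R}b)\in\ms T_\alpha$. Now for $\tau\in\mc A$ we compare $\varphi_1(\tau)=\exp(a)\tau$ with $f(\tau)=\exp(a+\tfrac1{2R}b)\tau$; the discrepancy is governed by $\exp(-a)\exp(a+\tfrac1{2R}b)=\exp(\tfrac1{2R}b+O(\tfrac1{R^2}))$ by Baker–Campbell–Hausdorff, and $\|\tfrac1{2R}b\|$ is controlled because on the core $d(\varphi_{2R}(\tau),\alpha\tau)\le\eR$ forces (via inequality \eqref{ineq:contrdtaud} and Proposition \ref{A-B}, as in Lemma \ref{lem:loxo}) the quantity $\|b\|=\|\log(\exp(-2Ra)\alpha)\|$ to be of order $\epsilon/R$ after conjugating into the right frame — more carefully, $\|\tfrac1{2R}b\|\le\bM_1\eR$ once we know $\|b\|\le\bM_1\epsilon$ which is exactly what the core condition provides. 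Using left-invariance of $d$ and the fact that $\ms T_\alpha$-orbits on $\mc A$ are within $\bM_1 R$ (so the "amount of twisting" happens over a bounded-in-$R$-times-$R$ region, and the per-unit twist is $O(\eR)$), we conclude $d(\varphi_1(\tau),f(\tau))\le\bM_1\eR$.

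The main obstacle I expect is bookkeeping the $R$-dependence correctly in the two competing directions: the element $\beta$ moves $\tau_0$ a distance of order $R$ (whence the $\bM_1 R$ in the diameter bound), while its logarithm $b$, once suitably normalized, is only of size $\epsilon/R$, and these two facts must be reconciled through a single uniform frame. Concretely one must check that the conjugating elements bringing $\beta$ to its "standard" form on $\mc A$ are bounded \emph{independently of $R$} — this is where connectedness of $\mc A$ and the properness of the orbit map (Proposition \ref{pro:procore}) are essential, and where one should be careful that the bound does not secretly blow up with $R$. A secondary technical point is ensuring $\ms T_\alpha\subset\ms Z(\ms Z(\alpha))$ rather than merely in the centralizer of $\alpha$: this follows because $b$, being (a bounded conjugate of) $\log\beta$ with $\beta$ the $a$-commuting semisimple correction to $\alpha$, actually lies in the center of the centralizer of $\alpha$, but the argument needs the semisimplicity of $\alpha$ in an essential way and should be spelled out. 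Once these points are settled, the estimate \eqref{ineq:diamT} and its companion diameter bound follow from left-invariance of the metric and BCH as sketched.
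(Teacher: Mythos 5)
Your overall strategy coincides with the paper's (which passes through Proposition \ref{lem:ua}): one writes $\alpha=\exp(2Ru_\alpha)$ with $u_\alpha=a+v_\alpha$ and $v_\alpha\in\mk l_0$ small in the tripod frame, then sets $\ms T_\alpha=\{\exp(tu_\alpha)\}$ and $f=\exp(u_\alpha)$. However, two steps in your sketch are not actually carried out.

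First, the diameter bound. You try to derive $\diam(\ms T_\alpha.\tau)\leq\bM_1 R$ from properness of the conjugation-orbit map $g\mapsto g^{-1}\beta g$. That properness is what makes the core $\mc X^\Lambda_\alpha$ \emph{compact} (Proposition \ref{pro:procore}); it does not control the diameter of a $\ms T_\alpha$-orbit, and indeed the bound you propose is for $\diam(\ms L_\alpha^{\mathrm{bdd}}.\tau)$, which is not the quantity in the statement. (You also assert $d(\tau_0,\beta\tau_0)$ is of order $R$: on the core it is of order $\eR$, since $\beta$ is close to the identity; you presumably mean $\alpha\tau_0$.) The missing idea is that $\alpha=\exp(2Ru_\alpha)$ lies in $\Lambda$ and hence acts trivially on $\mc F^\Lambda_\alpha$, so the downstairs orbit $\ms T_\alpha.\tau$ equals the image of $\exp([0,2R]u_\alpha).\tau$; and that fundamental-domain arc has diameter $O(R)$ because $\exp(tu_\alpha)\tau$ stays $\bM_2\eR$-close to $\varphi_t(\tau)$ for $t\in[0,2R]$ (inequality \eqref{eq:tauclose3}) while $\varphi_t(\tau)$ moves at bounded speed (inequality \eqref{eq:tauclose4}). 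Without this reduction the bound is not established.

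Second, the containment $\ms T_\alpha\subset\ms Z(\ms Z(\alpha))$, which you flag as ``should be spelled out'': it genuinely requires an argument, because $g\in\ms Z(\alpha)$ satisfying $g\beta g^{-1}=\beta$ does not immediately give $\Ad(g)\log\beta=\log\beta$ — for $g$ far from the identity, $\Ad(g)\log\beta$ need not lie in the injectivity domain of $\exp$. The paper first shows $u_\alpha$ commutes with the Lie algebra $\mk z(\alpha)$, then upgrades this to the full group by complexifying and using that centralizers of semisimple elements are connected in complex semisimple groups. A minor additional point: since $a$ and $v_\alpha$ commute exactly, no Baker--Campbell--Hausdorff error term appears in $\exp(-a)\exp(a+v_\alpha)=\exp(v_\alpha)$, and the estimate $d(\varphi_1\tau,f\tau)\leq\bM_1\eR$ follows directly from $d_0(\id,\exp(v_\alpha))\leq\eR$; it should not be routed through the diameter bound as in your last paragraph.
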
  
A first step in the proof  of this lemma is the following proposition where we use the same notation
	
\begin{proposition}\label{lem:ua} 	We assume $\epsilon$ small enough, then $R$ large enough. There exists a constant $\bM_2$ so that the following holds. Let $\alpha$ be a $\ms P$-loxodromic element. Let $\mc A^u$  be a non empty connected component  of  $\mc X_\alpha$, then there exists  $u_\alpha\in \mk g$ invariant by $\ms Z(\alpha)$,  with $\exp(2Ru_\alpha)=\alpha$ so that for all  $\tau\in\mc A^u$  
		  \begin{eqnarray}
		\forall 0\leq t\leq 2R,\ \ d(\varphi_t(\tau),\exp(tu_\alpha)(\tau))&\leq& \bM_2\cdotp\eR\, , \label{eq:tauclose3}\\
		\forall 0\leq t\leq 2R,\ \ d(\tau,\exp(tu_\alpha)(\tau))&\leq& \bM_2\cdotp R\, . \label{eq:tauclose4}
				\end{eqnarray}
\end{proposition}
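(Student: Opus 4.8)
\textbf{Plan of proof for Proposition \ref{lem:ua}.} The statement asks, for a connected component $\mc A^u$ of the core $\mc X_\alpha$, to produce a single Lie algebra element $u_\alpha$, centralized by $\ms Z(\alpha)$ and with $\exp(2Ru_\alpha)=\alpha$, which is \emph{uniformly} close (on the whole component) to the flow generator along $\mc A^u$. The starting point is the defining inequality of the core: for $\tau\in\mc A^u$ one has $d(\varphi_{2R}(\tau),\alpha(\tau))\leq\eR$, i.e.\ $\varphi_{2R}$ and left-multiplication by $\alpha$ almost agree at $\tau$. First I would invoke Lemma \ref{lem:loxo} (with $v=\tau$ and the element $\alpha$): it produces a tripod $w$ with $\partial^\pm w=\alpha^\pm$ — that is, $w\in\mc F_\alpha$ — such that $d(\varphi_t(\tau),\varphi_t(w))\leq\bM_4\eR$ for $0\leq t\leq 2R$. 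But on $\mc F_\alpha$ the flow $\varphi_t$ is given by left multiplication by $\exp(ta)$ where $a\in\mk g$ is the element determined by remark (1) of §\ref{sec:feet} (the one with $\varphi_t(\sigma)=\exp(ta)\sigma$ for all $\sigma\in\mc F_\alpha$, independent of $\sigma$, and thus centralized by $\ms L_\alpha\supseteq\ms Z(\alpha)$, with $\exp(2Ra)$ the loxodromic generator of the chord of $w$). Setting $u_\alpha\defeq a$ gives $\exp(2Ru_\alpha)=\alpha$ automatically once we check (via Lemma \ref{lem:loxo}, using $d(\varphi_{2R}(\tau),\alpha(\tau))\leq\eR$ and $d(\varphi_{2R}(\tau),\varphi_{2R}(w))\leq\bM_4\eR$) that the loxodromic element $\exp(2Ra)$ attached to $w$ is exactly $\alpha$, not merely conjugate to it; this follows because both fix $\alpha^\pm$ and have the same translation length. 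Then \eqref{eq:tauclose3} is just the triangle inequality combining $d(\varphi_t(\tau),\varphi_t(w))\leq\bM_4\eR$ with $\varphi_t(w)=\exp(tu_\alpha)w$ and $d(\varphi_t(\tau),\exp(tu_\alpha)\tau)$ controlled by $d(\tau,w)$, which is $O(\eR)$ and invariant-ish under the flow up to the contraction/dilation estimates of Corollary \ref{coro:contractleaf}.

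The real subtlety — and the step I expect to be the main obstacle — is the word \emph{component}: a priori $u_\alpha$ (equivalently $w$, equivalently $a$) produced by Lemma \ref{lem:loxo} depends on the chosen $\tau$, and I must show it is \emph{locally constant} on $\mc A^u$, hence constant on the connected component. The point is that $w$ is uniquely determined by its two endpoints $\alpha^\pm$ and its translation length $2R$ \emph{together with a choice of ``angular'' decoration} (the $\ms K_\alpha$-coordinate in the torsor $\mc F_\alpha$); Lemma \ref{lem:loxo} only pins down the endpoints and length, so $w$ ranges over an $\ms K_\alpha$-orbit. What is canonical is the Lie algebra element $a\in\mk g$: by remark (1) of §\ref{sec:feet} it is \emph{the same} element for every tripod in $\mc F_\alpha$, so in fact $a$ does not depend on $w$ at all, only on $\alpha$ (and the sign of $R$). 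Thus $u_\alpha$ is literally canonical — no continuity argument is even needed for its definition — and the only thing that varies with $\tau$ is the \emph{quality} of approximation, which Lemma \ref{lem:loxo} bounds uniformly by $\bM_4\eR$ on the orbit segment $t\in[0,2R]$. So the apparent obstacle dissolves once one recognizes that ``$u_\alpha$ invariant by $\ms Z(\alpha)$ with $\exp(2Ru_\alpha)=\alpha$'' is a property of $\alpha$, not of $\mc A^u$; the role of the connected component is only to ensure $\mc A^u\subseteq\mc X_\alpha$ is nonempty so that the hypotheses of Lemma \ref{lem:loxo} can be applied at some (hence, with uniform constants by compactness of $\mc X^\Lambda_\alpha$ from Proposition \ref{pro:procore}, every) point.

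Finally, inequality \eqref{eq:tauclose4}: this is the crude bound $d(\tau,\exp(tu_\alpha)\tau)\leq\bM_2 R$ for $0\leq t\leq 2R$. Here I would not use the contraction estimates but rather the fact that $\{\exp(tu_\alpha)\}$ is a one-parameter subgroup moving with bounded speed with respect to the left-invariant metric $d$ restricted to the orbit $\exp(\mathbb R u_\alpha)\tau$; since $\|u_\alpha\|$ is bounded in terms of $\|a_0\|$ and the (bounded, by the diameter estimate one gets from $\alpha$ being at bounded distance — via $d(T,S)\leq 2R$ of Lemma \ref{lem:shad} — i.e.\ $\|2Ru_\alpha\|_0\leq\mathrm{const}\cdot R$) norm of $\log\alpha$, we get $d(\tau,\exp(tu_\alpha)\tau)\leq t\|u_\alpha\|\leq 2R\|u_\alpha\|\leq\bM_2 R$. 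I would assemble these estimates into the two displayed inequalities, taking $\bM_2\defeq\max(\bM_4+\text{(contraction constant from Cor.\ \ref{coro:contractleaf})},\ \text{the norm bound})$, all depending only on $\ms G$. The one place requiring genuine care in the write-up is making the identification ``$u_\alpha=a$'' rigorous, i.e.\ verifying that the $a$ of remark (1) §\ref{sec:feet} indeed satisfies $\exp(2Ra)=\alpha$ and is $\ms Z(\alpha)$-invariant — both are immediate from $\alpha\in\ms L_\alpha$ acting trivially by conjugation on the flow direction and from $\varphi_{2R}$ matching $\alpha$ on $\mc F_\alpha$ by construction of the core together with Lemma \ref{lem:loxo}.
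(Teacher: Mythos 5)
The proposal founders on a single but crucial misidentification: you take $u_\alpha$ to be the canonical element $a\in\mk g$ for which $\varphi_t(\sigma)=\exp(ta)\sigma$ on $\mc F_\alpha$, and you assert $\exp(2Ra)=\alpha$. That equality is false in general. The element $\exp(2Ra)$ depends only on the pair $(\alpha^-,\alpha^+)$ and on $R$; by contrast $\alpha$ is an arbitrary $\ms P$-loxodromic element of $\ms L_\alpha$, which may carry a nontrivial ``elliptic'' part inside the compact factor of $\ms L_\alpha$. The core condition $d(\varphi_{2R}(\tau),\alpha(\tau))\leq\eR$ says only that $\alpha$ and $\exp(2Ra)$ almost agree on $\mc F_\alpha$, not that they agree, and your justification (``both fix $\alpha^\pm$ and have the same translation length'') is insufficient: elements of $\ms L_\alpha$ are not determined by their fixed points together with a translation length, precisely because $\ms L_\alpha$ has a nontrivial compact part. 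So with your choice of $u_\alpha$ the displayed condition $\exp(2Ru_\alpha)=\alpha$ fails, and with it the whole subsequent discussion.

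The paper's actual construction is the one your argument is trying to avoid: writing $b:=\tau^{-1}(\alpha)\exp(-2Ra_0)\in\ms L_0$, the core condition gives $d_0(b,\id)\leq\eR$, so for $\eR$ small there is a unique small $v_\alpha\in\mk l_0$ with $b=\exp(2Rv_\alpha)$, and one sets $u_\alpha:=\T\tau(a_0+v_\alpha)$. Then $\exp(2Ru_\alpha)=\alpha$ holds \emph{exactly} (since $a_0$ is central in $\mk l_0$), and your ``main obstacle'' reappears in sharpened form: the assignment $\tau\mapsto u_\alpha$ is genuinely $\tau$-dependent a priori, and the correct way to kill that dependence on a connected component is not ``$u_\alpha$ is canonical'' (it isn't) but that $u_\alpha$ is continuous in $\tau$ and takes values in the discrete fiber $\{u:\exp(2Ru)=\alpha\}$, hence is locally constant. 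Likewise the $\ms Z(\alpha)$-invariance of $u_\alpha$ does not follow from $a$ being central in $\mk l_\alpha$; the paper has to argue via regularity of $u_\alpha$ for $\exp$ and complexification. Finally, your use of Lemma~\ref{lem:loxo} is a detour: since $\mc X_\alpha\subset\mc F_\alpha$, the tripod $\tau$ already satisfies $\partial^\pm\tau=\alpha^\pm$, and Lemma~\ref{lem:loxo} gives you back nothing new. The estimate \eqref{eq:tauclose4} does essentially follow as you say from a bounded-speed argument, but that is the easy half.
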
	
\begin{proof}  If $\tau$ belongs to the $(\epsilon, R)$ core of $\alpha$, then
$$d_0(\tau^{-1}(\alpha),\exp(2Ra_0))\leq \eR\ .$$ Since $d_0$ is right invariant, we obtain that letting $b\defeq \tau^{-1}(\alpha)\exp(-Ra_0)$\ ,
$$d_0\left(b,\id\right)\leq \eR\ .$$ Thus for $\eR$ small enough, there a exists $v_\alpha$ unique (of smallest norm) in $\mk l_0$ so that 
\begin{eqnarray}
b=\exp(2Rv_\alpha)\  &,& \ \ 
\forall t\in[0,2R], \ \ d_0(\exp(tv_\alpha),\id)\leq\eR\ .	\label{eq:btau}
\end{eqnarray}
Let $u_\alpha\defeq \T \xi_\tau(a_0+v_\alpha)$. Since $a_0$ is in the center of $\mk l_0$, we get from the first equation that 
$$
\alpha=\xi_\tau\left(\exp(2R(a_0+v_\alpha)\right)=\exp(2Ru_\alpha)\ .\
$$ 
The second inequality in assertion \eqref{eq:btau} now yields that for all $\tau$ in $\mc X_\alpha$
$$
d(\varphi_t(\tau),\exp(t u_\alpha)\tau)=d_0(\exp(tv_\alpha),\id)\leq \eR\ .
$$
This proves   inequality \eqref{eq:tauclose3}. Finally inequality \eqref{eq:tauclose4} follows from the fact that there exists a constant $A$ only depending on ${\G}$ so that $d(\varphi_t(\tau),\tau)\leq A.t$, for all $t$ and $\tau$.

If $\eR$ is small enough, $\exp$ is a diffeomorphism in the neighborhood of $v_\alpha$, hence of $u_\alpha$. It follows that $u_\alpha$ only depends on the connected component of $\mc X^u_\alpha$ containing $\tau$.

Similarly since $u_\alpha$ is a regular point of $\exp$, it commutes with the Lie algebra $\mk z(\alpha)$ of $\ms Z(\alpha)$. After complexification, it commutes with $\mk z_\mathbb C(\alpha)$, hence is fixed by  $\ms Z_\mathbb C(\alpha)=\exp(\mk z_\mathbb C(\alpha))$ (since centralizers are connected in complex semisimple groups) and in particular with $\ms Z(\alpha)$.
\end{proof}

We now prove Lemma \ref{pro:flipw} as an application:

\begin{proof}
Let $\mc A^u$ be a connected component of the lift of $\mc A$ to $\mc F_\alpha$.  The hypothesis of proposition \ref{lem:ua} are satisfied  for $\mc A$ and let $u_\alpha\in\mk l_\alpha$ as in the conclusion of this proposition. Let $\ms T_\alpha\defeq \{\exp(tu_\alpha)\}_t$. Since $u_\alpha$ is fixed by $\ms Z(\alpha)$, $\ms T_\alpha\subset \ms Z(\ms Z(\alpha))$.

Let $V_\alpha=\exp([0,2R]u_\alpha)$ be a fundamental domain for the action of $\alpha$ on $V_\alpha$.
By  inequality \eqref{eq:tauclose4}, for all $\tau\in\mc A$
 $$
 \diam\left(V_{\alpha}\tau \right)\leq \bM_2\cdotp R,
 $$
  for some constant $\bM_3$ only depending on ${\G}$.
Since $\alpha$ acts trivially on $\mc F^\Lambda_\alpha$, we obtain that 
 $$
V_{\alpha}\tau= \ms T_{\alpha}\tau\ .
$$
This concludes the  proof of  the first assertion of Proposition \ref{pro:flipw}. The second assertion follows at once from inequality \eqref{eq:tauclose3}.
 \end{proof}

\subsubsection{Averaging measures} 
Let $\mu$ and $\nu$ as in the hypothesis of Theorem \ref{theo:mesfeet}.

 Let $\{\mc X_\alpha^i\}_{i\in I}$ be the collection of connected components of $\mc X^\Lambda_\alpha$. Let us denote by ${\boldsymbol 1}_A$ the characteristic function of a subset $A$. Let \begin{eqnarray}
 \bmu_i&\defeq&{\boldsymbol 1}_{\mc X_\alpha^i}\bmu\ ,\\
 \bnu_i&\defeq&{\boldsymbol 1}_{\mc X_\alpha^i}\bnu\ , 
 	 \end{eqnarray}  
so that $\bmu=\sum_{i\in I}\bmu_i$ and $\bnu=\sum_{i\in I}\bnu_i$.
Let $\ms T^i_\alpha\defeq \ms T^0_{\alpha,\mc X_\alpha^i} $ associated to  $\mc A_0=\mc X_\alpha^i$ as a consequence of Lemma \ref{pro:flipw}. Let finally consider the tori $\ms Q^i_\alpha=\ms T_0\times \ms T^i_\alpha$. 

We first state and prove the following:

\begin{proposition}\label{pro:bm5}
	For a constant $\bM_5$ only depending on ${\G}$, and  $R$ large enough, 
	\begin{eqnarray}
		 \forall g\in \ms T_0, \ \ d_L(\bmu_i, g_*\bmu_i)&\leq& \bM_5\cdotp\eR\ , \label{ineq:gmu}\\
		d_L(\bmu_i, {\varphi_1}_*\bmu_i)&\leq& \bM_5\cdotp\eR\, \label{ineq:phmu}\ .
	\end{eqnarray}
\end{proposition}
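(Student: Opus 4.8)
The two inequalities of Proposition~\ref{pro:bm5} are proved by the same mechanism: on each connected component $\mc X_\alpha^i$ of the core, both the group action by $\ms T_0$ and the time-one map $\varphi_1$ can be replaced (up to an error of size $\bM\eR$) by an element of the abelian group $\ms Q^i_\alpha=\ms T_0\times\ms T^i_\alpha$, which preserves $\bmu_i$ exactly or almost exactly; then one invokes the Levy--Prokhorov estimate from the Appendix (Theorem~\ref{prokho}) together with Lemma~\ref{pro:flipw}. Let me describe the two cases.

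\emph{The $\ms T_0$-action, inequality \eqref{ineq:gmu}.} Here $g\in\ms T_0\subset \ms C_\alpha$, so $g$ acts on $\mc F^\Lambda_\alpha$ by isometries (by the elementary remark following Definition of $\ms C_\alpha$, $\ms C_\alpha$ acts by isometries on $\mc F_\alpha$, hence on the quotient), preserving each connected component $\mc X_\alpha^i$ of the core by Proposition~\ref{pro:procore}, and commuting with the flow. The measure $\bnu_i={\boldsymbol 1}_{\mc X_\alpha^i}\bnu$ is $\ms C_\alpha$-invariant by construction, hence $g_*\bnu_i=\bnu_i$. Since $\bmu_i=({\boldsymbol 1}_{\mc X_\alpha^i}C)\,\bnu$ with $\|C-1\|_\infty\leq\eR^{\,2}\cdot\epsilon/R^{2}$ — more precisely $\|C-1\|_\infty\leq\epsilon/R^{2}$ by hypothesis \eqref{eq=mix} — the measure $\bmu_i$ differs from $\bnu_i$ by a density that is within $\epsilon/R^2$ of $1$. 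The plan is to apply the perturbation statement of Theorem~\ref{prokho}: a $g$-invariant measure $\bnu_i$ and a nearby measure $\bmu_i$ (with multiplicative density $1+O(\epsilon/R^2)$) have $d(g_*\bmu_i,\bmu_i)\leq d(g_*\bmu_i,g_*\bnu_i)+d(g_*\bnu_i,\bnu_i)+d(\bnu_i,\bmu_i)=2\,d(\bmu_i,\bnu_i)\leq \bM\epsilon/R^2\leq \bM\eR$, using that the Levy--Prokhorov distance is invariant under the isometry $g$. (If one prefers the uniform $\eR$ bound rather than $\eR^2$, note $\epsilon/R^2\le\epsilon/R$ for $R\ge1$.)

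\emph{The flow, inequality \eqref{ineq:phmu}.} This is the substantive case and the main obstacle, because $\varphi_1$ is \emph{not} an element of $\ms C_\alpha$ and need not preserve $\mc X_\alpha^i$ globally. The plan is to use Lemma~\ref{pro:flipw}: on the component $\mc A=\mc X_\alpha^i$ there is a one-parameter subgroup $\ms T^i_\alpha\subset\ms Z(\ms Z(\alpha))$ and an element $f_i\in\ms T^i_\alpha$ with $d(\varphi_1(\tau),f_i(\tau))\leq\bM_1\eR$ for all $\tau\in\mc A$. Since $f_i\in\ms T^i_\alpha\subset \ms Z(\ms Z(\alpha))\subseteq \ms C_\alpha$ (the connected centralizer of $\Lambda$), $f_i$ acts by isometries and preserves $\bnu_i$, hence $\bnu_i$ is $f_i$-invariant and, arguing as above, $d((f_i)_*\bmu_i,\bmu_i)\leq \bM\eR$. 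It remains to pass from $(f_i)_*\bmu_i$ to $(\varphi_1)_*\bmu_i$: since $\bmu_i$ is supported on $\mc X_\alpha^i$ and $d(\varphi_1(\tau),f_i(\tau))\leq\bM_1\eR$ pointwise on that support, the pushforwards $(\varphi_1)_*\bmu_i$ and $(f_i)_*\bmu_i$ are $\bM_1\eR$-close in the Levy--Prokhorov metric directly from the definition of that metric (a uniform displacement of mass by $\le\bM_1\eR$ moves any measure by $\le\bM_1\eR$). Combining the two estimates by the triangle inequality for $d$ gives $d(\bmu_i,(\varphi_1)_*\bmu_i)\leq \bM_5\eR$ with $\bM_5$ depending only on $\ms G$.

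\emph{Remaining care.} One point to verify carefully is that $\varphi_1$ does map (the support of) $\bmu_i$ into $\mc F^\Lambda_\alpha$ in a way compatible with the core structure — but this is immediate since $\varphi_t$ acts on $\mc F_\alpha$ by the left action of the one-parameter group generated by $a$, preserving $\mc F_\alpha$; whether it preserves $\mc X_\alpha^i$ is irrelevant for the pushforward estimate. A second point: Lemma~\ref{pro:flipw} is stated for connected components of $\mc X^\Lambda_\alpha$ (equivalently, after lifting, of $\mc X_\alpha$), which is exactly the indexing set $\{i\in I\}$ used here, so the application is direct. The only genuine input beyond Lemma~\ref{pro:flipw} and the definitions is the Levy--Prokhorov perturbation lemma of the Appendix, which we are entitled to assume.
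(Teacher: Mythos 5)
Your overall architecture matches the paper's: treat the torus action and the flow in parallel, replace $\varphi_1$ by a nearby element $f_i\in\ms T^i_\alpha$ using Lemma~\ref{pro:flipw}, control the pointwise displacement by Proposition~\ref{prokho1}, and close with the triangle inequality. The final assembly of the flow case is correct. But there is a genuine gap at the heart of both cases: you never actually establish a bound on a Levy--Prokhorov distance between $\bmu_i$ and any invariant measure, and the intermediate claim $d(\bmu_i,\bnu_i)\le \bM\epsilon/R^2$ is unjustified and, in general, false.

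The Levy--Prokhorov distance is not controlled by the $L^\infty$ ratio of densities alone: one can easily construct $\mu$, $\nu=C\mu$ on a set of diameter $D$ with $\|C-1\|_\infty\le\kappa$ and $d_L(\mu,\nu)\sim\kappa D$. Theorem~\ref{prokho} does not say that two measures with nearby densities are close; it compares a measure $\phi\cdot\mu$ to its own torus average $\overline\phi\cdot\mu$, and the bound is $4n\kappa\sup_x\diam(\ms T\cdot x)$, so the orbit diameter enters. You invoke Theorem~\ref{prokho} but never apply it: your reference measure is $\bnu_i$, which is $\ms Q^i_\alpha$-invariant but is not the $\ms Q^i_\alpha$-average of $\bmu_i$ (that average is $\hat\bmu_i=\hat C\,\bnu_i$ with $\hat C$ the orbital average of $C$, a non-constant $\ms Q^i_\alpha$-invariant function). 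Nothing in the hypotheses bounds $d(\bmu_i,\bnu_i)$ or $d(\hat\bmu_i,\bnu_i)$. The correct object of comparison is $\hat\bmu_i$: since $\bmu_i=(C/\hat C)\hat\bmu_i$ with $\|C/\hat C-1\|\le 2\epsilon/R^2$, Theorem~\ref{prokho} gives
$$d(\bmu_i,\hat\bmu_i)\ \le\ 4n\cdot\frac{2\epsilon}{R^2}\cdot\sup_{\tau\in\mc X^i_\alpha}\diam\left(\ms Q^i_\alpha\cdot\tau\right)\ \le\ \bM\,\frac{\epsilon}{R}\ ,$$
where the orbit-diameter bound $\diam(\ms Q^i_\alpha\cdot\tau)\le\bM R$ is exactly the content of Lemma~\ref{pro:flipw} (combined with $\diam(\ms T_0\cdot\tau)\le\diam\ms K_0$). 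Notice the answer is of order $\eR$, not $\epsilon/R^2$: the factor $R$ from the orbit diameter is unavoidable and is the whole point of the $R^{-2}$ in hypothesis~\eqref{eq=mix}. With $\hat\bmu_i$ in place of $\bnu_i$, your triangle inequality $d(g_*\bmu_i,\bmu_i)\le 2\,d(\bmu_i,\hat\bmu_i)$ is valid for every $g\in\ms Q^i_\alpha$ (in particular for $g\in\ms T_0$ and for $f_i\in\ms T^i_\alpha$), since $\hat\bmu_i$ is $\ms Q^i_\alpha$-invariant; the rest of your flow argument then goes through unchanged. This is precisely the paper's proof.
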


\begin{proof} In the proof $M_i$ will be constants only depending on $\ms G$. Let $\hat\bmu_i$ be the average of $\bmu_i$ with respect to $\ms Q^i_\alpha$. By hypothesis, $\bmu_i=C \bnu_i$, where $\Vert C-1\Vert\leq  \frac{\epsilon}{R^2}$. Since $\ms Q^i_\alpha\subset \ms C_\alpha$, and $\ms C_\alpha$ preserves $\bnu_i$, it follows that
\begin{eqnarray}
	\bmu_i=D \cdotp\hat\bmu_i,
\end{eqnarray}
	where $\Vert D-1\Vert\leq  2\frac{\epsilon}{R^2}$.
We now apply Theorem \ref{prokho} to get that 
\begin{eqnarray}
	d_L(\bmu_i,\hat\bmu_i)\leq B\cdotp M_1\frac{\epsilon}{R^2}\ ,
\end{eqnarray}	
where $M_1$ only depends on the dimension of $\ms T_0$ and
$$
B:=\sup(\diam(\ms Q^i_\alpha.\tau\mid \tau \in \mc X^i_\alpha)\ .
$$
By inequality \ref{ineq:diamT}, $\diam(\ms T^i_\alpha\tau)\leq \bM_1\cdotp R$, for $\tau\in\mc X_\alpha$. Moreover since $\ms T_0\subset\ms K_0$,  we have that 
$$\diam(\ms T_0.\tau)\leq \diam \ms K_0\ .$$
 and thus $B\leq M_2 R$.
Thus 
\begin{eqnarray}
	d_L(\bmu_i,\hat\bmu_i)\leq \cdotp M_3\frac{\epsilon}{R}\ ,
\end{eqnarray}
Observe now that $g\in \ms Q^i_\alpha$ acts by isometry on $\mc F_\alpha$ and thus for any measure $\lambda_1$ and $\lambda_0$ we have
$$
d_L(g_*\lambda_0,g_*\lambda_1)=d_L(\lambda_0,\lambda_1).
$$
Using the fact that $g_*\hat\bmu_i=\hat\bmu_i$ it then follows that
\begin{eqnarray}
	d_L(\bmu_i,g_*\bmu_i)\leq d_L(\bmu_i,\hat\bmu_i) + d_L(\hat\bmu_i,g_*\bmu_i)=2d_L(\bmu_i,\hat\bmu_i)\leq 2M_3\eR\ .
\end{eqnarray}
This proves the first assertion.

For the second inequality, by inequality \eqref{eq:tauclose3}, there exists $f\in\ms Q^i_\alpha$, so that for any $\tau$ in  $\mc X^i_\alpha$ then
$$
d(f(\tau),\varphi_1(\tau))\leq \bM_1\eR.
$$
Thus from Proposition \ref{prokho1},
$$
d_L(f_*\bmu_i,(\varphi_1)_*\bmu_i)\leq \bM_0\eR.
$$
Thus 
$$
d_L(\bmu_i,(\varphi_1)_*\bmu_i)\leq d_L(f_*\bmu_i,(\varphi_1)_*\bmu_i)+ d_L(f_*\bmu_i,\bmu_i)\leq \bM_5\eR\ .
$$

The last assertion of Proposition \ref{pro:bm5} now follows.
\end{proof}

\subsubsection{Proof of Theorem \ref{theo:mesfeet}}\label{sec:prooffeet}
From Proposition \ref{pro:bm5} 
	\begin{eqnarray*}
		d_L(\bmu_i, {\varphi_1}_*\bmu_i)\leq \bM_5\cdotp\eR\, &, &
		 \forall g\in \ms T_0, \ \ d_L(\bmu_i, g_*\bmu_i)\leq \bM_5\cdotp\eR .
	\end{eqnarray*}
	Thus by  Proposition \ref{pro:mui}
	\begin{eqnarray*}
		d_L(\bmu, {\varphi_1}_*\bmu)\leq \bM_5\cdotp\eR\, &, &
		 \forall g\in \ms T_0, \ \ d_L(\bmu, g_*\bmu)\leq \bM_5\cdotp\eR .
	\end{eqnarray*}
Then, if $g\in \ms T_0$, using again that $g$ acts by isometry on $\mc F^\Lambda_\alpha$ hence on it space of measures
	\begin{eqnarray*}
d_L(\bmu, g_*{\varphi_1}_*\bmu)\leq 
d_L(g_*\bmu, g_*{\varphi_1}_*\bmu)
+d_L(g_*\bmu, \bmu)= d_L(\bmu, {\varphi_1}_*\bmu)
+d_L(g_*\bmu, \bmu)\leq 2\bM_5\eR\ .
	\end{eqnarray*}
	\subsection{Feet projection of biconnected and triconnected  tripods}\label{sec:hypmes} For $\epsilon$ small enough, then $R$-large enough, thanks to item \ref{it:ii-shad} of Lemma \ref{lem:shad}, 
we can define the feet projection $\bpsi$ from $\mc B\seR(\alpha)$ to $\mc F_\alpha$ by 
$$
\bpsi (T,S_0,\alpha(S_0))=\Psi(T,\alpha^+;\alpha^-)\ ,
$$
Similarly we define the feet projection  $\bpsi$ from $\mc Q\seR(\alpha)$ to $\mc F_\alpha$ by 
$$
\bpsi (T,S_0,S_1,S_2))=\Psi(T,\alpha^+;\alpha^-)\ ,
$$
Let then $\nu\seR$ and $\mu\seR$ defined in paragraphs \ref{def:mu} and \ref{def:nu} respectively.
$$
\bnu\seR=\bpsi^*\nu\seR\ \  ,\ \  \bmu\seR=\bpsi^*\mu\seR\ , 
$$
We summarize some properties of the projection now.
\begin{proposition}\label{pro:supfeet} First we have
$$
\bpsi\circ \bI_0=\bI_0\circ \bpsi\ .
$$
Moreover, assume $\epsilon$ small enough then $R$ large enough.
The feet projection  $\bpsi$ is proper.
The measure $\bnu\seR$ is supported on $\mc X^\Lambda_\alpha$ and is  finite. Finally $\bmu\seR=C\seR\bnu\seR$ with $\Vert 1-C\seR\Vert_\infty\leq \frac{\epsilon}{R^2}$.
\end{proposition}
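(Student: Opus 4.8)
\textbf{Plan of proof for Proposition \ref{pro:supfeet}.} The statement collects four assertions about the feet projection $\bpsi$ and the pushforward measures $\bnu\seR=\bpsi^*\nu\seR$, $\bmu\seR=\bpsi^*\mu\seR$, and each follows by assembling results already established. The plan is to treat them in the order: properness of $\bpsi$, support of $\bnu\seR$ on the core, finiteness, and finally the identity $\bmu\seR=C\seR\bnu\seR$ with the sharp sup-norm bound.

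\emph{Properness of $\bpsi$.} First I would recall that $\bpsi$ is defined on $\mc B\seR(\alpha)$ (and on $\mc Q\seR(\alpha)$) by $\bpsi(T,S_0,\alpha(S_0))=\Psi(T,\alpha^+,\alpha^-)$, and that by item \ref{it:ii-shad} of the Closing Lemma \ref{lem:shad}, for $(T,S_0,\alpha(S_0))\in\mc B\seR(\alpha)$ one has $d_T(T^\pm,\alpha^\pm)\leq\bM_2(\eR+\exp(-R))$; since $\Psi$ is Lipschitz (Lemma \ref{lem:footmap}) with $d(T,\bpsi(T,S_0,\alpha(S_0)))=d(T,\Psi(T,\alpha^+,\alpha^-))\leq\bM\sup d_T(\alpha^\pm,\partial^\pm T)$, the tripod $T$ stays within bounded distance of its image. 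Conversely, given $\tau\in\mc F_\alpha$ in a compact set, a biconnected tripod $(T,S_0,\alpha(S_0))$ in $\bpsi^{-1}(\tau)\cap\mc B\seR(\alpha)$ has $T$ constrained to a bounded neighbourhood of $\tau$ (by the previous estimate), $S_0$ constrained to be $(\eR,R)$-almost closing for $\alpha$ with $T$ (Proposition \ref{pro:clemma-step1}), hence $d(T,S_0)\leq 2R$ by the last item of Lemma \ref{lem:shad}, so $(T,S_0)$ ranges in a compact set; properness follows. The analogous argument on $\mc Q\seR(\alpha)$ uses Proposition \ref{exispant}.

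\emph{Support, finiteness, and the final identity.} For the support statement: if $(T,S_0,\alpha(S_0))\in\mc B\seR(\alpha)$ then by Proposition \ref{pro:clemma-step1} the pair $T,S_0$ is $(\eR,R)$-almost closing for $\alpha$, and then Lemma \ref{lem:shad}\eqref{eq:cl1}, combined with Proposition \ref{A-B} to transfer the estimate from $\tau_\alpha=\Psi(T,\alpha^-,\alpha^+)$ to the actual foot, yields $d(\varphi_{2R}(\bpsi(\cdot)),\alpha(\bpsi(\cdot)))\leq\bM(\eR+\exp(-R))\leq\eR$ after adjusting $\epsilon$ (absorbing $\exp(-R)$ for $R$ large, or, if one wants the clean bound $\eR$, by replacing $R$ with a slightly smaller value as done repeatedly in the paper); hence $\bpsi$ maps $\mc B\seR(\alpha)$ into $\mc X_\alpha$, so $\bnu\seR=\bpsi^*\nu\seR$ is supported on $\mc X^\Lambda_\alpha$. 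Finiteness of $\bnu\seR$ follows from finiteness of $\nu\seR$ (Proposition \ref{pro:munu}, which states $\nu\seR$ is finite with relatively compact support for $R$ large) together with properness of $\bpsi$, which ensures the pushforward of a finite, compactly supported measure is finite. Finally, the identity $\bmu\seR=C\seR\bnu\seR$ with $\Vert 1-C\seR\Vert_\infty\leq\epsilon/R^2$ is obtained by pushing forward the identity $\pi_*(\mu\seR)=C\seR\cdot\nu\seR$ of Proposition \ref{pro:munu} under $\bpsi$: one checks that the feet projection on $\mc Q\seR(\alpha)$ factors as $\bpsi_{\mc Q}=\bpsi_{\mc B}\circ\pi$ where $\pi(T,S_0,S_1,S_2)=(T,S_0,S_1)$, so $\bmu\seR=\bpsi_{\mc Q}^*\mu\seR=\bpsi_{\mc B}^*\pi_*\mu\seR=\bpsi_{\mc B}^*(C\seR\nu\seR)=C\seR\,\bpsi_{\mc B}^*\nu\seR=C\seR\bnu\seR$, and the bound $\Vert 1-C\seR\Vert_\infty\leq\epsilon/R^2$ comes directly from \eqref{ccons} once $R$ is large enough that $K(\epsilon,\Gamma)\exp(-q\vert R\vert)\leq\epsilon/R^2$.

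\emph{Main obstacle.} The only genuinely delicate point is the support statement, i.e.\ verifying that the foot $\bpsi(T,S_0,\alpha(S_0))$ itself — rather than the auxiliary tripod $\tau_\alpha=\Psi(T,\alpha^-,\alpha^+)$ produced in the Closing Lemma — lies in the $(\eR,R)$-core, with the constant genuinely of the form $\eR$ and not $\bM\eR$; this requires being careful about which of $\Psi(T,\alpha^-,\alpha^+)$, $\Psi(\omega T,\dots)$, etc.\ is meant by $\bpsi$, using that all these feet are mutually $\bM_1\epsilon$-close (inequality \eqref{ineq:dist-feet} and Lemma \ref{lem:footmap}), and then invoking the standard device of shrinking $R$ slightly (or $\epsilon$) to absorb the constant $\bM$ — exactly as in the passage from "almost closing" to "stitched" in Theorem \ref{theo:clospant}. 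Everything else is bookkeeping with results already in hand.
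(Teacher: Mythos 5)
Your proof is correct and follows the same route as the paper's: properness from the Closing Lemma's control on $d(T,\bpsi(B))$ and $d(S_0,\bpsi(B))$ over the support, core membership from Proposition \ref{pro:clemma-step1} together with inequality \eqref{eq:cl1}, finiteness from compactness of $\mc X^\Lambda_\alpha$ (Proposition \ref{pro:procore}), and the final identity from Proposition \ref{pro:munu} via the factorisation $\bpsi_{\mc Q}=\bpsi_{\mc B}\circ\pi$. The constant mismatch you flag in the core-membership step ($\bM\eR$ versus $\eR$) is indeed present in the paper, which simply writes that $\tau_\alpha$ lies in ``the $(M\epsilon,R)$-core'' and absorbs the constant implicitly, exactly as you propose.
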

\begin{proof} 
By construction 
\begin{eqnarray*}
	\bpsi\circ\bI_0 (t,s,c_0,c_1,c_2)&=&\bpsi(\bI_0(t),\bI_0(s), \bI_0(c_1),\bI_0(c_0),\bI_0(c_2))\\
	&=&\Psi(\bI_0(T),\alpha^-,\alpha^+)\\
	&=&\bI_0(\Psi(T,\alpha^+,\alpha^-))\ ,
\end{eqnarray*}
where the last equality comes from the fact that $\bI_0$ exchanges the vertices $\partial^+\tau$ and $\partial^+\tau$ of the tripod and that $\bI_0$ preserves the metric on $\mc G$.

By Lemma \ref{lem:shad} and  Proposition \ref{pro:clemma-step1}, if $B\defeq (T,S_0,\alpha(S_0))$ is in the support of ${\rm D}\seR$ and $\tau_\alpha\defeq\bpsi(B)$, then
 $d(T,\tau_\alpha)+d(S_0,\tau_\alpha)\leq M(\epsilon+R)\ ,
 $
 for some universal constant $M$. This implies the properness of $\bpsi$. 
 
 Moreover, by Proposition \ref{pro:clemma-step1}, $(T,S_0)$-is almost closing for $\alpha$. Thus by 
  the  last assertion of the Closing Lemma \ref{lem:shad}, $\tau_\alpha$ belongs to the $(M\epsilon,R)$-core $\mc X_\alpha$. Thus $\bnu\seR$ is supported on the core. Since $\mc X^\Lambda_\alpha$ is compact (Proposition \ref{pro:procore}), $\bnu\seR$ is finite.
 
 The last assertion of this proposition now follows from Proposition \ref{pro:munu}.
\end{proof}

\section{Pairs of pants are evenly distributed}\label{sec:equi} We will want to glue pairs of pants along their boundary components if their  ``foot projections''  differ by approximately a  ``Kahn-Markovic''  twist. Given a pair of pants, the existence of other pairs of pants which you can admissibly glue along a given boundary component will be  obtained by an equidistribution theorem.

Since we need to glue pair of pants along boundary data, a whole part of this section is to explain the boundary data which in this higher rank situation is more subtle than for the hyperbolic 3-space. We also need to explain what does reversing the orientation mean in this context.

The main result is the Even Distribution Theorem \ref{theo:even} which requires many definitions before being stated. The proof relies on a Margulis type argument using mixing, as well as the presence of some large centralizers of elements of $\Gamma$. This is the only part where the flip assumption -- revisited in this section -- is used. This is of course structurally modeled on the corresponding section in \cite{Kahn:2009wh}. Let us sketch the construction.
\begin{enumerate}
\item The space of triconnected tripods carries a {\em measure} $\mu^+$ coming from the weight functions defined above. Similarly we have a measure $\mu^-$  obtained while using the reverting orientation diffeomorphism on the space of tripods (See Definition \ref{def:mu}).
	\item The {\em boundary data} associated to a boundary geodesic $\alpha$ with loxodromic holonomy and  end points $\alpha^+$ and $\alpha^-$ will be the set of tripods with end points  $\alpha^+$ and $\alpha^-$, up to the action of the centralizer of $\alpha$. In the simplest case of the principal $\skd$ in a complex simple group, this space of feet is a compact torus.
	 \end{enumerate} 
We have now a projection $\bpsi$ from the space of triconnected tripods to the space of feet $\mc F\defeq\sqcup_{\alpha}\mc F_\alpha^\Gamma$, just by taking the projection of one of the defining tripods (and using Theorem \ref{theo:struct-pant}). Our goal is to establish the {\em Even Distribution Theorem} \ref{theo:even} which says that the projected measures $\bpsi_*\mu^+$ and $\bpsi_*\mu^-$ do not differ by much after a Kahn--Markovic twist. Roughly speaking the proof goes as follows.
\begin{enumerate}
	\item This projection $\bpsi$ factors through the space of  ``biconnected tripods'' (by forgetting one of the path connecting the tripods) which carries itself a weight  and a measure. The mixing argument then tells us the projected measure from triconnected tripod to biconnected tripod are approximately the same, or in other words the forgotten path is roughly probabilistically independent form the others.
	\item It is then enough to show that the projected measures from the biconnected tripod is evenly distributed.  In the simplest case of the principal $\skd$ in a complex simple group, this comes from the fact these measures are invariant under the centralizer of $\alpha$ which, in that case, acts transitively on the boundary data. The general case is more subtle (and involves the Flip assumption) since the action of the centralizer of $\alpha$ on space of feet is not transitive anymore.
\end{enumerate} 

In this section $\Gamma$ will be a uniform lattice in $\ms G$, $\alpha$ a $\ms P$-loxodromic element in $\Gamma$, $\Gamma_\alpha$ the centralizer of $\alpha$ in $\Gamma$, which is a uniform lattice in $\ms Z_\G(\alpha)$ (See \cite[Proposition 3.5]{Gelander:2014wq}).

\subsection{The main result of this section: even distribution}

We can now state the main result of this section. This is the only  part of the paper that makes uses of the flip assumption. The Theorem uses the notion of Levy--Prokhorov distance for measures on a metric space which is discussed in Appendix \ref{P}. We first need this
\begin{definition}{\sc [Kahn--Markovic twist]}
For any $\alpha\in\Gamma$, 
the {\em Kahn--Markovic twist} \index{Kahn--Markovic twist}\index{$\KMT$} $\KMT_\alpha$ is the element $\varphi_1\circ\sigma$ that we see as a diffeomorphism of the space of feet $\mathcal F^\Gamma_{\alpha}$. Similarly we consider the (global Kahn--Markovic twist) as the product map $\KMT=\prod_{\alpha\in\Gamma} \KMT_{\alpha}$ from  $\mc F$ to itself.
\end{definition}
Our main result is then
\begin{theorem}\label{theo:even}{\sc [Even distribution]} 
For any  small enough positive $\epsilon$, there exists a positive  $R_0$, such that if $R>R_0$ then $\mu^\pm\seR$ are finite non zero  and furthermore
\begin{equation}
	d_L(\bpsi^+_*\mu^+\seR, \KMT_*\bpsi^-_*\mu^-\seR )\leq M\eR \ \  ,\label{eq:lpb}
\end{equation}
where $\KMT$ is the Kahn--Markovic twist, $M$ only depends on ${\G}$, and $d_L$ is the Levy--Prokhorov distance.
\end{theorem}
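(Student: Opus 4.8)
The plan is to reduce Theorem~\ref{theo:even} to the two mechanisms that have already been set up: the mixing result of Proposition~\ref{pro:munu}, which says that the feet projection of $\mu\seR$ from triconnected to biconnected tripods changes the measure only by a function $C\seR$ with $\Vert C\seR-1\Vert_\infty\leq K(\epsilon,\Gamma)\exp(-q\vert R\vert)$, and Theorem~\ref{theo:mesfeet}, which controls how a Kahn--Markovic twist $\KMT_\alpha=\varphi_1\circ\sigma$ moves a measure supported on the core $\mc X^\Lambda_\alpha$. Concretely, for each $\ms P$-loxodromic conjugacy class $[\alpha]$ in $\Gamma$ one works over $\mc F^\Gamma_\alpha=\mc F^{\Gamma_\alpha}_\alpha$, using $\Lambda=\Gamma_\alpha$, which is a uniform lattice in $\ms Z_\G(\alpha)$ by \cite[Proposition 3.5]{Gelander:2014wq}. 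It then suffices to prove the estimate \eqref{eq:lpb} componentwise, i.e. $d(\bpsi^+_*\mu^+\seR,\ \KMT_{\alpha,*}\bpsi^-_*\mu^-\seR)\leq \bM\eR$ on each $\mc F^\Gamma_\alpha$, since the global feet space $\mc F$ is the disjoint union over $[\alpha]$ and the Levy--Prokhorov distance is controlled factor by factor (this is the content, in the appendix, of Proposition~\ref{pro:mui}-type additivity).

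First I would record, via Proposition~\ref{pro:supfeet}, that $\bnu^\pm\seR\defeq\bpsi^*\nu_{\epsilon,\pm R}$ is supported on the $(M\epsilon,R)$-core $\mc X^\Lambda_\alpha$, is finite, is invariant under $\ms C_\alpha$, and that $\bmu^\pm\seR=C_{\epsilon,\pm R}\bnu^\pm\seR$ with $\Vert 1-C_{\epsilon,\pm R}\Vert_\infty\leq \epsilon/R^2$ — this is exactly the hypothesis \eqref{eq=mix} required to feed Theorem~\ref{theo:mesfeet}. The key structural input is the reverting involution $\bI$ of Section~\ref{sec:triconn}: by Proposition~\ref{pro:Imu} one has ${\rm b}\seR\circ\bI={\rm b}_{\epsilon,-R}$, so $\bI$ carries $\mu^-\seR$ to $\mu^+\seR$ (after the appropriate $\omega\bI_0$ book-keeping on the defining tripods), and at the level of feet $\bI_0=\bJ_0\circ\sigma$ with $\bI_0\circ\varphi_R=\varphi_{-R}\circ\bI_0$. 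Tracking the feet projection through $\bI$, one finds that $\bpsi^-_*\mu^-\seR$ and $\bpsi^+_*\mu^+\seR$ differ by the feet-space map induced by $\bI_0$ composed with the discrepancy between $\varphi_{R}$-conjugation and $\varphi_{-R}$-conjugation of the core, which — using $\varphi_1\circ\sigma=\KMT_\alpha$ and the fact (Lemma~\ref{pro:flipw}) that on each connected component of the core $\varphi_1$ agrees up to $\bM_1\eR$ with an element $f$ of a one-parameter torus $\ms T_\alpha\subset\ms Z(\ms Z(\alpha))$ — is exactly a Kahn--Markovic twist up to an error of size $\bM\eR$. This is where the flip assumption enters: Definition~\ref{def:flip} guarantees that the reflexion $\bJ_0$ sits in the connected component of ${\ms Z}({\ms Z}(a))$ (equivalently, the extra factor produced by comparing $\bI_0$-conjugation with the $\varphi_1\sigma$ twist lies in $\ms T_0$, a compact torus inside $\ms C_\alpha\cap\ms K_\alpha$), so that Theorem~\ref{theo:mesfeet} applies with the element $\sigma\in\ms T_0$ it needs.

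The actual chain of estimates is then: $\bpsi^+_*\mu^+\seR$ is within $\bM\eR$ (Theorem~\ref{theo:mesfeet}, applied with $\mu=\bmu^+\seR$, $\nu=\bnu^+\seR$, and $\sigma$ the flip element) of $\sigma_*\varphi_{1,*}\bmu^+\seR=\KMT_{\alpha,*}\bmu^+\seR$; on the other hand the reverting involution identity transports this, via Proposition~\ref{pro:Imu} and the commutation $\bI_0\circ\varphi_R=\varphi_{-R}\circ\bI_0$, to within $\bM\eR$ of $\KMT_{\alpha,*}\bpsi^-_*\mu^-\seR$; combining through the triangle inequality for $d$ and the $\ms C_\alpha$-invariance of $\bnu^\pm$ (which makes the averaging argument of Proposition~\ref{pro:bm5} legitimate) yields \eqref{eq:lpb}. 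The finiteness and non-vanishing of $\mu^\pm\seR$ for $R>R_0$ is Proposition~\ref{exispant} together with the existence of perfect stitched pair of pants (Proposition~\ref{rem:st2w}), so the measures are genuinely there to be compared. I expect the main obstacle to be the bookkeeping of the two ``twists'' — reconciling the $\omega$-shifts in the definition \eqref{eqdef:I} of $\bI$ on triconnected tripods with the $\omega$-shifts hidden in the definition \eqref{defb} of ${\rm b}\seR$, and making sure the element of $\ms Z(\ms Z(\alpha))$ produced by Lemma~\ref{pro:flipw} on a possibly-disconnected core is the \emph{same} one on all components so that a single $\KMT_\alpha$ works; the flip assumption is precisely the hypothesis that keeps this single element inside the compact torus $\ms T_0$, and verifying that is the crux.
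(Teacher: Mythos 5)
Your overall strategy --- reduce to each $\mc F^\Gamma_\alpha$, invoke Proposition~\ref{pro:supfeet} for the core and $\ms C_\alpha$-invariance, transport with the reverting involution via Proposition~\ref{pro:Imu}, and then apply Theorem~\ref{theo:mesfeet} with the flip element --- matches the paper's. But two steps are substantively wrong.

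\emph{The final chain of estimates does not close.} You assert that $\sigma_*\varphi_{1,*}\bpsi^+_*\mu^+\seR=\KMT_{\alpha,*}\bpsi^+_*\mu^+\seR$, with $\sigma$ the flip element $\bJ$. This is false: $\KMT_\alpha=\varphi_1\circ\sigma_{\rm invol}$ where $\sigma_{\rm invol}$ is the reflection $x\mapsto\overline x$, and $\sigma_{\rm invol}=\bJ\circ\bI_0$, so $\KMT_{\alpha,*}=\varphi_{1*}\bJ_*\bI_{0*}$. Since $\bJ$ commutes with $\varphi_1$, your left-hand side is $\varphi_{1*}\bJ_*\bmu$ with $\bmu=\bpsi^+_*\mu^+\seR$, and the equality you claim would force $\bI_{0*}\bmu=\bmu$, i.e.\ $\bpsi^-_*\mu^-\seR=\bpsi^+_*\mu^+\seR$, which is not known and is essentially the content of the theorem. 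The second leg of your triangle inequality --- ``$\KMT_{\alpha,*}\bmu^+\seR$ is within $\bM\eR$ of $\KMT_{\alpha,*}\bpsi^-_*\mu^-\seR$'' --- would likewise require bounding $d(\bmu,\bI_{0*}\bmu)$, a quantity you never control. The correct computation is an \emph{exact identity with no error term}: from Proposition~\ref{pro:Imu} one has $\bpsi^-_*\mu^-\seR=\bI_{0*}\bmu$, and then
\[
\KMT_*\bpsi^-_*\mu^-\seR=\varphi_{1*}\bJ_*\bI_{0*}\bI_{0*}\bmu=\varphi_{1*}\bJ_*\bmu,
\]
using $\bI_0^2=\operatorname{id}$. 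A single application of Theorem~\ref{theo:mesfeet} with the torus element $\bJ$ then gives $d(\bmu,\varphi_{1*}\bJ_*\bmu)\leq\bM\eR$, which is~\eqref{eq:lpb}.

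\emph{You stay at $\Gamma_\alpha$, but the flip hypothesis lives at $\Lambda_\alpha$.} You set $\Lambda=\Gamma_\alpha$ throughout, yet the weak flip assumption (Definition~\ref{def:flipw}) produces a compact torus $\ms T^0_\alpha$ containing $\bJ$ only inside $\ms Z_\G(\Lambda_\alpha)\cap\ms K_\alpha$, where $\Lambda_\alpha$ is a possibly proper finite-index subgroup of $\Gamma_\alpha$, and the $\ms C_\alpha$ of Theorem~\ref{theo:mesfeet} is $(\ms Z_\G(\Lambda_\alpha))^\circ$. In the regular flip case, $\bJ$ is not known to lie in any connected torus in $\ms Z_\G(\Gamma_\alpha)$, so the hypotheses of Theorem~\ref{theo:mesfeet} simply are not met on $\mc F^{\Gamma_\alpha}_\alpha$. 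The missing step is to pull back along the finite covering $\mathrm p:\mc F^{\Lambda_\alpha}_\alpha\to\mc F^{\Gamma_\alpha}_\alpha$ (of degree at most $\bM$), prove the bound upstairs where $\ms T^0_\alpha\subset\ms C_\alpha\cap\ms K_\alpha$ actually holds, and descend using Proposition~\ref{pro:prokhcont}.

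Finally, citing Definition~\ref{def:flip} as the immediate source of the torus conflates the original flip assumption with the weak flip assumption; the proposition deriving the weak from the strong (or regular) flip is where $\Lambda_\alpha$ and $\ms T^0_\alpha$ are actually built, and that is the form in which Theorem~\ref{theo:mesfeet} can consume the hypothesis.
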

The notation $\bpsi^+$ and $\bpsi^-$ in this theorem and the sequel of this section is for redundancy: 
it means $\bpsi$ as considered from $\mathcal B^+\seR(\alpha)$ and  $\mathcal B^-\seR(\alpha)$ respectively. 

The metric on $\mc F$ is the metric coming from its description as a disjoint union, not the induced metric from $\mc G$. This whole section is devoted to the proof of this theorem. We shall use the flip assumption.

\subsection{Revisiting the flip assumption} \label{sec:bjbj0}\label{def:refa} We fix in all this section a reflexion $\bJ_0$.
We will explain in this section the consequence of  the {\em flip assumption} that we shall use as well as give examples of groups satisfying the flip assumptions. Recall that for an element $\alpha$ in $\Gamma$,  we write $\Gamma_\alpha=\ms Z_\Gamma(\alpha)$.

Let $\alpha$ be a $\ms P$-loxodromic element. Let   $${\ms L}_\alpha\defeq \{g\in{\G}, g(\alpha^\pm)=\alpha^\pm\}$$ Observe first that since $\bJ_0\in\ms Z(\ms L_0)$,  then  $\bJ_\alpha\defeq\tau(\bJ_0)$ does not depend on $\tau$, for all 
$\tau$ with $\partial^\pm\tau=\alpha^\pm$, and belongs to ${\ms Z}({\ms L}_\alpha)$. Thus for all $\tau$ in $\mathcal F_\alpha$
$$
\bJ_\alpha\cdotp \tau=\tau\cdotp \bJ_0\ .
$$
The element $\bJ_\alpha$  of $\mathcal G$ is called the {\em reflexion of axis $\alpha$}.

Let also $\ms K_\alpha$ the maximal compact factor of $\ms L_\alpha$.

\begin{definition}{\sc [Weak flip assumption]}\label{def:flipw}
	We say the lattice $\Gamma$ in ${\G}$ satisfies the {\em weak flip assumption}, if  there is some integer  ${\rm M}$ only depending on $\ms G$, so that given a $\ms P$-loxodromic element $\alpha$ in $\Gamma$, then
	\begin{itemize}
	\item 	 there exists a  subgroup $\Lambda_\alpha$ of  $\Gamma_\alpha\cap  {\ms Z}^\circ_\G(\alpha)$, normalized by $\Gamma_\alpha$  with $[\Gamma_\alpha:\Lambda_\alpha]\leq {\rm M}$, 
	\item moreover $\bJ_\alpha$ belongs to  a connected compact torus  $\ms T^0_\alpha\subset \ms Z_\ms G (\Lambda_\alpha)\cap \ms K_\alpha$ , 
	\end{itemize}
\end{definition}
Denoting  by  $\ms{Z_F}(B)$ the centralizer in the group $\ms F$ of the set $B$ and ${\ms H}^\circ  $ the connected component of the identity of the group $\ms H$,
 we now   introduce the following group for a $\ms P$-loxodromic element $\alpha$ in $\Gamma$ satisfying the weak flip assumption:
\begin{eqnarray}
	\ms C_\alpha &\defeq& \left(\ms Z_{{\G}}\left(\Lambda_\alpha\right)\right)^\circ< {\ms L_\alpha}\, 
\end{eqnarray}

\subsubsection{Relating the flip assumptions}
We first  relate the flip assumptions \ref{def:flipr} and \ref{def:flip} to the weak flip assumption \ref{def:flipw}. 
\begin{proposition}
	If ${\G}$ and $\mk s_0$ satisfies the flip assumption \ref{def:flip}, or the regular flip assumption \ref{def:flipr}, then ${\G}$, $\mk s_0$ and $\Gamma$ satisfy the weak flip assumption.
\end{proposition}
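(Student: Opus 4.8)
The goal is to show that the strong flip assumptions (Definition~\ref{def:flip}) and the regular flip assumption (Definition~\ref{def:flipr}) both imply the weak flip assumption (Definition~\ref{def:flipw}). The key point is that the weak flip assumption asks for slightly less: instead of needing a reflexion $\bJ_0$ in the identity component of $\ms Z(\ms Z(a))$ (or of $\ms Z(a)$), we only need, for each $\ms P$-loxodromic $\alpha\in\Gamma$, a finite-index subgroup $\Lambda_\alpha\trianglelefteq\Gamma_\alpha$ sitting inside $\ms Z^\circ_\G(\alpha)$ with uniformly bounded index, together with the membership of $\bJ=\tau^{-1}\bJ_0$ in a compact torus inside $\ms Z_\ms G(\Lambda_\alpha)\cap\ms K_\alpha$. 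So the plan is: (1) produce $\Lambda_\alpha$ from general structure theory of lattices; (2) transfer the reflexion $\bJ_0$ along a tripod with feet at $\alpha^\pm$ to get $\bJ$, and check $\bJ\in\ms Z(\ms L_\alpha)$; (3) produce the compact torus $\ms T^0_\alpha$ containing $\bJ$ using that $\bJ$ is an involution in the identity component of the relevant centralizer.

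\textbf{Step 1: the subgroup $\Lambda_\alpha$.} For a $\ms P$-loxodromic $\alpha\in\Gamma$, the centralizer $\Gamma_\alpha=\ms Z_\Gamma(\alpha)$ is a uniform lattice in $\ms Z_\G(\alpha)$ (this is cited from \cite[Proposition 3.5]{Gelander:2014wq} and is already invoked in the excerpt). Now $\ms Z_\G(\alpha)$ has finitely many connected components, and the number of components is bounded independently of $\alpha$: up to conjugacy, $\alpha$ is of the form $\exp(u)$ with $u$ a multiple of a fixed $a$ conjugate to $a_0$ (for loxodromic elements the centralizer is essentially $\ms L_\alpha$-like, controlled by the Levi structure), so $\pi_0(\ms Z_\G(\alpha))$ takes only finitely many isomorphism types. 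Set $\Lambda_\alpha\defeq\Gamma_\alpha\cap\ms Z^\circ_\G(\alpha)$; this is normal in $\Gamma_\alpha$ (being the preimage of the identity in $\pi_0$), and $[\Gamma_\alpha:\Lambda_\alpha]\le|\pi_0(\ms Z_\G(\alpha))|\le\rm M$ for a constant $\rm M$ depending only on $\ms G$. Since $\alpha\in\ms Z^\circ_\G(\alpha)$ (it lies on the one-parameter group $\{\exp(tu)\}$) and $\alpha\in\Gamma_\alpha$, we also get $\alpha\in\Lambda_\alpha$, as required.

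\textbf{Step 2: transferring $\bJ_0$ and Step 3: the torus.} Choose any tripod $\tau$ with $\partial^\pm\tau=\alpha^\pm$, i.e.\ $\tau\in\mc F_\alpha$, and set $\bJ\defeq\tau^{-1}\bJ_0$ (abusing notation for the conjugation action); as noted in paragraph~\ref{sec:bjbj0} this is independent of the choice of $\tau$ because $\bJ_0\in\ms Z(\ms L_0)$ and $\mc F_\alpha$ is a left $\ms L_\alpha$-torsor, and $\bJ\in\ms Z(\ms L_\alpha)$. Under the (regular) flip assumption, $\bJ_0$ lies in $\ms Z^\circ(\ms Z(a_0))$ (resp.\ $\ms Z^\circ(a_0)$), hence $\bJ$ lies in $\ms Z^\circ(\ms L_\alpha)$ (resp.\ $\ms Z^\circ(\ms Z_\G(\alpha))$, using Proposition~\ref{pro:Levi} identifying $\ms L_\alpha$ with a centralizer of a torus). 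In particular $\bJ\in\ms C_\alpha=(\ms Z_\G(\Lambda_\alpha))^\circ$, since $\Lambda_\alpha\subset\ms Z^\circ_\G(\alpha)$ forces $\ms Z_\G(\Lambda_\alpha)\supseteq\ms Z_\G(\ms Z^\circ_\G(\alpha))$, and $\bJ$ centralizes the latter. Now $\bJ$ is an involution lying in the connected group $\ms C_\alpha$; a standard fact is that any element of a connected Lie group is contained in a connected abelian subgroup, and an involution of finite order lying in $\ms C_\alpha\cap\ms K_\alpha$ (it is elliptic, being of finite order, and can be arranged to sit in the maximal compact factor after conjugation, or one notes directly that $\bJ_0=\exp(i\zeta_0 a_0/2)$ type elements land in compact directions) lies in a compact torus $\ms T^0_\alpha\subset\ms Z_\ms G(\Lambda_\alpha)\cap\ms K_\alpha$. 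This gives the second bullet of Definition~\ref{def:flipw}.

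\textbf{Expected main obstacle.} The delicate point is Step~1's uniform bound on $[\Gamma_\alpha:\Lambda_\alpha]$ and, relatedly, the precise placement of $\bJ$ inside a \emph{compact} torus of $\ms Z_\ms G(\Lambda_\alpha)\cap\ms K_\alpha$ rather than merely inside $\ms Z^\circ_\G(\alpha)$. One must argue carefully that the ``flip'' direction is genuinely a compact (elliptic) direction of the centralizer—this is exactly what the (regular) flip assumption buys us, since it places $\bJ_0$ in a connected compact factor of $\ms Z(\ms Z(a_0))$ (resp.\ of $\ms Z(a_0)$)—and then transport this along $\tau$. The bookkeeping of which centralizer ($\ms Z(a)$ vs.\ $\ms Z(\ms Z(a))$) is used in the two cases, and checking the torus lands in $\ms K_\alpha$, is where the real content lies; the rest is structure theory that is largely routine once one has fixed the right one-parameter subgroup through $\alpha$.
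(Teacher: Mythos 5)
Your argument is correct for the flip assumption (Definition~\ref{def:flip}), but it has a genuine gap in the regular flip case (Definition~\ref{def:flipr}), where you try to reuse the same choice $\Lambda_\alpha=\Gamma_\alpha\cap\ms Z^\circ_\G(\alpha)$ and the same argument for landing $\bJ$ in the identity component of the centralizer.

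The gap is in Step~2. The regular flip assumption only places $\bJ_0$ in $\ms Z^\circ(a_0)=\ms L_0^\circ$, not in $\ms Z^\circ(\ms Z(a_0))=\ms Z^\circ(\ms L_0)$. Transporting along a tripod $\tau\in\mc F_\alpha$ therefore gives $\bJ\in\ms L_\alpha^\circ$, \emph{not} $\bJ\in\ms Z^\circ(\ms Z_\G(\alpha))$ as you assert: Proposition~\ref{pro:Levi} identifies $\ms L_\alpha$ with $\ms Z_\G(a)$ for a suitable $a$ with $\tau(a_0)=a$, not with $\ms Z_\G(\alpha)$, and the two differ because a $\ms P$-loxodromic $\alpha$ need not be a pure exponential $\exp(ta)$ (it may carry a compact twist), so $\ms Z_\G(\alpha)$ can be strictly smaller than $\ms L_\alpha$. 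Combined with the reflexion property $\bJ\in\ms Z(\ms L_\alpha)$ you do get that $\bJ$ centralizes $\Gamma_\alpha\supset\Lambda_\alpha$, hence $\bJ\in\ms Z_\G(\Lambda_\alpha)$, but nothing so far gives $\bJ\in\ms Z^\circ_\G(\Lambda_\alpha)$: the group $\ms L_\alpha^\circ$ does not centralize $\Lambda_\alpha$, so knowing $\bJ\in\ms L_\alpha^\circ$ provides no connected path to the identity inside $\ms Z_\G(\Lambda_\alpha)$, and $\ms Z(\ms L_\alpha)$ itself may be disconnected. Your claimed chain $\bJ\in\ms Z^\circ(\ms Z_\G(\alpha))\subset\ms Z^\circ_\G(\Lambda_\alpha)$ therefore has an unjustified first inclusion, and the final handwave about landing in a compact torus inherits this gap.

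The paper resolves this by choosing $\Lambda_\alpha$ more carefully in the regular case. Since $a_0$ is regular, $\ms L_\alpha=\ms A_\alpha\times\ms K_\alpha$ with $\ms A_\alpha$ a split torus and $\ms K_\alpha$ compact; because $\Gamma_\alpha$ is discrete and torsion-free it meets $\ms K_\alpha$ trivially and injects into $\ms A_\alpha$, so it is abelian. Letting $\pi:\ms L_\alpha\to\ms K_\alpha$ be the projection and $\ms B_1$ a maximal abelian subgroup of $\ms K_\alpha$ containing $\pi(\Gamma_\alpha)$, the paper takes $\Lambda_\alpha=\pi^{-1}(\ms B_1^\circ)\cap\Gamma_\alpha$. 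A maximal torus $\ms T^0_\alpha$ of $\ms K_\alpha$ containing $\ms B_1^\circ$ then centralizes $\Lambda_\alpha$, and it contains $\bJ$: since $\bJ$ is an involution in $\ms L_\alpha^\circ\cap\ms Z(\ms L_\alpha)$ its $\ms A_\alpha$-component is trivial (the split part of the identity component is torsion-free), so $\bJ$ lies in the center of $\ms K_\alpha$ and hence in every maximal torus. The uniform bound $[\Gamma_\alpha:\Lambda_\alpha]\leq\rm M$ comes from a Milnor-type bound on $[\ms C:\ms C^\circ]$ for $\ms C$ maximal abelian in $\ms K_0$ (and likewise the bound you invoke in Step~1 on $|\pi_0(\ms Z_\G(\alpha))|$ is Milnor's result and should be cited rather than asserted as obvious). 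Your treatment of the flip case itself is fine and matches the paper's argument there.
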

\begin{proof}  Let us first make the following preliminary remark:  as an easy consequence of  a general result by John Milnor in \cite{Milnor:1964tl} the following holds: {\em Given a center-free semisimple Lie group ${\G}$, there exists a constant ${\bN}$\index{$\bN$}, so that for every semisimple $g\in{\G}$, the number of connected components of $\ms Z_{\G}(g)$ is less than ${\bN}$}.
In particular, $[\Gamma_\alpha:\Gamma_\alpha\cap \ms Z^\circ_\ms G(\alpha)])\leq {\bN}$.

We have to study the two cases of the flip and regular flip assumptions. Assume first that ${\G}$ and $\mk s_0$ satisfy the flip assumption with reflexion $\bJ_0$. Let $\alpha$ be an element of $\Gamma$ which is $\ms P$-loxodromic. Then any element  $\beta$ commuting with $\alpha$ preserves $\alpha^+$ and $\alpha^-$, thus $\Gamma_\alpha< \ms L_\alpha$. The flip assumption hypothesis thus implies that taking $\Lambda_\alpha=\Gamma_\alpha\cap\ms Z^\circ_\ms G(\alpha)$.
		$$\bJ_\alpha\in \left(\ms Z_{\G}(\ms L_\alpha)\right)^\circ\subset \left(\ms Z_{\G}(\Gamma_\alpha)\right)^\circ\subset \left(\ms Z_{\G}(\Lambda_\alpha)\right)^\circ \ . $$
		Moreover $\bJ_\alpha$ is an involution that belongs to the center of $\ms L_\alpha$ and thus to its compact factor. Thus we may choose for $\ms T_\alpha^0$  a maximal torus in $\left(\ms Z_\ms G (\Lambda_\alpha)\right)^\circ\cap \ms K_\alpha$  containing $\bJ_\alpha$
This concludes this case.

Let us move to the regular flip assumption. In that case $\ms L_0=\ms A_0\times \ms K_0$ where $\ms A _0$ is a torus without compact factor and $\ms K_0$ is a compact factor, accordingly $\ms L_\alpha=\ms A_\alpha\times \ms K_\alpha$ with the same convention.  Let $\alpha$ be a $\ms P$-loxodromic element in $\Gamma$, as above we notice that $\Gamma_\alpha\subset \ms L_\alpha$. Since $\Gamma_\alpha$ is discrete torsion free, $\Gamma_\alpha\cap \ms K_\alpha=\{e\}$. Thus, the projection of $\Gamma_\alpha$ on $\ms A_\alpha$ is injective, and $\Gamma_\alpha$ is abelian. Let  $\pi$  be the projection of $\ms L_\alpha$ on $\ms K_\alpha$,  $\ms B=\pi(\Gamma_\alpha)$  and $\ms B_1$ a maximal abelian containing $\ms B$ in $\ms K_\alpha$. Using again \cite{Milnor:1964tl}, there is a constant ${\rm M}$ only depending on $\ms G$, so that if $\ms C$ is maximal abelian in $\ms K_0$, then $[\ms C:\ms C^\circ]\leq {\rm M}$. Let 
$$
\Lambda_\alpha\defeq\pi^{-1}(\ms B_1^\circ)\cap \Gamma_\alpha\subset \ms Z^\circ_\ms G(\alpha)\  .
$$
Then $[\Gamma_\alpha:\Lambda_\alpha]\leq {\rm M}$. Moreover, setting $\ms T^0_\alpha$ to be a maximal torus containing $\ms B_1^\circ$, we have (since $\bJ$ is central in $\ms K_0)$
$$
\bJ_\alpha\in \ms T^0_\alpha\subset \ms Z_\G (\ms B^\circ_1)\cap \ms K_\alpha\subset \ms Z_\G (\Lambda_\alpha)\cap \ms K_\alpha\ .
$$
	This concludes the proof of the proposition.
\end{proof}

\subsubsection{Groups satisfying (or not) the flip assumptions}\label{sec:flip-ex}

Let us show a list of group satisfying the flip assumptions. Examples
 \ref{exa:flip ii} was pointed out to us by Fanny Kassel, who also pointed out earlier mistakes.

\begin{enumerate}
\item If $\G$ is  a complex semi-simple Lie group. Let $\mk s=(a,x,y)$ be an even $\slt$-triple. Then $\bJ_0=\exp{\frac{i\zeta a}{2}}$, for the smallest $\zeta>0$ so that  $\exp(i\zeta a)=1$ is a reflexion that satisfies the flip assumption: indeed $\exp(ita)$ for $t$ real lies in $\ms Z\left(\ms Z(a)\right)$.
\item The isometry groups of the hyperbolic space,  $\ms{SO}(1,p)$ do not satisfy the flip assumption when $p$ is even,  while they satisfy it for   $p$ odd.
\item  \label{exa:flip i} The groups $\ms{SO}^+(2,4)$  satisfies the  flip assumption for  some $\slt$-triple with compact centralizer. More precisely let us consider $\Delta$ the diagonal $\slt$ in $\ms{SO}^+(1,2)\times \ms{SO}^+(1,2)$  in $\ms{SO}^+(2,4)$. Then, on can check that $\ms{Z}(a)=\ms{GL}(2,\mathbb R)\times \ms{SO}(2)$ and, in  this decomposition $J_0=(\id , -\id)$ . Thus although $a$ is not regular, $J_0$ belongs to the connected component of the identity of $\ms{Z}\left(\ms{Z}(a)\right)$. Finally, one notices that $\ms{Z} (\Delta)$ is compact.

\item\label{exa:flip ii}The groups  $\ms{SU}(p,q)$, with $q>p>0$,  satisfy the flip assumption. We consider  $\ms H$ to be the irreducible $\sld$ in $\ms{SO}(p,p+1)$. Then we see $\ms{SO}(p,p+1)$ as subgroup of $\ms{SU}(p,q)$ in $\ms{GL}(p+q)$. Then the centralizer of $a$ in   $\ms{GL}(p+q)$ is $\mathbb C^{2p}\times\ms{GL}(q-p)$. It follows that the centralizer 
$\ms{Z}(a)$ in $\ms{SU}(p,q)$ is $\mathbb C^p\times \ms{SU}(q-p)$. Thus $a$ is regular and  $\ms{Z}(a)$ is connected. It flows that $\ms H$ satisfies the regular flip assumption. 

\end{enumerate}

On the other hand, one easily checks that the groups 
 $\ms{SL}(n,{\mathbb R})$ do not satisfy the flip assumption for the irreducible $\ms{SL}(2,\mathbb R)$.
\subsection{Proof of the Even Distribution Theorem \ref{theo:even}}
Let $\Lambda_\alpha$ the subgroup of $\Gamma_\alpha$  of index at most $\bM$ appearing in Definition \ref{def:flipw}.

Recall that $\KMT_\alpha=\varphi_1\circ\sigma=\varphi_1\circ \bJ_0\circ \bI_0=\varphi_1\circ \bJ_\alpha\circ \bI_0$, where $\bJ_\alpha$ is defined in the beginning of paragraph \ref{sec:bjbj0}.

The second equality comes from the fact that as seen in the beginning of section \ref{sec:bjbj0} the right action of $\bJ_0$ and the left action of $\bJ_\alpha$ coincide on $\mc F_\alpha$.
Using 
propositions \ref{pro:Imu}, \ref{exispant} and \ref{pro:supfeet} we have
$$
\bpsi^-_*\mu^-\seR=\bpsi^-_*{\bI_0}_*\mu^+\seR={\bI_0}_*\bpsi^+_*\mu^+\seR\ .
$$
Let then $\bmu=\bpsi^+_*\mu^+\seR$ and $\bnu=\bpsi^+_*\nu^+\seR$
Our goal is thus to prove that there exists a constant $\bM_5$ only depending on ${\G}$, so that 
\begin{eqnarray}
d_L({{\varphi_1}_*\bJ_\alpha}_*\bmu,\bmu)\leq \bM_5\eR\ ,	
\end{eqnarray}
where we consider $\bmu$ as a measure on $\mc F^{\Gamma_\alpha}_{\alpha}$. We perform a further reduction. Let ${\rm p}$ the covering from $\mc F^{\Lambda_\alpha}_\alpha$ to $\mc F^{\Gamma_\alpha}_{\alpha}$. let $\bmu^0$ and $\bnu^0$ be the preimages of $\bmu$ and $\bnu$ respectively on $\mc F^{\Lambda_\alpha}_\alpha$.  Since, ${\rm p}_*\bmu^0=q \bmu$ and   ${\rm p}_*\bnu^0=q \bnu$ where $q$ is the degree -- less than $\bM$ -- of the covering  ${\rm p}$, it is enough by Proposition \ref{pro:prokhcont} to prove that 
there exists a constant $\bM_6$ only depending on ${\G}$, so that 
\begin{equation}
d_L({{\varphi_1}_*\bJ}_*\bmu^0,\bmu^0)\leq \bM_6\eR\ , . \label{ineq:redu0}	
\end{equation}
But now this is a consequence of Theorem \ref{theo:mesfeet}, taking
$\bnu\defeq \Psi_*\nu\seR$, where $\nu\seR$ is defined in  paragraph \ref{def:nu} and its invariance under $\rm C_\alpha$ is checked in the same paragraph.  The main hypothesis \eqref{eq=mix} of  Theorem \ref{theo:mesfeet} is a consequence of Proposition \ref{pro:munu}.

\section{Building straight surfaces and gluing}\label{sec:straight}

Our aim in this section is to define {\em straight surfaces} and prove their existence in Theorem \ref{exisstraight}. Loosely speaking, a straight surface is obtained by gluing almost Fuchsian pair of pants using KM-twists.  We also explain that a straight surface comes with a fundamental group.

This section is just a rephrasing of a similar argument in \cite{Kahn:2009wh} and uses as a central argument the Even Distribution Theorem \ref{theo:even}.

\subsection{Straight surfaces} Recall that in  a graph, a {\em flag} adjacent to  vertex $v$ a  is a pair $(v,e)$ so that the edge $e$ is adjacent to the vertex $v$. The {\em link} $L(v)$ of a vertex $v$  is the set of flags adjacent to $v$.   A trivalent ribbon graph is a graph with a cyclic permutation $\omega$ of order 3, without fixed points,   on edges so that $\omega(v,e)=(v,f)$ so that every link $L(v)$ is equipped with a cyclic permutation $\omega_v$ of order 3.  If a graph is bipartite so that we can write its set of vertices as $V^-\sqcup V^+$, we denote by $e^\pm$ the vertices of an edge $e$ that belong to $V^\pm$ respectively. 

Let $\Gamma$ be a discrete  subgroup of $\ms G$.

\begin{definition}{\sc [Straight surfaces]} Let $\epsilon$ and $R$ be positive numbers.
An $(\epsilon,R)$ {\em straight surface for $\Gamma$} \index{Straight surface} is a pair $\Sigma=(\mc R,W)$ where $\mc R$ is a finite bipartite trivalent ribbon graph whose set of vertices is $V^-\sqcup V^+$, and $W$ is labeling of flags in $\mc R$ so that 
\begin{enumerate}
	\item For every flag $(v,e)$ with $v\in V^\pm$, $W(v,e)$ belongs to $Q^{\Gamma,\pm}\seR$,
    \item The labeling map  is equivariant: $W(\omega_v(v,e))=\omega(W(v,e))$.
	\item for any edge $e$, 
	\begin{equation}
		d(\bpsi^+(W(e^+,e)),\KMT \bpsi^-((W(e^-,e)))\leq\eR\ .\label{def:KMg}
	\end{equation} 
\end{enumerate}
 \end{definition}

We may now associate to a straight surface $\Sigma$  a topological surface given by the gluing of pair of pants (labeled by vertices) along their boundaries (labeled by edges), surface whose  fundamental group is  denoted $\pi_1(\Sigma)$. 
The labeling of vertices of edges will then give rise to a representation of $\pi_1(\Sigma)$ into $\Gamma$ (See section \ref{sec:fundgroup}). The main Theorem of this section is 

\begin{theorem}\label{exisstraight}{\sc[Existence of straight surfaces]}
Let $\mk s$ be an $\sld$-triple in the Lie algebra of a semisimple group $\ms G$-satisfying the flip assumption.
Let $\Gamma$ be a uniform lattice in $\ms G$. 

Then, for every $\epsilon$, there exists $R_0$  so that for any $R\geq R_0$, there exists an $(\epsilon,R)$-straight surface for $\Gamma$.\end{theorem}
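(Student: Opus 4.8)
The strategy follows the Kahn--Markovic paradigm: first produce an abundance of good pairs of pants with appropriate weights, then glue them along matching boundary data using a measure-theoretic matching argument (the Measured Marriage Theorem, used in the next section). Concretely, the plan is to fix $\epsilon$ and choose $R$ large, work with the finite measures $\mu^\pm\seR$ on the spaces $\mc Q^{\Gamma,\pm}\seR$ of triconnected tripods (finite and with relatively compact support by Proposition \ref{exispant}), and for each $\ms P$-loxodromic conjugacy class $[\alpha]$ occurring as a boundary loop, push these measures forward via the feet projection $\bpsi^\pm$ to the feet space $\mc F^\Gamma_{[\alpha]}$. The Even Distribution Theorem \ref{theo:even} tells us precisely that $\bpsi^+_*\mu^+\seR$ and $\KMT_*\bpsi^-_*\mu^-\seR$ are Levy--Prokhorov $M\eR$-close. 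This near-coincidence of the two boundary measures — one coming from ``positive side'' pairs of pants, one from ``negative side'' pairs of pants after a Kahn--Markovic twist — is exactly the input needed to run a marriage/matching argument producing a way to pair up (with multiplicities rescaled to integers) positive flags with negative flags so that condition \eqref{def:KMg} holds for the resulting edges.

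\textbf{Key steps, in order.} First I would establish that $\mu^\pm\seR$ are nonzero for $R$ large: this requires exhibiting at least one triconnected pair of tripods of nonzero weight, which follows from Proposition \ref{rem:st2w} together with the existence of genuine $(\eR,R)$-stitched pairs of pants in $\Gamma$ — and these exist because $R$-perfect stitched pairs of pants based at any tripod $\tau_0$ exist abstractly in $\ms G$, and the mixing estimate of Proposition \ref{pro:expmix} guarantees the downstairs weight ${\rm a}\seR$ is close to $1$, hence positive, making the associated weight function ${\rm b}\seR$ positive on a nonempty open set of triconnected tripods in $\Gamma\backslash\mc G$. Second, for each boundary conjugacy class $[\alpha]$ I would record that $\bpsi^\pm$ are proper (Proposition \ref{pro:supfeet}) so the pushforward measures are finite with compact support in $\mc F^\Gamma_{[\alpha]}$, and that $\KMT_\alpha=\varphi_1\circ\sigma$ is a homeomorphism of $\mc F^\Gamma_\alpha$. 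Third — the combinatorial heart — I would invoke the Measured Marriage Theorem (deferred to Section \ref{sec:straight}, so here I only need to cite that it will be applied): from two finite measures on a common metric space that are $\delta$-close in Levy--Prokhorov distance, one extracts, after rescaling to rational and then integer weights, a bipartite graph structure — vertices being (copies of) the pairs of pants weighted by $\mu^\pm$, edges being the matched pairs of boundary flags — in which every edge satisfies the approximate gluing inequality with constant controlled by $\delta = M\eR$. Fourth, I would assemble this data into a bipartite trivalent ribbon graph $\mc R$: trivalence and the order-$3$ permutation $\omega_v$ come from the order-$3$ symmetry of stitched pairs of pants and the relation \eqref{eq:bwo}, ${\rm b}\seR\circ\omega={\rm b}\seR$, which ensures the weight is $\omega$-invariant so the matching can be chosen $\omega$-equivariantly; the bipartite structure $V^-\sqcup V^+$ comes from the $\pm$ distinction. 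Finally I would verify the labelling $W$ sending each flag to the corresponding element of $\mc Q^{\Gamma,\pm}\seR$ satisfies the three axioms: (1) by construction $W(v,e)\in \mc Q^{\Gamma,\pm}\seR$; (2) $\omega$-equivariance of $W$ by the $\omega$-equivariant choice of matching; (3) the gluing inequality \eqref{def:KMg} with the constant $\eR$, possibly after absorbing $M$ into a redefinition of $\epsilon$ at the outset (replace $\epsilon$ by $\epsilon/M$ so that $M\cdot(\epsilon/M)/R = \eR$).

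\textbf{Main obstacle.} The genuinely substantial ingredient is the Even Distribution Theorem \ref{theo:even} — but that has already been proved in this excerpt, so within the scope of this final statement it may be cited. The remaining real work is bookkeeping: making the reduction from $\mc Q$ to $\mc F$ for each boundary class simultaneously and uniformly, ensuring the measures $\bpsi^\pm_*\mu^\pm\seR$ restricted to each $\mc F^\Gamma_{[\alpha]}$ are comparable (this is where one uses that $\KMT$ is the twist $\prod_\alpha \KMT_\alpha$ acting componentwise, and that the Levy--Prokhorov bound \eqref{eq:lpb} holds with respect to the metric on $\mc F$ as a disjoint union — so it holds componentwise), and verifying that the marriage argument can be carried out $\omega$-equivariantly and with the bipartite constraint — this last point being where the flip assumption has already done its job by forcing $\mu^+$ and $\mu^-$ (via $\bpsi$ and the twist) to match. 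A secondary technical point is that the marriage theorem produces rational weights only after a limiting/approximation argument; one must check that the $\eR$-slack in \eqref{def:KMg} survives this rationalization, which it does because the inequality is open and the approximating measures can be taken arbitrarily close. So the proof of Theorem \ref{exisstraight} is, modulo the Measured Marriage Theorem of the next section, a direct corollary of \ref{theo:even} together with the finiteness and properness facts already assembled, plus careful attention to the ribbon-graph combinatorics.
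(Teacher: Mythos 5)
Your proposal is correct and follows the same route as the paper: nonemptiness and finiteness of $\mu^{\pm}\seR$, the Levy--Prokhorov bound from the Even Distribution Theorem \ref{theo:even}, and then the Measured Marriage Theorem (packaged in the paper as the Trivalent Graph Lemma \ref{trigraph}, applied to $Y=\mc Q^{\Gamma,+}\seR$, $\mu=\mu^+\seR$, $f^0=\bpsi^+$, $f^1=\KMT\circ\bpsi^+$) to assemble the bipartite trivalent ribbon graph and labelling. Your remark on rescaling $\epsilon$ to absorb the multiplicative constant $M$, and your careful derivation of nonemptiness of $\mc Q^{\Gamma,+}\seR$ from the factorization \eqref{eq:brar} together with the mixing estimate on ${\rm a}\seR$, are both sound and slightly more explicit than the paper's terse citations.
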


\subsection{Marriage and equidistribution}
We want to prove
\begin{lemma}{\sc [Trivalent graph]}\label{trigraph}
Let $Y$ be a compact metric space. Let $\omega$ be  an order 3 symmetry  acting freely on $Y$. Let  $\mu$ be a $\omega$-invariant  finite (non zero) measure on $Y$. Let $\alpha$ be a real number. Let $f^0$ and $f^1$ 
be two uniformly Lipschitz maps from $Y$  to a metric space $Z$ such that
$
d_L(f^0_*\mu,f^1_*\mu)<\alpha
$. Then there exists a nonempty  finite  trivalent bipartite ribbon graph $\mc R$, whose set vertices are $V_0\sqcup V_1$ so that
\begin{itemize}
\item we have an $\omega$-equivariant labeling $W$ of  flags by elements of  $Y$. 
\item if $e$ is an edge from $v_0$ to $v_1$ so that $v_i\in V_i$, then $d(f^0\circ W(v_0,e),f^1\circ W (v_1,e))\leq\alpha$.
\end{itemize}
\end{lemma}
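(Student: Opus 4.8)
The plan is to reduce the statement to a bipartite perfect‑matching problem and feed it to Hall's marriage theorem, the hypothesis on the measures being exactly what supplies Hall's condition. We assume $\mu\neq 0$ and write $d_Z$ for the metric of $Z$.

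First I would discretize $\mu$. A standard approximation — cutting $Y$ into a finite $\omega$-invariant family of Borel cells of small diameter (possible since $\omega$ is an isometry of $Y$), choosing a representative in each, and assigning it the $\mu$-mass of its cell — produces, for any $\delta>0$, a finite multiset $\mathcal S$ of points of $Y$ that is $\omega$-invariant as a multiset, together with an integer $N\geq1$, so that the atomic measure $\mu'\defeq\tfrac1N\sum_{y\in\mathcal S}\delta_y$ has normalisation $\bar\mu'\defeq\mu'/\mu'(Y)$ at Lévy--Prokhorov distance $\leq\delta$ from $\bar\mu\defeq\mu/\mu(Y)$. As $f^0,f^1$ are uniformly Lipschitz, say with constant $L$, this gives $d(f^j_*\bar\mu',f^j_*\bar\mu)\leq L\delta$ for $j\in\{0,1\}$. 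Combining with the hypothesis $d(f^0_*\mu,f^1_*\mu)<\alpha$ and the properties of the Lévy--Prokhorov distance recalled in Appendix~\ref{P}, for $\delta$ small enough we obtain a coupling of $P'\defeq f^0_*\bar\mu'$ and $Q'\defeq f^1_*\bar\mu'$ supported inside $\{(z,w)\in Z\times Z\mid d_Z(z,w)\leq\alpha\}$; equivalently $P'(U)\leq Q'(U^\alpha)$ for every Borel $U\subset Z$, where $U^\alpha$ denotes the closed $\alpha$-neighbourhood of $U$. This is the step where the strictness of the inequality and the Lipschitz control are spent.

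Next I would build the graph. Take $V_0$ and $V_1$ to be two copies of the multiset $\mathcal S$; each vertex is declared trivalent, its three flags indexed by $\mathbb Z/3\mathbb Z$ and cyclically permuted by $\omega$, and the flag in position $j$ over a vertex carrying $y\in Y$ is labelled by $\omega^j y$ — an assignment which is $\omega$-equivariant by construction. Form the auxiliary bipartite graph $\mathcal H$ whose left (resp.\ right) vertices are the flags over $V_0$ (resp.\ over $V_1$), the flag in position $j$ over $y\in V_0$ carrying the ``colour'' $f^0(\omega^j y)\in Z$ and the one over $y\in V_1$ the colour $f^1(\omega^j y)$, and joining a left flag to a right flag exactly when their colours lie at distance $\leq\alpha$ in $Z$. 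A perfect matching of $\mathcal H$ is precisely a choice of edges turning $V_0\sqcup V_1$ into a finite trivalent bipartite ribbon graph with the above $\omega$-equivariant labelling $W$ and with $d_Z(f^0(W(v_0,e)),f^1(W(v_1,e)))\leq\alpha$ for every edge $e$ from $v_0\in V_0$ to $v_1\in V_1$; so it suffices to produce one. Since $\mathcal H$ has the same number, $3|\mathcal S|$, of left and of right vertices, by Hall's theorem it is enough to verify $|N_{\mathcal H}(A)|\geq|A|$ for every set $A$ of left flags. Letting $U\subset Z$ be the finite set of colours occurring in $A$, the $\omega$-invariance of $\mathcal S$ gives, after a reindexing in each position, $|A|\leq 3|\mathcal S|\,P'(U)$ and $|N_{\mathcal H}(A)|=3|\mathcal S|\,Q'(U^\alpha)$, so Hall's condition follows at once from $P'(U)\leq Q'(U^\alpha)$. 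Finally $\mathcal S\neq\emptyset$, so the graph is nonempty.

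The step I expect to be the main obstacle is the end of the first paragraph: upgrading the mere closeness of $f^0_*\mu$ and $f^1_*\mu$ to the exact inequality $P'(U)\leq Q'(U^\alpha)$ — equivalently, to a coupling supported entirely within distance $\alpha$, not just concentrated near the diagonal — uniformly in, and surviving, the discretization. This is exactly where the quantitative transport- and Strassen-type statements about the Lévy--Prokhorov distance assembled in Appendix~\ref{P} are needed. Once that inequality is in hand, the rest is the bookkeeping of Hall's theorem and the (routine) check that a perfect matching of $\mathcal H$ carries the ribbon structure and the equivariance.
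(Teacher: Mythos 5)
Your argument is correct and, at its heart, the same as the paper's: equivariantly discretize $\mu$, then run Hall's marriage theorem, with the L\'evy--Prokhorov hypothesis supplying Hall's condition. The packaging differs slightly. The paper first proves the Measured Marriage Theorem~\ref{Hall+}, producing a finite multiset $\bar Y$, a map $p\colon\bar Y\to Y$, an equivariant order-3 action $\tilde\sigma$, and a bijection $\phi$ of $\bar Y$ with $d(f\circ p,g\circ p\circ\phi)$ small; Lemma~\ref{trigraph} is then obtained by taking the vertices of $\mc R$ to be the two copies $V_0\sqcup V_1$ of $\bar Y/\langle\tilde\sigma\rangle$, the edges to be the elements $y\in\bar Y$ (joining $\pi(y)$ to $\pi(\phi(y))$), and the labelling to be $W=p$. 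You skip the intermediate theorem and instead build the bipartite graph of flags directly, with vertices the points of the discretization $\mathcal S$, each sprouting three flags labelled by its $\omega$-orbit, and apply Hall to that graph, verifying Hall's condition explicitly via $|A|\le 3|\mathcal S|\,P'(U)\le 3|\mathcal S|\,Q'(U^\alpha)=|N_{\mathcal H}(A)|$ — which is a cleaner way of surfacing exactly where the L\'evy--Prokhorov hypothesis is spent. The two constructions are interchangeable (your perfect matching of flags is the paper's bijection $\phi$; the paper's quotient by $\tilde\sigma$ reappears in your approach as the grouping of triples of flags into a vertex), though yours yields a graph three times as large since vertices correspond to discretization points rather than to their $\omega$-orbits. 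One shared tacit hypothesis worth flagging: your finite cell decomposition of $Y$ into small-diameter pieces, like the paper's Proposition~\ref{pro:approx}, requires the support of $\mu$ to be effectively compact; this is not stated in Lemma~\ref{trigraph} but does hold in the intended application, where $Y=\mc Q^{\Gamma,+}\seR$ is relatively compact by Proposition~\ref{exispant}.
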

This will be an easy consequence of the following theorem.
\begin{theorem}{\sc [Measured Marriage Theorem]} \label{Hall+}
Let $Y$ be a  compact metric space equipped with a finite (non zero) measure $\mu$. Let $f$ and $g$
be two uniformly Lipschitz maps from $Y$ to a metric space $Z$ such that
$$
d_L(f_*\mu,g_*\mu)< \beta\ .
$$
Then there exists a non empty finite set $\bar Y$, a  map  $p$  from $\bar Y$ in $Y$, a bijection $\phi$ from $\bar Y$ to itself, so that
$$
d(f\circ p,g\circ p\circ \phi)\leq 2\beta.
$$
Assume moreover that we have a free action of an order 3 symmetry $\sigma$ on  $Y$  preserving the measure. Then,  there exists $\bar Y$  and $p$ as before equipped with  an order 3 symmetry $\tilde\sigma$ so that $p$ is $\tilde\sigma$-equivariant.
\end{theorem}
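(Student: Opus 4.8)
The statement to be proved is the Measured Marriage Theorem (Theorem~\ref{Hall+}), a continuous/measured analogue of Hall's marriage theorem adapted to an order~$3$ symmetry; it is used to deduce Lemma~\ref{trigraph}, which in turn (via the Even Distribution Theorem~\ref{theo:even}) yields the existence of straight surfaces. The plan is to first prove the non-equivariant statement and then upgrade to the $\sigma$-equivariant version by a factor-of-$3$ trick. For the non-equivariant part, the idea is the classical one from \cite{Kahn:2009wh}: the hypothesis $d(f_*\mu, g_*\mu)\leq\beta$ in the Levy--Prokhorov distance means that for every Borel set $A\subset Z$ one has $f_*\mu(A)\leq g_*\mu(A^\beta)+\beta$ and vice versa (where $A^\beta$ is the $\beta$-neighbourhood), and we want to realize, on a suitable finite sampling of $Y$, a bijection matching points whose $f$- and $g$-images are within $2\beta$.

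\textbf{Step 1: discretization.} Cover $Z$ (or the relevant compact part of it carrying the push-forward measures, using that $f,g$ are uniformly Lipschitz and $\mu$ finite) by finitely many Borel sets of diameter $<\beta$; pull these back along $f$ and along $g$ to get two finite Borel partitions $\mathcal P_f$ and $\mathcal P_g$ of $Y$, and let $\mathcal P$ be their common refinement. For each atom $P\in\mathcal P$ pick a base point; approximating $\mu$ by a rational-valued measure $\bar\mu$ supported on these base points with $d(f_*\bar\mu,g_*\bar\mu)\leq\beta+\delta<2\beta$ (possible since both pushforwards depend only on the $\mathcal P$-masses and we may perturb masses slightly), clear denominators to get an integer weighting, i.e. a finite multiset $\bar Y$ of points of $Y$ with a map $p:\bar Y\to Y$.

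\textbf{Step 2: build the bijection via Hall.} On $\bar Y$ define a bipartite graph by joining $y\in\bar Y$ (viewed on the ``$f$ side'') to $y'\in\bar Y$ (on the ``$g$ side'') whenever $d(f(p(y)),g(p(y')))\leq 2\beta$. The Levy--Prokhorov inequality $d(f_*\bar\mu,g_*\bar\mu)<2\beta$ translates precisely into Hall's condition: for any union $S$ of atoms, the $f$-mass of $S$ is at most the $g$-mass of its $2\beta$-neighbourhood, which after clearing denominators says the neighbourhood of any vertex subset in the bipartite multigraph is at least as large; hence a perfect matching exists, giving the bijection $\phi$ with $d(f\circ p, g\circ p\circ\phi)\leq 2\beta$. (The details of turning a Levy--Prokhorov bound into a Hall deficiency condition are exactly where one must be careful; I expect this to be the main obstacle, though it is essentially bookkeeping once the right partition is fixed.)

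\textbf{Step 3: the order~$3$ equivariant version.} Now suppose $\sigma$ acts freely on $Y$ preserving $\mu$. Choose the partition $\mathcal P$ above to be $\sigma$-invariant (replace $\mathcal P$ by $\mathcal P\vee\sigma\mathcal P\vee\sigma^2\mathcal P$) and choose the base points and rational approximation $\sigma$-equivariantly, so that $\bar Y$ carries a free order~$3$ symmetry $\tilde\sigma$ with $p\circ\tilde\sigma=\sigma\circ p$; since $\sigma$ preserves $\mu$ it preserves $\bar\mu$, so $f_*\bar\mu$ and $g_*\bar\mu$ are unchanged and the Hall argument of Step~2 applies verbatim, producing $\bar Y$, $p$ and $\tilde\sigma$ as claimed. (We do not need $\phi$ itself to be equivariant for the application — only $p$ and $\tilde\sigma$ — which is what the statement asks.) Finally, Lemma~\ref{trigraph} follows by applying Theorem~\ref{Hall+} with $f=f^0$, $g=f^1$: the bijection $\phi$ on the $\sigma$-equivariant finite set $\bar Y$ is used to define the edges of $\mathcal R$ (edges $=$ matched pairs), the vertices are the $\sigma$-orbits (of size $3$, since $\sigma$ acts freely) on the two copies of $\bar Y$, the cyclic order~$3$ structure comes from $\tilde\sigma$, and the labelling $W$ is $p$; condition~$d(f^0\circ W(v_0,e),f^1\circ W(v_1,e))\leq\alpha$ is exactly $d(f^0\circ p, f^1\circ p\circ\phi)\leq 2\beta$ with $\beta=\alpha/2$ (replacing $\alpha$ by $\alpha/2$ in the input of Theorem~\ref{Hall+}), and non-emptiness is guaranteed because $\mu$ is a nonzero finite measure.
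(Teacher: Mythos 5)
Your proposal follows essentially the same route as the paper's own proof: approximate $\mu$ by a rational atomic measure (the paper's Proposition~\ref{pro:approx}, which you replicate by discretizing via a finite partition), clear denominators to produce the finite multiset $\bar Y$, transfer the Levy--Prokhorov bound on $f_*\mu,g_*\mu$ to the discretized measures via uniform Lipschitzness (the paper cites Proposition~\ref{pro:prokhcont} for exactly this), read the resulting bound as Hall's marriage condition to extract the matching $\phi$, and make the construction $\sigma$-equivariant by choosing an invariant partition and equivariant base points. The only cosmetic difference is that you build the partition of $Y$ by pulling back a fine partition of $Z$ along both $f$ and $g$, whereas the paper partitions $Y$ directly into small-diameter pieces and then invokes Lipschitz continuity — these are equivalent once you observe, as you do, that only a compact set carrying (most of) the mass matters. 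Your spelled-out verification that the Levy--Prokhorov bound yields the Hall deficiency condition (via $g_*\bar\mu(A_{2\beta})\geq f_*\bar\mu(A)$ for $A=f(p(S))$) is a detail the paper leaves implicit, and your observation that only $p$ and $\tilde\sigma$, not $\phi$, need to be equivariant is exactly what the application in Lemma~\ref{trigraph} uses.
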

\begin{proof}
If $\mu$ is the counting measure and $Y$ is finite, this theorem is a rephrasing of Hall Marriage Theorem (see \cite[Theorem 3.2]{Kahn:2009wh})  We reduce to this case by the following trick:
by approximation (See Proposition \ref{pro:approx}), we can approximate with respect to the Levy Prokhorov metric $\mu$ by a finitely supported atomic measure $\nu$. 

Then  by Proposition \ref{pro:prokhcont}, $f_*\nu$ and $f_*\mu$ are very close and the same holds for $g$.  Thus
\begin{eqnarray}
	d_L(f_*(\nu),g_*(\nu))< 2\beta\ .\label{ineq:marria}
\end{eqnarray}
Since $\nu$ is atomic, we can replace $Y$ with the finite set $\operatorname{Supp}(\nu)$ of cardinal $N$. Then $\nu$ become an element of $\mathbb R^N$. The inequation \eqref{ineq:marria} now turns to be equivalent to  the following statement. Let $\mathcal E$ be the set   of  pair of subsets $(Z_0,Z_1)$ of $Y$ so that $Z_0=f^{-1}(B)$ and $Z_0=g^{-1}(B_{2\beta})$ for some $B$ in $X$.
\begin{eqnarray*}
	\hbox{ for all } (Z_0,Z_1)  \hbox{ in  } \mathcal E &\ , & \ \  \sum_{x\in Z_0}\nu\{x\}\leq \sum_{x\in Z_1}\nu\{x\}\ .
\end{eqnarray*}
In other words, into finitely many inequalities with integer coefficients and linear in the coordinates of $\nu$.

Since we have a solution with real positive coefficients of this set of inequalities, we also have a solution with positive rational coefficients, or in other words a measure $\lambda$ with rational weights so that
\begin{eqnarray*}
	d_L(f_*(\lambda),g_*(\lambda))< 2\beta\ .
\end{eqnarray*}
After multiplication we can assume  that all weights are integers. then finally we let $\bar Y$ to be the set $Y$ counted with the multiplicity given by $\lambda$, and we can conclude using the observation at the beginning of the paragraph. Finally the procedure can be made equivariant with respect to finite order symmetries.
\end{proof}

\subsubsection{Proof of Lemma \ref{trigraph}}
 Let $\bar Y$, $\tilde\sigma$ and $h$ as in Theorem \ref{Hall+}. Let us write $V=\bar Y/\langle\sigma\rangle$ and $\pi$ the projection  from $\bar Y$ to $V$. Let now $\mc R=V_0\sqcup V_1$ be the disjoint union of two copies of $V$; this will be the set of vertices of the graph. An edge is given by a point $y$ in $\tilde Y$, that we consider joining the vertex $v_0\defeq \pi(y)$ to $v_1\defeq \pi(\phi(y))$. The labeling is given by $W=p$.

\subsection[Proof of the existence of straight surfaces]{Existence of straight surfaces: Proof of Theorem \ref{exisstraight}}  

We apply Lemma \ref{trigraph} to the set $Y\defeq\mc Q^{\Gamma,+}\seR$ (which is non empty by Proposition \ref{exispant}), the measure  $\mu\defeq\mu^+\seR$ (which is $\omega$-invariant and satisfy $I_*\mu^\pm\seR=\mu^\mp\seR$ by Proposition \ref{exispant}) and the functions $f^0\defeq\bpsi^+$, $f^1\defeq \KMT\circ\bpsi^+=\KMT\circ\bpsi^-\circ\bI_0$. Observe that we label vertices of $V_0$ by $W(v,e)$, while we label vertices of $V_1$ by $I(W(v,e))$. For $\epsilon$ small enough, then $R$ large enough (depending on $\epsilon$)  we have that 
\begin{itemize}
	\item the set   $\mc Q^+\seR$ is non empty by Proposition \ref{exispant}.
\item  by Theorem \ref{theo:even}, we have the inequality $
d(f^0_*\mu,f^1_*\mu)\leq M\eR
$, using the fact that $I_*\mu^\pm\seR=\mu^\mp\seR$.
\end{itemize}
 Theorem \ref{exisstraight} is now a rephrasing of the Trivalent Graph Lemma \ref{trigraph}.
 
 \section{The perfect lamination}\label{sec:lam}

In this section, we concentrate on plane hyperbolic geometry. We present some results  of \cite{Kahn:2009wh} concerning the $R$-perfect lamination. This perfect lamination is associated to a tiling by hexagons.

We also introduce a new concept: {\em accessible points} from a given hexagons. Apart from the definition, the most important result is the Accessibility Lemma \ref{lem:access} which guarantees accessible points are almost (in a quantitative way) dense.

\subsection{The \texorpdfstring{$R$}{R}-perfect lamination and the hexagonal tiling}
Let us consider two ideal triangles in the (oriented) hyperbolic plane, glued to each other by a swish of length $R$ (with $R>0$) to obtain a pair of pants $P_0$, called the {\em positive  $R$-perfect pair of pant}.  Symmetrically, the {\em negative $R$-perfect pair of pants} $P_1$ is obtained by a swish of length $-R$. Both perfect pair of pants come by construction  with ideal triangulations and orientations.

The {\em $R$-perfect surface}\index{Perfect surface}  $S_R$ \index{$S_R$} is the genus 2  oriented surface obtained by gluing the two pairs of pants  $P_0$ and $P_1$  with a swish of value 1. The surface $S_R$  possesses three {\em cuffs} which are the three geodesic boundaries of the initial pairs of pants. These cuffs are oriented, where the  orientation comes from the orientation on $P_0$.

Let $\Lambda_R$\index{$\Lambda_R$} be the Fuchsian group so that $\hh/\Lambda_R=S_R$.

The {\em $R$-perfect lamination}\index{Perfect Lamination} $\mathcal L_R$ of ${\hh}$ is the lift  of the cuffs of $S_R$ in  $\hh$.

Observe that each leaf of $\mathcal L_R$ carries a natural orientation. Connected components of the complement of $\mathcal L_R$ are {\em even} or {\em odd} whenever they cover respectively a copy of $P_0$ or  $P_1$.

We denote by $\mathcal L^\infty_R$ the set of endpoints of $\mathcal L_R$ in $\partial_\infty\hh$.

\subsubsection{Length, intersection and diameter} We collect here important facts about the $R$-perfect lamination from Kahn--Markovic paper \cite{Kahn:2009wh}. 
\begin{lemma}{\sc[Length control]\  }\cite[Lemma 2.3]{Kahn:2009wh},
There exist a constant  $K$, so that for $R$ large enough,  for all geodesic segments $\gamma$ in $\hh$ of length  
$\ell$, we have
$
\sharp(\gamma\cap\mc L_r)\leq K\cdotp R\cdotp\ell
$ .
\end{lemma}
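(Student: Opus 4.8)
This is \cite[Lemma 2.3]{Kahn:2009wh} and could be invoked as a black box; nonetheless, here is the line of argument I would follow. The plan is first to push the statement down to the closed surface $S_R=\hh/\Lambda_R$. A geodesic segment $\gamma$ of length $\ell$ projects to a geodesic arc $\bar\gamma\colon[0,\ell]\to S_R$ of the same length, and since $\mc L_R$ is by construction the full preimage of the union $\mathfrak c=c_1\cup c_2\cup c_3$ of the three cuffs of $S_R$, one has $\sharp(\gamma\cap\mc L_R)=\sharp\{t\in[0,\ell]\colon\bar\gamma(t)\in\mathfrak c\}$ (a geodesic segment of $\hh$ is injective). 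Splitting $[0,\ell]$ into $\lceil\ell\rceil$ subintervals of length at most $1$, I would reduce to bounding by $KR$ the number of intersection points with $\mathfrak c$ of a geodesic arc of length at most $1$ in $S_R$; this yields the asserted estimate after adjusting $K$ (and absorbing a harmless additive term, which is only needed in the degenerate range $\ell<1$).

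Next, since the three cuffs are pairwise disjoint, $\sharp(\delta\cap\mathfrak c)=\sum_{i=1}^{3}i(\delta,c_i)$ for any geodesic arc $\delta$, where $i(\cdot,\cdot)$ counts transverse intersection points; and each $c_i$ is a simple closed geodesic whose length is comparable to $R$ (of order $2R+O(1)$ for the $R$-perfect pants). So the whole lemma reduces to the purely hyperbolic statement: a geodesic arc of length at most $1$ in a hyperbolic surface meets a given simple closed geodesic of length $L$ at most $KL$ times, with $K$ universal. One convenient device for this is the embedded collar $C_i$ of $c_i$ furnished by the Collar Lemma: for $R$ large the $C_i$ are pairwise disjoint, $\mathfrak c$ is contained in $C_1\cup C_2\cup C_3$, and every maximal subarc of $\delta$ inside a $C_i$ lies in a single collar and crosses its core at most once (a geodesic arc in a hyperbolic annulus meets the core at most once), so that counting crossings amounts to counting collar visits.

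The crossing-versus-length estimate of the previous paragraph is the heart of the matter, and I expect it to be the main obstacle, because it cannot come from a soft area or measure count: the lifts of a long simple closed geodesic to $\hh$ accumulate on one another at spacing of order $e^{-L/2}$, so any bound that ignores the geodesic nature of $\delta$ only gives the exponential estimate $e^{L/2}$, whereas we need $O(L)$. The argument in \cite{Kahn:2009wh} is an elementary computation in $\hh$: lift $\delta$ to $\tilde\delta$ of length at most $1$, list in order the lifts $A_1,\dots,A_k$ of $c_i$ that $\tilde\delta$ crosses, and use that between two consecutive crossings $\tilde\delta$ must traverse the convex region separating $A_j$ from $A_{j+1}$; tracking, step by step, the length $\tilde\delta$ must spend against the widths of these gaps — this is where the collar geometry of a geodesic of length $L$ enters — and summing against the total length at most $1$ forces $k\le KL+K$. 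Equivalently, one can run the comparison downstairs inside a single $R$-perfect pair of pants, which up to bounded error is a pair of slightly truncated ideal triangles, so that all constants become independent of $R$. Substituting $L\asymp R$ back through the two reductions gives $\sharp(\gamma\cap\mc L_R)\le KR\ell$ for $R$ large, with $K$ universal; everything outside the crossing estimate is bookkeeping.
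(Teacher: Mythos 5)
Your reductions to the closed surface $S_R$ and to unit-length arcs are sound, but the claimed reduction to a \emph{universal} bound for a single curve --- ``a geodesic arc of length at most $1$ in a hyperbolic surface meets a given simple closed geodesic of length $L$ at most $KL$ times, with $K$ universal'' --- is false as stated, and this is where the argument genuinely breaks. On a hyperbolic surface containing a very short geodesic $\alpha$ of length $\epsilon$ crossing $c$, a unit geodesic arc running nearly parallel to $\alpha$ inside its collar winds about $1/\epsilon$ times and crosses $c$ about that often; since $c$ need only have length $L\gtrsim\log(1/\epsilon)$ to traverse the collar, $1/\epsilon$ can be exponentially larger than $L$. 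The constant must depend on a lower bound for the injectivity radius, and that is exactly the input you are missing: Lemma~\ref{lem:diamSR}, stated immediately afterwards, gives a diameter bound for $S_R$ uniform in $R$, which together with the fixed area $4\pi$ forces $\operatorname{inj}(S_R)\geq\iota_0>0$ independently of $R$.

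Given such an $\iota_0$, the desired estimate is a routine ball-covering count and does not use collars at all: break the unit arc into $O(1/\iota_0)$ subarcs of length $\iota_0/4$; each lies in an embedded ball $B'$ of radius $\iota_0/4$; inside the concentric embedded ball $B$ of radius $\iota_0/2$, every component of $c_i\cap B$ that meets $B'$ is a geodesic chord of length at least a fixed positive multiple of $\iota_0$, so there are $O(L_i/\iota_0)$ of them, and a geodesic subarc of $\delta$ meets each such chord at most once. Summing over the three cuffs with $L_i\asymp R$ gives $O(R\ell)$. By contrast, the collar-width tracking you describe in your last paragraph cannot close the gap: as you yourself point out two sentences earlier, the collar half-width of a length-$L$ geodesic is $\asymp e^{-L/2}$, so ``summing the widths of the gaps against total length at most $1$'' bounds the number of crossings only by $e^{L/2}$, not by $KL$. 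That final paragraph therefore restates the obstruction in different words rather than resolving it.
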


\begin{lemma}{\sc [Uniformly bounded diameter]\ } \cite[Lemma 2.7]{Kahn:2009wh}\label{lem:diamSR}
There exists a constant $M$ independent of $R$, such that for all $R$,
$
	\diam\left(S_R\right)\leq M$.
\end{lemma}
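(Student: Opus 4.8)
The plan is to reduce to the statement in \cite[Lemma 2.7]{Kahn:2009wh} and only indicate the mechanism, since the lemma is quoted, not reproved here. The natural starting point is that $S_R$ is a \emph{closed} oriented surface of genus $2$, so by Gauss--Bonnet $\operatorname{area}(S_R)=-2\pi\chi(S_R)=4\pi$, a constant independent of $R$. Thus the only way $\diam(S_R)$ could fail to be bounded is through the formation of a long thin neck, that is, a short closed geodesic: so it suffices to exhibit a constant $\delta_0>0$, independent of $R$, with $\operatorname{inj}(S_R)\ge\delta_0$ (equivalently, $\operatorname{sys}(S_R)\ge 2\delta_0$). Granting $\delta_0$, the diameter bound is routine: a maximal $\delta_0$-separated set $N\subset S_R$ has cardinality at most $\operatorname{area}(S_R)$ divided by the area of an embedded ball of radius $\delta_0/4$, hence at most a constant $\kappa=\kappa(\delta_0)$; being maximal, $N$ is also $\delta_0$-dense, and since $S_R$ is connected any two of its points are joined by a chain of at most $\kappa$ members of $N$ with consecutive members at distance $\le 2\delta_0$, whence $\diam(S_R)\le(2\kappa+1)\delta_0=:M$.

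It remains to bound $\operatorname{inj}(S_R)$ below, which is the heart of the matter. First I would note that the closed geodesics of $S_R$ disjoint from the three cuffs are exactly the cuffs themselves (an essential curve in the interior of a pair of pants is boundary--parallel, and its geodesic representative is a cuff), and the cuff lengths are bounded below --- indeed they grow with $R$ --- so these cause no problem. Every other closed geodesic $\gamma$ crosses the cuff lamination, i.e.\ its lift to $\hh$ crosses $\mathcal L_R$; here I would invoke the explicit description of $S_R$ as assembled from two ideal triangles sheared by $\pm R$ and the associated tiling of $\hh$ by right-angled hexagons (each of area $\pi$) recalled above. Using the geometry of these hexagons one shows that a geodesic arc of $\hh$ crossing $\mathcal L_R$ has length bounded below in terms of its number of crossings, while conversely the Length Control Lemma \cite[Lemma 2.3]{Kahn:2009wh} forces a geodesic that closes up after few crossings to be already long; combining these on a lift of $\gamma$ gives the uniform lower bound $\delta_0$ on $\operatorname{length}(\gamma)$. (Alternatively one may phrase the reduction as: it is enough that $\diam(P_0)$ and $\diam(P_1)$ be bounded independently of $R$, since $S_R=P_0\cup P_1$ along the cuffs; but this is equivalent and the analysis is the same.)

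The main obstacle, and the place where the work of \cite[\S2]{Kahn:2009wh} actually sits, is precisely ruling out short geodesics transverse to $\mathcal L_R$: the pieces $P_0,P_1$ have long cuffs and correspondingly thin collars, and one must show that no closed geodesic can "sneak across" several such collars and close up with small length. This requires a careful count of the combinatorics of the hexagonal tiling together with right-angled hexagon trigonometry. Since the statement is used here only as a black box, I would simply quote the conclusion of \cite{Kahn:2009wh} rather than carry out that estimate.
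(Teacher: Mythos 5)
The paper does not prove this lemma: it is quoted directly from \cite[Lemma 2.7]{Kahn:2009wh} and used as a black box, so there is no internal proof to compare against, and your ultimate decision to defer to the reference matches the paper's treatment. Your reduction to a uniform systole bound (Gauss--Bonnet gives $\operatorname{area}(S_R)=4\pi$, a uniform injectivity radius bound then gives a uniform diameter bound by packing and connectivity) is correct and is, for closed surfaces of fixed genus, an equivalent reformulation of the statement; and your observation that a closed geodesic disjoint from the cuff lamination is itself a cuff, hence of length $2R$, is also correct.

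But the sentence invoking the Length Control Lemma is wrong, and that is exactly where the heuristic breaks down. That lemma says a geodesic segment of length $\ell$ meets $\mathcal{L}_R$ at most $KR\ell$ times; its contrapositive is that a geodesic crossing $\mathcal{L}_R$ exactly $n$ times has length at least $n/(KR)$, a bound that \emph{degrades} as $R\to\infty$ and so cannot yield a uniform systole bound. It certainly does not ``force a geodesic that closes up after few crossings to be already long'': that is the reverse direction, and as a statement it is false on its own --- the cuffs of $S_R$ have length $2R$ and therefore very thin collars (half-width $\sim e^{-R}$), so a single transverse crossing of a cuff costs almost nothing in length. What actually rules out short geodesics transverse to $\mathcal{L}_R$ is the shear of $1$ in the gluing of $P_0$ to $P_1$, which misaligns the short seams of the two pairs of pants along each cuff, so that a geodesic cannot pass almost directly from one thin seam to an adjacent one across a cuff; your sketch does not identify this and instead misattributes the step to the wrong lemma. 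Since you quote the conclusion of \cite{Kahn:2009wh} in any case, the lemma itself stands, but the paragraph attempting to indicate the mechanism should be corrected or dropped.
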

As a corollary of the first Lemma, using the language of section \ref{sec:cuff}, we have

\begin{corollary}\label{cor:KMbd}
There exists a constant $K$, so that for $R$ large enough, any coplanar sequence of cuffs whose underlying geodesic lamination is a subset of $\mathcal L_R$ is a $KR$-sequence of cuffs.
\end{corollary}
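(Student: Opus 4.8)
The plan is to unwind the definition of a $Q$-sequence of cuffs from Section~\ref{sec:cuff} and to verify directly, from the Length Control Lemma, the defining chord inequality with $Q=K'R$ for a universal constant $K'$ and $R$ large enough. So let $\vect c=\{c_i\}$ be the given coplanar sequence of cuffs, lying in a single totally geodesic hyperbolic plane $\hh$ and with underlying geodesics contained in $\mathcal L_R$; the goal is to show $\vert m-p\vert\le Q\,\delta(c_m,c_p)+Q$ for all indices $m,p$. By symmetry one may assume $m<p$, and the case $p\le m+1$ is trivial as soon as $Q\ge 1$ since then $\vert m-p\vert\le 1$.

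The first step is the planar bookkeeping. Since consecutive cuffs of a coplanar path are nested, $S_0(c_p)\subset S_0(c_{p-1})\subset\dots\subset S_0(c_{m+1})\subset S_0(c_m)$. Moreover two distinct leaves of $\mathcal L_R$ never share an ideal endpoint: they are lifts of a family of pairwise disjoint simple closed geodesics of the closed surface $S_R$, and in a discrete subgroup of $\sld$ two primitive hyperbolic elements with a common fixed point are powers of one element, so distinct leaves have distinct axes. Hence for $m<j<p$ both ideal endpoints of $c_j$ lie in the open arc $S_0(c_m)$, which is precisely the statement that the geodesic $c_j$ separates $c_m$ from $c_p$ in $\hh$.

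The second step bounds how far apart $c_m$ and $c_p$ can be. Choose tripods $\tau_m\in c_m$, $\tau_p\in c_p$ with $d(\tau_m,\tau_p)\le\delta(c_m,c_p)+1$, and let $\gamma$ be the hyperbolic geodesic segment of $\hh$ from $s(\tau_m)$ to $s(\tau_p)$. Since $s$ maps a cuff onto its underlying geodesic, $\gamma$ joins a point of $c_m$ to a point of $c_p$, hence must cross each of the $p-m-1$ separating leaves $c_{m+1},\dots,c_{p-1}$ (transversally, at distinct points, as these leaves are pairwise disjoint), so $\sharp(\gamma\cap\mathcal L_R)\ge p-m-1$. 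On the other hand, the projection $s$ from the space of $\sld$-tripods to $\operatorname{Sym}(\sld)=\hh$ is a smooth $\sld$-equivariant map between homogeneous spaces carrying invariant metrics, so its differential has constant operator norm and $s$ is $C$-Lipschitz for a universal $C$; therefore $\operatorname{length}(\gamma)=d_{\hh}(s(\tau_m),s(\tau_p))\le C\,d(\tau_m,\tau_p)\le C\bigl(\delta(c_m,c_p)+1\bigr)$. Feeding $\gamma$ into the Length Control Lemma gives $p-m-1\le\sharp(\gamma\cap\mathcal L_R)\le KR\,\operatorname{length}(\gamma)\le KRC\bigl(\delta(c_m,c_p)+1\bigr)$, whence $\vert m-p\vert\le KRC\,\delta(c_m,c_p)+KRC+1$. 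Taking $Q\defeq K'R$ with $K'\defeq 2KC+1$, one checks $\vert m-p\vert\le Q\,\delta(c_m,c_p)+Q$ for every $m,p$ provided $R\ge 1$, which is exactly the assertion that $\vect c$ is a $K'R$-sequence of cuffs.

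I expect the main obstacle to be not a hard estimate but pinning down the two soft points: first, that $\gamma$ genuinely meets every intermediate leaf, which hinges on the absence of asymptotic pairs of leaves of $\mathcal L_R$ (established above) together with a careful reading of the nesting of the arcs $S_0(c_i)$; and second, the comparison between the shift $\delta$, defined through the left-invariant metric on the space of tripods, and honest hyperbolic distance in $\hh$, which is exactly the Lipschitz property of $s$. Everything else is a direct application of the Length Control Lemma.
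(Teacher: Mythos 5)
Your proof is correct, and since the paper states Corollary~\ref{cor:KMbd} as an unproved consequence of the Length Control Lemma, yours is essentially the argument the authors have in mind. The two points you flag as ``soft'' are exactly the ones that need verifying, and you handle both properly: the separation of $c_m$ from $c_p$ by each intermediate leaf follows from nesting together with the absence of asymptotic leaf pairs (which you derive from discreteness of the Fuchsian group $\Lambda_R$), and the comparison of $\delta(c_m,c_p)$ with hyperbolic arclength is the Lipschitz property of the equivariant projection $s$ between homogeneous spaces with invariant metrics, whose Lipschitz constant is independent of $R$. Feeding the resulting arc into the Length Control Lemma and absorbing the additive $+1$ into the constant, provided $R\geq 1$, gives exactly the defining inequality of a $K'R$-sequence. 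One small bookkeeping note: the ``$Q\geq 1$'' you invoke for the trivial case $|m-p|\leq 1$ is automatic once you set $Q=K'R$ with $K'\geq 1$ and $R\geq 1$, and you already choose $K'=2KC+1\geq 1$, so the case distinction is consistent.
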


\subsubsection{Tilings: connected components, tiling hexagons and  tripods}
Let $C$ be a connected component of ${\hh}\setminus \mc L_R$. 

Observe that $C$  is  tiled by right-angled {\em tiling hexagons} coming from the decomposition in pair of pants of $S_R$.  Each such hexagon $H$ is described by a triple of geodesics $(a,b,c)$ in $\mathcal L_R$, whose ends points (with respect to the orientation) are respectively $(a^-,a^+)$, $(b^-,b^+)$ and $(c^-,c^+)$ so that the sextuple $(a^-,a^+,b^-,b^+, c^-,c^+)$ is positively oriented. Let us then define three disjoint intervals, called {\em sides at infinity} in $\partial_\infty\hh$ by $\partial_aH:=[b^+,c^-]$, $\partial_cH:=[a^+,b^-]$,    and 
 $\partial_bH:=[c^+,a^-]$. 	Each such side corresponds to the edge of the hexagon connecting the two corresponding cuffs.
 
 \begin{definition}
 \begin{enumerate}
 	\item The {\em successor} of an hexagon $H=(a,b,c)$ is the unique hexagon of the form $\Sc(H)=(a,d,b)$. 
 	\item The {\em opposite} of an hexagon $H=(a,b,c)$ is the hexagon  $\Op(H)=(a,b',c')$, so that $H$ and $\Op(H)$ meet along a geodesic segment of length $R-1$.
 	\item Given a tiling hexagon $H$, an {\em admissible tripod}\index{Admissible tripod} with respect to $H$ is given by three points $(x,y,z)$ in $\partial_aH\times
 \partial_bH\times\partial_cH$.
 \end{enumerate}
\end{definition}  
We remark that 
$\Op\circ\Sc\circ\Op\circ\Sc={\rm Id}$. We can furthermore color hexagons:

\begin{proposition}\label{pro:hcolor}
	There exists a labeling of hexagons by two colors (black and white) so that $H$ and $\Op(H)$ have the same color, while $H$ and $\Sc(H)$ have different colors.
\end{proposition}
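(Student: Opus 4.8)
The statement to prove is Proposition~\ref{pro:hcolor}: the tiling hexagons of $\hh\setminus\mc L_R$ admit a two-coloring such that $H$ and $\Op(H)$ get the same color while $H$ and $\Sc(H)$ get opposite colors. The natural approach is to exhibit the coloring directly from the structure of the $R$-perfect surface $S_R$. Recall $S_R$ is glued from the positive perfect pair of pants $P_0$ and the negative one $P_1$, each of which is itself built from \emph{two} ideal triangles; passing to right-angled hexagons, each pair of pants is tiled by hexagons that come in two flavors according to which of the two constituent ideal triangles (equivalently, which ``side'' of the seam) they sit on. The first step is therefore to set up, on $S_R$ itself, a partition of its finitely many tiling hexagons into two classes $\mathcal C_0,\mathcal C_1$, and then define the coloring of a hexagon $H$ in $\hh$ to be the class of its image under the covering projection $\hh\to S_R=\hh/\Lambda_R$. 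Since the operations $\Sc$ and $\Op$ are defined geometrically and commute with the deck action of $\Lambda_R$, it suffices to verify the two compatibility conditions downstairs on $S_R$.

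The second step is the local verification. For $\Sc$: by definition $\Sc(H)=(a,d,b)$ is the hexagon sharing the full cuff-adjacent edge (the short edge, of length comparable to the pants gluing parameter) with $H=(a,b,c)$; crossing this edge one passes from one ideal triangle of a pair of pants to the adjacent ideal triangle of the \emph{same} or the neighboring pair of pants along a gluing curve, and in either case (inside a single perfect pair of pants the two triangles are glued along the seam; across the genus-$2$ gluing one crosses from $P_0$ to $P_1$) the ``flavor'' flips --- this is exactly the combinatorial content of the alternating tiling of a pair of pants by triangles. For $\Op$: by definition $\Op(H)$ meets $H$ along a long geodesic segment of length $R-1$, i.e. along (most of) a cuff geodesic; crossing a cuff one stays within the decomposition pattern in a way that preserves the flavor, since $\Op$ corresponds to reflecting through a cuff and the two hexagons on either side of a cuff-edge of the tiling belong to matching triangles. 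Concretely, I would phrase both checks in terms of the identity $\Op\circ\Sc\circ\Op\circ\Sc=\mathrm{Id}$ already recorded: if we tentatively assign to $\Sc$ a sign $-1$ and to $\Op$ a sign $+1$, this relation is consistent ($(-1)(+1)(-1)(+1)=1$), whereas assigning $\Op$ the sign $-1$ would force $\Sc$ to have sign $+1$ to keep consistency, and then one rules that branch out by examining a single explicit hexagon and its successor inside $P_0$.

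The cleanest way to make the second step rigorous, and what I would actually write, is to build an auxiliary graph: vertices are the (finitely many) tiling hexagons of $S_R$, with an edge between $H$ and $\Sc(H)$ for every hexagon and an edge between $H$ and $\Op(H)$; color $\Sc$-edges ``odd'' and $\Op$-edges ``even''. A valid two-coloring as required exists if and only if this edge-colored graph has no cycle with an odd number of ``odd'' edges. Using $\Op^2=\mathrm{Id}$, $\Sc^{-1}$ expressible via $\Op$ and $\Sc$ through $\Op\Sc\Op\Sc=\mathrm{Id}$ (so $\Sc^{-1}=\Op\Sc\Op$), the group generated by $\Sc$ and $\Op$ acting on hexagons is a quotient of the infinite dihedral-type group, and the relevant parity homomorphism sending $\Sc\mapsto 1\in\mathbb Z/2$, $\Op\mapsto 0$ is well-defined precisely because $\Op\Sc\Op\Sc\mapsto 1+0+1+0=0$; this is the obstruction-vanishing statement. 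The main obstacle --- and the only place genuine hyperbolic-geometry input beyond bookkeeping is needed --- is checking that \emph{no other relations} among $\Sc,\Op$ are hidden in the finite surface $S_R$ that would violate this parity, i.e. that every loop in the hexagon adjacency graph of $S_R$ closes up with an even number of $\Sc$-moves; this follows because each $\Sc$-move changes which of the two triangles of a pair-of-pants-cell one occupies and each $\Op$-move does not, and a loop on $S_R$ must return to its starting triangle, forcing an even count. I would present this last point by reference to the explicit two-triangle decomposition of $P_0$ and $P_1$ and the gluing data, which is standard (cf. \cite{Kahn:2009wh}), rather than re-deriving it.
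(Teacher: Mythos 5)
The paper states Proposition~\ref{pro:hcolor} without proof, so there is no paper argument to compare against; what matters is whether your argument is correct on its own. Your overall strategy --- color the finitely many hexagons of $S_R$ so that $\Sc$ flips the color and $\Op$ preserves it, then pull back to $\hh$ --- is the right one, and your observation that the desired coloring corresponds to a parity homomorphism sending $\Sc\mapsto 1$, $\Op\mapsto 0\in\mathbb Z/2$ (which is compatible with $\Op^2=\id$ and $\Op\Sc\Op\Sc=\id$) correctly identifies the obstruction: you must show that every cycle in the $\Sc,\Op$-adjacency graph of hexagons on $S_R$ has an even number of $\Sc$-edges.

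The gap is in how you close this. You dispose of the obstruction by introducing a ``flavor'' bit, namely \emph{which of the two triangles of a pair-of-pants-cell one occupies}, and asserting that $\Sc$ flips it while $\Op$ preserves it. But this bit is exactly the two-coloring you are trying to construct, so the argument is circular unless you supply an \emph{independent} definition. The difficulty is that the two hexagons tiling a pair of pants are not canonically distinguishable --- labelling one as ``triangle $1$'' and the other as ``triangle $2$'' is a free choice, made separately in $P_0$ and in $P_1$ --- and the claim ``$\Op$ does not change which triangle one occupies'' is meaningless until you specify a correspondence between the two triangles of $P_0$ and the two triangles of $P_1$. Whether the four choices of labelling admit one that is preserved by $\Op$ across \emph{all three} cuffs simultaneously (and also invariant under $\Lambda_R$, which you need for the pull-back to $\hh$ to descend to every cuff cover $S$) is precisely the content of the proposition; deferring it to a citation of \cite{Kahn:2009wh} does not fill the gap, since the present operations $\Sc,\Op$ and the $R$-perfect gluing data are particular to this paper. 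A further minor confusion: your discussion of $\Sc$ says one might pass ``to the adjacent ideal triangle of the same or the neighbouring pair of pants,'' but $\Sc(H)$ lies in the \emph{same} connected component of $\hh\setminus\mc L_R$ as $H$ --- $\Sc$ crosses a short (orthogeodesic) edge, never a cuff.

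A way to close the gap without the circularity would be to avoid the ``flavor'' shortcut and work directly in $\hh$ where the structure is tree-like. Within a single component $C$ of $\hh\setminus\mc L_R$ the short-edge adjacency graph of hexagons is a trivalent tree (it is the universal cover of the theta graph dual to the decomposition of the pair of pants into two hexagons), hence bipartite; and the component adjacency graph is a tree since each cuff separates $\hh$. The only nontrivial cycles in the $\Sc,\Op$-graph are therefore those that cross a cuff $\gamma$ from $C_1$ to $C_2$ by an $\Op$-move, travel along the hexagons of $C_2$ touching $\gamma$ by $\Sc$-moves, and return by a second $\Op$-move. Using $\Op\Sc\Op=\Sc^{-1}$ (equivalent to $\Op\Sc\Op\Sc=\id$), one gets $\Op\circ\Sc^k\circ\Op=\Sc^{-k}$, so such a cycle uses $k$ $\Sc$-moves in $C_1$ and $k$ in $C_2$, for a total of $2k$ --- even. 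Thus the parity cocycle is a coboundary and the coloring exists. One then has to observe that this coloring can be taken $\Lambda_R$-equivariant (e.g.\ by noting the construction commutes with the $\Lambda_R$-action once a $\Lambda_R$-orbit of base hexagons is colored compatibly, which again reduces to the same relation), or one can instead carry out a small explicit check on the four hexagons of $S_R$ using the shear-$1$ gluing; either way this computation is what your ``flavor'' claim quietly assumes.
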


We denote by $T_R(H)$ the set of admissible tripods with respect to a given hexagon $H$ and $T_R$ the set of all admissible tripods. Elementary hyperbolic geometry yields

\begin{proposition}\label{pro:hexa}
	There exists a universal constant $K$, so that for $R$ large enough
	\begin{enumerate}
		\item the diameter of each tiling hexagon is less than $R+K$.
		\item each hexagon has long edges (along cuffs) of length $R$, and short edges of length $\ell$ where   $$
\frac{e^\ell+1}{e^\ell-1}=\sqrt{\frac{1+e^{3R}}{e^R+e^{2R}}}, \ \ \ \ \ \lim_{R\to\infty}e^{\frac{R}{2}\ell}=1\ .
$$
\item the  distance between any  two admissible tripods with respect to the same hexagon is at most $2e^{-\frac{R}{2}}$.
\end{enumerate} 
 \end{proposition}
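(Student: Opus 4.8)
All three assertions are explicit computations in the hyperbolic plane, and I would organise them around one fixed tiling hexagon $H$, lifted to $\hh$. Recall that $H$ is a right--angled hexagon (all six interior angles equal to $\pi/2$) coming from the pair--of--pants decomposition of $S_R$: three of its sides — the \emph{long} ones — lie on lifts of cuffs of $S_R$, and three — the \emph{short} ones — lie on lifts of the seams. Since in the $R$--perfect pants every cuff is obtained from two shears of length $R$, the cuffs have length $2R$ and each long side of $H$ has length exactly $R$; this is the first half of (2). Moreover the $R$--perfect pants carries an order--$3$ isometry cyclically permuting its three cuffs, so $H$ is ``regular'': its three short sides all have one and the same length $\ell$.

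\textbf{The trigonometric identity (2).} I would apply the right--angled hexagon law of cosines to $H$, whose side lengths in cyclic order are $R,\ell,R,\ell,R,\ell$. Taking the relation at a short side $\ell$ (whose two neighbours are long sides $R$, and whose opposite side is a long side $R$) gives
\[
\cosh R=\sinh^{2}R\,\cosh\ell-\cosh^{2}R ,
\]
hence $\cosh\ell=\cosh R/(\cosh R-1)$. Since $\cosh\ell+1=\frac{2\cosh R-1}{\cosh R-1}$ and $\cosh\ell-1=\frac{1}{\cosh R-1}$, and since $\frac{e^{\ell}+1}{e^{\ell}-1}=\coth(\ell/2)$ with $\coth^{2}(\ell/2)=\frac{\cosh\ell+1}{\cosh\ell-1}$, we get
\[
\Bigl(\frac{e^{\ell}+1}{e^{\ell}-1}\Bigr)^{2}=2\cosh R-1=\frac{1+e^{3R}}{e^{R}+e^{2R}} ,
\]
the last equality being $1+e^{3R}=(1+e^{R})(1-e^{R}+e^{2R})$; taking square roots yields the displayed formula. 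The same computation gives $1/\sinh^{2}(\ell/2)=2(\cosh R-1)=4\sinh^{2}(R/2)$, i.e.\ $\sinh(\ell/2)=\tfrac{1}{2\sinh(R/2)}$, whence $\ell=2e^{-R/2}+O(e^{-3R/2})$ as $R\to\infty$, so $\tfrac{R}{2}\ell\to0$ and $e^{\frac{R}{2}\ell}\to1$. This proves (2), which is the heart of the statement.

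\textbf{The diameter bound (1) and the tripod bound (3).} For (1) I would first note, by Gauss--Bonnet, that $\operatorname{area}(H)=(6-2)\pi-6\cdot\tfrac{\pi}{2}=\pi$, so $H$ contains no disc of radius exceeding $r_{0}:=\operatorname{arccosh}(3/2)$; hence $\diam(H)\le\diam(\partial H)+2r_{0}$, and it suffices to bound the diameter of the $1$--skeleton $\partial H$. Placing the regular right--angled hexagon symmetrically in the Poincaré disc and using the explicit side lengths $R$ and $\ell=\ell(R)$ just computed, a direct estimate shows that the extremal distance on $\partial H$ is $R+O(1)$ with universal $O(1)$ (the maximum being realised between two points deep in two distinct ``corners''); this gives (1). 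For (3): an admissible tripod $(x,y,z)\in\partial_aH\times\partial_bH\times\partial_cH$ has its three ideal coordinates in the three sides at infinity, which in the same model are uniformly separated and each of visual diameter $\asymp e^{-R/2}$ when viewed from $H$; writing a second admissible tripod as $g\cdot(x,y,z)$ with $g\in\psld$ therefore forces $d(g,\operatorname{id})\le 2e^{-R/2}$ for the normalisation of the tripod metric fixed in Section~\ref{sec:tritrimet}. Both of these are routine computations of the sort recorded in \cite{Kahn:2009wh}.

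\textbf{Main difficulty.} There is essentially none — the statement is itself flagged as ``elementary hyperbolic geometry'', and the only genuine computation is the one--line hexagon cosine rule behind (2). The sole point requiring attention is that the constant $K$ in (1) and the rate $2e^{-R/2}$ in (3) be independent of $R$; this is transparent once $H$ has been normalised in a model, since the shape of the regular right--angled hexagon depends on $R$ only through $\ell=\ell(R)$, which part (2) controls explicitly.
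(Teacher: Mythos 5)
The paper gives no proof of this proposition — it is flagged as ``Elementary hyperbolic geometry yields'' — so your task was to supply the elementary content, which you have done. Part (2) is a correct application of the right-angled hexagon cosine rule (Buser's formula $\cosh c = \sinh a\sinh b\cosh\gamma - \cosh a\cosh b$ with $a=b=c=R$ and $\gamma=\ell$), and your algebra reducing to $\coth(\ell/2)=\sqrt{2\cosh R-1}=\sqrt{\tfrac{1+e^{3R}}{e^R+e^{2R}}}$ and extracting the asymptotic $\ell\sim 2e^{-R/2}$ is right. For part (1), your phrase ``a direct estimate shows\ldots'' is doing real work, but the estimate does hold: by the $D_3$-symmetry all six vertices lie at a common distance $\rho$ from the centre $O$, and one computes $\tanh^2\rho=\tanh^2(R/2)+\tanh^2(\ell/2)$, whence $\rho=\tfrac{R}{2}+O(1)$ uniformly, so $\diam H\le 2\rho\le R+K$. (Your Gauss--Bonnet inradius detour is superfluous since $\diam(H)=\diam(\partial H)$ for a convex set, but it is harmless.)

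The one genuine slip is in part (3), in the intermediate claim that each side at infinity has ``visual diameter $\asymp e^{-R/2}$ when viewed from $H$.'' The visual reference point for the tripod metric $d_\tau$ is $s(\tau)$, which for an admissible tripod sits near the centre of $H$, at hyperbolic distance $\approx R/2$ from the corresponding short side. Normalising $\tau=(0,1,\infty)$, the side at infinity $\partial_aH$ becomes an interval $(-u,v)$ about $0$ with $u+v=\tanh^2(\ell/2)\cdot(1+O(u))\asymp e^{-R}$, so its visual diameter from $s(\tau)=i$ is $\asymp e^{-R}$, not $\asymp e^{-R/2}$. Equivalently: a short side of length $\ell\asymp e^{-R/2}$ viewed from distance $\approx R/2$ subtends angle $\asymp\ell\,e^{-R/2}\asymp e^{-R}$. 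Your conclusion $d(\tau_1,\tau_2)\le 2e^{-R/2}$ therefore still holds — in fact with room to spare, since the true bound is $O(e^{-R})$ — but the stated reason overestimates the visual size by a factor $e^{R/2}$. If you want the intermediate step to match your conclusion as stated, you should either give the sharper $e^{-R}$ estimate or note explicitly that the bound in (3) is not claimed to be tight.
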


\subsubsection{Cuff groups and graphs}\label{sec:perfect}
The {\em cuff elements}\index{Cuff elements, cuff group} are those  elements of the Fuchsian group $\Lambda_R$ whose axis are cuffs, a {\em cuff group} $\Lambda$ is a finite  index subgroup of $\Lambda_R$ containing all the primitive cuff elements: equivalently, $\Lambda\backslash\hh$ is obtained by gluing  $R$-perfect pair of pants by swishes of length 1. We will identify oriented cuffs with primitive cuff elements.

To a cuff group $\Lambda$, we can associate   a ribbon graph $\mathcal R$. Observe that $S\defeq \Lambda\backslash\hh$ is tiled by hexagons. We consider the graph $\mathcal R$ whose vertices are hexagons in the above tiling of  $S$, up to cyclic symmetry, and edges corresponding to pair of  hexagons who lift to opposite hexagons.

Observe $\mathcal R$ is the covering of the corresponding graph for $S_R$ and has thus two connected components which correspond respectively to the two coloring in black and white hexagons. The distinction between odd and even components (and thus between odd and even hexagons) gives to $\mathcal R$ the structure of a bipartite graph.

 Hexagons in $S$ correspond to links of $\mathcal R$. By construction each hexagon $H$ is associated to a perfect triconnected pair of tripods $W_0(H)$ with respect to $\sld$, in other words an element in $\mc Q_{0,R}$. We have thus associated to each cuff group $\Lambda$ a $(0,R)$-straight surface $\Sigma(\Lambda)\defeq(\mc R,W_0)$ -- which actually has two connected components. One easily checks that every connected $(0,R)$-straight surface $\Sigma$ is obtained from a well defined cuff group $\Lambda$, as a connected component of $\Sigma(\Lambda)$.

\subsection{Good sequence of cuffs and accessible points}

Let us start with a definition associated to a positive number $K$.

\begin{definition} A pair $(c_1,c_2)$ of cuffs is {\em $K$-acceptable} if 
\begin{enumerate}
	\item There is no cuffs between $c_1$ and $c_2$,
	\item Moreover $d(c_1,c_2)\leq K$.
\end{enumerate}

A triple of cuffs $(c_1,c_2,c_3)$ of cuffs is {\em $K$-acceptable} if 
\begin{enumerate}
     	\item we have  $d(c_1,c_3)\leq K$.
     	\item $c_2$ is the unique cuff between $c_1$ and $c_3$
\end{enumerate}
\end{definition}

Observe that if  $(c_1,c_2,c_3)$ of cuffs is  $K$-acceptable, then both $(c_1,c_2)$ and $(c_2,c_3)$ are $K$-acceptable

\begin{definition}
\begin{enumerate}
	\item A {\em $K$-good sequence of cuffs } is a sequence of cuffs   $\{c_m\}_{1\leq m\leq p}$ such that for every $m$, 
whenever it makes sense, $(c_m,c_{m+1},c_{m+2})$ is $K$-acceptable.
\item  An {\em accessible point} with respect to an tiling hexagon $H$  is a point in $\partial_\infty\hh$	 which is a limit of subsequences of end points of the cuffs of $K$-good sequence of cuffs, where $c_1$ and $c_2$ contains long segments of the boundary of $H$.
\end{enumerate}
\end{definition}

Observe that we have an associated nested sequence of chords, where the chord is defined by the geodesic $c_n$ and the half space containing $c_{n+1}$ or not containing $c_{n-1}$. For a point $x$ in $\hh$, we denote by $W^R_x(K)$ the set of  $K$-accessible  points from an hexagon containing $x$ (with respect to the lamination $\mathcal L_R$).

The main result of this section is the following lemma

\begin{lemma}{\sc [Accessibility]}\label{lem:access}
Let $K_0$ be a positive constant large enough.
	There exists some function $R\mapsto a(R)$ converging to zero as $R\to \infty$, so that $W_H^R(K_0)$ is $a(R)$-dense.
\end{lemma}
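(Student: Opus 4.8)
The goal is to show that accessible points (from a hexagon in the $R$-perfect lamination, via $K_0$-good sequences of cuffs whose first two cuffs contain long segments of the boundary of the hexagon $H$) form an $a(R)$-dense subset of $\partial_\infty\hh$, with $a(R)\to 0$. The plan is to construct, for any target point $z\in\partial_\infty\hh$, an explicit $K_0$-good sequence of cuffs nesting down toward a point within distance $a(R)$ of $z$, where ``distance'' is measured in the visual metric of a fixed admissible tripod of $H$. First I would reduce to a combinatorial/geometric statement inside the hexagonal tiling: the complement $\hh\setminus\mathcal L_R$ is tiled by the tiling hexagons, and a $K_0$-good sequence of cuffs corresponds precisely to a walk through adjacent tiling hexagons that ``turns'' in a controlled way at each step (using the $\Sc$ and $\Op$ operations from the paragraph on tilings). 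By Proposition \ref{pro:hexa}, each hexagon has diameter at most $R+K$, long edges of length $R$, and short edges of length $\ell$ with $e^{\frac R2\ell}\to 1$; moreover the intervals $\partial_aH,\partial_bH,\partial_cH$ in $\partial_\infty\hh$ cut out by the three bounding cuffs of a hexagon $H$ have visual size controlled by these edge lengths.

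The core geometric step: starting from $H$, choose the cuff $c_1$ among $(a,b,c)$ whose side-at-infinity interval contains the target $z$ — say $z\in\partial_aH=[b^+,c^-]$, so $c_1$ is the geodesic whose ``outer'' half-space (the one not containing $H$) contains $z$. This half-space is bounded by the chord of $c_1$, and $z$ lies in the sliver/interval cut off by $c_1$ on the far side. Now iterate: the far side of $c_1$ is again tiled by hexagons; let $H_1$ be the one adjacent to $c_1$, pick among its two ``new'' bounding cuffs the one $c_2$ whose far interval contains $z$, and so on. At each stage the new interval $[c_n^{\text{endpoints}}]$ containing $z$ is obtained from the previous by removing a hexagon, hence shrinks. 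The key quantitative input is that adjacent cuffs in this walk are at distance $\le K_0$ (so the triples $(c_n,c_{n+1},c_{n+2})$ are $K_0$-acceptable once $K_0$ is chosen larger than $R+K$ plus a bounded constant — wait: here one must be careful, $d(c_n,c_{n+2})$ should be bounded, and since consecutive cuffs bound a common hexagon of diameter $\le R+K$, taking $K_0 \ge 2(R+K)$ suffices; but we want $K_0$ independent of $R$, so instead one uses that successive cuffs of the walk are either sides of a common hexagon or separated by exactly one intervening cuff at bounded hyperbolic distance — this is exactly the combinatorics of $\Sc$, and one checks $d(c_n,c_{n+2})$ stays uniformly bounded by the geometry of two adjacent hexagons). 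That the walk produces a $K_0$-good sequence with $c_1,c_2$ containing long boundary segments of $H$ is built into the construction. By the length-control Lemma and Corollary \ref{cor:KMbd}, this sequence of cuffs is a $KR$-sequence of cuffs; by the Convergence Corollary \ref{coro-conv-nest} (or directly by nesting of the intervals), it converges to a single accessible point $x$.

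The final estimate: I must bound $d_T(x,z)$, the visual distance from the chosen tripod $T$ of $H$ to the limit point $x$. At each step the interval containing $z$ on the far side of $c_n$ is replaced by a proper sub-interval that still contains $z$ and also contains $x$ in the limit; so $x$ and $z$ lie in the same nested intervals, and $d_T(x,z)$ is at most the visual diameter of the interval cut off by $c_1$. But $c_1$ is a side of $H$, so by Proposition \ref{pro:hexa}(3) and the computation of short-edge length $\ell$ with $e^{\frac R2\ell}\to1$, this visual diameter is bounded by some quantity $a(R)$ tending to $0$ as $R\to\infty$ (this is precisely the content that ``short edges are short at infinity''). More precisely, the three sides-at-infinity $\partial_a H,\partial_b H,\partial_c H$ partition (up to the three cuff-endpoint gaps) $\partial_\infty\hh$; the gaps themselves — the intervals between $b^+$ and $c^-$ etc. as seen from far — have visual length going to $0$ because the relevant short edges do. Whichever of these intervals $z$ falls into, the matching $x$ is within that interval's visual diameter, hence within $a(R)$. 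This proves $W_H^R(K_0)$ is $a(R)$-dense.

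\textbf{Main obstacle.} The delicate point is the uniform (in $R$) bound on the distance between successive cuffs of the walk, i.e. verifying that the constructed sequence really is $K_0$-good for a fixed $K_0$ — one must check that at each turn the walk passes from a cuff to an adjacent-hexagon cuff without ever needing to ``skip'' across a long edge, so that $d(c_n,c_{n+2})$ is controlled by the geometry of a bounded cluster of hexagons rather than by $R$. This is where the precise combinatorics of $\Sc$, $\Op$, and the coloring Proposition \ref{pro:hcolor} must be used carefully, tracking which side of each hexagon the target point $z$ sees and ensuring the walk always moves ``inward'' toward $z$ along short edges. The rest — nesting, convergence, and the $a(R)\to 0$ estimate — follows routinely from the cited hexagon-geometry estimates and the Convergence Corollary.
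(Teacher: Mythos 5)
Your proposal attempts a direct construction: walk outward from $H$ through a $K_0$-good sequence of cuffs nesting toward a prescribed target $z$, and conclude the limit point lies within $a(R)$ of $z$. The obstacle you flag as ``delicate'' is in fact fatal, and it is exactly why the paper takes a different route. By Proposition \ref{pro:Ktrip}, once $(c_{m-1},c_m)$ (and an orientation of $c_m$) are fixed there are only \emph{three} $K_0$-acceptable choices for $c_{m+1}$, namely $c_3^-,c_3^0,c_3^+$, and their cuff-shadows cover only a bounded band of the shadow of $c_m$ around the projection of $c_{m-1}$. If $z$ sits deep in the shadow of $c_m$, none of the three admissible next cuffs has $z$ in its shadow, so the invariant ``$z$ lies in every nested interval'' cannot be maintained. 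Indeed, if it could be maintained for every $z$ one would conclude $W^R_H(K_0)=\partial_\infty\hh$, which is far too strong. There is also a concrete error in your final estimate: you swap the roles of the large cuff-shadow intervals $[a^-,a^+],[b^-,b^+],[c^-,c^+]$ (whose visual diameter from $H$ is bounded away from $0$ independently of $R$) with the small sides-at-infinity $\partial_aH=[b^+,c^-]$, etc.\ (whose visual size does $\to 0$ by Proposition \ref{pro:hexa}). The interval cut off by $c_1$ when $c_1$ is a bounding cuff of $H$ is a cuff-shadow, not a side-at-infinity, so its visual diameter does \emph{not} tend to $0$ and the claimed $a(R)$ bound does not follow.

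The paper's proof argues by contradiction precisely to sidestep the reachability problem. One assumes there is a fixed $\beta>0$ and a sequence $R_m\to\infty$ for which some interval $I$ of visual length $\beta$ avoids $W^R_H(K_0)$. A half-plane $D_0$ with $\partial_\infty D_0\subset I$ is fixed, and a bounded geodesic segment $\zeta$ from (a point near) $H$ toward $D_0$ is considered; the cuffs $c_1,\ldots,c_p$ crossed by $\zeta$ in order form a $K$-good sequence simply because they all meet a segment of bounded length (pairwise distance control and nestedness give the acceptability conditions). Then the \emph{last} item of Proposition \ref{pro:Ktrip} is used to extend the sequence by one more step to a cuff $c_{p+1}$ that either enters $D_0$ or wraps around it, so that an endpoint $a$ of $c_{p+1}$ lies in $D_0$. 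Finally, Proposition \ref{pro:access} produces an accessible point within visual distance $\alpha(R)$ of $a$, hence inside $D_0\subset I$ for $R_m$ large --- a contradiction. The quantitative input is thus Propositions \ref{pro:Ktrip} and \ref{pro:access} rather than a shrinking of sides-at-infinity, and the argument only needs to land \emph{inside a fixed-size interval}, not at an arbitrary prescribed point.
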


\subsection{Preliminary on acceptable pairs and triples}
We need first to understand $K$-acceptable pairs

\begin{proposition}\label{pro:Kpairs}
For $R$ large enough, Let $(c_1,c_2)$ be a $K$-acceptable pair
	\begin{enumerate}
		\item then $ \frac{1}{2}e^{-\frac{R}{2}}\leq d(c_1,c_2)\leq 2e^{-\frac{R}{2}}$
		\item There exists exactly two hexagons $(H_1,H_2)$ whose sides are $c_1$ and $c_2$. Moreover $H_2=\Sc(H_1)$.
		\item if $(c_1,\eta)$ is $K$-acceptable, and furthermore  $\eta$ and $c_2$ lie in the same connected component of $\hh\setminus c_1$,  then there exists $\gamma\in\Lambda_R$ preserving $c_1$ so that $\eta=\gamma\cdotp c_2$.
		\end{enumerate}
\end{proposition}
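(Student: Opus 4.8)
Throughout, the statement lives entirely in $\hh$ equipped with the perfect lamination $\mc L_R$, and all three parts rest on one geometric fact, which I would isolate first: \emph{if $(c_1,c_2)$ is a $K$-acceptable pair then, for $R$ large depending only on $K$, the geodesics $c_1$ and $c_2$ are two of the three long sides of a common tiling hexagon.} To prove this I would pick $p\in c_1$, $q\in c_2$ realising $d(c_1,c_2)$ (the common perpendicular of two ultraparallel leaves exists and realises their distance). The segment $[p,q]$ crosses no leaf of $\mc L_R$: a leaf it crossed would, being disjoint from $c_1$ and from $c_2$, separate them, contradicting acceptability. Hence $[p,q]$ stays in the closure of one component of $\hh\setminus\mc L_R$ and enters a first hexagon $H_0$ adjacent to $c_1$. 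Two quantitative facts about a right-angled hexagon with long sides of length $R$ — elementary hexagon trigonometry, consistent with the degeneration to an ideal triangle as $R\to\infty$ — then finish the argument: two distinct short sides are at distance $\asymp R$ (they sit at the two ends of a common long side), and a long side is at distance tending to $\infty$ with $R$ from the short side opposite to it. Since $\operatorname{length}[p,q]\le K$, for $R$ large the segment crosses at most one short edge before reaching $c_2$ and cannot cross the short side of $H_0$ opposite to $c_1$; and crossing a short side adjacent to $c_1$ produces, by reflection in that side's geodesic (which fixes $c_1$ setwise, since $c_1$ meets it orthogonally), a hexagon still having $c_1$ as a long side. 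A short case check then forces $c_1$ and $c_2$ to be long sides of $H_0$ itself.

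Granting this, (1) and (2) are immediate. For (1): the short side $s$ of the common hexagon joining $c_1$ and $c_2$ meets each of them orthogonally, so it is their (unique) common perpendicular and $d(c_1,c_2)=\operatorname{length}(s)=\ell$, the short-edge length of Proposition \ref{pro:hexa}; the asymptotics $\lim_{R\to\infty}e^{R\ell/2}=1$ stated there give $\tfrac12 e^{-R/2}\le d(c_1,c_2)\le 2e^{-R/2}$ for $R$ large. For (2): any hexagon having $c_1$ and $c_2$ as long sides contains their common perpendicular $s$ as one of its short edges, hence lies on one of the two sides of the geodesic carrying $s$; there is exactly one such hexagon on each side, namely $H_1$ and its reflection in that geodesic, and the reflection is $\Sc(H_1)$, being the unique hexagon sharing both long sides $c_1,c_2$ with $H_1$.

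For (3): applying (2) to the pairs $(c_1,c_2)$ and $(c_1,\eta)$ realises $c_2$ and $\eta$ as long sides of hexagons in the bi-infinite chain $\{G_j\}_{j\in\mathbb Z}$ of hexagons having $c_1$ as a long side and lying on the prescribed component of $\hh\setminus c_1$. Consecutive $G_j$ are reflections of one another across short edges perpendicular to $c_1$, equivalently are related by the successor operation $\Sc$, so their colours (Proposition \ref{pro:hcolor}) alternate along the chain. The stabiliser of $c_1$ in $\Lambda_R$ is the infinite cyclic group generated by the primitive cuff element $\gamma_{c_1}$, which acts on $\{G_j\}$ by a colour-preserving translation; bookkeeping with the colouring identifies the hexagon of the chain carrying $c_2$ with the one carrying $\eta$ up to a power of $\gamma_{c_1}$, and that power is the required $\gamma$.

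The main obstacle is the quantitative combinatorial step of the first paragraph: showing that, once $R$ is large compared with $K$, a cuff-free path of length $\le K$ from $c_1$ to $c_2$ cannot take the detours available in principle — running far along $c_1$ through the chain of hexagons, or crossing the short side of a hexagon opposite to $c_1$ — which is exactly where the length-$R$ scale of the long sides and the ideal-triangle degeneration of the hexagons are used. Once that is secured, (1) and (2) follow from the geometry of the common perpendicular and its uniqueness, and (3) from the colouring of hexagons together with the cuff group; these last points are routine hyperbolic-plane bookkeeping.
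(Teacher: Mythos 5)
Your proof follows the paper's route but actually supplies the arguments where the paper only asserts them. The paper states the dichotomy --- a cuff-free pair either has $d(c_1,\eta)>R/2$ or bounds two hexagons across a common short edge --- with essentially no proof; you prove it via the common perpendicular $[p,q]$, observing that it crosses no leaf of $\mc L_R$ and that the hexagon estimates prevent it from leaving the first hexagon $H_0$. (Incidentally, your case ``crossing a short side adjacent to $c_1$'' is vacuous: $[p,q]$ and those two short sides are all perpendicular to $c_1$ at distinct feet, hence pairwise disjoint as geodesics, so $[p,q]$ cannot meet them at all; the segment stays in $H_0$ outright.) Items (1) and (2) then follow as you say, with the numerical estimate coming from Proposition \ref{pro:hexa} exactly as in the paper.

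One caution for (2): identifying $\Sc(H_1)$ with the reflection of $H_1$ across the geodesic carrying $s$ presupposes that $\mc L_R$ is preserved by that reflection, which it is not --- the shear of value $1$ in the gluing $P_0 \sim P_1$ destroys the seam symmetry of $S_R$. Your alternative characterisation of $\Sc(H_1)$, as the unique second tiling hexagon having both $c_1$ and $c_2$ as long sides (equivalently, the tile across the shared short edge $s$), is correct and should stand alone; the reflection digression should be dropped. For (3), the ``bookkeeping with the colouring'' is the thinnest step: along the bi-infinite chain $\{G_j\}$ the adjacent cuffs $\eta_j$ project alternately onto the two \emph{other} boundary curves of the pair of pants, while $\gamma_{c_1}$ translates the chain by two hexagons, so its orbit among the $\eta_j$ has index $2$; the colouring alone does not tell you that $c_2$ and $\eta$ lie in the same orbit. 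The paper's own treatment of (3) is a one-liner that elides the same point, so you are not behind the paper here, but the parity question is worth recording rather than waving past.
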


We have also a proposition on $K$-acceptable triples
\begin{proposition}\label{pro:Ktrip} There exists $K_0$ so that if  $(c_1,c_2)$ is a $K$-acceptable pair with $K>K_0$, then
	\begin{enumerate}
		 \item there exists exactly three $K$-acceptable triples starting with $c_1$ and $c_2$. Fixing an orientation of $c_2$, we can describe the last geodesic in the triple  as $c_3^+:=<c_1,c_2>^+$,  and similarly $c_3^0$ and $c_3^-$, where if $x^i$ is the projection of $c^i_3$ on $c_2$, then $(x^-,x^0,x^+)$ is oriented.
		 \item  If $(c_1,c_2,c_3)$ is a $K$-acceptable triple, then $d(c_1,c_3)\leq K_0$ and moreover if $x_i$ is the point in $c_2$ closest to $c_i$, then $d(x_1,x_2)\leq 3R$.
		 \item \label{it:hexatrip} Moreover if $(H_1,\Sc(H_1))$ and $(H_2,\Sc(H_2))$ are the pairs of hexagons bounded respectively by $(c_1,c_2)$ and $(c_2,c_3)$, then 
		 $$
		 H_2=\gamma^p\Op(H_1)\ ,
		 $$
		 where $\gamma$ is the cuff element associated to $c_2$ and $p\in\{-1,0,1\}$.
		\item  if $c$ is a geodesic non intersecting $c_1$ and $c_2$, so that $c_2$ is between $c_1$ and $c$ and so that $d(c,c_1)<K$, then  there is a cuff $c_3$ so that $(c_1,c_2,c_3)$ is a $K$-acceptable triple and \begin{itemize}
			\item either $c_3$ do not not intersect $c$ and \begin{itemize}
			\item  $c_3$ lies between $c$ and $c_2$, 
			\item or $c$ lies between $c_3$ and $c_2$,
		 \end{itemize}
			\item or $c_3$ intersects $c$.
		\end{itemize} 
	\end{enumerate}
\end{proposition}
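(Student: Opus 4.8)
The final statement to prove is Proposition~\ref{pro:Ktrip}, a purely hyperbolic--geometric lemma about $K$-acceptable triples in the $R$-perfect lamination $\mathcal L_R$. The plan is to work entirely in $\hh$ and in the quotient surface $S_R = \hh/\Lambda_R$, reducing everything to the combinatorics of the hexagonal tiling and the structure maps $\Sc$ and $\Op$ introduced above, together with the quantitative estimates of Propositions~\ref{pro:hexa} and~\ref{pro:Kpairs}. The central organising idea is that a $K$-acceptable pair $(c_1,c_2)$ determines a unique pair of hexagons $(H_1,\Sc(H_1))$ having $c_1$ and $c_2$ as sides (Proposition~\ref{pro:Kpairs}(2)), so that choosing $c_3$ with $(c_1,c_2,c_3)$ acceptable amounts to choosing the ``next'' pair of hexagons across $c_2$; the group element preserving $c_2$ (the cuff element $\gamma$) acts on the finitely many local configurations, and the analysis of $\Op\circ\Sc$ controls which $c_3$ can occur.

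First I would fix $K_0$: take $K_0$ larger than the diameter bound $M$ of $S_R$ from Lemma~\ref{lem:diamSR} and large enough that all the ``two hexagons across a cuff'' statements hold for $R$ large; this is what makes $K$-acceptability a genuinely local notion. For item~(1), $c_3$ together with $c_2$ forms a $K$-acceptable pair (as observed right after the definition), so $d(c_2,c_3)\le 2e^{-R/2}$ by Proposition~\ref{pro:Kpairs}(1); combined with $d(c_1,c_2)\le 2e^{-R/2}$ and the no-cuff-between condition forcing $c_1,c_3$ on opposite sides of $c_2$, the triangle inequality gives $d(c_1,c_3)\le 4e^{-R/2}<K_0$, and the statement that $d(x_1,x_2)\le 3R$ between the feet on $c_2$ follows from the long-edge length $R$ of each tiling hexagon (Proposition~\ref{pro:hexa}(2)) together with the fact that $x_1$ and $x_2$ lie in adjacent hexagon edges along $c_2$. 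For the ``exactly three'' count in item~(1): having fixed $(H_1,\Sc(H_1))$, the cuff $c_2$ is a common long side, and an acceptable $c_3$ must be a side of a hexagon glued across $c_2$; the cuff element $\gamma$ of $c_2$ has a fundamental domain on $c_2$ meeting exactly the three relevant hexagon-edges, which yields exactly the three choices $c_3^-,c_3^0,c_3^+$, ordered by the ordering of their projections $x^-,x^0,x^+$ on $c_2$ once $c_2$ is oriented. This is essentially the content of Kahn--Markovic's analysis of the perfect lamination and I would cite \cite{Kahn:2009wh} for the hexagon bookkeeping where convenient.

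Item~(3) is the combinatorial heart: given the two pairs of hexagons $(H_1,\Sc(H_1))$ and $(H_2,\Sc(H_2))$ bounded by $(c_1,c_2)$ and $(c_2,c_3)$, I would argue that $H_2$ and $\Op(H_1)$ share the cuff $c_2$ and sit on the same side configuration, hence differ by an element of the stabiliser of $c_2$ in $\Lambda_R$, i.e.\ by a power $\gamma^p$ of the cuff element; the relation $\Op\circ\Sc\circ\Op\circ\Sc=\mathrm{Id}$ and the shear-$1$ gluing between the two $R$-perfect pairs of pants constrain $p$ to lie in $\{-1,0,1\}$ (the three values matching the three acceptable $c_3$'s of item~(1)). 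Item~(4) is a continuity/interval argument: the three sides at infinity $\partial_a H,\partial_b H,\partial_c H$ of the hexagons across $c_2$ partition the relevant arc of $\partial_\infty\hh$, and the endpoint data of the geodesic $c$ lands in one of these arcs or straddles a cuff endpoint; choosing $c_3$ to be the cuff whose hexagon-side-at-infinity contains (or is adjacent to) the endpoint of $c$ gives the claimed trichotomy (c_3 disjoint from $c$ with $c_3$ between $c$ and $c_2$, or $c$ between $c_3$ and $c_2$, or $c_3$ crossing $c$), and the bound $d(c,c_1)<K$ with $K_0$ large guarantees $c$ does not reach past the third hexagon so that such a $c_3$ genuinely exists and is acceptable.

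The main obstacle I anticipate is item~(3)---pinning down that $p\in\{-1,0,1\}$ and that the three values of $p$ correspond bijectively to the three acceptable triples. This requires careful tracking of how the $\Op$ map interacts with the shear-$1$ gluing between $P_0$ and $P_1$ inside $S_R$, and of the action of $\gamma$ on the universal cover; it is the one place where one cannot avoid examining the hexagonal combinatorics of $S_R$ concretely. The estimates (items~(1),(2)) and the interval argument (item~(4)) are comparatively routine once the structure of $\Sc$, $\Op$ and the cuff elements is set up, and I would lean on Propositions~\ref{pro:hexa} and~\ref{pro:Kpairs} and on \cite{Kahn:2009wh} to keep the hyperbolic--trigonometric bookkeeping short.
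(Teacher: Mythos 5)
Your overall plan is consistent with the paper's, but there are two substantive problems that need to be fixed.

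First, the estimate for item (2) is wrong. You assert $d(c_1,c_3)\le 4e^{-R/2}$ by the triangle inequality, using $d(c_1,c_2)\le 2e^{-R/2}$ and $d(c_2,c_3)\le 2e^{-R/2}$. But the triangle inequality between infima of distances to a common geodesic $c_2$ does not bound $d(c_1,c_3)$, because the feet of $c_1$ and of $c_3$ on $c_2$ may be far apart. Indeed, for the acceptable triples with $p=\pm1$ the two feet $x_1,x_2$ on $c_2$ are at distance roughly $R$, and the correct bound on $d(c_1,c_3^{\pm1})$ is a universal constant $K_0$, not something exponentially small. The paper gets this bound from the observation that the configuration $(c_3^0,c_3^1,c_2,c_1,\gamma(c_1))$ converges, as $R\to\infty$, to a fixed configuration of two ideal triangles sheared by $1$; this is what defines $K_0$. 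Your claimed exponential bound holds only for $c_3^0$.

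Second, you never actually establish that there are exactly three acceptable triples. You gesture at "the cuff element $\gamma$ has a fundamental domain meeting exactly the three relevant hexagon-edges", but the count of three is a metric fact, not a combinatorial one: $(c_1,c_2,c_3^p)$ exists as a combinatorial configuration for every integer $p$, and what rules out $|p|\ge 2$ is the quantitative lower bound $d(c_3^p,c_1)\ge \tfrac18 R$ for $R$ large. The paper proves this by trapping $c_3^p$ (for $|p|\ge 2$) inside explicit convex regions $D^\pm$ bounded by geodesic arcs orthogonal to $c_2$, then applying the elementary distance estimate of Proposition~\ref{pro:quad} to bound the distance from $c_1$ to those regions from below. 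Without an argument of this kind, "exactly three" is unsupported; the upper bound $d(c_1,c_3^{\pm1})\le K_0$ and lower bound $d(c_1,c_3^p)\gg K$ for $|p|\ge 2$ are both needed. Item (3) then falls out of this enumeration, and item (4) uses the same regions $D^\pm$ (together with two more half-planes $D_0,D_1$) to show that a geodesic $c$ with $d(c,c_1)<K$ cannot avoid the arcs at infinity cut out by $c_3^-,c_3^0,c_3^+$; your interval-partition idea is the right intuition, but it needs to be run through the same quantitative exclusion regions to be a proof.

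Also, your suggestion to take $K_0$ larger than $\operatorname{diam}(S_R)$ is not what is needed; $K_0$ is determined by the limiting five-geodesic configuration as above, not by the surface diameter.
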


These two propositions have  immediate consequences summarized in the following corollary:
\begin{corollary}\label{coro:K10}
\begin{enumerate}
	\item 	For all positive $K_1$ and $K_2$ greater than $K_0$, there exists $R_0$ so that for all $R>R_0$, $W_x^R(K_1)=W_x^R(K_2)$.
	\item Any finite $K$-good  sequence of cuffs $\{c_1,\ldots,c_p\}$ can be extended to an infinite $K$-good sequence $\seq{c}$.
\end{enumerate}
\end{corollary}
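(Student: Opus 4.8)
The plan is to read both assertions off Propositions~\ref{pro:Kpairs} and~\ref{pro:Ktrip}, exactly as the phrase ``immediate consequences'' suggests; the only real content is to unwind the definitions of a \emph{$K$-acceptable triple} and a \emph{$K$-good sequence of cuffs}.

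For assertion~(i), I would show that, once $R$ is large enough, the family of $K$-acceptable triples of cuffs is literally independent of $K$ as soon as $K>K_0$. Indeed, Proposition~\ref{pro:Ktrip}(2) says that every $K$-acceptable triple $(c_1,c_2,c_3)$ with $K>K_0$ automatically satisfies $d(c_1,c_3)\leq K_0$; and conversely any triple in which $c_2$ is the unique cuff between $c_1$ and $c_3$ and $d(c_1,c_3)\leq K_0$ is, by definition, $K$-acceptable for every $K\geq K_0$. Hence for $K_1,K_2>K_0$ the two notions of ``$K_i$-good sequence of cuffs'' coincide, and therefore so do the two notions of ``$K_i$-accessible point from a hexagon through $x$'': the remaining requirements in that definition (that $c_1$ and $c_2$ contain long boundary segments of a hexagon containing $x$) make no reference to $K_i$. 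This gives $W_x^R(K_1)=W_x^R(K_2)$, with $R_0$ the threshold already supplied by Proposition~\ref{pro:Ktrip}.

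For assertion~(ii), I would extend the given finite sequence one cuff at a time from its right-hand end. It suffices to prove: if $\{c_1,\dots,c_p\}$ is $K$-good with $K>K_0$ and $(c_{p-1},c_p)$ is a $K$-acceptable pair, then there is a cuff $c_{p+1}$ for which $\{c_1,\dots,c_{p+1}\}$ is $K$-good and $(c_p,c_{p+1})$ is again a $K$-acceptable pair; iterating then produces the required infinite $K$-good sequence $\seq{c}$. Given the $K$-acceptable pair $(c_{p-1},c_p)$, Proposition~\ref{pro:Ktrip}(1) supplies a cuff $c_{p+1}$ (one of three) making $(c_{p-1},c_p,c_{p+1})$ a $K$-acceptable triple; every consecutive triple of $\{c_1,\dots,c_{p+1}\}$ is then $K$-acceptable (those inside $\{c_1,\dots,c_p\}$ by hypothesis, the last one by construction), so the extension is $K$-good, and $(c_p,c_{p+1})$ is $K$-acceptable by the remark following the definition of a $K$-acceptable triple. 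The hypothesis that $(c_{p-1},c_p)$ be $K$-acceptable is automatic when $p\geq 3$ (same remark); for $p\leq 2$ one first enlarges the sequence, using that any cuff $c$ has a neighbouring cuff $c'$ — two cuff-sides of a common tiling hexagon — with no cuff separating them and, by Proposition~\ref{pro:hexa}, at distance at most the short-edge length $\ell=O(e^{-R/2})<K$, so that $(c,c')$ is a $K$-acceptable pair.

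The argument is essentially bookkeeping with the definitions; the one point I would treat as the ``main obstacle'', mild as it is, is checking that Proposition~\ref{pro:Ktrip} genuinely delivers the $K$-uniformity used in~(i) — namely that its constant $K_0$ and its $R$-threshold depend only on $\ms G$ and the perfect lamination, not on the auxiliary parameter $K>K_0$ — so that a single $R_0$ works simultaneously for all such $K$. This is visible from the statement of the proposition, but it deserves one explicit sentence in the final write-up; the remainder is immediate.
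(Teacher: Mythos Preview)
Your argument is correct and is exactly the elaboration the paper intends: the authors give no proof at all beyond the phrase ``immediate consequences'' of Propositions~\ref{pro:Kpairs} and~\ref{pro:Ktrip}, and your unwinding of the definitions is the right one. Two small remarks. First, your closing worry about $K$-uniformity is unnecessary: the corollary fixes $K_1,K_2$ \emph{before} choosing $R_0$, so it is perfectly fine for the $R$-threshold in Proposition~\ref{pro:Ktrip} to depend on $K$; you simply take $R_0$ large enough for both $K_1$ and $K_2$. Second, your treatment of the edge case $p=2$ is slightly off: if $(c_1,c_2)$ happens not to be a $K$-acceptable pair, no extension to a $K$-good triple $(c_1,c_2,c_3)$ can exist, so ``enlarging the sequence'' cannot repair this. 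This is really an imprecision in the statement rather than in your proof --- in every use the paper makes of the corollary, the initial pair consists of two cuff-sides of a tiling hexagon and is therefore automatically $K$-acceptable --- so your inductive step, which is the actual content, is what matters.
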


\subsubsection{Proof of Proposition \ref{pro:Kpairs}}

If there is no cuffs between $c_1$ and $\eta$, then $c_1$ and $\eta$ are common bounds of the universal cover of one pair of pants. Then for $R$ large enough
\begin{itemize}
\item either $d(c_1,\eta)>R/2$,
\item Or they bounds two hexagons with a common short edge that joins $c_1$ to $c_2$. Then by construction of the shear coordinates, the pair of pants obtained by gluing to ideal triangles using an $R$-swish has $2R$ as length of its boundaries. Thus the two hexagons have opposite long sides of  length $R$ and short side of length approximately $e^{-\frac{R}{2}}$  by the last item of Proposition \ref{pro:hexa}. The result now follows

\end{itemize}
			
This shows the first assertion.

 Finally  all $K$-acceptable pairs $(c_1,\eta)$ -- if $\eta$ and $c_2$ are in the same connected component of $\hh\setminus c_1$ --  are equivalent under the action of $\Lambda_R$, the first item follows.

\subsubsection{Controlling distances to geodesics}
We will denote in general  by $[c,d]$ the geodesic arc passing between $c$ and $d$ where $c$ and $d$ could be at infinity
We first need a statement from elementary hyperbolic geometry
			\begin{proposition}\label{pro:quad}
				If $a$ and $b$ are two non intersecting geodesics, if $x$ is the closest point on  $a$ to $b$, if $y$ is a point on $a$ so that $d(x,y)>R_0$, then
			$$
			 d(y,b)\geq \inf\left(\frac{1}{10}d(a,b)e^{\frac{3}{4}d(x,y)}, \frac{1}{4}d(x,y)-d(a,b)\right)\ .
			$$
			\end{proposition}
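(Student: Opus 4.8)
The statement is a purely two-dimensional hyperbolic-geometry estimate about the distance from a point on a geodesic $a$ to a disjoint geodesic $b$. The plan is to reduce everything to an explicit computation in the upper half-plane model, where one of the two geodesics becomes standard.

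First I would normalize: place $a$ as the vertical line $\{\operatorname{Re}=0\}$ (the imaginary axis) and, since $a$ and $b$ are disjoint, realize $b$ as a Euclidean half-circle $\{|w|=\rho\}$ with $0<\rho$ chosen so that it does not meet $a$ — that is, a half-circle centered at a real number $r$ with $|r|>\rho$, or more conveniently center it so that the common perpendicular foot computation is transparent. The cleanest choice: put $b$ as the half-circle of Euclidean radius $1$ centered at $c_0>1$ on the real axis; then the common perpendicular of $a$ and $b$ is itself a half-circle through the origin, and $d(a,b)=:D$ is computed from $c_0$ by the standard formula $\cosh D = c_0$ (up to the usual normalization). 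The foot $x$ on $a$ of this common perpendicular is an explicit point $i t_0$ with $t_0=\tan(\text{something})$, and $y$ is another point $i t$ on the imaginary axis with $d(x,y)=:\ell = |\log(t/t_0)|$. So $t = t_0 e^{\pm\ell}$ and we may assume $t=t_0 e^{\ell}$ after possibly reflecting.

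The core of the argument is then to estimate $d(it, b)$ from below as a function of $\ell$ and $D$. I would use the explicit distance-to-a-half-circle formula in the upper half-plane: for a point $z$ and the geodesic $\Gamma$ equal to the half-circle $|w-c_0|=1$, $\sinh d(z,\Gamma)$ equals (up to sign) $\dfrac{|z - \text{(center)}|^2 - 1}{2\,\operatorname{Im}(z)}$ suitably interpreted (this is the standard formula: $\cosh^2 d = \bigl((|z-c_0|^2+1)/(2\operatorname{Im} z)\bigr)^2$ type identity). Plugging $z = it$ gives $d(it,b)$ as an elementary function of $t$ and $c_0$, hence of $\ell$ and $D$. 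Then the proof splits into the two regimes the conclusion anticipates: (i) when $\ell$ is large relative to $D$, one expands and shows the distance grows like $\tfrac12\ell - D + O(1)$, which after absorbing the $O(1)$ into the constant $R_0$ (i.e. for $d(x,y)>R_0$) yields the lower bound $\tfrac14 d(x,y) - d(a,b)$ with room to spare; (ii) when $D$ is not small, i.e. $b$ is already far, the Euclidean picture shows $d(it,b)$ is bounded below by a multiple of $D e^{(3/4)\ell}$ because moving along $a$ away from the foot pushes $it$ out of the horoball-type neighborhood of $b$ at an exponential rate — here the exponent $3/4$ (rather than the sharp $1$) gives the slack needed to swallow lower-order corrections uniformly. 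Taking the infimum of the two bounds and choosing $R_0$ large enough that the elementary error terms are dominated gives exactly the stated inequality
$$
d(y,b)\geq \inf\left(\tfrac{1}{10}\,d(a,b)\,e^{\frac{3}{4}d(x,y)},\ \tfrac14 d(x,y) - d(a,b)\right).
$$

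\textbf{Main obstacle.} The only real work is bookkeeping the explicit constants: verifying that the coefficient $1/10$ and the exponent $3/4$ (as opposed to the sharp asymptotic constant and exponent $1$) are genuinely valid for \emph{all} $d(x,y) > R_0$ and \emph{all} values of $d(a,b)\geq 0$, not merely asymptotically. The safe route is to prove a clean asymptotic estimate of the form $d(y,b) = \log\bigl(e^{d(x,y)} d(a,b)\bigr) + O(1)$ in regime (i)-(ii) uniformly, then note that for $d(x,y) \geq R_0$ with $R_0$ absolute the logarithmic/linear bound with the deliberately weakened constants follows from elementary inequalities $\log(1+u)\geq \text{const}\cdot\min(u, \log u)$. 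I do not expect any conceptual difficulty beyond choosing the normalization that makes the half-plane computation shortest and being careful that the two cases in the $\inf$ together cover all parameter ranges.
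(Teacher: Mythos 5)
Your plan takes a genuinely different route from the paper's proof. The paper argues synthetically: it projects $x$ and $y$ orthogonally onto $b$ to get feet $w$ and $z$, sets $A=d(x,y)$, $D=d(a,b)=d(x,w)$, and splits on whether $d(w,z)\leq\tfrac{3A}{4}$ or $d(w,z)\geq\tfrac{3A}{4}$. In the first case a one-line triangle inequality in the quadrilateral $x,w,z,y$ gives $d(y,z)\geq A-D-\tfrac{3A}{4}=\tfrac{A}{4}-D$; in the second case the quoted bound $d(y,z)\geq\tfrac1{10}D\,e^{3A/4}$ is the standard exponential-divergence estimate for geodesics orthogonal to $b$ (equivalently, the Lambert-quadrilateral relation $\sinh d(y,z)=\sinh D\cdot\cosh d(w,z)$ together with $\cosh t\geq \tfrac12 e^t$ and $\sinh D\geq D$). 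No half-plane coordinates appear at all. Your approach instead normalizes to the upper half-plane and computes $d(y,b)$ from the explicit distance-to-a-circle formula, then splits into regimes by asymptotics. That is a perfectly sensible alternative, and has the advantage of being fully self-contained (the paper tacitly relies on the divergence estimate); the paper's route has the advantage of avoiding any coordinate bookkeeping.

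Two caveats. First, your dichotomy is organized by the size of $d(x,y)$ relative to $d(a,b)$, whereas the paper's dichotomy is by the size of the projected gap $d(w,z)$; the latter is what makes the triangle-inequality case a one-liner, and you would need to reproduce that case distinction (or an equivalent one) to avoid the unpleasant intermediate regime where both quantities in the $\inf$ are comparable. Second, your writeup is a plan rather than a proof: the entire content of the proposition is the uniform validity of the constants $\tfrac1{10}$ and $\tfrac34$ over all $D\geq 0$ once $A>R_0$, and you defer exactly that verification to "bookkeeping." Until the explicit half-plane computation is actually carried out and the two regimes shown to cover all parameters, this remains a credible outline but not a complete argument.
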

			
			\begin{proof} Let $w$ and $z$ be the projections of $x$ and $y$ on $b$. 	 Let $A:=d(x,y)$.
						\begin{enumerate}
				\item Assume first $d(z,w)\leq \frac{3A}{4}$. Then 
				\begin{eqnarray*}
			d(y,z)
					\geq  d(y,x) - d(x,w)- d(w,z)\geq A- d(a,b)-\frac{3}{4}A
					\geq  \frac{A}{4}-d(a,b)\ ,
				\end{eqnarray*}
			\item If now $d(z,w)\geq \frac{3A}{4}$, then 
				 $d(y,z) \geq  \frac{1}{10} d(x,w)\  e^{\frac{3A}{4}}$.
			\end{enumerate}
			This concludes the proof of the inequalities	
			\end{proof}\subsubsection{Proof of Proposition \ref{pro:Ktrip}}

\begin{figure}[h]
\centering
\includegraphics[width=0.4\textwidth]{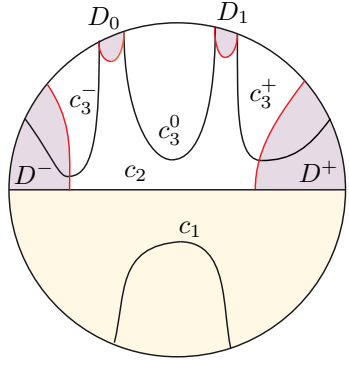}
  \caption{$K$-acceptable triples}\label{fig:Kaccep}
\end{figure}
Let $(c_1,c_2)$ be a $K$-acceptable pair. Let $c^0_3$ be the unique cuff so that $(c_2,c^0_3)$ are $K$-acceptable and if $z$ is the projection of $c_3^0$ on $c_2$, $y$ is the projection of $c_1$ on $c_2$, then $d(z,y)=1$. 

Let $\gamma$ the primitive element of $\Lambda_R$ preserving $c_2$, $p\in\mathbb Z$ and 
$$c^p_3=\gamma^{\frac{p}{2}}\left(c^0_3\right)\ , \ \ \ z^p=\gamma^{\frac{p}{2}}\left(z\right)\ .$$
Observe that $z^p$ is the projection of $c^p_3$ on $c_2$ and that $d(z,z^p)=pR$. 

Obviously $(c_1,c_2,c_3^0)$ is $K$-acceptable since $d(c_1,c_3^0)\leq 2$ for $R$ large enough.

Observe now that the configuration of five geodesics given by 
$c^0_3,c_3^1,c_2,c_1,\gamma(c_1)$ converges to a pair of ideal triangles swished by 1. Thus, there exists a universal constant $K_0$ so that, for $R$-large enough
\begin{eqnarray}
d(c_1,c_3^1)&\leq& K_0\ ,\label{ineq:00000}\\	
d(c_1,c_3^{-1})&\leq& K_0\ \label{ineq:00001}\ . 
\end{eqnarray}
where the second inequality is obtained by a similar argument

 As a consequence for $K>2$,  $(c_1,c_2,c_3^p)$ is $K$-acceptable for  $p=+1,0,-1$ and $K\geq K_0$.
We want to show that these are the only ones. Let us write to simplify $c^{\pm}_3=c^{\pm1}_3$, $z^\pm=z^{\pm 1}$.
 \begin{itemize}
	\item let $D_2$ be the connected component of $\hh\setminus c_2$ not containing $c_1$,
	\item Let $\eta^\pm$ be the geodesic arc orthogonal to $c_2$ passing though $z^\pm$ and lying inside $D_2$. 
	\item Let $D^\pm$ be the convex set bounded by $\eta^\pm$ and the geodesic arc $[z^\pm,c_2(\pm\infty)]$,
\end{itemize} 
Observe that 
\begin{enumerate}
	\item for all $p>3$, $c^p_3\subset D^+$,
	\item The closest point $m$ to $c_1$ in $D^\pm$ lies on $c_2$ (geodesic arcs orthogonal to $\eta^\pm$ never intersect $c_2$ and $c_1$).
\end{enumerate}
It follows that for all $p>1$
$$
d(c^p_3, c_1)\geq d(D^\pm,c_1)=d(m,c_1) \geq \inf\left(\frac{1}{10}d(c_1,c_2)e^{\frac{3}{4}A}, \frac{1}{4}A-d(c_1,c_2)\right)
$$
where $A=d(m, y)$ and 
where the last inequality comes from Proposition \ref{pro:quad}.  
Observe that 
$$
d(m, y)\geq d(z^\pm,y)\geq d(z^\pm,z)-d(z,y)=R-1.
$$
Since  $d(c_1,c_2)\geq \frac{1}{2}e^{-\frac{1}{2}R}$, we obtain from the previous inequality that
$$
d(c^p_3, c_1) \geq d(D^\pm,c_1)\geq \inf\left(\frac{1}{1000}e^{\frac{R}{4}-1}, \frac{1}{4}R-2\right).
$$
Thus for $R$ large enough, 
$$
d(c^p_3, c_1) \geq d(D^\pm,c_1)\geq  \frac{1}{8}R\ .
$$
It follows that $(c_1,c_2,c_3^p)$ is not $K$-acceptable for $R$ large enough and $p>1$ (and a symmetric argument yields the case $p<1$).  This finishes the proof of the first point.

The second point follows from inequality \eqref{ineq:00000}, \eqref{ineq:00001}). The third point is  an immediate consequence of the previous construction and more precisely the restriction on $p$ appearing.

We use the notation of the previous paragraph to prove the last point. Let $c$ so that $d(c_1,c)\leq K$.
Since $d(D^\pm,c_1)\geq \frac{1}{8}R$, it follows that 
$$
c\not\subset D^+\sqcup D^-.
$$

Let furthermore $D_0$ (respectively $D_1$) be the hyperbolic half plane not containing $c_1$ bounded by $[c_3^-(+\infty),c_3^0(-\infty)]$ ( and respectively by $[c_3^0(+\infty),c_3^+(-\infty)]$). Observe that $d(D_0,c_2)\geq R$ and $d(D_1,c_2)\geq R$. Thus 
$$
c\not\subset D_0\sqcup D_1\ .
$$
Thus the result now follows from the examination of Figure \ref{fig:Kaccep}.

\subsection{Preliminary on accessible points}

The following proposition is obvious and summarizes some properties of accessible points

\begin{proposition}
	A $K$-good sequence of cuffs $\seq{\gamma}$ admits a unique accessible point which is also the Hausdorff limit of $\seq{\gamma}$ in the compactification of $\hh$, as well as the limit of the nested sequence of associated chords.
	\end{proposition}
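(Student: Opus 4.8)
The statement is essentially a repackaging of the nested-interval convergence already built into the $R$-perfect lamination. The plan is to exploit three facts established above: (1) a $K$-good sequence of cuffs $\seq{\gamma}$ gives rise to an associated \emph{nested} sequence of chords $\{h_m\}_{m\in\mathbb N}$ (each $h_m$ being the chord carried by the geodesic $c_m$, with the half-plane side chosen to contain $c_{m+1}$), by the remark preceding the statement; (2) by Corollary \ref{cor:KMbd} and Corollary \ref{coro:K10}, for $R$ large enough this coplanar sequence of cuffs is a $KR$-sequence for a universal $K$, so in particular $\delta(h_0,h_m)\to\infty$ as $m\to\infty$; (3) in planar hyperbolic geometry each chord $h_m$, viewed through $S_0(h_m)\subset\partial_\infty\hh$, is an honest interval, and the nesting $(h_m,h_{m+1})$ means $S_0(h_{m+1})\subset S_0(h_m)$.

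First I would observe that since the $c_m$ are pairwise disjoint and $\delta(c_m,c_{m+p})$ increases without bound (Corollary \ref{cor:KMbd}), the lengths of the intervals $S_0(h_m)$, measured in a fixed visual metric $d_{T_0}$ for any reference tripod $T_0$ coplanar to the sequence, shrink to zero: indeed nesting plus $\delta(h_0,h_m)\to\infty$ forces the diameter of $S_0(h_m)$ in $d_{T_0}$ to tend to $0$ (one can invoke Proposition \ref{exis-cont}, or simply the elementary hyperbolic fact that a chord at hyperbolic distance $\geq \ell$ from a fixed geodesic subtends a visual angle $\lesssim e^{-c\ell}$). Hence $\bigcap_m \overline{S_0(h_m)}$ is a single point $x\in\partial_\infty\hh$, and this $x$ is by definition the accessible point of the sequence. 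This gives simultaneously: existence of the accessible point, its characterization as the limit of the nested sequence of chords, and its characterization as the Hausdorff limit of $\{c_m\}$ in the compactification $\overline{\hh}=\hh\cup\partial_\infty\hh$ (the closures of the $c_m$ form a decreasing-in-convex-hull family whose intersection in $\overline{\hh}$ is exactly $\{x\}$, since the cuffs escape every compact set of $\hh$).

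Uniqueness of the accessible point: any subsequential limit of endpoints of the $c_m$ lies in $\overline{S_0(h_m)}$ for all large $m$ — because once $c_n$ is nested past $c_m$, both of its endpoints lie in $S_0(h_m)$ — hence equals $x$. The same argument shows that the point does not depend on which good sequence realizes it as long as the sequences share the same nested chord structure, and more precisely that the accessible point depends only on the sequence, so the "unique accessible point" terminology is justified. I expect the only mildly delicate point to be checking that the shrinking of $S_0(h_m)$ is \emph{uniform} enough to conclude — i.e.\ that $\delta(h_0,h_m)\to\infty$ genuinely follows from the $KR$-sequence property rather than merely $\delta$ being nondecreasing; but this is exactly the content of Corollary \ref{cor:KMbd} together with the fact that a $K$-good sequence of cuffs has no two equal consecutive cuffs, so the index increases strictly and the shift must diverge. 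With that in hand the proof is a one-paragraph assembly of the cited lemmas.

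\begin{proof}
Let $\seq{\gamma}$ be a $K$-good sequence of cuffs and let $\{h_m\}_{m\in\mathbb N}$ be the associated sequence of chords, where $h_m$ is the chord carried by $\gamma_m$ with the half-plane side chosen to contain $\gamma_{m+1}$. By construction $(h_m,h_{m+1})$ is nested for every $m$, so $\{S_0(h_m)\}_{m}$ is a decreasing sequence of closed intervals in $\partial_\infty\hh$. Since consecutive cuffs in a $K$-good sequence are distinct, the underlying geodesics are pairwise disjoint; by Corollary \ref{cor:KMbd}, for $R$ large enough the sequence $\{h_m\}$ is a $KR$-coplanar sequence of cuffs, whence $\delta(h_0,h_m)\to\infty$ as $m\to\infty$. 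Fixing a tripod $T_0$ coplanar to the sequence, elementary hyperbolic geometry (or Proposition \ref{exis-cont}) gives a constant $c>0$ with
\[
\operatorname{diam}_{d_{T_0}}\!\big(S_0(h_m)\big)\ \leq\ C\,e^{-c\,\delta(h_0,h_m)}\ \xrightarrow[m\to\infty]{}\ 0.
\]
Therefore $\bigcap_{m}\overline{S_0(h_m)}$ consists of a single point $x\in\partial_\infty\hh$, which is precisely the accessible point attached to $\seq{\gamma}$, and $x$ is by definition the limit of the nested sequence of chords. If $z$ is any subsequential limit of endpoints of the $\gamma_m$, then for each fixed $m$ all but finitely many $\gamma_n$ are nested past $\gamma_m$, so both endpoints of such $\gamma_n$ lie in $S_0(h_m)$; hence $z\in\overline{S_0(h_m)}$ for all $m$, i.e.\ $z=x$. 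This proves uniqueness of the accessible point. Finally, since the cuffs $\gamma_m$ leave every compact subset of $\hh$ and their convex hulls decrease, $x$ is also the Hausdorff limit of $\{\gamma_m\}$ in the compactification of $\hh$.
\end{proof}
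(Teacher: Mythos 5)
The paper declares this proposition ``obvious'' and supplies no proof, so there is nothing to compare against; what matters is whether your argument actually holds up, and it does. Your route is the natural one: nesting of the chords gives a decreasing family of closed intervals $S_0(h_m)\subset\partial_\infty\hh$, Corollary \ref{cor:KMbd} upgrades the cuff sequence to a $KR$-sequence so that $\delta(h_0,h_m)\to\infty$, and the elementary planar fact that a chord at hyperbolic distance $\ell$ from a fixed point subtends a visual interval of size $O(e^{-\ell})$ forces $\operatorname{diam}_{d_{T_0}}S_0(h_m)\to 0$, which gives a singleton intersection $\{x\}$, uniqueness of subsequential limits of endpoints, and Hausdorff convergence of $\overline{\gamma_m}$ to $\{x\}$. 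The one step you correctly flagged as delicate --- that $\delta(h_0,h_m)\to\infty$ rather than merely being nondecreasing --- is indeed exactly what Corollary \ref{cor:KMbd} buys, since consecutive cuffs of a $K$-good sequence are distinct and the $KR$-sequence bound gives $\delta(h_0,h_m)\geq m/(KR)-1$.

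One small wording fix: the phrase ``the closures of the $c_m$ form a decreasing-in-convex-hull family'' is a bit loose, since each $c_m$ is a complete geodesic whose convex hull is itself. What you mean (and what makes the Hausdorff-limit argument go through) is that the closed half-planes $\overline{D_m}\subset\overline{\hh}$ bounded by $c_m$ and containing $c_{m+1}$ form a decreasing family with $\bigcap_m \overline{D_m}=\{x\}$ because $\partial_\infty D_m=\overline{S_0(h_m)}$ shrinks to $x$; since $\overline{c_m}\subset\overline{D_{m-1}}$ and these half-planes Hausdorff-converge to $\{x\}$ in the compact space $\overline{\hh}$, so do the $\overline{c_m}$. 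With that phrasing tightened, the proof is complete.
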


We can explain our first construction of accessible points

\begin{proposition}\label{pro:access} There exists a function $\alpha(R)$ converging to zero as $R$ goes to infinity with the following property.
Given $K$ there exists $R_0$ so that for all $R>R_0$ the following holds: let $(c_1,c_2)$ be a $K$-acceptable pair, let $a$ be an extremity at infinity of $c_2$, Then there exists an accessible point $\beta$ in $\partial_\infty\hh$,  so that for all $x$ on $c_1$,
$$
d_x(\beta,a)\leq\alpha(R).
$$	
\end{proposition}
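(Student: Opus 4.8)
The plan is to construct $\beta$ explicitly by extending $(c_1,c_2)$ to an infinite $K$-good sequence of cuffs and then controlling where its accessible point lands in the visual metric based at a point of $c_1$. First I would orient $c_2$ so that $a=c_2(+\infty)$, and use Proposition \ref{pro:Ktrip} (item (1) and (4)) together with Corollary \ref{coro:K10}(2) to build an infinite $K$-good sequence $\seq{c}$ starting with $c_1,c_2$ in which the projection points $z_m$ of $c_m$ onto $c_2$ march off toward $a$: concretely, at each step I choose the branch of the acceptable triple whose new cuff lies on the side of $c_2$ closer to $a$, so that $c_3$ separates $a$ from $c_1,c_2$. The associated nested sequence of chords then shrinks to the unique accessible point $\beta$ by the first proposition of this subsection.

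The key estimate is then purely hyperbolic-geometric: I must show $d_x(\beta,a)\le\alpha(R)$ for all $x$ on $c_1$, with $\alpha(R)\to 0$. The chord $H_2$ associated to $c_2$ (with the half-space containing $c_3$, hence $a$) squeezes both $a$ and $\beta$: $a$ is an endpoint of $H_2$ lying in the closure of $S_0(H_2)$, and $\beta\in S_0(H_2)$ by construction. So it suffices to bound the visual diameter (based at any $x\in c_1$) of the arc $\partial_\infty\hh$ cut off by $c_2$ on the far side from $c_1$. Elementary hyperbolic geometry gives that this visual diameter is comparable to $e^{-d(x,c_2)}$ up to universal constants; and by Proposition \ref{pro:Kpairs}(1), $d(c_1,c_2)\ge\tfrac12 e^{-R/2}$, while the relevant $x\in c_1$ can be taken near the foot of the common perpendicular so that $d(x,c_2)$ is bounded below by roughly $\tfrac12 e^{-R/2}$ minus a constant — wait, this is the wrong direction. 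The point is rather the opposite: since $c_1$ and $c_2$ are a $K$-acceptable pair they are very close ($d(c_1,c_2)\le 2e^{-R/2}$), so $x$ on $c_1$ is at distance $O(e^{-R/2})+O(1)$ from $c_2$; this does not immediately make the arc small. Instead I would use that $c_1$ contains a long segment (length $\ge R-O(1)$, from Proposition \ref{pro:hexa}) of the hexagon boundary, and that the accessible point construction has $c_3$ at distance $\ge R/8$ from $c_1$ along the correct side, so the nested chords past $c_2$ all sit inside the half-plane beyond $\eta^\pm$, which from $x\in c_1$ subtends a visual angle bounded by $e^{-(R/8 - O(1))}$. Thus $\alpha(R)=C e^{-R/8}$ works for a universal $C$.

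The order I would carry this out: (1) fix orientation and set up the notation of Proposition \ref{pro:Ktrip}, choosing at each stage the branch $\langle c_{m-1},c_m\rangle^{+}$ or $^{-}$ that keeps $a$ on the far side; (2) invoke Corollary \ref{coro:K10}(2) to get the infinite $K$-good sequence, and the first proposition of this subsection to get the accessible point $\beta$ and its identification with the Hausdorff/chord limit; (3) observe $a,\beta$ both lie in the closed arc of $\partial_\infty\hh$ bounded by the endpoints of $c_2$ on the side away from $c_1$; (4) estimate the $d_x$-diameter of that arc for $x\in c_1$ using Proposition \ref{pro:quad} (distance from $x$ to the geodesics bounding the nested chords) and the standard fact that visual distance decays like $e^{-(\text{distance to geodesic})}$; (5) conclude with $\alpha(R)=Ce^{-cR}$ for universal $c,C>0$. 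The main obstacle is step (4): getting a clean, uniform relation between the visual metric $d_x$ on $\partial_\infty\hh$ and hyperbolic distances to the separating geodesics, uniformly over $x\in c_1$ and over all large $R$ — in particular making sure the lower bound $d(c_3,c_1)\ge R/8$ from the proof of Proposition \ref{pro:Ktrip} translates correctly into an upper bound on the visual angle of the far arc, rather than accidentally bounding the wrong quantity. Everything else is bookkeeping with the already-established properties of the $R$-perfect lamination.
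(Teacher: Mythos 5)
Your proposal is correct and takes essentially the same route as the paper: build the $K$-good sequence by always choosing the branch of the acceptable triple that moves toward $a$ along $c_2$ (the paper's $\langle c_{p-1},c_p\rangle^+$), confine all cuffs past $c_3$ in the region $D$ beyond the perpendicular $\eta$ at $z$ (the projection of $c_3$ on $c_2$), and apply Proposition \ref{pro:quad} (with $d(y,z)\geq R-1$ and $d(c_1,c_2)\geq\tfrac12 e^{-R/2}$) to obtain $d(c_1,z)\geq R/8$ and hence exponential decay of the visual diameter of $D$ from any $x\in c_1$. You correctly recognized and abandoned the first attempt (bounding the arc cut off by $c_2$, whose visual diameter from $x$ is $O(1)$ since $d(c_1,c_2)=O(e^{-R/2})$); however, your final numbered plan (3)--(4) still refers to that wrong arc, and should instead use the smaller arc $\overline{D}\cap\partial_\infty\hh$, bounded by an endpoint of $\eta$ and $a$, for which $d(x,D)\geq d(c_1,z)\geq R/8$ gives the needed uniform bound over $x\in c_1$. (The mention of the long hexagon edge on $c_1$ is a red herring; what matters is the length-$(R-1)$ march of $z$ along $c_2$ away from $y$.)
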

\begin{proof}
It is enough to prove this inequality whenever $x$ is the projection of $a$ on $c_1$. Let us consider the $K$-good sequence $\seq{c}$, starting with $c_1$, $c_2$, characterized by the following induction procedure:

First we choose an orientation on $c_2$ so that $a=c_2(+\infty)$, let also $b=c_1(-\infty)$ when $c_1$ inherits the orientation form $c_2$.

Assume $\{c_1,\ldots,c_p\}$ is defined. We choose the orientation on $c_i$ compatible with $c_2$. Then we choose $c_{p+1}\defeq\braket{c_{p-1},c_p}^+$, where the notation is from Proposition \ref{pro:Ktrip}. 

Let $\beta$ be the accessible point from this sequence. We will now show that 
$$
\lim_{R\to\infty}d_x(\beta,a)=0\ .
$$
This will prove the result setting $\alpha(R)\eqdef d_x(\beta,a)$.
Let us start by the following construction and observations
\begin{itemize}
\item let $z$ the projection of $c_3$ on $c_2$, 
\item let $\eta$ the geodesic arc orthogonal to $c_2$ starting at $z$ and intersecting $c_3$. 
\item $D$ be the convex set bounded by $\eta$ and $[y,a]$
 \end{itemize}
Observe that for all $p>3$, $c_p\subset D$.  It is therefore enough to prove that $D$ converges to $\{a\}$ whenever $R$ goes to infinity. Since 
$$
d(x,D)=d(x,z) \ .
$$
it will be enough to prove that $d(x,z)$ converges to $\infty$. 
 Then let $y$ be the projection of $c_1$ on $c_2$. We then know that 
 $$
A:=d(y,z)\geq R-1\ .
 $$ 
 It the follows from Proposition \ref{pro:quad} that
 $$
 d(x,z)\geq d(c_1,z)\geq \inf\left(\frac{1}{2}d(c_1,c_2)e^{\frac{3}{4}A}, \frac{1}{4}A-d(c_1,c_2)\right)\ .
 $$  
Since $d(c_1,c_2)\geq \frac{1}{2}e^{-\frac{R}{2}}$  for $R$ large enough,  it follows,  again for $R$ large enough, that
$$
d(x,z)\geq \frac{1}{8}R\ .
$$  
In particular $\lim_{R\to\infty}d(x,z)=\infty$. This concludes the proof.
\end{proof}

\subsection{Proof of the accessibility Lemma \ref{lem:access}}
 Let us work by contradiction. Then there exists $\beta>0$, and for all $R$,  an interval $I_R$ in $\partial_\infty\hh$ of visual length with respect to $H$  greater than $2\beta$ so that  $W^R_H$ does not intersect $I_R$. As a consequence, there exist a non empty closed interval $I$ of length $\beta$,  a subsequence $\seq{R}$ going to infinity so that $W_m(K):=W^{R_m}_{H}(K)$ never intersects $I$.

Let $\gamma$ be the geodesic connecting the extremity of $I$ and $D_0$ the closed geodesic half-plane whose boundary is $\gamma$ and boundary at infinity $I$. We may as well assume -- at the price of taking a smaller $\beta$ -- that $D_0$ does not intersect $H$. Let $x$ be the center of mass of $H$.

Let then $K$ be the distance form $\gamma$ to $x$.  Assume $m$ is large enough (that is $R_m$ is large enough) so that $W_m(K)=W_m(K_0)$. Let also $\eta$ be a geodesic inside $D_0$, so that $d(\eta,x)=2K$ and $d(\eta,\gamma)=K$. Let $D_1\subset D_0$ bounded by $\eta$. Let also $\zeta$ be the geodesic segment joining $x$ to $\eta$. This segment intersects finitely many cuffs and let $c$ be the closest cuff to $\eta$, non intersecting $D_0$. Let us consider all the cuffs 
$\{c_1,\ldots,c_p\}$ intersecting $\zeta$ between $x$ and $c=c_p$. Then $\{c_1,\ldots,c_p\}$ is a $K$-good sequence of cuffs.

We can now work out the contradiction. According to the last item of Proposition \ref{pro:Ktrip}, there exists a cuff $c_{p+1}$ so that $(c_{p-1},c_p,c_{p+1})$ is a $K$-acceptable triple and  either

\begin{itemize}
\item $\gamma$ intersects $c_{p+1}$, \label{case1}
	\item $\gamma$ is between $c_p$ and $c_{p+1}$. \label{case2}
\end{itemize}
Indeed, $c_{p+1}$ cannot be between $c_p$ and $\gamma$, by the construction of $c_p$.

In both cases, $c_{p+1}$ has an extremity -- call it $a$ -- inside $D_1$.
 Then according to Proposition \ref{pro:access}, we can find an accessible point with respect to a sequence starting with $(c_p,c_{p+1})$ -- hence starting with $(c_1,c_2)$ -- so that the corresponding accessible point $y$ satisfies for any $\epsilon$ and $R$ large enough
$$d_z(y,a)\leq  \epsilon,$$
where $z$ is the intersection of $c_p$ with $\zeta$. Hence, since $a$ lies in $D_0$, 
$$d_x(y,a)\leq \epsilon\ .$$
But this implies that $y\in D_0$ for $\epsilon$ small enough and thus the contradiction.

\section{Straight surfaces and limit maps}\label{sec:ss-lm}

We finally make the connection with the first part of the paper and the path of quasi tripods. Our starting object in this section will be a straight surface as discussed in the previous section, or more generally an {\em equivariant straight surface}: see Definition \ref{def:eq-sf}. Such an equivariant straight surface comes with a monodromy $\rho$ and our main result, Theorem \ref{theo:sull-straight}, shows that there exists a $\rho$ equivariant limit curve which is furthermore Sullivan. This implies the Anosov property and in particular the fact that the representation is faithful.

The proof involves introducing another object: unfolding a straight surface gives rise to a labeling of each hexagons of the fundamental tiling of the hyperbolic plane by tripods, satisfying some coherence relations  -- see Proposition \ref{pro:label-straight}.

Then we show that accessible points with respect to a given hexagon can be reached through nice paths of tripods. The labeling of hexagons gives  deformations of these paths into path of quasi-tripods. We can now use the Limit Point Theorem \ref{theo:exislimi} and thus  associate to an accessible point, a point in $\gp$: the limit point of the sequence of quasi-tripods. 

Using finally the Improvement Theorem \ref{theo:boot} and the explicit control on limit points in Theorem \ref{theo:exislimi}, we show that we can define an actual Sullivan limit map.

\subsection{Equivariant straight surfaces}\label{sec:fundgroup}
We extend the definition of straight surfaces (which require a discrete subgroup of $\G$) to that of an equivariant straight surface that you may think as of a ``local system'' in our setting, similar in spirit to the definition of  positive representations in \cite{Fock:2006a}.

Recall that a almost closing pair of pants for  $\G$ is a quintuple $T=(\alpha,\beta,\gamma, T_0,T_1)$ so that $\alpha,\beta,\gamma$ are $\ms P$-loxodromic elements in $\G$ and $T_0,T_1$ are tripods, satisfying the conditions of Definition \ref{def:almost-closing}. We denoted by $P^\pm\seR$ the space of $(\eR,\pm R)$-almost closing pair of pants.

Then we defined if $T=(\alpha,\beta,\gamma,T_0,T_1)$ is an $(\eR,R)$-almost closing pair of pants, 
$$
\bpsi(T)=\Psi(T_0,\alpha^+,\alpha^-)\ .
$$
where $\Psi$ is the foot map for quasi-tripods defined in Definition \ref{def:footmap}. We now define

\begin{definition}{\sc[Configuration spaces and gluing]}\label{def:conf-glu}
	\begin{enumerate}
\item the {\em configuration space of  pair of pants  }is defined as $\mc P^\pm\seR\defeq{\ms G}\backslash P^\pm\seR$\ .
  \item the {\em configuration space of  gluing} is
   $\mathcal Z\seR\defeq {\ms G}\backslash Z\seR$ where $Z\seR$ is the set of pairs $(T^+,T^-)\in P^+\seR\times 
  P^-\seR$ so that
  \begin{eqnarray*}
  T^\pm=(\alpha_\pm,\beta_\pm,\gamma_\pm, T_0^\pm,T_1^\pm)\ ,\ \ 
   \alpha_+=(\alpha_-)^{-1}\ ,\  \ d(\bpsi (T^+), \KMT\bpsi (T^-))\leq\eR\ . \end{eqnarray*}

 Observe that we have obvious $\G$-equivariant projections
$\pi^\pm: Z\seR\mapsto P^\pm\seR$, and also denote by $\pi^\pm$ the resulting projection $\mathcal Z\seR\mapsto \mathcal P^\pm\seR$.
An element of $\mathcal Z_{\epsilon,R}$ is called a {\em gluing}.
\item A {\em perfect gluing} based at an element $q$ of $\mathcal P^\pm\seR$, is given by the class of a pair $(w, \varphi_1 \bJ_\alpha \bI_0 w)$, where  
$w=(\alpha,\beta,\gamma,t,s)$ is a representative in $P^\pm\seR$  of the class   $q$  and $\bJ_\alpha$
is the reflexion of axis $\alpha$ defined in the beginning of paragraph \ref{def:refa}.
	\end{enumerate}
\end{definition}
With this definition at hand, we can now introduce the main object of this section:
\begin{definition}{\sc [Equivariant straight surfaces]}\label{def:eq-sf}
 Let $\epsilon$ and $R$ be positive numbers.
An $(\epsilon,R)$ {\em  equivariant straight surface} \index{equivariant straight surface} is a pair $(\mc R,Z)$ where $\mc R$ is a bipartite trivalent ribbon graph whose set of vertices is $V^-\sqcup V^-$, so that 
\begin{enumerate}
\item Every edge $e$, is labeled by an element $Z(e)$ of $\mathcal Z\seR$. For convenience, we define the corresponding label of flag 
	\begin{equation}
		W(e^\pm,e)\defeq
		\pi^\pm Z(e)\in\mathcal P^\pm\seR .\label{def:KMg-eq}
\end{equation}By an abuse of notation, we will talk about equivariant straight surfaces as triples
$(\mathcal R, Z,W)$ even though $W$ is redundant.
    \item The labeling map from the link of a vertex $v$ is equivariant with respect to the order 3 symmetries:
     $$W(\omega_v(v,e))=\omega(W(v,e))\ .$$
\end{enumerate}
 \end{definition}

 Given a discrete subgroup $\Gamma$, $\epsilon$ small enough and then  $R$ large enough, a straight surface for $\Gamma$ gives rise to an equivariant straight surface, whose underlying graph is finite.
 
Observe that, given a bipartite trivalent graph  $\mathcal R$ there is just one $(0,R)$-equivariant straight surface, that we call the {\em perfect surface} for $\mathcal R$.

\subsubsection{Monodromy of an  equivariant straight surface}
The {\em fundamental group} (as a graph of groups) $\pi_1(\Sigma)$ of $\Sigma=(\mathcal R,Z,W)$ is described as follows.

First let $\mathcal R^u$ be the universal cover of the trivalent graph  $\mathcal R$ and $\pi_1(\Sigma^u)$ be the group \begin{enumerate}
	\item with one  generator for every oriented edge, so that the element associated to the opposite edge is the inverse. 
	\item one relation for every vertex: the product of the three generators corresponding to the edges is one, using the orientation at each vertex.
\end{enumerate}  
Observe that the fundamental group of $\mathcal R$ acts by automorphisms on $\pi_1(\Sigma^u)$. We now define $$\pi_1(\Sigma)\defeq\pi_1(\Sigma^u)\rtimes \pi_1(\mathcal R)\ .$$

Then 
\begin{proposition} When the underlying graph is finite, 
	the group $\pi_1(\Sigma)$ is isomorphic to the fundamental group of a surface whose Euler characteristic is the (opposite) of  the number of vertices of $\mathcal R$.
	\end{proposition}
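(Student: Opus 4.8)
\emph{Proof plan.} The statement to prove is that $\pi_1(\Sigma) = \pi_1(\Sigma^u) \rtimes \pi_1(\mathcal R)$ is isomorphic to the fundamental group of a closed oriented surface whose Euler characteristic equals minus the number of vertices of $\mathcal R$. The plan is to exhibit $\Sigma$ concretely as a surface built out of pairs of pants and then compute. First I would recall the combinatorial model: each vertex $v$ of $\mathcal R$ corresponds to a pair of pants $P_v$ (a genus $0$ surface with $3$ boundary circles), and each edge $e$ of $\mathcal R$ tells us to glue the boundary circle of $P_{e^+}$ indexed by the flag $(e^+,e)$ to the boundary circle of $P_{e^-}$ indexed by $(e^-,e)$. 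Since $\mathcal R$ is trivalent, every pair of pants has exactly its three boundary circles used, so the resulting space $S$ is a closed surface; the ribbon (cyclic-order) structure $\omega_v$ at each vertex together with the bipartite structure $V^-\sqcup V^+$ fixes the gluing maps up to isotopy and makes $S$ oriented (the two colors orient the pants oppositely so that the gluings are orientation-reversing on the boundary, hence orientation-coherent on $S$).

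Next I would identify $\pi_1(S)$ with the group $\pi_1(\Sigma)$ as defined. This is the standard van Kampen / graph-of-groups computation: give $S$ the decomposition into the pieces $P_v$ and collar neighborhoods of the gluing circles. The fundamental group of a pair of pants is free on two generators, which one presents as three boundary loops $x,y,z$ with the single relation $xyz=1$ (using the cyclic order at $v$ for the orientation of the three loops) — this is exactly the one-relation-per-vertex presentation of $\pi_1(\Sigma^u)$ restricted to the star of $v$. Running van Kampen over the graph of spaces whose underlying graph is $\mathcal R$ gives that $\pi_1(S)$ is the fundamental group of the corresponding graph of groups, which is $\pi_1(\Sigma^u)\rtimes\pi_1(\mathcal R)$ precisely as in the definition of $\pi_1(\Sigma)$ (the semidirect factor $\pi_1(\mathcal R)$ records the homotopy classes of loops in the underlying graph, i.e.\ the Bass--Serre contribution, and $\pi_1(\Sigma^u)$ is the fundamental group of the universal cover surface, where $\Sigma^u$ is built from $\mathcal R^u$).

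Finally I would compute the Euler characteristic. Let $n$ be the number of vertices of $\mathcal R$; since $\mathcal R$ is trivalent, it has $3n/2$ edges. Each pair of pants has $\chi(P_v) = -1$, and gluing two pieces along a circle (with $\chi(S^1)=0$) is additive on $\chi$ by inclusion--exclusion, so $\chi(S) = \sum_v \chi(P_v) - \sum_e \chi(S^1) = -n - 0 = -n$. As $S$ is a closed surface, $\chi(S) = -n$ forces $S$ to be orientable of genus $g = 1 + n/2 \geq 2$ (once $n\geq 2$; note $n$ is even since $3n/2$ is an integer), and then $\pi_1(S)$ is a surface group, which finishes the argument.

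The only genuine subtlety — and the step I would be most careful about — is checking that the abstract group $\pi_1(\Sigma)$ defined purely combinatorially in the text really coincides with the topological $\pi_1(S)$, rather than merely being abstractly isomorphic to \emph{some} surface group. This amounts to matching the van Kampen presentation of $\pi_1(S)$ edge-by-edge and vertex-by-vertex with the ``one generator per oriented edge, one relation per vertex'' presentation of $\pi_1(\Sigma^u)$ and the semidirect product with $\pi_1(\mathcal R)$; the ribbon structure is what guarantees the orientation conventions line up, and the bipartite structure is what makes the two families of pants glue orientation-coherently. Everything else is bookkeeping with Euler characteristics and the classification of surfaces.
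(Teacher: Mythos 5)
The paper states this proposition without proof, treating it as a standard fact about pants decompositions; your argument supplies exactly the intended reasoning — build the closed surface $S$ by gluing one pair of pants per vertex of $\mathcal R$ along boundary circles indexed by edges, identify $\pi_1(S)$ with $\pi_1(\Sigma^u)\rtimes\pi_1(\mathcal R)$ via the graph-of-spaces presentation, and compute $\chi(S)=-n$ by inclusion--exclusion — and the argument is correct.

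Two small slips in wording are worth fixing. First, near the end you write that ``$\chi(S)=-n$ forces $S$ to be orientable''; Euler characteristic alone does not decide orientability (a closed non-orientable surface can have the same $\chi$). Orientability comes, as you correctly explain earlier, from the ribbon structure together with the bipartite coloring of $V^-\sqcup V^+$ (so that each gluing is between pants of opposite color, hence orientation-reversing on the shared circle), and should not be rededuced from $\chi$. Second, the surface built from $\mathcal R^u$ is not the universal cover of $S$ (which is contractible once $g\geq 2$); it is the intermediate cover of $S$ corresponding to the subgroup $\pi_1(\Sigma^u)$, pulled back from the universal cover of the graph $\mathcal R$, with deck group $\pi_1(\mathcal R)$. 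That is precisely the cover that gives the split short exact sequence $1\to\pi_1(\Sigma^u)\to\pi_1(S)\to\pi_1(\mathcal R)\to 1$, with the section coming from the embedded dual spine $\mathcal R\hookrightarrow S$; once that is phrased carefully, the identification $\pi_1(S)\cong\pi_1(\Sigma)$ matches the paper's definition exactly.
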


Let denote by $[(v,e)]$ the flag in $\mathcal R$ which is the projection of the flag $(v,e)$ in $\mathcal R^u$. The following follows at once from the fact that $\ms G$ acts freely on the space of almost closing  pair of pants.

\begin{proposition}
	There exists a map $Z^u$ from the set of oriented of edges  of $\mathcal R^u$ into $Z^\pm\seR$, a map $W^u$ from the set of flags edges of $\mathcal R^u$ with values in $P^\pm\seR$,  so that \begin{eqnarray}
		[W^u(v,e)]=W([v,e])&,&\pi^\pm Z^u(e)=W^u(e^\pm,e)\ .
		\end{eqnarray}
	Moreover $(W^u, Z^u)$ is unique up to the action of $\G$. 
\end{proposition}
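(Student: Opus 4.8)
The plan is to run a developing-map argument: $\mathcal R^u$ is the universal cover of a graph, hence a tree, so it is simply connected and there will be no monodromy obstruction; everything then comes down to the fact, recalled just above, that $\ms G$ acts freely on the space of stitched pair of pants. First I would pull back the labellings $Z$ and $W$ along the covering $\mathcal R^u\to\mathcal R$ to get a labelling $\tilde Z$ of oriented edges of $\mathcal R^u$ by elements of $\mathcal Z\seR$ and $\tilde W$ of flags by elements of $\mathcal P^\pm\seR$, still satisfying $\tilde W(e^\pm,e)=\pi^\pm\tilde Z(e)$ and $\tilde W(\omega_v(v,e))=\omega(\tilde W(v,e))$. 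Since $\ms G$ acts freely on $P^\pm\seR$, it also acts freely on $Z\seR\subset P^+\seR\times P^-\seR$ (a stabiliser of $(T^+,T^-)$ stabilises $T^+$), so $P^\pm\seR\to\mathcal P^\pm\seR$ and $Z\seR\to\mathcal Z\seR$ are principal $\ms G$--bundles, and the maps $\pi^\pm$, $\omega$, $\bpsi$, $\KMT$ are all $\ms G$--equivariant and descend to the quotients.

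The elementary lifting lemma I would isolate is the following: given $z_0\in Z\seR$ and any lift $w\in P^\pm\seR$ of $\pi^\pm[z_0]\in\mathcal P^\pm\seR$, there is a \emph{unique} $z\in Z\seR$ with $[z]=[z_0]$ and $\pi^\pm z=w$. Indeed any two lifts of $\pi^\pm[z_0]$ differ by a unique element of $\ms G$, so $\pi^\pm z_0=g\cdot w$ for a unique $g$; then $z\defeq g^{-1}z_0$ works, and if $z'$ also worked then $z'=h\cdot z$ for a unique $h\in\ms G$, whence $w=\pi^\pm z'=h\cdot w$ forces $h=\id$. Similarly $\omega$ carries lifts to lifts compatibly with $\omega$ on the quotient.

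With this in hand I would propagate along the tree. Root $\mathcal R^u$ at an edge $e_0$ and choose \emph{any} lift $Z^u(e_0)\in Z\seR$ of $\tilde Z(e_0)$ (the fibre is a nonempty $\ms G$--torsor), then set $W^u(e_0^\pm,e_0)\defeq\pi^\pm Z^u(e_0)$, which are lifts of $\tilde W(e_0^\pm,e_0)$ by equivariance of $\pi^\pm$. Inductively: whenever a flag $(v,e)$ of $\mathcal R^u$ has $W^u(v,e)$ defined, define $W^u$ on the other two flags at $v$ by $W^u(\omega_v^j(v,e))\defeq\omega^j(W^u(v,e))$; for each not-yet-visited edge $e'$ incident to $v$ (with $v=(e')^\pm$), the lifting lemma produces the unique $Z^u(e')\in Z\seR$ over $\tilde Z(e')$ with $\pi^\pm Z^u(e')=W^u((e')^\pm,e')$, and then $W^u((e')^\mp,e')\defeq\pi^\mp Z^u(e')$ extends the labelling to the far endpoint of $e'$. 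Because $\mathcal R^u$ is a tree, this visits each edge and each flag exactly once and there is no cycle along which consistency must be checked, so $W^u$ and $Z^u$ are well defined on all of $\mathcal R^u$ and satisfy $[W^u(v,e)]=\tilde W(v,e)=W([v,e])$, $\pi^\pm Z^u(e)=W^u(e^\pm,e)$, and $W^u(\omega_v(v,e))=\omega(W^u(v,e))$ by construction. For uniqueness, the only free choice was $Z^u(e_0)$, and two such choices differ by a unique $g\in\ms G$; by $\ms G$--equivariance of $\pi^\pm$ and $\omega$ this $g$ propagates through the whole tree, so the two resulting pairs $(W^u,Z^u)$ differ by the diagonal action of $g$, while conversely the diagonal $\ms G$--action on $(W^u,Z^u)$ plainly preserves all the stated relations. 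The only real input is the freeness of the $\ms G$--action (already available); the rest is the tree induction and the principal-bundle bookkeeping, which is where the small care is needed — in particular that $\pi^\pm$, though far from injective on $Z\seR$, restricts to a bijection from the lifts of $\tilde Z(e)$ to the lifts of $\tilde W(e^\pm,e)$, which is exactly the lifting lemma above.
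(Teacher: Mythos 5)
Your argument is correct and is precisely the expansion of what the paper asserts without proof: the paper merely remarks that the statement ``follows at once from the fact that $\ms G$ acts freely on the space of stitched pair of pants,'' and your proof isolates exactly that freeness (making $P^\pm\seR\to\mathcal P^\pm\seR$ and $Z\seR\to\mathcal Z\seR$ principal $\ms G$--bundles, giving the lifting lemma) and combines it with the tree structure of $\mathcal R^u$ to run the standard developing-map induction. Same approach, just written out in full.
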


A pair $(\mathcal R^u, Z^u)$ is called a {\em lift} of $(\mathcal R,Z)$
As a corollary of this construction we obtain from $W^u$ a representation $\rho^u$ of $\pi_1(\Sigma^u)$ in $\G$, where the image by $\rho^u$ of the element represented by the flag $(v,e)$ is $\alpha$, when $W^u(v,e)=(\alpha,\beta,\gamma,T_0,T_1)$. Moreover by uniqueness of $W^u$ up to the action of $\ms G$, we obtain also a representation $\rho_0$ of $\pi_1(\mathcal R)$ into $\G$ so that if $a\in \pi_1(\Sigma^u)$, $b\in \pi_1(\mathcal R)$ then 
$$
\rho^u(b\cdotp a \cdotp b^{-1})=\rho_0(b)\cdotp \rho^u(a)\cdotp\rho_0(b^{-1})\ .
$$
\begin{definition}{\sc [Monodromy--Cuff elements]}
	The {\em monodromy} of $\Sigma=(\mathcal R,Z,W)$ is the unique  morphism $\rho$ from $\pi_1(\Sigma)$ to   $\G$  extending both $\rho^u$ and $\rho_0$. 
	The {\em cuff limit map}\index{Cuff limit map} is the map $\xi^\prime$ which for every cuff element $a$,  associates to the attractive point $a^+$ of $a$, the attracting fixed point $\xi^\prime(a^+)\defeq\rho(a)^+$ in $\gp$ of the $\ms P$-loxodromic element $\rho(a)$. 
\end{definition}

\subsection{Doubling and deforming equivariant straight surfaces}

Let $\mathcal R$ be a bipartite trivalent tree, $v$ a vertex in $\mathcal R$, $N$ an integer. The  {\em $N$-doubling graph at $v$}   $\mathcal R^{(2)}$ is the finite bipartite trivalent graph obtained by the following procedure: We consider the ball $B$of radius $N$ based at a lift of $v$ in the universal cover of $\mathcal R$, then $\mathcal R^{(2)}$ is the graph obtained by taking two copies of $B^0$ and $B^1$ and adding two edges between the identified extreme points of $B$. Moreover the cyclic order on  the edges   of $B^1$ is reversed from the cyclic order for $B^1$.


Let now $\Sigma=(\mathcal R,Z)$ be an $(\epsilon,R)$-equivariant straight surface  over a tree and $v$ a vertex of $\mathcal R$. The  {\em $N$-doubling equivariant straight surface  at $v$} is the surface $\Sigma{^(2)}=(\mathcal R^{(2)},Z^{(2)}$ whose underlying graph is the $N$-doubling graph at $v$. Let us now describe the labeling $Z^{(2)}$. Let us denote by $F$ the bijection between $B^0(v,N)$ and $B^1(v,N)$, and $\pi$ the projection from $B^0(v,N)$ to $\mathcal R$.
\begin{enumerate}
	\item If $w$ is a vertex of $B^0(N,v)$, we define 
	$$Z^{(2)}(w)=Z(\pi(w)\ .$$
	\item  If $w$ is a vertex of $B^1(N,v)$, we define 
	$$Z^{(2)}(w)=\bI_0 Z(\pi(F(w)))\ .$$
	\item if $e$ is an edge of $B^0(N,v)$, 
we define 
	$$Z^{(2)}(e)=Z(\pi(e)\ .$$
		\item if $e$ is an edge of $B^1(N,v)$, 
we define 
	$$Z^{(2)}(e)=Z(\pi(e)\ .$$
	\item if $e$ is an edge joining $w$ in $B^0$ to $F(w)$ in $B^1$,  so that $e=\omega(f)$ where $f$ is an edge joining $w$ to a point in $B^0$, we define  $Z^{(2)}(e)$ to be the perfect gluing based at $\omega W(w,f)$,
	\item Finally, if  if $e$ is an edge joining $w$ in $B^0$ to $F(w)$ in $B^1$,  so that $e=\omega^2(f)$ where $f$ is an edge joining $w$ to a point in $B^0$, we define  $Z^{(2)}(e)$ to be the perfect gluing based at $\omega^2 W(w,f)$.
\end{enumerate}

We may deform equivariant straight surfaces. Let us say a family $\Sigma_t=(\mathcal R,Z_t,W_t)$, with $t\in[0,1]$, of $(\epsilon,R)$-equivariant straight surface
is {\em continuous} if ,  $W_t$ is continuous in $t$. The corresponding family of representations is then continuous as well. Our main result in this section is the following

\begin{proposition}\label{pro:deflabel}{\sc[Deforming the double]}
	There exists $\epsilon_0$ and $R_0$ and a constant $\rm C$ only depending on $\G$, so that if $\epsilon<\epsilon_0$ and $R>R_0$ the following holds:  
	Let  $\Sigma=(\mathcal R,Z)$ be an  equivariant $(\epsilon,R)$-straight surface, whose underlying graph is a tree, , let $v$ be a vertex of $\mathcal R$ and $N$ a positive  integer. 
	
	Then there exists a continuous  family $\Sigma^{(2)}_t$ of  $(C\epsilon,R)$-equivariant straight surfaces, with $t\in[0,1]$, so that  $\Sigma^{(2)}_1$ is the $(v,N)$-doubling of $\Sigma$  and $\Sigma^{(2)}_0$ is the perfect surface  for $\mathcal R^{(2)}$.
\end{proposition}

\begin{proof} Thanks to the doubling procedure, it is equivalent to show that we can have a deformation of any   labeling of the ball of radius $N$ in a trivalent tree. 

In this proof $B_i$ will denote constants only depending on $\ms G$.

We first  prove that the fibers of the projection
$$
\pi:=(\pi^+,\pi^-): \ \ {\mc Z}\seR\to {\mc P}^+\seR\times {\mc P}^-\seR \ .
$$
are connected for $\epsilon$ small enough and $R$ large enough. These fibers are described in the way: given $p=(p^0,p^1)\in P^+\times P^-$, where $p^*=(\alpha_*,\beta_*,\gamma_*, \tau^*_0,\tau^*_1)$, then there exist a unique element $g\in \ms Z_{\ms G}(\alpha)$ so that 
 $$
\bpsi(\tau^0_0,\alpha_0^+,\alpha_0^-)=g \KMT  \bpsi(\tau^1_0,\alpha_1^+,\alpha^-_1)\ .
 $$
 This element $g$ which is uniquely defined will be referred as the {\em gluing parameter}.
 
 Observe now that $g$ is $B_1\eR$-close to the identity with respect to the metric $d_{\tau^+_0}$. This follows from the definition of $Z\seR$ and  assertion \eqref{ineq:contrdtaud} in Proposition \ref{A-B}. 
 
 Our first step is to deform the gluing parameters to the identity so that all gluing are perfect.
 
 Let us use the tripod $\tau^+_0$ as an identification of $\ms G$ with $\ms G_0$. Let then $g_0=\tau^+_0(g)$ and $\alpha_0=\tau_0^+(\alpha)$. Then $g_0$ is $B_2\eR$ close to the identity with respect to $d_0$.   Thus  write for $\epsilon$ small enough,  $g=\exp(u)$ with
  $u\in\mk l_{0}$ or norm less than $B_3\eR$. Moreover $u$ is the unique such vector of norm less than $B_4$. We now prove that $u\in\mk z_\G(\alpha_0)$. Observe that 
$$
\exp(\operatorname{ad}(\alpha_0)\cdotp u)=\alpha_0\cdotp g_0 \cdotp\alpha_0^{-1}=g_0\ .
$$
By assertion \eqref{eq:bdrqf1},  $\alpha_0=\exp(2R\cdotp a_0)h$ where $h$ is  $\bM\eR$ close to the identity for some constant $\bM$. Thus for $\epsilon$ small, the linear operator  $\operatorname{ad}(\alpha_0)$ -- acting on $\mk l_0$ -- is close to the identity and has norm less than 2. Thus   
$$\Vert \operatorname{ad}(\alpha_0)\cdotp u\Vert \leq B_5\eR\ .$$
It follows by uniqueness of $u$ that  $\operatorname{ad}(\alpha_0)\cdotp u=u$. Thus  $u\in\mk z_{\ms G}(\alpha_0)$. Then we can deform $g$ to the identity through elements $B_6\eR$-close to identity with respect to $d_{\tau_0^+}$.

Then as a first step of our deformation, we deform each gluing parameter for every edge  to the identity. 

Observe now that an equivariant straight surface
with trivial gluing parameters is the same thing  as  a labeling of each vertex with an element of $\mc P_{\seR}$ with the constraints that boundary loops corresponding to opposite edges are conjugate. More formally, if $(v,e)$ is a flag corresponding to the oriented edge $e$ in the graph, and $\bar e$ is the edge with the opposite orientation, if $W(v,e)=(\alpha,\beta,\gamma,\tau_0,\tau_1)$ and we write $\alpha(e)=[\alpha]$, the constraint is that $\alpha(e)=\alpha(\bar e)$.
From now on, we keep the gluing parameters trivial.

Let us describe the  second step of the deformation. Let $V=(T,S_0,S_1,S_2)$ be the pair of pants labeling $v$ with boundary loops  $(\alpha_0,\beta_0,\gamma_0)$.  Using Theorem \ref{theo:defo-pant}, we consider a deformation  of pair of pants $V_t=(T^t,S^t_0,S^t_1,S^t_2)$ with boundary loops  $(\alpha_0,\beta_0,\gamma_0)$ to an $R$-perfect pair of pants; observe that the conjugacy classes $(\alpha_t,\beta_t,\gamma_t)$ of the boundaries change. However, we may write that $\alpha_t=g_t\alpha_0 k_t g_t^{-1}$ where $k_t$ belongs to ${\ms L}_{\alpha_0}$ and the length of the corresponding curves is small with respect to .

For the third step, let $w$ be a vertex at distance 1 from $v$ whose common boundary with $v$ is --say -- $\alpha$. We use Theorem \ref{theo:pantconn} to deform the pair of pants associated to $w$, following the deformation of the (unique) common boundary between $v$ and $w$, while keeping the other boundary loops of $w$ in the same conjugacy class. As a result the ``inner" boundary loop of $w$ is $R$-perfect.

As a fourth step, we deform the pair of pants labeling $w$ to a perfect pair of pants keeping the inner boundary loop of $w$ $R$-perfect.

Then we repeat inductively this procedure, namely step 3 and 4.

We have thus deformed our doubled  equivariant straight surfaces to an equivariant straight surface with perfect pair of pants for each vertex and perfect gluing parameters, or in other words a perfect surface.
 \end{proof}

\subsection{Main result}
Our main result is the  following  theorem that shows the existence of Sullivan curves.

\begin{theorem}{\sc [Sullivan and straight surfaces]}\label{theo:sull-straight}
We assume $\mk s$ has a compact centralizer. 

For any positive $\zeta$, there exists positive numbers $\epsilon_0$ so that for $\epsilon<\epsilon_0$,  there exists $R_0$ so that if $R>R_0$, if $\Sigma$ is an $(\epsilon,R)$ equivariant straight surface with monodromy $\rho$ and cuff limit map $\xi^\prime$, then there exists a unique $\rho$-equivariant $\zeta$-Sullivan map $\xi$ from $\partial_\infty\pi_1(S_R)$ to $\gp$ extending $\xi^\prime$.
\end{theorem}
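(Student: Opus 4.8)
The plan is to realize the Sullivan limit map as the assignment to each accessible point of the limit point of an associated path of quasi-tripods, obtained from the labelling of hexagons by tripods, and then to upgrade this partially defined map to a genuine Sullivan curve using the Improvement Theorem~\ref{theo:boot}. First I would unfold the equivariant straight surface $\Sigma$ to produce a $\rho$-equivariant labelling of the hexagons of the $R$-perfect lamination $\mathcal L_R$ by tripods (Proposition~\ref{pro:label-straight}/the ``lift'' construction of Section~\ref{sec:fundgroup}), so that adjacent hexagons carry tripods that are $(\eR, R)$-sheared up to a bounded error, and opposite hexagons give $(2R,\eR)$-sheared configurations --- i.e.\ the labelling is an $\eR$-deformation of the perfect combinatorial model. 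By Corollary~\ref{cor:KMbd}, any coplanar sequence of cuffs lying in $\mathcal L_R$ is a $KR$-sequence of cuffs, so these deformed paths of tripods fit exactly into the hypotheses of the Limit Point Theorem~\ref{theo:exislimi} with $\ell_0 R = KR$.

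Next I would fix a basepoint hexagon $H_0$ (with center $z_0=s(\tau_0)$ for the associated tripod $\tau_0$) and, for each point $x$ in the boundary at infinity, use the Accessibility Lemma~\ref{lem:access} together with the good-sequence-of-cuffs construction of Section~\ref{sec:lam} to choose a $K_0$-good sequence of cuffs converging to an accessible point close to $x$; the associated nested sequence of chords, decorated by the hexagon labelling, is an $(\ell_0 R,\eR)$-deformed sequence of quasi-tripods. The Limit Point Theorem then produces a point $\xi(x)\in\gp$, together with the quantitative estimates \eqref{ineq:limit-dist00}--\eqref{ineq:limit-dist}: in particular the limit point is $\bQ^n\beta$-close to the $n$-th chord's $\partial^0$ vertex, and it depends on the sequence only through its first $n$ chords up to $\bQ^n\beta$. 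I would check that the cuff limit map $\xi'$ is recovered on the attractive fixed points of cuff elements: a cuff element $a$ fixes a chord in $\mathcal L_R$, and the corresponding perfect path of quasi-tripods is (up to bounded error) $\varphi_{2R\mathbb Z}$-periodic under $\rho(a)$, which by the Closing Lemma~\ref{lem:shad} and the structure of stitched pair of pants forces its limit point to be $\rho(a)^+ = \xi'(a^+)$. Equivariance under $\rho$ is then automatic from the $\rho$-equivariance of the hexagon labelling and the uniqueness of limit points (the third item of Theorem~\ref{theo:exislimi}).

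The output of the previous two paragraphs is a $\rho$-equivariant map defined on the dense set of accessible points, which by the estimates of Theorem~\ref{theo:exislimi} is an $(a(R),C\eR)$-Sullivan map in the sense of Section~\ref{sec:sull}: given any tripod $T$ for $\hh$, Proposition~\ref{pro:suldef0} and Lemma~\ref{lem:cuffsul} supply a circle map (coming from the perfect model path through $s(T)$) and, thanks to Accessibility, an $(a(R),T)$-dense subset of accessible points on which $\xi$ is within $C\eR$ of that circle map. To pass from ``$(a_0,\zeta)$-Sullivan defined on accessible points'' to ``$\zeta$-Sullivan on all of $\Rp$'' I would invoke the Improvement Theorem~\ref{theo:boot}: by Proposition~\ref{pro:deflabel} the equivariant straight surface $\Sigma$ is connected through $(C\epsilon,R)$-equivariant straight surfaces to the perfect surface for $\mathcal R$, whose limit map is a genuine $0$-Sullivan circle map; this gives the continuous family $\{\rho_t,\xi_t\}$ with $\xi_0$ honestly ($2\zeta$-)Sullivan. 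Attractive coherence of each $\xi_t$ follows from the fact that accessible points are limits of cuff fixed points and $\xi$ sends cuff fixed points to attractive fixed points of $\rho$. Choosing $R$ large enough that $a(R)\le a_0$ and $C\eR \le \zeta_2$, the Improvement Theorem yields that $\xi=\xi_1$ is $2\zeta$-Sullivan; rescaling $\zeta$ gives the stated conclusion, and uniqueness follows since a $\zeta$-Sullivan map is H\"older (Theorem~\ref{theo:sull-hold}) hence continuous and determined by its values on the dense set of accessible points, which are pinned down by Theorem~\ref{theo:exislimi}.

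The main obstacle I anticipate is the bookkeeping in the first step: one must verify carefully that the hexagon labelling coming from an equivariant straight surface --- where the gluing data between adjacent pairs of pants is only an approximate KM-twist, and where the combinatorial identifications involve the order-$3$ rotation $\omega$ and the two colors of hexagons --- really does assemble, along every good sequence of cuffs emanating from $H_0$, into an $(\ell_0 R,\eR)$-deformed sequence of quasi-tripods with a single coherent model, uniformly in the sequence. This requires reconciling the ``shear $1$'' gluings along cuffs (handled by the $\varphi_1\circ\sigma$ KM-twist inside the foot space, cf.\ Theorem~\ref{theo:mesfeet} and Section~\ref{sec:feet}) with the ``shear $R$'' gluings inside each pair of pants (the Structure Theorem~\ref{theo:struct-pant}), and checking that the accumulated deformation sizes add up to $O(\eR)$ per chord --- precisely the $\epsilon/N$-per-step hypothesis of the Confinement Lemma~\ref{lem:zigzag} that underlies Theorem~\ref{theo:exislimi}.
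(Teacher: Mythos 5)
Your proposal follows essentially the same path as the paper: unfold the straight surface to a $\rho$-equivariant labelling of hexagons by tripods (Proposition~\ref{pro:label-straight}), feed good sequences of cuffs and their deformed quasi-tripod paths into the Limit Point Theorem~\ref{theo:exislimi} to extend $\xi'$ to accessible points with the Sullivan-type estimate (the paper's Lemma~\ref{lem:wlimit} / Proposition~\ref{lem:wlimit1}), then connect $\Sigma$ to the perfect surface via Proposition~\ref{pro:deflabel}, check attractive coherence and $(a_0,\zeta)$-density via the Accessibility Lemma~\ref{lem:access}, and finish with the Improvement Theorem~\ref{theo:boot}; the ``main obstacle'' you flag is precisely what Proposition~\ref{pro:accQT} resolves in the paper. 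One small misattribution: the shear-$1$ gluings between adjacent hexagons are handled in the paper by Proposition~\ref{pro:Ktrip}\eqref{it:hexatrip} together with the last item of Theorem~\ref{theo:struct-pant}, not by the feet-space machinery of Theorem~\ref{theo:mesfeet}, which only enters the existence of straight surfaces.
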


In this section, we will always precise when we use the hypothesis that $\mk s$ has a compact centralizer.

\subsection{Hexagons and tripods}

We need to connect our notion of equivariant straight surfaces to the picture of tiling by hexagons.

\subsubsection{labeling hexagons by tripods}

Let us consider $\Sigma_0$ the perfect surface for $\mathcal R$, that is the unique $(0,R)$-straight surface of the form $(\mathcal R, Z_0)$. Gluing perfect $R$-pair of pants associated to the vertices of $\mathcal R$ along sides corresponding to edges of $\mathcal R$ by an $1$-swish, we obtain a covering  $S$ of the perfect surface $S_R$.
 We now consider $\rho$ as a representation of the cuff group $\Lambda$ which is so that $\Lambda\backslash \hh=S$.
  
 We recall (see paragraph \ref{sec:perfect}) that conversely $\mathcal R$ is obtained  as the adjacency graph of the tiling  of $S$ by (let us say) white hexagons.

Taking the universal cover of this perfect surface, one obtains a map 
 $\pi$ from the set of tiling hexagons to the flags of $\mathcal R$,  so that  $\pi(\Sc(H))=\pi(H)$, if $\pi(\Op(H))$ is the opposite flag to $\pi(H)$.

\begin{proposition}{\sc[Straight surfaces and equivariant labeling]}\label{pro:label-straight}
	Let $\Sigma=(\mathcal R,Z,W)$ be an equivariant  $(\epsilon,R)$-straight surface, with monodromy $\rho$ and cuff limit map $\xi^\prime$. Then there exists a labeling $\tau$ of tiling hexagons by tripods so that \begin{enumerate}
	\item $\tau(a,b,c)=\omega(\tau(b,c,a))$
	\item If $H=(a,b,c)$, then $P(H)\defeq (\tau(H),\tau(\Sc(H)),\rho(a),\rho(b),\rho(c))$ is an $(\epsilon,R)$-almost closing pair of pants,   
		\item for a white hexagon $W(\pi(H))=[P(H)]$,
		\item for all $\gamma\in\Lambda$, $\tau(\gamma(H))=\rho(\gamma)\cdotp\tau(H)$.
	\end{enumerate}
\end{proposition}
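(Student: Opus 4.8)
The plan is to build the labelling $\tau$ by first choosing it on a set of representatives and then propagating it by the two combinatorial moves $\Sc$ and $\Op$ together with equivariance, checking along the way that the required coherences are consistent. First I would work with a lift $(\mathcal R^u, Z^u, W^u)$ of $(\mathcal R, Z, W)$ as produced just before the statement, so that each flag $(v,e)$ of $\mathcal R^u$ carries a genuine stitched pair of pants $W^u(v,e) = (\alpha,\beta,\gamma,\tau_0,\tau_1) \in P^\pm\seR$ rather than a $\ms G$-orbit. Recall (paragraph \ref{sec:perfect}) that the tiling hexagons of $S$ are in bijection with the flags of $\mathcal R$ up to the cyclic symmetry $\omega_v$, so that a tiling hexagon $H = (a,b,c)$ maps under $\pi$ to a flag, with $\pi(\Sc(H)) = \pi(H)$ and $\pi(\Op(H))$ the opposite flag. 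Fixing one hexagon $H_0 = (a_0,b_0,c_0)$ in each $\Lambda$-orbit lying over a white flag, I would set $\tau(H_0) \defeq \tau_0$, where $W^u(\pi(H_0)) = (\alpha, \beta, \gamma, \tau_0, \tau_1)$, with the generators ordered so that $\rho(a_0) = \alpha$, $\rho(b_0) = \beta$, $\rho(c_0) = \gamma$; this is exactly the bookkeeping convention implicit in the definition of $\rho^u$. For a black hexagon in the same orbit, namely $\Sc(H_0) = (a_0, d_0, b_0)$, I set $\tau(\Sc(H_0)) \defeq \tau_1$ (the second tripod of the same stitched pair of pants). The other hexagons in the orbit are $\gamma(H_0)$ and $\gamma(\Sc(H_0))$ for $\gamma \in \Lambda$, and I define $\tau$ there by item (iv), $\tau(\gamma(H)) \defeq \rho(\gamma)\tau(H)$; this is well-defined because the stabilizer in $\Lambda$ of a hexagon is trivial (hexagons are fundamental domains for the cuff group).

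Next I would verify the four listed properties. Item (iv) holds by construction. For item (iii): a white hexagon $H$ equals $\gamma(H_0)$ for a unique white representative $H_0$ and $\gamma \in \Lambda$, and $P(H) = (\tau(H), \tau(\Sc(H)), \rho(a), \rho(b), \rho(c))$; since $\tau(H) = \rho(\gamma)\tau(H_0)$, $\tau(\Sc(H)) = \rho(\gamma)\tau(\Sc(H_0))$, and the boundary loops of $\gamma(H_0)$ are the $\rho(\gamma)$-conjugates of those of $H_0$, we get $P(H) = \rho(\gamma)\cdot P(H_0)$, which has the same $\ms G$-orbit, so $[P(H)] = [P(H_0)] = W^u(\pi(H_0)) = W([\pi(H)])$, using the equivariance of $W$ under the order-3 symmetry to match the cyclic placement. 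Item (ii) — that $P(H)$ is actually an $(\epsilon,R)$-stitched pair of pants — is where the real content sits: for the chosen white representatives it is the hypothesis on $W^u$; for $\gamma(H_0)$ it follows because being $(\epsilon,R)$-stitched is $\ms G$-invariant (Definition \ref{def:stitched} is phrased $\ms G$-equivariantly); and for the black hexagons $\Sc(H_0)$ one must use Corollary \ref{coro:smcp} (the Symmetry Corollary), since passing from the pair of pants based at $\tau_0$ to the one based at $\tau_1 = \alpha^{-1}(\ldots)$-type data is exactly the symmetry $(\alpha,\beta,\gamma,\tau_0,\tau_1) \mapsto (\alpha, \beta^{-1}\alpha^{-1}, \beta, \tau_1, \alpha(\tau_0))$ appearing there, up to an $\bM_1$-loss in the constant which is absorbed by enlarging $\epsilon$ slightly. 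Finally, item (i), $\tau(a,b,c) = \omega(\tau(b,c,a))$, records the freedom in how we named the three cuffs of a single hexagon: the hexagon $(a,b,c)$ and the hexagon $(b,c,a)$ are the same tiling hexagon with cyclically relabelled sides, and the order-3 symmetry of stitched pairs of pants (the map $\omega$ defined right after Definition \ref{def:stitched}, satisfying $\omega^3 = \mathrm{Id}$) together with the $\omega$-equivariance of $W$ forces this identity; concretely, applying $\omega$ to $P(a,b,c)$ gives $P(b,c,a)$, and reading off first coordinates gives the claim.

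The main obstacle I expect is consistency: the moves $\Sc$ and $\Op$ generate, together with $\Lambda$, all transitions between hexagons, and one must check that the value of $\tau$ assigned to a hexagon does not depend on the path of moves used to reach it from a representative. The key relations to check are $\Op \circ \Sc \circ \Op \circ \Sc = \mathrm{Id}$ (stated in Section \ref{sec:lam}) and the interaction of $\Sc$, $\Op$ with the $\Lambda$-action and with $\omega_v$ — for instance $\pi(\Sc H) = \pi(H)$ and item \eqref{it:hexatrip} of Proposition \ref{pro:Ktrip} which says that pairs of hexagons adjacent across a cuff are related by $H_2 = \gamma^p \Op(H_1)$ with $p \in \{-1,0,1\}$. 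Around each cuff, the ring of hexagons is a $\Lambda_\alpha$-orbit (for $\alpha$ the cuff element) of a single $\Sc$-pair, so once $\tau$ is defined on one white/black $\Sc$-pair of that ring by the above recipe, equivariance under $\rho(\alpha)$ defines it everywhere on the ring; consistency of the two descriptions (via $\Sc$-moves inside a pair of pants versus via $\Op$-moves across the long side) reduces to the compatibility of the shear-$1$ gluing with the representation $\rho$, which is exactly encoded in the definition of an equivariant straight surface (the condition $d(\bpsi(T^+), \KMT\bpsi(T^-)) \le \epsilon/R$ in Definition \ref{def:eq-sf}) and in Proposition \ref{pro:boundaryloop} controlling how $\rho$ of a cuff element compares to the perfect one. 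I would close the argument by invoking that all these compatibilities are already verified for the perfect surface $\Sigma_0$ (where $\tau$ is literally the canonical labelling from the perfect tiling, paragraph \ref{sec:perfect}) and that Proposition \ref{pro:deflabel} connects $\Sigma$ to $\Sigma_0$ through $(C\epsilon,R)$-equivariant straight surfaces, so the combinatorial identities, being closed conditions independent of the deformation parameter, persist.
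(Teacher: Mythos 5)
Your construction takes a genuinely different route from the paper's. The paper defines $P$ on white hexagons by propagating outward from a single base hexagon $H_0$ using two explicit local rules: an $\Op$-rule (when $H' = \Op(H)$, lift $Z(e)$ to determine $P(H')$ from $P(H)$) and an $\Sc^2$-rule ($P(\Sc^2(H)) = \alpha_H P(H)$). It then extends to black hexagons via the symmetry formula of Proposition \ref{pro:ac}. You instead pick one representative per $\Lambda$-orbit, read off $\tau$ from a lift $W^u$, and extend by $\Lambda$-equivariance. The two recipes would coincide only if the per-orbit choices are made consistently across $\Op$-edges, and that is exactly what your construction does not guarantee.

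The substantive gap is this: your construction never invokes the gluing data $Z(e)$. The proposition's four listed items can perhaps be satisfied by your recipe in the orbit-level sense of item (iii), but the labelling is actually used downstream with a stronger property than the statement records. In the proof of Proposition \ref{pro:accQT} the passage across an $\Op$-edge relies on the fact that (up to $\eta_m = \gamma_m^p$) the quasi-tripods $\theta_{2m-1}$ and $\theta_{2m}$ are $(1,\eR)$-sheared, and that phrase ``by the definition of a labelling'' is cashing in precisely the constraint that $(P(H), P(\Op(H)))$ is a lift of $Z(e)$. Your representatives-plus-equivariance recipe picks $\tau$ on two orbit representatives $H_0$ and $H_1'$ independently, so if $\Op(H_0) = \gamma' H_1'$, nothing forces the pair $(P(H_0), P(\Op(H_0)))$ to be a lift of $Z(e)$; you would have to build the $\Op$-rule into the choice of representatives, at which point the construction becomes the paper's. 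A second, more cosmetic issue: you apply $W^u$ to $\pi(H_0)$, but $\pi$ as you defined it lands in flags of $\mathcal R$, while $W^u$ is a function on flags of $\mathcal R^u$; the tiling hexagons in $\hh$ are a $\Lambda$-cover of those of $S$, whereas $\mathcal R^u \to \mathcal R$ is a $\pi_1(\mathcal R)$-cover, and these are different covers with $\pi_1(\mathcal R)$ a proper quotient of $\Lambda$, so the further lift must be made and is not unique.

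Finally, your fallback for the consistency check — that the combinatorial identities ``persist'' under the deformation to the perfect surface provided by Proposition \ref{pro:deflabel} — is not a valid argument. Well-definedness of a labelling defined by propagation rules is a discrete, all-or-nothing condition (triviality of the monodromy of the move groupoid), not an open or closed condition that deformation preserves. The coherence has to be checked directly on the relations among $\Op$, $\Sc^2$, and the $Z(e)$ lifts; the paper defers this to the reader, but the deferral cannot be replaced by a continuity argument. You are, however, right to flag the $\bM_1$-loss of constant when passing to black hexagons via Corollary \ref{coro:smcp}; the paper's statement of item (ii) is slightly imprecise there and your remark is a genuine improvement.
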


We will refer to $\tau$, $P$ as {\em equivariant labelings} associated to the straight surface $\Sigma$.

\begin{proof}  From the definition of $\Sigma=(\mathcal R, Z, W)$ we have a map from the set of white hexagons to ${\mathcal P}^+\seR$ given by $H\mapsto W(\pi(H))$.

We are now going to lift $W\circ\pi$ to a map $P$ with values in $P^\pm\seR$: Let us choose a white hexagon $H_0$ and fix a lift  $P(H_0)=(\alpha_0,\beta_0,\gamma_0,\tau(H_0),\tau_1(H_0))$ of $W\circ\pi$. For any white hexagon $H$, let us lift $W\circ\pi(H)$ to   $P(H)=(\alpha_H,\beta_H,\Gamma_H,\tau(H),\tau_1(H))$ by using the following rules. 
\begin{enumerate}
	\item If $H'=\Op(H)$ then $P(H')$ is uniquely defined from $P(H)$ so that $(P(H),P(H'))$ is a lift of $Z(e)$ in $Z\seR$ where $e$ is the edge in $\mc R$ associated to the pair $(H,H')$.
	\item If $H'=\Sc^2(H)$, then $P(H')=\alpha_H P(H)$.
\end{enumerate} 
We leave to the reader to check that these rules are coherent. 
We finally choose a labeling of the black hexagons using the following rule: if $H'=\Sc(H)$ and $H$ is labeled by  $P(H)=(\alpha_H,\beta_H,\Gamma_H,\tau(H),\tau_1(H))$  then the labeling of $H'$ is given by 	$$P(H')=(\alpha_H,\beta_H^{-1}\gamma_H\beta_H, \beta_H, \tau_1(H), \alpha_H\tau(H))$$

Our label by tripods is finally given by the maps $\tau:H\mapsto\tau(H)$, where $P(H)=(\alpha_H,\beta_h,\gamma_H,\tau(H),\tau_1(H))$. \end{proof}

\subsection{A first step: extending to accessible points}
Our first step will not use the assumption that $\mk s$ has a compact centralizer and will be used to show a weaker version of the surface subgroup theorem in that context.

Let us denote by $W^R_H$ the set of accessible points from a tiling hexagon $H$ and let us define the set of accessible points as
$$
 W^R:=\bigcup_{H}W^R_H
$$ 
 the union set of of all accessible points with respect to any hexagon. Observe that $W^R$ is $\pi_1(S)$ invariant and thus dense. 

Our main result in this paragraph is the next lemma that unlike Theorem \ref{theo:sull-straight} will not use the compact stabilizer hypothesis.

\begin{lemma}{\sc [Extension]}\label{lem:wlimit}
For any positive $\zeta$, there exist positive numbers $(\epsilon_0,R_0)$ so that for $\epsilon<\epsilon_0$,there exists $R_\epsilon$ so that for $R>R_\epsilon$, the following holds.

Let $\Sigma$ is an $(\epsilon,R)$ equivariant straight surface with monodromy $\rho$ and cuff limit map $\xi^\prime$.  Then there exist a unique $\rho$-equivariant  map $\xi$ from the set of accessible points  $W^R$ to $\gp$, so that
if $\seq{c}$  is a   nested sequence of cuffs  converging to an accessible point $y\in W^R_H$, then
 $$
\lim_{m\to\infty}(\xi^\prime(c_m^\pm))=\xi(y)
$$
Moreover, if $\eta$  is the circle map associated to $\tau_0=\tau(H)$, then
 for any $\tau$ coplanar to $\tau(H)$ so that $\tau^\pm=\tau_0^\pm$\begin{eqnarray*}
 	d_\tau(\xi(y),\eta(y))\leq\zeta\ .\label{ineq:limit-sul}
 \end{eqnarray*}
\end{lemma}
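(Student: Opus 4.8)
The plan is to deduce Lemma~\ref{lem:wlimit} from the machinery already assembled in Sections \ref{sec:Morse} and \ref{sec:ss-lm}, the only new input being the combinatorics of $K$-good sequences of cuffs from Section~\ref{sec:lam}. Fix an accessible point $y\in W^R_H$, and a $K_0$-good sequence of cuffs $\{c_m\}$ with $c_1,c_2$ containing long segments of the boundary of $H$, converging to $y$. By Corollary~\ref{cor:KMbd} (or rather Corollary~\ref{coro:K10} together with the length control lemma), this sequence, read as a coplanar sequence of cuffs in $\hh$, is a $KR$-sequence of cuffs for a universal $K$; more precisely, after passing to the associated chords $\{h_m\}$ and choosing tripods $T_m$ in those chords compatible with the successive hexagons $H_m$ along the good sequence, one obtains a $\vect{R}(N)$-type coplanar path of tripods whose shears are all of size comparable to $R$ (this is exactly the content of Proposition~\ref{pro:hexa} and the description of successor/opposite hexagons in Proposition~\ref{pro:Ktrip}). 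So $\{T_m\}$ is a coplanar $Q$-sequence of tripods with $Q=\ell_0 R$ for a suitable threshold $\ell_0$.

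Next I would deform this coplanar path using the equivariant labelling $\tau$ of hexagons by tripods provided by Proposition~\ref{pro:label-straight}. Each hexagon $H_m$ along the good sequence is labelled by a tripod $\tau(H_m)$, and the stitched-pair-of-pants conditions satisfied by $P(H_m)$ say precisely (by the Structure Theorem~\ref{theo:struct-pant} applied to consecutive pairs) that $\tau(H_m)$ is $\bM\eR$-close to $\varphi$-sheared images of $\tau(H_{m\pm 1})$. Thus, exactly as in Proposition~\ref{pro:modeldef}, the sequence of tripods $\{\dt\theta_m\}\defeq\{\tau(H_m)\}$ together with the vertices $\xi'(\cdot)$ of the cuffs forms an $(\ell_0R,\eR)$-sheared path of quasi-tripods $\vect{\theta}$ whose model is the coplanar path $\{T_m\}$; in the language of Section~\ref{sec:cuff}, $\vect{\theta}$ is an $(\ell_0R,\eR)$-deformed sequence of quasi-tripods, with $\epsilon$ taken small enough that the deformation constant is $\le\eta_0$. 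I would then invoke the Limit Point Theorem~\ref{theo:exislimi} (with $\ell_0$, $R$ as here and $\beta=\zeta$, say, after the usual swallowing of multiplicative constants into $\zeta$): it yields a point $\xi(\vect{\theta})\in\gp$ with $\bigcap_m S_\delta(\Gamma_m)=\{\xi(\vect\theta)\}$ and, crucially, the quantitative estimate $d_\tau(\xi(\vect\theta),\partial^j\theta_m)\le\bQ^m\zeta$ for $\tau\in\Gamma_0$ and $m$ large. Since $\partial^\pm\theta_m$ are precisely $\xi'(c_m^\pm)$ up to the pivot bookkeeping, this gives $\lim_m\xi'(c_m^\pm)=\xi(\vect\theta)$, so we define $\xi(y)\defeq\xi(\vect\theta)$.

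It then remains to check three things. \emph{Well-definedness}: any two $K_0$-good sequences converging to $y$ and starting at long segments of $\partial H$ define, via Proposition~\ref{pro:Ktrip}, eventually-nested sequences of chords, so by the third item of Theorem~\ref{theo:exislimi} (stability under coincidence up to the $n$-th chord, combined with item~\eqref{inc:lacunary}) their limit points agree; together with the cyclic-symmetry clause $\tau(a,b,c)=\omega\tau(b,c,a)$ of Proposition~\ref{pro:label-straight} and Proposition~\ref{pro:symqt}, the choice of which hexagon containing $y$'s defining segments one starts from is immaterial. \emph{Equivariance}: by the last clause of Proposition~\ref{pro:label-straight}, $\tau(\gamma H)=\rho(\gamma)\tau(H)$, and cuff endpoints transform by $\xi'(\gamma c^\pm)=\rho(\gamma)\xi'(c^\pm)$; since $\gamma$ maps $K_0$-good sequences to $K_0$-good sequences, $\xi(\gamma y)=\rho(\gamma)\xi(y)$. \emph{The metric estimate}: take $\tau_0=\tau(H)=\dt\theta_0$ (after relabelling so that $H=H_0$), with $\eta$ the circle map associated to $\tau_0$. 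Using statement~\eqref{ineq:limit-dist0} of Theorem~\ref{theo:exislimi} (comparing the limit of $\vect\theta$ to the limit $\xi(\vect T)=\eta(y)$ of its coplanar model), we get $d_{\tau_0}(\xi(y),\eta(y))\le\zeta$; propagating to any coplanar $\tau$ with $\tau^\pm=\tau_0^\pm$ is Lemma~\ref{coro:shcont} (sliding out), which changes $\zeta$ only by a universal factor, so shrinking $\zeta$ at the outset absorbs it.

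The main obstacle I anticipate is not any single estimate but the careful matching of two pieces of bookkeeping: that the coplanar $Q$-sequence extracted from a $K_0$-good sequence of cuffs genuinely has shear parameters bounded by $\ell_0R$ with the \emph{same} $\ell_0$ used in Theorem~\ref{theo:exislimi}, and that the hexagon labelling $\tau$ deforms the \emph{whole} infinite path (not just finite pieces) with a single deformation constant $\le\eta_0$ — i.e. that the ``model'' supplied by the coplanar path of tripods along the good sequence is globally consistent. This is essentially routine given Corollary~\ref{coro:K10}(2) (extendability of finite good sequences) and Proposition~\ref{pro:modeldef}, but it is where one must be attentive; everything downstream is a direct citation of Theorem~\ref{theo:exislimi}.
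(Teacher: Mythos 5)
Your proposal is essentially the paper's own proof: extract the coplanar model sequence of $\sld$-tripods $\{T_m\}$ from the $K_0$-good sequence of cuffs (length control $\Rightarrow$ a $KR$-sequence by Corollary~\ref{cor:KMbd}), use the equivariant hexagon labelling of Proposition~\ref{pro:label-straight} to promote it to an $(\ell_0R,\eR)$-deformed sequence of quasi-tripods $\theta_m=(\tau(H_m),\xi'(T_m))$, and invoke Theorem~\ref{theo:exislimi} for both convergence and the metric estimate. The one place you compress what the paper does in full is the deformation step: what you describe as ``the Structure Theorem applied to consecutive pairs'' is carried out in the paper's Proposition~\ref{pro:accQT} with a two-case analysis — the shear within a hexagon pair $H_{2m-1}\to\Sc(H_{2m-1})$ is $\pm R+O(\eR)$, whereas the shear across a cuff $H_{2m}\to H_{2m+1}$ requires first identifying $H_{2m+1}=\gamma_m^p\Op(H_{2m-1})$ with $p\in\{-1,0,1\}$ via Proposition~\ref{pro:Ktrip}\eqref{it:hexatrip}, then combining the $1$-shear coming from the labelling rule with the $2pR$-shear from the closed-up boundary loop via assertion~\eqref{eq:bdrqf1} of Theorem~\ref{theo:struct-pant}; in particular some shears are close to $1$, not $R$, so your phrase ``shears all of size comparable to $R$'' is slightly off even though the conclusion (a $KR$-sequence) is right. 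Everything downstream — equivariance from the last clause of Proposition~\ref{pro:label-straight}, well-definedness via the nesting stability item of Theorem~\ref{theo:exislimi}, and the final comparison with $\eta(y)=\xi(\vect T)$ via \eqref{ineq:limit-dist0} — matches the paper, and your explicit treatment of well-definedness is a helpful addition to what the paper leaves implicit.
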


We furthermore show that the dependence of $\xi$ on the straight surface is continuous

\begin{corollary}\label{coro:wlimit}
	Let $\{\Sigma_t\}_{t\in\mathbb R}$ be a continuous family of $(\epsilon,R)$--equivariant straight surfaces, and $\{\xi_t\}_{t\in\mathbb R}$ the family of maps produced as above, then for every accessible $z$, the map $\xi_t(z)$ is continuous as a function of $t$.
\end{corollary}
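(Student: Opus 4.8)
\textbf{Proof plan for Corollary \ref{coro:wlimit}.}
The plan is to re-run the construction of $\xi$ from Lemma \ref{lem:wlimit}, but now tracking the dependence on the parameter $t$. Recall that for a fixed accessible point $z\in W^R_H$, one chooses a $K_0$-good sequence of cuffs $\seq{c}$ converging to $z$, with $c_1,c_2$ containing long segments of $\partial H$; by Corollary \ref{cor:KMbd} the associated coplanar sequence of chords is a $K R$-sequence. The equivariant labelling $\tau=\tau_t$ of tiling hexagons by tripods (Proposition \ref{pro:label-straight}) depends continuously on $t$, since $W_t$ does and, by the argument in that proposition, $\tau_t(H)$ is obtained from the lift $P_t$ of $W_t\circ\pi$ by a finite sequence of the continuous operations ($\Op$-gluing, multiplication by $\alpha_H$, the successor rule). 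Consequently, the sequence of quasi-tripods $\seq{\theta}$ obtained by deforming the model sequence $\nu_0(\seq{\tau})$ via the labelling --- which is exactly what enters the Limit Point Theorem \ref{theo:exislimi} in the proof of Lemma \ref{lem:wlimit} --- depends continuously on $t$ at each fixed index $m$.

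The key step is then to invoke the quantitative estimate \eqref{ineq:limit-dist00} of Theorem \ref{theo:exislimi}: for $\tau\in\Gamma_0$ and $m>(\ell_0+1)^2R$,
$$
d_\tau\bigl(\xi_t(z),\partial^j\theta_m^{(t)}\bigr)\leq \bQ^m\beta,
$$
with $\bQ<1$ and $\beta$ independent of $t$. Fix $\varepsilon>0$. Choose $m$ large enough that $\bQ^m\beta<\varepsilon/3$. Since $t\mapsto \theta_m^{(t)}$ is continuous and $\partial^j$ is continuous, there is a neighbourhood $U$ of $t_0$ on which $d_\tau(\partial^j\theta_m^{(t)},\partial^j\theta_m^{(t_0)})<\varepsilon/3$; here one also uses Proposition \ref{A-B} to pass between the metrics $d_\tau$ attached to nearby tripods, all of which lie in a bounded set by Lemma \ref{lem:properA} (or simply because $\Gamma_0$ and the relevant configurations vary in a compact family). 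Combining with the triangle inequality gives $d_\tau(\xi_t(z),\xi_{t_0}(z))<\varepsilon$ for $t\in U$, which is the asserted continuity. Note that the index $m$ and the choice of good sequence can be kept fixed on $U$ by the first part of Corollary \ref{coro:K10}, so no discontinuity is introduced from the combinatorics.

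The only mild subtlety --- and the step I expect to require the most care --- is making sure the deformation data ($\epsilon$-deformation $v_t$ of the model coplanar path, foot tripods $\psi_i$, compatible circle maps) can be chosen \emph{continuously} in $t$ rather than merely existing for each $t$. This is handled by Proposition \ref{pro:modeldef}, whose deformation $v$ is produced by the explicit formula $g_i(\omega^{n_i}\psi_1(\theta_i))=\tau_i$, together with the smoothness of the foot map $\Psi$ (Lemma \ref{lem:footmap}); all ingredients are continuous in the input tripods, hence in $t$. Once continuity of $v_t$ and of the chords $\Gamma_m^{(t)}$ is in hand, the estimate above closes the argument, and $\rho_t$-equivariance is preserved throughout by construction. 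This completes the proof.
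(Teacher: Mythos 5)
Your proof is correct and is essentially an expanded version of the paper's own one-line argument: the paper deduces continuity from the fact that, by inequality \eqref{ineq:wlimi1}, the convergence of $\{\xi(\theta^j_m)\}_{m\in\mathbb N}$ is uniform in $t$, and continuity of the finite-stage data then passes to the limit. You have filled in exactly this $3\varepsilon$-argument together with the (correct) observation that the labelling and deformation data depend continuously on $t$.
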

 
We first construct a sequence of quasi-tripods associated to an accessible point and an equivariant labeling, then show that this sequence of quasi-tripods converges and complete the proof of the Extension Lemma \ref{lem:wlimit}

\subsubsection{A sequence of quasi tripods for an accessible point}

 Let $\Sigma$ be an equivariant straight surface with monodromy $\rho$ and cuff limit map $\xi'$, let $\tau$ be an equivariant labeling obtained by Proposition \ref{pro:label-straight}.

Given $K$, choose $R_0$ so that Proposition \ref{pro:Ktrip} holds and $R>R_0$
Let $y$ be an accessible point which is the limit of a sequence of cuffs $\seq{c}$.

As a first step we associate to $\seq{c}$ a sequence of coplanar tripods $\seq{T}$ {\em associated} to a $K$-good sequence of cuffs $\seq{c}$: first we orient each cuff so that $c_{m+1}$ is on the right of $c_m$, then  we associate to every $K$-acceptable pair $(c_m,c_{m+1})$ the pair of tripods $(T_{2m-1},T_{2m})$ defined by
$$
T_{2m-1}=(c^-_m, c^+_m,c^-_{m+1}, ),\ \ T_{2m}=(c^-_{m+1}, c^+_{m}, c^+_{m+1},).
$$
Let then $A_m$ be the swish between $T_m$ and $T_{m+1}$.

Our second step is to associate our data a sequence of quasi-tripods. Recall that $c_m$, $c_{m+1}$ are the common edges of exactly two hexagons $H_{2m-1}=(c_{m+1},\overline{c_m},b_m)$ and $H_{2m}=(c_{m+1},d_m, \overline{c_{m}})=\Sc(H_{2m-1})$, where we denote by $\overline{c}$ the cuff $c$ with the opposite orientation. 

Let us consider the sequence $\seq{\theta}$ of quadruples given by $\theta_m=(\tau(H_m),\xi'(T_m))$. Then it follows by the second item of  Theorem \ref{theo:struct-pant} that for $R_\epsilon$ only depending on $\epsilon$ and $R>R_\epsilon$, $\theta_m$ is an $\bM_0(\eR)$-quasi-tripod, for some constant $\bM_0$ only depending on $\ms G$.

We can now prove
\begin{proposition} \label{pro:accQT} There exists a positive constant $\bM_1$ only depending on $\ms G$ so that
	the sequence $\seq{\theta}$ is an $(\seq{A},\bM_1\eR)$ swished sequence of quasi tripods whose model is $\seq{T}$. 
\end{proposition}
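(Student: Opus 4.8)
The plan is to verify the two requirements in the definition of an $(\vect{A}(N),\epsilon)$-sheared sequence of quasi-tripods: first that each $\theta_m$ is an $\epsilon$-quasi-tripod (already noted to be $\bM_0\eR$-close), and second that $\theta_{m+1}$ is $(A_m,\bM_1\eR)$-sheared from $\omega^{n_m}\theta_m$ for the appropriate combinatorics, with $\seq{T}$ as a model. The key input is Proposition \ref{pro:label-straight}, which tells us that $P(H_m)=(\tau(H_m),\tau(\Sc(H_m)),\rho(a),\rho(b),\rho(c))$ is an $(\epsilon,R)$-stitched pair of pants for each hexagon $H_m$, together with the Structure Theorem \ref{theo:struct-pant} and the third item of that theorem controlling $\Psi(\tau_0,\alpha^-,\alpha^+)$ and $\varphi_R$.

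First I would fix the combinatorics: since $H_{2m}=\Sc(H_{2m-1})$ and passing from one $K$-acceptable pair to the next involves applying $\Op$ and $\Sc$ (item \ref{it:hexatrip} of Proposition \ref{pro:Ktrip}, $H_2=\gamma^p\Op(H_1)$), I would translate the combinatorics of the hexagon sequence $H_1,H_2,H_3,\dots$ into the combinatorics $(n_1,\dots,n_N)\in\{1,2\}^N$ of the path of tripods $\seq{T}$, using the relation $\tau(a,b,c)=\omega(\tau(b,c,a))$ from Proposition \ref{pro:label-straight}(1) and the definition of the perfect triangle/circle map. This matching is purely hyperbolic-plane bookkeeping: the coplanar sequence $\seq{T}$ is exactly the sequence of preferred triples of endpoints of the cuffs, and the $\omega$-shifts record the cyclic relabelling of the hexagon sides.

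Next, for the shearing estimate, I would use that $P(H_m)$ and $P(\Sc(H_m))$ are $(\epsilon,R)$-stitched pairs of pants sharing the tripod data, so by the third item of Theorem \ref{theo:struct-pant} (and the definitions of $\psi_i$ and the reflection $\sigma$ in Section \ref{sec:quas-trip}) the feet $\psi_1(\theta_{m+1})$ and $\overline{\varphi_{A_m}(\psi_1(\theta_m))}$ are $\bM_1\eR$-close, which is precisely Definition \ref{def:shear-qt}; here $A_m$ is the hyperbolic shear between $T_m$ and $T_{m+1}$, which by the perfect-pant geometry agrees with $R$ (for the pant-gluing edges) or $1$ (for the straight-surface gluing edge) up to a controlled additive constant absorbed into the model. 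I would also check that $\partial^\pm\theta_m=\partial^\mp\theta_{m+1}$, which holds because consecutive hexagons share the cuff $c_{m+1}$ (or $c_m$) and the circle maps agree there. Finally, Proposition \ref{pro:modeldef} identifies the unique $\bM\epsilon$-deformation realizing $\seq{\theta}$ from its model $\seq{T}$, giving the "model" assertion.

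The main obstacle I anticipate is the combinatorial alignment between the hexagon sequence (with its $\Op,\Sc$ moves and the $\gamma^p$ twist with $p\in\{-1,0,1\}$) and the $\{1,2\}$-valued combinatorics of the path of tripods, together with keeping track of the various $\omega^{n_m}$-shifts coming from $\tau(a,b,c)=\omega(\tau(b,c,a))$ and from the definition of $\psi_2,\psi_3$ in Definition \ref{def:footmap}; the metric estimates themselves are immediate from Theorem \ref{theo:struct-pant} once the right tripods are matched, and one must also use Proposition \ref{pro:symqt} and Corollary \ref{coro:domega} to absorb the $\omega$-distortions into the constant $\bM_1$. A secondary point to be careful about is that the shear between consecutive $T_m$'s alternates between the "within a pant" shear ($\approx R$) and the "gluing" shear ($\approx 1$), so $\vect{A}$ is not constant; but this is harmless since the Limit Point Theorem \ref{theo:exislimi} only needs a $Q$-sequence bound, and Corollary \ref{cor:KMbd} already guarantees $\seq{T}$ is a $KR$-sequence of cuffs.
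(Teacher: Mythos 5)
Your proposal follows the same overall route as the paper — invoking Proposition \ref{pro:label-straight} and the Structure Theorem \ref{theo:struct-pant}, splitting the verification into the within-pant transition $H_{2m-1}\to H_{2m}=\Sc(H_{2m-1})$ and the between-pant transition $H_{2m}\to H_{2m+1}$, and finishing with Proposition \ref{pro:modeldef} to extract the model deformation. However, there is a concrete error in how you characterize the shears at the between-pant transitions, and it is precisely the kind of error that would make the foot-closeness check of Definition \ref{def:shear-qt} fail if taken literally.

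You assert that for the straight-surface gluing edges the model shear $A_m$ ``agrees with $1$ up to a controlled additive constant absorbed into the model.'' This is wrong, and it is not an optional detail. By item \ref{it:hexatrip} of Proposition \ref{pro:Ktrip}, the hexagon $H_{2m+1}$ is not simply $\Op(H_{2m-1})$ but $\gamma^p\Op(H_{2m-1})$ with $\gamma$ a cuff element of $c_{m+1}$ and $p\in\{-1,0,1\}$. In the paper's argument the labelling equivariance $\tau(\gamma H)=\rho(\gamma)\tau(H)$ from Proposition \ref{pro:label-straight}(4), combined with inequality \eqref{eq:bdrqf1} of Theorem \ref{theo:struct-pant} which says $\rho(\gamma)$ acts on the feet like $\varphi_{2R}$ up to $\bM\eR$-error, converts the $\gamma^p$-twist into an extra shift proportional to $R$. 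The resulting model shear is $A_m\approx 1+pR$, which for $p=\pm 1$ is of order $\pm R$, not of order $1$. If you try to verify $\psi_1(\theta_{2m+1})$ is $\bM_1\eR$-close to $\overline{\varphi_{A_{2m}}(\psi_1(\omega^{n_{2m}}\theta_{2m}))}$ with $A_{2m}\approx 1$, the check simply does not hold for $p\ne 0$; the missing ingredient is the conversion of the cuff translation $\gamma^p$ into an $O(R)$ contribution to the shear via equivariance and \eqref{eq:bdrqf1}. You did flag the $\gamma^p$-twist as a hazard, but you treat it as a bookkeeping annoyance rather than as a change of magnitude in $A_m$; resolving it is not absorbable into a bounded additive constant. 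A separate minor point: you invoke the $Q$-sequence bound from Corollary \ref{cor:KMbd} as a reason the non-constancy of $\vect{A}$ is ``harmless,'' but that bound is a property of the model $\seq{T}$ and plays no role in establishing the shear closeness that Proposition \ref{pro:accQT} asserts; it only matters later when feeding the resulting path of quasi-tripods into Theorem \ref{theo:exislimi}.
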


\begin{proof}
	Let us first consider the pair $(\theta_{2m-1},\theta_{2m})$,   From the definition, the $\eR$-quasi tripod 
	$$\beta_{2m-1}:=\left(\tau_{2m-1}, \xi'(c_{m+1}^-),\xi'(c_{m}^+),\xi'(b_m^-)\right)$$
is $(R,\eR)$ swished from $$\omega(\beta_{2m}):=
\left(\omega(\tau_{2m}), \xi'(c_{m}^+),\xi'(c_{m+1}^-),\xi'(d_m^-)\right)
	$$ for $m$ odd and $(-R,\eR)$ swished for $m$ even. Since 
 by construction,  \begin{eqnarray*}
		\beta_{2m}^\pm=\theta_{2m}^\pm\ , \ \
\omega(\beta_{2m-1})^\pm=\omega(\theta_{2m-1})^\pm\ , \ \
\dt{\beta}_m=\dt{\theta}_m\ . 
	\end{eqnarray*} it follows that $\theta_{2m}$ is $(R,\eR)$ swished from $\omega(\theta_{2m-1})$  for $m$ odd and $(-R,\eR)$ swished for $m$ even.
.Then since \begin{itemize}
	\item $\omega(T_{2m})$ is $2\eR$ close to  $t_{2m}=(c_{m}^+,c_{m+1}^-,d_m^-)$ by Proposition \ref{pro:hexa} and similarly
	\item $T_{2m-1}$ is $2\eR$ close to  $t_{2m-1}=(c_{m+1}^-,c_{m}^+,b_m^-)$, 
\end{itemize}
it follows that $A_m$ is $2\eR$ close to $R$, for $m$ odd and to $-R$ for $m$-even
Thus $\theta_{2m}$ is $(A_m,\bM_2\eR)$ swished from $\omega(\theta_{2m-1})$ for some constant $\bM_2$.
	
Let us consider now the pair $(\theta_{2m-1},\theta_{2m})$. Since $(c_{m-1},c_m,c_{m+1})$ is a $K$-acceptable triple, it follows by item \ref{it:hexatrip} of Proposition \ref{pro:Ktrip} that 
	$$
	H_{2m+1}=\eta_m\Op(H_{2m-1})\ ,
	$$
	where $\eta_m=\gamma_m^p$, $\gamma_m$ is the cuff element associated to $c_m$ and $p\in\{-1,0,1\}$.

By the definition of a labeling, $\eta_m^{-1}(\theta_{2m})$ is $(1,\eR)$-swished from $\theta_{2m-1}$. By construction (see Proposition \ref{pro:label-straight})
 $P(H_{2m-1})$ is  an $(\eR,R)$-almost closing pair of pants associated to  and thus by the last item of Theorem \ref{theo:struct-pant}
$$
d(\eta(\theta_{2m}),\varphi_{2R}(\theta_{2m}))\leq \bM_3\eR\ ,
$$
for some constant $\bM_3$ only depending on $\ms G$.

It follows that $\theta_{2m}$ is $(1+pR,\bM_4\eR)$-swished from $\theta_{2m-1}$ for a constant $\bM_4$ only depending on $\ms G$. Since $A_m=1+pR$, the quasi-tripod $\theta_{2m}$ is $(A_m,\bM_4\eR)$-swished from $\theta_{2m-1}$ for a constant $\bM_4$ only depending on $\ms G$.

This concludes the proof of the proposition. \end{proof}

\subsubsection{Proof of Lemma \ref{lem:wlimit} and its corollary}

We first prove the following result which is the key argument in the proof.

\begin{proposition}{\sc [Extension]}\label{lem:wlimit1}
For any positive $\zeta$ and $K$, there exists positive numbers $\epsilon_0$, $\bQ<1$, $\beta$, $L$,  so that for $\epsilon<\epsilon_0$,  there exists $R_\epsilon$ so that for  $R>R_\epsilon$, \begin{itemize}
	\item if $\Sigma$ is an $(\epsilon,R)$ straight surface,
	\item  if $\seq{c}$  is a   nested sequence of cuffs  converging to an accessible point $y$ with respect to a tiling hexagon $H$ for $\Sigma$, 
	\end{itemize}  
Then $\{\xi(c_m^\pm)\}_{\in\mathbb N}$ converges to a point $Y$ so that  for any $\tau$ coplanar to $\tau(H)$ so that $\tau^\pm=\tau_0^\pm$ and $m>L$
\begin{eqnarray}
	d_\tau(Y, \xi(c_m^\pm))\leq {\bQ}^m\beta \label{ineq:wlimi1}
\end{eqnarray}
Moreover, if $\eta$  is the circle map associated to $\tau_0=\tau(H)$, then
 for any $\tau$ coplanar to $\tau(H)$ so that $\tau^\pm=\tau_0^\pm$\begin{eqnarray}
 	d_\tau(Y,\eta(y))\leq\zeta\ .\label{ineq:limit-sul1}
 \end{eqnarray}
\end{proposition}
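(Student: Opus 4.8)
The statement to prove is Proposition \ref{lem:wlimit1}, which packages the convergence of the cuff values $\xi'(c_m^\pm)$ along a nested sequence of cuffs converging to an accessible point, together with the quantitative estimates \eqref{ineq:wlimi1} and \eqref{ineq:limit-sul1}. The plan is to feed the sequence of quasi-tripods constructed in Proposition \ref{pro:accQT} into the Limit Point Theorem \ref{theo:exislimi}. First I would fix $K=K_0$ as in the Accessibility apparatus (so that Proposition \ref{pro:Ktrip} applies), and observe that by Corollary \ref{cor:KMbd}, a coplanar sequence of cuffs whose underlying lamination lies in $\mathcal L_R$ is a $KR$-sequence of cuffs for a universal constant $K$; hence the model sequence $\vect{T}=\seq{T}$ attached to $\seq{c}$ (as in the proof of Proposition \ref{pro:accQT}) is a $\ell_0 R$-coplanar sequence of tripods for $\ell_0$ a universal constant, after possibly enlarging $K$. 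Then Proposition \ref{pro:accQT} says the sequence $\seq{\theta}$ of quasi-tripods, with $\theta_m=(\tau(H_m),\xi'(T_m))$, is an $(\vect{A},\bM_1\eR)$-sheared sequence of quasi-tripods with model $\seq{T}$; in particular it is a $(\ell_0 R,\eR)$-deformed sequence of quasi-tripods once $\epsilon$ is taken small enough that $\bM_1\eR\le\eR$ in the sense required by Theorem \ref{theo:exislimi} (i.e. absorbing $\bM_1$ into the choice of $\epsilon$).

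Next I would apply Theorem \ref{theo:exislimi} with $\beta={\rm A}$ (the constant furnished there) and this $\ell_0$: it produces a limit point $\xi(\vect{\theta})=\lim_m \partial^j\theta_m$ for $j\in\{+,-,0\}$, which by construction is $\lim_m \xi'(c_m^\pm)$ since $\partial^\pm\theta_m$ are exactly $\xi'(c_m^\pm)$ up to the parity bookkeeping of the vertex labels in the definition of $T_m$. Call this point $Y$. The quantitative estimate \eqref{ineq:limit-dist00} of Theorem \ref{theo:exislimi}, applied to $\tau\in\Gamma_0$ (the chord of $\dt\theta_0=\tau(H)$), gives $d_\tau(Y,\partial^j\theta_m)\le \bQ^m\beta$ for $m>(\ell_0+1)^2R$; taking $L:=(\ell_0+1)^2R$ and renaming $\beta$ this is precisely \eqref{ineq:wlimi1}. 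To pass from "$\tau$ in the chord $\Gamma_0$ of $\tau(H)$" to "any $\tau$ coplanar to $\tau(H)$ with $\tau^\pm=\tau_0^\pm$" I would use Lemma \ref{coro:shcont} (sliding out): any such $\tau$ is obtained from an element of $h_{\tau(H)}$ by a bounded shear, and since $Y$ and the $\xi'(c_m^\pm)$ all lie in a small cone $C_\delta$ of the commanding tripod (again by Theorem \ref{theo:exislimi}, the limit point lies in $C_\delta(\check\tau_0)$), the metrics $d_\tau$ and $d_{\tau(H)}$ are uniformly Lipschitz-equivalent on that cone, at the cost of a universal multiplicative constant absorbed into $\beta$ and $\bQ$ (which can be taken closer to $1$ if needed — both are still universal).

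For the Sullivan-type estimate \eqref{ineq:limit-sul1}: here I would use the last item of Proposition \ref{pro:suldef0}-style reasoning adapted to the present setting. By the last item of Proposition \ref{pro:label-straight} and Proposition \ref{pro:hexa}(3), the admissible tripods $T_m$ are all $2e^{-R/2}$-close to tripods in the circle $C$ of $\tau(H)$ whose $0$-vertex is $y$ in the limit; combined with the fact that $\theta_m$ is an $\bM_0\eR$-quasi-tripod, one gets $d_{\dt\theta_m}(\xi'(c_m^0),\eta_m(c_m^0))\le M\eR$ where $\eta_m$ is the circle map of $\dt\theta_m$, so that $\xi'(c_m^\pm)$ lies in the sliver $S_{M\eR}(\tau(H))$ for every $m$; passing to the limit, $Y\in S_{M\eR}(\tau(H))$, i.e. $d_\tau(Y,\eta(y))\le M\eR\le\zeta$ for $\epsilon$ small enough and any coplanar $\tau$ with $\tau^\pm=\tau_0^\pm$ (again using Lemma \ref{coro:shcont} to move between coplanar tripods on the cone). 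The main obstacle I anticipate is not any single one of these steps but the careful bookkeeping of constants — ensuring that $\bM_1$, the shear bounds $\bM_2,\bM_3,\bM_4$ from Proposition \ref{pro:accQT}, and the multiplicative losses from Lemma \ref{coro:shcont} and Proposition \ref{A-B} are all genuinely universal and can be cascaded into a single choice of $\epsilon_0,R_0$ depending only on $\zeta$ and $\G$, and that the "$\ell_0$" used to invoke Theorem \ref{theo:exislimi} is compatible with the $\ell_0$ built into the Accessibility constant $K_0$. Finally, the corollary (continuity in the family $\{\Sigma_t\}$) follows from \eqref{ineq:limit-dist} of Theorem \ref{theo:exislimi}: for fixed $y$ and large $m$ the value $\xi_t(y)$ is $\bQ^m\beta$-close to $\xi'_t(c_m^\pm)=\rho_t(c_m)^+$, which depends continuously on $t$ since $\rho_t$ does; uniform convergence in $m$ then gives continuity of $t\mapsto\xi_t(y)$.
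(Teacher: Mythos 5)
Your outline gets the two main ingredients right — you feed the sequence of quasi-tripods from Proposition \ref{pro:accQT} into the Limit Point Theorem \ref{theo:exislimi} — and your derivation of \eqref{ineq:wlimi1} from \eqref{ineq:limit-dist00} is essentially what the paper does. One small over-complication: ``coplanar to $\tau(H)$ with $\tau^\pm=\tau_0^\pm$'' already characterizes exactly the chord of $\tau(H)$ (the condition forces the intertwining element of $\sld$ to lie in the diagonal torus, i.e.\ to be a shear), so the hypothesis ``$\tau\in\Gamma_0$'' in Theorem \ref{theo:exislimi} directly covers the class of $\tau$ in the statement; there is nothing to pass between via Lemma \ref{coro:shcont}.

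The genuine gap is in your derivation of \eqref{ineq:limit-sul1}. You invoke the Limit Point Theorem with $\beta=\bA$, then try to recover the Sullivan-type estimate by an ad hoc sliver argument: you claim $\xi'(c_m^\pm)\in S_{M\eR}(\tau(H))$ and pass to the limit. That step does not follow from the quasi-tripod definition alone, and even if it did, membership of $Y$ in the sliver $S_{M\eR}(\tau(H))$ only says $Y$ is near \emph{some} point of the circle; it does not pin $Y$ down to within $\zeta$ of the \emph{specific} point $\eta(y)$ in the $d_\tau$ metric. The correct and much shorter route — the one the paper uses — is to apply Theorem \ref{theo:exislimi} with $\beta=\zeta$ and then read off item (2) of that theorem: since the model sequence $\vect{\tau}$ built from $\seq{T}$ satisfies $\xi(\vect{\tau})=\eta(y)$ (the undeformed coplanar path of tripods for a perfect straight surface converges, under the circle map of $\tau(H)$, precisely to the circle value at the accessible point $y$), the second item gives $d_\tau(Y,\eta(y))=d_\tau(\xi(\vect{\theta}),\xi(\vect{\tau}))\leq\beta=\zeta$ immediately. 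You missed this because you fixed $\beta$ as a theorem constant rather than as a free parameter to be chosen equal to the target tolerance.
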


\begin{proof}
 Let $\zeta$ be a positive constant. 
 The sequence of tripods $\seq{T}$ is a $2KR$-sequence of tripods by Corollary \ref{cor:KMbd}.  From Proposition \ref{pro:accQT}, it follows that $\seq{\theta}$ is a $(KR,\eR)$-deformed sequence of quasi tripods. In particular, using Theorem \ref{theo:exislimi} with $\beta=\zeta$, $\{\xi(c_m^+)\}_{m\in\mathbb N}$ and $\{\xi(c_m^-)\}_{m\in\mathbb N}$ both converge to a point $y(\theta)=:Y$ in $\gp$. 
 
 Then inequality \eqref{ineq:wlimi1} is a consequence of \eqref{ineq:limit-dist00}.

Since $y(\tau)=\eta(y)$, inequality \eqref{ineq:limit-sul1} also follows from Theorem \ref{theo:exislimi}.
  \end{proof} 

The proof of  Lemma \ref{lem:wlimit} now follows immediately. The proof of Corollary \ref{coro:wlimit} follows from that fact thanks to inequality \eqref{ineq:wlimi1} the convergence of $\{\xi(\theta^j_m)\}_{m\in\mathbb N}$ is uniform.

\subsection{Proof of  Theorem \ref{theo:sull-straight}} We now make use of the compact stabilizer hypothesis using in particular the  Improvement Theorem \ref{theo:boot}.

It is enough (by eventually passing to the universal conver) to prove the theorem when the underlying graph of $\Sigma$ is a tree and that is what we do now.

Let us start with an  observation. Let $\tau$ be any tripod in $\hh$. Since the diameter of the hyperbolic surfaces $S_R$ is bounded independently of $R$ (Lemma \ref{lem:diamSR}). It follows that there exists some constant $C_0$, so that  given any tripod $\tau$, we can find a tiling hexagon $H$ so that 
\begin{eqnarray}
	d(\tau,\tau_H)\leq C_0\ ,\label{ineq:tth}
\end{eqnarray}
where $\tau_H$ is a  admissible tripod in $\hh$ for $H$. It follows that there exists a universal constant $C_1$ so that  for any extended circle map $\eta$
\begin{eqnarray}
	d_{\eta(\tau)}\leq  C_1\cdotp d_{\eta(\tau_H)}\ ,
\end{eqnarray}

Given a positive number  $\zeta$, let us let fix   fix $\epsilon$, and $R_0=R_\epsilon$ so that Lemma \ref{lem:wlimit} holds.  Let then $R>R_0$

Let  $\Sigma=(\mathcal R,Z)$ be an $(\epsilon,R)$ equivariant straight surface  with monodromy $\rho$ and cuff limit map $\xi^\prime$ so that $\mathcal R$ is a tree.  

Then according to Proposition \ref{pro:deflabel}, for any vertex $v$ in $\mathcal R$ and integer $N$,  we can find a continuous family $\{\Sigma^{(2)}_t\}_{t\in[0,1]}$ of $(\epsilon,R)$ equivariant labeling under $\{\rho^{(2)}_t\}_{t\in[0,1]}$ deforming the $(v,N)$ double $\Sigma^{(2)}$ of $\Sigma$.

 It follows by the Extension Lemma \ref{lem:wlimit} and Corollary \ref{coro:wlimit} that we can find a continuous family $\{\xi^{(2)}_t\}_{t\in[0,1]}$ defined on the dense set of accessible points $W^R$ so that
\begin{enumerate}
	\item $\xi^{(2)}_t$ is equivariant under $\rho^{(2)}_t$,
	\item $\xi^{(2)}_0$ is a circle map,
	\item For any tiling hexagon $H$, for all $y$ in $W^R_H$\begin{eqnarray}
		d_{\tau_t}(\eta_t(y),\xi^{(2)}_t(y))\leq\frac{\zeta}{C_1}\ .\label{ineq:wsul}
		\end{eqnarray}
		where $\eta^H_t$ is the circle map so that 
		$\eta^H_t(\tau_H)=\tau_t:=\tau_t(H)$.
\end{enumerate}
Remark now that
\begin{enumerate}
	\item by Theorem \ref{lem:wlimit}, $\xi^{(2)}_t$ is attractively continuous: for all $y\in W$ which is the limit of  elements $c^+_m$, $\xi^{(2)}_t(y)$ is the limit of $\xi^{(2)}_t(c^+_m)$ as $m$ goes to infinity,  where $\xi^{(2)}_t(c^+_m)$ is the attractive element of the cuff element $\rho^{(2)}_t(c_m)$; Applying this for $t=1$, we get that $\xi^{(2)}$ extends the cuff maps $\xi^{\prime (2)}$.
	\item by the Accessibility Lemma \ref{lem:access}, $W^R_H$ is $a(R)$-dense, where $a(R)$ goes to zero when $R$ goes to $\infty$.
\end{enumerate}
We thus now choose $R_0$ so that for all $R$ greater than $R_0$, $a(R)<a_0$ where $a_0$ is given from $\zeta$ by Theorem \ref{theo:boot}.

Using the initial observation, we now have that for any tripod $\tau$, and any $t\in[0,1]$, we can find a circle map $\eta_t=\eta_t^H$ so that for any $y$ in some $a_0$-dense set 
\begin{eqnarray*}
	d_{\eta_t(\tau)}(\eta_t,\xi^{(2)}_t(y))\leq\zeta\ ,
\end{eqnarray*}
where we have used both inequalities \eqref{ineq:6110} and \eqref{ineq:wlimi1}. In other words, $\xi^{(2}_t$ is $(a_0,\zeta)$-Sullivan

We are now in a position to apply the Improvement Theorem \ref{theo:boot}. This shows that $\xi^{(2)}_t$ -- and in particular $\xi^{(2)}$-- is $2\zeta$-Sullivan. By construction $\xi^{(2)}$ extends $\xi^{\prime(2)}$. 

Remember now that the doubling construction depends on the choice of a parameter $N$ and we now write $\xi_N^{(2)}$ and  $\xi_N^{\prime(2)}$ to mark the dependency in $N$.

Let us consider the universal cover of both the original surface $\Sigma$ and the double   $\Sigma^{(2)}$. By construction, the labeling are identical on the large ball $B(v,N)$. This large ball corresponds to a free subgroup of $\pi_1(\Sigma_R)$ with limit set $\Lambda(N)$. This for any cuff element $c_m$ with end points in $\Lambda(N)$,
\begin{equation}
	\xi_N^{\prime(2)}(c_m^+)=\xi^{\prime}(c_m^+)\ .\label{eq:improv00}
\end{equation}
It follows that if $M>N$gluing
\begin{equation}
\left.\xi_M^{(2)}\right\vert_{\Lambda(N)}=\left.\xi_N^{(2)}\right\vert_{\Lambda(N)}\ .\label{eq:improv01}
\end{equation}
Recall now that all limit maps  $\xi_M^{(2)}$ being $\zeta$-Sullivan admits a modulus of continuity by Theorem \ref{theo:HolQuant}. We  may thus extract form the sequence $\{\xi^{(2)}_M\}_{M\in\mathbb N}$ a uniformly converging subsequence to a $\zeta$-Sullivan map $\xi$.

Let finally $\Lambda\defeq\bigcup_N\Lambda(N)$ and observe that $\Lambda$ is dense and contains the end points of all cuff elements. Then, the $\zeta$-Sullivan map $\xi$ coincide with $\xi^\prime$ on $\Lambda$ by equations \eqref{eq:improv00} and \eqref{eq:improv01}

This completes the proof of Theorem \ref{theo:sull-straight}.

\section{Wrap up: proof of the main results}
This section is just the  wrap up of the proof  of the main Theorems obtained by combing the various theorems obtained in this paper.

\begin{theorem}
Let $\ms G$ be a semisimple Lie group of Lie algebra $\mk g$ without compact factors. Let $\mk s=(a,x,y)$ be an $\sld$-triple in $\mk g$.  Assume that $\mk s$ satisfies the flip assumption and that
$\mk s$ has a compact centralizer. 

Let $\Gamma$ be a uniform lattice in $\ms G$. Let $\epsilon$ be a positive real number. Then there exists a closed hyperbolic surface $S_\epsilon$, a faithful $(\ms G,\ms P)$ Anosov representation $\rho_\epsilon$ of $\pi_1(S_\epsilon)$ in $\Gamma$, whose limit curve is $\epsilon$-Sullivan with respect to $\mk s$, where $\ms P$ is  the parabolic associated to $a$.

\end{theorem}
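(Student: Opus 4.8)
The plan is to assemble the results proved in the previous sections; the wrap-up carries no new difficulty in itself. Fix the target $\epsilon>0$. Since being $\zeta$-Sullivan for $\zeta\leq\epsilon$ implies being $\epsilon$-Sullivan, we may replace $\epsilon$ by $\zeta\defeq\min(\epsilon,\zeta_1)$, where $\zeta_1$ is the constant of the Sullivan-implies-Anosov Theorem \ref{theo:sull-anos}, and aim to produce a $\zeta$-Sullivan equivariant limit curve.

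First I would feed $\zeta$ into Theorem \ref{theo:sull-straight}: because $\mk s$ has compact centralizer by hypothesis, this produces constants $\epsilon_0,R_0>0$ such that any $(\epsilon',R)$-equivariant straight surface with $\epsilon'<\epsilon_0$ and $R>R_0$ admits a unique $\rho$-equivariant $\zeta$-Sullivan map extending its cuff limit map, $\rho$ being the monodromy. Next, since $\mk s$ satisfies the flip assumption, the existence Theorem \ref{exisstraight} applies to the uniform lattice $\Gamma$: for the chosen $\epsilon'<\epsilon_0$ there is $R_1$ so that for all $R\geq R_1$ there exists an $(\epsilon',R)$-straight surface for $\Gamma$. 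Choosing $R>\max(R_0,R_1)$ and $\epsilon'$ small enough (as required by the Closing Pant Theorem \ref{theo:clospant}), such a straight surface for $\Gamma$ gives rise, via Theorem \ref{theo:clospant}, to an $(\epsilon',R)$-equivariant straight surface $\Sigma=(\mathcal R,Z,W)$ whose monodromy $\rho_\epsilon$ takes values in $\Gamma$.

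Now apply Theorem \ref{theo:sull-straight} to $\Sigma$: one obtains a $\rho_\epsilon$-equivariant $\zeta$-Sullivan map $\xi\colon\partial_\infty\pi_1(\Sigma)=\Rp\to\gp$ extending the cuff limit map $\xi^\prime$. By the Proposition of paragraph \ref{sec:fundgroup}, $\pi_1(\Sigma)$ is the fundamental group of a closed oriented surface $S_\epsilon$ of Euler characteristic $-\#V(\mathcal R)\leq -2$, hence of genus at least $2$; realize $\pi_1(S_\epsilon)$ as a cocompact Fuchsian group acting on $\Rp$. Since $\zeta\leq\zeta_1$, Theorem \ref{theo:sull-anos} shows that $\rho_\epsilon$ is $\ms P$-Anosov with limit curve $\xi$, where $\ms P$ is the parabolic attached to $a$; being Anosov, $\rho_\epsilon$ is in particular faithful. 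Finally $\xi$ is $\zeta$-Sullivan, hence $\epsilon$-Sullivan, with respect to $\mk s$, which is the desired conclusion.

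The main obstacles have all been absorbed into the cited statements: the flip assumption enters only through the Even Distribution Theorem \ref{theo:even} (used to produce straight surfaces in Theorem \ref{exisstraight}), the compact centralizer hypothesis is used in Theorem \ref{theo:sull-straight} through the Improvement Theorem \ref{theo:boot}, and the convergence at infinity of deformed paths of quasi-tripods is the Limit Point Theorem \ref{theo:exislimi}. The only care needed in the wrap-up is bookkeeping: distinguishing the straight-surface quality parameter $\epsilon'$ from the target Sullivan constant, and observing that a straight surface constructed \emph{for $\Gamma$} has, by construction together with Theorem \ref{theo:clospant}, monodromy with image in $\Gamma$.
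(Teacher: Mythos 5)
Your proposal is correct and follows essentially the same route as the paper's wrap-up: produce an $(\epsilon',R)$-straight surface for $\Gamma$ from Theorem \ref{exisstraight}, pass (via the Closing Pant Theorem \ref{theo:clospant}) to an equivariant straight surface, extract a $\zeta$-Sullivan equivariant limit map from Theorem \ref{theo:sull-straight}, and conclude Anosovness and faithfulness from Theorem \ref{theo:sull-anos}. You are merely more explicit than the paper about the bookkeeping — distinguishing the quality parameter $\epsilon'$ of the straight surface from the target Sullivan constant $\zeta=\min(\epsilon,\zeta_1)$, and flagging that the straight-surface-to-equivariant passage is where Theorem \ref{theo:clospant} enters — which is a reasonable clarification rather than a different argument.
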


As a corollary, considering the case of the principal $\sld$ in a complex semisimple Lie group,  we obtain
\begin{theorem}
	Let $\ms G$ be a complex semisimple group, let $\Gamma$ be a uniform lattice in $\ms G$, then there exists a closed Anosov surface subgroup in $\Gamma$.
\end{theorem}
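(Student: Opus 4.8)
The plan is to assemble this last theorem purely from results already established in the paper, with essentially no new work. The statement to prove is: if $\ms G$ is a complex semisimple group and $\Gamma$ a uniform lattice in $\ms G$, then $\Gamma$ contains a closed Anosov surface subgroup. Since the abstract statement of the introduction is about center-free complex groups, I would first reduce to the center-free case: replacing $\ms G$ by $\ms G/Z(\ms G)$ and $\Gamma$ by its image, a finite-index subgroup of $\Gamma$ (or $\Gamma$ itself up to finite index) maps onto a uniform lattice in the adjoint group, and a surface subgroup of the quotient lifts to a surface subgroup (an Anosov representation composed with a finite covering of groups remains Anosov, and faithfulness is preserved on a finite-index subgroup / by the fact that surface groups are residually finite; more simply, the preimage of a surface subgroup contains a surface subgroup). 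So we may assume $\ms G$ is center free.

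Next I would invoke the structure theory of Section~\ref{sec:prel}: in a complex semisimple Lie group one has the \emph{principal} $\skd$-triple $\mk s=(a,x,y)$, which is even and regular, hence has compact centralizer (the centralizer of a regular $\skd$-triple is compact, by the Definition in \ref{sec:sld}). Moreover, as recorded in paragraph~\ref{sec:flip-ex}, item (1), every complex semisimple Lie group with an even $\skd$-triple satisfies the flip assumption: the reflexion $\bJ_0=\exp(i\zeta a/2)$ works because $\exp(ita)$ for real $t$ lies in $\ms Z(\ms Z(a))$. Thus the principal $\skd$-triple $\mk s$ satisfies both hypotheses of the previous (quantitative) theorem: it has compact centralizer and satisfies the flip assumption.

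Then I would apply the previous theorem (the quantitative surface subgroup theorem, whose proof is the content of Sections~\ref{sec:straight} and~\ref{sec:ss-lm}): fixing any $\epsilon>0$, there is a closed hyperbolic surface $S_\epsilon$ and a faithful $\ms P$-Anosov representation $\rho_\epsilon\colon\pi_1(S_\epsilon)\to\Gamma$ whose limit curve is $\epsilon$-Sullivan with respect to $\mk s$, where $\ms P$ is the parabolic associated to $a$. Taking $\epsilon$ small enough (any value works, since we only need existence), the image $\rho_\epsilon(\pi_1(S_\epsilon))$ is the desired subgroup: it is a surface group (fundamental group of a closed oriented surface of genus $\geq 2$, as built from gluing at least two pairs of pants), it injects in $\Gamma$ since $\rho_\epsilon$ is faithful, and it is Anosov, since $\ms P$-Anosov representations are by definition Anosov and in particular quasi-isometric embeddings with all elements loxodromic (as recalled just after Theorem~\ref{theo:sull-anos}).

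There is no serious obstacle here: the theorem is a specialization. The only points requiring a sentence of care are (i) checking the reduction to center-free $\ms G$ is harmless for the existence of a surface subgroup, and (ii) noting that for complex $\ms G$ the principal $\skd$-triple simultaneously realizes the ``compact centralizer'' and ``flip'' hypotheses, so that the hypotheses of the preceding theorem are met. Both are immediate from the cited facts. I would therefore present the proof as: reduce to center free; take the principal $\skd$-triple; verify compact centralizer (regular $\Rightarrow$ compact) and the flip assumption (Section~\ref{sec:flip-ex}(1)); apply the quantitative theorem with any fixed $\epsilon$; conclude that $\rho_\epsilon(\pi_1(S_\epsilon))<\Gamma$ is a closed Anosov surface subgroup.
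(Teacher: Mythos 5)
Your proposal is correct and follows exactly the paper's approach: the paper's proof of this corollary is the single clause ``considering the case of the principal $\sld$ in a complex semisimple Lie group,'' and you have simply spelled out the verification that the principal $\skd$-triple is even and regular (hence has compact centralizer) and satisfies the flip assumption via $\bJ_0 = \exp(i\zeta a/2)$, exactly as recorded in paragraph~\ref{sec:flip-ex}. Your explicit reduction to the center-free case is a small bit of extra care not present in the paper (which works throughout with $\ms G$ center-free as a standing hypothesis from Section~\ref{sec:prel}), and it is harmless and correct.
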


\begin{proof}
	From Theorem \ref{exisstraight}, for any positive $\epsilon$, there exists $R_0$, so that for any $R>R_0$, there exists an $(\epsilon,R)$- straight surface $\Sigma$ in $\Gamma$ associated to $\mk s$. This straight surface is equivariant under a representation $\rho$ of a surface group $\Gamma_0$ in $\Gamma$.

By Theorem \ref{theo:sull-straight}, for any $\zeta$, for  $\epsilon$ small enough, there exists $R_0$ so that for $R>R_0$,  an  $(\epsilon,R)$- straight surface equivariant under a representation $\rho$ of a surface group $\Gamma_0$ in $\Gamma$, is so that we can find a $\zeta$-Sullivan $\rho$-equivariant Sullivan map from $\partial_\infty\Gamma_0$ to $\gp$. By Theorem \ref{theo:sull-anos}, for $\zeta$-small enough the corresponding representation is Anosov and in particular faithful.
\end{proof}
\subsection{The case of the non compact stabilizer} In that context we obtain a less satisfying result. Recall that we denote by $c^+$ the attractive point in $\partial_\infty\pi_1(S)$ of a non trivial element $c$ of $\pi_1(S)$.

Let $\ms G_1,\ldots,\ms G_n$ be semisimple Lie groups without compact factors. Let $\ms G=\prod_{i=1}^n\ms G_i$ with  Lie algebra $\mk g$   Let $\Gamma$ a uniform lattice in $\ms G$ so that (up to finite cover) its projection on $\ms G_i$ is an irreducible lattice.  Let $(a,x,y)$ be an $\sld$-triple in $\mk g$ so that 
\begin{itemize}
	\item $\mk s$ satisfies the flip assumption,
	\item the projections on all factors $\mk g_i$ are non trivial,
\end{itemize}
Let $\ms P$ the parabolic associated to $a$. Let $\Gamma$ be a uniform lattice in $\ms G$. 

\begin{theorem}
Let $\epsilon$ be a positive real. Then there exists some $R$  and  \begin{itemize}
	\item  a faithful representation $\rho$ of $\Gamma_R=\pi_1(S_R)$ in $\Gamma$, so that the image of every cuff element of $\Gamma_R$ has an attractive fixed point in $\gp$.
	\item a  $\rho$-equivariant $\xi$ from $\partial_\infty\Gamma_R$ to $\gp$ so that
	\begin{itemize}
	\item For a cuff element $c$,  $\xi(c^+)$ is the attractive fixed point of $\rho(c)$,
	\item If $\seq{c}$ is a sequence of nested cuff elements so that $\{c^+_m\}_{m\in\mathbb N}$  converges to $y$, then $\{\xi(c^+_m)\}_{m\in\mathbb N}$ converges to $\xi(y)$.
\end{itemize}	
\end{itemize} 
\end{theorem}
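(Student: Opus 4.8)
The strategy is to mirror the compact-centralizer argument (Theorem \ref{theo:sull-straight} and the main theorem) but to stop at the weaker output permitted by the Extension Lemma \ref{lem:wlimit}, which does \emph{not} use the compact-stabilizer hypothesis. First I would apply the Existence of Straight Surfaces Theorem \ref{exisstraight}: since $\mk s$ satisfies the flip assumption and $\Gamma$ is a uniform lattice in $\ms G=\prod_i\ms G_i$, for every $\epsilon$ there is $R_0$ so that for all $R>R_0$ there is an $(\epsilon,R)$-straight surface $\Sigma$ for $\Gamma$. By Theorem \ref{theo:clospant} this gives an $(\epsilon,R)$-equivariant straight surface $(\mc R,Z,W)$, with monodromy $\rho=\rho_\epsilon$ a representation of $\Gamma_R=\pi_1(S_R)$ into $\Gamma$ and cuff limit map $\xi'$ sending the attractive point $a^+$ of a cuff element $a$ to the attractive fixed point $\rho(a)^+\in\gp$ of the $\ms P$-loxodromic element $\rho(a)$. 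This already produces the bulleted properties of $\xi$ on cuff points.

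Next I would invoke the Extension Lemma \ref{lem:wlimit} (and its proof via Proposition \ref{lem:wlimit1}, which rests on the Limit Point Theorem \ref{theo:exislimi} applied to the $(KR,\eR)$-deformed sequence of quasi-tripods attached to a nested sequence of cuffs, using Proposition \ref{pro:accQT} and Corollary \ref{cor:KMbd}): for $\epsilon$ small and $R$ large there is a unique $\rho$-equivariant map $\xi$ from the dense set $W^R$ of accessible points to $\gp$ such that whenever $\seq{c}$ is a nested sequence of cuffs converging to an accessible point $y\in W^R_H$, we have $\lim_m \xi'(c_m^\pm)=\xi(y)$, together with the quantitative estimate $d_\tau(\xi(y),\eta(y))\le\zeta$ for the circle map $\eta$ of $\tau(H)$. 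This yields exactly the second bullet of the desired theorem (convergence of $\{\xi(c_m^+)\}$ to $\xi(y)$), and the first bullet is the definition of $\xi'=\xi|_{\text{cuff points}}$ together with $\xi'(c^+)=\rho(c)^+$.

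It remains to establish faithfulness of $\rho_\epsilon$. Here I cannot use the Anosov conclusion of Theorem \ref{theo:sull-anos} (that requires the compact-centralizer hypothesis, since Sullivan-implies-Anosov and the H\"older property are proved under the fbox hypothesis ``the centralizer of $\mk s$ is compact''). Instead I would argue componentwise: the given $\skd$-triple $(a,x,y)$ has nontrivial projection $(a_i,x_i,y_i)$ onto each factor $\mk g_i$, and the assumption that $\Gamma$ projects (up to finite cover) to an irreducible lattice in each $\ms G_i$ means $\Gamma$ is essentially irreducible. The plan is to note that each projection $\rho_i$ of $\rho$ lands in a flag manifold $\gp_i$ where the projected $\skd_i$-triple \emph{has compact centralizer in $\ms G_i$} in at least one factor --- or more robustly, to observe that the cuff-limit-map data already forces each $\rho(c)$ to be $\ms P$-loxodromic, hence of infinite order, so $\rho$ is non-elementary; then injectivity follows from the fact that the construction is (for $\epsilon$ small) a small perturbation of a Fuchsian representation glued from $R$-perfect pair of pants, whose monodromy into the group generated by a correct $\sld$ is the faithful holonomy of a hyperbolic surface, and discreteness/faithfulness is an open condition preserved along the deformation $\{\Sigma_t\}$ of Proposition \ref{pro:deflabel}. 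Concretely: compose $\rho_\epsilon$ with the projection to one factor $\ms G_{i_0}$ in which $\skd_{i_0}$ has compact centralizer (such a factor exists after possibly regrouping, since the flip assumption and nontriviality give this for the relevant normalizations); the resulting $\rho_{i_0}$ has a $\zeta$-Sullivan equivariant limit map (the projection of $\xi$), hence is $\ms P_{i_0}$-Anosov by Theorem \ref{theo:sull-anos}, hence faithful, hence $\rho_\epsilon$ is faithful.

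\textbf{Main obstacle.} The delicate point is precisely this last step: without the compact-centralizer hypothesis the Sullivan-implies-Anosov machinery is unavailable on the full $\gp$, so faithfulness must be extracted either from a well-chosen factor or from an openness/deformation argument connecting $\rho_\epsilon$ to the Fuchsian model. I expect the cleanest route is the factor argument --- identifying, under the stated hypotheses on $(a,x,y)$ and on the irreducibility of $\Gamma$'s projections, a single simple factor in which the induced $\skd$-triple is regular or otherwise has compact centralizer, and running Theorems \ref{theo:sull-straight}--\ref{theo:sull-anos} there --- but one must check that the accessible-point limit map $\xi$ really does project to a genuine (everywhere-defined, after the Improvement-type closure) Sullivan curve in that factor, which may instead only give the weaker ``defined on accessible points'' statement and thus only faithfulness via the non-elementary/discreteness argument rather than the full Anosov conclusion; accordingly the theorem is stated with the weaker conclusion. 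I would finish by assembling these pieces and remarking that this is why, in the non-compact-stabilizer case, one obtains faithfulness and the cuff/limit behavior but not the $\zeta$-Sullivan (hence not the clean Anosov) statement of the main theorem.
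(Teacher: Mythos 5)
The paper's own proof of this theorem is a single sentence: ``The proof runs as before except that we replace the use of the Theorem \ref{theo:sull-straight} by Lemma \ref{lem:wlimit}.'' Your construction phase --- invoke Theorem \ref{exisstraight} to produce an $(\epsilon,R)$-straight surface, pass to an equivariant one via Theorem \ref{theo:clospant}, and apply the Extension Lemma \ref{lem:wlimit} --- matches the paper's route, and your verification of the cuff-point compatibility $\xi(c^+)=\rho(c)^+$ and of the nested-sequence convergence is correct. You also rightly emphasize that Lemma \ref{lem:wlimit} does not use the compact-centralizer hypothesis.

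The weak point is faithfulness, and here both routes you propose have gaps. Your ``factor argument'' --- project $\rho_\epsilon$ to a simple factor $\ms G_{i_0}$ in which the projected $\skd$-triple has compact centralizer and then run Theorems \ref{theo:sull-straight} and \ref{theo:sull-anos} there --- rests on the unjustified claim that such a factor exists. You attribute this to the flip assumption and nontriviality of the projections, but neither implies it: every projected $\skd$-triple can simultaneously have non-compact centralizer, and the theorem is stated precisely so as to cover that case. Your alternative, that faithfulness is preserved along the deformation $\{\Sigma_t\}$ of Proposition \ref{pro:deflabel}, does not work as stated either: faithfulness is not an open condition for representations into $\G$ in general (it is open for Anosov representations, which is precisely the conclusion unavailable here), and the intermediate representations $\rho_t$ do not take values in $\Gamma$, so no discreteness of the image can be exploited along the path. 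The argument the paper's one-liner leaves implicit is more elementary than either of these. Lemma \ref{lem:wlimit} gives the quantitative estimate $d_\tau(\xi(y),\eta(y))\leq\zeta$ for accessible $y$ and $\tau$ coplanar to $\tau(H)$; since $\eta$ is injective, $\xi$ is injective on accessible points from a fixed hexagon once $\zeta$ is small. By $\rho$-equivariance, any $g\in\ker\rho$ satisfies $\xi(gy)=\xi(y)$ for every accessible $y$, and combining this with the local injectivity of $\xi$ together with the $\Gamma_R$-invariance and density of the set of accessible points forces $gy=y$ on a dense subset of $\partial_\infty\hh$, hence $g=\id$. You should supply this direct argument rather than either speculative alternative, both of which you yourself concede you cannot fully justify.
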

\begin{proof} The proof runs as before except that we replace the use of the Theorem \ref{theo:sull-straight} by Lemma \ref{lem:wlimit}, from which we obtain the existence and properties of the application $\xi$ which is equivariant under a representation $\rho$. Let us now show that $\rho$ is injective. We already know that the image of any cuff element in $\Gamma_R$ is non trivial. Let $\gamma$ so that $\rho(\gamma)$ is the identity and assume by contradiction that $\gamma$ is not the identity and $\gamma^+$ its attracting point in $\partial_\infty \Gamma_R$ and $\gamma^-$. Let $c^+_m$ the end point of a cuff. Then $\xi(\gamma^n c^+_m)=\xi(c^+_m)$. It follows, by taking the limit when $n$ goes to infinity, that $\xi(c^+_m)=\xi(\gamma^+)$ since $c^+_m$ is different from $\gamma^-$. Thus $\xi$ would be constant but this is a contradiction: $\xi(c_m^+)$ is different from $\xi(c_m^-)$.
\end{proof}

\section{Appendix: L\'evy--Prokhorov distance}\label{P}

Let $\mu$ and $\nu$ be two finite measures of the same mass on a metric space $X$ with metric $d$. For any subset $A$ in $X$, let $A_\epsilon$ be its $\epsilon$-neighborhood. Then we define
$$
d_L(\mu,\nu)=\inf \{\epsilon >0 \mid \forall A\subset X,\ \nu(A_\epsilon)\geq  \mu(A)\}.
$$
This function $d_L$ is actually a distance (see \cite[Paragraph 3.3]{Kahn:2009wh}) related to both  the {\em L\'evy--Prokhorov distance} \index{Levy--Prokhorov} and the {\em Wasserstein-$\infty$ distance}. By a slight abuse of language, we call still call this distance the {\em L\'evy--Prokhorov distance}.

We want to prove the following result which is an extension of a result proved in \cite{Kahn:2009wh} for connected 2-dimensional tori. The proof uses different ideas. 
\begin{theorem}\label{prokho}
Let $X$ be a  manifold. Assume that a connected compact torus $\ms T$ -- with Haar measure $\nu$ -- of dimension $n$ acts freely on $X$ preserving a  a bi-invariant Riemannian metric $d$ and measure $\mu$.  Let $\phi$ be a positive function on $X$. Let $\overline\phi:=\int_\ms T\phi\circ g .{\rm d}\nu(g)$ be its $\ms T$-average. Assume that
$(1-\kappa){\overline \phi}\leq \phi \leq (1+\kappa){\overline \phi}$. Then 
$$
d_L(\phi.\mu,\overline{\phi}.\mu)\leq 4n. \kappa.\sup_{x\in X}\diam(\ms T.x)\ .
$$
\end{theorem}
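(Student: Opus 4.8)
The plan is to reduce the problem to a one-dimensional statement by slicing $X$ along orbits, and then to prove the one-dimensional case by an explicit transport argument. Write $\pi \colon X \to Y := X/\ms T$ for the quotient; since $\ms T$ acts freely and isometrically, $\pi$ is a fiber bundle with fibers the $\ms T$-orbits, and by disintegration $\mu = \int_Y \mu_y\, {\rm d}\bar\mu(y)$ where $\mu_y$ is a measure on the orbit $\ms T\cdot x$ over $y$, invariant under $\ms T$ (here I use that $\mu$ is $\ms T$-invariant). The functions $\phi$ and $\overline\phi$ are then compared fiberwise. Because the Lévy--Prokhorov distance $d_L$ behaves well under such decompositions — if $d_L(\phi\mu_y, \overline\phi\mu_y)\leq \delta$ for $\bar\mu$-almost every $y$, with $\delta$ uniform, then $d_L(\phi\mu,\overline\phi\mu)\leq \delta$ — it suffices to bound the distance on a single orbit. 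On a fixed orbit, identifying it $\ms T$-equivariantly with $\ms T = (\mathbb R/\mathbb Z)^n$ carrying its Haar measure $\nu$, we are reduced to: given a positive function $\psi$ on $\ms T$ with $(1-\kappa)\overline\psi \le \psi \le (1+\kappa)\overline\psi$ where $\overline\psi$ is the (constant) Haar-average, show $d_L(\psi\nu, \overline\psi\nu) \le 4n\kappa \cdot \operatorname{diam}(\ms T)$, the diameter being measured in the induced metric; then rescaling gives the claimed bound with $\sup_x\operatorname{diam}(\ms T\cdot x)$.

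**The one-dimensional core and induction on $n$.** I would prove the torus statement by induction on the dimension $n$, peeling off one circle factor at a time. For $n=1$: on $\mathbb R/\mathbb Z$ we must transport the measure with density $\psi$ to the uniform measure, moving mass a total (Wasserstein-$\infty$–type) distance at most $4\kappa$. The key elementary fact is that if $f$ is a density on the circle with $\|f - 1\|_\infty \le \kappa'$ (after normalizing $\overline\psi = 1$, so $\kappa' = \kappa$), then one can build a transport plan moving no point further than $2\kappa'$: integrate the signed excess $f-1$, note its antiderivative $F$ has total variation at most $2\kappa'$ over any arc (since the positive and negative parts of $f-1$ each integrate to at most $\kappa'$ over the whole circle), and the induced monotone rearrangement displaces each point by at most $\sup|F| \le 2\kappa'$; this yields $d_L \le 2\kappa'$, and one gives away a factor $2$ for safety to reach $4\kappa$ — actually $2\kappa$ suffices in dimension one, and the factor $4n$ accrues from the inductive step. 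For the inductive step, write $\ms T = \ms T' \times \mathbb R/\mathbb Z$; average $\phi$ over the last circle to get $\phi_1$, apply the $n=1$ case fiberwise over $\ms T'$ to pass from $\phi$ to $\phi_1$ at cost $\le 2\kappa\operatorname{diam}$, observe that $\phi_1$ still satisfies a two-sided bound $(1-\kappa)\overline\phi \le \phi_1 \le (1+\kappa)\overline\phi$ with the same $\kappa$ and the same average, and apply the inductive hypothesis on $\ms T'$ at cost $\le 4(n-1)\kappa\operatorname{diam}$; the triangle inequality for $d_L$ gives $\le (4n-2)\kappa\operatorname{diam} \le 4n\kappa\operatorname{diam}$.

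**Assembling the pieces.** Finally I would check the two structural lemmas used above: (i) $d_L$ satisfies the triangle inequality and is invariant under isometries of $X$ (both standard; the latter is needed to move the fiberwise estimates around under the $\ms T$-action); and (ii) the disintegration inequality: if $\mu = \int \mu_y\,{\rm d}\bar\mu(y)$, $\sigma = \int \sigma_y\,{\rm d}\bar\mu(y)$ over the same base with $d_L(\mu_y,\sigma_y)\le \delta$ a.e., then $d_L(\mu,\sigma)\le\delta$ — this follows from the defining inequality $\sigma(A_\delta)\ge\mu(A)$ by integrating the fiberwise version, using that $(A)_\delta \cap \text{(fiber)} \supseteq (A\cap\text{fiber})_\delta$ within the fiber since the ambient $\delta$-neighborhood contains the intrinsic one. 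The main obstacle — and the only place requiring genuine care — is the one-dimensional transport estimate and propagating the $L^\infty$ two-sided bound unchanged through each averaging step; everything else is bookkeeping with the triangle inequality and the disintegration. I would also remark, as the paper does, that in the model case where $X$ is itself a torus and $\ms T$ a subtorus this recovers the statement needed in the applications.
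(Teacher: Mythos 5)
Your overall architecture---disintegrate over the quotient $X/\ms T$, reduce to a single torus orbit, and induct on the torus dimension by peeling off one circle factor at a time---matches the paper's proof of this theorem; your one-dimensional core, via the antiderivative $F$ of the signed excess $\psi-1$ and the monotone transport $t\mapsto t+F(t)$ it induces, is a clean alternative to the direct measure-counting argument the paper uses in its Lemma~\ref{prokho2}. However, there is a genuine gap in the inductive step. You identify the orbit $\ms T$-equivariantly with $(\mathbb R/\mathbb Z)^n$ carrying the induced bi-invariant flat metric, then write $\ms T=\ms T'\times(\mathbb R/\mathbb Z)$ and apply the $n=1$ case fiberwise, but you never control the size of that circle factor in the induced metric. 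A flat torus of diameter $D$ can have a coordinate circle, for a generic choice of splitting, that is a closed geodesic of circumference $L$ with $L\gg D$; your one-dimensional transport then moves mass by arclength of order $\kappa L$ along that circle, and through the one-sided inclusion in your disintegration lemma this only yields $d_L\leq C\kappa L$, not $C\kappa D$. Since $\kappa L$ can be comparable to $D$ even when $\kappa$ is tiny, the stated bound $4n\kappa\cdot\diam(\ms T)$ does not follow from what you wrote.

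The paper handles exactly this point: it first records that $\pi_1$ of the orbit is generated by translations of length at most $2\diam(\ms T)$, which supplies an $\ell_1$-product metric $d_1$ on the orbit whose coordinate circles each have $d_1$-diameter $1$ and which satisfies $d\leq 2\diam(\ms T)\cdot d_1$. The induction is then run entirely in $d_1$ (where every fiber circle has controlled size), and the comparison to the Riemannian metric $d$ is invoked only at the end, which is precisely where the factor $\diam(\ms T)$ enters the final estimate. To repair your argument you need the same preliminary reduction: choose a short basis for the orbit's lattice before peeling off circle factors, or equivalently prove the estimate in a normalized product metric and compare to $d$ afterward. With that inserted, the rest goes through, modulo the usual factor-of-two bookkeeping when passing from the two-sided bound comparing $\phi$ and $\overline\phi$ to one comparing $\phi$ and the partial average $\phi_1$ used fiberwise.
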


\subsubsection{Elementary properties}
The  following  properties of the L\'evy--Prokhorov distance will be used in the proof.
\begin{proposition}\label{pro:mui}
Let $\{\mu_n\}_{n\in\mathbb N}$ and  $\{\nu_n\}_{n\in\mathbb N}$ be two families of measures so that $\mu=\sum_{n=1}^\infty$ and $\nu=\sum_{n=1}^\infty\nu_n$ are finite measures.  Assume that for all $i$, $d_L(\mu_i,\nu_i) < \epsilon$, then
$d_L(\mu,\nu) < \epsilon$. 
	\end{proposition}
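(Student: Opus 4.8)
\textbf{Proof plan for Proposition \ref{pro:mui}.}

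The plan is to unwind the definition of the L\'evy--Prokhorov distance $d_L$ and check that the defining inequality survives countable summation. Recall $d_L(\mu,\nu)\leq\epsilon$ means that for every Borel set $A\subset X$ we have $\nu(A_\epsilon)\geq\mu(A)$, where $A_\epsilon$ denotes the $\epsilon$-neighbourhood of $A$. By hypothesis, for each $i$ and each Borel $A$ we have $\nu_i(A_\epsilon)\geq\mu_i(A)$.

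First I would fix an arbitrary Borel set $A\subset X$ and an index $i$; applying the hypothesis $d_L(\mu_i,\nu_i)\leq\epsilon$ to this very set $A$ gives $\nu_i(A_\epsilon)\geq\mu_i(A)$. Since this holds for every $i$, I sum over $i\in\mathbb N$: by countable additivity (or, more precisely, by the monotone convergence theorem applied to the partial sums of nonnegative measures),
$$
\nu(A_\epsilon)=\sum_{i=1}^\infty\nu_i(A_\epsilon)\geq\sum_{i=1}^\infty\mu_i(A)=\mu(A)\ .
$$
Both $\mu$ and $\nu$ are finite by assumption, so all sums converge and the manipulation is legitimate; moreover $\mu$ and $\nu$ have the same total mass since $\mu(X)=\sum_i\mu_i(X)\leq\sum_i\nu_i(X_\epsilon)=\nu(X_\epsilon)=\nu(X)$ and symmetrically the reverse inequality holds, so $d_L$ is well-defined on the pair $(\mu,\nu)$.

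Since $A$ was arbitrary, the displayed inequality shows that $\epsilon$ belongs to the set over which the infimum defining $d_L(\mu,\nu)$ is taken, hence $d_L(\mu,\nu)\leq\epsilon$. There is essentially no obstacle here: the only point requiring a word of care is the interchange of the (countable) sum with the measure evaluation, which is justified because all the $\mu_i$, $\nu_i$ are nonnegative measures and the totals are finite; one might also remark that if one only has $d_L(\mu_i,\nu_i)\leq\epsilon$ with the common-mass condition holding termwise, the common-mass condition for $\mu,\nu$ follows automatically as noted above. This completes the argument.
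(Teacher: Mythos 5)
Your proof is correct and follows essentially the same route as the paper's: fix an arbitrary Borel set $A$, apply the defining inequality $\nu_i(A_\epsilon)\geq\mu_i(A)$ termwise, and sum over $i$. The only difference is that you additionally spell out why the common-mass condition propagates from the summands to the sums, which the paper leaves implicit.
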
 
	\begin{proof} Assume $\epsilon > d_L(\mu_i,v_i)$.
Then for all $i$ and for all $A\subset X$,
	$\nu_i(A_\epsilon) \geq  \mu_i(A)$. Thus 
	$\nu(A_\epsilon) \geq  \mu(A)$. It follows that $\eta\geq d(\mu,\nu)$.	
	\end{proof}

\begin{proposition}\label{pro:approx}
	Let $\mu$ be a  finite measure on a compact metric space $X$.  Then for all positive $\epsilon$, there exists an atomic measure $\mu_\epsilon$   with finite support so that 
	$$
	d_L(f\mu,\mu_\epsilon)\leq \epsilon.
	$$
	If $\mu$ is invariant by a finite group $H$, then we may choose $f$ and $\mu_\epsilon$ invariant by $H$.
\end{proposition}

\begin{proof} One can find a  finite partition of $X$ by sets $U^i,\ldots,U^n$ together with a finite set of points $x_1,\ldots, x_n$ so that $x_i\in U_i\subset B(x_i,\epsilon)$. 

We then choose the atomic measure $\mu_\epsilon:=\sum_{i=1}^n \mu(U_i)\delta_{x_i}$, so that $\mu_\epsilon(U^i)=\mu(U^i)$. Let $A\subset X$ and $A^i=A\cap U^i$. Let $I$ be the set of $i$ so that $A^i$ is non empty, then for $i\in I$, 
$$
U^i\subset B(x_i,\epsilon)\subset A^i_{2\epsilon}\ .$$
Thus,
$$
A\subset \bigsqcup_{i\in I} U^i=\bigsqcup_{i\in I} \left(U^i\cap A^i_{2\epsilon}\right)\subset A_{2\epsilon}
$$
It follows that for all subset $A$, 
$$
\mu(A)\leq \mu\left(\bigsqcup_{i\in I} U^i\right)=\mu_\epsilon\left(\bigsqcup_{i\in I} U^i\right)=\mu_\epsilon\left(\bigsqcup_{i\in I}  \left(U^i\cap A^i_{2\epsilon}\right)\right)\leq\mu_{\epsilon}(A_{2\epsilon})\ .
$$
in particular $d(\mu,\mu_{\epsilon})\leq 2\epsilon$. 

To obtain the invariance by the finite group $H$, one just averages by $H$, using proposition \ref{pro:mui}.

\end{proof}

	\begin{proposition}\label{prokho1}
Let $f$ and $g$ be two maps from a measured space $(Y,\nu)$ to a metric space $X$. Assume that for all $y$ in $Y$,
$
d(f(y),g(y))\leq \kappa.
$
Then
$$
d_L(f_*\nu,g_*\nu)\leq\kappa.
$$
\end{proposition}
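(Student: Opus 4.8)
The final statement to prove is Proposition~\ref{prokho1}: given maps $f,g\colon (Y,\nu)\to X$ with $d(f(y),g(y))\leq\kappa$ for all $y$, one has $d_L(f_*\nu,g_*\nu)\leq\kappa$.

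\textbf{Approach.} This is a routine unwinding of the definition of the L\'evy--Prokhorov distance $d_L$, and the plan is simply to verify directly that $\kappa$ (or any $\epsilon\geq\kappa$) is an admissible value in the infimum defining $d_L(f_*\nu,g_*\nu)$. Recall $d_L(\mu_1,\mu_2)=\inf\{\epsilon>0\mid \forall A\subset X,\ \mu_2(A_\epsilon)\geq\mu_1(A)\}$, where $A_\epsilon$ denotes the $\epsilon$-neighbourhood of $A$; note this definition is slightly asymmetric as written, so I would check the inequality in the direction actually needed and invoke symmetry of $d_L$ as established in the cited \cite[Paragraph 3.3]{Kahn:2009wh}. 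Concretely, fix an arbitrary Borel set $A\subset X$ and any $\epsilon>\kappa$; I would show $(g_*\nu)(A_\epsilon)\geq (f_*\nu)(A)$.

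\textbf{Key steps.} First, observe that $(f_*\nu)(A)=\nu(f^{-1}(A))$ and $(g_*\nu)(A_\epsilon)=\nu(g^{-1}(A_\epsilon))$ by definition of pushforward. Second, the pointwise bound $d(f(y),g(y))\leq\kappa<\epsilon$ gives the set inclusion $f^{-1}(A)\subset g^{-1}(A_\epsilon)$: indeed, if $y\in f^{-1}(A)$ then $f(y)\in A$, and since $d(g(y),f(y))\leq\kappa<\epsilon$ we get $g(y)\in A_\epsilon$, i.e. $y\in g^{-1}(A_\epsilon)$. Third, monotonicity of $\nu$ yields $\nu(f^{-1}(A))\leq\nu(g^{-1}(A_\epsilon))$, hence $(f_*\nu)(A)\leq (g_*\nu)(A_\epsilon)$. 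Since $A$ was arbitrary, $\epsilon$ is admissible in the infimum, and letting $\epsilon\downarrow\kappa$ gives $d_L(f_*\nu,g_*\nu)\leq\kappa$. By symmetry of $d_L$ (interchanging the roles of $f$ and $g$, which satisfy the same hypothesis) the same bound holds with $f_*\nu$ and $g_*\nu$ exchanged, so no further work is needed.

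\textbf{Main obstacle.} There is essentially no difficulty here; the only points requiring a word of care are measurability (one should note $f,g$ are assumed measurable so that $f^{-1}(A)$, $g^{-1}(A_\epsilon)$ are measurable and the pushforwards are defined) and the mild asymmetry in the stated definition of $d_L$, which is handled by citing that $d_L$ is genuinely a metric. The proof is two or three lines once the definitions are spelled out.

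\begin{proof}
Let $\epsilon>\kappa$ and let $A\subset X$ be any measurable set. If $y\in f^{-1}(A)$ then $f(y)\in A$ and $d(g(y),f(y))\leq\kappa<\epsilon$, so $g(y)\in A_\epsilon$; hence $f^{-1}(A)\subset g^{-1}(A_\epsilon)$. Therefore
$$
(f_*\nu)(A)=\nu(f^{-1}(A))\leq \nu(g^{-1}(A_\epsilon))=(g_*\nu)(A_\epsilon).
$$
Since $A$ is arbitrary, $\epsilon$ belongs to the set whose infimum is $d_L(f_*\nu,g_*\nu)$, so $d_L(f_*\nu,g_*\nu)\leq\epsilon$. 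Letting $\epsilon\downarrow\kappa$ gives $d_L(f_*\nu,g_*\nu)\leq\kappa$.
\end{proof}
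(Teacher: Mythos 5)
Your proof is correct and follows essentially the same route as the paper's: the pointwise bound $d(f(y),g(y))\leq\kappa$ gives a set inclusion between preimages, and monotonicity of the measure finishes. The only cosmetic difference is that you establish $f^{-1}(A)\subset g^{-1}(A_\epsilon)$ directly, whereas the paper passes through the image set $f(D)$ (for $D=g^{-1}(A)$), which is the same idea with the roles of $f$ and $g$ swapped but introduces a routine measurability wrinkle that your version avoids.
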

\begin{proof} Observe that by hypothesis, for any subset $B$ of $Y$,  $f(B)\subset\left( g(B)\right)_{\kappa}$.
Let $A$ be a subset of $X$, $C=f^{-1}(A)$ and $D=g^{-1}(A)$. Then $f(D) \subset A_\kappa$. It follows that 
$$f_*\mu(A_\kappa)\geq f_*\mu (f(D))=\mu(f^{-1}(f(D))\geq \mu(D)=g_*\mu(A). 
$$
The assertion follows.
\end{proof}

	\begin{proposition}\label{pro:prokhcont}
		Let $\pi$ be a $K$-Lipschitz  map from $X$ to $Y$. Let $\mu$ and $\nu$ be measures on $X$, then
		$$
		d_L(\pi_*(\mu),\pi_*(\nu))\leq K. d(\mu,\nu).
		$$
	\end{proposition}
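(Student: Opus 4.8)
The plan is to reduce the statement to one elementary set-theoretic inclusion relating neighborhoods on $X$ and on $Y$. Throughout, for a subset $A$ of a metric space and $\epsilon>0$, write $A_\epsilon$ for its $\epsilon$-neighborhood, and recall that the measures $\mu,\nu$ are tacitly assumed to have the same total mass (so that $d_L$ is defined), whence $\pi_*\mu$ and $\pi_*\nu$ also have the same mass. We may assume $K>0$: if $K=0$ then $\pi$ is constant and $\pi_*\mu=\pi_*\nu$, so the inequality is trivial.

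First I would fix an arbitrary $\eta>d(\mu,\nu)$; by definition of the L\'evy--Prokhorov distance this means that $\nu(A_\eta)\geq\mu(A)$ for every subset $A\subseteq X$. Next, the key observation: for every subset $B\subseteq Y$ one has
$$
\left(\pi^{-1}(B)\right)_\eta\subseteq \pi^{-1}\!\left(B_{K\eta}\right).
$$
Indeed, if $x\in\left(\pi^{-1}(B)\right)_\eta$ then there is $x'\in X$ with $\pi(x')\in B$ and $d_X(x,x')<\eta$, hence $d_Y(\pi(x),\pi(x'))\leq K\,d_X(x,x')<K\eta$ by the $K$-Lipschitz property, so $\pi(x)\in B_{K\eta}$, i.e. $x\in\pi^{-1}(B_{K\eta})$.

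Finally I would chain the estimates. For any $B\subseteq Y$,
$$
(\pi_*\nu)\!\left(B_{K\eta}\right)=\nu\!\left(\pi^{-1}(B_{K\eta})\right)\geq \nu\!\left(\left(\pi^{-1}(B)\right)_\eta\right)\geq \mu\!\left(\pi^{-1}(B)\right)=(\pi_*\mu)(B),
$$
where the first inequality uses the inclusion above together with monotonicity of $\nu$, and the second applies the defining property of $\eta$ to the subset $A=\pi^{-1}(B)\subseteq X$. Since this holds for every $B\subseteq Y$, the definition of $d_L$ gives $d_L(\pi_*\mu,\pi_*\nu)\leq K\eta$; letting $\eta$ decrease to $d(\mu,\nu)$ yields $d_L(\pi_*\mu,\pi_*\nu)\leq K\,d(\mu,\nu)$, as claimed. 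I do not expect any genuine obstacle here; the only points needing a word of care are the degenerate case $K=0$ (handled above) and the bookkeeping of equal total masses, both of which are immediate.
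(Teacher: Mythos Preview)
Your proof is correct and follows essentially the same route as the paper's: both hinge on the inclusion $\bigl(\pi^{-1}(B)\bigr)_\eta\subseteq\pi^{-1}(B_{K\eta})$ and the same chain of inequalities. The only cosmetic difference is that the paper first rescales the metric on $Y$ to reduce to the case $K=1$, whereas you keep $K$ explicit throughout and take a limit in $\eta$; your version is arguably a touch cleaner and also handles the degenerate case $K=0$ explicitly.
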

	We will actually apply this proposition when $\pi:X\to Y$ is a finite covering.	\begin{proof} By renormalizing the distance, we can assume the map $\pi$ is contracting.
		Let $\epsilon\geq d(\mu,\nu)$. Let $B\subset Y$, observe that
	$\pi^{-1}(B)_\epsilon \subset\pi^{-1}(B_\epsilon)$. Then,
	$$
	\pi_*\mu (B_\epsilon)=\mu(\pi^{-1}(B_\epsilon))\geq \mu \left(\pi^{-1}(B)_\epsilon\right)\geq \nu (\pi^{-1}(B))=\pi_*\nu(B).
	$$
	Then by definition, $\epsilon\geq d(\pi_*(\mu),\pi_*(\nu))$ and 
	the result follows.
	\end{proof}
	 
\subsubsection{Some lemmas}

We need the following lemmas.
\begin{lemma}\label{prokho3}
Let $X$ be a metric space equipped some metric $d$. Let $\pi:X\to X_0$ be a fibration. Let $d_x$ be the restriction of $d$ to the fiber $\pi^{-1}\{x\}$.   Let $\nu$ and $\mu$ be two measures on $X$ so that $\pi_*\mu=\pi_*\nu=\lambda$.
For every $x$ in $X_0$, let $\mu_x$ --respectively $\nu_x$-- be the disintegrated measure on $\pi^{-1}(x)$  coming from $\mu$ and $\nu$ respectively.
Then
\begin{eqnarray}
d_L(\mu,\nu)\leq \sup_{x\in X_0}d_x(\mu_x,\nu_x).\label{propro}
\end{eqnarray}

\end{lemma}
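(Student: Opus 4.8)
\textbf{Proof plan for Lemma \ref{prokho3}.}

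The plan is to reduce the global Lévy--Prokhorov estimate to the fiberwise one by a direct unwinding of the definition of $d_L$ together with the disintegration formula $\mu = \int_{X_0} \mu_x \, \mathrm{d}\lambda(x)$, and similarly for $\nu$. Set $\delta \defeq \sup_{x\in X_0} d_x(\mu_x,\nu_x)$; we may assume $\delta < \infty$, otherwise there is nothing to prove. Fix any $\epsilon > \delta$. I want to show that for every measurable $A \subset X$ one has $\nu(A_\epsilon) \geq \mu(A)$, where $A_\epsilon$ is the $\epsilon$-neighborhood with respect to the ambient metric $d$; this gives $d_L(\mu,\nu) \leq \epsilon$, and letting $\epsilon \downarrow \delta$ yields \eqref{propro}.

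The key observation is a containment of neighborhoods: for each $x \in X_0$, writing $A^x \defeq A \cap \pi^{-1}(x)$ for the slice, the $\epsilon$-neighborhood of $A^x$ \emph{taken inside the fiber} $\pi^{-1}(x)$ with respect to $d_x$ is contained in $(A)_\epsilon \cap \pi^{-1}(x)$, simply because $d_x \geq d$ on the fiber so a $d_x$-ball is contained in a $d$-ball, and because points of $A^x$ lie in $A$. Hence, using that $\pi_*\mu = \pi_*\nu = \lambda$ so both disintegrations are over the same base measure, and that the disintegrated measures $\mu_x$, $\nu_x$ are supported on $\pi^{-1}(x)$:
\begin{eqnarray*}
\mu(A) &=& \int_{X_0} \mu_x(A^x) \, \mathrm{d}\lambda(x) \;\leq\; \int_{X_0} \nu_x\big( (A^x)_\epsilon^{\,d_x} \big) \, \mathrm{d}\lambda(x) \\
&\leq& \int_{X_0} \nu_x\big( A_\epsilon \cap \pi^{-1}(x) \big) \, \mathrm{d}\lambda(x) \;=\; \nu(A_\epsilon),
\end{eqnarray*}
where the first inequality is the fiberwise Lévy--Prokhorov bound $d_x(\mu_x,\nu_x) \leq \delta < \epsilon$ applied to the measurable set $A^x$ inside the metric space $(\pi^{-1}(x), d_x)$, and the second is the neighborhood containment just described.

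The only genuinely delicate point — and hence the step I would be most careful about — is measurability: one must know that the disintegration $x \mapsto \mu_x$ (and $x \mapsto \nu_x$) exists as a weakly measurable family of probability measures on the fibers, that $x \mapsto \mu_x(A^x)$ and $x \mapsto \nu_x(A_\epsilon \cap \pi^{-1}(x))$ are measurable functions of $x$, and that $A_\epsilon$ is itself measurable (which follows since it is open). Under the standing hypotheses here — $X$ a manifold (hence a standard Borel space), $\pi$ a fibration, and all measures finite — the disintegration theorem applies and supplies exactly these properties, so this is routine to verify; I would simply cite the disintegration theorem for Radon measures on Polish spaces. Everything else is the elementary chain of inequalities above, so no further computation is needed.
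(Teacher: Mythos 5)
Your proof is correct and is essentially the argument the paper gives: the key containment $(A^x)_\epsilon^{d_x}\subset (A_\epsilon)^x$, combined with the fiberwise Lévy--Prokhorov bound and integration against $\lambda$, is exactly the paper's chain of inequalities (written in the opposite order). The extra remark on measurability of the disintegration is a reasonable addendum that the paper leaves implicit.
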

\begin{proof}
Let $A$ be a subset of $X$ and $A^x:=A\cap\pi^{-1}\{x\}$.  Let $(A^x)_\kappa$ be the $\kappa$ neighborhood of $A^x$ in $\pi{-1}(x)$. By construction $(A^x)_\kappa\subset (A_\kappa)^x$. Thus, for any set $A$, if $\kappa\geq d_x(\nu_x,\mu_x)$ for all $x$, we have
\begin{eqnarray*}
\nu (A_\kappa)=\int_{X_0} \nu_x\left((A_\kappa)^x\right){\rm d}\lambda (x)
\geq\int_{X_0} \nu_x\left((A^x)_\kappa\right){\rm d}\lambda (x)
\geq\int_{X_0} \mu_x\left(A^x\right){\rm d}\lambda (x)
\geq\mu_0(A)\ .
\end{eqnarray*}
Thus, $\kappa\geq d(\mu,\nu)$. inequality  (\ref{propro}) follows. 
\end{proof}
\begin{lemma}\label{prokho2}
Let $\ms T^1$ be the connected compact torus of dimension $1$ equipped with a bi-invariant metric $d$ and  Haar measure $\mu$.  Let $\phi$ be a positive function on $\ms T^1$. Let $\overline\phi:= \int_{\ms T^1} \phi\circ g \ {\rm d}\mu(g)$ be its $\ms T^1$-average, that we see as a constant function. Assume that
$\exp(-\kappa){\overline \phi}\leq \phi$. Then 
$$
d(\phi.\mu,\overline{\phi}.\mu)\leq \kappa\diam\left(\ms T^1\right).
$$
\end{lemma}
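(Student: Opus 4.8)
\textbf{Plan of proof for Lemma \ref{prokho2}.} The statement concerns the $1$-dimensional torus $\ms T^1$: given a positive function $\phi$ bounded below by $e^{-\kappa}\overline\phi$ (where $\overline\phi$ is the average), we must bound the L\'evy--Prokhorov distance between $\phi\cdot\mu$ and $\overline\phi\cdot\mu$ by $\kappa\diam(\ms T^1)$. Note that $\phi\cdot\mu$ and $\overline\phi\cdot\mu$ have the same total mass (by definition of the average), so $d_L$ makes sense. The natural strategy is to exhibit an \emph{explicit transport map}: I want to find a measurable bijection $\Phi:\ms T^1\to\ms T^1$ moving each point by at most $\kappa\diam(\ms T^1)$, so that $\Phi_*(\phi\cdot\mu)=\overline\phi\cdot\mu$; then Proposition \ref{prokho1} gives the bound immediately.

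First I would parametrize $\ms T^1$ by arclength as $\mathbb R/L\mathbb Z$ where $L=\diam(\ms T^1)\cdot 2$ is the total length (one should be a little careful here: $\diam(\ms T^1)=L/2$, so $\kappa\diam(\ms T^1)=\kappa L/2$; I need the transport displacement to be at most this, and I will check the constant at the end — if it comes out $\kappa L$ rather than $\kappa L/2$ the statement would need $\diam$ replaced by something, so I should track it carefully, perhaps using that moving ``forward'' by $s$ along the circle is the same as moving ``backward'' by $L-s$, so the true displacement is $\min(s,L-s)\le L/2$, and in fact averaging the forward and backward transports keeps the displacement $\le L/2$). The transport I have in mind is the monotone (optimal) coupling on the circle: lift $\phi$ and $\overline\phi$ to densities on an interval of length $L$, define $\Phi$ via the relation $\int_0^x \phi = \int_0^{\Phi(x)}\overline\phi = \overline\phi\,\Phi(x)$ (since $\overline\phi$ is constant). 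Then $\Phi$ is the cumulative-distribution transport.

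The key estimate is then that $|\Phi(x)-x|$ is small. Writing $m(x)=\int_0^x\phi$, we have $\Phi(x)=m(x)/\overline\phi$, so $\Phi(x)-x=\frac{1}{\overline\phi}\int_0^x(\phi(t)-\overline\phi)\,dt$. Since $\phi\ge e^{-\kappa}\overline\phi\ge(1-\kappa)\overline\phi$ and $\int_0^L(\phi-\overline\phi)=0$, the integral $\int_0^x(\phi-\overline\phi)\,dt$ of a function of zero mean over the full circle, whose negative part is bounded pointwise by $(1-e^{-\kappa})\overline\phi\le\kappa\overline\phi$, satisfies $\bigl|\int_0^x(\phi-\overline\phi)\bigr|\le \kappa\overline\phi\cdot L$ (the partial integral of a zero-mean function is bounded by the total mass of its negative part, which is at most $\kappa\overline\phi L$; actually it is at most $\kappa\overline\phi L$, and one can do better by a factor using that the positive and negative parts have equal mass — I expect the clean bound $\le\kappa\overline\phi L/2$ after this observation, or I accommodate the circle symmetry). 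Dividing by $\overline\phi$ gives $|\Phi(x)-x|\le\kappa L$, and the circle-symmetry refinement brings this to $\kappa L/2=\kappa\diam(\ms T^1)$.

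The main obstacle I anticipate is precisely the bookkeeping of this constant: ensuring the displacement bound is $\kappa\diam(\ms T^1)$ and not $2\kappa\diam(\ms T^1)$, which forces me to use genuinely that we are on a circle (where ``distance moved'' is $\min$ of two arcs) rather than on an interval, and to exploit that the negative part of $\phi-\overline\phi$ has the same total mass as the positive part. A secondary, purely technical point is measurability/monotonicity of $\Phi$ when $\phi$ is merely measurable rather than continuous — handled by taking $\Phi$ to be the generalized inverse of the CDF, which is automatically non-decreasing and pushes the measure correctly. Once $\Phi$ is in hand with the displacement bound, the conclusion $d_L(\phi\cdot\mu,\overline\phi\cdot\mu)\le\kappa\diam(\ms T^1)$ is immediate from Proposition \ref{prokho1} applied with $(Y,\nu)=(\ms T^1,\phi\cdot\mu)$, $f=\mathrm{id}$, $g=\Phi$ (so that $f_*\nu=\phi\cdot\mu$ and $g_*\nu=\overline\phi\cdot\mu$).
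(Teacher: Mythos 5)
Your strategy is a genuinely different one from the paper's: the paper normalizes $\diam(\ms T^1)=1$ and verifies the L\'evy--Prokhorov inequality set by set, combining $\mu(A_\kappa)\geq\mu(A)+2\kappa$ and $\mu(A)\leq 1-2\kappa$ with $e^{-\kappa}\geq 1-2\kappa$ when $A_\kappa\neq\ms T^1$, whereas you construct an explicit transport map of bounded displacement and invoke Proposition \ref{prokho1}. Your CDF map $\Phi(x)=m(x)/\overline\phi$ does push $\phi\cdot\mu$ forward to $\overline\phi\cdot\mu$, and using Proposition \ref{prokho1} is a sound framing.

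The gap is exactly where you flagged it, but neither of the two constant-saving ideas you gesture at actually produces the needed factor of $2$. Writing $M(x)=\int_0^x(\phi-\overline\phi)$ and $L=2\diam(\ms T^1)$ for the circumference, you have $|\Phi(x)-x|=|M(x)|/\overline\phi\leq\kappa L=2\kappa\diam(\ms T^1)$. The observation ``$\min(s,L-s)\leq L/2$'' is of no help: for small $\kappa$ the displacement $s$ is already much less than $L/2$, so $\min(s,L-s)=s$ and nothing is gained. The equal-mass observation does not halve $\sup|M|$ either: take $\phi=(1-\kappa')\overline\phi$ on $[0,L-\delta]$ with $\kappa'=1-e^{-\kappa}$ and $\phi$ correspondingly large on the remaining short arc; then $\sup|M|\to\kappa'\overline\phi L$ as $\delta\to 0$, so with basepoint $0$ the bound $\kappa L$ is essentially sharp. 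What is actually missing is a \emph{change of cut-point}: pick $a$ with $M(a)=\tfrac12(\sup M+\inf M)$ and use the transport $\Phi_a$ starting at $a$, for which $\Phi_a(x)-x=(M(x)-M(a))/\overline\phi$. Since $M(x)-M(y)=-\int_{[y,x]^c}(\phi-\overline\phi)\leq\kappa'\overline\phi\,|[y,x]^c|\leq\kappa'\overline\phi L$ for all $x,y$, one gets $\sup M-\inf M\leq\kappa'\overline\phi L$, hence $|\Phi_a(x)-x|\leq\tfrac12\kappa' L\leq\kappa\diam(\ms T^1)$, and Proposition \ref{prokho1} then gives the stated bound. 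With this choice of basepoint made explicit, the transport argument closes.
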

\begin{proof} We can as well assume after multiplying the distance by a constant that $\diam(\ms T^1)=1$. Let $A$ be any subset in $\ms T^1$. Assume first that $A_{\kappa}$ is a strict subset of $\ms T^1$ (and thus  $\kappa< 1/2$). Then
\begin{eqnarray}
(\phi\cdotp\mu)(A_{\kappa})\geq  \exp(-\kappa)\int_{A_\kappa} \overline{\phi} .{\rm d}\mu
\geq \exp(-\kappa)(\mu(A)+2\kappa)\overline{\phi}
\end{eqnarray}
Next observe that $\mu(A)\leq 1-2\kappa$. Hence
\begin{eqnarray}
(\phi\cdotp\mu)(A_{\kappa})\geq \exp(-\kappa)\left(1+\frac{2\kappa}{1-2\kappa}\right)\overline{\phi}.\mu(A)\geq\left( \frac{\exp(-\kappa)}{1-2\kappa}\right)\overline{\phi}.\mu(A).
\end{eqnarray}
Thus if $A_{\kappa}$ is a strict subset of $\ms T^1$: 
$
\phi\cdotp\mu(A_{\kappa})\geq  \overline{\phi}\cdotp\mu (A)$.
Finally if $A_\kappa=\ms T^1$, 
$$
\phi\cdotp\mu(A_\kappa)=\int_{\ms T^1}\phi \cdotp{\rm d}\mu=\overline{\phi}\geq\overline{\phi}\cdotp\mu(A).
$$
This concludes the proof of the statement.
\end{proof}

These two lemmas have the following immediate consequence
\begin{corollary}\label{proco}
Let $X:=\ms T^1\times X_0$. Let $d$  -- respectively $\mu$ -- be a $\ell_1$ product metric -- respectively a measure -- on $X$ invariant by $\ms T^1$. Let $\phi$ be a function on $X$.  Let $\overline\phi:=\int_{\ms T^1} \phi\circ g \ {\rm d}\nu(g)$ be its $\ms T^1$-average. Assume that
$(1-\kappa){\overline \phi}\leq \phi \leq (1+\kappa){\overline \phi}$. Then 
$$
d(\phi.\mu,\overline{\phi}.\mu)\leq \kappa\diam(\ms T^1).
$$
\end{corollary}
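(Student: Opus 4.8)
The plan is to prove the corollary by combining the fibre estimate of Lemma~\ref{prokho3} with the one-dimensional bound of Lemma~\ref{prokho2}, applied fibrewise along the projection $\pi\colon X=\ms T^1\times X_0\to X_0$. First I would record what the hypotheses give. Disintegrating $\mu$ along $\pi$, each fibre measure on $\ms T^1\times\{x\}$ is $\ms T^1$-invariant of total mass one, hence equal to the Haar \emph{probability} measure $\nu$ of $\ms T^1$, so that $\mu=\int_{X_0}\nu\,\d\lambda(x)$ with $\lambda\defeq\pi_*\mu$. The $\ms T^1$-average $\overline\phi$ is constant along fibres, so one may write $\overline\phi(t,x)=\overline\phi(x)$. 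A short computation then shows that $\phi\mu$ and $\overline\phi\mu$ have the \emph{same} pushforward under $\pi$: for every $A\subset X_0$, both $\pi_*(\phi\mu)(A)$ and $\pi_*(\overline\phi\mu)(A)$ equal $\int_A\overline\phi(x)\,\d\lambda(x)$, where one uses that $\nu$ is a probability measure. This puts us in position to apply Lemma~\ref{prokho3} to the pair $(\phi\mu,\overline\phi\mu)$.

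Next I would identify the fibrewise disintegrations. With respect to the common base measure $\overline\phi\lambda$, the disintegration of $\phi\mu$ (resp.\ of $\overline\phi\mu$) on the fibre $\ms T^1\times\{x\}$ is the probability measure $\overline\phi(x)^{-1}\phi(\cdot,x)\,\nu$ (resp.\ $\nu$); and since $d$ is an $\ell^1$ product metric, its restriction $d_x$ to a fibre is exactly the metric of $\ms T^1$. Fixing $x$ and setting $\psi\defeq\overline\phi(x)^{-1}\phi(\cdot,x)$, this $\psi$ is a positive function on $\ms T^1$ whose $\ms T^1$-average equals $1$ and which satisfies $(1-\kappa)\le\psi\le(1+\kappa)$, so Lemma~\ref{prokho2} yields $d_x\bigl(\overline\phi(x)^{-1}\phi(\cdot,x)\,\nu,\ \nu\bigr)\le\kappa\,\diam(\ms T^1)$. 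Taking the supremum over $x\in X_0$ and feeding this into Lemma~\ref{prokho3} gives $d(\phi\mu,\overline\phi\mu)\le\kappa\,\diam(\ms T^1)$, which is the assertion.

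I do not expect a genuine obstacle here: the corollary is essentially bookkeeping built on the two lemmas. The only points that need care are (i) fixing the normalizations so that $\pi_*(\phi\mu)=\pi_*(\overline\phi\mu)$ --- which is precisely what forces $\nu$ to be the Haar \emph{probability} measure and $\overline\phi$ to be the corresponding fibrewise average --- and (ii) checking that the $\ell^1$ product structure makes $d_x$ coincide with the circle metric, so that Lemma~\ref{prokho2} applies verbatim on each fibre; one also notes that the hypothesis $\psi\ge 1-\kappa$ is exactly what the proof of Lemma~\ref{prokho2} uses, after the harmless replacement of $e^{-\kappa}$ by $1-\kappa$ (the inequality $\tfrac{1-\kappa}{1-2\kappa}\ge 1$ still holds).
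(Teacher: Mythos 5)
Your argument is correct and is exactly the intended route behind the paper's ``immediate consequence'': disintegrate fibrewise over $X_0$, observe that $\phi\mu$ and $\overline\phi\mu$ have the same pushforward $\overline\phi\lambda$ (using that the fibre conditionals of $\mu$ are Haar probability measures and that $\overline\phi$ is constant on fibres), and then apply Lemma~\ref{prokho3} with the fibrewise bound from Lemma~\ref{prokho2}, after noting that the $\ell^1$ product structure makes the fibre metric coincide with that of $\ms T^1$. You also rightly flag the one subtlety the paper elides: the corollary's lower bound $(1-\kappa)\overline\phi\le\phi$ is strictly weaker than the hypothesis $e^{-\kappa}\overline\phi\le\phi$ of Lemma~\ref{prokho2}, but the proof of that lemma runs unchanged with $1-\kappa$ in place of $e^{-\kappa}$ since the needed inequality $\tfrac{1-\kappa}{1-2\kappa}\ge 1$ still holds for $\kappa<1/2$, so the fibrewise estimate you want is indeed available.
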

\subsubsection{Proof of Theorem \ref{prokho}}
We first treat the case of $X=\ms T=(\ms T^1)^n$ with the $\ell_1$ product metric $d_1$ which is of diameter 1 on each factor.  Note first that if $\tilde\phi$ is its average along one of the $\ms T^1$ factor, then
$$
\exp(-2.\kappa).\tilde\phi\leq\phi\leq \exp(2.\kappa).\tilde\phi.
$$
Applying Corollary \ref{proco} to all the factors of $\ms T$, we get after an induction procedure that for the corresponding L\'evy--Prokhorov distance 
$$
d_1(\phi.\mu,\overline{\phi}.\mu)\leq 2n.\kappa.
$$
We can conclude. 

We still consider the case $X=\ms T^n$ but  now equipped with a bi-invariant Riemannian metric $d$. Observe that  $\pi_1(X)$ can be generated by translations of length smaller than $2\diam(X)$. Thus there exists a bi-invariant $\ell_1$ product metric $d_1$ on this torus whose factors have diameter 1, so that
$$
d\leq 2.\diam(\ms T).d_1.
$$ 
The statement in that case follows from the following observation: let $d_1$, $d_2$ be two metrics whose corresponding L\'evy--Prokhorov distances are  respectively $\delta_1$ and $\delta_2$. Assume  that $d_2\leq K. d_1$. Then 
$
\delta_2\leq K.\delta_1
$.
Finally,  we apply Lemma \ref{prokho3} to conclude for the general case.

\section{Appendix B: Exponential Mixing}\label{app:mix}

The following lemma is well  known to experts as a combination of various deep results. However, it is difficult to track it precisely in the literature.  We thank Bachir Bekka and Nicolas Bergeron for their help on that matter.

\begin{lemma}\label{lem:mix}
Let $\ms G$ be a semi-simple Lie group without compact factor and $\Gamma$ be an irreducible lattice in $\ms G$, then the action of any non trivial hyperbolic element is exponentially mixing.

When the lattice is not irreducible, we have to impose furthermore that the projection of the hyperbolic element  to all  irreducible factors is non trivial.
\end{lemma}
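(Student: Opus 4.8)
The statement collects two standard inputs and combines them: (1) the Howe--Moore theorem, which tells us that for a semi-simple Lie group $\ms G$ without compact factors, any unitary representation of $\ms G$ in which the hyperbolic one-parameter subgroup $\{\exp(ta)\}$ acts with no almost-invariant vectors has matrix coefficients decaying to zero, so that $\{\varphi_t\}$ acts mixingly on $L^2_0(\Gamma\backslash\ms G)$ (the orthogonal complement of the constants); and (2) a quantitative refinement — a spectral gap for $L^2_0(\Gamma\backslash\ms G)$ — giving a \emph{rate} of decay. First I would reduce to the irreducible case: if $\Gamma$ is reducible, write $\ms G=\ms G_1\times\cdots\times\ms G_k$ (after passing to a finite cover so that each projection $\mathrm{pr}_i(\Gamma)$ is an irreducible lattice in $\ms G_i$), and decompose $L^2_0(\Gamma\backslash\ms G)$ into the tensor-product pieces coming from the factors; because by hypothesis the projection of the hyperbolic element to every $\ms G_i$ is non-trivial, the decay estimate on each factor yields the product estimate, with the weakest individual rate serving as $a$.

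The core of the argument is the spectral-gap input. For $\Gamma$ an irreducible lattice in a semisimple $\ms G$ without compact factors, $L^2_0(\Gamma\backslash\ms G)$ is $1/p$-tempered for some even integer $p=p(\Gamma)$; this is a consequence of property $(T)$ when $\mathrm{rk}_{\mathbb R}(\ms G)\ge 2$ (or when one factor has property $(T)$), and in rank one of the Selberg/Jacquet--Langlands-type bounds together with the fact that uniform lattices have a uniform spectral gap. One then invokes the standard estimate (Cowling--Haagerup--Howe, or the explicit version in Kleinbock--Margulis, or Gorodnik--Nevo) that for a $p$-tempered representation the matrix coefficients $\langle \varphi_t f, g\rangle$ with $f,g$ smooth vectors satisfy
\[
|\langle \varphi_t f,g\rangle|\le C\, e^{-at}\,\|f\|_{\mathcal S^k}\,\|g\|_{\mathcal S^k}
\]
for suitable $C,a>0$ and a Sobolev-type norm $\mathcal S^k$ of fixed order $k$; here one uses that $\exp(ta)$ is contained in a maximal $\mathbb R$-split torus and that the relevant Harish-Chandra $\Xi$-function decays. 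Translating the $\mathcal S^k$-norm into a $C^k$-norm on $\Gamma\backslash\mc G$ (legitimate because $\Gamma\backslash\mc G$ is compact, $\Gamma$ being a uniform lattice, so $C^k$ controls $\mathcal S^k$ up to constants) gives precisely the inequality in the definition of exponential mixing, applied to $f - \int f\,d\mu$ and $g$, the constant-function contribution producing the product term $\int f\,d\mu\int g\,d\mu$.

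The main obstacle is not any single deep theorem — each is classical — but assembling the correct form of the quantitative statement uniformly across ranks and handling the reducible case cleanly. In higher rank one must be careful that the decay rate $a$ depends only on $\ms G$ (via its root system) and not on $\Gamma$; in rank one one must cite the uniform spectral gap for cocompact lattices. For the reducible case the delicate point is that the decay of a tensor product $\bigotimes_i \pi_i$ of matrix coefficients along a diagonal-type element $(\exp(ta_1),\ldots,\exp(ta_k))$ requires \emph{each} $a_i\ne 0$, which is exactly the extra hypothesis; the product of the individual decay rates then gives an exponential rate governed by $\min_i a_i$. Once these pieces are in place, the lemma follows by combining them with the passage from $\mathcal S^k$ to $C^k$ norms. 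I would therefore present the proof as: reduce to the irreducible case; quote Howe--Moore plus the spectral gap (with references to Cowling--Haagerup--Howe, Kleinbock--Margulis, Gorodnik--Nevo, and Bekka--de la Harpe--Valette for property $(T)$) to get the quantitative matrix-coefficient bound; convert norms using compactness; and conclude.
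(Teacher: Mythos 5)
Your overall route is the same as the paper's — reduce exponential mixing to a spectral gap for $L^2_0(\Gamma\backslash\ms G)$ and invoke a Kleinbock--Margulis type estimate to convert the spectral gap into effective decay of matrix coefficients. But there is a genuine gap in the way you establish the spectral gap, and it concerns exactly the class of examples that one cares most about here (e.g.\ $\ms{SL}_2(\mathbb R)\times\ms{SL}_2(\mathbb R)$, or more generally irreducible lattices in products of rank-one factors such as Hilbert modular groups). You claim the spectral gap ``is a consequence of property $(T)$ when $\mathrm{rk}_{\mathbb R}(\ms G)\ge 2$''; that implication is false, since a product of rank-one groups has real rank $\ge 2$ but none of its factors (and hence not $\ms G$ itself) has property $(T)$. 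Your fallback clause ``or when one factor has property $(T)$'' doesn't rescue this case either. Moreover, the tensor-product decomposition of $L^2_0(\Gamma\backslash\ms G)$ that you use is only available for \emph{reducible} lattices (up to finite index, $\Gamma=\prod\Gamma_i$); for an irreducible lattice in a product there is no such decomposition, so you cannot appeal to factorwise decay. A further technical point: the Kleinbock--Margulis criterion that is actually needed (their Corollary~4.5) asks for a spectral gap for the \emph{restriction} $\pi_i=\pi|_{\ms G_i}$ to each simple factor, not merely for $\pi$ as a $\ms G$-representation; this is precisely where non-triviality of the projection of the hyperbolic element to each factor enters.

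The paper fills this hole by a completely different mechanism in the product case: it invokes the Margulis Arithmeticity Theorem to conclude that an irreducible lattice in a higher-rank product is arithmetic, and then obtains the spectral gap for each $\pi_i$ from Burger--Sarnak together with Clozel's theorem (uniform case) or from Kleinbock--Margulis directly (non-uniform case). For $\ms G$ simple, the paper simply cites Bekka for the spectral gap (Proposition~8.1 of his survey for $\Gamma$ uniform, and his Lemma~4.1 for $\Gamma$ non-uniform), which is more direct than your invocation of Howe--Moore plus $1/p$-temperedness plus Cowling--Haagerup--Howe, though that machinery is perfectly correct in the simple case. Your treatment of the reducible case (tensor-product decomposition, each factor non-trivial) and the passage from Sobolev-type norms to $C^k$-norms using cocompactness are both fine.
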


\begin{proof} The extension to non irreducible factors follow from simple considerations. Thus let just prove the first statement. Let 
 $\ms G_1,\dots, \ms G_n$ be the simple factors of $\ms G$. Let 
 $\pi$ be the unitary representation of  $G$ in $L^2_0(\ms G/\Gamma)$, the orthogonal to the constant function in   $L^2(\ms G/\Gamma).$

By  Kleinbock--Margulis \cite[Corollary 4.5]{Kleinbock:1999tk} we have to show that the restriction $\pi_i$ of $\pi$ on $\ms G_i$ has a spectral gap (see also  Katok--Spatzier \cite[Corollary 3.2]{Katok:1994wr}) 

In the simplest case is when $\ms G$  is simple and $\Gamma$ uniform, this follows by standard arguments, for instance see Bekka's survey \cite[Proposition 8.1]{Bekka:2016tx}

When $\ms G$ is still simple, but $\Gamma$ non uniform, this now follows from Bekka \cite[Lemma 4.1]{Bekka:1998wg}.

When finally  $\ms G$ is a actually a product, by Margulis Arithmeticity Theorem \cite{Margulis:1984tk}, $\Gamma$ is arithmetic.  
For $\Gamma$  uniform, the spectral gap follows from Burger--Sarnak \cite{Burger:1991tp} and Clozel \cite{Clozel:2003wd}.  For $\Gamma$ non uniform, this is  due to Kleinbock--Margulis \cite[Theorem 1.12]{Kleinbock:1999tk}.
\end{proof}

\bibliographystyle{amsplain}

\begin{thebibliography}{10}

\bibitem{Bekka:1998wg}
Bachir Bekka, \emph{{On uniqueness of invariant means}}, Proc. Amer. Math. Soc.
  \textbf{126} (1998), no.~2, 507--514.

\bibitem{Bekka:2016tx}
\bysame, \emph{{Spectral rigidity of group actions on homogeneous spaces}},
  arXiv.org (2016).

\bibitem{Bergeron2013}
Nicolas Bergeron, \emph{{La conjecture des sous-groupes de surfaces (d'apr\`es
  Jeremy Kahn et Vladimir Markovi\'c)}}, Ast{\'e}risque (2013), no.~352, Exp.
  No. 1055, x, 429--458.

\bibitem{Bochi:2016wl}
Jairo Bochi, Rafael Potrie, and Andrés Sambarino, \emph{{Anosov representations
  and dominated splittings}}, arXiv.org (2016).

\bibitem{Bourbaki:owAvyv1m}
Nicolas Bourbaki, \emph{{Lie groups and Lie algebras. Chapters 7--9}}, Elements
  of Mathematics (Berlin), Springer-Verlag, Berlin, 2005.

\bibitem{Bridgeman:2015ba}
Martin~J Bridgeman, Richard Canary, Fran{\c c}ois Labourie, and Andres
  Sambarino, \emph{{The pressure metric for Anosov representations}}, Geometric
  And Functional Analysis \textbf{25} (2015), no.~4, 1089--1179.

\bibitem{Burger:1991tp}
Marc Burger and Peter Sarnak, \emph{{Ramanujan duals. II}}, Inventiones
  Mathematicae \textbf{106} (1991), no.~1, 1--11.

\bibitem{Clozel:2003wd}
Laurent Clozel, \emph{{D\'emonstration de la conjecture $\tau$}}, Inventiones
  Mathematicae \textbf{151} (2003), no.~2, 297--328.

\bibitem{Cooper:1997ug}
D~Cooper, D~D Long, and A~W Reid, \emph{{Essential closed surfaces in bounded
  $3$-manifolds}}, J. Amer. Math. Soc. \textbf{10} (1997), no.~3, 553--563.

\bibitem{Fock:2006a}
Vladimir~V Fock and Alexander~B Goncharov, \emph{{Moduli spaces of local
  systems and higher {T}eichm{\"u}ller theory}}, Publ. Math. Inst. Hautes
  {\'E}tudes Sci. (2006), no.~103, 1--211.

\bibitem{Gelander:2014wq}
Tsachik Gelander, \emph{{Lectures on lattices and locally symmetric spaces}},
  Geometric group theory, Amer. Math. Soc., Providence, RI, 2014, pp.~249--282.

\bibitem{Guichard:2012eg}
Olivier Guichard and Anna Wienhard, \emph{{Anosov representations: domains of
  discontinuity and applications}}, Inventiones Mathematicae \textbf{190}
  (2012), no.~2, 357--438.

\bibitem{Hamenstadt:2015wa}
Ursula Hamenst{\"a}dt, \emph{{Incompressible surfaces in rank one locally
  symmetric spaces}}, Geometric And Functional Analysis \textbf{25} (2015),
  no.~3, 815--859.

\bibitem{Kahn:2009wh}
Jeremy Kahn and Vladimir Markovic, \emph{{Immersing almost geodesic surfaces in
  a closed hyperbolic three manifold}}, Annals of Mathematics \textbf{175}
  (2012), no.~3, 1127--1190.

\bibitem{Kahn:2011tt}
\bysame, \emph{{The good pants homology and the Ehrenpreis Conjecture}}, Annals
  of Mathematics \textbf{182} (2015), no.~1, 1--72.

\bibitem{Kapovich:2014ue}
Michael Kapovich, Bernhard Leeb, and Joan Porti, \emph{{A Morse Lemma for
  quasigeodesics in symmetric spaces and euclidean buildings}}, arXiv.org
  (2014).

\bibitem{Katok:1994wr}
Anatole Katok and Ralf~J. Spatzier, \emph{{First cohomology of Anosov actions
  of higher rank abelian groups and applications to rigidity}}, Publ. Math.
  Inst. Hautes {\'E}tudes Sci. (1994), no.~79, 131--156.

\bibitem{Kleinbock:1999tk}
Dmitry~Y. Kleinbock and Gregory~A. Margulis, \emph{{Logarithm laws for flows on
  homogeneous spaces}}, Inventiones Mathematicae \textbf{138} (1999), no.~3,
  451--494.

\bibitem{Kostant:1959wi}
Bertram Kostant, \emph{{The principal three-dimensional subgroup and the Betti
  numbers of a complex simple Lie group}}, American Journal of Mathematics
  \textbf{81} (1959), 973--1032.

\bibitem{Labourie:2006}
Fran{\c c}ois Labourie, \emph{{Anosov flows, surface groups and curves in
  projective space}}, Inventiones Mathematicae \textbf{165} (2006), no.~1,
  51--114.
  
\bibitem{Labourie:2009}
 {Labourie, Fran{\c c}ois and McShane, Gregory},
\emph{{Cross Ratios and Identities for Higher 
{T}eichm{{\"u}}ller-{T}hurston Theory}}, Duke Mathematical Journal, (2009),
\textbf{148},
 no.~1,{9},
279--345.




\bibitem{Lackenby:2010wk}
Marc Lackenby, \emph{{Surface subgroups of Kleinian groups with torsion}},
  Inventiones Mathematicae \textbf{179} (2010), no.~1, 175--190.

\bibitem{Margulis:1984tk}
Gregory~A. Margulis, \emph{{Arithmeticity of the irreducible lattices in the
  semisimple groups of rank greater than $1$}}, Inventiones Mathematicae
  \textbf{76} (1984), no.~1, 93--120.

\bibitem{Milnor:1964tl}
John Milnor, \emph{{On the Betti numbers of real varieties}}, Proc. Amer. Math.
  Soc. \textbf{15} (1964), 275--280.

\end{thebibliography}
\providecommand{\bysame}{\leavevmode\hbox to3em{\hrulefill}\thinspace}
\providecommand{\MR}{\relax\ifhmode\unskip\space\fi MR }
\providecommand{\MRhref}[2]{
  \href{http://www.ams.org/mathscinet-getitem?mr=#1}{#2}
}
\providecommand{\href}[2]{#2}

\printindex

\end{document}